\pdfoutput=1
\makeatletter\def\input@path{{FIGURES/}}\makeatother
\documentclass[10pt,a4paper]{article}  
\usepackage[utf8]{inputenc}
\usepackage[T1]{fontenc}
\DeclareFontFamily{U}{mathx}{}
\DeclareFontShape{U}{mathx}{m}{n}{<-> mathx10}{}
\DeclareSymbolFont{mathx}{U}{mathx}{m}{n} 
\DeclareMathAccent{\widehat}{0}{mathx}{"70}
\DeclareMathAccent{\widecheck}{0}{mathx}{"71}  
\usepackage{tikz} 
\usepackage{mathrsfs}
\usepackage{amsbsy}
\usepackage{amsmath, amsfonts, amsthm, amssymb, amscd}
\usepackage{accents, verbatim}
\usepackage{pdfpages}
\usepackage{graphicx}
\usepackage{xcolor}
\usepackage{microtype}
\usepackage[nottoc]{tocbibind} 
\usepackage{zref-perpage} \zmakeperpage{footnote}
\usepackage{float}
\usepackage{xr-hyper}
\usepackage{hyperref}
\usepackage[normalem]{ulem}
\usepackage{array}
\usepackage{a4wide}
\usepackage{titlesec}
\usepackage{hyperref,bookmark,footnotebackref}
\hypersetup{linktoc = all}
\hypersetup{hidelinks}
\hypersetup{bookmarksnumbered}
\usepackage{calligra}
\usepackage[T1]{fontenc} 
\usepackage[mathscr]{euscript}
\usepackage{bbm}
\makeatletter
\let\oldaligned\aligned
\def\@alignedenvir{aligned}
\def\aligned{\ifx\@currenvir\@alignedenvir\expandafter\@firstoftwo\fi\oldaligned\relax}
\makeatother
\numberwithin{figure}{section}
\numberwithin{table}{section}

\usepackage{aliascnt}
\theoremstyle{plain}
\newtheorem{theorem}{Theorem}[section]
\newcommand{\mynewtheorem}[2]{
  \newaliascnt{#1}{theorem}
  \newtheorem{#1}[#1]{#2}
  \aliascntresetthe{#1}
  \expandafter\providecommand\csname #1autorefname\endcsname{#2}
}
\mynewtheorem{definition}{Definition}
\mynewtheorem{proposition}{Proposition}
\mynewtheorem{lemma}{Lemma}
\mynewtheorem{corollary}{Corollary}
\mynewtheorem{remark}{Remark}
\mynewtheorem{claim}{Claim}
\mynewtheorem{conjecture}{Conjecture}
\mynewtheorem{problem}{Problem}
\mynewtheorem{hypothesis}{Hypothesis}
\mynewtheorem{assumption}{Assumption}
\numberwithin{equation}{section}
\DeclareMathAlphabet\mathbfcal{OMS}{cmsy}{b}{n} 

\renewcommand \geq {\geqslant}
\renewcommand \leq {\leqslant}
\newcommand \be 	{\begin{equation}}
\newcommand \ee 	{\end{equation}} 
\newcommand \bel 	{\be \label}
\newcommand \bei 	{\begin{itemize}}
\newcommand \eei 	{\end{itemize}}
\let\oldmarginpar\marginpar
\renewcommand\marginpar[1]{\ifhmode\unskip\fi\- \oldmarginpar[\raggedleft\footnotesize #1]{\raggedright\footnotesize #1}}

\newcommand \bse {\begin{subequations}}
\newcommand \ese {\end{subequations}}

\newcommand \Jperp  {\mathbb J^\perp{}}
\newcommand \Jpar {J^\parallel{}}

\newcommand \Jhatpar { \widehat{J}^\parallel{}}

\newcommand \Jbb {\mathbb J} 

\newcommand \Nbb {\mathbb N}

\newcommand \Xbb {\mathbb X} 
\newcommand \Ybb {\mathbb Y}

\newcommand \Hess {\operatorname{Hess}}

\newcommand \Ebf{E}

\newcommand \del	{\partial}
\newcommand \auth 	{\textsc}   
\newcommand \Mcal 	{\mathcal M}

\newcommand \loc   	{\textnormal{loc}} 
\newcommand \RR 	{\mathbb R}

\newcommand \eps 	{\epsilon}

\newcommand \supp 	{\operatorname{supp}}

\newcommand \Ocal {\mathcal O}

\newcommand \Rcal {\mathcal{R}}

\newcommand \coloneqq {\mathrel{\mathop :}\mathrel{\mkern-1.2mu}=}

\makeatletter
\newcommand*\smallbullet{\mathpalette\smallbullet@{.5}}
\newcommand*\smallbullet@[2]{\mathbin{\vcenter{\hbox{\scalebox{#2}{$\m@th#1\bullet$}}}}}
\makeatother

\newcommand \Mbf{M}

\newcommand \Fcomp {F}
\newcommand \Fvee {F^\vee}

\newcommand \Numb {N}
\newcommand \hNumb {\widehat{N}}
\newcommand \vNumb {{\mathbb N}}

\newcommand \funmu {\underline{\smash{\ensuremath{\mu}}}}

\newcommand{\meff}[1]{\mu_{#1}}

\newcommand{\Neff}[1]{N_{#1}}

\DeclareFontFamily{U}{mathx}{}
\DeclareFontShape{U}{mathx}{m}{n}{<-> mathx10}{}
\DeclareSymbolFont{mathx}{U}{mathx}{m}{n}
\DeclareMathAccent{\widehat}{0}{mathx}{"70}
\DeclareMathAccent{\widecheck}{0}{mathx}{"71}
\newcommand \Rbb {\mathbb R}
\DeclareMathOperator\artanh{artanh}

\newcommand{\Jac}{\mathbf{Jac}}
\newcommand{\yqu}{a}
\newcommand{\entropygenerator}{\vartheta}

\newcommand{\auxeps}{{\delta_1}}
\newcommand{\kernelpair}[1]{\ifstrempty{#1}{(\alpha_\chi,\beta_\chi)}{(\alpha_{\chi_{#1}},\beta_{\chi_{#1}})}}

\titleformat{\subsubsection}[runin]{\normalfont\bfseries}{\thesubsubsection}{.5em}{}[\ifnum\spacefactor<1001.\fi\ \ ]
\titlespacing{\section}{0pt}{2.4ex plus .6ex minus .2ex}{1.4ex plus .2ex}
\titlespacing{\subsection}{0pt}{2ex plus .5ex minus .2ex}{.8ex plus .2ex}
\titlespacing{\subsubsection}{0pt}{1.2ex plus .1ex minus .1ex}{0pt}
\makeatletter
\newcommand{\adjustdisplayspacing}{\abovedisplayskip=6pt plus2pt minus3pt\belowdisplayskip=6pt plus2pt minus3pt\belowdisplayshortskip=4pt plus2pt minus2pt}
\g@addto@macro\normalsize{\adjustdisplayspacing}
\g@addto@macro\small{\adjustdisplayspacing}
\AtBeginDocument{\normalsize}
\makeatother

\begin{document}

\title{\Large\bfseries An infinite family of conservation laws\\ for perfect compressible fluids}

\author{Bruno Le Floch\thanks{Laboratoire de Physique Théorique et Hautes Énergies \& Centre National de la Recherche Scientifique, Sorbonne Universit\'e, 4 Place Jussieu, 75252 Paris, France. Email: {\tt bruno@le-floch.fr}.}
\, and Philippe G. LeFloch\thanks{Laboratoire Jacques-Louis Lions \& Centre National de la Recherche Scientifique, Sorbonne Universit\'e,  4 Place Jussieu, 75252 Paris, France. Email: {\tt contact@philippelefloch.org}. \newline \emph{2020 Mathematics Subject Classification.} 35L65, 76Y05. {\it Keywords and phrases.} Relativistic Euler system; conservation law; spacetime quasi-current; vacuum state; general pressure law. \hfill October 2026.
}}
\date{}
\maketitle

\begin{center}
{\scshape \small Dedicated to Constantine M. Dafermos\\ on the occasion of his 85th birthday}
\vspace{\baselineskip}
\end{center}

\begin{abstract}
We prove the existence of an \emph{infinite family of conservation laws} and, more generally, balance laws, satisfied by plane-symmetric flows of perfect compressible fluids, allowing all components of the velocity vector to be possibly non-vanishing. We cover \emph{general pressure laws} satisfying solely hyperbolicity and asymptotic conditions. We exhibit an \emph{almost quadratic formulation} of the Euler equations, which we refer to as the $\Jbb$-formulation, and introduce a \emph{reduced Euler system} based on an effective mass-energy density and an effective pressure law which account for transverse velocities. We establish the existence of a fundamental kernel which we use to generate all spacetime (quasi-)currents, also called mathematical entropy pairs. We derive explicit formulas for their density and flux components, together with the source-terms of the corresponding balance laws. Furthermore, we establish regularity and asymptotic estimates covering the \emph{whole range} of physical variables, including vacuum. We prove that the kernel is positive on the interior of its support provided the pressure law satisfies a genuine nonlinearity condition, and we also investigate the \emph{future-convexity} of the spacetime currents. Throughout, the fluid momentum parallel to the orbits of plane-symmetry is kept arbitrary. In turn, this family of spacetime quasi-currents provides a key ingredient in our companion study of torus-symmetric Einstein--Euler flows with weak regularity.
\end{abstract} 
 
{\small

\setcounter{secnumdepth}{2} 
\setcounter{tocdepth}{1}
\makeatletter
\patchcmd{\l@section}{1.0em \@plus\p@}{.12em \@plus .08em}{}{\errmessage{TOC section spacing patch failed}}
\makeatother
\let\bfseries\mdseries
\renewcommand{\contentsname}{{\large\fontseries{bx}\selectfont Content}}
\tableofcontents

}

\

\

\section{Introduction}
\label{section-1}

\subsection{Perfect compressible fluids}
\label{section-1-1}

\subsubsection{The importance of conservation laws}

We study here the relativistic Euler system describing the evolution of a compressible perfect fluid on $(1+n)$-dimensional Minkowski spacetime, with arbitrary spatial dimension $n\geq2$. We assume plane symmetry, which restricts the dependence to the time coordinate and one spatial coordinate, while allowing all $n$ possibly non-vanishing velocity components. We fix a dimensionless inverse speed of light $\eps\in(0,1]$ and consider some density-pressure law $p=p(\mu)$, where $\mu\geq0$ is the mass-energy density divided by the squared speed of light. We denote the Minkowski metric by $g$, its covariant derivative by $\nabla$, and the future-directed unit fluid velocity by $u$, with $g(u,u)=-1$. The stress-energy tensor is\footnote{The time coordinate is rescaled by the speed of light; Section~\ref{section:epsilon-geometric-normalization} gives the conversion to physical coordinates and velocities.}
\bel{eq:intro-T}
T_{\alpha\beta}=(\mu+\eps^2p)u_\alpha u_\beta+\eps^2p g_{\alpha\beta}
\ee
and the Euler equations are the divergence laws
\bel{eq:intro-Euler-cov}
\nabla^\alpha T_{\alpha\beta}=0, \qquad \beta=0,\ldots,n,
\ee
with an implicit summation over $\alpha=0,\dots,n$.
We are interested in ``additional'' conservation laws satisfied by all sufficiently regular solutions to \eqref{eq:intro-Euler-cov} and, specifically, we prove the existence of \emph{infinitely many spacetime currents} for the Euler system obtained by fixing the normalized momentum parallel to the plane-symmetry orbits. We cover the full state space \emph{from vacuum to arbitrarily large density}. We also derive the quasi-balance identity satisfied by an arbitrary current, including terms that account for the variation of fluid momentum components that are parallel to the orbits of symmetry.  Further estimates for the constructed families of currents are postponed to the companion article~\cite{TorusT2-3-asymptotic-2026}. 

Recall that conservation laws and, more generally balance laws, are particularly useful to study weak solutions to the Euler equations, for instance to formulate admissibility conditions (i.e. mathematical entropy inequalities), derive invariant regions, as well as derive compensated compactness properties for exact or approximate solutions. Indeed, divergence laws underlie Tartar's compensated compactness method~\cite{Tartar-1979}, applied to compressible Euler flows in plane-symmetry in a seminal work by DiPerna~\cite{DiPerna-1983a,DiPerna-1983b}. Conservation laws for the Euler equations play a key role in Dafermos's a priori regularity and stability theory~\cite{Dafermos-1979-stability,Dafermos-book}. LeFloch~\cite{LeFloch-curved} proved a weak--strong uniqueness theory for relativistic flows. For non-relativistic flows, Abbrescia and Speck~\cite{AbbresciaSpeck-2025} derived localized integral identities.

\subsubsection{Main purpose}

Our objective in the present paper is the \emph{derivation} and \emph{parametrization} of conservation laws satisfied by regular solutions to the Euler equations, whereas, for their application to the study of sequences of exact or regularized solutions, we refer to the companion papers~\cite{TorusT2-1-2026,TorusT2-2-2026}. Our construction combines Lorentz covariance of the density--flux pair with the analysis of a current equation, namely a second-order linear wave equation in (density and velocity) state space which is \emph{singular at vacuum} and enjoys very different asymptotic properties in the small- and large-density regimes. Freezing the (suitably normalized) momentum parallel to the symmetry orbits yields an \emph{effective pressure law}, which also satisfies natural hyperbolicity and asymptotic properties. 

To proceed, we work with the causal fluid momentum $\Jbb$ introduced in~\cite{TorusT2-1-2026}, which ranges over a closed future causal cone and remains meaningful at vacuum. In a \emph{fixed frame}, we denote the time component and the spatial component orthogonal to the symmetry orbits of a current by $(F^{0},F^{1})$, and their values in the \emph{rest frame of the fluid motion} orthogonal to the symmetry orbits by $(\alpha_{F},\beta_{F})$; they are related by a local Lorentz boost by the fluid velocity. We proceed to study current components in the rest frame, expressed in terms of the mass-energy density and the fluid rapidity orthogonal to the symmetry orbits. The equation satisfied by currents is linear hyperbolic and its coefficients only depend on the mass-energy. The equation admits a fundamental kernel, with two components, which is used to represent by convolution every spacetime current whose rest-frame components vanish at vacuum and whose \emph{vacuum profile} (namely the leading term in an expansion near the vacuum) is a prescribed continuous function. Continuous profiles give continuous currents, while suitable growth conditions on the vacuum profile imply finite-energy bounds and some control towards the boundary of state space (as we explain in Section~\ref{section-9-2}, below).

\subsubsection{The notion of spacetime quasi-currents}

In addition to deriving conservation laws, we also derive balance laws satisfied by plane-symmetric flows depending on time $t$ and one spatial variable $x$, with all $n$ velocity components being possibly non-zero. Away from vacuum, a suitably \emph{normalized momentum} parallel to the symmetry orbits is defined and denoted by \(\Jhatpar=(\widehat J_2,\ldots,\widehat J_n)\in\Rbb^{n-1};\) cf.~its definition in~\eqref{eq:def-Jhat}, below. At vacuum, a representative of $\Jhatpar$ must be prescribed separately, since it is not determined by the standard fluid momentum $\Jbb$. We observe that each component $\widehat J_m$ is \emph{advected by the fluid motion} orthogonal to the symmetry orbits. Furthermore, when $\Jhatpar$ is frozen, the remaining Euler equations form a \text{reduced relativistic Euler system} in one spatial dimension which is defined from an \emph{effective density} and an \emph{effective pressure}. More generally, when $\Jhatpar$ is allowed to vary, a current $\Fcomp=(\Fcomp^0,\Fcomp^1)$ of the frozen system satisfies the \emph{balance law} 
\bel{eq:intro-ft-quasi-balance}
\del_t\Fcomp^0+\del_x\Fcomp^1 =\sum_{m=2}^n\Fvee_m\del_x\widehat J_m, 
\ee
and then we call $\Fcomp$ a \emph{spacetime quasi-current}. Its source factors $\Fvee_m$ come from the dependence of the effective pressure and the vacuum profile upon the parallel momentum. The identity~\eqref{eq:intro-ft-quasi-balance} holds for the full plane-symmetric Euler system and reduces to a \emph{conservation law} for solutions with constant normalized parallel momentum.

\subsection{Main contributions}
\label{section-introduction-kernel-constructions}

Our results in the present paper concern the existence and properties of the (weak, strong) fundamental kernels, which then generate via a convolution representation the whole family of spacetime quasi-currents, including the particle-number and stress-energy currents. We construct the density and flux components together, identify the leading profiles of the fundamental kernels at both characteristic boundaries of their support, and establish estimates on currents covering the full density range.  This only relies on hyperbolicity and asymptotic conditions on the pressure law; we cover every polytropic exponent $\gamma>1$ at vacuum and any isothermal behavior in the large density limit.
We also find the condition required on the pressure for the current kernel to be \emph{future-timelike}, as well as the condition required for the currents themselves to be future-convex. 
The companion article~\cite{TorusT2-3-asymptotic-2026} is devoted to quantitative estimates for derivatives in the parallel momentum and to the non-relativistic limit.  

Earlier constructions of currents for Euler flows include the following results.
\bei
\item 
DiPerna~\cite{DiPerna-1983a,DiPerna-1983b} first observed that the current kernel for non-relativistic fluid flows in one spatial variable admits a closed formula provided the fluid is governed by a polytropic equation of state (namely $p(\mu) = \mu^\gamma$ with $\gamma>1$). 

\item Next, current kernels for non-relativistic fluids governed by a class of pressure laws that includes \emph{superpositions} of polytropic laws were constructed by Chen and LeFloch~\cite{CL1-2000,CL2-2003}. Makino and his collaborators~\cite{Hsu-Makino-2001,Makino-2001} studied analytic perturbations of polytropic fluids.
Chen and Schrecker~\cite{CS-2022} used a perturbation argument to extend~\cite{CL1-2000} to relativistic flows, \emph{provided} the fluid velocity is sufficiently small with respect to the light speed. 
LeFloch and Shelukhin~\cite{LeFlochShelukhin-2005} and LeFloch and Yamazaki~\cite{LeFlochYamazaki-2007}, and later on Schrecker and Schulz~\cite{SchreckerSchulz-2020} studied (asymptotically) isothermal laws. 

\eei
\noindent These works motivate the construction of currents throughout the full range of state variables. In this paper, we present a comprehensive theory of (non)-relativistic fluid flows with \emph{arbitrary, unrestricted fluid velocities} and for \emph{general pressure functions} subject only to hyperbolicity and asymptotic conditions. 

\subsection{Organization of this paper}
\label{section-1-5}

\subsubsection{Fluid equations and reference currents}

This paper is organized as follows. In Section~\ref{section-main-results}, we state the assumptions required on the pressure and present the main results on the fundamental kernel for currents, the representation of quasi-currents by their vacuum profiles, and the criteria for future-convexity, as stated in~Theorems~\ref{theo-main-results} and~\ref{theo-intro-current-representation}, and Corollaries~\ref{cor:main-quasi-current-kernel} and~\ref{theo-intro-convexity-classification}, below. We also explain in this section the role of these currents in the companion study of the Einstein-Euler system.

In Section~\ref{section-2}, we introduce the physical variables and their normalization, the fluid momentum variable, and the spacetime currents associated with general flows. The current balance identity (Proposition~\ref{prop:section-3-current-identity}) provides the starting point for the discussion of future-convexity and mathematical entropy inequalities.

In Section~\ref{section-3}, we derive the plane-symmetric equations with all velocity components and identify the advected momenta parallel to the symmetry orbits. We also describe the transformation of current components under Lorentz boosts and obtain the quasi-balance law, with explicit source factors; cf.~Proposition~\ref{prop:section-4-lorentz-quasi-balance}, below.

In Section~\ref{section-4}, we study the particle-number and energy currents that serve as reference currents throughout this paper. Their convexity properties (cf.~Propositions~\ref{lem-convexM} and~\ref{lem-convexN}) lead to the two mathematical formulations of weak solutions proposed in~\cite{TorusT2-1-2026} (in the context of the Einstein-Euler system), based respectively on dissipated energy and particle production.

\subsubsection{The effective Euler system and construction of the kernel}

In Section~\ref{section-5}, we reduce the Euler equations at arbitrary fixed parallel momentum to an effective system with a suitably defined effective density and pressure. We prove that our pressure admissibility assumptions remain valid for the effective system (cf.~Proposition~\ref{prop:fixed-effective-admissibility}) and we then derive the second-order wave equation satisfied by the spacetime currents, in physical and Riemann-invariant variables. The (rather singular) behavior of the coefficients of this equation, both in small- and large-density limits, governs the subsequent analysis of this paper.

In Section~\ref{section-6}, we begin the Fourier analysis of the current equation and establish the existence of the (weak, strong) kernel for any given Fourier parameter (Theorem~\ref{theo:section-7-fourier-branches}). We construct the unique weak branch, as well as the affine family of strong branches selected at vacuum, and we estimate their behavior for all positive density, including oscillatory or exponential behavior at large density.

In Section~\ref{section-fourier-continuation}, we connect these local and asymptotic descriptions and sharpen the estimates uniformly in Fourier frequency. The analysis includes turning points and threshold frequencies, gives the bounds needed for Fourier inversion (cf.~Proposition~\ref{cor:kernel-Sobolev-regularity}), and specifies a strong representative with controlled growth (cf.~Proposition~\ref{prop:section-8-selected-strong-branch}).

In Section~\ref{section-7}, we are in a position to return to the physical state variables by inverse Fourier transformation. We prove finite propagation and determine the leading behavior along the characteristic boundary of the support of the kernel, and we then establish positivity and comparison properties under appropriate pressure conditions; cf.~Propositions~\ref{prop:distributional-leading-trace} and~\ref{theo:section-8-causal-krl-current}. These properties will control both the size and the causal character of currents. 

\subsubsection{Representation of currents and convexity criteria}

We continue in Section~\ref{section-flux-kernels} by constructing the flux kernel associated with each density kernel and reconstructing the resulting spacetime vector. In addition, we also deal with the strong current kernel (cf.~Proposition~\ref{prop:strong-entropy-flux-kernel}) and identify the characteristic profiles of both components, while establishing global bounds and regularity properties at vacuum.

In Section~\ref{section-9}, we prove that weak currents are represented by convolution with prescribed vacuum profiles; cf.~Proposition~\ref{thm:param-weak-entropy-currents}. We explain the covariance of this representation formula, and we identify the vacuum profiles corresponding to the particle-number, energy, and momentum currents. We also give a growth condition ensuring that the resulting currents are globally bounded by the energy, a property that is particularly useful in the applications. 

In Section~\ref{section-10}, we compute the Hessian of a current in the two systems of conservative variables of interest, to find criteria for future-convexity of the current. The vacuum limit gives necessary conditions on the profile, while kernel positivity and the corresponding pressure hypotheses prove sufficiency at positive density; cf.~Proposition~\ref{theo:section-8-pressure-independent-convexity}. Returning to the definition of the effective pressure, we arrive at the relevant criteria associated with quasi-currents at each fixed parallel momentum.

Finally, in Appendix~\ref{appendix:AB-from-ab} we display some calculations in  Riemann-invariant variables. In Appendix~\ref{appendix-olver}, we state and prove the Liouville--Green estimates used for the estimates at fixed Fourier parameter, and in Appendix~\ref{appendix-pressure-endpoints} we derive some consequences of our assumptions on the pressure at vacuum and at infinity.

\section{Main results and applications}
\label{section-main-results}

\subsection{Equations of state}
\label{section-1-3}

\subsubsection{Hyperbolicity and asymptotics conditions}

We state here the main results established in this paper. 
Throughout, the dimensionless inverse speed of light $\eps\in(0,1]$ is fixed. We denote the mass-energy density divided by the squared speed of light by $\mu\geq0$ and the pressure by $p=p(\mu)$.

\bei 

\item {\bf Hyperbolicity.}  We assume that $p\in C^3((0,+\infty))\cap C^1([0,+\infty))$ satisfies
\bel{hyperbolic-eos-1}
p(0)=0, \qquad 0<p'(\mu)<\eps^{-2}\quad(\mu>0), \qquad 0\leq p'(0)<\eps^{-2},
\ee
where the derivative $p'(\mu)$ represents the square of the sound speed.

\item {\bf Strictly subluminal.} We also impose the large-density condition
\bel{hyperbolic-eos-2}
0<\liminf_{\mu\to+\infty}\frac{p(\mu)}{\mu} \leq\sup_{\mu>0}\frac{p(\mu)}{\mu}<\eps^{-2}.
\ee
\eei 

\bei
\item {\bf Vacuum behavior.} There exist $\gamma>1$, $\kappa_0>0$, and $\mu_0>0$ such that\footnote{The remainders \(r_-\) and \(r_+\) may depend on the fixed parameter \(\eps\); we suppress this dependence in their notation.}
\bse\label{equa-cond-small}
\bel{eq:intro-vacuum-B}
\frac{\mu^2p'(\mu)}{(\mu+\eps^2 p(\mu))^2}=\kappa_0\mu^{\gamma-1}(1+r_-(\mu)), \qquad 0<\mu\leq\mu_0,
\ee
where $r_-(\mu)\to0$ as $\mu\to0$ and
\bel{eq:intro-vacuum-r}
\int_0^{\mu_0}\Bigl(|r_-'(\mu)|+|\mu r_-''(\mu)|\Bigr)\,d\mu<+\infty.
\ee
\ese
\item {\bf Large-density behavior.} There exist $k_\infty\in(0,\eps^{-1})$, $\mu_1>0$, and $r_+\in C^2([\mu_1,+\infty))$ such that
\bse
\label{equa-cond-large}
\bel{eq:intro-infty-B}
\frac{\mu^2p'(\mu)}{(\mu+\eps^2 p(\mu))^2}=\frac{k_\infty^2}{(1+\eps^2k_\infty^2)^2}(1+r_+(\mu)), \qquad \mu\geq\mu_1,
\ee
where $r_+(\mu)\to0$ as $\mu\to+\infty$ and
\bel{eq:intro-infty-r}
\int_{\mu_1}^{+\infty}\Bigl(|r_+'(\mu)|+|\mu r_+''(\mu)|\Bigr)\,d\mu<+\infty.
\ee
\ese
\eei
\noindent A key coefficient of the current equation will be
\bel{eq:intro-def-B}
B(\mu)\coloneqq\frac{\eps^2\mu^2p'(\mu)}{(\mu+\eps^2p(\mu))^2}
\ee
which asymptotes to $\eps^2\kappa_0\mu^{\gamma-1}$ and $\eps^2 k_\infty^2/(1+\eps^2k_\infty^2)^2$ at vacuum and large density, respectively, under the asymptotic conditions above.
Proposition~\ref{prop:pressure-endpoint-asymptotics} deduces the asymptotics of the pressure law, namely $p(\mu)=\kappa_0\gamma^{-1}\mu^\gamma(1+o(1))$ at vacuum and $p(\mu)=k_\infty^2\mu(1+o(1))$ at large density.

\begin{definition}
\label{def-pression}
A pressure law $p=p(\mu)$ is called an \emph{admissible equation of state} if it satisfies the hyperbolicity conditions~\eqref{hyperbolic-eos-1}--\eqref{hyperbolic-eos-2}, and the asymptotic conditions~\eqref{equa-cond-small}--\eqref{equa-cond-large}.
\end{definition}

In addition, for some of our results we will also assume one of the following additional conditions on the pressure.
\bei

\item {\bf Genuine nonlinearity conditions.}
The particle-production convexity condition is defined as 
\be
\label{hyperbolic-eos-3}
G_{\rm N}(\mu)\coloneqq \eps^2\bigl((\mu+\eps^2p(\mu))p''(\mu) +2p'(\mu)(1-\eps^2p'(\mu))\bigr)>0 \qquad(\mu>0).
\ee
The mass-energy convexity condition is defined as 
\be
\label{eq:intro-mass-energy-convexity-condition}
G_{\rm E}(\mu)\coloneqq \eps^2\bigl((\mu+\eps^2p(\mu))p''(\mu) +2p'(\mu)\bigl(1-\eps\sqrt{p'(\mu)}\bigr)^2\bigr)\geq0 \qquad(\mu>0).
\ee
These additional signs are used for the properties specified below; admissibility suffices for the kernels.\footnote{Observe that $G_{\rm N}=G_{\rm E}+4c^3(1-c)$, hence \(G_{\rm E}\geq0\) implies \(G_{\rm N}>0\).}

We write $G_{\rm N}[q]$ and $G_{\rm E}[q]$ when a pressure law $q$ replaces $p$; thus $G_{\rm N}=G_{\rm N}[p]$ and $G_{\rm E}=G_{\rm E}[p]$.
\eei

\subsection{Main results}
\label{section-1-2}

\subsubsection{Spacetime current kernels}

We work on Minkowski spacetime of signature $(-,+,\ldots,+)$, in spatial dimension $n\geq2$, and define $h(\mu)\coloneqq\mu+\eps^2p(\mu)$.  Sections~\ref{section-2} to~\ref{section-5} give the fluid variables and symmetry reduction.
The variable $v$ denotes the \emph{fluid rapidity}, defined by $\tanh v=u^1/u^0$ for the future-directed unit fluid velocity $u$. We use $k(\mu)$ for the \emph{acoustic distance} from vacuum, defined by $k'(\mu)=\eps\sqrt{p'(\mu)}/h(\mu)$ and $k(0)=0$. As we shall see, the interval $|v-s|\leq k(\mu)$ is the support region of a kernel centered at $v=s$ for some rapidity~$s$, and $\lambda\coloneqq(3-\gamma)/(2(\gamma-1))$ is the exponent of its leading boundary profile.

We denote the vector kernel by $\chi=\chi(\mu,v;s)$ and its laboratory components as $\chi^0,\chi^1$. Its scalar coefficients in the frame of the fluid motion orthogonal to the symmetry orbits are $\alpha_\chi,\beta_\chi$, so that $\chi=\chi^0e_0+\chi^1e_1=\alpha_\chi\overline u+\beta_\chi\overline u^\perp$. Here $(e_0,e_1)$ is the laboratory orthonormal basis, $\overline u\coloneqq\cosh(v)e_0+\sinh(v)e_1$, and $\overline u^\perp\coloneqq\sinh(v)e_0+\cosh(v)e_1$. Because coefficients of the current equation only depend on~$\mu$, the fundamental kernel only depends on the difference $v-s$, so it suffices to consider the kernel with source at zero rapidity $s=0$ at vacuum. A \emph{weak kernel} has both coefficients zero at vacuum and $\del_\mu\alpha_\chi=\delta_0$ there, with $\delta_0$ the unit Dirac mass at zero rapidity; a \emph{strong kernel} instead has $\alpha_\chi=\delta_0$ at vacuum. A representative of the strong family is selected by Cauchy data at positive density in Section~\ref{section-strong-kernel}, below. We use a hat for the Fourier transform in rapidity, with convention $\widehat f(\xi)=\int_\Rbb e^{-i\xi v}f(v)\,dv$, and $H^\ell=H^\ell(\Rbb)$ for the usual $L^2$ Sobolev space. All vacuum traces are understood in the sense of distributions, and constants may depend on the given pressure function, on $\eps$, and on the stated Sobolev order, but not on density or frequency.

\begin{theorem}[Fundamental kernel of spacetime currents]
\label{theo-main-results}
Let $p=p(\mu)$ be an admissible equation of state in the sense of Definition~\ref{def-pression}. There exists a unique weak current kernel $\chi=\chi(\mu,v)$ satisfying the following properties.
\bei
\item The kernel is defined for every mass-energy density $\mu\in[0,+\infty)$ and every fluid rapidity $v\in\Rbb$, is supported in the acoustic cone $|v|\leq k(\mu)$, and is selected by the vacuum normalization
\bel{eq:intro-kernel-normalization}
\alpha_{\chi}|_{\mu=0}=0, \qquad \del_\mu\alpha_{\chi}|_{\mu=0}=\delta_{v=0}, \qquad \beta_{\chi}|_{\mu=0}=0.
\ee
The scalar coefficients reconstruct the spacetime vector $\chi$ in the fluid frame and solve, at positive density, the first-order system
\be
\label{eq:intro-current-kernel-system}
(\mu+\eps^{2}p)\del_\mu\alpha_{\chi}=\alpha_{\chi}+\del_v\beta_{\chi},\qquad (\mu+\eps^{2}p)\del_\mu\beta_{\chi}=\eps^{2}p'(\beta_{\chi}+\del_v\alpha_{\chi}).
\ee
\item The Fourier transforms of $\alpha_\chi$ and $\beta_\chi$ admit global bounds covering the vacuum, intermediate-density, and large-density regimes, including two critical frequencies at which the limiting large-density Fourier equation changes between asymptotically oscillatory and asymptotically exponential behavior. For $\mu>0$ and any Fourier frequency $\xi\in\Rbb$, 
after setting 
$k_{\min}(\mu)\coloneqq\min\{k(\mu),1\}$, the Fourier transforms satisfy (for a constant $C(\eps)>0$  depending also on the pressure law)
\be
\label{eq:intro-current-Fourier-decay}
|\widehat{\alpha_{\chi}}(\mu,\xi)|+|\widehat{\beta_{\chi}}(\mu,\xi)| \leq \frac{C(\eps)\mu}{\bigl(1+ k_{\min}(\mu) |\xi|\bigr)^{\lambda+1}},
\ee
while, for every regularity exponent $0\leq\ell<\lambda+1/2$, the corresponding Sobolev estimate at each density is
\bel{eq:intro-kernel-Sobolev-bound}
\|\alpha_{\chi}(\mu,\cdot)\|_{H^\ell} +\|\beta_{\chi}(\mu,\cdot)\|_{H^\ell} \leq C_\ell\mu \, k_{\min}(\mu)^{-\ell-1/2}.
\ee
At large density, the weight \(\mu\) improves to \(\mu^{1/(1+\eps^2k_\infty^2)+\delta}\) for every \(\delta>0\), and to \(\sqrt\mu\) on each supercritical frequency range separated from the thresholds; the precise frequency-dependent bounds are given in Proposition~\ref{prop:fixed-current-bounds}, below. The Fourier bound~\eqref{eq:intro-current-Fourier-decay} also gives continuity of both components of $\chi$ in $H^{-1}$ up to vacuum, with value zero.

\item Both kernel components have \emph{explicit leading terms} near the boundaries of their support $v=\pm k(\mu)$: after rescaling the distance to either boundary by a factor $r>0$ and dividing the errors by $r^\lambda$, the \emph{errors tend to zero} in $L^2$ norm on every bounded interval of the rescaled variable as $r\to 0$.

\item Furthermore, if the genuine nonlinearity condition~\eqref{hyperbolic-eos-3} holds, then $\alpha_{\chi}$ and $\eps\sqrt{p'}\,\alpha_{\chi}\pm\beta_{\chi}$ are non-negative measures, and the current vector $\chi$ is future timelike in the interior of the acoustic cone.
\eei
\end{theorem}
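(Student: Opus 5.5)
The proof goes through the Fourier transform in rapidity. Since the coefficients of \eqref{eq:intro-current-kernel-system} depend only on $\mu$, transforming in $v$ turns the system into a linear ODE system in $\mu$ with parameter $\xi$:
\[
h\,\del_\mu\widehat{\alpha}=\widehat{\alpha}+i\xi\widehat{\beta},\qquad h\,\del_\mu\widehat{\beta}=\eps^2p'\bigl(\widehat{\beta}+i\xi\widehat{\alpha}\bigr).
\]
Eliminating $\widehat\beta$ gives a second-order equation for $\widehat\alpha$. After the change of variable to the acoustic distance $k(\mu)$, its principal part is governed by the coefficient $B(\mu)$ of \eqref{eq:intro-def-B}. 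Near vacuum, the asymptotics \eqref{equa-cond-small} make $\mu=0$ a regular singular point. The indicial roots correspond to the weak solution, $\widehat\alpha\sim\mu$, and the strong solution, $\widehat\alpha\sim 1$. The weak branch is then fixed uniquely by $\widehat\alpha(0)=0$, $\del_\mu\widehat\alpha(0)=1$ and $\widehat\beta(0)=0$, which are the Fourier transforms of \eqref{eq:intro-kernel-normalization}. Existence and uniqueness at each $\xi$ come from a Frobenius-type Volterra integral equation near $\mu=0$, continued by standard linear ODE theory to all $\mu>0$. This gives uniqueness of $\chi$ as a tempered distribution in $v$, and it is essentially Theorem~\ref{theo:section-7-fourier-branches}.

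For the bounds \eqref{eq:intro-current-Fourier-decay}, I would split the $(\mu,\xi)$ plane into regimes.
\begin{itemize}
\item \emph{Low frequency, $k(\mu)|\xi|\lesssim1$:} a Gronwall argument on the integral equation gives $|\widehat\alpha|+|\widehat\beta|\lesssim\mu$.
\item \emph{High frequency near vacuum:} the equation is a Bessel-type equation in $k|\xi|$ of order tied to $\lambda$. Applying the Liouville--Green estimates of Appendix~\ref{appendix-olver}, with error-control functions made finite by the integrability conditions \eqref{eq:intro-vacuum-r}, gives decay $(k|\xi|)^{-\lambda-1}$ times $\mu$.
\item \emph{Large density:} \eqref{equa-cond-large} with \eqref{eq:intro-infty-r} turns the equation into an asymptotically constant-coefficient one in $\log\mu$. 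Its characteristic exponents switch from real to complex at two threshold frequencies, and there $\widehat\alpha$ behaves like $\mu^{\rho(\xi)}$. A second Liouville--Green analysis, patched with uniform turning-point estimates near the thresholds, yields the stated improved weights. This is Proposition~\ref{prop:fixed-current-bounds}.
\end{itemize}
The Sobolev bound \eqref{eq:intro-kernel-Sobolev-bound} then follows from Plancherel by integrating $(1+|\xi|^2)^\ell$ against the square of \eqref{eq:intro-current-Fourier-decay}. The integral converges exactly when $2\ell-2\lambda-2<-1$, and rescaling $\xi\mapsto\xi/k_{\min}$ produces the factor $k_{\min}^{-\ell-1/2}$. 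The bound also gives $H^{-1}$ continuity with limit $0$ at vacuum. I expect this uniform-in-$\xi$ patching, especially the turning points, to be the main obstacle.

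Support in $|v|\leq k(\mu)$ comes from a Paley--Wiener argument. The Liouville--Green representations show that $\widehat\alpha$ and $\widehat\beta$ extend to entire functions of exponential type $k(\mu)$ in $\xi$ with polynomial bounds. Alternatively, finite propagation speed for the hyperbolic system, whose characteristic speeds are $dv/d\mu=\pm k'(\mu)$, gives the same conclusion. The leading boundary profiles come from the dominant high-frequency term $e^{\pm i\xi k(\mu)}(\xi k)^{-\lambda-1}$. Its inverse transform is a constant times $(k\mp v)_+^{\lambda}$ near $v=\pm k(\mu)$. The error is $o(|\xi|^{-\lambda-1})$, so after rescaling by $r$ and dividing by $r^\lambda$ its local $L^2$ norm tends to zero.

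For positivity, I would rewrite the system in Riemann-invariant form along the characteristics $v\pm k(\mu)$. The quantities $\eps\sqrt{p'}\,\alpha\pm\beta$ then satisfy a coupled transport system whose coupling coefficients are nonnegative precisely when $G_{\rm N}>0$, using \eqref{hyperbolic-eos-3}. Starting from the nonnegative vacuum data $\delta_0$ for $\del_\mu\alpha$, a monotone-iteration or maximum-principle argument on mollified data propagates nonnegativity. Passing to the limit gives nonnegative measures. The same argument yields $\alpha\geq0$, and $|\beta|<\alpha$ in the open cone, using strict positivity from the propagating Dirac mass. This makes $\chi$ future timelike there.
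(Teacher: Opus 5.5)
Your overall plan matches the paper's: Fourier transform in $v$, a Volterra construction of the weak branch at vacuum, Bessel structure in $y=k(\mu)$, Plancherel for Sobolev bounds, finite propagation for the support, and a characteristic system whose off-diagonal coupling is non-negative iff $G_{\rm N}>0$. The gap is that the step you flag as the main obstacle is assigned to tools that cannot give uniformity in $\xi$.

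\emph{Near vacuum.} The equation $\del_z^2X+VX=0$ has the turning point $(1+\xi^2)B(\mu)=\tfrac14$, which moves into the vacuum region as $|\xi|\to\infty$. The half-line Liouville--Green estimates of Appendix~\ref{appendix-olver} are therefore only fixed-frequency statements there. The uniform decay $(1+k|\xi|)^{-\lambda-1}$ comes from comparing with the Bessel functions themselves: a Volterra equation in $y=k(\mu)$ with Green kernel built from $\sqrt y\,J_\nu$, $\sqrt y\,Y_\nu$ and weight $M_\xi$, closed by $\int_0 y|\mathcal R(y)|\,dy<\infty$ (Proposition~\ref{prop:Bessel-one-term-bound}).

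\emph{At large density.} ``Uniform turning-point estimates near the thresholds'' are not available under the hypotheses. At $\xi=\pm\xi_\infty$ the degeneracy sits at $z=+\infty$, no rate is imposed on $r_+$, and $V(\cdot,\xi)$ may have infinitely many zeros accumulating at infinity, so neither Liouville--Green error control nor Airy matching is uniform. For the same reason $\widehat{\alpha_\chi}$ need not behave like $\mu^{\rho(\xi)}$. For example, $r_+=1/\log\mu$ satisfies \eqref{eq:intro-infty-r} but already produces powers of $\log\mu$ at $\xi=0$, where $\widehat{\alpha_\chi}=\Numb$.

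The missing idea is to replace these asymptotics by energy estimates. The energy $H=|X_z|^2+((1+\xi^2)B+\tfrac14)|X|^2$, together with $\int|\del_zB|/B\,dz<\infty$, gives the weight $\mu$ at every frequency, thresholds included (Proposition~\ref{prop:global-density-Fourier-bound}). The energy $|X_z|^2+(V+2\kappa_\xi^2)|X|^2$ with $\kappa_\xi=\sqrt{(-V_\infty(\xi))_+}+\delta$ gives the improved weights uniformly (Lemma~\ref{lem:isothermal-growth-frequency-regimes}). This is exactly why the statement carries the loss $+\delta$.

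Three further steps need more than you indicate.

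\emph{Boundary profiles.} A Fourier error $o(|\xi|^{-\lambda-1})$ does not by itself make $r^{-\lambda}\mathcal R_{\alpha_\chi}(\mu,\pm(k-r\,\cdot))$ small in local $L^2$. After rescaling, its transform is only bounded by $C(r+|\eta|)^{-\lambda-1}$, which is not uniformly square integrable near $\eta=0$. Moreover, the term from the opposite boundary oscillates like $e^{-2ik\eta/r}$ and tends to zero only weakly. The paper applies $\del_q^m$ with $m>\lambda+1$ and obtains $\mathcal S'$ convergence via dominated convergence and Riemann--Lebesgue. It then integrates back using the one-sided support, and upgrades to strong $L^2_{\rm loc}$ convergence through uniform $H^\ell(-Q,Q)$ bounds, $\ell<\lambda+1/2$, and compactness. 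For $\beta_\chi$ it uses the differentiated Bessel remainder \eqref{eq:Bessel-derivative-remainder}.

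\emph{Positivity.} The data $\delta_0$ sit on $\del_\mu\alpha_\chi$, while $c\alpha_\chi\pm\beta_\chi$ vanish at vacuum. The characteristic system must therefore be written for $R_\mp=(c\alpha_\chi\pm\beta_\chi)/(2c\Numb)$, whose data at $y=0$ is $\tfrac12\delta_0$. The switching rate $\Lambda_{\rm c}\sim(\lambda+1)/y$ is not integrable at $y=0$. So one truncates it, passes to the limit using $\int(R_++R_-)\,dv=1$ and compactness, and identifies the limit with the Fourier kernel by frequency-wise ODE uniqueness. Strict interior positivity comes from the one-switch term with $\Lambda_{\rm c}>0$, not from a propagating Dirac mass, whose weight $\exp(-\int_0^y\Lambda_{\rm c})$ is actually zero.

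\emph{Support.} Your Paley--Wiener alternative would need complex-frequency bounds $C(1+|\zeta|)^Ne^{k(\mu)|{\rm Im}\,\zeta|}$, which real-axis Liouville--Green representations do not provide. The finite-propagation route works, but it requires regularizing $B$ near vacuum as in Lemma~\ref{prop:kernel-positive-finite-speed-prelim}.
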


In addition, the current equations also admit a \emph{strong spacetime kernel} which instead satisfies  the vacuum condition $\alpha_\chi=\delta_0$ at vacuum. 
To include arbitrary motion parallel to the symmetry orbits, we use the particle density $\Numb$, characterized by $\Numb'/\Numb=1/h$ and $\Numb(\mu)/\mu\to1$ at vacuum, and the normalized parallel momenta defined at positive density to be $\widehat J_j=2h(\mu)u_j/\Numb(\mu)$ for $j=2,\ldots,n$, where $u_j$ are the spatial components of the unit fluid velocity. Their values on the vacuum set are prescribed separately. The effective pressure introduced in the following corollary expresses the same pressure value as a function of a new density variable.

\begin{corollary}[Fundamental kernel of spacetime quasi-currents]
\label{cor:main-quasi-current-kernel}
Let $p$ be an admissible equation of state in the sense of Definition~\ref{def-pression}. For every fixed normalized parallel momentum $\Jhatpar=(\widehat J_2,\ldots,\widehat J_n)$, with $\varpi\coloneqq|\Jhatpar|^2$, consider the effective density and pressure defined by
\be
m=\meff{\varpi}(\mu)\coloneqq\mu+\frac{\varpi\Numb(\mu)^2}{4h(\mu)},\qquad p_\varpi(m)\coloneqq p(\mu).
\ee
Then, this pressure law is also admissible, and all conclusions of Theorem~\ref{theo-main-results} apply with $m$ as the density and $p_\varpi$ as the pressure. The support of the kernel is given by the acoustic distance defined by $k_\varpi'(m)=\eps\sqrt{p_\varpi'(m)}/(m+\eps^2p_\varpi(m))$, and the genuine nonlinearity conditions $G_{\rm N}[p_\varpi]>0$ and $G_{\rm N}[p]>0$ are equivalent. Writing the resulting coefficients as $\alpha_{\chi_{\varpi}},\beta_{\chi_{\varpi}}$, the kernel on the original state space is
\be
\label{eq:main-parallel-momentum-kernel}
\chi(\mu,v;s,\Jhatpar)=\alpha_{\chi_{\varpi}}(m,v-s)\overline u+\beta_{\chi_{\varpi}}(m,v-s)\overline u^\perp.
\ee
The vacuum normalization for $\alpha_{\chi_{\varpi}}$ is taken with respect to the variable $m$; after composition with $m=\meff{\varpi}(\mu)$, its $\mu$-derivative at vacuum is $(1+\varpi/4)\delta_0$. The momenta are regarded as arbitrary fixed parameters in these estimates, whereas the constants may depend on them.
\end{corollary}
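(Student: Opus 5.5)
Our plan is to reduce the corollary to Theorem~\ref{theo-main-results} via the reduced Euler system of Section~\ref{section-5}. The work splits into three parts: (i) verify that $p_\varpi$ is admissible; (ii) show that the kernel system in the variable $m$ pulls back to the current equation for the frozen system on the original state space; (iii) compare the genuine nonlinearity conditions and track the vacuum normalization.

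For (i), we first check that $m=\meff{\varpi}(\mu)$ is a $C^3$ diffeomorphism of $[0,+\infty)$ with $m(0)=0$. Using $\Numb'/\Numb=1/h$ we compute
\[
\frac{d}{d\mu}\Bigl(\frac{\Numb^2}{4h}\Bigr)=\frac{\Numb^2}{4h^2}\bigl(2-h'\bigr)=\frac{\Numb^2}{4h^2}\bigl(1-\eps^2p'\bigr)>0 .
\]
Hence $dm/d\mu>0$, and $p_\varpi(m)$ is well defined, $C^3$ at positive density, and $C^1$ up to $0$. The key structural identity is that $h_\varpi\coloneqq m+\eps^2p_\varpi$ satisfies $h_\varpi=h+\varpi\Numb^2/(4h)$. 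We expect the combination $\eps\sqrt{p_\varpi'}/h_\varpi\,dm$ to coincide with the characteristic speed structure of the reduced one-dimensional system, which is what makes the reduction consistent. We then verify each condition in turn.
\begin{itemize}
\item The bound $0<p_\varpi'=p'/(dm/d\mu)$ is immediate. The strict subluminal bounds follow because $dm/d\mu\geq 1$ up to the factor computed above, after checking $p_\varpi'<\eps^{-2}$ directly using $h_\varpi\geq h$.
\item At vacuum, $\Numb\sim\mu$ and $h\sim\mu$, so $m=(1+\varpi/4)\mu(1+o(1))$. Consequently $B_\varpi(m)=\eps^2m^2p_\varpi'/h_\varpi^2$ has the form $\kappa_0'm^{\gamma-1}(1+\tilde r_-)$ with the same $\gamma$. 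The integrability of $\tilde r_-',\,m\tilde r_-''$ follows from that of $r_-$ together with the smooth dependence of $m/\mu$ on $\mu$. This expansion also yields the factor $(1+\varpi/4)$ in the vacuum normalization, by the chain rule $\del_\mu=(dm/d\mu)\del_m$ with $dm/d\mu\to1+\varpi/4$.
\item At large density, $p\sim k_\infty^2\mu$ and $\Numb\sim C\mu^{1/(1+\eps^2k_\infty^2)}$, so $\Numb^2/h=o(\mu)$. Thus $m=\mu(1+o(1))$ with controlled derivatives, and the large-density conditions transfer with the same $k_\infty$. We expect this to be the main technical obstacle. We must show that the new remainder $\tilde r_+$ still satisfies $\int(|\tilde r_+'|+|m\tilde r_+''|)<\infty$, which requires $C^2$ control of $\Numb^2/(h\mu)$. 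We would obtain this from $\Numb'/\Numb=1/h$ and the given $r_+$ bounds via Proposition~\ref{prop:pressure-endpoint-asymptotics}, and we would handle it first.
\end{itemize}

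For (ii), Section~\ref{section-5} writes the frozen plane-symmetric system as a relativistic Euler system in the variables $(m,v)$ with pressure $p_\varpi$. The current equation is therefore exactly \eqref{eq:intro-current-kernel-system} in $m$, and the fluid frame $(\overline u,\overline u^\perp)$ is unchanged. Applying Theorem~\ref{theo-main-results} to $p_\varpi$ and translating in rapidity by $s$ (the coefficients are independent of $v$) gives \eqref{eq:main-parallel-momentum-kernel}. The support is the cone $|v-s|\leq k_\varpi(m)$.

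For (iii), we differentiate $p_\varpi'=p'\,d\mu/dm$ once more and substitute into $G_{\rm N}[p_\varpi]$. We aim for an identity of the form $G_{\rm N}[p_\varpi](m)=\Phi(\mu)\,G_{\rm N}[p](\mu)$ with $\Phi>0$; we expect $\Phi$ to be a power of $d\mu/dm$ times $h_\varpi/h$. This is a direct computation using the formula above for $d^2m/d\mu^2$, and it gives the claimed equivalence.
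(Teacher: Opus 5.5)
Your overall route is the paper's. There the corollary is proved by invoking Proposition~\ref{prop:fixed-effective-admissibility} for admissibility of $p_\varpi$, Proposition~\ref{prop:section-6-effective-pressure} for the Euler reduction and the genuine-nonlinearity identity, and Theorem~\ref{theo-main-results} applied to $p_\varpi$, with $\meff{\varpi}'(0)=1+\varpi/4$ giving the vacuum trace. Your steps (ii), (iii) and the chain-rule argument match this.

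The gap is in (i), exactly where you flag the obstacle, and the tool you name cannot close it. To get $\int(|\tilde r_+'|+|m\tilde r_+''|)\,dm<\infty$ you need $A\coloneqq p/\mu$ to satisfy $A'\in L^1$, $\mu A''\in L^1$ and $\mu A'\in L^\infty$ at infinity. The reason is that $A$ enters through $\zeta\coloneqq m/\mu=1+\sigma_\varpi(1+\eps^2A)$, and through $c^2=\eps^2B_{\rm ref}(1+\eps^2A)^2$ with $B_{\rm ref}=\mu^2p'/(\mu+\eps^2p)^2$. Hence it also enters through $D_\varpi$ and $\mu\sigma_\varpi''$, since $\sigma_\varpi'=-2c^2\sigma_\varpi/h$. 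Proposition~\ref{prop:pressure-endpoint-asymptotics} only gives the limits $A\to k_\infty^2$ and $p'\to k_\infty^2$, and limits say nothing about integrability of derivatives.

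The missing step is the one in the proof of Lemma~\ref{lem:epsilon-pressure-admissibility}. With $z=\log\mu$, differentiate $A_z=B_{\rm ref}(e^z)(1+\eps^2A)^2-A$ to get $A_{zz}+\bigl(1-2\eps^2B_{\rm ref}(1+\eps^2A)\bigr)A_z=(1+\eps^2A)^2\del_z\bigl(B_{\rm ref}(e^z)\bigr)$. The damping coefficient tends to $(1-\eps^2k_\infty^2)/(1+\eps^2k_\infty^2)>0$, and the forcing lies in $L^1(dz)$ by~\eqref{eq:intro-infty-r}. Variation of constants then gives $A_z\in L^1\cap L^\infty$, and the equation gives $A_{zz}\in L^1$. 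With this in hand:
\begin{itemize}
\item $\sigma_\varpi\leq C\mu^{-\theta}$, and $\mu|\sigma_\varpi''|\leq C(\sigma_\varpi|(c^2)'|+c^2|\sigma_\varpi'|+c^2\sigma_\varpi/\mu)$ is integrable;
\item the factorization $B_{{\rm ref},\varpi}(m)=B_{\rm ref}(\mu)\,\zeta^2(1+\eps^2A)^2/\bigl(D_\varpi(\zeta+\eps^2A)^2\bigr)$ writes $1+\tilde r_+$ as $(1+r_+)$ times a factor with integrable first and weighted second derivatives;
\item $d^2g/dm^2=g''/D_\varpi^2-g'D_\varpi'/D_\varpi^3$, with $\mu D_\varpi'$ bounded, transfers these bounds to the variable $m$.
\end{itemize}

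There are also some smaller corrections.
\begin{itemize}
\item Near vacuum $m/\mu$ is not smooth: $\sigma_\varpi'=\Ocal(\mu^{\gamma-2})$ is unbounded for $\gamma<2$, and $m$ is in general only $C^1$ at $\mu=0$, not a $C^3$ diffeomorphism of $[0,+\infty)$. What you actually need is $\zeta',\mu\zeta''\in L^1$ near $0$, which does hold.
\item The bound $\sup_m p_\varpi(m)/m<\eps^{-2}$ follows from $m\geq\mu$, i.e.\ $p_\varpi(m)/m\leq p(\mu)/\mu$, not from $h_\varpi\geq h$.
\item Without a rate on $r_+$ you only know $\Numb\leq C\mu^\theta$ with $\theta<1$, not $\Numb\sim C\mu^{1/(1+\eps^2k_\infty^2)}$. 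This still suffices for $\Numb^2/h=o(\mu)$.
\item The identity you are after in (iii) is $G_{\rm N}[p_\varpi](m)=D_\varpi^{-3}(1+\sigma_\varpi)^2G_{\rm N}[p](\mu)$, i.e.\ $\Phi=(d\mu/dm)^3(h_\varpi/h)^2$. Only its positivity matters.
\end{itemize}
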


\begin{proof}
Proposition~\ref{prop:fixed-effective-admissibility}, below, proves admissibility of each fixed effective pressure, and Proposition~\ref{prop:section-6-effective-pressure}, below, gives the Euler reduction and preservation of genuine nonlinearity. We apply Theorem~\ref{theo-main-results} to this pressure. Since $\meff{\varpi}'(0)=1+\varpi/4$, the chain rule gives the stated vacuum trace in the original density.
\end{proof}

The fundamental kernel centered at rapidity $s$ has \emph{leading profile} $[k(\mu)^2-(v-s)^2]_+^\lambda$ near vacuum, where $[\cdot]_+$ is the positive part.  If $1<\gamma<3$ ($\lambda>0$) this profile vanishes at the support boundary, it is a step function if $\gamma=3$ ($\lambda=0$), and has a square-integrable singularity if $\gamma>3$ ($-1/2<\lambda<0$); see Figure~\ref{fig:kernel-exponent-regimes}.
Figure~\ref{fig:kernel-pressure-transition} shows the weak density kernels for admissible pressure laws with several values of~$\gamma$.
Proposition~\ref{prop:distributional-leading-trace}, below, proves local $L^2$ scaling for all $\gamma>1$ and locally uniform scaling for $1<\gamma<3$. At positive density, the Sobolev bound~\eqref{eq:intro-kernel-Sobolev-bound} gives $L^2(\Rbb)$ regularity for both components of the weak current kernel. Their compact support also gives $L^1(\Rbb)$ regularity and absolute convergence of the convolution against every continuous vacuum profile.

The boundary exponent is $\lambda=1$ when $\gamma=5/3$, so the leading profile vanishes linearly there. A fractional power $0<\lambda<1$ occurs for $5/3<\gamma<3$; for example, $\gamma=7/3$ gives $\lambda=1/4$, whereas $\gamma=7/5$ gives $\lambda=2$. For $1<\gamma<3$, the Sobolev bound and one-dimensional Sobolev embedding give H\"older continuity with every positive exponent smaller than $\min\{\lambda,1\}$. The construction of the kernels includes all finite $\gamma>1$.

\begin{figure}[htbp]
\centering
\includegraphics[width=\linewidth]{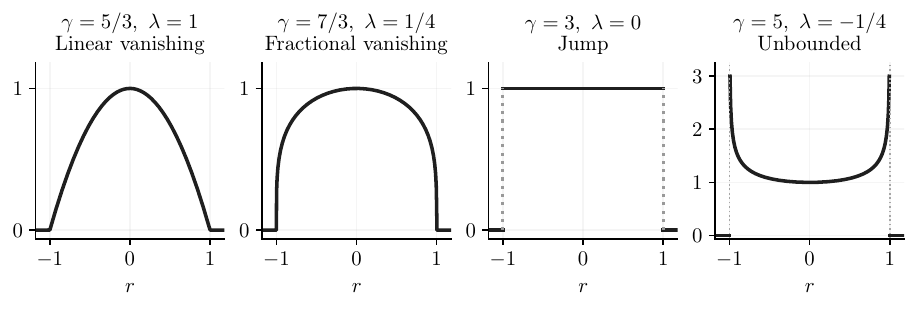}
\caption{Normalized leading boundary profiles $(1-r^2)_+^\lambda$, with $r=(v-s)/k(\mu)$ and value one at $r=0$. The four panels distinguish linear vanishing ($\gamma=5/3$), fractional-power vanishing ($\gamma=7/3$), a jump ($\gamma=3$), and an unbounded profile ($\gamma=5$). These are the leading terms, not the exact global kernels. In the last panel the curve has boundary singularities; for every finite $\gamma>3$, the profile remains in $L^1\cap L^2$.}
\label{fig:kernel-exponent-regimes}
\end{figure}

\begin{figure}[htbp]
\centering
\includegraphics[width=\linewidth]{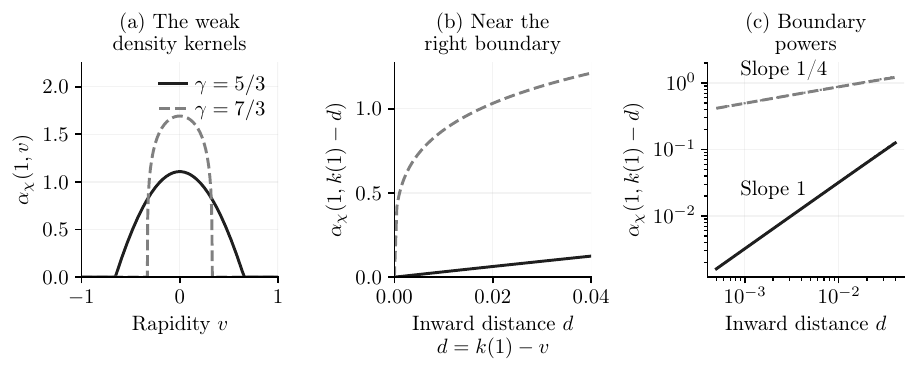}
\caption{The weak density kernel for $p'(\mu)=\mu^{\gamma-1}/(4(1+\mu^{\gamma-1}))$, evaluated at $\mu=1$, and $\eps=1/2$. Solid curves correspond to $\gamma=5/3$ and dashed curves to $\gamma=7/3$. Panel (a) shows the kernels themselves, computed from their Fourier equation. Panel (b) enlarges the right boundary using the inward distance $d=k(1)-v$; the acoustic radius $k(1)$ is that of the corresponding pressure. Panel (c) uses logarithmic axes: the dotted lines are the theoretical leading powers, with coefficients fixed by the vacuum normalization, and the computed curves approach slopes $\lambda=1$ and $\lambda=1/4$.}
\label{fig:kernel-pressure-transition}
\end{figure}

\subsubsection{Representation of currents}

By definition, a vacuum profile is the derivative in the density variable at vacuum of the fluid-frame time component of a spacetime current. For a continuous function $\vartheta$ of the vacuum label $s$, the subscript $\vartheta$ denotes the current and its coefficients obtained by integration against that profile. We write
\be
\label{eq:main-current-vector-convolution}
F_\vartheta\coloneqq\chi*\vartheta=F_\vartheta^0e_0+F_\vartheta^1e_1=\alpha_\vartheta\overline u+\beta_\vartheta\overline u^\perp,\qquad
\ee
in which 
\be
(\chi*\vartheta)(\mu,v)\coloneqq\int_\Rbb\chi(\mu,v;s)\vartheta(s)\,ds.
\ee
The vector integral keeps the state $(\mu,v)$, and hence the fluid frame, fixed. Its two scalar coefficients are ordinary convolutions in $v-s$.

\begin{theorem}[An infinite family of quasi-conservation laws]
\label{theo-intro-current-representation}
Let $p$ be an admissible equation of state in the sense of Definition~\ref{def-pression}, and introduce the notation $\Jhatpar=(\widehat J_2,\ldots,\widehat J_n)$ with $\varpi=|\Jhatpar|^2$ and $m=\meff{\varpi}(\mu)$. 
\bei 

\item For a prescribed profile $\vartheta=\vartheta(s,\Jhatpar)$ continuous in $s$, the formulas
\be
\label{eq:intro-fixed-kernel-representation}
\aligned
\alpha_\vartheta(\mu,v,\Jhatpar) &=\int_\Rbb\alpha_{\chi_{\varpi}}(m,v-s)\vartheta(s,\Jhatpar)\,ds,\qquad
\\
\beta_\vartheta(\mu,v,\Jhatpar)&=\int_\Rbb\beta_{\chi_{\varpi}}(m,v-s)\vartheta(s,\Jhatpar)\,ds,\\
(F_\vartheta^0,F_\vartheta^1)&=\bigl(\cosh v\,\alpha_\vartheta+\sinh v\,\beta_\vartheta,\sinh v\,\alpha_\vartheta+\cosh v\,\beta_\vartheta\bigr)
\endaligned
\ee
represent the (weak) spacetime currents of the reduced Euler system at fixed parallel momentum, characterized by the initial condition that the fluid-frame coefficients vanish at the vacuum and the derivative of the time component, taken in the effective density variable $m$, equals $\vartheta$. 

\item The spacetime current uniquely determines its vacuum profile and, for $\vartheta(\cdot,\Jhatpar)\in H^1_{\loc}(\Rbb)$, is $C^1$ in $(\mu,v)$ for all positive density. For a vacuum profile of class $C^2_{\loc}$ in $s$, it is $C^2$ in these variables and its divergence vanishes along every sufficiently regular solution of the Euler system with $\Jhatpar$ kept constant. 

\item In particular, the vacuum profiles $1$, $\cosh s$, and $\sinh s$ allow one to recover the particle-number, energy, and momentum currents (with the effective normalizations specified in Proposition~\ref{prop:N0-M00-M01-weak-entropies}, below). Whenever the resulting family indexed by~$\Jhatpar$ is $C^1$ in the full state variables, its quasi-balance law is
\be
\label{eq:main-profile-quasi-balance}
\del_tF_\vartheta^0+\del_xF_\vartheta^1
=\sum_{j=2}^n\Fvee_j\del_x\widehat J_j,\qquad
\Fvee_j=\frac1{\cosh v}\del_{\widehat J_j}\beta_\vartheta\big|_{\mu,v}.
\ee
\eei
\end{theorem}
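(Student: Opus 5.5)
We derive all three items from the fundamental kernel of Corollary~\ref{cor:main-quasi-current-kernel}, using the linearity of the first-order system~\eqref{eq:intro-current-kernel-system} and its invariance under translations in $v$.

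\emph{Fixed parallel momentum.} We fix $\Jhatpar$ and work in the effective variable $m$ with the admissible pressure $p_\varpi$. The coefficients of~\eqref{eq:intro-current-kernel-system} depend only on $m$. Hence, if $\chi_\varpi$ solves the system, so does its convolution $(\alpha_{\chi_\varpi}*\vartheta,\beta_{\chi_\varpi}*\vartheta)$ in $v$. The convolution integrals converge absolutely for continuous $\vartheta$, because the kernels are compactly supported in $|v-s|\leq k_\varpi(m)$ and lie in $L^1\cap L^2$ by~\eqref{eq:intro-kernel-Sobolev-bound}.

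The vacuum conditions follow from~\eqref{eq:intro-kernel-normalization} by duality. Near vacuum, $\alpha_{\chi_\varpi}(m,\cdot)/m\to\delta_0$ as distributions, and the support shrinks to $\{0\}$. Pairing with a continuous $\vartheta$ therefore gives $\alpha_\vartheta\to0$ and $\partial_m\alpha_\vartheta|_{m=0}=\vartheta$. For $\beta_\vartheta$ I use~\eqref{eq:intro-current-Fourier-decay}, which gives $\beta_\vartheta=O(m)$, so $\beta_\vartheta$ vanishes at vacuum as well.

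\emph{Uniqueness.} Suppose a weak current has zero vacuum profile. I would Fourier transform in $v$, which reduces the system to a linear ODE in $m$ for each frequency $\xi$. Theorem~\ref{theo:section-7-fourier-branches} asserts that the weak branch is unique, i.e.\ the solution with zero trace and prescribed derivative at vacuum is unique. The difference of two currents with the same profile then vanishes identically. This is the step I expect to be the main obstacle. The vacuum is a singular point of the ODE, so uniqueness holds only within the weak class, where solutions are $o(1)$ at vacuum. I also have to justify taking Fourier transforms of currents with merely continuous, possibly growing, profiles, which I would handle by localizing, using finite propagation in $v$ (the support is the cone $|v-s|\leq k_\varpi(m)$).

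\emph{Regularity and the divergence law.} For $C^1$ regularity I move one $v$-derivative onto $\vartheta\in H^1_{\loc}$, so that $\partial_v\alpha_\vartheta=\alpha_{\chi_\varpi}*\vartheta'$ with $\alpha_{\chi_\varpi}(m,\cdot)\in L^2$, and likewise for $\beta_\vartheta$. The $m$-derivatives are then obtained from the system itself. Continuity of these expressions in $(\mu,v)$ follows from the continuity of $m\mapsto\chi_\varpi(m,\cdot)$ in $L^2$, which the Fourier bounds give. For $C^2$ profiles I repeat this argument once more. Writing the Euler system with $\Jhatpar$ constant in the variables $(m,v)$, as in Section~\ref{section-5}, the system~\eqref{eq:intro-current-kernel-system} is exactly the compatibility condition for a pair $(\alpha,\beta)$ to be a current. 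By the chain rule, $\partial_tF^0+\partial_xF^1=0$ along regular solutions.

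\emph{Reference currents and the quasi-balance law.} For the profiles $1$, $\cosh s$, and $\sinh s$, I verify that the particle-number, energy, and momentum currents of Section~\ref{section-4}, expressed in the effective variables, have these vacuum derivatives (up to the normalizations of Proposition~\ref{prop:N0-M00-M01-weak-entropies}). Uniqueness then identifies them with $F_1$, $F_{\cosh}$, and $F_{\sinh}$. For the quasi-balance law, $\Jhatpar$ varies and is advected. I expand the divergence of $F_\vartheta$ with the chain rule in $(\mu,v,\Jhatpar)$. The $(\mu,v)$ part cancels by the previous step, and the remaining terms are $\partial_{\widehat J_j}F^0\,\partial_t\widehat J_j+\partial_{\widehat J_j}F^1\,\partial_x\widehat J_j$. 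Using the advection law $\partial_t\widehat J_j=-\tanh v\,\partial_x\widehat J_j$ and the boost relation~\eqref{eq:intro-fixed-kernel-representation} at fixed $(\mu,v)$, the coefficient of $\partial_x\widehat J_j$ becomes
\[
-\tanh v(\cosh v\,\partial\alpha+\sinh v\,\partial\beta)+\sinh v\,\partial\alpha+\cosh v\,\partial\beta=\frac{\partial_{\widehat J_j}\beta_\vartheta}{\cosh v},
\]
as in Proposition~\ref{prop:section-4-lorentz-quasi-balance}.
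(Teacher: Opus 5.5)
Your existence, vacuum-trace, regularity and quasi-balance steps follow the paper. The uniqueness step, which you rightly flag, has a genuine gap.

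\textbf{The uniqueness gap.} The theorem allows arbitrary continuous profiles, and you yourself invoke uniqueness for the non-tempered profiles $\cosh s$ and $\sinh s$. For such profiles the difference $d$ of two weak currents is not tempered in $v$, so the reduction to an ODE in $m$ at fixed $\xi$ is unavailable.

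Your remedy, localizing by finite propagation, does not supply what is missing. The cone $|v-s|\leq k_\varpi(m)$ of Lemma~\ref{prop:kernel-positive-finite-speed-prelim} is a property of the constructed kernel. It is proved from the kernel's Fourier representation and a regularization of $B$ near vacuum, and it says nothing about an arbitrary distributional solution $d$ with zero vacuum data. A cutoff in $v$ destroys the equation. A domain-of-dependence statement for general solutions issuing from the singular line $m=0$ is precisely what uniqueness amounts to. (Even for tempered $d$, $\widehat d(m,\cdot)$ is only a distribution in $\xi$, so the fixed-frequency argument needs extra care.)

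\textbf{How the paper closes it.} The paper proves this domain-of-dependence property directly in physical variables.
\begin{itemize}
\item It smooths $d$ by convolution in $v$, which preserves the equation.
\item It sets $w=d/m$, so that $w_{mm}+\frac2m w_m=q_\varpi(w_{vv}-w)$ with $q_\varpi=(k_\varpi')^2=B_\varpi/m^2$.
\item It integrates the energy $w_m^2/q_\varpi+w_v^2+w^2$ over backward acoustic cones from $\{M\}\times K$. The lateral terms have a good sign.
\item The bulk coefficient $4/m+q_\varpi'/q_\varpi=(2+mB_\varpi'/B_\varpi)/m$ is positive near vacuum, because $mB_\varpi'/B_\varpi\to\gamma-1$.
\item The initial energy vanishes, because $w_m=m^{-2}\int_0^m B_\varpi(w_{vv}-w)\,dr=o(B_\varpi/m)$.
\item Gronwall covers the later compact density intervals.
\end{itemize}
You need such an argument. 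Alternatively, a duality argument against backward solutions would work, but their compact support in $v$ would have to be proved separately.

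\textbf{Reference currents.} The paper identifies these without invoking uniqueness. It uses the kernel moments $\frac{d^2}{dm^2}\int\alpha_{\chi}(m,\theta;\varpi)\cosh\theta\,d\theta=0$ and $\int\beta_{\chi}(m,\theta;\varpi)\sinh\theta\,d\theta=-\eps^2p_\varpi(m)$. Adopting this removes your reliance on non-tempered uniqueness there, though the first item still requires it.

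\textbf{Quasi-balance law.} When $\Jhatpar$ varies, the cancellation of the $(\mu,v)$ part does not follow from the constant-$\Jhatpar$ step alone. You also need $\Sigma=(y,v)$ to obey $\partial_t\Sigma=A(\Sigma,\Jhatpar)\partial_x\Sigma$ with no $\partial_x\Jhatpar$ terms. This is Lemma~\ref{lem:section-4-acoustic-block-diagonalization}, which combines $\partial_{\widehat J_m}G=-\tanh v\,\partial_{\widehat J_m}U$ with the advection law. Given that lemma, your computation of $\Fvee_j$ is correct.
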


All regularity assertions above keep $\Jhatpar$ fixed: namely for each fixed parallel momentum we relate the regularity of the vacuum profile to that of the current in density and rapidity variables.  This is done by transferring derivatives to that profile in the convolution.
In~\cite{TorusT2-3-asymptotic-2026}, we show that a vacuum profile with $C^1$ dependence on~$\Jhatpar$ with values in $H^1_{\loc}(\Rbb)$ leads to a $C^1$~family of currents.  This involves an analysis of the dependence of the fundamental kernels with respect to~$\varpi$, hence to the effective pressure law~$p_\varpi$.
In the present paper, we show that, conditional on the family of currents being sufficiently regular, the quasi-balance identity applies.

\subsubsection{Convexity criteria}

Convexity depends on the conservative variables in which the current density is expressed. For the reduced system at fixed parallel momentum, we let $E$, $P$, and $r$ denote its energy, momentum, and particle-number densities measured by a fixed inertial observer. In terms of the effective density $m$ and pressure $p_\varpi$, these are 
\be
E=(m+\eps^2p_\varpi)\cosh^2v-\eps^2p_\varpi, 
\quad P=(m+\eps^2p_\varpi)\sinh v\cosh v,
\quad
r=\Neff{\varpi}(m)\cosh v,
\ee
where $\Neff{\varpi}$ is the particle density of the reduced fluid, normalized by $\Neff{\varpi}(m)/m\to1$ at vacuum. The choice $U_{\rm N}=(E,P)$ uses negative particle number as the reference mathematical entropy and is called the \emph{particle-production formulation}; the choice $U_{\rm E}=(r,P)$ uses energy instead and is called the \emph{dissipated-energy formulation}. With the parallel momentum fixed, we write $\vartheta\in C^2_{\loc}(\Rbb)$ for the vacuum profile and define
\be
\mathcal D_{\rm N}\vartheta\coloneqq\vartheta''-\vartheta, \qquad \mathcal D_{\rm E}\vartheta\coloneqq\vartheta''-\tanh(s)\vartheta'.
\ee
The pressure signs $G_{\rm N}>0$ and $G_{\rm E}\geq0$ are defined in  the conditions~\eqref{hyperbolic-eos-3} and~\eqref{eq:intro-mass-energy-convexity-condition}.

\begin{corollary}[Future-convexity criteria for spacetime quasi-currents]
\label{theo-intro-convexity-classification}
Let $p$ be an admissible equation of state in the sense of Definition~\ref{def-pression}, fix $\Jhatpar$, and let $F^{0}_{\vartheta}$ be the (laboratory-frame) density of the weak quasi-current with profile $\vartheta(\cdot,\Jhatpar)\in C^2_{\loc}(\Rbb)$. All Hessians are evaluated at fixed~$\Jhatpar$.
\bei
\item The condition $\mathcal D_{\rm N}\vartheta\geq0$ is necessary for convexity of $F^{0}_{\vartheta}$ in $U_{\rm N}$. Under $G_{\rm N}>0$, it is also sufficient throughout the state space.
\item The condition $\mathcal D_{\rm E}\vartheta\geq0$ is necessary for convexity in $U_{\rm E}$. Under $G_{\rm E}\geq0$, it is sufficient throughout the state space.
\eei
Effective-pressure reduction preserves the sufficient pressure conditions, so the corresponding criteria apply to quasi-currents in the two conservative variables of the reduced Euler system at each fixed normalized momentum parallel to the symmetry orbits.
\end{corollary}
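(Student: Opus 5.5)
The plan is to work at fixed $\Jhatpar$, so that $\varpi$ is fixed. By Corollary~\ref{cor:main-quasi-current-kernel}, the reduced system is then a genuine one-dimensional relativistic Euler system in the density $m$ with admissible pressure $p_\varpi$. Moreover $G_{\rm N}[p_\varpi]>0$ is equivalent to $G_{\rm N}[p]>0$, and we also need to check that $G_{\rm E}[p]\geq0$ implies $G_{\rm E}[p_\varpi]\geq0$, which we do by the same change-of-variables computation used for $G_{\rm N}$. It therefore suffices to prove both criteria for a plain admissible pressure, written again $p$ with density $\mu$. The first step is to compute the Hessian of $F^0_\vartheta=\cosh v\,\alpha_\vartheta+\sinh v\,\beta_\vartheta$ with respect to $U_{\rm N}=(E,P)$ and to $U_{\rm E}=(r,P)$, using $(\mu,v)$ as intermediate coordinates.

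The key structural point is that the Jacobians of $(\mu,v)\mapsto U$ are explicit, and the system~\eqref{eq:intro-current-kernel-system} (which $\alpha_\vartheta,\beta_\vartheta$ inherit by convolution) lets us eliminate $\mu$-derivatives in favour of $v$-derivatives. We expect the Hessian quadratic form, after multiplying by a positive weight and changing basis to the characteristic directions, to reduce to a diagonal or nearly diagonal form. Its entries should be
\[
\mathcal Q_\pm = \bigl(\del_v^2\alpha_\vartheta - \alpha_\vartheta\bigr) \pm \text{(terms in }\beta_\vartheta) + \frac{G}{(\text{positive})}\,\bigl(\eps\sqrt{p'}\,\alpha_\vartheta\pm\beta_\vartheta\bigr)\text{-type terms},
\]
with $G=G_{\rm N}$ for $U_{\rm N}$ and $G=G_{\rm E}$ for $U_{\rm E}$. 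The $G_{\rm E}$ case differs because the energy weight introduces the $\tanh$ term. The two operators $\mathcal D_{\rm N}$ and $\mathcal D_{\rm E}$ are exactly the combinations that commute with convolution in the fluid frame. Indeed, $\del_v^2-1$ is the Casimir of the boost group acting on $\cosh,\sinh$, so the entries should be expressible as $\chi*(\mathcal D\vartheta)$ plus $G\times$(kernel combination)$*\vartheta$, using covariance of the representation (Theorem~\ref{theo-intro-current-representation}). We would verify this first on the reference currents with profiles $1,\cosh s,\sinh s$ (Proposition~\ref{prop:N0-M00-M01-weak-entropies}). Energy and momentum are affine in $U_{\rm N}$, hence have zero Hessian, consistent with $\mathcal D_{\rm N}\cosh=\mathcal D_{\rm N}\sinh=0$; particle number is affine in $U_{\rm E}$, consistent with $\mathcal D_{\rm E}1=0$.

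Necessity comes from the vacuum limit. As $\mu\to0$, the normalization $\del_\mu\alpha_\chi=\delta_0$ gives $\alpha_\vartheta\approx\mu\vartheta(v)$ and $\beta_\vartheta=o(\mu)$, by the Fourier bound~\eqref{eq:intro-current-Fourier-decay} and the kernel system. After dividing by the appropriate power of $\mu$, the leading Hessian entry tends to $\vartheta''(v)-\vartheta(v)$, respectively $\vartheta''(v)-\tanh(v)\vartheta'(v)$, up to a positive factor. Convexity at all states then forces $\mathcal D\vartheta\geq0$ pointwise.

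Sufficiency at positive density uses kernel positivity from Theorem~\ref{theo-main-results}. Under $G_{\rm N}>0$, the measures $\alpha_\chi$ and $\eps\sqrt{p'}\alpha_\chi\pm\beta_\chi$ are non-negative. We then write each principal entry of the Hessian as a sum of convolutions of such nonnegative kernels against $\mathcal D\vartheta\geq0$, plus $G$ times nonnegative kernels against nonnegative data. We also need to check the off-diagonal term, which we expect to vanish in the characteristic basis. The \textbf{main obstacle} is exactly this algebraic identity: that the Hessian entries are genuinely nonnegative combinations of $\chi*\mathcal D\vartheta$ with no leftover sign-indefinite $\vartheta$ terms. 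Our first attempt is to differentiate the kernel system once more in $\mu$ and apply it to $\del_v^2\chi-\chi$, showing that $\del_v^2\chi-\chi$ satisfies the same system with a source proportional to $G_{\rm N}$. For the $U_{\rm E}$ case, we would conjugate by $\cosh$ to turn $\mathcal D_{\rm E}$ into $\cosh^{-1}\mathcal D_{\rm N}\cosh$-type structure.
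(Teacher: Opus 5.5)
Your overall architecture matches the paper: fix $\varpi$, read necessity off the vacuum limit, diagonalize the Hessian in the two acoustic directions, and use positivity of $c\alpha_\chi\pm\beta_\chi$ for $U_{\rm N}$. The cross term does vanish there, since $\Hess_UF^0\,A$ is symmetric and $\lambda_+\neq\lambda_-$.

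For necessity the paper avoids second-derivative asymptotics. It shows that $t^{-1}F^0_\vartheta(tU)$ converges locally uniformly to the perspective functions in~\eqref{eq:section-8-vacuum-homogeneous-entropies}, and uses that locally uniform limits of convex functions are convex. Your Hessian-level limit needs the full Hessian formula and $\del_v^j\beta_\vartheta=\Ocal(\mu^\gamma)$ for $j\leq2$. That bound comes from the flux integral~\eqref{eq:weak-flux-vacuum-integral}, not from the Fourier bound, which only gives $\Ocal(\mu)$.

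The Hessian structure you anticipate is not the right one. For $U_{\rm N}$ there is no $G_{\rm N}$-weighted term. In $x^{\mathsf T}\bigl(D_w^2F^0-\sum_jg_jD_w^2U_j\bigr)x$ the $p''$ contributions cancel exactly. For $x=(1,\pm c/h)$ one obtains $\frac{2c(\cosh v\pm c\sinh v)}{h^2}(cA_{\rm N}\pm B_{\rm N})$, where $(A_{\rm N},B_{\rm N})=(\del_v^2-1)(\alpha_\vartheta,\beta_\vartheta)$ is the current with profile $\mathcal D_{\rm N}\vartheta$. So $G_{\rm N}$ enters only through kernel positivity. A term like $G\,(c\alpha_\vartheta\pm\beta_\vartheta)$ would pair against the unsigned $\vartheta$ and ruin sufficiency. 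Your proposed mechanism cannot produce one anyway: since the coefficients depend only on $\mu$, $\del_v^2\chi-\chi$ solves the kernel system exactly, with no source.

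The genuine gap is sufficiency in $U_{\rm E}$. Your premise that $\mathcal D_{\rm E}$ commutes with rapidity convolution is false. $\mathcal D_{\rm E}=\del_s^2-\tanh(s)\del_s$ has a nonconstant coefficient, and no multiplicative conjugation turns it into $\mathcal D_{\rm N}$: one would need $\phi\propto(\cosh s)^{-1/2}$ and $\phi''=\phi$ at once. Concretely, the acoustic Hessian components are positive multiples of $\mathcal Q_{\rm E}^\pm=cA_{\rm E}\pm B_{\rm E}$, with
\[
A_{\rm E}=(\del_v-\tanh v)(\del_v\alpha_\vartheta+\beta_\vartheta),\qquad B_{\rm E}=\del_v^2\beta_\vartheta-\beta_\vartheta .
\]
Here $\tanh$ is evaluated at the state rapidity $v$, not at the label $s$. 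So these are not convolutions of any fixed nonnegative kernel with $\mathcal D_{\rm E}\vartheta$, and kernel positivity cannot be applied.

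The missing idea is to show that, in $y=k(\mu)$,
\[
(\del_y-\del_v)\mathcal Q_{\rm E}^+=A_p\mathcal Q_{\rm E}^++(A_p+\tanh v)\mathcal Q_{\rm E}^-,\qquad (\del_y+\del_v)\mathcal Q_{\rm E}^-=(A_p-\tanh v)\mathcal Q_{\rm E}^++A_p\mathcal Q_{\rm E}^-,
\]
with $4c^3(A_p-1)=G_{\rm E}$. Then $G_{\rm E}\geq0$ gives $A_p\geq1>|\tanh v|$, so the coupling is nonnegative at \emph{every} rapidity. Genuine nonlinearity only gives $A_p>c$, which is why the stronger pressure condition is needed. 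The vacuum sign $\mathcal Q_{\rm E}^\pm=\mu c\,\mathcal D_{\rm E}\vartheta+o(\mu c)$ is then propagated by a comparison argument for $\mathcal Q_{\rm E}^\pm/(\mu c)$ on backward characteristic cones.

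Note also that $G_{\rm E}\geq0$, unlike $G_{\rm N}$, does not transform multiplicatively under $p\mapsto p_\varpi$. The change of variables leaves an extra term, and its nonnegativity requires the factorization in Lemma~\ref{lem:effective-mass-energy-convexity}.
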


\subsection{Applications}
\label{section-1-4}

\subsubsection{Bounded solutions}
 
In a seminal work~\cite{DiPerna-1983a,DiPerna-1983b}, DiPerna developed the theory of \emph{bounded solutions} with weak regularity for the non-relativistic Euler equations. This theory was extended by Makino, Mizohata, and Ukai~\cite{MakinoMizohataUkai-1992,MakinoMizohataUkai-1994}, Lions, Perthame, and Tadmor~\cite{Lions-2-1994}, Lions, Perthame, and Souganidis~\cite{Lions-1-1996}, and Chen and LeFloch~\cite{CL1-2000,CL2-2003}. For relativistic flows, Hsu, Lin, and Makino~\cite{Hsu-Makino-2001} treated analytic perturbations of polytropic fluids, LeFloch and Yamazaki~\cite{LeFlochYamazaki-2007} studied isothermal fluids, and Chen and Schrecker~\cite{CS-2022} covered more general pressure laws.

\subsubsection{Finite-energy solutions}

LeFloch and Westdickenberg~\cite{LeFlochWestdickenberg-2007} established the theory of \emph{finite-energy weak solutions} for the non-relativistic Euler equations with geometric effects, while Chen and Perepelitsa~\cite{CP-2010} and Germain and LeFloch~\cite{PG-PLF} extended it to the zero-viscosity--capillarity limit. Next, Schrecker and Schulz~\cite{SchreckerSchulz-2020,SchreckerSchulz-2021-corrigendum} treated asymptotically isothermal pressures. Both for bounded solutions and finite-energy solutions, these results are restricted to fluid velocities that are sufficiently small with respect to the light speed.

The current families constructed here provide the balance laws used in our companion study~\cite{TorusT2-1-2026,TorusT2-2-2026,TorusT2-4-2026} of finite-energy (non-)relativistic fluids with general pressure laws satisfying the
hyperbolicity conditions~\eqref{hyperbolic-eos-1}--\eqref{hyperbolic-eos-2}, and the asymptotic conditions~\eqref{equa-cond-small}--\eqref{equa-cond-large}.

\subsubsection{Torus-symmetric Einstein--Euler flows}

Indeed, the construction in the present paper is part of a study of \emph{torus-symmetric flows} satisfying the Einstein-Euler equations. The first article~\cite{TorusT2-1-2026} introduced the weakly regular geometric and fluid formulation, while the second~\cite{TorusT2-2-2026} developed finite-energy Cauchy estimates for both the Einstein--Euler and the Einstein--Navier--Stokes systems. Here, we develop the theory of the spacetime quasi-currents, which provide us the tool to formulate mathematical entropy admissibility conditions and, in turn, apply Tartar's compensated method. The fourth part~\cite{TorusT2-4-2026} will establish further compactness properties and the zero-viscosity limit for the Einstein-Navier-Stokes system.

\section{Spacetime currents}
\label{section-2}

\subsection{Light speed and normalized variables}
\label{section:epsilon-geometric-normalization}

We begin by introducing the physical variables and their normalized counterparts, then presenting standard currents together with the notion of future-convexity. The speed of light in chosen units is $c_{\rm L}\coloneqq\eps^{-1}$. The density-pressure law is denoted by $p=p(\mu)$, where $\mu$ is the mass-energy density divided by $c_{\rm L}^2$. All constants may depend on $\eps$. (The non-relativistic limit of the current kernels is postponed to the companion paper~\cite{TorusT2-3-asymptotic-2026}.)

We work in spatial dimension $n\geq2$ on Minkowski spacetime with metric $g$ of signature $(-,+,\ldots,+)$. At positive density, we define the enthalpy factor by
\be
h(\mu)=\mu+\eps^2p(\mu),
\ee
and write the fluid momentum in terms of the future-directed unit velocity $u$, with $g(u,u)=-1$, as
\bel{eq:def-J}
\Jbb = (J_0,J_1,\ldots,J_n), \qquad J_m \coloneqq \sqrt{2(\mu+\eps^{2}p(\mu))}\,u_m.
\ee
Here $u_m$ denotes the covariant component of $u$ in an inertial orthonormal frame, with $m=0,\ldots,n$. For plane-symmetric flows depending on time $t$ and one spatial coordinate $x$, the transverse components with indices $2,\ldots,n$ are parallel to the symmetry orbits. We will use both terminologies (transverse, parallel) interchangeably. The particle density is normalized at vacuum by
\bel{eq:section-3-particle-density}
\lim_{\mu\to 0}\frac{\Numb(\mu)}{\mu} \, =1, \qquad \Numb(\mu) \coloneqq \mu \exp \Bigl( \int_0^{\mu} \Bigl( \frac{1}{\sigma+\eps^{2}p(\sigma)} - \frac{1}{\sigma} \Bigr) d\sigma \Bigr),
\ee
and determines, together with $h$, the \emph{normalized momentum parallel to the symmetry orbits}, defined by
\bel{eq:def-Jhat}
\Jhatpar \coloneqq (\widehat J_2,\ldots,\widehat J_n) \coloneqq \frac{\sqrt{2h(\mu)}}{\Numb(\mu)}(J_2,\ldots,J_n),
\ee
whose squared norm is
\bel{eq:section-6-varpi-definition}
\varpi=|\Jhatpar|^2 =\sum_{m=2}^n\widehat J_m^2.
\ee 

\begin{definition}
When $\Jhatpar$ is held fixed, the remaining Euler equations describe a one-dimensional fluid after a change of density variable. One calls its density $m=\meff{\varpi}(\mu)$ the \emph{effective density} and its pressure $p_\varpi(m)$ the \emph{effective pressure}; they are defined by
\bel{eq:section-6-effective-pressure}
\meff{\varpi}(\mu) = \mu+ \frac{\varpi\Numb(\mu)^2}{4h(\mu)}, \qquad p_\varpi\bigl(\meff{\varpi}(\mu) \bigr) = p(\mu).
\ee
Thus $p_\varpi$ is the pressure expressed as a function of~$m$.
\end{definition}

The components of $\Jhatpar$ are transported along fluid particle paths, which is the meaning of \emph{advected momentum} below. Their values on the vacuum set are prescribed separately, since the formula~\eqref{eq:def-Jhat} is defined only for $\mu>0$. Sections~\ref{section-3-3} and~\ref{section-5-1} establish the transport equations and this reduction.

We denote the physical time by $t_{\rm phys}$ and put $t\coloneqq t_{\rm phys}/\eps$. The physical Minkowski metric and its expression in the rescaled coordinates are
\bse\label{eq:physical-geometric-kinematics}
\be
g=-\eps^{-2}(dt_{\rm phys})^2+\sum_{j=1}^n dx_j^2 =-dt^2+\sum_{j=1}^n dx_j^2.
\ee
We use the orthonormal frame $e_0\coloneqq\del_t=\eps\del_{t_{\rm phys}}$ and $e_j\coloneqq\del_{x_j}$. A physical spatial velocity $V\in\Rbb^n$ with $|V|<\eps^{-1}$ determines the future unit velocity through
\be
\Gamma\coloneqq(1-\eps^2|V|^2)^{-1/2}, \qquad u^0=\Gamma, \qquad u^j=\eps\Gamma V_j.
\ee
\ese
Thus $g(u,u)=-(u^0)^2+\sum_{1\leq j\leq n}(u^j)^2=-1$, and the stress tensor in this orthonormal frame is
\bse\label{eq:physical-stress-conservation}
\be
T^{\alpha\beta} \coloneqq h u^\alpha u^\beta +\eps^2p g^{\alpha\beta}.
\ee
We define the physical densities and stress components by
\be
E\coloneqq h\Gamma^2-\eps^2p, \qquad M_i\coloneqq h\Gamma^2V_i, \qquad S_{ij}\coloneqq h\Gamma^2V_iV_j+p\delta_{ij}.
\ee
Conservation of the stress tensor is equivalent, in the physical coordinates $(t_{\rm phys},x)$, to the coupled system
\be
\del_{t_{\rm phys}} E+\sum_{i=1}^n\del_{x_i}M_i=0, \qquad \del_{t_{\rm phys}} M_j+\sum_{i=1}^n\del_{x_i}S_{ij}=0, \quad j=1,\ldots,n.
\ee
\ese
Indeed, $T^{00}=E$, $T^{0i}=\eps M_i$, and $T^{ij}=\eps^2S_{ij}$, while $\del_t=\eps\del_{t_{\rm phys}}$. Plane symmetry requires dependence on $(t_{\rm phys},x_1)$ and allows all $n$ components of $V$ to be non-vanishing.

The fluid momentum \(\Jbb=\sqrt{2h}\,u\) from the equation~\eqref{eq:def-J} has covariant components
\be
J_0=-\sqrt{2h}\,\Gamma, \qquad J_j=\eps\sqrt{2h}\,\Gamma V_j, \qquad -\Jbb\cdot\Jbb=2h.
\ee
The \textit{fluid rapidity} $v$ and its physical normalization $w$ are defined by
\be
v\coloneqq\artanh(\eps V_1), \qquad w\coloneqq v/\eps, \qquad V_1=\frac{\tanh(\eps w)}{\eps}.
\ee
Using $v$ keeps the Lorentz boost formulas in their usual form familiar to relativists. In the same convention, a current $(F^0,F^1)$ in the rescaled coordinates has physical density and flux $(F^0,\eps^{-1}F^1)$.

Here \(\Numb(\mu)\) is the particle density, normalized by \(\Numb'=\Numb/h\) and \(\Numb/\mu\to1\) at vacuum. For \(\widehat J_m=2hu_m/\Numb\), the physical normalization is
\bse\label{eq:physical-parallel-transport}
\be
a_m\coloneqq\eps^{-1}\widehat J_m =\frac{2h\Gamma V_m}{N}, \quad m=2,\ldots,n.
\ee
The components $a_m$ and $\widehat J_m$ obey the same physical transport law; for $a_m$, we have
\be
\del_{t_{\rm phys}} a_m+V_1\del_{x_1}a_m=0.
\ee
\ese
The momentum $\Jhatpar$, with $\varpi=|\Jhatpar|^2$ as in the equation~\eqref{eq:section-6-varpi-definition}, is fixed in the reduced kernel theory; its variation and physical normalization are studied in~\cite{TorusT2-3-asymptotic-2026}.

We write $c\coloneqq\eps\sqrt{p'}$ for the sound speed divided by the speed of light and $\omega$ for the oriented two-dimensional volume form. The auxiliary parameter $\auxeps$ used in cutoff and remainder arguments is unrelated to light speed.

\begin{proposition}[Non-relativistic limits of the state variables]
\label{prop:physical-state-nonrelativistic-limits}
Fix the pressure law \(p\) and a physical state \((\mu,V)\in(0,+\infty)\times\Rbb^n\), and evaluate the quantities defined above for sufficiently small \(\eps>0\). As \(\eps\to0\), the following limits hold locally uniformly in \((\mu,V)\). We write \(V_\parallel=(V_2,\ldots,V_n)\) and \(m=\meff{\varpi}(\mu)\).
\bse\label{eq:physical-state-nonrelativistic-limits}
\be
\label{eq:state-limit-scalar}
h\to\mu,\qquad \Numb\to\mu,\qquad \Gamma\to1,\qquad c/\eps=\sqrt{p'(\mu)},\qquad w=v/\eps\to V_1.
\ee
\be
\label{eq:state-limit-momentum}
u^0\to1,\qquad u^j/\eps\to V_j,\qquad J_0\to-\sqrt{2\mu},\qquad J_j/\eps\to\sqrt{2\mu}\,V_j.
\ee
\be
\label{eq:state-limit-physical-stress}
E\to\mu,\qquad M_i\to\mu V_i,\qquad S_{ij}\to\mu V_iV_j+p(\mu)\delta_{ij},\qquad \Numb\Gamma\to\mu.
\ee
\be
\label{eq:state-limit-parallel}
\aligned
a_j=\widehat J_j/\eps&\to2V_j\quad(j=2,\ldots,n),&\quad \varpi/\eps^2&\to4|V_\parallel|^2,\\
(m-\mu)/\eps^2&\to\mu|V_\parallel|^2,& p_\varpi(m)&=p(\mu).
\endaligned
\ee
\be
\label{eq:state-limit-mechanical-energy}
\aligned
\frac{E-\Numb\Gamma}{\eps^2}&\longrightarrow\frac12\mu|V|^2+\mu\int_0^\mu\frac{p(s)}{s^2}\,ds,\\
\frac{M_i-\Numb\Gamma V_i}{\eps^2}&\longrightarrow
\Bigl(\frac12\mu|V|^2+\mu\int_0^\mu\frac{p(s)}{s^2}\,ds+p(\mu)\Bigr)V_i.
\endaligned
\ee
\ese

\end{proposition}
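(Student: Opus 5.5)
The plan is to expand every quantity in powers of $\eps^2$ at a fixed physical state $(\mu,V)$. Throughout, $p$ is a fixed function of $\mu$, and the assumption $\sup p/\mu<\eps^{-2}$ only constrains $\eps$ from above, so it holds for all small $\eps$. All errors will be bounded by $C\eps^2$, with $C$ depending continuously on $(\mu,V)$ through $p(\mu)$, $|V|$, and $\int_0^\mu p(s)s^{-2}\,ds$. This gives local uniformity on compact subsets of $(0,+\infty)\times\Rbb^n$.

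First I would treat the scalar limits~\eqref{eq:state-limit-scalar}. We have $h=\mu+\eps^2p\to\mu$ directly. For $\Numb$, the integrand in~\eqref{eq:section-3-particle-density} is
\[
\frac1{\sigma+\eps^2p}-\frac1\sigma=-\frac{\eps^2p(\sigma)}{\sigma(\sigma+\eps^2p(\sigma))}.
\]
The vacuum asymptotics $p\sim\kappa_0\gamma^{-1}\sigma^\gamma$ from Proposition~\ref{prop:pressure-endpoint-asymptotics}, with $\gamma>1$, make $p/\sigma^2$ integrable near $0$. Dominated convergence then gives
\[
\Numb=\mu\Bigl(1-\eps^2\int_0^\mu\frac{p}{\sigma^2}\,d\sigma+O(\eps^4)\Bigr).
\]
Since $\Gamma=(1-\eps^2|V|^2)^{-1/2}=1+\tfrac12\eps^2|V|^2+O(\eps^4)$, we get $\Gamma\to1$. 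The identity $c/\eps=\sqrt{p'}$ is a definition. For the rapidity, $w=\artanh(\eps V_1)/\eps\to V_1$ by a Taylor expansion of $\artanh$.

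The limits~\eqref{eq:state-limit-momentum} and~\eqref{eq:state-limit-physical-stress} follow by substituting these expansions into $u^0=\Gamma$, $u^j=\eps\Gamma V_j$, $J_0=-\sqrt{2h}\,\Gamma$, $J_j=\eps\sqrt{2h}\,\Gamma V_j$, and into the definitions of $E$, $M_i$, $S_{ij}$. For the parallel quantities~\eqref{eq:state-limit-parallel}, write $a_j=2h\Gamma V_j/\Numb\to2V_j$. Hence $\varpi/\eps^2=\sum_j a_j^2\to4|V_\parallel|^2$. Moreover
\[
\frac{m-\mu}{\eps^2}=\frac{\varpi}{\eps^2}\cdot\frac{\Numb^2}{4h}\to4|V_\parallel|^2\cdot\frac{\mu}{4}=\mu|V_\parallel|^2.
\]
Finally, $p_\varpi(m)=p(\mu)$ is the definition~\eqref{eq:section-6-effective-pressure}.

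The only genuinely second-order step is~\eqref{eq:state-limit-mechanical-energy}, and this is where I expect the main care to be needed: the $O(\eps^2)$ terms must be tracked exactly, not merely bounded. Expanding each factor to order $\eps^2$,
\[
h\Gamma^2=\mu+\eps^2\bigl(p+\mu|V|^2\bigr)+O(\eps^4),\qquad
\Numb\Gamma=\mu+\eps^2\Bigl(\tfrac12\mu|V|^2-\mu\int_0^\mu\frac{p}{s^2}\,ds\Bigr)+O(\eps^4).
\]
Since $E=h\Gamma^2-\eps^2p$, the difference is
\[
E-\Numb\Gamma=\eps^2\Bigl(\tfrac12\mu|V|^2+\mu\int_0^\mu\frac{p}{s^2}\,ds\Bigr)+O(\eps^4),
\]
which is the first claimed limit. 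For the second, $M_i-\Numb\Gamma V_i=(h\Gamma^2-\Numb\Gamma)V_i$. The additional $\eps^2p$ in $h\Gamma^2-\Numb\Gamma$ relative to $E-\Numb\Gamma$ produces the extra $+p(\mu)$ term.

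To keep the $O(\eps^4)$ remainders locally uniform, I would use the bound $|V|\le R$ on compacts together with the integrable bound $p(\sigma)/\sigma^2\le C\sigma^{\gamma-2}$ near vacuum. The latter also controls the $O(\eps^4)$ remainder in the exponential and in the integrand of $\Numb$.
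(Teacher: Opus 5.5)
Your proposal is correct and follows essentially the same route as the paper: expand $\Gamma$ and $\Numb$ to order $\eps^2$, using the integrable majorant $p(s)/s^2$ (from $p=\Ocal(s^\gamma)$, $\gamma>1$) for the particle density, and then substitute into the definitions of $E$, $M_i$, $S_{ij}$ and the parallel quantities. The differences are minor. You record the slightly sharper $\Ocal(\eps^4)$ remainder for $\Numb$, which is justified because $p^2/s^3$ is integrable at vacuum, where the paper only states $o(\eps^2)$. You also obtain $(m-\mu)/\eps^2$ as a limit of a product, whereas the paper uses the exact identity $m-\mu=\eps^2h\Gamma^2|V_\parallel|^2$.
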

\begin{proof}
The vacuum conditions~\eqref{eq:intro-vacuum-B}--\eqref{eq:intro-vacuum-r} give \(p(s)=\Ocal(s^\gamma)\) with \(\gamma>1\), as established in Proposition~\ref{prop:pressure-endpoint-asymptotics}, below. The particle normalization therefore yields
\be
\label{eq:state-limit-particle-expansion}
\frac1{\eps^2}\log\frac{\Numb(\mu)}{\mu}
=-\int_0^\mu\frac{p(s)}{s(s+\eps^2p(s))}\,ds
\longrightarrow-\int_0^\mu\frac{p(s)}{s^2}\,ds.
\ee
The integrable majorant \(p(s)/s^2\) gives uniform convergence for bounded density intervals. Thus, with \(e(\mu)=\int_0^\mu p(s)s^{-2}\,ds\), we have \(\Numb=\mu(1-\eps^2e+o(\eps^2))\). Together with \(\Gamma=1+\eps^2|V|^2/2+\Ocal(\eps^4)\), this proves the limits for the scalars, momenta and physical stresses. The definitions of the parallel momentum give the identity \(m-\mu=\eps^2h\Gamma^2|V_\parallel|^2\), which proves the parallel and effective-density limits. Finally, \(E=\mu+\eps^2\mu|V|^2+o(\eps^2)\) and \(M_i=(\mu+\eps^2(p+\mu|V|^2)+o(\eps^2))V_i\) give the two limits in~\eqref{eq:state-limit-mechanical-energy}. The expansions are uniform on compact subsets of the stated physical state space.
\end{proof}

\subsection{Fluid states and momentum}
\label{section-2-1}

\subsubsection{Extended state space}

We study currents for relativistic Euler flows on Minkowski spacetime in spatial dimension $n\geq2$, using the fluid momentum~$\Jbb$ and an arbitrary inertial observer. We first treat general flows, before imposing plane symmetry in~\autoref{section-3}, below. We use Minkowski spacetime with metric
\be
(\mathbb R^{1+n},g), \qquad g=-dt^2+ \sum_{m=1}^n dx_m^2.
\ee
In global Cartesian coordinates \((t,x_1,\ldots,x_n)\), the standard orthonormal frame \((e_0,e_1,\ldots,e_n)\) is
\be
e_0 = \del_t, \qquad e_j = \del_{x_j}, \quad j=1,\ldots,n.
\ee
Here \(e_0\) is future-directed timelike and \(e_1,\ldots,e_n\) are spacelike.

In the frame \((e_0,e_1,\ldots,e_n)\), the fluid velocity is $u=u^m e_m$, with contravariant components $u^m$. Its covariant components \(u_m \coloneqq g(u,e_m)\) satisfy the normalization and sign
\be
-u_0^2+ \sum_{m=1}^n u_m^2=-1, \qquad u_0=-u^0< 0.
\ee
We recall the fluid momentum from the equation~\eqref{eq:def-J}, with the rescaling used in~\cite{TorusT2-1-2026}. It satisfies
\bel{eq:J-norm}
-\Jbb\cdot\Jbb = J_0^2-\sum_{m=1}^nJ_m^2 = 2(\mu+\eps^{2}p(\mu)).
\ee
Thus every positive-density state gives a  future-directed timelike momentum vector.

The positive-density formula does not define \(u\) at \(\mu=0\). We use \(\Jbb\) as state variable on the closed future causal cone,
\bel{eq:state-space}
\mathcal A \coloneqq \Bigl\{ \Jbb\in\Rbb^{1+n}: J_0\le0, \quad -\Jbb\cdot\Jbb\ge0 \Bigr\},
\ee
which we call the (extended) \emph{admissible state space}. On the interior of \(\mathcal A\), that is, when \(-\Jbb\cdot\Jbb>0\), the mass-energy density \(\mu\) is uniquely recovered from the equation~\eqref{eq:J-norm}, since the map $\mu\mapsto 2(\mu+\eps^{2}p(\mu))$ is a strictly increasing bijection of $(0,+ \infty)$ under assumption \eqref{hyperbolic-eos-1}. The velocity is then reconstructed from
\bel{eq:u-from-J}
u_m= \frac{J_m}{\sqrt{2(\mu+\eps^{2}p(\mu))}} \quad \text{ on the non-vacuum set } {\mathcal A}^\circ.
\ee
The null boundary has density \(\mu=0\) through the identity~\eqref{eq:J-norm}, while the primitive velocity~$u$ is not defined on this boundary. Indeed, any $\mu\to 0$ limit with bounded~$u$ tends to a vanishing fluid momentum vector $\Jbb=0$, while nonzero null states represent a scaling limit in which $\mu\to 0$ while velocity components become unbounded. Only $\Jbb=0$ is the physical vacuum state with zero stress. At a nonzero null momentum, the continuous extension of the Euler tensor is $T^{\alpha\beta}=J^\alpha J^\beta/2$. Its divergence equation remains nontrivial, even though the primitive velocity cannot be reconstructed. The current profiles used below prescribe the small-density behavior at fixed finite rapidity; this normalization differs from approach to a nonzero null state with diverging rapidity.

For the later one-dimensional reduction, we distinguish the \emph{orthogonal} components from those parallel to the symmetry orbits and set
\bel{eq:J-perp-par}
\Jperp \coloneqq (J_0,J_1), \qquad \Jpar \coloneqq (J_2,\ldots,J_n)\in\Rbb^{n-1}.
\ee
This decomposition gives
\bel{eq:Jperp-est}
-\Jperp\cdot\Jperp\ge -\Jbb\cdot\Jbb\ge0.
\ee

\subsubsection{Barotropic Euler flows}

Away from vacuum, we consider a barotropic fluid described by a density \(\mu>0\), a future-directed unit vector field \(u\), and a pressure law \(p=p(\mu)\). Thus, \(g(u,u) =-1\), and its stress-energy tensor is \(T_{\alpha\beta} \, =(\mu+\eps^{2}p)u_\alpha u_\beta+\eps^{2}p g_{\alpha\beta}\). The Euler equations are the covariant conservation laws\footnote{Here $\nabla$ is the Levi--Civita connection of $g$, equal to ordinary differentiation in Cartesian coordinates.}
\bse\label{eq:covariant-Euler-and-projections}
\bel{eq:section-3-Euler}
\nabla_\alpha T^{\alpha\beta} \, =0.
\ee
By contracting the equation~\eqref{eq:section-3-Euler} with \(u_\beta\) or projecting it orthogonally to \(u\), we obtain respectively the energy and acceleration equations
\bel{eq:section-3-Euler-splitting}
u^\alpha\nabla_\alpha\mu+(\mu+\eps^{2}p)\nabla_\alpha u^\alpha=0,\qquad (\mu+\eps^{2}p)u^\alpha\nabla_\alpha u^\beta+(g^{\alpha\beta}+u^\alpha u^\beta)\nabla_\alpha \eps^{2}p=0,
\ee
\ese
which form a system equivalent to the system~\eqref{eq:section-3-Euler}. 

Recall that Frauendiener~\cite{Frauendiener-2003} and Walton~\cite{Walton-2005} gave a symmetric-hyperbolic formulation of relativistic Euler flow using a timelike vector field $w^\alpha\coloneqq\frac{\Numb(\mu)}{\mu+\eps^{2}p}u^\alpha$ where $\Numb(\mu)$ is the particle number defined in the equation~\eqref{eq:section-3-particle-density}.
 LeFloch and Ukai~\cite{LeFlochUkai-2009} provides a symmetrization (based on the spatial angular direction of~$u$, a rapidity variable, and a function of~$\mu$), that remains meaningful at vacuum. In the present work, the fluid momentum vector~\(\Jbb\) is chosen, both to exhibit a quadratic flux structure and to extend the state space to the closed future causal cone.

\subsubsection{A constitutive coefficient}

The principal part of the Euler system depends on the \textit{pressure ratio}
\bse\label{eq:pressure-ratio-and-vacuum}
\bel{eq:def-q}
q(\mu) \coloneqq \frac{\mu-\eps^{2}p(\mu)}{\mu+\eps^{2}p(\mu)}, \qquad \mu>0,
\ee
which belongs to \((0,1)\) for \(\mu>0\). Expressing $\mu$ in terms of $-\Jbb\cdot\Jbb$ through the identity~\eqref{eq:J-norm}, we sometimes denote the pressure ratio as $q_\Jbb \coloneqq q(\mu)$. Moreover, the limit at vacuum
\bel{eq:q-zero}
q(0) \coloneqq \lim_{\mu\to0}q(\mu) = \frac{1-\eps^{2}p'(0)}{1+\eps^{2}p'(0)}\in(0,1]
\ee
\ese
exists; in particular, \(q(0) =1\) when \(\eps^{2}p'(0) =0\). We set
\be
y \coloneqq -\Jbb\cdot\Jbb=2(\mu+\eps^{2}p(\mu)).
\ee
The definition gives
\bel{eq:yq-prime}
\frac{d}{dy}(yq) = \frac{1-\eps^{2}p'(\mu)}{1+\eps^{2}p'(\mu)},
\ee
which belongs to \((0,1)\) away from vacuum. It also governs the singularities of the current kernels.

\subsection{Particle-number and spacetime currents}
\label{section-2-2}

\subsubsection{Particle-number current} 

The particle density normalized at vacuum in the equation~\eqref{eq:section-3-particle-density} satisfies the linear equation
\bse\label{eq:particle-equation-and-current}
\be
\Numb'(\mu) = \frac{\Numb(\mu)}{\mu+\eps^{2}p(\mu)} , \qquad \mu>0.
\ee
The first equation in \eqref{eq:section-3-Euler-splitting} gives
\bel{eq:section-3-particle-conservation}
\nabla_\alpha(\Numb u^\alpha) = \Numb'(\mu)u^\alpha\nabla_\alpha\mu+ \Numb(\mu)\nabla_\alpha u^\alpha=0.
\ee
Hence the \emph{particle current}
\bel{eq:section-3-particle-current}
\mathcal N^\alpha \coloneqq \Numb u^\alpha
\ee
\ese
is conserved on regular Euler flows.

The asymptotic behavior $p(\mu) = \Ocal(\mu^\gamma)$ as $\mu\to 0$, with $\gamma>1$, implies that
\be
\del_\mu \log\bigl(\Numb(\mu)/\mu \bigr) = \frac{1}{\mu+\eps^{2}p(\mu)} - \frac{1}{\mu} = \Ocal(\mu^{\gamma-2})
\ee
is integrable at vacuum. Hence \(\Numb(\mu)/\mu\) has a finite positive limit, and the normalization in the equation~\eqref{eq:section-3-particle-density} sets that limit to unity. Any other positive normalization merely rescales the particle current.\!\footnote{The normalization~\eqref{eq:section-3-particle-density} identifies it with the zero Fourier mode of the current kernels.} For the exact isothermal law $\eps^{2}p(\mu) =c_{\rm iso}^2\mu$, with \(0<c_{\rm iso}<1\), instead, \(\Numb(\mu) =C\mu^{1/(1+c_{\rm iso}^2)}\), with arbitrary \(C>0\); the vacuum normalization above is then unavailable. \autoref{section-3} gives the particle current's vacuum extension and its expression in reduced fluid variables.

\subsubsection{General notion of current}

We first consider a general first-order system on Minkowski spacetime, with components indexed by \(A\):
\bse\label{eq:general-system-covariant-and-frame}
\bel{equakdjkdk3}
\del_\alpha f^{\alpha A}(x,w) = S^A(x,w).
\ee
Here \(w\) ranges in an open finite-dimensional state domain. The fluxes \(f^{\alpha A}\) and sources   \(S^A\)  may depend smoothly on the spacetime point \(x\) and on \(w\), but not on derivatives of \(w\). The index \(A\) ranges in a fixed finite-dimensional vector space \(E\). In Cartesian coordinates \((t,x^1,\ldots,x^n)\), the balance laws read
\bel{eq:section-3-local-balance}
\del_t f^{0A}(x,w)+ \sum_{i=1}^n\del_{x^i} f^{iA}(x,w) = S^A(x,w).
\ee
\ese
Under a local invertibility condition~\eqref{eq:section-3-normal-Jacobian} below, these equations form an evolution system for~$w$. We define currents in state space and deduce their spacetime conservation or balance laws.

\begin{definition}
\label{def:section-3-current}
A \emph{spacetime current} (or current, in short) is a vector field \(F=F(x,w)\) for which there exists an \(E^*\)-valued \emph{current multiplier} \(V=V(x,w)\) satisfying
\bel{eq:section-3-multiplier}
\frac{\del F^\alpha}{\del w^I} \, =V_A\frac{\del f^{\alpha A}}{\del w^I}, \qquad \alpha=0,\ldots,n.
\ee
These are identities between differentials in the state variable, with the spacetime point \(x\) held fixed.
\end{definition}

\subsubsection{Derivation of the current balance law}

The multiplier relation~\eqref{eq:section-3-multiplier} expresses the divergence of a current without derivatives of the state. The vector~$F$ has density~$F^0$ and $n$ spatial flux components; its divergence and squared norm~\(g(F,F)\) are Lorentz scalars, while the density measured by a future unit observer~\(n\) is \(-g(F,n)\). The plane-symmetric reduction is given in~\autoref{section-3-4}.

\begin{proposition}[Current balance law]
\label{prop:section-3-current-identity}
Let \(F\) and \(V\) satisfy the equation~\eqref{eq:section-3-multiplier}. Every regular solution of   \(\nabla_\alpha f^{\alpha A}(x,w) =S^A(x,w)\)  satisfies
\bse\label{eq:general-current-balance-and-source}
\bel{eq:section-3-current-identity}
\nabla_\alpha F^\alpha= \Sigma_F,
\ee
where, in an inertial frame, one has
\bel{eq:section-3-current-source}
\Sigma_F\coloneqq V_A S^A+ \del_\alpha^xF^\alpha-V_A\del_\alpha^xf^{\alpha A}.
\ee
\ese
Here \(\del_\alpha^x\) denotes differentiation in \(x\) with \(w\) fixed. In particular, the two base-derivative terms vanish when  both the current \(F\) and the fluxes \(f\) have no explicit \(x\)-dependence.
\end{proposition}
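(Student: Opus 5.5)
The identity is a chain-rule computation along a regular solution \(w=w(x)\). Since \(\nabla\) is ordinary differentiation in Cartesian coordinates, we can write \(\nabla_\alpha F^\alpha=\del_\alpha\bigl(F^\alpha(x,w(x))\bigr)\). The plan is to expand this total derivative into an explicit part and a part through the state, replace the state part using the multiplier relation~\eqref{eq:section-3-multiplier}, and then eliminate all derivatives of \(w\) using the system~\eqref{equakdjkdk3} itself.

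First, by the chain rule we will write
\[
\del_\alpha\bigl(F^\alpha(x,w(x))\bigr)=\del^x_\alpha F^\alpha+\frac{\del F^\alpha}{\del w^I}\,\del_\alpha w^I .
\]
Next, we insert the multiplier identity \(\del F^\alpha/\del w^I=V_A\,\del f^{\alpha A}/\del w^I\), which holds pointwise at fixed \(x\). This turns the second term into \(V_A\,\frac{\del f^{\alpha A}}{\del w^I}\del_\alpha w^I\).

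We then recognize this last expression as part of the total divergence of the fluxes:
\[
\frac{\del f^{\alpha A}}{\del w^I}\del_\alpha w^I=\del_\alpha\bigl(f^{\alpha A}(x,w(x))\bigr)-\del^x_\alpha f^{\alpha A}=S^A-\del^x_\alpha f^{\alpha A},
\]
where the second equality uses the equation~\eqref{equakdjkdk3}. Contracting with \(V_A\) and adding the explicit term \(\del^x_\alpha F^\alpha\) gives exactly \(\Sigma_F\) as defined in~\eqref{eq:section-3-current-source}. When neither \(F\) nor \(f\) depends explicitly on \(x\), both terms \(\del^x_\alpha F^\alpha\) and \(\del^x_\alpha f^{\alpha A}\) vanish, so \(\Sigma_F=V_AS^A\). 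This proves the final remark of the statement.

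There is no real obstacle; the only point requiring care is regularity. "Regular solution" must mean that \(w\) is at least \(C^1\) and that \(F\) and \(f\) are \(C^1\) in \((x,w)\), so that the chain rule applies pointwise and all sums over \(I\), \(A\) and \(\alpha\) are finite. The result is also coordinate-free: the left-hand side is a Lorentz scalar, and in an inertial frame \(\nabla_\alpha=\del_\alpha\). The formula~\eqref{eq:section-3-current-source} is stated in such a frame, so no Christoffel terms arise.
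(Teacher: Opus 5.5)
Your proof is correct and is the same argument as the paper's. The chain rule together with the multiplier identity gives \(\partial_\alpha F^\alpha = V_A(\partial_\alpha f^{\alpha A}-\partial^x_\alpha f^{\alpha A})+\partial^x_\alpha F^\alpha\). Substituting the system in an inertial frame, where the connection coefficients vanish, then yields \(\Sigma_F\).
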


\begin{proof}
The chain rule and the multiplier identity~\eqref{eq:section-3-multiplier} give \(\del_\alpha F^\alpha=V_A(\del_\alpha f^{\alpha A}-\del_\alpha^xf^{\alpha A})+ \del_\alpha^xF^\alpha\). Substitution of the system~\eqref{eq:section-3-local-balance}, with vanishing connection coefficients in this inertial frame, gives the identities~\eqref{eq:section-3-current-identity}--\eqref{eq:section-3-current-source}.
\end{proof}

\subsubsection{Stress-energy and particle currents}

We write \(\Mcal\coloneqq\Rbb^{1+n}\) for Minkowski spacetime. We fix an interval \(I\) and a spacelike foliation \((\Sigma_t)_{t\in I}\) with future unit normal~\(n\). For every vector field \(X\) on \(\Mcal\), we define the stress-energy current \(F_X\) by
\bse\label{eq:stress-current-and-balance}
\bel{eq:section-3-stress-current}
F_X^\alpha \coloneqq -T^\alpha{}_\beta X^\beta.
\ee
The Euler equations imply the standard balance laws
\bel{eq:section-3-stress-current-identity}
\nabla_\alpha F_X^\alpha=-T^\alpha{}_\beta\nabla_\alpha X^\beta.
\ee
\ese
Indeed, the Leibniz rule gives \(\nabla_\alpha F_X^\alpha=- (\nabla_\alpha T^\alpha{}_\beta)X^\beta-T^\alpha{}_\beta\nabla_\alpha X^\beta\), and the first term vanishes by the equation~\eqref{eq:section-3-Euler}. Thus \(F_X\) is conserved when \(X\) is Killing, without any invariance condition on the fluid. Otherwise the source describes exchange with the deformation of the observer field. In particular, if \(X\) is future-directed, then \(-n_\alpha F_X^\alpha=T(n,X)\) is the corresponding energy density on \(\Sigma_t\).

The observer-independent particle current~\(\mathcal N\), conserved by the equation~\eqref{eq:section-3-particle-conservation}, describes material transport, whereas \(F_X\) describes energy and momentum transport. The convexity below selects a reference entropy inequality for weak solutions.

\subsection{Future-convexity and mathematical entropy inequalities}
\label{section-2-3}

\subsubsection{Invertibility condition}

For the system~\eqref{eq:section-3-local-balance}, we choose a future-directed unit timelike vector~\(n\), for instance normal to a spacelike foliation, and set \(U^A(x,w)\coloneqq-n_\alpha f^{\alpha A}(x,w)\). At each spacetime point, an adapted inertial frame gives \(n=e_0\) and \(U^A=f^{0A}\). The system determines the evolution of~$w$ relative to this normal when the Jacobian
\bel{eq:section-3-normal-Jacobian}
\Jac_I^A(x,w) \coloneqq \frac{\del U^A}{\del w^I}(x,w)
\ee
is invertible on the state domain under consideration. By definition, \(U\) is our set of \emph{conservative variables}; this is the only point at which the chosen foliation enters the current construction.

\subsubsection{The symmetrizer criterion}

At a fixed spacetime point, we use the conservative variables \(U\) of the equation~\eqref{eq:section-3-normal-Jacobian} and write \(f^i=f^i(U)\), \(i=1,\ldots,n\), for the tangential fluxes. Then the equation~\eqref{eq:section-3-multiplier} is equivalent to
\bel{eq:section-3-entropy-flux-relation}
\frac{\del F^{i}}{\del U^B} \, = \frac{\del F^{0}}{\del U^A}\,A^{iA}{}_B, \qquad A^{iA}{}_B \coloneqq \frac{\del f^{iA}}{\del U^B}.
\ee
The Hessian of the current density \(F^{0}\) must therefore symmetrize every tangential flux Jacobian.

\begin{lemma}[Local criterion for a current density]
\label{lem:section-3-symmetrizer}
Let \(U\) range over a simply connected open set of conservative variables on which \(w\mapsto U\) is one-to-one with invertible Jacobian. A \(C^2\) function \(F^{0}= F^{0}(U)\) is the density of a current if and only if, for every \(i=1,\ldots,n\),
\bel{eq:section-3-symmetrizer}
\frac{\del^2F^{0}}{\del U^A\del U^C}\,A^{iA}{}_B = \frac{\del^2F^{0}}{\del U^A\del U^B}\,A^{iA}{}_C.
\ee
Equivalently, the matrices \(D_U^2F^{0}\,A^i\) are symmetric. When this condition holds, the tangential fluxes \(F^{i}\) are determined by the equation~\eqref{eq:section-3-entropy-flux-relation}, up to additive constants.
\end{lemma}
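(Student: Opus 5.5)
This is the classical symmetrizer criterion, and I would reduce it to a closedness statement for a family of one-forms on the simply connected domain of conservative variables. Fix the spacetime point $x$ throughout. The hypothesis that $w\mapsto U$ is one-to-one with invertible Jacobian~\eqref{eq:section-3-normal-Jacobian} lets me regard the fluxes $f^{iA}$, the density $F^0$ and the multiplier $V$ as functions of $U$. The multiplier relation~\eqref{eq:section-3-multiplier} then transports to $U$-coordinates by the chain rule, since $\Jac$ is invertible.

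\textbf{Step 1: identify the multiplier.} Take $\alpha=0$ in~\eqref{eq:section-3-multiplier}. Because $f^{0A}=U^A$ in the adapted frame, this gives $\del F^0/\del U^A=V_A$. So the multiplier is forced to be $V=D_UF^0$, and~\eqref{eq:section-3-multiplier} for $\alpha=i$ becomes exactly the relation~\eqref{eq:section-3-entropy-flux-relation}. Consequently, $F^0$ is the density of a current if and only if, for each $i$, the one-form
\[
\omega^i\coloneqq \frac{\del F^0}{\del U^A}\,A^{iA}{}_B\,dU^B
\]
is exact, with primitive $F^i$.

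\textbf{Step 2: necessity.} Suppose $F^i$ exists. I would differentiate~\eqref{eq:section-3-entropy-flux-relation} in $U^C$ and use symmetry of mixed partials of $F^i$; this requires $F^i\in C^2$, which follows since the right-hand side is $C^1$. The result is
\[
\frac{\del^2F^0}{\del U^A\del U^C}A^{iA}{}_B+\frac{\del F^0}{\del U^A}\frac{\del^2 f^{iA}}{\del U^B\del U^C}
\]
is symmetric in $(B,C)$. The second term is already symmetric in $(B,C)$, since $f$ is smooth in $U$. Hence the first term is symmetric, which is~\eqref{eq:section-3-symmetrizer}.

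\textbf{Step 3: sufficiency.} Conversely, assume~\eqref{eq:section-3-symmetrizer}. The same computation, read backwards, shows that $d\omega^i=0$. Since the domain is simply connected, the Poincaré lemma for $C^1$ closed forms (obtained by integrating along paths, with homotopy invariance) produces a $C^2$ primitive $F^i$. I then set $V=D_UF^0$, and~\eqref{eq:section-3-multiplier} holds. Two primitives of the same closed form on a connected set differ by a constant, which gives the uniqueness statement for the tangential fluxes.

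\textbf{Main obstacle.} The only delicate point is regularity. With $F^0\in C^2$ only, $\omega^i$ is merely $C^1$, so I would invoke the Poincaré lemma in its $C^1$ form and note that its path-integral proof requires nothing more. Apart from this, the argument is a direct computation.
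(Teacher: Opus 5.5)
Your proposal is correct and follows the paper's argument. Necessity comes from the symmetry of the mixed second derivatives of $F^i$ in \eqref{eq:section-3-entropy-flux-relation}, where the term containing $\del^2 f^{iA}/\del U^B\del U^C$ is already symmetric; sufficiency comes from closedness of the one-form $(\del F^0/\del U^A)A^{iA}{}_B\,dU^B$ and the Poincar\'e lemma on the simply connected domain, with multiplier $V_A=\del F^0/\del U^A$. Your extra steps (identifying $V$ from the $\alpha=0$ component of \eqref{eq:section-3-multiplier}, the $C^2$ regularity of $F^i$, and uniqueness up to constants) only make explicit what the paper's short proof leaves implicit.
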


When the Hessian satisfies \(D_U^2F^{0}>0\), it symmetrizes the system and yields a symmetric-hyperbolic formulation~\cite{FriedrichsLax-1971,Godunov-1961}.

\begin{proof}
If \(F^{i}\) exists, the equality of its mixed derivatives in the equation~\eqref{eq:section-3-entropy-flux-relation} gives the equation~\eqref{eq:section-3-symmetrizer}, since \(A^{iA}{}_B= \del f^{iA}/\del U^B\) already has commuting mixed derivatives. Conversely, the equation~\eqref{eq:section-3-symmetrizer} says that the one-form \((\del F^{0}/\del U^A)A^{iA}{}_B\,dU^B\) is closed. The Poincar\'e lemma gives \(F^{i}\), and the multiplier is \(V_A= \del F^{0}/\del U^A\).
\end{proof}

\subsubsection{Convexity relative to a foliation}

We return to the foliation \((\Sigma_t)_{t\in I}\) and its future unit normal~\(n\). We use its conservative variables~\(U\) from the equation~\eqref{eq:section-3-normal-Jacobian}, assuming locally that \(w\mapsto U(w)\) is one-to-one.

\begin{definition}
\label{def:section-3-future-convex}
A current \(F\) is called \emph{future-convex}, relative to the chosen foliation, if its normal density \(-n_\alpha F^\alpha\), regarded as a function of the conservative variable \(U\), is convex on every convex subset of the conservative state domain. For a density of class \(C^2\), this is equivalent to the positive semi-definiteness of its Hessian. The current is called \emph{strictly future-convex} when this Hessian is positive definite.
\end{definition}

This definition is invariant under changes of orthonormal frame tangent to the leaves. Its dependence on the future unit normal enters through both the density \(-g(F,n)\) and the conservative variables \(-n_\alpha f^{\alpha A}\). Convexity for one normal therefore does not, by this definition alone, assert convexity for every future timelike normal. The future-convexity of the two reference currents is established in \autoref{section-4-2} and~\ref{section-4-3}. For the effective Euler systems in one spatial variable, the local preservation of convexity under a change of normal in the plane orthogonal to the symmetry orbits follows from the subluminal acoustic speeds and the Hessian formula~\eqref{eq:convexity-changed-normal-hessian}, below. The corresponding criteria for the full family of quasi-currents are established in Sections~\ref{section-10-1} and~\ref{section-10-2}.

\subsubsection{Weak solutions}

For a regular solution, every current satisfies the identity in \autoref{prop:section-3-current-identity}. For a weak solution satisfying the Euler equations in the sense of distributions, we impose a mathematical entropy inequality as an additional admissibility condition, since the classical chain rule need not hold. If $F=F(x,w)$ and $\Sigma_F= \Sigma_F(x,w)$ are locally integrable, the \textit{mathematical entropy inequality}
\bse\label{eq:entropy-inequality-and-test-form}
\be
\nabla_\alpha F^\alpha\leq\Sigma_F
\ee
is required for one future-convex current~$F$ called the \emph{reference current}, which, by definition, means that
\be
\label{eq:current-distributional-inequality}
\int_{\Mcal}\bigl(F^\alpha(x,w)\nabla_\alpha\varphi+ \Sigma_F(x,w)\varphi \bigr)\,d\operatorname{Vol}_g\geq0\qquad\text{for every }0\leq\varphi\in C_c^\infty(\Mcal).
\ee
\ese
This formulation depends on the choice of the spacetime current~$F$, and not on a particular observer's decomposition into density and flux. Convexity however is evaluated in the conservative variables selected above. Section~\ref{section-4} distinguishes the two reference choices; the construction of the kernels  gives further currents and their convexity criteria.

\section{Plane-symmetric quasi-currents}
\label{section-3}

\subsection{Symmetry reduction} 
\label{section-3-1}

\subsubsection{Reduced Euler system}

Plane-symmetric flows allow all velocity components to be non-vanishing, requiring simply that they depend only on two coordinates. We derive their reduced equations and quasi-balance law; the kernels of Sections~\ref{section-6} to~\ref{section-flux-kernels}, below, will construct the currents, while a control of sources in the resulting balance laws also requires control of derivatives of the advected momentum, which depends on the application. For a pair $\Xbb=(X_0,X_1)$, we define the quadratic expressions entering the system by
\bel{eq:E0E1}
\Ebf_0(\Xbb) \coloneqq \frac12(X_0^2+X_1^2), \qquad \Ebf_1(\Xbb) \coloneqq X_0X_1.
\ee
For the momentum parallel to the symmetry orbits~$\Jpar$, we set
\bel{eq:E0-complex}
\Ebf_0(\Jpar) \coloneqq \frac12\sum_{m=2}^n J_m^2.
\ee
For pairs in the plane orthogonal to the symmetry orbits, we use the additive shorthand \(\Ebf_m(\Xbb,\Ybb) \coloneqq \Ebf_m(\Xbb)+ \Ebf_m(\Ybb)\), \(m=0,1\); the same convention is used for \(\Ebf_0\) when an argument is a vector of momentum components parallel to the symmetry orbits. In particular, we can write
\bel{eq:E0J}
\Ebf_0(\Jbb) = \Ebf_0(\Jperp)+ \Ebf_0(\Jpar) = \frac12 \Bigl( J_0^2+ \sum_{m=1}^n J_m^2 \Bigr).
\ee
We also use the Minkowski inner product on the first two components, together with the associated null form, i.e.
\bel{eq:dot-wedge}
-\Xbb\cdot\Ybb \coloneqq X_0Y_0-X_1Y_1, \qquad \Xbb\wedge\Ybb \coloneqq X_0Y_1-X_1Y_0.
\ee
The components of the stress-energy tensor relevant to our formulation are then
\bel{eq:M-components}
\Mbf_{00}(\Jbb) \coloneqq \Ebf_0(\Jbb) -\frac{q_\Jbb}{2}\,\Jbb\cdot\Jbb, \qquad \Mbf_{01}(\Jbb) \coloneqq \Ebf_1(\Jperp),\qquad \Mbf_{11}(\Jbb) \coloneqq \Ebf_0(\Jperp) -\Ebf_0(\Jpar)+ \frac{q_\Jbb}{2}\,\Jbb\cdot\Jbb.
\ee
Here \(\Mbf_{00} \, =2T_{00}\), \(\Mbf_{01} \, =2T_{01}\), and \(\Mbf_{11} \, =2T_{11}\), where \(T\) is the perfect-fluid stress-energy tensor and the indices here denote its covariant orthonormal components; the present normalization removes irrelevant factors of two from the reduced balance laws. The matter energy density is \(\Mbf_{00}\). Since \(q_\Jbb\in(0,1]\) and \(-\Jbb\cdot\Jbb\ge0\), we have
\bel{eq:M00-positive}
\Mbf_{00}(\Jbb)\ge \Ebf_0(\Jbb)\ge0.
\ee
Hence \(\Mbf_{00}\) controls all $n+1$ components of the fluid momentum.

We impose \emph{invariance under translations tangent to the symmetry orbits}, equivalently,
\be
e_m(\Jbb) =0, \qquad m=2,\ldots,n.
\ee
This reduction of the Euler equations in the variable~\(\Jbb\) arose in the torus-symmetric spacetime formulation of~\cite{TorusT2-1-2026}. In the Minkowski spacetime under consideration, it takes the reduced form
\bel{eq:euler-principal}
\begin{aligned}
e_0\Bigl(
\Ebf_0(\Jbb) -\frac{q_\Jbb}{2}\,\Jbb\cdot\Jbb
\Bigr)
-
e_1\bigl(\Ebf_1(\Jperp) \bigr)
&=0,
\\
e_0\bigl(\Ebf_1(\Jperp) \bigr)
-
e_1\Bigl(
\Ebf_0(\Jperp) -\Ebf_0(\Jpar)+ \frac{q_\Jbb}{2}\,\Jbb\cdot\Jbb
\Bigr)
&=0,
\\
e_0(J_0J_m) -e_1(J_1J_m)
&=0,
\qquad m=2,\ldots,n.
\end{aligned}
\ee
Equivalently, the system may be written in quasilinear form as
\bel{eq:quasilinear-J}
A_0(\Jbb)\,e_0(\Jbb) -A_1(\Jbb)\,e_1(\Jbb) =0,
\ee
with the following geometric \(2+(n-1)\) block representation. To exhibit the coupling with the momentum parallel to the symmetry orbits,  we put \(E_\perp \coloneqq \operatorname{span}(e_0,e_1)\) and \(E_\parallel \coloneqq \operatorname{span}(e_2,\ldots,e_n)\), and denote by \((e^0,\ldots,e^n)\) the dual coframe. We identify \(\Jpar\) with the vector \(\mathbf J_\parallel \coloneqq \sum_{m=2}^nJ_me_m\in E_\parallel\) and write \(\mathbf J_\parallel^\flat \coloneqq \sum_{m=2}^nJ_me^m\). With \(\yqu  \coloneqq (yq)'\in(0,1]\), the matrices are endomorphisms of \(E_\perp\oplus E_\parallel\), and each tensor product below is a rank-one map:
\be
\label{eq:A0A1}
\begin{alignedat}{2}
B_0&\coloneqq \begin{pmatrix}(1+\yqu)J_0&(1-\yqu)J_1\\ J_1&J_0\end{pmatrix}, \qquad &B_1&\coloneqq \begin{pmatrix}J_1&J_0\\ (1-\yqu)J_0&(1+\yqu)J_1\end{pmatrix},\\
A_0(\Jbb)&\coloneqq \begin{pmatrix}B_0&(1-\yqu)e_0\otimes\mathbf J_\parallel^\flat\\ \mathbf J_\parallel\otimes e^0&J_0\operatorname{Id}_{E_\parallel}\end{pmatrix}, \qquad &A_1(\Jbb)&\coloneqq \begin{pmatrix}B_1&(\yqu-1)e_1\otimes\mathbf J_\parallel^\flat\\ \mathbf J_\parallel\otimes e^1&J_1\operatorname{Id}_{E_\parallel}\end{pmatrix}.
\end{alignedat}
\ee
Here $y=-\Jbb\cdot\Jbb$ as in the equation~\eqref{eq:yq-prime}, and primes denote $y$-derivatives. Freezing the normalized parallel momentum gives the effective Euler system in one spatial variable of Section~\ref{section-5}, below; Sections~\ref{section-6} to~\ref{section-flux-kernels} construct its kernels.

\subsubsection{Hyperbolicity and vacuum degeneracy} 

We now describe the admissible states and characteristic fields in the \(\Jbb\)-formulation introduced in~\cite{TorusT2-1-2026} for the Einstein--Euler system. The following theorem gives hyperbolicity away from vacuum and the degeneracy of the acoustic speeds at vacuum.

\begin{theorem}[Structural properties of relativistic fluid flows in the $\Jbb$ variable]
\label{prop:structure-euler}
Let $p\in C^2((0,+\infty))\cap C^1([0,+\infty))$ satisfy $p(0)=0$, $0<\eps^2p'(\mu)<1$ for $\mu>0$, and $0\leq\eps^2p'(0)<1$. Then the Euler equations \eqref{eq:euler-principal}, written in the variable \(\Jbb\), enjoy the following properties.
\bei

\item \emph{Admissible state space.} The set \(\mathcal A\) defined in the definition~\eqref{eq:state-space} is a closed convex cone containing both vacuum and non-vacuum states.

\item \emph{Recovery of the primitive variables.} On the timelike region \(\{-\Jbb\cdot\Jbb>0\}\), the primitive variables \((\mu,u)\) are recovered uniquely from the equations~\eqref{eq:J-norm} and \eqref{eq:u-from-J}. At vacuum \(\mu=0\), \(u\) is undefined.

\item \emph{Quadratic principal structure.} The principal fluxes of the system are quadratic in the components of \(\Jbb\), up to the scalar constitutive coefficient \(q_\Jbb\).

\item \emph{Hyperbolicity away from vacuum.} On the interior of the admissible cone \(\{-\Jbb\cdot\Jbb>0\}\), the principal symbol of the system~\eqref{eq:quasilinear-J} has $n+1$ real speeds: one transport speed of multiplicity $n-1$ and two acoustic speeds.

\item \emph{Characteristic structure.} Away from vacuum, the transport field of multiplicity $n-1$ is linearly degenerate. The two acoustic fields are genuinely nonlinear in the sense of Lax at every state at which \( G_{\rm N}\neq0\), with the notation in~\eqref{hyperbolic-eos-3}.

\item \emph{Degeneracy at vacuum.} At every nonzero null state, the two acoustic speeds coalesce with the transport speed, irrespective of whether \(p'(0) =0\) or \(p'(0)>0\). The interior characteristic splitting therefore degenerates at vacuum. At the zero state the matrices \(A_0,A_1\) both vanish.
\eei
\end{theorem}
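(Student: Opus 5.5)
The proof goes item by item, using the quasilinear form~\eqref{eq:quasilinear-J} and the block matrices~\eqref{eq:A0A1}.

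\emph{Cone, recovery, quadratic structure.} The set $\mathcal A$ is $\{J_0\le0,\ J_0^2\ge\sum_{m\ge1}J_m^2\}$, i.e.\ $-J_0\ge|(J_1,\dots,J_n)|$. It is the epigraph of a norm, hence a closed convex cone, and it contains $\Jbb=0$ as well as timelike states. Recovery of $(\mu,u)$ follows from strict monotonicity of $\mu\mapsto2(\mu+\eps^2p)$, which holds since $p'>0$, together with~\eqref{eq:u-from-J}. The quadratic structure is read off from~\eqref{eq:euler-principal} and~\eqref{eq:M-components}.

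\emph{Speeds and eigenvectors.} The characteristic speeds $\lambda$ (with $e_1$-speed defined through $\det(\lambda A_0-A_1)=0$) come from the block structure. On $E_\parallel$ directions orthogonal to $\mathbf J_\parallel$, both matrices act by $J_0$ and $J_1$ respectively, with no coupling, so the speed is $J_1/J_0$ with multiplicity $n-2$. The remaining problem is the $3\times3$ system on $E_\perp\oplus\operatorname{span}\mathbf J_\parallel$. The cleanest way to treat it is to avoid brute-force determinants and instead change variables to $(\mu,v,\Jhatpar)$. The last block of~\eqref{eq:euler-principal}, combined with particle conservation~\eqref{eq:section-3-particle-conservation}, gives transport of $\widehat J_m$ at speed $\tanh v=J_1/J_0$ (up to sign convention). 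This supplies one more transport eigenvalue, giving total multiplicity $n-1$. With $\Jhatpar$ frozen, the remaining $2\times2$ system is a one-dimensional relativistic Euler system for an effective pressure. Its speeds are $\tanh(v\pm\artanh c_{\rm eff})$, which are real and distinct because the effective sound speed lies in $(0,1)$. I expect to verify $0<c_{\rm eff}<1$ directly from $\yqu\in(0,1)$, using $\yqu=(1-\eps^2p')/(1+\eps^2p')$ from~\eqref{eq:yq-prime}. Equivalently, I would compute $\det(\lambda A_0-A_1)$ on this $3\times3$ block, factor out $(\lambda J_0-J_1)$, and check that the remaining quadratic has positive discriminant when $y>0$. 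Diagonalisability follows from distinct acoustic speeds and a full transport eigenspace: the $n-1$ transport eigenvectors are explicit, namely variations of $\Jhatpar$ at fixed $(\mu,v)$.

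\emph{Linear degeneracy and genuine nonlinearity.} Linear degeneracy of the transport field follows because its speed $\tanh v$ is constant along its eigenvectors, which vary only $\Jhatpar$ at fixed $(\mu,v)$. For the acoustic fields, I would compute $\nabla\lambda_\pm\cdot r_\pm$ in the variables $(\mu,v)$ at fixed $\Jhatpar$. The standard relativistic computation gives this quantity proportional to $\frac{d}{d\mu}(v\pm\artanh c)$ along the rarefaction curve $dv=\mp c\,d\mu/h$. The nonvanishing factor that remains should be exactly $G_{\rm N}$, after multiplying by positive factors such as $h$, $(1-c^2)$ and $c$. The main obstacle is that the effective pressure depends on $\Jhatpar$, so the factor obtained at first is $G_{\rm N}[p_\varpi]$. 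I expect to match it with $G_{\rm N}[p]$ through the preservation of genuine nonlinearity quoted from Proposition~\ref{prop:section-6-effective-pressure}. Failing that, I would redo the computation directly in $\Jbb$-coordinates at $\Jpar\neq0$, accepting the algebra.

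\emph{Vacuum.} At a nonzero null state, $y\to0$ forces $\mu\to0$ and, with $J_0\ne0$, rapidity $\to\pm\infty$. In the limit the determinant reduces to the $\yqu$-independent factor $(\lambda J_0-J_1)^3$ times a constant, because at null $\Jbb$ the quadratic's discriminant is proportional to $y$. So all speeds tend to $J_1/J_0$, whether $\yqu(0)=1$ or $\yqu(0)<1$. At $\Jbb=0$ every entry of $A_0$ and $A_1$ is linear in $\Jbb$, so both matrices vanish.
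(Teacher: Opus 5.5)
Your outline is sound and is essentially the paper's argument in effective-pressure language. The paper's field $\mathcal V=\partial_\mu+\frac{1-\eps^2p'}{2h}\sum_mJ_m\partial_{J_m}$ is exactly $\partial_\mu$ at fixed $(\Jhatpar,v)$. Its $c_{\rm ac}^2=\eps^2p'/D$ is your $\eps^2p_\varpi'$, since $\sigma_\parallel=\sigma_\varpi$ and $D=D_\varpi$. Its identity~\eqref{eq:acoustic-speed-density-direction} is precisely $G_{\rm N}[p_\varpi]=D_\varpi^{-3}(1+\sigma_\varpi)^2G_{\rm N}[p]$. Your splitting off of $\mathbf J_\parallel^\perp\subset E_\parallel$, on which $A_0,A_1$ act as $J_0,J_1$, is a correct shortcut. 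Two steps, however, are asserted rather than proved.

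First, the transport law and the frozen reduction only show that the system is block upper-triangular in the variables $((\mu,v),\Jhatpar)$. The $\Jhatpar$-equations contain no $\partial_x(\mu,v)$, but the $(\mu,v)$-equations could contain a term $K\,\partial_x\Jhatpar$.
\begin{itemize}
\item This suffices for the eigenvalues and for the acoustic eigenvectors. The latter then have no $\Jhatpar$-component, so your genuine-nonlinearity computation at fixed $\Jhatpar$ is legitimate.
\item Your transport eigenvectors, ``variations of $\Jhatpar$ at fixed $(\mu,v)$'', carry the linear degeneracy claim, but they are eigenvectors only if $K=0$. Otherwise they acquire a $v$-component, along which $\tanh v$ may vary.
\item The missing observation is that, at fixed $(\mu,v)$, the $\varpi$-dependence of $T^{ab}$ ($a,b\in\{0,1\}$) lies only in $h(1+\sigma_\varpi)\overline u^a\overline u^b$. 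The extra term is therefore proportional to $\overline u^a\partial_a\varpi=0$. Equivalently, $\partial_{\widehat J_m}G=-\tanh v\,\partial_{\widehat J_m}U$ at fixed $(y,v)$, as in Lemma~\ref{lem:section-4-acoustic-block-diagonalization}.
\item Alternatively, verify directly, as the paper does, that the vectors~\eqref{eq:transport-eigenvectors} are annihilated by $\lambda_0A_0+A_1$ with $\lambda_0=-J_1/J_0$.
\end{itemize}

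Second, the chart $(\mu,v,\Jhatpar)$ degenerates on the null boundary, so the vacuum item cannot come from the reduction.
\begin{itemize}
\item The determinant computation you list as an alternative is required there, namely~\eqref{eq:general-characteristic-polynomial}. Its quadratic factor has discriminant $4(1-a)y\bigl(2a(J_0^2-J_1^2)+(1-a)y\bigr)$.
\item You also need $\det A_0=2aJ_0^{n+1}\neq0$ at a nonzero null state, so that all $n+1$ roots are accounted for.
\item Your remark that the rapidity tends to $\pm\infty$ is false at null states with $\Jpar\neq0$. There $|J_1|<|J_0|$, and it is $\Jhatpar$ that blows up instead. The remark is not used, however.
\end{itemize}

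Finally, the theorem allows $\eps^2p'(0)>0$, in which case the vacuum normalization~\eqref{eq:section-3-particle-density} of $\Numb$ fails. Normalize $\Numb$ at a positive density instead. Because $\varpi\Numb^2$ is invariant under rescaling of $\Numb$, this changes neither $m$, nor $p_\varpi$, nor the pointwise $G_{\rm N}$ identity.
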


\begin{proof}
\bse
\noindent{\bf 1.} The first assertion follows since \(\mathcal A\) is the closed future causal cone in the orthonormal components \(\Jbb\); in particular, it is convex. The fact that \(\mu\mapsto2(\mu+\eps^{2}p(\mu))\) is strictly increasing and bounded below and above by $2\mu,4\mu$ proves that it is a bijection of $(0,+\infty)$, from which the recovery statement immediately follows. The Euler equations~\eqref{eq:euler-principal} manifestly exhibit the quadratic principal structure. To show hyperbolicity away from vacuum we work in the original fluid variables and set (for $\mu>0$)
\be
h = \mu+\eps^{2}p(\mu), \qquad y = -\Jbb\cdot\Jbb=2h, \qquad \sigma_\parallel \coloneqq \frac{|\Jpar|^2}{y}, \qquad D \coloneqq 1+(1-\eps^{2}p'(\mu))\sigma_\parallel.
\ee
Substitution in the principal symbol associated with the system~\eqref{eq:quasilinear-J} gives the transport speed \(\lambda_0 \coloneqq -J_1/J_0\), with multiplicity \(n-1\), and the two acoustic speeds
\bel{eq:section-2-characteristic-speeds}
\lambda_\pm= \frac{\lambda_0\pm c_{\rm ac}}{1\pm\lambda_0 c_{\rm ac}}, \qquad c_{\rm ac}^2 \coloneqq \frac{\eps^{2}p'(\mu)}{D}\in(0,1).
\ee
Hence all characteristic speeds are real and satisfy \(-1<\lambda_-<\lambda_0<\lambda_+<1\).

\noindent{\bf 2.} A basis of the transport eigenspace is
\bel{eq:transport-eigenvectors}
r_j\coloneqq \Bigl( \frac{J_0J_j}{y+|\Jpar|^2},\frac{J_1J_j}{y+|\Jpar|^2},(\delta_{mj})_{m=2}^n \Bigr) , \qquad j=2,\ldots,n.
\ee
Indeed, a direct substitution gives \((\lambda_0A_0+A_1)r_j=0\). Their components parallel to the symmetry orbits are independent, and \(D\lambda_0[r_j]=0\), which proves linear degeneracy.

\ese

\bse
\noindent{\bf 3.} It remains to determine the variation of the acoustic speeds along their characteristic directions. We use the coordinates \((\mu,v,J_2,\ldots,J_n)\), where \(v = \artanh(-J_1/J_0)\), and introduce the vector fields
\be
\mathcal R_\pm \coloneqq \frac{h(1+\sigma_\parallel)}{D} \Bigl( \del_\mu+ \frac{1-\eps^{2}p'(\mu)}{2h}\sum_{m=2}^nJ_m\del_{J_m} \Bigr) \pm c_{\rm ac}\del_v.
\ee
A substitution in the principal symbol shows that \(\mathcal R_\pm\) span the two acoustic characteristic directions. We write \(\mathcal V\coloneqq\del_\mu+(1-\eps^{2}p') (2h)^{-1}\sum_{m=2}^nJ_m\del_{J_m}\) for the density direction in \(\mathcal R_\pm\). The definitions give \(\mathcal V\sigma_\parallel=-2\eps^{2}p'\sigma_\parallel/h\) and
\be
\label{eq:acoustic-speed-density-direction}
\frac{h(1+\sigma_\parallel)}D\mathcal V(c_{\rm ac}^2)+2c_{\rm ac}^2(1-c_{\rm ac}^2)=\frac{(1+\sigma_\parallel)^2}{D^3}G_{\rm N}(\mu).
\ee
Since \(\lambda_\pm= \tanh(v\pm\artanh c_{\rm ac})\),  differentiation along \(\mathcal R_\pm\), using the equation~\eqref{eq:acoustic-speed-density-direction}, gives
\bel{eq:acoustic-speed-directional-derivative}
\mathcal R_\pm\lambda_\pm = \pm \frac{\operatorname{sech}^2(v\pm\artanh c_{\rm ac})} {2c_{\rm ac}(1-c_{\rm ac}^2)} \frac{(1+\sigma_\parallel)^2}{D^3}\, G_{\rm N}(\mu).
\ee
The scalar factor multiplying \(\pm G_{\rm N}(\mu)\) is strictly positive. Thus the two acoustic fields are genuinely nonlinear exactly where \( G_{\rm N}\neq0\), with opposite orientations.

\ese
\noindent{\bf 4.} For the vacuum limit, we evaluate the characteristic polynomial in the momentum variable~\(\Jbb\). We put \(a\coloneqq(yq)'=(1-\eps^{2}p')/(1+\eps^{2}p')\in(0,1]\). Expanding the matrices in the equation~\eqref{eq:A0A1} gives
\bel{eq:general-characteristic-polynomial}
\det(\lambda A_0+A_1) =(\lambda J_0+J_1)^{n-1} \bigl(2a(\lambda J_0+J_1)^2- (1-a)y(1-\lambda^2) \bigr).
\ee
The last factor is responsible for the acoustic wave speeds. At a nonzero null state, \(y=0\), \(J_0\ne0\), and \(a=(1-\eps^{2}p'(0))/(1+\eps^{2}p'(0))>0\) so the last factor reduces to $2a(\lambda J_0+J_1)^2$ and all $n+1$ roots equal \(-J_1/J_0\). The leading coefficient remains nonzero there, since \(\det A_0=2aJ_0^{n+1}\). This proves coalescence of the interior speeds. The matrices $A_0,A_1$ both vanish at the zero state $\Jbb=0$ by the equation~\eqref{eq:A0A1} and boundedness of~$a$.
\end{proof}

\subsection{The isothermal model}
\label{section-3-2} 

\subsubsection{The asymptotic reference system}

The exactly isothermal law
\bel{equa-isothermal}
\eps^2p(\mu)=c_{\rm iso}^2\mu, \qquad 0<c_{\rm iso}<1,
\ee
serves as the reference equation at large density, although it does not belong to the polytropic-vacuum class of~\autoref{def-pression}. In this isothermal model, $c_{\rm iso}$ is the sound speed divided by the speed of light. By Proposition~\ref{prop:pressure-endpoint-asymptotics}, below, every admissible equation of state satisfies \(\eps^{2}p(\mu)/\mu\to \eps^{2}k_\infty^{2}\) and \(\eps^{2}p'(\mu)\to \eps^{2}k_\infty^{2}\) as \(\mu\to+ \infty\). Consequently, its pressure ratio converges to
\be
\label{eq:isothermal-asymptotic-pressure-ratio}
q(\mu) = \frac{\mu-\eps^{2}p(\mu)}{\mu+\eps^{2}p(\mu)} \to \frac{1-\eps^{2}k_\infty^{2}}{1+\eps^{2}k_\infty^{2}}.
\ee
The isothermal system with $c_{\rm iso}=\eps k_\infty$ gives the limiting principal coefficients as $\mu\to+\infty$. The isothermal law is the unique barotropic law with constant $q$, and its Euler system is exactly quadratic in $\Jbb$ (as noted in~\cite{TorusT2-1-2026}). Recall that LeFloch and Shelukhin~\cite{LeFlochShelukhin-2005} studied the non-relativistic isothermal current equation and its symmetries; LeFloch and Yamazaki~\cite{LeFlochYamazaki-2007} treated the relativistic problem, including vacuum and velocities approaching the speed of light.

\subsubsection{Exact quadratic structure}

The following corollary applies, in particular, to the radiation law  \(\eps^{2}p(\mu)=\mu/n\), which is also trace-free, since \(g^{\alpha\beta}T_{\alpha\beta}=-\mu+n\eps^{2}p\) in dimension \(1+n\).

\begin{corollary}[Structural properties of isothermal fluid flows]
\label{prop:isothermal-structure}
For the isothermal equation of state \(\eps^{2}p(\mu) =c_{\rm iso}^2\mu\) with $c_{\rm iso} \in (0,1)$, the Euler system \eqref{eq:euler-principal}, written in the variable \(\Jbb\), enjoys the following properties.
\bei

\item The relation between the density and the fluid momentum is explicit:
\bel{eq:J-norm-isothermal}
-\Jbb\cdot\Jbb=2(1+c_{\rm iso}^2)\mu.
\ee

\item The pressure ratio is constant:
\bel{eq:q-isothermal}
q= \frac{1-c_{\rm iso}^2}{1+c_{\rm iso}^2}, \qquad {q}\in(0,1).
\ee

\item The principal fluxes are exactly quadratic in \(\Jbb\).

\item The matter energy density is the quadratic form
\bel{eq:M00-isothermal}
\Mbf_{00}(\Jbb) = \frac{1+q}{2}J_0^2 + \frac{1-q}{2}\sum_{m=1}^nJ_m^2,
\ee
and is therefore strictly convex and positive-definite on \(\Rbb^{1+n}\).

\item Away from vacuum, the system is hyperbolic, with a linearly degenerate transport field of multiplicity $n-1$ and two genuinely nonlinear acoustic fields.

\item At a nonzero null state the acoustic speeds and the transport speed coincide, despite the positive squared sound speed \(\eps^{2}p'(0) =c_{\rm iso}^2>0\).

\eei
\end{corollary}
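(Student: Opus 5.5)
The plan is to specialize Theorem~\ref{prop:structure-euler} to the law \(\eps^2p(\mu)=c_{\rm iso}^2\mu\), and to check directly the few items that are specific to the isothermal law. First, this law satisfies the hypotheses of Theorem~\ref{prop:structure-euler}: \(p\) is linear, hence \(C^2\) on \((0,+\infty)\) and \(C^1\) up to zero. Also \(p(0)=0\), and \(\eps^2p'\equiv c_{\rm iso}^2\in(0,1)\), including at \(\mu=0\). With \(h=(1+c_{\rm iso}^2)\mu\), the identity~\eqref{eq:J-norm} gives \eqref{eq:J-norm-isothermal} at once. The definition~\eqref{eq:def-q} gives \(q=(1-c_{\rm iso}^2)/(1+c_{\rm iso}^2)\), which is independent of \(\mu\) and lies in \((0,1)\) because \(0<c_{\rm iso}<1\).

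Second, substitute this constant \(q\) into \eqref{eq:M-components} and \eqref{eq:euler-principal}. Since \(q_\Jbb\) no longer depends on \(\Jbb\), every flux in \eqref{eq:euler-principal} is a polynomial of degree two in the components of \(\Jbb\); this is the exact quadratic structure. For the energy density, expand \(\Mbf_{00}=\tfrac12(J_0^2+\sum_{m\ge1}J_m^2)-\tfrac q2(-J_0^2+\sum_{m\ge1}J_m^2)\) to obtain \eqref{eq:M00-isothermal}. Both coefficients \((1\pm q)/2\) are positive because \(q\in(0,1)\). So \(\Mbf_{00}\) is a positive-definite quadratic form on all of \(\Rbb^{1+n}\), and in particular it is strictly convex, with Hessian \(\operatorname{diag}(1+q,1-q,\ldots,1-q)\).

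Third, hyperbolicity and linear degeneracy of the transport field follow directly from Theorem~\ref{prop:structure-euler}. For genuine nonlinearity of the acoustic fields, we need \(G_{\rm N}\neq0\) away from vacuum. Here \(p''=0\), so \eqref{hyperbolic-eos-3} reduces to \(G_{\rm N}=2c_{\rm iso}^2(1-c_{\rm iso}^2)>0\). The sign in \eqref{eq:acoustic-speed-directional-derivative} then gives genuine nonlinearity. Finally, at a nonzero null state we have \(y=0\), and \(a=(yq)'=q>0\). So the factorization \eqref{eq:general-characteristic-polynomial} leaves only the root \(-J_1/J_0\), with multiplicity \(n+1\), even though \(\eps^2p'(0)=c_{\rm iso}^2>0\). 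This is the vacuum-degeneracy item of Theorem~\ref{prop:structure-euler}.

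We do not expect a genuine obstacle. The one point that needs care is the hypothesis check. The isothermal law lies outside the admissible class of Definition~\ref{def-pression} because it fails the vacuum condition, but Theorem~\ref{prop:structure-euler} uses only the hyperbolicity and regularity assumptions, which do hold here. So the corollary follows without invoking any of the asymptotic conditions.
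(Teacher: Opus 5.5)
Your proposal is correct and follows the paper's proof. You compute \(-\Jbb\cdot\Jbb\) and \(q\) directly, observe that a constant \(q\) makes every flux exactly quadratic, expand \(\Mbf_{00}\) into the stated diagonal form, and take the characteristic properties away from vacuum and at nonzero null states from Theorem~\ref{prop:structure-euler}. Your explicit checks of that theorem's hypotheses and of \(G_{\rm N}=2c_{\rm iso}^2(1-c_{\rm iso}^2)>0\), which genuine nonlinearity requires, are details the paper leaves implicit, and both are correct.
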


\begin{proof} The identity \eqref{eq:J-norm-isothermal} follows from \(-\Jbb\cdot\Jbb=2(\mu+\eps^{2}p(\mu))\), while the definition of the pressure ratio gives \(q=(1-c_{\rm iso}^2)/(1+c_{\rm iso}^2)\). Hence \((yq)'=q\), and all the principal fluxes are quadratic in \(\Jbb\). Moreover, \(\Mbf_{00} \, = \Ebf_0+ \tfrac q2(-\Jbb\cdot\Jbb)\), which gives the equation~\eqref{eq:M00-isothermal}; its two coefficients are positive since \(0<q<1\). \autoref{prop:structure-euler} gives the characteristic properties away from vacuum as well as at a nonzero null state.
\end{proof}

\subsection{Advected parallel momentum}
\label{section-3-3}

\subsubsection{Particle number normalization}

We return to the reduced system \eqref{eq:euler-principal} with a general equation of state, with \(e_0= \del_t\), \(e_1= \del_x\), and \(\Jbb=(\Jperp,\Jpar)\in\mathcal A\). With the particle density \(\Numb\) defined by the equation~\eqref{eq:section-3-particle-density}, we introduce the \emph{particle factor}
\bel{eq:section-4-particle-factor}
\hNumb(\mu) \coloneqq \frac{\Numb(\mu)}{\sqrt{2(\mu+\eps^{2}p(\mu))}}.
\ee
Since \(\Jbb=\sqrt{2(\mu+\eps^{2}p(\mu))}\,u\), the $(1+n)$-dimensional current is \(\vNumb^\alpha= \hNumb_\Jbb J^\alpha\); in particular, its components in the plane orthogonal to the symmetry orbits are
\bel{eq:section-4-reduced-particle-current}
\vNumb^0=-\hNumb_\Jbb J_0, \qquad \vNumb^1= \hNumb_\Jbb J_1, \qquad \del_t\vNumb^0+ \del_x\vNumb^1=0.
\ee
Here \(\hNumb_\Jbb \coloneqq \hNumb(\mu(\Jbb))\); raising the index of~\(J_\alpha\) changes the time-component sign. Plane symmetry reduces the conservation law~\eqref{eq:section-3-particle-conservation} to~\eqref{eq:section-4-reduced-particle-current}.

\subsubsection{Advected momentum parallel to the symmetry orbits} 

The normalized momentum in the equation~\eqref{eq:def-Jhat} satisfies \(\Jhatpar=\hNumb_\Jbb^{-1}\Jpar\), with the particle factor from the equation~\eqref{eq:section-4-particle-factor}. We call it the \emph{advected momentum parallel to the symmetry orbits} in view of the transport equation~\eqref{eq:Jhat-transport-Minkowski} below. The variable \(\Jhatpar\) is not determined by \(\Jbb\) on the vacuum boundary, where \(\hNumb_\Jbb=0\). Every statement below involving \(\Jhatpar\) is therefore understood either on a non-vacuum region or under the explicit hypothesis that a bounded representative of \(\Jhatpar\), inherited for instance from a regular approximation, has been prescribed up to vacuum. On each set $|\Jhatpar|^2\leq R$, because the normalization~\eqref{eq:section-3-particle-density} gives $\Numb(\mu)\leq\mu$, we have $|\Jpar|^2=\Numb(\mu)^2|\Jhatpar|^2/(2(\mu+\eps^{2}p(\mu)))\leq R\mu/2$. Hence every finite momentum limit as $\mu\to0$ has zero component parallel to the symmetry orbits, uniformly in rapidity in the direction orthogonal to the symmetry orbits; it may still be a nonzero null momentum orthogonal to the symmetry orbits. The $n-1$ equations for momentum parallel to the symmetry orbits in the equation~\eqref{eq:euler-principal} can then be written as \(\del_t(\vNumb^0\widehat J_m)+ \del_x(\vNumb^1\widehat J_m) =0\). Combining them with the conservation law~\eqref{eq:section-4-reduced-particle-current} yields
\bel{eq:Jhat-transport-Minkowski}
\del_t\widehat J_m-\frac{J_1}{J_0}\del_x\widehat J_m=0, \qquad m=2,\ldots,n.
\ee
Thus, away from vacuum, the normalized momentum parallel to the symmetry orbits is advected with velocity in the direction orthogonal to the symmetry orbits \(-J_1/J_0\). The particle and $n-1$ parallel-momentum conservation laws yield the following infinite family of currents.

\begin{proposition}[Currents defined from the advected momentum]
\label{prop:section-4-transported-currents}
For plane-symmetric flows and every \(H\in C^1(\Rbb^{n-1})\), every regular non-vacuum solution of the system~\eqref{eq:euler-principal}, or every regular solution for which both the particle current $\vNumb$ and $\Jhatpar$ have $C^1$ extensions to vacuum, satisfies
\bel{eq-H-Jhat-balance}
\del_t\bigl(H(\Jhatpar)\vNumb^0 \bigr)+ \del_x\bigl(H(\Jhatpar)\vNumb^1 \bigr) =0.
\ee

\end{proposition}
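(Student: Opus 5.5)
The argument is a direct product-rule computation that combines the particle conservation law~\eqref{eq:section-4-reduced-particle-current} with the transport equations~\eqref{eq:Jhat-transport-Minkowski}. On a non-vacuum region we have $\vNumb^0=-\hNumb_\Jbb J_0>0$, because $J_0<0$ and $\hNumb>0$ for $\mu>0$. The leibniz rule then gives
\[
\del_t\bigl(H(\Jhatpar)\vNumb^0\bigr)+\del_x\bigl(H(\Jhatpar)\vNumb^1\bigr)
=H(\Jhatpar)\bigl(\del_t\vNumb^0+\del_x\vNumb^1\bigr)
+\sum_{m=2}^n\del_{\widehat J_m}H(\Jhatpar)\bigl(\vNumb^0\del_t\widehat J_m+\vNumb^1\del_x\widehat J_m\bigr).
\]
The first bracket vanishes by~\eqref{eq:section-4-reduced-particle-current}. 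For the second, note that $\vNumb^1/\vNumb^0=\hNumb_\Jbb J_1/(-\hNumb_\Jbb J_0)=-J_1/J_0$. Hence
\[
\vNumb^0\del_t\widehat J_m+\vNumb^1\del_x\widehat J_m=\vNumb^0\Bigl(\del_t\widehat J_m-\frac{J_1}{J_0}\del_x\widehat J_m\Bigr),
\]
which is zero by~\eqref{eq:Jhat-transport-Minkowski}. This proves~\eqref{eq-H-Jhat-balance} away from vacuum.

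We should also check that~\eqref{eq:Jhat-transport-Minkowski} itself is justified. It follows from the last $n-1$ lines of~\eqref{eq:euler-principal}. Since $J_0J_m=-\hNumb_\Jbb^{-1}\ldots$ is awkward to track directly, we use the identity $J_\alpha J_m=\hNumb_\Jbb^{-1}(\pm\vNumb^\alpha)\hNumb_\Jbb\widehat J_m\cdot(\ldots)$. More cleanly, $J_0J_m=-\vNumb^0\widehat J_m\cdot(J_0^2\ldots)$ is not needed: the paper has already recorded that those equations are equivalent to $\del_t(\vNumb^0\widehat J_m)+\del_x(\vNumb^1\widehat J_m)=0$. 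We may simply use that form, which is the case $H=\widehat J_m$ of our statement. Expanding it and subtracting $\widehat J_m$ times particle conservation gives the transport law wherever $\vNumb^0\neq0$. This is the one point where some care is warranted: the equivalence uses the factor relating $J_0J_m$ to $\vNumb^0\widehat J_m$, namely $\sqrt{2h}\,\hNumb^{-1}\ldots$, and in the paper's normalization these coincide up to the constant factor $2h/(\hNumb\cdot\hNumb^{-1})$. I would verify this factor directly and not re-derive it.

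For the second case, suppose $\vNumb$ and $\Jhatpar$ have $C^1$ extensions up to vacuum. Then $H(\Jhatpar)\vNumb$ is $C^1$ on the whole domain and its divergence is continuous. On the open non-vacuum set this divergence vanishes by the computation above. At vacuum points I would not attempt to use~\eqref{eq:Jhat-transport-Minkowski}. Instead I would use the same Leibniz expansion with $\vNumb^0=\vNumb^1=0$ there. Indeed, $\vNumb=\hNumb_\Jbb\Jbb$ with $\hNumb=0$ at $\mu=0$, so the $\del H$-terms carry the vanishing factors $\vNumb^\alpha$. The remaining term $H\,\del_\alpha\vNumb^\alpha$ vanishes because particle conservation holds on all of the regular solution by continuity: it holds on the non-vacuum set, and at interior points of the vacuum set $\vNumb\equiv0$.

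The main (mild) obstacle is this vacuum bookkeeping, in particular boundary points of the vacuum set. There I would argue by continuity of the $C^1$ divergence, which is zero on the dense-or-open complement, together with the pointwise expansion above. Apart from this, the proof is a two-line chain-rule identity.
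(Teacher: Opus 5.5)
Your proof is correct and is essentially the paper's argument: product rule, particle conservation, the $n-1$ parallel-momentum laws, and continuity on the dense complement of the vacuum boundary. The one difference is that the paper writes $\vNumb^a\del_a\widehat J_m=\del_a(\widehat J_m\vNumb^a)-\widehat J_m\del_a\vNumb^a$ and uses the conservation form directly, so it never divides by $\vNumb^0$ and the same identity covers vacuum once those laws are extended by continuity. You instead use the transport law away from vacuum and need a separate pointwise argument at vacuum, using $\vNumb=0$ there; both routes work.

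Your aside about a conversion factor is muddled and should be dropped. Since $\widehat J_m=J_m/\hNumb_\Jbb$, one has exactly $\vNumb^0\widehat J_m=-J_0J_m$ and $\vNumb^1\widehat J_m=J_1J_m$, so there is no factor to verify. A non-constant factor such as $2h$ would in fact destroy the equivalence with the last $n-1$ equations of~\eqref{eq:euler-principal}.
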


\begin{proof} We sum over \(a=0,1\) and use the components of \(\Jhatpar\) directly. The product and chain rules give
\be
\label{eq:transport-current-chain-vacuum}
\del_a\bigl(H(\Jhatpar)\vNumb^a \bigr) =H(\Jhatpar)\del_a\vNumb^a+ \sum_{m=2}^n\del_{\widehat J_m}H(\Jhatpar)\bigl(\del_a(\widehat J_m\vNumb^a) -\widehat J_m\del_a\vNumb^a \bigr) =0.
\ee
The last equality uses the particle conservation law and the $n-1$ conservation laws for momentum parallel to the symmetry orbits. Under the stated $C^1$ extension of the particle current, its divergence extends continuously from the positive-density region and is zero in the interior of the vacuum set, hence everywhere. The parallel-momentum identities likewise extend by continuity. The calculation therefore applies on the vacuum set without division by particle density.
\end{proof}

\subsubsection{Flows with constant advected momentum parallel to the symmetry orbits}

By the transport equation~\eqref{eq:Jhat-transport-Minkowski}, a spatially constant initial value of~$\Jhatpar$ remains constant along the flow. In this case the Euler system reduces to two equations for $\Jperp=(J_0,J_1)$.

\subsection{Quasi-currents and their shear-induced sources}
\label{section-3-4}

\subsubsection{The quasi-balance law}

We freeze $\Jhatpar$ to obtain a current of the two acoustic equations from the kernels constructed below. For a family of such currents that is $C^1$ in all state variables, the chain rule and transport equation give the following quasi-balance law when $\Jhatpar$ is allowed to vary.

\begin{definition} 
\label{def-quasi-cur-prelim}
For plane-symmetric fluid flows, a \(C^1\) vector field in the plane orthogonal to the symmetry orbits \(F= \Fcomp^0e_0+ \Fcomp^1e_1\), whose orthonormal components depend on \(\Jbb\) in the interior of the admissible cone, is called a \emph{quasi-current} for the reduced Euler system provided every regular non-vacuum solution to the system~\eqref{eq:euler-principal} satisfies, in the standard inertial frame,
\bel{eq:section-4-quasi-balance}
\del_t\Fcomp^0(\Jbb)+ \del_x\Fcomp^1(\Jbb) = \sum_{m=2}^n\Fvee_m(\Jbb)\del_x\widehat J_m,
\ee
in which the right-hand side is called the \emph{shear-induced source.}  The continuous functions \(\Fvee_m\), \(m=2,\ldots,n\), are called the \emph{shear-induced factors} associated with \(F\). These coefficients multiply derivatives of the advected momentum parallel to the symmetry orbits, which represent the shear of the reduced flow.
\end{definition} 
 
\subsubsection{Rapidity variable}

For every non-vacuum state, the rapidity \(v\in\Rbb\) in the direction orthogonal to the symmetry orbits satisfies
\bel{equa-rapi}
\tanh v = - \frac{J_1}{J_0} ,
\ee
and we introduce the orthonormal frame adapted to the fluid motion orthogonal to the symmetry orbits
\bel{eq:section-4-fluid-lorentz-frame}
\overline u = \cosh(v)e_0+ \sinh(v)e_1, \qquad \overline u^\perp = \sinh(v)e_0+ \cosh(v)e_1.
\ee
Thus \(\overline u\) is the projection of the fluid velocity~$u$ onto the \((e_0,e_1)\)-plane, normalized to unit norm, and \(\overline u^\perp\)  is the positively oriented unit spacelike vector orthogonal to it in that plane. They form an orthonormal basis, namely they obey $g(\overline u,\overline u)=-1$, $g(\overline u,\overline u^\perp)=0$, and $g(\overline u^\perp,\overline u^\perp)=1$. We shall find the following identities useful,
\be
J_0 \pm J_1 = - (-\Jperp\cdot\Jperp)^{1/2} e^{\mp v}.
\ee

\subsubsection{Lorentzian character and scalar components}

A density and its spatial fluxes are the components of one spacetime vector. For the two-component formulas in this paragraph, we consider a current taking values in a chosen space--time plane with oriented orthonormal frame \((e_0,e_1)\), so that \(F=F^0e_0+F^1e_1\). For a general current in dimension \(1+n\), the same formulas describe its projection onto this plane; its components parallel to the symmetry orbits remain unchanged by a boost in the plane. A boost of rapidity \(\theta\) is defined by \(e_0^\theta\coloneqq \cosh(\theta)e_0+ \sinh(\theta)e_1\) and \(e_1^\theta\coloneqq \sinh(\theta)e_0+ \cosh(\theta)e_1\). The components of the same current in the boosted frame are
\bel{eq:section-3-lorentz-current}
\begin{pmatrix}F^{\theta 0}\\F^{\theta 1}\end{pmatrix} \, = \begin{pmatrix}\cosh(\theta)&-\sinh(\theta)\\-\sinh(\theta)&\cosh(\theta)\end{pmatrix}\begin{pmatrix}F^0\\F^1\end{pmatrix}.
\ee
On the other hand, \(\nabla_aF^a\) is a scalar and is unaffected by the boost.

Based on the projected fluid velocity~$\overline u$ and its orthogonal~$\overline u^\perp$  given in the equation~\eqref{eq:section-4-fluid-lorentz-frame}, we associate with each current in the plane the two \emph{rest-frame scalar components}
\bel{eq:section-3-rest-frame-scalars}
\alpha_{F} \coloneqq -g(F,\overline u), \qquad \beta_{F} \coloneqq g(F,\overline u^\perp).
\ee
They are independent of the orthonormal frame used to calculate them, and they reconstruct the full current by
\bel{eq:section-3-current-rest-frame-decomposition}
F=F^0e_0+F^1e_1=\alpha_F\overline u+\beta_F\overline u^\perp.
\ee 

\begin{lemma}[Current components under Lorentz boosts]
\label{prop:section-3-current-reconstruction}
The rapidity $v=\artanh(u^1/u^0)\in\RR$ gives
\bse\label{eq:section-3-rest-frame-and-fixed-frame-components}
\bel{eq:section-3-rest-frame-components}
\alpha_{F}=\cosh(v)F^0-\sinh(v)F^1,\qquad \beta_{F}=-\sinh(v)F^0+\cosh(v)F^1,
\ee
\bel{eq:section-3-fixed-frame-components}
F^0=\cosh(v)\alpha_{F}+\sinh(v)\beta_{F},\qquad F^1=\sinh(v)\alpha_{F}+\cosh(v)\beta_{F}.
\ee
\ese
\end{lemma}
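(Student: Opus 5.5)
The plan is to compute the two rest-frame scalars directly from their definitions~\eqref{eq:section-3-rest-frame-scalars}, and then to invert the resulting linear map. Write $F=F^0e_0+F^1e_1$ and use the Minkowski relations $g(e_0,e_0)=-1$, $g(e_1,e_1)=1$, $g(e_0,e_1)=0$. Together with the frame~\eqref{eq:section-4-fluid-lorentz-frame} this gives
\[
g(F,\overline u)=-\cosh(v)F^0+\sinh(v)F^1,\qquad g(F,\overline u^\perp)=-\sinh(v)F^0+\cosh(v)F^1 .
\]
Substituting these into $\alpha_F=-g(F,\overline u)$ and $\beta_F=g(F,\overline u^\perp)$ yields~\eqref{eq:section-3-rest-frame-components} at once.

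One point should be checked first: the rapidity here is defined by $\tanh v=u^1/u^0$, whereas~\eqref{equa-rapi} uses $\tanh v=-J_1/J_0$. Since $J_m=\sqrt{2h}\,u_m$ with covariant components $u_0=-u^0$ and $u_1=u^1$, the two definitions coincide. Hence $\overline u$ is indeed the normalized projection of $u$ used above.

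For~\eqref{eq:section-3-fixed-frame-components}, I would observe that the matrix
\[
\begin{pmatrix}\cosh v&-\sinh v\\-\sinh v&\cosh v\end{pmatrix}
\]
has determinant $\cosh^2v-\sinh^2v=1$. Its inverse is therefore the same matrix with the off-diagonal signs flipped, which gives the stated formulas.

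Alternatively, one can expand the reconstruction~\eqref{eq:section-3-current-rest-frame-decomposition} $F=\alpha_F\overline u+\beta_F\overline u^\perp$ in the basis $(e_0,e_1)$ and read off the coefficients. This route also confirms the reconstruction identity itself, because $(\overline u,\overline u^\perp)$ is orthonormal with $g(\overline u,\overline u)=-1$.

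There is no real obstacle. The only care needed is with signs, namely the Lorentzian sign in $\alpha_F$ and the covariant-versus-contravariant convention in the definition of $v$.
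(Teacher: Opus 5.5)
Your proposal is correct and follows essentially the same route as the paper: the rest-frame formulas come from contracting $F$ with $\overline u$ and $\overline u^\perp$, and the fixed-frame formulas come from the decomposition $F=\alpha_F\overline u+\beta_F\overline u^\perp$, which is equivalent to your explicit inversion of the unimodular boost matrix. Your consistency check that $\tanh v=u^1/u^0$ agrees with $\tanh v=-J_1/J_0$ (via $u_0=-u^0$) is a sound extra detail that the paper leaves implicit.
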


\begin{proof} The equation~\eqref{eq:section-3-rest-frame-components} follows by contracting \(F\) with \(\overline u\) and \(\overline u^\perp\), while the equation~\eqref{eq:section-3-fixed-frame-components} is the decomposition~\eqref{eq:section-3-current-rest-frame-decomposition}.
\end{proof}

\subsubsection{Frame-independent form of the quasi-balance law}

We now express the quasi-balance law of~\autoref{def-quasi-cur-prelim} without choosing an inertial frame. The source is written in terms of the scalar components of the current in the fluid frame.

\begin{proposition}[Lorentz-covariant quasi-balance law]
\label{prop:section-4-lorentz-quasi-balance}
For plane-symmetric fluid flows, let \(F\) be a quasi-current, with
\bel{eq:section-4-rest-frame-current}
\alpha_{F} = \cosh(v)\Fcomp^0-\sinh(v)\Fcomp^1, \qquad \beta_{F} = -\sinh(v)\Fcomp^0+ \cosh(v)\Fcomp^1
\ee
as its rest-frame scalar components. On every regular solution of the reduced Euler system, one has
\bel{eq:section-4-intrinsic-quasi-balance}
\nabla_aF^a= \sum_{m=2}^n\mathcal R_m\,(\overline u^\perp)^a\nabla_a\widehat J_m, \qquad \mathcal R_m \coloneqq \frac{\del\beta_{F}}{\del\widehat J_m}\Big|_{\mu,v} \, = \cosh(v)\Fvee_m.
\ee
{The parameter derivative keeps the original density and rapidity in the direction orthogonal to the symmetry orbits fixed.} Both sides of the balance law~\eqref{eq:section-4-intrinsic-quasi-balance} are Lorentz scalars. In particular, the quasi-current obeys the vectorial transformation rule \eqref{eq:section-3-lorentz-current}; only its divergence law differs from that of an ordinary current.
\end{proposition}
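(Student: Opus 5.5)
We need to prove that \(\nabla_aF^a=\sum_m \mathcal R_m(\overline u^\perp)^a\nabla_a\widehat J_m\) with \(\mathcal R_m=\del_{\widehat J_m}\beta_F|_{\mu,v}=\cosh(v)\Fvee_m\). The plan has three steps: identify the source in the inertial frame, rewrite it covariantly using the transport equation, and then identify \(\Fvee_m\) with a parameter derivative of \(\beta_F\).

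\textbf{Step 1: covariant form of the source.} From Definition~\ref{def-quasi-cur-prelim}, in the standard frame \(\nabla_aF^a=\del_t\Fcomp^0+\del_x\Fcomp^1=\sum_m\Fvee_m\del_x\widehat J_m\). By \eqref{eq:Jhat-transport-Minkowski} and \eqref{equa-rapi}, \(\del_t\widehat J_m=-\tanh(v)\del_x\widehat J_m\), so \(\overline u^a\nabla_a\widehat J_m=0\). Then
\[
(\overline u^\perp)^a\nabla_a\widehat J_m=\sinh v\,\del_t\widehat J_m+\cosh v\,\del_x\widehat J_m=\frac{1}{\cosh v}\,\del_x\widehat J_m .
\]
Hence \(\sum_m\Fvee_m\del_x\widehat J_m=\sum_m\cosh(v)\Fvee_m\,(\overline u^\perp)^a\nabla_a\widehat J_m\). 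This gives the stated form with \(\mathcal R_m=\cosh(v)\Fvee_m\). Both sides are scalars: \(\widehat J_m\) is a scalar field and \(\overline u^\perp\) is a vector determined by the flow. The transformation rule \eqref{eq:section-3-lorentz-current} holds for \(F\) because \(F\) is defined as a vector through Lemma~\ref{prop:section-3-current-reconstruction}.

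\textbf{Step 2: identifying \(\Fvee_m\).} Use state variables \((\mu,v,\widehat J_2,\dots,\widehat J_n)\) on the non-vacuum set. The chain rule gives
\[
\del_t\Fcomp^0+\del_x\Fcomp^1=\bigl[\text{terms in }\del\mu,\del v\bigr]+\sum_m\bigl(\del_{\widehat J_m}\Fcomp^0\,\del_t\widehat J_m+\del_{\widehat J_m}\Fcomp^1\,\del_x\widehat J_m\bigr).
\]
Substituting \(\del_t\widehat J_m=-\tanh v\,\del_x\widehat J_m\), the \(\widehat J\)-terms become
\[
\bigl(\del_{\widehat J_m}\Fcomp^1-\tanh v\,\del_{\widehat J_m}\Fcomp^0\bigr)\del_x\widehat J_m=\frac{1}{\cosh v}\,\del_{\widehat J_m}\beta_F\big|_{\mu,v}\,\del_x\widehat J_m .
\]
The last equality uses \eqref{eq:section-4-rest-frame-current}: since \(v\) is held fixed, \(\del_{\widehat J_m}\beta_F=-\sinh v\,\del_{\widehat J_m}\Fcomp^0+\cosh v\,\del_{\widehat J_m}\Fcomp^1\).

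\textbf{Step 3: the main obstacle.} It remains to show that the \(\del\mu,\del v\) terms cancel on solutions, that is, that for each frozen \(\Jhatpar\) the map \((\mu,v)\mapsto F\) is a current of the reduced two-equation system. The difficulty is that the reduced equations \eqref{eq:euler-principal}, written in \((\mu,v)\) with \(\Jhatpar\) varying, themselves contain \(\del_x\widehat J_m\) terms, because \(\Jpar\) enters \(\Mbf_{11}\) and \(\Mbf_{00}\).

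To handle this, I would write the quasi-balance identity evaluated along all solutions, including those with constant \(\Jhatpar\). For these, the reduction of Section~\ref{section-3-3} gives a closed system in \(\Jperp\), so the \(\del\mu,\del v\) contribution must vanish identically in \((\del_t,\del_x)(\mu,v)\) subject to that system. When \(\Jhatpar\) varies, the reduced acoustic equations acquire additional source terms proportional to \(\del_x\widehat J_m\). These terms can be absorbed into \(\Fvee_m\) only if the multiplier's contraction with them is a function of state, which it is. So \(\Fvee_m\) is a priori the sum of the explicit derivative term from Step 2 and the multiplier term.

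To conclude, I would express \(\Fvee_m\) as the coefficient of \(\del_x\widehat J_m\) and then compare it with the Definition. The Definition asserts only that some continuous \(\Fvee_m\) exists, and the coefficient of \(\del_x\widehat J_m\) is unique, since \(\del_x\widehat J_m\) can be prescribed arbitrarily at a point. Uniqueness therefore reduces the identity \(\mathcal R_m=\del_{\widehat J_m}\beta_F|_{\mu,v}\) to showing that the multiplier term vanishes. Establishing this vanishing is the computation I expect to be delicate. I would first test it on the particle current \(\vNumb\), which is independent of \(\widehat J\) at fixed \((\mu,v)\) and has \(\Fvee=0\), and then argue in general through the rest-frame formulation, where the \(\widehat J\)-dependence enters only through \(\beta_F\).
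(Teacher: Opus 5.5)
Your Steps~1 and~2 match the paper's proof. The transport law gives $\overline u^a\nabla_a\widehat J_m=0$ and $(\overline u^\perp)^a\nabla_a\widehat J_m=(\cosh v)^{-1}\del_x\widehat J_m$. Differentiating $\beta_F$ at fixed $(\mu,v)$ gives $\del_{\widehat J_m}\beta_F=\cosh v\,(\del_{\widehat J_m}\Fcomp^1-\tanh v\,\del_{\widehat J_m}\Fcomp^0)$.

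The substantive claim, however, is $\mathcal R_m=\del_{\widehat J_m}\beta_F|_{\mu,v}$, that is, $\Fvee_m=(\del_{\widehat J_m}\Fcomp^1-\tanh v\,\del_{\widehat J_m}\Fcomp^0)|_{\mu,v}$. Your Step~3 reduces this to the vanishing of a ``multiplier term'' and then stops, so the proof has a gap exactly there. The premise of Step~3 is also false. Written in $\Sigma=(y,v)$, equivalently $(\mu,v)$, with $\Jhatpar$ varying, the two acoustic equations carry no $\del_x\widehat J_m$ source at all.

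Your fallback would not close the gap either:
\begin{itemize}
\item One test current gives only one scalar condition on a two-component coefficient.
\item The particle density $\vNumb^0=\Numb(\mu)\sqrt{1+\sigma_\varpi}\cosh v$ does depend on $\Jhatpar$ at fixed $(\mu,v)$; only $\Fcomp^1-\tanh v\,\Fcomp^0=0$ does not.
\item $\alpha_F$ generally depends on $\Jhatpar$ at fixed $(\mu,v)$ too, e.g.\ $\alpha=m\cosh v$ for the energy current.
\end{itemize}

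The missing idea is the block-diagonal structure of Lemma~\ref{lem:section-4-acoustic-block-diagonalization}. Set $R=y+\underline{\Numb}(y)^2|\Jhatpar|^2$, $J_0=-R^{1/2}\cosh v$, $J_1=R^{1/2}\sinh v$ and $\Jpar=\underline{\Numb}(y)\Jhatpar$. At fixed $\Sigma$, the $\Jpar$-dependence of $\Mbf_{00}$, $J_0J_1$, $\Mbf_{11}$ enters in the ratios $1:-\tanh v:\tanh^2v$. Hence, for $U=(\Mbf_{00},J_0J_1)$ and $G=(J_0J_1,\Mbf_{11})$,
\[
\frac{\del U}{\del\widehat J_m}=2\underline{\Numb}(y)^2\widehat J_m\cosh^2v\,(1,-\tanh v),\qquad \frac{\del G}{\del\widehat J_m}=-\tanh v\,\frac{\del U}{\del\widehat J_m}.
\]
Insert $\del_t\widehat J_m=-\tanh v\,\del_x\widehat J_m$. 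The $\widehat J$-contributions $\del_{\widehat J_m}U\,\del_t\widehat J_m-\del_{\widehat J_m}G\,\del_x\widehat J_m$ to $\del_tU-\del_xG=0$ then cancel identically, so $\del_t\Sigma=A(\Sigma,\Jhatpar)\del_x\Sigma$. Equivalently, your multiplier term $W\cdot(\del_{\widehat J_m}G+\tanh v\,\del_{\widehat J_m}U)$ vanishes for every current.

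With this identity, your uniqueness-of-coefficients argument finishes the proof. Prescribing $\del_x\Sigma$ and $\del_x\widehat J_m$ arbitrarily at a point yields both the compatibility relation and $\Fvee_m=(\del_{\widehat J_m}\Fcomp^1-\tanh v\,\del_{\widehat J_m}\Fcomp^0)|_{\mu,v}$. This is how the paper concludes, through Lemma~\ref{lem:Fvee-expr-prelim}.

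The point is not cosmetic. In other coordinates, such as $(J_0,J_1,\Jhatpar)$, the $\Jhatpar$-dependence of $(U,G)$ is not of this transport form. There the acoustic equations do pick up $\del_x\widehat J_m$ terms, which is presumably what you had in mind.
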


\begin{proof} The transport equations in \eqref{eq:Jhat-transport-Minkowski} and the definition of \(v\) give
\bel{eq:section-4-fluid-frame-transport}
\overline u^a\nabla_a\widehat J_m=0, \qquad (\overline u^\perp)^a\nabla_a\widehat J_m=(\cosh v)^{-1}\del_x\widehat J_m.
\ee
Moreover, since \(J_1/J_0=-\tanh v\), the second identity in \eqref{eq:section-4-rest-frame-current} gives  \(\beta_{F}=\cosh(v)(\Fcomp^1+(J_1/J_0)\Fcomp^0)\).  Differentiating at a fixed  \((\mu,v)\) gives \(\mathcal R_m=\cosh(v)(\del_{\widehat J_m}\Fcomp^1-\tanh(v)\del_{\widehat J_m}\Fcomp^0)\). The chain rule in the quasi-balance law identifies the expression in parentheses with \(\Fvee_m\), and hence \(\mathcal R_m=\cosh(v)\Fvee_m\). Substitution into the balance law~\eqref{eq:section-4-quasi-balance} proves the balance law~\eqref{eq:section-4-intrinsic-quasi-balance}.
\end{proof}

When using the quasi-balance law in applications, it is useful to distinguish conservation, convexity at fixed advected momentum, and integrability of the source, as follows.
\bei

\item \emph{Conservation.} If shear-induced factors vanish, one obtains a conserved current of the full $(n+1)$-component reduced system. If a family of solutions features constant~$\Jhatpar$, one has conservation on that family, without necessarily defining a conserved current of the full system.

\item \emph{Parametrization and convexity.} Future-convexity at fixed~$\Jhatpar$, as in Definition~\ref{def:section-3-future-convex} is the relevant notion when using these currents as mathematical entropies. Sections~\ref{section-9} and~\ref{section-10}, below, describe growth and convexity through the vacuum profile.

\item \emph{Shear-induced sources.} The products in the law~\eqref{eq:section-4-intrinsic-quasi-balance} belong to $L^1$ if $\Fvee_m\in L^\infty$ and $\del_x\widehat J_m\in L^1$.
Under this condition they are suitable sources in compensated compactness arguments.
\eei

\noindent The quasi-balance law~\eqref{eq:section-4-intrinsic-quasi-balance} is proved here for regular flows. Its use for weakly regular flows requires a justified chain rule and source terms defined as locally integrable functions or Radon measures.

\subsection{Fluid variables and shear-induced factors}
\label{section-3-5}

\subsubsection{Density and rapidity variables}

We now use the rapidity \(v\) defined in the equation~\eqref{equa-rapi} and set
\bel{eq:section-4-acoustic-state-reconstruction}
y=-\Jbb\cdot\Jbb=2\bigl(\mu+\eps^{2}p(\mu)\bigr)>0, \qquad \Sigma\coloneqq(y,v).
\ee
Together with the advected momentum parallel to the symmetry orbits \(\Jhatpar\), the pair \(\Sigma\) determines the fluid state throughout the positive-density interior. Since \(\mu\mapsto 2(\mu+\eps^{2}p(\mu))\) is strictly increasing, we may write \(\mu=\funmu(y)\) for its inverse. The reconstruction formulas are
\bel{eq:section-4-state-reconstruction}
\Jpar=\hNumb(\funmu(y))\Jhatpar, \qquad J_0=-\bigl(y+|\Jpar|^2\bigr)^{1/2}\cosh v, \qquad J_1=\bigl(y+|\Jpar|^2\bigr)^{1/2}\sinh v.
\ee
In particular, \(J_1/J_0=-\tanh v\), and \(\Sigma\) ranges over \((0,+\infty)\times\Rbb\).

These coordinates describe the positive-density interior, but they need not remain bounded along a sequence approaching a nonzero null state. If \(y\to0\) while \(\Jhatpar\) and \(v\) remain bounded, then \(\Jbb\to0\). A nonzero null limit with bounded \(\Jhatpar\) therefore requires \(|v|\to+\infty\); an unbounded \(\Jhatpar\) gives another way for the coordinates to leave every compact set. We formulate all boundary statements directly in the original variable \(\Jbb\).

In these variables, the reduced Euler system~\eqref{eq:euler-principal} takes the form
\bel{Sigmat}
\del_t\Sigma=A(\Sigma,\Jhatpar)\del_x\Sigma, \qquad \del_t\Jhatpar=-\tanh(v)\del_x\Jhatpar.
\ee
We define the matrix and flux variables by
\bel{eq:section-4-acoustic-matrix}
U\coloneqq(\Mbf_{00},J_0J_1), \qquad G\coloneqq(J_0J_1,\Mbf_{11}), \qquad A(\Sigma,\Jhatpar)\coloneqq(D_\Sigma U)^{-1}D_\Sigma G.
\ee
Here \(U\) and \(G\) are evaluated using the equation~\eqref{eq:section-4-state-reconstruction}. For a current \(F=(\Fcomp^0,\Fcomp^1)\), its multiplier \(W=(W_0,W_1)\), at fixed \(\Jhatpar\), is determined by
\bel{eq:section-4-acoustic-multiplier}
D_\Sigma\Fcomp^0=W\cdot D_\Sigma U, \qquad D_\Sigma\Fcomp^1=-W\cdot D_\Sigma G.
\ee
The first relation determines \(W\) on the positive-density state space once \(D_\Sigma U\) is shown to be invertible; the second is the compatibility relation for the current flux. We write the particle factor in the variable~$y$ as
\be
\underline{\Numb}(y)\coloneqq\hNumb(\funmu(y)), \qquad y>0.
\ee

\begin{lemma}[Block diagonalization]
\label{lem:section-4-acoustic-block-diagonalization}
With $c^2= \eps^{2}p'(\funmu(y))$,
for every \(m=2,\ldots,n\), differentiation at fixed \(\Sigma\) gives
\bel{eq:section-4-flux-parameter-derivatives}
\frac{\del U}{\del\widehat J_m} = 2 \underline{\Numb}(y)^2\widehat J_m\cosh^2v\,(1,-\tanh v), \qquad \frac{\del G}{\del\widehat J_m} = -\tanh v\,\frac{\del U}{\del\widehat J_m}.
\ee
Moreover, throughout the positive-density state space, \(D_\Sigma U\) is invertible and
\bel{eq:acoustic-coordinate-determinant}
-\det D_\Sigma U = \frac{(y+ \underline{\Numb}^2|\Jhatpar|^2)\cosh^2v}{1+c^2} \Bigl( 1-c^2\tanh^2v +\frac{1}{y} \underline{\Numb}(y)^2|\Jhatpar|^2 \, (1-c^2) \Bigr) > 0.
\ee
\end{lemma}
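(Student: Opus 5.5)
The plan is to compute everything explicitly from the reconstruction formulas~\eqref{eq:section-4-state-reconstruction}. I will write $Z\coloneqq y+|\Jpar|^2=y+\underline{\Numb}(y)^2|\Jhatpar|^2$, so that $J_0=-Z^{1/2}\cosh v$ and $J_1=Z^{1/2}\sinh v$. The parameters $\widehat J_m$ then enter $U$ and $G$ only through $Z$.

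\emph{Step~1: closed forms for $U$ and $G$.} Using $J_0^2+J_1^2=Z(\cosh^2v+\sinh^2v)$, $|\Jpar|^2=Z-y$ and $-\Jbb\cdot\Jbb=y$ in the definitions~\eqref{eq:M-components}, I expect
\[
\Mbf_{00}=Z\cosh^2v-\tfrac12 y(1-q),\qquad J_0J_1=-Z\sinh v\cosh v,\qquad \Mbf_{11}=Z\sinh^2v+\tfrac12 y(1-q).
\]
The only $y$-dependence besides $Z$ is the combination $\tfrac12 y(1-q)$. By~\eqref{eq:yq-prime}, its $y$-derivative is $\tfrac12(1-a)=c^2/(1+c^2)$.

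\emph{Step~2: derivatives in the parallel momentum.} At fixed $\Sigma$ we have $\del Z/\del\widehat J_m=2\underline{\Numb}^2\widehat J_m$. This gives $\del U/\del\widehat J_m=2\underline{\Numb}^2\widehat J_m(\cosh^2v,-\sinh v\cosh v)$, which is the first identity in~\eqref{eq:section-4-flux-parameter-derivatives}. Similarly, $\del G/\del\widehat J_m=2\underline{\Numb}^2\widehat J_m(-\sinh v\cosh v,\sinh^2v)$, which is $-\tanh v$ times the previous vector.

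\emph{Step~3: the determinant.} This is the only real computation. First I need $\del_yZ=1+2\underline{\Numb}\,\underline{\Numb}'|\Jhatpar|^2$. From $\underline{\Numb}=\Numb/\sqrt y$, together with $\Numb'=\Numb/h=2\Numb/y$ and $dy/d\mu=2(1+c^2)$, I obtain
\[
\underline{\Numb}'(y)=\underline{\Numb}\,\frac{1-c^2}{2y(1+c^2)}.
\]
Hence $\del_yZ=1+\underline{\Numb}^2|\Jhatpar|^2(1-c^2)/(y(1+c^2))$.

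Expanding the $2\times2$ determinant, with $b\coloneqq c^2/(1+c^2)$, $\del_vU=(Z\sinh2v,-Z\cosh2v)$ and $\cosh^2v-\sinh^2v=1$, the cross terms should collapse to
\[
-\det D_\Sigma U=Z\bigl(\del_yZ\cosh^2v-b(\cosh^2v+\sinh^2v)\bigr).
\]
Factoring out $\cosh^2v/(1+c^2)$, the bracket becomes
\[
(1+c^2)+\underline{\Numb}^2|\Jhatpar|^2(1-c^2)/y-c^2(1+\tanh^2v)=1-c^2\tanh^2v+\underline{\Numb}^2|\Jhatpar|^2(1-c^2)/y,
\]
which is exactly~\eqref{eq:acoustic-coordinate-determinant}.

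\emph{Step~4: positivity and invertibility.} Positivity follows from $0<c<1$ (hypothesis~\eqref{hyperbolic-eos-1}), $\tanh^2v<1$ and $y>0$. Invertibility of $D_\Sigma U$ then follows. The main obstacle is only bookkeeping in Step~3, namely keeping track of the $y$-dependence of $\underline{\Numb}$ and cancelling the hyperbolic identities correctly.
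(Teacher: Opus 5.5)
Your proposal is correct and takes essentially the same route as the paper. With $Z$ (the paper's $R$) equal to $y+\underline{\Numb}^2|\Jhatpar|^2$, both arguments write $U=(Z\cosh^2v-2\eps^2p,\,-Z\sinh v\cosh v)$, use $d(\underline{\Numb}^2)/dy=\underline{\Numb}^2(1-c^2)/(y(1+c^2))$ and $d(2\eps^2p)/dy=c^2/(1+c^2)$, and reduce $-\det D_\Sigma U$ to $Z\bigl(\del_yZ\cosh^2v-\tfrac{c^2}{1+c^2}\cosh 2v\bigr)$; the paper's proof additionally uses the parameter-derivative identities to derive the reduced system~\eqref{Sigmat}, which goes beyond the statement.
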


These coordinates and differentiations require \(\mu>0\). At vacuum, \(v\) and \(\Jhatpar\) may be undefined; boundary extensions are understood in~\(\Jbb\).

\begin{proof}
\bse
We set $R\coloneqq y+ \underline{\Numb}^2|\Jhatpar|^2$. The reconstruction formula~\eqref{eq:section-4-state-reconstruction} yields
\bel{eq:section-4-J0-square-reconstruction}
J_0^2=R\cosh^2v, \qquad \frac{J_1}{J_0}=-\tanh v.
\ee
At fixed \(\Sigma\), the momentum parallel to the symmetry orbits satisfies \(\Jpar= \underline{\Numb}(y) \Jhatpar\). Differentiating \(\Mbf_{00}\), \(J_0J_1\), and \(\Mbf_{11}\) with respect to \(\widehat J_m\) therefore gives
\bel{eq:section-4-component-parameter-derivatives}
\frac{\del\Mbf_{00}}{\del\widehat J_m} = 2 \underline{\Numb}(y)^2\widehat J_m\cosh^2v, \qquad \frac{\del(J_0J_1)}{\del\widehat J_m} = -\tanh v\,\frac{\del\Mbf_{00}}{\del\widehat J_m}, \qquad \frac{\del\Mbf_{11}}{\del\widehat J_m} = \tanh^2v\,\frac{\del\Mbf_{00}}{\del\widehat J_m}.
\ee
These identities prove the equations~\eqref{eq:section-4-flux-parameter-derivatives}. To calculate the determinant, we use \(\Numb'/\Numb=1/(\mu+\eps^{2}p)\) and \(y=2(\mu+\eps^{2}p)\), which imply
\bel{eq:acoustic-coordinate-derivatives}
\frac{d(\underline{\Numb}^2)}{dy} = \frac{\underline{\Numb}^2(1-c^2)}{y(1+c^2)}, \qquad \frac{d\eps^{2}p}{dy} = \frac{c^2}{2(1+c^2)}, \qquad U= \bigl(R\cosh^2v-2\eps^{2}p,\,-R\sinh v\cosh v\bigr).
\ee
Differentiation gives the determinant identity
\be
-\det D_\Sigma U = R\Bigl( \frac{dR}{dy}\cosh^2v -2\frac{d\eps^{2}p}{dy}\cosh(2v) \Bigr), \qquad \frac{dR}{dy} = 1+\frac{\underline{\Numb}(y)^2|\Jhatpar|^2(1-c^2)}{y(1+c^2)}.
\ee
Substituting the equation~\eqref{eq:acoustic-coordinate-derivatives} gives the equation~\eqref{eq:acoustic-coordinate-determinant}. Its right-hand side is positive since \(R>0\), \(y>0\), and \(0<c^2<1\). Finally, we substitute the transport law for \(\Jhatpar\) into the first two equations of~\eqref{eq:euler-principal}. The identities in~\eqref{eq:section-4-flux-parameter-derivatives} cancel the derivatives of \(\Jhatpar\); multiplication by \((D_\Sigma U)^{-1}\) then gives the first equation of~\eqref{Sigmat}. The remaining reduced equations give its second equation.
\ese
\end{proof}

\begin{lemma}[Expressions for the shear-induced factors]
\label{lem:Fvee-expr-prelim}
With the notation in~\autoref{def-quasi-cur-prelim}, the shear-induced factors are
\bel{Fvee-expr}
\Fvee_m(\Jbb) =  \frac{\del}{\del\widehat J_m} \bigl(\Fcomp^1-\tanh(v)\Fcomp^0\bigr) \Big|_{\Sigma}, \qquad m=2,\ldots,n.
\ee
Thus the derivative is taken with \(\Sigma=(y,v)\) fixed. With the null variables $L_\pm\coloneqq J_0\pm J_1$ and the multiplier $W=(W_0,W_1)$ from the equation~\eqref{eq:section-4-acoustic-multiplier}, one also has
\bel{eq:section-4-null-multiplier}
W_0\pm W_1 = \frac{1}{L_\pm}  \frac{d(\Fcomp^0\mp\Fcomp^1)}{dL_\pm} \Big|_{L_+L_-,\,\Jpar}.
\ee
The subscripts on \(W\) denote its two components, not Lorentzian index lowering.
\end{lemma}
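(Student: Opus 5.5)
The plan is to prove the two identities separately, both by the chain rule applied to the quasi-balance law of Definition~\ref{def-quasi-cur-prelim}, written in the coordinates $(\Sigma,\Jhatpar)$.

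\textbf{Shear-induced factors.} Expand the left-hand side of~\eqref{eq:section-4-quasi-balance} as
\[
D_\Sigma\Fcomp^0\,\del_t\Sigma+D_\Sigma\Fcomp^1\,\del_x\Sigma+\sum_m\bigl(\del_{\widehat J_m}\Fcomp^0\,\del_t\widehat J_m+\del_{\widehat J_m}\Fcomp^1\,\del_x\widehat J_m\bigr),
\]
with all $\widehat J_m$-derivatives taken at fixed $\Sigma$. Then substitute the evolution equations~\eqref{Sigmat}. The transport law gives $\del_t\widehat J_m=-\tanh v\,\del_x\widehat J_m$, so the terms involving derivatives of $\Jhatpar$ collect into
\[
\sum_m\del_{\widehat J_m}\bigl(\Fcomp^1-\tanh v\,\Fcomp^0\bigr)\big|_\Sigma\,\del_x\widehat J_m .
\]
The remaining terms are $(D_\Sigma\Fcomp^0\,A+D_\Sigma\Fcomp^1)\del_x\Sigma$. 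These vanish by the multiplier relations~\eqref{eq:section-4-acoustic-multiplier} together with $A=(D_\Sigma U)^{-1}D_\Sigma G$: indeed $D_\Sigma\Fcomp^0 A=W\cdot D_\Sigma G=-D_\Sigma\Fcomp^1$. Invertibility of $D_\Sigma U$ comes from Lemma~\ref{lem:section-4-acoustic-block-diagonalization}.

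This proves~\eqref{Fvee-expr}, provided the factors are uniquely determined. Uniqueness holds because $\del_x\widehat J_m$ can be prescribed arbitrarily at a point for regular non-vacuum solutions. For this one constructs local solutions with arbitrary initial data in $(\Sigma,\Jhatpar)$, using hyperbolicity from Theorem~\ref{prop:structure-euler}.

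\textbf{Null multiplier.} This identity concerns the derivatives at fixed $\Jpar$ and fixed $y$-level, expressed through the null variables $L_\pm=J_0\pm J_1$. Since $L_+L_-=-\Jperp\cdot\Jperp=y+|\Jpar|^2$, fixing $L_+L_-$ and $\Jpar$ fixes $\mu$, so only $v$ varies, with $L_\pm$ scaling like $e^{\mp v}$.

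From~\eqref{eq:M-components}, the components of $U$ and $G$ along this curve are
\[
\Mbf_{00}=\tfrac12(J_0^2+J_1^2)+\text{const},\qquad J_0J_1,\qquad \Mbf_{11}=\tfrac12(J_0^2+J_1^2)+\text{const}.
\]
Hence
\[
U^0\mp U^1=\tfrac12 L_\mp^2+\text{const},\qquad G^0\mp G^1=\mp\bigl(\tfrac12L_\mp^2\bigr)+\text{const}.
\]
The precise signs must be checked: $J_0J_1-\Mbf_{11}$ versus $J_0J_1+\Mbf_{11}$ differ in sign by $\mp$.

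The multiplier relations are valid for any tangent direction, in particular along this curve, where $d\Mbf_{00}=d\Mbf_{11}=d(J_0^2+J_1^2)/2$. This gives
\[
d\Fcomp^0=W_0\,d\Mbf_{00}+W_1\,d(J_0J_1),\qquad d\Fcomp^1=-W_0\,d(J_0J_1)-W_1\,d\Mbf_{00}.
\]
Combining them, $d(\Fcomp^0\mp\Fcomp^1)=(W_0\pm W_1)\,d\bigl(\tfrac12(J_0\pm J_1)^2\bigr)=(W_0\pm W_1)\,L_\pm\,dL_\pm$. Dividing by $L_\pm\,dL_\pm$ then yields~\eqref{eq:section-4-null-multiplier}.

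\textbf{Main obstacle.} The delicate point is not the algebra but the meaning of $d/dL_\pm$ at fixed $L_+L_-$ and $\Jpar$. Here $L_\mp$ is implicitly determined by $L_\pm$, and one must verify that this one-dimensional variation is tangent to the state space where the multiplier relations hold. It is, since it is just variation in $v$. One must also track the sign conventions for $J_0<0$ consistently, which I would do by direct substitution of $L_\pm=-(y+|\Jpar|^2)^{1/2}e^{\mp v}$.
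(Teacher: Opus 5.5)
Your proposal is correct and follows essentially the same route as the paper. For \eqref{Fvee-expr}, you substitute \eqref{Sigmat} into the quasi-balance law and cancel the $\del_x\Sigma$ terms using the compatibility relation \eqref{eq:section-4-acoustic-multiplier}. For the null identity, you vary only $v$ at fixed $L_+L_-$ and $\Jpar$ (hence fixed $y$ and $\Jhatpar$) and obtain $d(\Fcomp^0\mp\Fcomp^1)=(W_0\pm W_1)L_\pm\,dL_\pm$. Your added remark that the factors are uniquely determined, because $\del_x\widehat J_m$ can be prescribed at a point, is a harmless refinement that the paper leaves implicit.
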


\begin{proof}
\bse
We write \(\Fcomp^a=\Fcomp^a(\Sigma,\Jhatpar)\), substitute the system~\eqref{Sigmat} into the balance law~\eqref{eq:section-4-quasi-balance}, and apply the compatibility relation~\eqref{eq:section-4-acoustic-multiplier}. The terms involving \(\del_x\Sigma\) cancel, and the coefficient of \(\del_x\widehat J_m\) is the right-hand side of~\eqref{Fvee-expr}. For the second identity, we hold \(-\Jperp\cdot\Jperp\) and \(\Jpar\) fixed. We use the identity
\be
y=-\Jperp\cdot\Jperp-|\Jpar|^2,
\ee
which shows that \(y\), \(\mu\), and \(\Jhatpar\) also remain fixed. At a timelike state, the constraint \(L_+L_-=-\Jperp\cdot\Jperp\) specifies a positive product and determines either null variable from the other. Using \(U_0=J_0^2-2\eps^{2}p\), \(U_1=G_0=J_0J_1\), and \(G_1=J_1^2+2\eps^{2}p\), direct differentiation gives
\bel{eq:null-flux-derivatives}
\frac{d(U_0\pm G_0)}{dL_\pm}=L_\pm, \qquad \frac{d(U_1\pm G_1)}{dL_\pm}=\pm L_\pm.
\ee
By the equation~\eqref{eq:section-4-acoustic-multiplier}, we have $ d(\Fcomp^0\mp\Fcomp^1)=W\cdot d(U\pm G). $ The preceding derivatives show that the right-hand side is \(L_\pm(W_0\pm W_1)\). Since \(L_\pm\ne0\) at every timelike state, division by \(L_\pm\) proves the equation~\eqref{eq:section-4-null-multiplier}.
\ese
\end{proof}

\subsubsection{Quasi-currents defined from advected variables}

We take \((F^{0}_{0},F^{1}_{0})=(F^{0}_{0},F^{1}_{0})(\Sigma,\Jhatpar)\)  to be a smooth family of density--flux pairs. For each fixed \(\Jhatpar\), we assume that it satisfies
\bel{eq:section-4-acoustic-current-compatibility}
D_\Sigma F^{1}_{0}=-D_\Sigma F^{0}_{0}\,A,
\ee
where \(A\) is defined in the equation~\eqref{eq:section-4-acoustic-matrix}; the derivatives here keep \(\Jhatpar\) fixed. Given a smooth scalar function \(H\) of the advected variables, we define
\bel{eq:section-4-transported-parameter-current}
\Fcomp^0\coloneqq H(\Jhatpar)F^{0}_{0}(\Sigma,\Jhatpar), \qquad \Fcomp^1\coloneqq H(\Jhatpar)F^{1}_{0}(\Sigma,\Jhatpar).
\ee

\begin{lemma}[Quasi-current defined from an advected variable]
\label{prop:section-4-transported-parameter-current}
For plane-symmetric flows, the pair in~\eqref{eq:section-4-transported-parameter-current} is a quasi-current, and its shear-induced factors are
\bel{eq:section-4-transported-parameter-factors}
\Fvee_m = (\del_{\widehat J_m}H) \bigl(F^{1}_{0}-\tanh(v)F^{0}_{0}\bigr) + H\bigl( \del_{\widehat J_m}F^{1}_{0} -\tanh(v)\del_{\widehat J_m}F^{0}_{0} \bigr), \qquad m=2,\ldots,n.
\ee
All derivatives with respect to \(\widehat J_m\) keep \(\Sigma\) fixed. The second term vanishes when \((F^{0}_{0},F^{1}_{0})\) is independent of \(\Jhatpar\); in that case, its compatibility relation is required for each parameter value under consideration.
\end{lemma}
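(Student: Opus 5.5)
The plan is to verify Definition~\ref{def-quasi-cur-prelim} directly: compute $\del_t\Fcomp^0+\del_x\Fcomp^1$ along a regular non-vacuum solution, written in the variables $(\Sigma,\Jhatpar)$ of~\eqref{Sigmat}, and read off the coefficient of $\del_x\widehat J_m$. The product rule splits the divergence into two groups of terms. The first group contains the derivatives of $H(\Jhatpar)$ multiplied by $F^a_0$. The second group contains $H$ multiplied by the divergence of $(F^0_0,F^1_0)$, taken along the solution with $\Jhatpar$ allowed to vary.

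For the first group, the transport law $\del_t\widehat J_m=-\tanh(v)\del_x\widehat J_m$ from~\eqref{Sigmat} gives
\[
\sum_m\del_{\widehat J_m}H\,\bigl(F^0_0\del_t\widehat J_m+F^1_0\del_x\widehat J_m\bigr)=\sum_m\del_{\widehat J_m}H\,\bigl(F^1_0-\tanh(v)F^0_0\bigr)\del_x\widehat J_m .
\]
This is the first term of~\eqref{eq:section-4-transported-parameter-factors}.

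For the second group, we apply the chain rule:
\[
\del_tF^0_0+\del_xF^1_0=D_\Sigma F^0_0\,\del_t\Sigma+D_\Sigma F^1_0\,\del_x\Sigma+\sum_m\bigl(\del_{\widehat J_m}F^0_0\,\del_t\widehat J_m+\del_{\widehat J_m}F^1_0\,\del_x\widehat J_m\bigr).
\]
We substitute $\del_t\Sigma=A\,\del_x\Sigma$ from~\eqref{Sigmat}. The compatibility relation~\eqref{eq:section-4-acoustic-current-compatibility} then gives $D_\Sigma F^0_0A+D_\Sigma F^1_0=0$, so every $\del_x\Sigma$ term cancels. Note that both $A$ and the compatibility relation are evaluated at the current value of $\Jhatpar$, which is exactly how they enter here. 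The remaining terms, after the transport law is used again for $\del_t\widehat J_m$, give $\sum_m\bigl(\del_{\widehat J_m}F^1_0-\tanh(v)\del_{\widehat J_m}F^0_0\bigr)\del_x\widehat J_m$. Multiplying by $H$ yields the second term of~\eqref{eq:section-4-transported-parameter-factors}.

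The only delicate point is justifying the form~\eqref{Sigmat} of the reduced system, in particular that $\del_t\Sigma=A\,\del_x\Sigma$ holds with no extra $\del_x\Jhatpar$ terms. This is supplied by Lemma~\ref{lem:section-4-acoustic-block-diagonalization}, whose identity $\del_{\widehat J_m}G=-\tanh(v)\,\del_{\widehat J_m}U$ makes those terms cancel. The factors obtained are continuous, since $F_0$ is smooth and $H$ is smooth, and the pair $(\Fcomp^0,\Fcomp^1)$ is $C^1$. This gives the quasi-current property. As a consistency check, the resulting formula also agrees with Lemma~\ref{lem:Fvee-expr-prelim}, because $\Fcomp^1-\tanh(v)\Fcomp^0=H\,(F^1_0-\tanh(v)F^0_0)$ and differentiating this at fixed $\Sigma$ reproduces the two terms.

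If $(F^0_0,F^1_0)$ does not depend on $\Jhatpar$, the $\del_{\widehat J_m}F_0$ terms vanish. The compatibility relation must still hold for each $\Jhatpar$ under consideration, because $A$ itself depends on $\Jhatpar$.
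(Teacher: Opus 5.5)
Your proposal is correct and is essentially the paper's proof. You substitute the reduced system~\eqref{Sigmat}, let the compatibility relation~\eqref{eq:section-4-acoustic-current-compatibility} cancel every $\del_x\Sigma$ term, and use the transport law $\del_t\widehat J_m=-\tanh(v)\del_x\widehat J_m$ with the product rule on $H(\Jhatpar)F^a_0$ to read off the coefficient of $\del_x\widehat J_m$; your consistency check against Lemma~\ref{lem:Fvee-expr-prelim} is a harmless extra that the proof does not need.
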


\begin{proof}
The compatibility relation~\eqref{eq:section-4-acoustic-current-compatibility} cancels the terms involving \(\del_x\Sigma\). By the equation~\eqref{Sigmat}, the remaining terms in \(\del_tF^{0}_{0}+\del_xF^{1}_{0}\) are
\be
\sum_{m=2}^n \bigl( -\tanh(v)\del_{\widehat J_m}F^{0}_{0} +\del_{\widehat J_m}F^{1}_{0} \bigr)\del_x\widehat J_m.
\ee
We apply the product rule to \(H(\Jhatpar)F^{0}_{0}\) and \(H(\Jhatpar)F^{1}_{0}\), and use the transport equation \(\del_tH=-\tanh(v)\del_xH\). The resulting coefficient of \(\del_x\widehat J_m\) is~\eqref{eq:section-4-transported-parameter-factors}.
\end{proof}

\section{Reference currents and mathematical entropy inequalities}
\label{section-4}

\subsection{Particle current}
\label{section-4-1}

\subsubsection{Behavior of the particle current at vacuum}

Here, we compare the properties of the two reference currents: mass-energy and negative particle number. We are going to prove that each is future-convex in its own conservative variables; consequently, it can be used to formulate a mathematical entropy inequality for weakly regular flows, with the remaining conservation laws imposed in the sense of distributions; the two choices select different admissible shock waves, but are fully equivalent in the class of regular solutions. 

We use the particle density and current defined in the equations~\eqref{eq:section-3-particle-density} and~\eqref{eq:section-3-particle-current}, extending \(\Numb\) continuously by \(\Numb(0) =0,\) while writing \(\vNumb\) for this same current when it is expressed through \(\Jbb\). With the particle factor already introduced in the equation~\eqref{eq:section-4-particle-factor}, we have
\bel{eq:section-5-particle-current-components}
\vNumb^\alpha= \hNumb_\Jbb J^\alpha, \qquad (\vNumb^0,\ldots,\vNumb^n) = \hNumb_\Jbb(-J_0,J_1,\ldots,J_n),
\ee
where \(\hNumb_\Jbb = \hNumb(\mu(\Jbb))\). This expression extends continuously to the null vacuum boundary, although the primitive velocity does not. For the admissible polytropic-vacuum laws, this extension follows directly from \(\Numb(\mu)/\mu\to1\) and \(\eps^{2}p(\mu)/\mu\to0\):
\be
\label{eq:particle-factor-vacuum-extension}
\hNumb(\mu) = \sqrt{\mu/2}\,(1+o(1)), \qquad \vNumb^\alpha= \sqrt{\mu/2}\,(1+o(1))J^\alpha.
\ee
Thus the particle current tends to zero whenever \(\Jbb\) approaches any finite null state. The same conclusion holds for \(H(\Jhatpar)\vNumb\) if \(H\) is continuous and \(\Jhatpar\) remains in a fixed compact set: its limit is zero independently of the representative chosen on the vacuum set. This is an extension of the current, not of the primitive velocity or of \(\Jhatpar\). For an isothermal law \(\eps^2p=c_{\rm iso}^2\mu\), with \(0<c_{\rm iso}<1\), the particle factor is instead proportional to \(\mu^{(1-c_{\rm iso}^2)/(2(1+c_{\rm iso}^2))}\), whose exponent is positive, so the particle current again vanishes at finite null states.

The stress-energy current has a different boundary value. Since \(T^{\alpha\beta} \, = \tfrac12J^\alpha J^\beta+\eps^{2}p\,g^{\alpha\beta}\), it tends to \(\tfrac12J^\alpha J^\beta\), which need not vanish at a nonzero null state. Consequently, domination by the reference energy alone neither forces a current to vanish at vacuum nor proves its continuity there. For each admissible pressure, the sharper estimate~\eqref{eq:fixed-Hone-current-bound}, below,  gives that continuity for kernel currents with \(H^1\) vacuum profiles.

\subsubsection{Growth estimates of the particle current}

Differentiating the definition~\eqref{eq:section-4-particle-factor} and using the pressure ratio \(q(\mu) = \frac{\mu-\eps^{2}p(\mu)}{\mu+\eps^{2}p(\mu)}\), we find
\bel{eq-hN-derivative}
d(\log \hNumb) = \frac{1}{2(\mu+\eps^{2}p(\mu))}\,d\bigl((\mu+\eps^{2}p(\mu))\,q(\mu) \bigr) = \frac{1-\eps^{2}p'(\mu)}{2(\mu+\eps^{2}p(\mu))}\,d\mu.
\ee
Hence \(\hNumb\) is increasing. We also have the following growth bounds.

\begin{lemma}[Growth of the particle factor and current]
\label{lem-hN}
There exist constants \(C>1\) and \(\theta\in(1/2,1)\), depending on the equation of state and on $\eps$, such that
\bel{eq:section-5-particle-growth}
\aligned
\hNumb'(\mu)&>0,
&\qquad \lim_{\mu\to0}\hNumb(\mu)&=0,
\\
C^{-1}\mu^{1/2}&\leq\hNumb(\mu)\leq C\mu^{1/2}\quad(0<\mu\leq1),
&\qquad \hNumb(\mu)&\leq C\mu^{\theta-1/2}\quad(\mu\geq1).
\endaligned
\ee
Consequently, on the causal cone one has
\bel{eq:section-5-current-subquadratic}
|\vNumb|\le {C|J_0|^{2\theta}}, \qquad \frac{|\vNumb|}{\Mbf_{00}(\Jbb)} \to 0 \quad\text{as }|J_0| \to + \infty.
\ee

\end{lemma}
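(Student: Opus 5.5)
The plan is to integrate the logarithmic derivative~\eqref{eq-hN-derivative}, $(\log\hNumb)'=(1-\eps^2p')/(2h)$, separately on $(0,1]$ and on $[1,+\infty)$. First, positivity of $\hNumb'$ is immediate from~\eqref{eq-hN-derivative} and the hyperbolicity condition $\eps^2p'<1$ in~\eqref{hyperbolic-eos-1}. For the small-density behavior, I use $\hNumb=\Numb/\sqrt{2h}$ together with $\Numb(\mu)/\mu\to1$ and $h(\mu)/\mu\to1$ as $\mu\to0$; the latter holds since $p(\mu)=\Ocal(\mu^\gamma)$ by Proposition~\ref{prop:pressure-endpoint-asymptotics}. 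This gives $\hNumb(\mu)/\sqrt{\mu}\to 1/\sqrt2$, consistent with~\eqref{eq:particle-factor-vacuum-extension}. Since $\hNumb/\sqrt\mu$ is continuous and positive on $(0,1]$ with a positive finite limit at $0$, it is bounded above and below there. This yields the two-sided bound on $(0,1]$ and also $\hNumb\to0$.

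For large density, I write $(\log\hNumb)'=\frac{1-\eps^2p'(\mu)}{2\mu(1+\eps^2p/\mu)}$. Proposition~\ref{prop:pressure-endpoint-asymptotics} gives $\eps^2p'\to\eps^2k_\infty^2$ and $\eps^2p/\mu\to\eps^2k_\infty^2$ as $\mu\to\infty$, so $\mu(\log\hNumb)'\to \frac{1-\eps^2k_\infty^2}{2(1+\eps^2k_\infty^2)}$. This limit is strictly less than $1/2$, because $k_\infty>0$. I then pick $\theta$ with $\theta-1/2$ strictly between this limit and $1/2$; a limit lying in $(0,1/2)$ forces $\theta\in(1/2,1)$. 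For $\mu\ge\mu_2$ (some large $\mu_2$) the inequality $\mu(\log\hNumb)'\le\theta-1/2$ holds, and integrating gives $\hNumb(\mu)\le\hNumb(\mu_2)(\mu/\mu_2)^{\theta-1/2}$. On $[1,\mu_2]$, continuity lets me enlarge $C$. The only delicate point is ensuring $\theta>1/2$ strictly while also $\theta<1$. The limiting exponent $\frac{1-c^2}{2(1+c^2)}$, with $c=\eps k_\infty\in(0,1)$, is positive and strictly less than $1/2$, so $\theta-1/2$ can be placed in $\bigl(\frac{1-c^2}{2(1+c^2)},1/2\bigr)$. This gives $\theta\in(1/2,1)$. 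I regard this choice of exponent as the main (mild) obstacle, since the statement's $\theta$ must work uniformly.

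For~\eqref{eq:section-5-current-subquadratic}, I write $|\vNumb|=\hNumb|\Jbb|$, interpreting the norm Euclideanly in components. On the causal cone, $|J_m|\le|J_0|$ for each spatial component, and $|J_0|^2\ge -\Jbb\cdot\Jbb=2h\ge2\mu$, so $\mu\le|J_0|^2/2$. Consequently, for $\mu\ge1$,
\[
|\vNumb|\le C\sqrt{n+1}\,\mu^{\theta-1/2}|J_0|\le C'|J_0|^{2\theta-1}|J_0|=C'|J_0|^{2\theta}.
\]
For $\mu\le1$, I have $|\vNumb|\le C|J_0|$. The bound $|J_0|\le|J_0|^{2\theta}$ holds when $|J_0|\ge1$. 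When $|J_0|<1$, I instead use $\hNumb\le C\mu^{1/2}\le C|J_0|$, which gives $|\vNumb|\le C|J_0|^2\le C|J_0|^{2\theta}$. Finally, \eqref{eq:M00-positive} gives $\Mbf_{00}\ge\Ebf_0(\Jbb)\ge J_0^2/2$, so $|\vNumb|/\Mbf_{00}\le 2C|J_0|^{2\theta-2}\to0$, since $\theta<1$.
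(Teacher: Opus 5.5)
Your proof is correct and follows essentially the paper's route. Monotonicity comes from \eqref{eq-hN-derivative}, and $\Numb\simeq\mu\simeq h$ at vacuum; a power bound at large density follows by integrating a logarithmic derivative. You then conclude with $\mu\le J_0^2/2$, $|\Jbb|_{\rm Eucl}\lesssim|J_0|$ and $\Mbf_{00}\ge J_0^2/2$. The null boundary, where $\vNumb=0$, is trivially included.

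The differences are minor. The paper derives the large-density bound from \eqref{hyperbolic-eos-2} alone, via $\Numb'/\Numb=1/h$ and $\hNumb\le\Numb/\sqrt{2\mu}$. You use instead the sharper limits of Proposition~\ref{prop:pressure-endpoint-asymptotics}, which in exchange let you take any $\theta>1/(1+\eps^2k_\infty^2)$. The paper also avoids your case split by noting $\mu^{1/2}\le\mu^{\theta-1/2}$ on $(0,1]$.
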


\begin{proof}
The identity \eqref{eq-hN-derivative} gives the monotonicity. At vacuum,  the assumption~\eqref{equa-cond-small} yields \(\Numb(\mu)\simeq\mu\) and \(\hNumb(\mu)\simeq\mu^{1/2}\). At large density, the condition~\eqref{hyperbolic-eos-2} gives \(\Numb(\mu)\le C\mu^\theta\) and \(\hNumb(\mu)\le C\mu^{\theta-1/2}\) for some \(\theta\in(1/2,1)\). For \(0<\mu\leq1\), we have \(\mu^{1/2}\leq\mu^{\theta-1/2}\). Thus \(\hNumb(\mu)\leq C\mu^{\theta-1/2}\) holds for every \(\mu>0\). Since \(|\Jbb|_{\rm Eucl}\leq\sqrt2|J_0|\), \(\mu\le J_0^2/2\), and \(\Mbf_{00}(\Jbb)\ge J_0^2/2\),  we obtain
\be
\label{eq:particle-current-homogeneous-growth}
|\vNumb|\leq C\mu^{\theta-1/2}|J_0|\leq C|J_0|^{2\theta},\qquad \frac{|\vNumb|}{\Mbf_{00}(\Jbb)}\leq C|J_0|^{2\theta-2}\quad(J_0\ne0).
\ee
Since \(2\theta-2<0\), the limit follows. The current is zero on the null boundary, and continuity gives the homogeneous bound at the origin.
\end{proof}

\subsubsection{The isothermal example}

For \(\eps^{2}p(\mu) =c_{\rm iso}^2\mu\), the identities in~\autoref{section-2} give \(\hNumb(\mu)\simeq\mu^{q/2}\) and \(\mu\simeq-\Jbb\cdot\Jbb\), with the constant pressure ratio~\eqref{eq:q-isothermal}. Hence we obtain
\bel{eq-interesting-entropy}
\vNumb^\alpha=C_{\rm iso}(-\Jbb\cdot\Jbb)^{q/2}J^\alpha, \qquad C_{\rm iso}>0.
\ee

\subsection{Dissipated-energy formulation}
\label{section-4-2}

\subsubsection{Choice of conservative variables}

With our normalization, we have
\bel{eq:section-5-fluid-stress-components}
T^{00} = \frac12\Mbf_{00}, \qquad T^{0m} = -\frac12J_0J_m, \qquad m=1,\ldots,n,
\ee
and recall that \(\vNumb^0=-\hNumb_\Jbb J_0\geq0\). For the dissipated-energy formulation proposed in~\cite{TorusT2-1-2026}, the conservative variables and the reference current are
\bel{eq:section-5-mass-energy-variables}
Z_{\rm E} \coloneqq (\vNumb^0,(T^{0m})_{m=1}^n), \qquad F_{\rm E}^* \coloneqq (\Mbf_{00},-\Mbf_{01}).
\ee
We choose the future unit normal \(e_0\) to the foliation and set \(\mathcal E_\beta \coloneqq \nabla_\alpha T^\alpha{}_\beta\), \(F_{\rm E}^\alpha \coloneqq -T^\alpha{}_\beta e_0^\beta\), and \(\Sigma_{\rm E} \coloneqq -T^\alpha{}_\beta\nabla_\alpha e_0^\beta\). The signs follow the convention~\eqref{eq:euler-principal}; the two components are density and flux.

\subsubsection{Convexity of the mass-energy component}

We write \(\rho\coloneqq\vNumb^0\), \(Z_m\coloneqq T^{0m}\) for \(m=1,\ldots,n\), and \(\vec Z\coloneqq(Z_1,\ldots,Z_n)\), so that \(Z_{\rm E}=(\rho,\vec Z)\).

\begin{proposition}[Convexity of the mass-energy]
\label{lem-convexM}
Under the admissibility conditions of Definition~\ref{def-pression}, every state \(\Jbb\in\mathcal A\) is uniquely determined by \(Z_{\rm E}\). Moreover, the map
\bel{eq:section-5-mass-energy-convex-map}
Z_{\rm E}\longmapsto\Mbf_{00}(\Jbb)
\ee
is non-negative and convex on \(\{\rho\geq0,\ \vec Z\in\Rbb^n\}\), with positive-definite Hessian when \(\rho>0\), and it extends continuously to the entire null vacuum boundary.
\end{proposition}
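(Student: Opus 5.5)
The plan is to reduce everything to an explicit two-variable computation in the fluid variables $(\mu,v)$ together with the spatial direction of motion. Rotational invariance in the spatial momentum makes this possible. Write $\rho=\vNumb^0=\Numb(\mu)\,u^0$ and $\vec Z=\tfrac12 h\,u^0\,\vec u$, where $\vec u=(u^1,\ldots,u^n)$ and $h=\mu+\eps^2p$. Write also $\Mbf_{00}=2(h(u^0)^2-\eps^2p)$. Introduce the full rapidity $\eta\geq0$ by $u^0=\cosh\eta$, $|\vec u|=\sinh\eta$. Then
\[
\rho=\Numb(\mu)\cosh\eta,\qquad r\coloneqq|\vec Z|=\tfrac12 h\sinh\eta\cosh\eta,\qquad \Mbf_{00}=2h\cosh^2\eta-2\eps^2p .
\]

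\emph{Injectivity.} First, the map $(\mu,\eta)\mapsto(\rho,r)$ is one-to-one on $(0,\infty)\times[0,\infty)$. I would check this by computing its Jacobian with $\Numb'=\Numb/h$ and $h'=1+c^2$, where $c^2=\eps^2p'$. I expect a determinant of the form $\tfrac12\Numb\bigl(\cosh^2\eta+c^2\sinh^2\eta\bigr)\cdot(\text{positive})$, which is positive because $c<1$. Combined with properness at $\eta\to\infty$ and $\mu\to\infty$, this gives a global diffeomorphism onto $\{\rho>0,\ r\geq0\}$. The direction $\vec Z/|\vec Z|$ recovers the spatial direction of $\Jbb$. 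Hence $\Jbb$ is determined by $Z_{\rm E}$ in the interior, and on the null boundary it is determined by the formula below.

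\emph{Convexity in the interior.} Since $\Mbf_{00}=\Phi(\rho,|\vec Z|)$, its Hessian splits into a radial $2\times2$ block in $(\rho,r)$ and an angular block $\Phi_r/r$ times the identity on $\vec Z^\perp$. The plan is to compute the differential $d\Mbf_{00}=\theta\,d\rho+\omega\,dr$ by solving the linear system in $(d\mu,d\eta)$. I expect clean multipliers, roughly $\omega=4\tanh\eta$ (twice the velocity) and $\theta$ proportional to $h/(\Numb\cosh\eta)$. Since $\omega$ vanishes at $\eta=0$ with positive slope, this gives $\Phi_r/r>0$, including a positive limit at $r=0$. For the radial block, the Hessian equals $\partial(\theta,\omega)/\partial(\rho,r)$, which is automatically symmetric. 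It is positive definite if and only if $\det\partial(\theta,\omega)/\partial(\mu,\eta)$ and the Jacobian determinant $\det\partial(\rho,r)/\partial(\mu,\eta)$ have the same sign and one diagonal entry is positive, e.g. $\partial_r\omega>0$ at fixed $\rho$.

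\emph{Main obstacle.} The key step is this determinant sign. I expect it to reduce to a product of $\cosh$ factors times $(1-c^2)$, $(1+c^2)$ and $\cosh^2\eta-c^2\sinh^2\eta$. Each of these is positive exactly because $0<c^2<1$, i.e.\ by hyperbolicity~\eqref{hyperbolic-eos-1}. This should parallel the determinant formula of Lemma~\ref{lem:section-4-acoustic-block-diagonalization}. If the direct computation gets messy, I would instead verify the sign at $\eta=0$ and use boost covariance: the Hessian is the matrix of the symmetrizer from Lemma~\ref{lem:section-3-symmetrizer} for the energy current, and its signature cannot change on the connected positive-density set, since the determinant never vanishes there.

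\emph{Boundary extension and nonnegativity.} As $\mu\to0$ along a sequence converging to a null state, $\rho\to0$ by~\eqref{eq:particle-factor-vacuum-extension}. Meanwhile $\Mbf_{00}\to J_0^2$ and $|\vec Z|\to\tfrac12|J_0|\,|\vec J|=\tfrac12 J_0^2$. Hence $\Phi$ extends continuously by $\Phi(0,\vec Z)=2|\vec Z|$, which is consistent with the asymptotic slope $\omega\to4$. Convexity on the closed half-space then follows from convexity on its interior by continuity. Nonnegativity is~\eqref{eq:M00-positive}.
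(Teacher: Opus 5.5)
Your interior analysis is sound and is a legitimate variant of the paper's Step~3: the paper completes squares in the primitive variables $(\mu,w)$, whereas you split the Hessian into a radial block and an angular block and apply the $2\times2$ determinant criterion. A few slips should be corrected, although none of them changes a sign.

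\begin{itemize}
\item Since $T^{0m}=-\tfrac12J_0J_m=h\,u^0u^m$, one has $r=|\vec Z|=h\sinh\eta\cosh\eta$, not $\tfrac12 h\sinh\eta\cosh\eta$.
\item With this normalization $d\Mbf_{00}=\theta\,d\rho+\omega\,dr$ holds with $\theta=2h/(\Numb\cosh\eta)$ and $\omega=2\tanh\eta$. Your slope $\omega\to4$ therefore contradicts your own boundary value $2|\vec Z|$, while the correct slope $2$ matches it.
\item The Jacobian is $\Numb\cosh\eta\,(\cosh^2\eta-c^2\sinh^2\eta)$, with a minus sign; this is where $c<1$ enters.
\item One has $\det\partial(\theta,\omega)/\partial(\mu,\eta)=4c^2/(\Numb\cosh^3\eta)$; this is where $c^2>0$ enters.
\item Finally $\partial_r\omega|_\rho=2/\bigl(h\cosh^2\eta\,(\cosh^2\eta-c^2\sinh^2\eta)\bigr)>0$ and $\Phi_r/r=2/(h\cosh^2\eta)>0$.
\end{itemize}
Hence strict convexity for $\rho>0$ goes through. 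Your properness check should also cover $\mu\to0$ with $\eta\to\infty$; there $r/\rho=(h/\Numb)\sinh\eta\to\infty$. It is cleanest to extend to signed $r$ and $\eta\in\Rbb$ before invoking the global inverse theorem.

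The genuine gap is the boundary step. What you check is that along states $\Jbb^{(k)}$ converging to a null state, both $Z_{\rm E}(\Jbb^{(k)})$ and $\Mbf_{00}(\Jbb^{(k)})$ converge. That is only continuity of $\Jbb\mapsto(Z_{\rm E},\Mbf_{00})$ on $\mathcal A$, which is automatic. The statement needs continuity of $Z_{\rm E}\mapsto\Mbf_{00}$ on $\{\rho\geq0\}$. In other words, you must show that $Z_{\rm E}^{(k)}\to(0,\vec Z_0)$ forces the unique preimage states to converge to the null state with $T^{0m}=Z_{0,m}$. Without this, the continuous extension is unproved. So is your passage from interior convexity to convexity on the closed half-space, since that limiting argument needs continuity in $Z_{\rm E}$.

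The missing ingredient is properness together with injectivity on all of $\mathcal A$. Suppose $\rho+|\vec Z|\leq R$. Then $|\vec J|=2|\vec Z|/|J_0|$, so a large $|J_0|$ forces $y$, and hence $\mu$, to be large. Once $\mu\geq1$, monotonicity of $\hNumb$ gives $\rho=\hNumb(\mu)|J_0|\geq\hNumb(1)|J_0|$. So preimages of bounded sets are bounded. Compactness and uniqueness then give continuity of the inverse map up to the boundary, with boundary value $2|\vec Z|$. Uniqueness here must include the explicit null reconstruction $J_0=-\sqrt{2|\vec Z|}$, $J_m=\sqrt{2/|\vec Z|}\,Z_m$, which you announce but never write, and $\Jbb=0$ at the origin.

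For comparison, the paper avoids this issue altogether. It writes $\Mbf_{00}=\sup_{y>0}Q_y(\rho,\vec Z)$ with $Q_y=\sqrt{y\,\underline{\Numb}(y)^{-2}\rho^2+4|\vec Z|^2}-2\eps^2p$. Convexity on the closed half-space is then immediate, as a supremum of convex functions. Restricting the supremum to $0<y\leq Y_R$ gives a Lipschitz bound uniform in $y$, hence local Lipschitz continuity up to $\rho=0$. The Hessian computation is used only for strictness. Your route yields a transparent strict-convexity computation, but it must add the compactness argument above to handle the boundary.
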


\begin{proof}
\bse
{\bf 1.} 
We use \(Z_{\rm E}=(\rho,\vec Z)\), \(y=-\Jbb\cdot\Jbb\), and \(\underline{\Numb}(y)=\hNumb(\funmu(y))\), where \(\funmu\) is the inverse of \(\mu\mapsto2(\mu+\eps^{2}p(\mu))\). For \(\rho>0\), the definitions give
\bel{eq:section-5-energy-reconstruction}
J_0=- \underline{\Numb}(y)^{-1}\rho, \qquad J_m=2 \underline{\Numb}(y)\rho^{-1}Z_m, \qquad m=1,\ldots,n.
\ee
Consequently, the physical value of \(y\) is characterized by
\bel{eq:section-5-energy-scalar-reconstruction}
\phi_{\rho}(y) \coloneqq \underline{\Numb}(y)^{-2} \rho^2 \bigl(\underline{\Numb}(y)^{-2} \rho^2 -y \bigr), \qquad \phi_{\rho}(y)=4|\vec Z|^2.
\ee
The identity \eqref{eq-hN-derivative}, rewritten in the variable \(y\), yields \(\underline{\Numb}'(y)>0\) and
\bel{eq:section-5-energy-reconstruction-monotonicity}
\del_y\phi_{\rho}(y) =-4\frac{\underline{\Numb}'(y)}{\underline{\Numb}(y)}\phi_{\rho}(y) - \underline{\Numb}(y)^{-2} \rho^2  \Bigl( 1+ \frac{d(yq)}{dy} \Bigr) < 0
\ee
on the interval on which \(\phi_{\rho}>0\). The vacuum behavior in  the assumption~\eqref{equa-cond-small} gives \(\phi_{\rho}(y)\to+ \infty\) as \(y\to0\), while \(\phi_{\rho}\) vanishes at the unique point determined by \(y \underline{\Numb}(y)^2=  \rho^2 \). Thus the equation~\eqref{eq:section-5-energy-scalar-reconstruction} has one and only one solution when \(\rho>0\). If \(\rho=0\) and \(\vec Z=0\), the unique state is \(\Jbb=0\). If \(\rho=0\) and \(\vec Z\neq0\), the unique state lies on the null vacuum boundary and is given by
\bel{eq:section-5-energy-null-reconstruction}
J_0=-\sqrt{2|\vec Z|}, \qquad J_m= \sqrt{\frac{2}{|\vec Z|}}Z_m, \qquad m=1,\ldots,n.
\ee
This null state satisfies \(T^{0m} \, =Z_m\), completing recovery on the closed cone.
\ese

\vskip.2cm

{\bf 2.} 
\bse
For fixed \(y>0\),  we write \(p(y) \coloneqq p(\funmu(y))\) and define
\bel{eq:section-5-energy-support-function}
Q_y(\rho,\vec Z) \coloneqq \sqrt{y \underline{\Numb}(y)^{-2} \rho^2 +4|\vec Z|^2}-2\eps^{2}p(y).
\ee
Each \(Q_y\) is convex in \((\rho,\vec Z)\), being a Euclidean norm of a linear map minus a constant. We now prove that
\bel{eq:section-5-energy-supremum}
\Mbf_{00}(\Jbb) = \sup_{y>0}Q_y(\rho,\vec Z).
\ee
At the physical value \(y=-\Jbb\cdot\Jbb\), the right-hand side in the equation~\eqref{eq:section-5-energy-support-function} equals \(J_0^2-2\eps^{2}p= \Mbf_{00}\). Furthermore, the identity \(\del_y(y \underline{\Numb}(y)^{-2}) =4 \underline{\Numb}(y)^{-2}\del_y\eps^{2}p(y)\) gives
\bel{eq:section-5-energy-support-derivative}
\del_yQ_y=2\del_y\eps^{2}p(y) \Biggl( \frac{\underline{\Numb}(y)^{-2} \rho^2 }{\sqrt{y\underline{\Numb}(y)^{-2} \rho^2 +4|\vec Z|^2}}-1 \Biggr).
\ee
The sign of the derivative in the equation~\eqref{eq:section-5-energy-support-derivative} is the sign of \(\phi_{\rho}(y) -4|\vec Z|^2\). For \(\rho>0\), the physical value of \(y\) is therefore the unique maximizer of \(Q_y\), while the large-density assumption guarantees \(Q_y\to-\infty\) as \(y\to+ \infty\). If \(\rho=0\), we have \(Q_y(0,\vec Z) =2|\vec Z|-2\eps^{2}p(y)\), and its supremum as \(y\to0\) is \(2|\vec Z|\), which equals \(\Mbf_{00}\) for the null state \eqref{eq:section-5-energy-null-reconstruction}. Formula \eqref{eq:section-5-energy-supremum} follows on the entire closed cone. Since a supremum of convex functions is convex, the formula also proves convexity; we verify continuity at the boundary below.

For \(0\leq\rho\leq R\) and \(|\vec Z|\leq R\), monotonicity of \(\underline{\Numb}\) and the positive high-density pressure ratio give \(Q_y\leq C R\sqrt y+2R-cy\) for all sufficiently large \(y\), with \(c>0\). This tends to \(-\infty\) uniformly on that state set. Since \(\lim_{y\to0}Q_y\geq0\), the supremum can therefore be restricted to \(0<y\leq Y_R\). Moreover, \(y\underline{\Numb}(y)^{-2}\to4\) at zero by the particle normalization, so \(L_R \coloneqq \sup_{0<y\leq Y_R}\sqrt y/\underline{\Numb}(y)<+\infty\). The reverse triangle inequality for the norm defining \(Q_y\) gives
\be
\label{eq:mass-energy-boundary-continuity}
|Q_y(\rho,\vec Z) -Q_y(\widetilde\rho,\widetilde{\vec Z})| \leq L_R|\rho-\widetilde\rho|+2|\vec Z-\widetilde{\vec Z}|, \qquad 0<y\leq Y_R.
\ee
Taking suprema proves local Lipschitz continuity of the mass-energy density on the closed half-space, including \(\rho=0\), with boundary value \(2|\vec Z|\).
\ese

\bse
{\bf 3.} 
To calculate its Hessian in the interior, we put \(h= \mu+\eps^{2}p\), \(c^2=\eps^{2}p'(\mu)\), \(w\coloneqq(u^1,\ldots,u^n)\), \(\Gamma=(1+|w|^2)^{1/2}\), and \(E=T^{00} \, = \Mbf_{00}/2\). Thus \(\rho= \Numb\Gamma\) and \(\vec Z=h\Gamma w\). The spatial block of the Jacobian of \((\mu,w)\mapsto(\rho,\vec Z)\) is \(h(\Gamma\operatorname{Id}+ {\Gamma}^{-1}w\otimes w)\). Its tangential eigenvalue is \(h\Gamma\), with multiplicity \(n-1\), and its radial eigenvalue is \(h(1+2|w|^2)/\Gamma\). The Schur complement therefore gives the positive determinant \(\Numb h^{n-1}{\Gamma}^{n-1}[1+(1-c^2)|w|^2]\). Direct differentiation gives
\be
\label{eq:mass-energy-gradient-interior}
D_{\rho}E= \frac{h}{\Numb\Gamma}, \qquad D_{\vec Z}E= \frac{w}{\Gamma}.
\ee
For a primitive variation \((\dot\mu,\dot w)\),  we denote by \(\delta Z_{\rm E}\)  the corresponding conservative variation. Differentiating the gradient identity~\eqref{eq:mass-energy-gradient-interior} and pairing with \(\delta Z_{\rm E}\), we obtain
\be
\label{eq:mass-energy-hessian-interior}
D^2_{Z_{\rm E}}E[\delta Z_{\rm E},\delta Z_{\rm E}]= \frac{c^2}{h}\dot\mu^2+ \frac{2c^2}{{\Gamma}^2}\dot\mu\,w\cdot\dot w+h|\dot w|^2-\frac{h}{{\Gamma}^2}(w\cdot\dot w)^2.
\ee
If \(w\neq0\), we decompose \(\dot w= \dot w_\perp+ \lambda w\), with \(w\cdot\dot w_\perp=0\). Completing the square yields
\be
\label{eq:mass-energy-hessian-squares}
D^2_{Z_{\rm E}}E[\delta Z_{\rm E},\delta Z_{\rm E}]=h|\dot w_\perp|^2+ \frac{h}{{\Gamma}^2|w|^2} \Bigl( w\cdot\dot w+ \frac{c^2|w|^2}{h}\dot\mu \Bigr) ^2 + \frac{c^2}{h{\Gamma}^2}\bigl(1+(1-c^2)|w|^2 \bigr)\dot\mu^2.
\ee
At \(w=0\), the Hessian formula~\eqref{eq:mass-energy-hessian-interior} reduces to \(c^2h^{-1}\dot\mu^2+h|\dot w|^2\). In either case the quadratic form vanishes only for the zero variation. Since \(\Mbf_{00} \, =2E\), this proves positive definiteness. The boundary value \(2|\vec Z|\) is not strictly convex along radial segments; the strict conclusion is restricted to positive density.
\ese
\end{proof}

\subsubsection{Convex currents from momentum parallel to the symmetry orbits}

The advected-momentum currents of  \autoref{prop:section-4-transported-currents}  give further convex densities in all $n+1$ conservative variables. Indeed, with \(Z_\parallel\coloneqq(Z_2,\ldots,Z_n)\), the reconstruction formula gives \(\Jhatpar=2Z_\parallel/\rho\). For \(H\in C^2(\Rbb^{n-1})\), the time component of \(H(\Jhatpar)\vNumb\) is therefore \(f_H(\rho,\vec Z) \coloneqq \rho H(2Z_\parallel/\rho)\). Writing \(z\coloneqq2Z_\parallel/\rho\), we calculate
\be
\label{eq:transported-current-full-convexity}
D^2 f_H[(\dot\rho,\dot{\vec Z}), (\dot\rho,\dot{\vec Z})] = \rho^{-1}D^2H(z)[2\dot Z_\parallel-z\dot\rho,2\dot Z_\parallel-z\dot\rho].
\ee
Thus \(f_H\) is convex on \(\rho>0\) if and only if \(H\) is convex: necessity follows at fixed positive \(\rho\), and sufficiency from the Hessian formula. This convexity is not strict, since the density is independent of \(Z_1\). Its extension to vacuum depends on the growth of \(H\). For example, \(H(z) = \sqrt{1+|z|^2}\) gives
\be
\label{eq:transverse-norm-current}
F_H^0= \sqrt{ \rho^2 +4|Z_\parallel|^2}, \qquad F_H^1= \frac{Z_1}{E+\eps^{2}p(\mu)}F_H^0, \qquad E=T^{00}.
\ee
This current extends continuously to the closed state cone. At a nonzero null state, its components are \(2|Z_\parallel|\) and \(2|Z_\parallel|Z_1/|\vec Z|\), respectively; at the zero state both vanish. To check continuity at the origin and its finite-energy growth, we use \(\Numb\leq\mu\), \(E\geq\mu(u^0)^2\), and \(|\vec Z|\leq E\). They give \(\rho\leq E\), \(0\leq F_H^0\leq\sqrt5 E\), and \(|F_H^1|\leq F_H^0\). The density is convex on the closed half-space, being a Euclidean norm, and its current is conserved on regular non-vacuum flows. These facts do not impose a genuine nonlinearity condition on the pressure.

\subsubsection{\texorpdfstring{The selected mass-energy inequality}{The selected mass-energy inequality}}

The identity in \eqref{eq:section-3-stress-current-identity} reads \(\nabla_\alpha F_{\rm E}^\alpha= \Sigma_{\rm E}\). The \textit{dissipated-energy formulation} proposed in~\cite{TorusT2-1-2026} imposes the particle-current equation and the $n$ spatial stress equations as distributional equalities, while imposing
\bel{eq:section-5-mass-energy-selection}
\nabla_\alpha\vNumb^\alpha=0, \qquad \mathcal E\big|_{T\Sigma_t} \, =0, \qquad \nabla_\alpha F_{\rm E}^\alpha\leq\Sigma_{\rm E}.
\ee
The middle equality means that the covector $\mathcal E$ vanishes on every vector tangent to the chosen spacelike leaf; it is therefore independent of the tangential frame used to express it.

\subsection{Particle-production formulation}
\label{section-4-3}

\subsubsection{Convexity of the negative particle-number current}

For the particle-production formulation proposed in~\cite{TorusT2-1-2026}, the conservative variables and the reference current are
\bel{eq:section-5-particle-number-variables}
Z_{\rm N} \coloneqq (T^{00},(T^{0m})_{m=1}^n), \qquad F_{\rm N}^* \coloneqq -\vNumb=(-\vNumb^0,-\vNumb^1).
\ee
The first component is again the density and the second one is the spatial flux. Here the convex density is the negative particle-number density expressed in the stress-energy variables \(Z_{\rm N}\). We write \(E=T^{00}\) and \(P\coloneqq(T^{01},\ldots,T^{0n})\), so that \(Z_{\rm N}=(E,P)\).

\begin{proposition}[Convexity of the negative particle number]
\label{lem-convexN}
Under the admissibility conditions of Definition~\ref{def-pression}, every state \(\Jbb\in\mathcal A\) is uniquely determined by \(Z_{\rm N}\), whose range is the closed cone \(\{(E,P):E\geq|P|\}\). The map
\bel{eq:section-5-particle-convex-map}
Z_{\rm N}\longmapsto-\vNumb^0
\ee
is non-positive and convex on this closed cone, has a positive-definite Hessian in the interior \(E>|P|\), and vanishes continuously on its null boundary.
\end{proposition}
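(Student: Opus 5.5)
The proof will mirror that of Proposition~\ref{lem-convexM}, with the roles of energy and particle number exchanged: a reconstruction step, a representation of $-\vNumb^0$ as a supremum of convex functions, and an interior Hessian computation.

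\textbf{Reconstruction of the state.} In primitive variables write $E=h\Gamma^2-\eps^2p$ and $P=h\Gamma w$, with $w=(u^1,\ldots,u^n)$ and $\Gamma=(1+|w|^2)^{1/2}$. Then $E-|P|=h\Gamma(\Gamma-|w|)-\eps^2p>0$ at positive density. Since $\Gamma(\Gamma-|w|)\geq 1/2$, this is bounded below by $h/2-\eps^2p=(\mu-\eps^2p)/2>0$, using $\eps^2p<\mu$ from~\eqref{hyperbolic-eos-2}. To recover the state, set $s=|w|$ and eliminate $\Gamma$:
\[
E+\eps^2p=h\Gamma^2, \qquad |P|^2=(E+\eps^2p)(E+\eps^2p-h).
\]
Hence $\mu$ solves $\psi_{E}(\mu)\coloneqq(E+\eps^2p(\mu))(E-\mu)=|P|^2$. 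We have $\psi_E'=\eps^2p'(E-\mu)-(E+\eps^2p)$. On $\{\psi_E\geq0\}$ one has $\mu\leq E$, and $\eps^2p'<1$ gives $\psi_E'<0$. Moreover $\psi_E(0)=E^2\geq|P|^2$ and $\psi_E(E)=0$. So there is a unique root $\mu\in(0,E]$ when $E>|P|$, and $\mu=0$ when $E=|P|$. In the latter case the state is the null state with $J^\alpha J^\beta/2=T^{\alpha\beta}$, namely $J_0=-\sqrt{2E}$ and $J_m=-\sqrt{2/E}\,P_m$ (up to sign conventions), or $\Jbb=0$ when $E=0$. The range of $Z_{\rm N}$ is therefore the closed cone $\{E\geq|P|\}$.

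\textbf{Convexity via an infimum/supremum representation.} I will show
\[
\vNumb^0=\inf_{\mu>0}\Phi_\mu(E,P),
\]
with each $\Phi_\mu$ concave, so that $-\vNumb^0=\sup_\mu(-\Phi_\mu)$ is convex. The natural candidate comes from Legendre duality with $\Numb'=\Numb/h$: take $\Phi_\mu(E,P)$ to be $\frac{\Numb(\mu)}{h(\mu)}\bigl(\sqrt{(E+\eps^2p(\mu))^2-|P|^2}\bigr)$ corrected by an affine term in $E$. The square root $\sqrt{X^2-|P|^2}$ is concave on the forward cone, being a Lorentzian norm.

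Concretely, first check that at $\Gamma$-solving values $\vNumb^0=\Numb\Gamma=\Numb\sqrt{(E+\eps^2p)/h}$. Then look for $\Phi_\mu$ of the form
\[
a(\mu)\sqrt{(E+\eps^2 p)^2-|P|^2}\;+\;b(\mu)\,(E+\eps^2p)+c(\mu),
\]
concave in $(E,P)$ for fixed $\mu$. The coefficients are chosen so that $\partial_\mu\Phi_\mu=0$ exactly on the physical relation $\psi_E(\mu)=|P|^2$, and the sign of $\partial_\mu\Phi_\mu$ changes as in~\eqref{eq:section-5-energy-support-derivative}.

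Rather than guess these coefficients, I would fall back on a direct \textbf{Hessian computation}, as in step~3 of the proof of Proposition~\ref{lem-convexM}. Use $D_E\vNumb^0$ and $D_P\vNumb^0$ from the multiplier identity: the gradient of the particle density in $(E,P)$ is $(\Numb/h)(\Gamma,-w)$ up to factors, coming from $d\Numb=\Numb\,d\mu/h$ and Gibbs-type relations. Differentiating once more and pairing with variations $(\dot\mu,\dot w)$ gives a quadratic form. Splitting $\dot w=\dot w_\perp+\lambda w$, I expect the tangential part $\frac{\Numb}{h\Gamma}|\dot w_\perp|^2$ to be manifestly positive, and the radial $2\times2$ block to have determinant proportional to $1-c^2$ times positive factors. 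Completing squares as in~\eqref{eq:mass-energy-hessian-squares} then gives positive definiteness of $-D^2\vNumb^0$ on $E>|P|$. Convexity on the closed cone follows by continuity, since convexity in the interior of a convex set extends to its closure.

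\textbf{Boundary behavior.} Non-positivity is immediate from $\vNumb^0=\Numb\Gamma\geq0$. Continuous vanishing at $E=|P|$ follows from $\vNumb^0\leq C\mu^{1/2}|J_0|$ by~\eqref{eq:particle-factor-vacuum-extension} and Lemma~\ref{lem-hN}, together with continuity of $\mu(E,P)$ from the monotone root, which gives $\mu\to0$ as $E-|P|\to0$. Since $|J_0|^2\leq 2E$ stays bounded, the bound forces $\vNumb^0\to0$.

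\textbf{Main obstacle.} The main obstacle is the sign bookkeeping in the radial $2\times2$ block. I expect the only pressure input needed to be $0<c^2<1$, with no genuine-nonlinearity condition required, and checking that this is indeed all that enters is where I anticipate the most difficulty.
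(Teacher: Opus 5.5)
Your proposal follows essentially the paper's proof: the same root equation $(E+\eps^{2}p(\mu))(E-\mu)=|P|^2$ with $\psi_E'<0$ on $[0,E]$, the same gradient $(\Numb/h)(\Gamma,-w)$ of $\vNumb^0$ differentiated and paired with $(\dot\mu,\dot w)$, completion of squares after $\dot w=\dot w_\perp+\lambda w$, and continuous extension by zero to the null boundary. Incidentally, the infimum idea you abandoned works with no affine correction: $\Phi_\mu\coloneqq(\Numb/h)\sqrt{(E+\eps^{2}p)^2-|P|^2}$ has $\del_\mu\Phi_\mu$ of the sign of $|P|^2-\psi_E(\mu)$, so $\vNumb^0=\inf_{\mu>0}\Phi_\mu$ is concave on the closed cone.

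When you write the Hessian out, the squares are $\Numb\Gamma|\dot w_\perp|^2+\frac{\Numb}{\Gamma|w|^2}\bigl(w\cdot\dot w+\frac{c^2|w|^2}{h}\dot\mu\bigr)^2+\frac{\Numb c^2}{\Gamma h^2}\bigl(1+(1-c^2)|w|^2\bigr)\dot\mu^2$, so $c^2>0$ is used as well as $c^2<1$. Definiteness in $Z_{\rm N}$ also needs the differential of $(\mu,w)\mapsto(E,P)$ to be invertible, which your $\psi_E'<0$ supplies via the implicit function theorem. Finally, in the boundary step $J_0^2=2(E+\eps^{2}p)\leq 4E$, not $\leq 2E$.
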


\begin{proof}
\bse
\noindent{\bf 1.} In the variables \(Z_{\rm N}=(E,P)\), the density \(\mu\) is the unique root in \([0,E]\) of
\bel{eq:section-5-particle-reconstruction}
(E+\eps^{2}p(\mu))(E-\mu) -|P|^2=0.
\ee
For \(E>0\), the derivative of the left-hand side is \(\eps^{2}p'(\mu)(E-\mu) - (E+\eps^{2}p(\mu))< 0\). When \(E>|P|\), the root satisfies \(\mu>0\), and the future velocity is recovered from \((\mu+\eps^{2}p)(u^0)^2=E+\eps^{2}p\) and \((\mu+\eps^{2}p)u^0u^m=T^{0m}\). Thus \(-\vNumb^0=-\Numb(\mu)u^0\) is well-defined in the interior.

We put \(h = \mu+\eps^{2}p(\mu)\), \(c^2 = \eps^{2}p'(\mu)\), \(w = (u^1,\ldots,u^n)\), and \(\Gamma = u^0=(1+|w|^2)^{1/2}\). If \((\dot\mu,\dot w)\) is a variation of the primitive state and \(\zeta\) is the corresponding variation of \(Z_{\rm N}\), then
\bel{eq:section-5-particle-differential}
\dot E= \bigl(1+(1+c^2)|w|^2 \bigr)\dot\mu+2h\,w\cdot\dot w, \qquad \dot P=(1+c^2)\Gamma w\,\dot\mu+h\bigl(\Gamma\operatorname{Id}+ {\Gamma}^{-1}w\otimes w \bigr)\dot w.
\ee
The spatial block is the one used in the proof of Proposition~\ref{lem-convexM}; its Schur complement now gives the determinant \(h^n{\Gamma}^{n-2}[1+(1-c^2)|w|^2]>0\), so this differential is invertible in the interior of the future cone. Since \(\Numb'= \Numb/h\), differentiation gives
\bel{eq:section-5-particle-gradient}
D_E(-\Numb\Gamma) =-\frac{\Numb\Gamma}{h}, \qquad D_P(-\Numb\Gamma) = \frac{\Numb}{h}w.
\ee
\ese

\bse
\noindent{\bf 2.} We differentiate this gradient and pair it with the differential~\eqref{eq:section-5-particle-differential}. Only \(\Numb'= \Numb/h\) and \(h'=1+c^2\) enter the calculation, which gives
\bel{eq:section-5-particle-hessian}
D^2_{Z_{\rm N}}(-\vNumb^0)[\zeta,\zeta]= \frac{\Numb c^2\Gamma}{h^2}\dot\mu^2+ \frac{2\Numb c^2}{h\Gamma}\dot\mu\,w\cdot\dot w+ \Numb\Gamma|\dot w|^2-\frac{\Numb}{\Gamma}(w\cdot\dot w)^2.
\ee
When \(w\neq0\), we decompose \(\dot w= \dot w_\perp+ \lambda w\), with \(w\cdot\dot w_\perp=0\). Completing the square in the equation~\eqref{eq:section-5-particle-hessian}, we obtain
\bel{eq:section-5-particle-hessian-squares}
D^2_{Z_{\rm N}}(-\vNumb^0)[\zeta,\zeta]= \Numb\Gamma|\dot w_\perp|^2+ \frac{\Numb}{\Gamma|w|^2} \Bigl( w\cdot\dot w+ \frac{c^2|w|^2}{h}\dot\mu \Bigr) ^2 + \frac{\Numb c^2}{\Gamma h^2}\bigl(1+(1-c^2)|w|^2 \bigr)\dot\mu^2\geq0.
\ee
At \(w=0\), the continuous form of the same identity is \(D^2_{Z_{\rm N}}(-\vNumb^0)[\zeta,\zeta]= \Numb|\dot w|^2+ \Numb c^2h^{-2}\dot\mu^2\geq0\). In either case equality requires \(\dot\mu=0\) and \(\dot w=0\), hence \(\zeta=0\); the Hessian is positive definite in the interior. On the null boundary \(E=|P|\), the unique root of the equation~\eqref{eq:section-5-particle-reconstruction} is \(\mu=0\). At the origin the state is \(\Jbb=0\), while for \(E=|P|>0\) it is the unique null state
\bel{eq:section-5-particle-null-reconstruction}
J_0=-\sqrt{2E}, \qquad J_m= \sqrt{\frac{2}{E}}P_m, \qquad m=1,\ldots,n.
\ee
Moreover, \(\Mbf_{00} \, =2E\) and the bound in the equation~\eqref{eq:M00-positive} imply \(|J_0|\leq2\sqrt E\); hence \(|\vNumb^0|= \hNumb_\Jbb|J_0|\to0\) whenever an interior state approaches this boundary. The convex interior function therefore extends continuously by zero to the boundary of the convex cone \(E\geq|P|\), and the extension remains convex. Its sign follows from \(J_0\leq0\).

\noindent{\bf 3.} For a global bound on this extension, we put \(r_* \coloneqq \sup_{\mu>0}\eps^{2}p(\mu)/\mu<1\). The normalization at vacuum and \((\log(\Numb/\mu))'=-\eps^{2}p/(\mu h)\leq0\) give \(\Numb\leq\mu\). The equation~\eqref{eq:section-5-particle-reconstruction} then implies
\be
\label{eq:particle-boundary-gap}
E^2-|P|^2= \mu E-\eps^{2}p(E-\mu)\geq(1-r_*)\mu E, \qquad (\vNumb^0)^2= \frac{\Numb^2(E+\eps^{2}p)}h\leq2\mu E.
\ee
Consequently, \(0\leq\vNumb^0\leq\sqrt{2/(1-r_*)}\sqrt{E^2-|P|^2}\) on the closed cone. This global bound gives vanishing at every finite null state. Strict convexity holds only in the interior; the boundary value is zero.
\ese
\end{proof}

\subsubsection{\texorpdfstring{The selected particle-number inequality}{The selected particle-number inequality}}

The particle-production formulation imposes all \(n+1\) stress-energy equations and selects negative particle number as the convex current:
\bel{eq:section-5-particle-number-selection}
\mathcal E_\beta=0\quad(\beta=0,\ldots,n), \qquad \nabla_\alpha(-\vNumb^\alpha)\leq0.
\ee
For regular flows, the selected inequalities in \eqref{eq:section-5-mass-energy-selection} and \eqref{eq:section-5-particle-number-selection} are equalities. There are consequently two weak formulations: the dissipated-energy formulation and the particle-production formulation. Each applies both to the subclass \(\Jpar\equiv0\) and to the full system with advected momentum parallel to the symmetry orbits; these are two regimes of the same equations, not two additional admissibility choices. The role of entropy dissipation in admissibility was emphasized by Dafermos~\cite{Dafermos-1973-entropy-rate} through his entropy-rate criterion.

\section{The effective Euler system}
\label{section-5}

\subsection{Effective density and pressure}
\label{section-5-1}

\subsubsection{Frozen momentum parallel to the symmetry orbits}

We hold the normalized momentum parallel to the symmetry orbits fixed and reduce the remaining equations to an effective Euler system in one spatial variable. The reduction identifies the effective density, pressure, and particle normalization and preserves the pressure conditions that are relevant for hyperbolicity, convexity, etc.

In this section we freeze \(\Jhatpar\). Its squared Euclidean norm \(\varpi=|\Jhatpar|^2\), defined in the equation~\eqref{eq:section-6-varpi-definition}, is invariant under rotations of the frame parallel to the orbits. The momentum parallel to the symmetry orbits enters the two Euler equations for the momentum orthogonal to the symmetry orbits only through~\(\varpi\). We recall \(h(\mu)=\mu+\eps^2p(\mu)\), write \(u_\parallel \coloneqq (u^2,\ldots,u^n)\), and define the unit velocity in the direction orthogonal to the symmetry orbits by
\bse\label{eq:effective-velocity-and-rapidity}
\bel{eq:section-6-quotient-velocity}
\overline u^a = \bigl(1+|u_\parallel|^2 \bigr)^{-1/2}u^a, \qquad a=0,1.
\ee
Thus, \(\overline u\) is future-directed and unit timelike for the two-dimensional Minkowski metric, and the rapidity $v$ is given by
\bel{eq:section-6-effective-velocity-rapidity}
\tanh v = \frac{u^1}{u^0} = \frac{\overline u^1}{\overline u^0} \in(-1,1).
\ee
\ese

\subsubsection{The effective Euler system}

When the momentum parallel to the symmetry orbits vanishes, $\varpi=0$, the definitions~\eqref{eq:section-6-effective-pressure} give $\meff{0}(\mu) = \mu$ and $\eps^{2}p_{0} \, =\eps^{2}p$. For $\varpi>0$, the effective pressure changes with the prescribed momentum, and we keep this dependence explicit in the construction.
Figure~\ref{fig:effective-pressure-reduction} illustrates this change of density for $p(0)=0$, $p'(\mu)=\sqrt\mu/(4(1+\sqrt\mu))$, $\eps=1/2$, and $\varpi=0,4,16$. The particle density has the vacuum normalization~\eqref{eq:section-3-particle-density}. Increasing $\varpi$ increases $\meff{\varpi}(\mu)$ at fixed $\mu$; expressing the same pressure value in this new variable therefore lowers $p_\varpi(m)$ at fixed $m$.

\begin{figure}
\centering
\includegraphics[width=\linewidth]{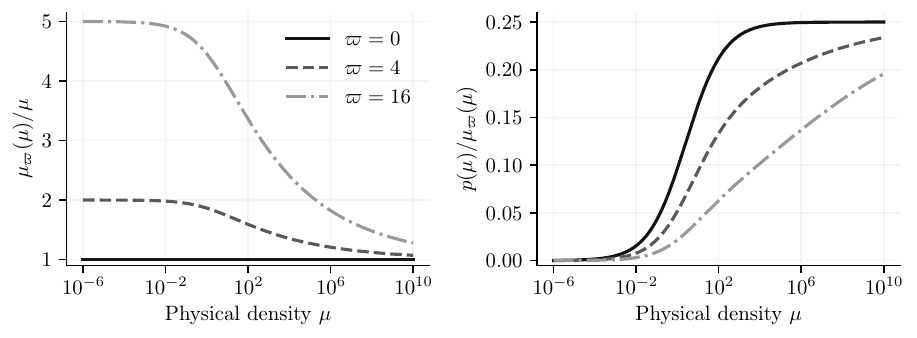}
\caption{Numerical illustration of the effective density and pressure for $p(0)=0$, $p'(\mu)=\sqrt\mu/(4(1+\sqrt\mu))$, and $\eps=1/2$. The line styles distinguish the fixed values $\varpi=|\Jhatpar|^2=0,4,16$ in both panels.  Left: the ratio $\meff{\varpi}(\mu)/\mu$ as a function of the physical density $\mu$.  Right: the ratio $p(\mu)/\meff{\varpi}(\mu)$ on the same physical-density range. The horizontal axes are logarithmic, with $10^{-6}\leq\mu\leq10^{10}$. The pressure value is preserved, $p_\varpi(\meff{\varpi}(\mu))=p(\mu)$; only its density variable changes.}
\label{fig:effective-pressure-reduction}
\end{figure}

\begin{proposition}[Effective equation of state] 
\label{prop:section-6-effective-pressure}
Fix \(\Jhatpar\in\Rbb^{n-1}\) and \(\varpi=|\Jhatpar|^2\), and recall the effective density and pressure from the equation~\eqref{eq:section-6-effective-pressure}. Define the auxiliary coefficients by
\bel{eq:section-6-effective-auxiliary-coefficients}
\sigma_\varpi\coloneqq\frac{\varpi\Numb(\mu)^2}{4h(\mu)^2}, \qquad D_\varpi \coloneqq \frac{d\meff{\varpi}}{d\mu} \, =1+\sigma_\varpi(1-\eps^{2}p'(\mu)) >0.
\ee

 \bei 

\item  \emph{Energy--momentum tensor in effective variables.} The map \(\mu\mapsto\meff{\varpi}(\mu)\) is a strictly increasing bijection of \((0,+ \infty)\) onto itself. Moreover, the first two equations of the  reduced Euler system with \(\Jhatpar\) fixed are the one-dimensional Euler equations for the state variables \((\meff{\varpi},v)\), the velocity \(\overline u\), and the pressure law \(\eps^{2}p_{\varpi}\). Stress-tensor components of the original and effective variables in directions $0,1$ coincide, namely
\bel{eq:section-6-effective-stress}
T^{ab} = T_\varpi^{ab} := \bigl(\meff{\varpi}+ \eps^{2}p_{\varpi} \bigr)\overline u^a\overline u^b+ \eps^{2}p_{\varpi}g^{ab}, \qquad a,b=0,1.
\ee

\item \emph{Particle current in effective variables.}   Let \(\Neff{\varpi}\)  denote the particle density associated with \(\eps^{2}p_{\varpi}\), canonically normalized at the effective vacuum as in the equation~\eqref{eq:section-3-particle-density}.  Then one has
\bel{eq:section-6-effective-particle-density}
\Neff{\varpi}\bigl(\meff{\varpi}(\mu) \bigr) = \Bigl(1+\frac{\varpi}{4}\Bigr)^{1/2} (1+\sigma_\varpi)^{1/2} \Numb(\mu), \qquad \Neff{\varpi}\overline u^a= \Bigl(1+\frac{\varpi}{4}\Bigr)^{1/2} \Numb u^a, \qquad a=0,1.
\ee
For the vacuum normalization below, set \(D_{\varpi,0}\coloneqq\lim_{\mu\to0}D_\varpi(\mu)=1+\varpi/4\).

\item \emph{Hyperbolicity and genuine nonlinearity.} With   the coefficients in~\eqref{eq:section-6-effective-auxiliary-coefficients},  one has
\bse\label{eq:effective-hyperbolicity-and-nonlinearity}
\bel{eq:section-6-effective-sound-speed}
\eps^{2}p_{\varpi}'\bigl(\meff{\varpi}(\mu) \bigr) = D_\varpi^{-1}\eps^{2}p'(\mu)\in(0,1).
\ee
The genuine-nonlinearity coefficient satisfies
\bel{eq:section-6-effective-genuine-nonlinearity}
G_{\rm N}[p_\varpi]\bigl(\meff{\varpi}(\mu)\bigr)
=D_\varpi^{-3}(1+\sigma_\varpi)^2G_{\rm N}[p](\mu).
\ee
\ese
Thus the effective law is hyperbolic and subluminal, and its genuine-nonlinearity coefficient has the same sign as the corresponding coefficient of the original law.
\eei
\end{proposition}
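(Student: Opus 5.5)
The plan is to make every claim follow from two facts: the particle-density relation $\Numb'=\Numb/h$, and the identity $|u_\parallel|^2=\sigma_\varpi$. The identity itself comes from the definitions: $\widehat J_m=2hu_m/\Numb$ gives $|u_\parallel|^2=\varpi\Numb^2/(4h^2)=\sigma_\varpi$. With these two facts, each assertion becomes a one-variable computation in $\mu$.

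\emph{Step 1 (monotonicity and bijection).} Differentiating $\meff{\varpi}=\mu+\varpi\Numb^2/(4h)$ with $\Numb'=\Numb/h$ and $h'=1+c^2$ gives
\[
\frac{d\meff{\varpi}}{d\mu}=1+\frac{\varpi\Numb^2}{4h^2}\bigl(2-(1+c^2)\bigr)=D_\varpi,
\]
and $D_\varpi\geq1$ since $c^2<1$.
\begin{itemize}
\item Near vacuum, $\Numb\sim\mu$ and $h\sim\mu$, so $\meff{\varpi}(\mu)\to0$ and $\meff{\varpi}(\mu)/\mu\to1+\varpi/4$.
\item At large density, $\meff{\varpi}\geq\mu\to+\infty$.
\end{itemize}
Together these give a strictly increasing bijection of $(0,+\infty)$.

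\emph{Step 2 (stress tensor).} I will write $u^a=(1+\sigma_\varpi)^{1/2}\overline u^a$ for $a=0,1$. Then
\[
T^{ab}=h(1+\sigma_\varpi)\overline u^a\overline u^b+\eps^2p\,g^{ab}.
\]
Next I check that $h(1+\sigma_\varpi)=h+\varpi\Numb^2/(4h)=\meff{\varpi}+\eps^2p_\varpi$; this yields~\eqref{eq:section-6-effective-stress}. By plane symmetry, the $b=0,1$ equations $\del_aT^{ab}=0$ are exactly the one-dimensional Euler equations for $(\meff{\varpi},\overline u)$ with pressure $\eps^2p_\varpi$. This uses that $\Jhatpar$ is frozen, so $\varpi$ is constant.

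\emph{Step 3 (particle density).} Let $\widetilde\Numb(\mu)$ denote the right-hand side of~\eqref{eq:section-6-effective-particle-density}. I will verify that $\widetilde\Numb$ solves the defining ODE $d\log\Neff{\varpi}/dm=1/(m+\eps^2p_\varpi)$ written in the $\mu$ variable, namely
\[
\frac{d}{d\mu}\log\widetilde\Numb=\frac{D_\varpi}{h(1+\sigma_\varpi)}.
\]
Using $\sigma_\varpi'=-2\sigma_\varpi c^2/h$, the left side equals
\[
\frac1h-\frac{\sigma_\varpi c^2}{h(1+\sigma_\varpi)}=\frac{1+\sigma_\varpi(1-c^2)}{h(1+\sigma_\varpi)},
\]
as required. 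For the normalization: $\sigma_\varpi\to\varpi/4$ and $\meff{\varpi}/\mu\to1+\varpi/4$ at vacuum, so $\widetilde\Numb/\meff{\varpi}\to1$; uniqueness of the normalized solution then identifies $\widetilde\Numb$ with $\Neff{\varpi}\circ\meff{\varpi}$. The current identity follows from $\overline u^a=(1+\sigma_\varpi)^{-1/2}u^a$, and $D_{\varpi,0}=1+\varpi/4$ is immediate.

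\emph{Step 4 (sound speed and genuine nonlinearity).} Formula~\eqref{eq:section-6-effective-sound-speed} is the chain rule with Step 1, and $0<c^2/D_\varpi\leq c^2<1$. For~\eqref{eq:section-6-effective-genuine-nonlinearity}, I will use the rewriting
\[
G_{\rm N}=h\,\frac{d(c^2)}{d\mu}+2c^2(1-c^2),
\]
which follows from $h'=1+c^2$. Applied to the effective law with $h_\varpi=h(1+\sigma_\varpi)$, $c_\varpi^2=c^2/D_\varpi$ and $d/dm=D_\varpi^{-1}d/d\mu$, this becomes
\[
G_{\rm N}[p_\varpi]=\frac{h(1+\sigma_\varpi)}{D_\varpi}\Bigl(\frac{c^2}{D_\varpi}\Bigr)'+\frac{2c^2}{D_\varpi}\Bigl(1-\frac{c^2}{D_\varpi}\Bigr).
\]
I then substitute
\[
D_\varpi'=-\frac{2\sigma_\varpi c^2(1-c^2)}{h}-\sigma_\varpi\,(c^2)',
\]
and collect the terms in $h(c^2)'$ and in $c^2(1-c^2)$.

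This last algebra is the main obstacle, since everything must factor into $D_\varpi^{-3}(1+\sigma_\varpi)^2\bigl(h(c^2)'+2c^2(1-c^2)\bigr)$. I would first multiply through by $D_\varpi^3$ and compare coefficients:
\begin{itemize}
\item The coefficient of $h(c^2)'$ should reduce to $(1+\sigma_\varpi)(D_\varpi+\sigma_\varpi c^2)=(1+\sigma_\varpi)^2$.
\item The remaining $c^2(1-c^2)$-terms should reduce to the same factor via $D_\varpi=1+\sigma_\varpi-\sigma_\varpi c^2$.
\end{itemize}
As an independent check, the identity~\eqref{eq:acoustic-speed-density-direction} from the proof of Theorem~\ref{prop:structure-euler} already exhibits the same prefactor $(1+\sigma_\parallel)^2/D^3$, so the result should agree with it.
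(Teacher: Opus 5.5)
Your proposal is correct and follows essentially the same route as the paper's proof: the identity $|u_\parallel|^2=\sigma_\varpi$, the relation $h(1+\sigma_\varpi)=m+\eps^2p_\varpi$ for the stress tensor, the logarithmic derivative of $\Numb\sqrt{1+\sigma_\varpi}$ for the particle density, and a direct two-step differentiation (first in $\mu$, then in $m$) for $G_{\rm N}[p_\varpi]$. Your coefficient collection does close, via $D_\varpi+\sigma_\varpi c^2=1+\sigma_\varpi$ and $D_\varpi-c^2=(1-c^2)(1+\sigma_\varpi)$; one small correction is that the rewriting $G_{\rm N}=h\,(c^2)'+2c^2(1-c^2)$ is simply the definition, since $\eps^2p''=(c^2)'$, and does not use $h'=1+c^2$.
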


\begin{proof}
\bse
By the definitions~\eqref{eq:def-J} and \eqref{eq:def-Jhat}, we have \(\Jpar= \hNumb(\mu)\Jhatpar= \Numb(\mu)(2h(\mu))^{-1/2}\Jhatpar\), and therefore
\bel{eq:section-6-transverse-velocity}
|u_\parallel|^2= \frac{\varpi\Numb(\mu)^2}{4h(\mu)^2} \, =\sigma_\varpi.
\ee
The definition of \(\overline u\) and the identity   \(h(1+\sigma_\varpi) = \meff{\varpi}+ \eps^{2}p_{\varpi}\)  give the equation~\eqref{eq:section-6-effective-stress}. Thus the first two reduced conservation laws are those of the effective one-dimensional fluid. Since \(\Numb'/\Numb=h^{-1}\), differentiation of the equation~\eqref{eq:section-6-effective-pressure} gives the formula for \(D_\varpi\) in the equation~\eqref{eq:section-6-effective-auxiliary-coefficients}. Hence \(D_\varpi\geq1\), with equality only when \(\varpi=0\), and \(\meff{\varpi}\) is strictly increasing. The endpoint limits follow from \(\Numb(\mu) =\Ocal(\mu)\) at vacuum and \(\Numb(\mu)^2/h(\mu) =o(\mu)\) at large density, which are consequences of the assumptions in~\autoref{section-1-3}. Thus the map is onto. Differentiating \(\eps^{2}p_{\varpi}\bigl(\meff{\varpi}(\mu) \bigr) =\eps^{2}p(\mu)\) proves the equation~\eqref{eq:section-6-effective-sound-speed}.

For the particle density,
the identity   \(\sigma_\varpi'=-2 \eps^{2}p'(\mu) \sigma_\varpi/h\)  gives
\bel{eq:section-6-effective-particle-calculation}
\frac{d}{d\mu}\log\bigl(\Numb\sqrt{1+\sigma_\varpi} \bigr) = \frac{D_\varpi}{h(1+\sigma_\varpi)} \, = \frac{D_\varpi}{\meff{\varpi}+ \eps^{2}p_{\varpi}}.
\ee
After division by \(d\meff{\varpi}/d\mu=D_\varpi\), this is the defining relation for \(\Neff{\varpi}\). Since \(\Numb(\mu)/\mu\to1\), \(h(\mu)/\mu\to1\), and   \(\sigma_\varpi(\mu)\to\varpi/4\), we have \(\meff{\varpi}(\mu)/\mu\to{1+\varpi/4}\); the factor \( \bigl(1+\frac{\varpi}{4}\bigr)^{1/2} \) in the equation~\eqref{eq:section-6-effective-particle-density} is the constant which gives \(\Neff{\varpi}(m)/m\to1\) as \(m\to0\). The second identity in \eqref{eq:section-6-effective-particle-density} follows from   \(u^a= \sqrt{1+\sigma_\varpi}\,\overline u^a\). Differentiating first with respect to \(\mu\) and then with respect to \(\meff{\varpi}\) gives
\bel{eq:section-6-effective-genuine-nonlinearity-calculation}
\aligned
& D_\varpi^3 \Bigl( \eps^{2}p_{\varpi}''+2\frac{(1-\eps^{2}p_{\varpi}')\eps^{2}p_{\varpi}'}{\meff{\varpi}+ \eps^{2}p_{\varpi}} \Bigr) \bigl(\meff{\varpi}(\mu) \bigr)\\
&=(1+\sigma_\varpi)\eps^{2}p''+ \frac{2\eps^{4}(p')^{2}\sigma_\varpi(1-\eps^{2}p'(\mu))}{h}+ \frac{2\eps^{2}p'D_\varpi(D_\varpi- \eps^{2}p'(\mu))}{h(1+\sigma_\varpi)}
\\
&=(1+\sigma_\varpi) \Bigl( \eps^{2}p''+2\frac{\eps^{2}p'(1-\eps^{2}p')}{h} \Bigr),
\endaligned
\ee
which is the identity~\eqref{eq:section-6-effective-genuine-nonlinearity}.
\ese
\end{proof}

The effective reduction preserves the pressure value and the sign in the identity~\eqref{eq:section-6-effective-genuine-nonlinearity}. Mass-energy convexity requires a stronger condition. The relation to genuine nonlinearity is given by
\be
\label{eq:effective-convexity-genuine-nonlinearity-relation}
h\eps^{2}p''+2\eps^{2}p'\bigl(1-\eps\sqrt{p'}\bigr)^2=G_{\rm N}-4(\eps^{2}p')^{3/2}\bigl(1-\eps\sqrt{p'}\bigr).
\ee
Since $0<\eps^{2}p'<1$, non-negativity of the left-hand side implies $G_{\rm N}>0$. It requires a positive lower bound for $G_{\rm N}$ in terms of the sound speed, whereas genuine nonlinearity requires only $G_{\rm N}>0$. Proposition~\ref{theo:section-8-pressure-independent-convexity}, below, uses the stronger condition for mass-energy convexity; kernel positivity needs only genuine nonlinearity.

\begin{lemma}[Convexity condition for the effective pressure]
\label{lem:effective-mass-energy-convexity} 
If the pressure satisfies the \emph{mass-energy convexity condition} $G_{\rm E}\geq 0$~\eqref{eq:intro-mass-energy-convexity-condition}, then every effective pressure in the equation~\eqref{eq:section-6-effective-pressure} satisfies the same condition for every $\varpi\geq0$.
\end{lemma}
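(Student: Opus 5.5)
The plan is to reduce the claim to the genuine-nonlinearity identity~\eqref{eq:section-6-effective-genuine-nonlinearity} and the sound-speed relation~\eqref{eq:section-6-effective-sound-speed} of Proposition~\ref{prop:section-6-effective-pressure}. The key observation is that $G_{\rm E}$ is $G_{\rm N}$ minus an explicit function of the sound speed, namely~\eqref{eq:effective-convexity-genuine-nonlinearity-relation}:
\[
G_{\rm E}[q]=G_{\rm N}[q]-4c_q^3(1-c_q),
\]
where $c_q=\eps\sqrt{q'}$. This lets us compare the two laws at corresponding points $m=\meff{\varpi}(\mu)$ without redoing any second derivatives.

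\textbf{Setup.} Write $c=\eps\sqrt{p'(\mu)}\in(0,1)$, $\sigma=\sigma_\varpi(\mu)\geq0$ and $D=D_\varpi(\mu)=1+\sigma(1-c^2)$. By~\eqref{eq:section-6-effective-sound-speed}, the effective sound speed is $c_\varpi=c/\sqrt D$. By~\eqref{eq:section-6-effective-genuine-nonlinearity} and the hypothesis $G_{\rm E}[p]\geq0$,
\[
G_{\rm N}[p_\varpi](m)=D^{-3}(1+\sigma)^2G_{\rm N}[p](\mu)\geq D^{-3}(1+\sigma)^2\,4c^3(1-c).
\]
It therefore suffices to prove
\[
D^{-3}(1+\sigma)^2c^3(1-c)\geq c_\varpi^3(1-c_\varpi)=D^{-3/2}c^3\bigl(1-cD^{-1/2}\bigr).
\]
After multiplying by $D^3/c^3$ and setting $s=\sqrt D$, this is equivalent to the elementary inequality
\[
(1+\sigma)^2(1-c)\geq s^2(s-c).
\]

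\textbf{The inequality.} I would split it into two steps.
\begin{itemize}
\item First, $s^2=D\leq1+\sigma$, and $s-c\geq 1-c>0$ because $D\geq1$. Hence $s^2(s-c)\leq(1+\sigma)(s-c)$.
\item It then remains to show $s\leq(1+\sigma)-c\sigma$. The right side is positive, so we may square both sides. A direct expansion gives
\[
\bigl(1+\sigma-c\sigma\bigr)^2-\bigl(1+\sigma-\sigma c^2\bigr)=\sigma(1+\sigma)(1-c)^2\geq0.
\]
\end{itemize}
Combining the two steps yields $(1+\sigma)(s-c)\leq(1+\sigma)^2(1-c)$, which is the required bound.

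\textbf{Expected difficulty.} I expect the only delicate point to be bookkeeping. The identity~\eqref{eq:section-6-effective-genuine-nonlinearity} must be used with the correct powers of $D$, and the argument needs that $\eps$-normalized $G_{\rm N}$ and $G_{\rm E}$ share the same normalization so that the cubic term compares correctly. The algebraic inequality itself is the short squaring argument above, with equality exactly when $\sigma=0$, that is, when $\varpi=0$. As a final check, both sides vanish consistently in the vacuum limit because of the factor $c^3$, so the conclusion $G_{\rm E}[p_\varpi]\geq0$ holds for every $m>0$ and every $\varpi\geq0$.
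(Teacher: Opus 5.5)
Your proof is correct, and it reaches the same key identity as the paper by a different, more economical route.

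\textbf{The paper's route.} It differentiates $\eps^2p_\varpi'=\eps^2p'/D$ afresh and obtains an explicit formula for $D^3G_{\rm E}[p_\varpi]$ in terms of $h\eps^2p''$. Inserting the hypothesis bounds this below by $2\eps^2p'\Delta$. It then shows $\Delta\geq0$ through an explicit polynomial factorization based on $1+\sigma=(d^2-c^2)/(1-c^2)$, where $d=\sqrt D$ (your $s$).

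\textbf{Your route.} You reuse the already established transformation law~\eqref{eq:section-6-effective-genuine-nonlinearity} together with the pointwise relation $G_{\rm N}-G_{\rm E}=4c^3(1-c)$. With no further differentiation, this gives
\[
D^3G_{\rm E}[p_\varpi]=(1+\sigma)^2G_{\rm E}[p]+4c^3\bigl((1+\sigma)^2(1-c)-s^2(s-c)\bigr).
\]
This is exactly the alternative form the paper records later in~\eqref{eq:effective-E-convexity}; the paper's $\Delta$ is $2c$ times your parenthesis. You then replace the factorization by the chain $s^2(s-c)\leq(1+\sigma)(s-c)\leq(1+\sigma)^2(1-c)$. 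The first step uses $D\leq1+\sigma$ and $s\geq1>c$. The second uses the squaring identity $(1+\sigma-c\sigma)^2-D=\sigma(1+\sigma)(1-c)^2\geq0$. I checked both steps and the reduction with its powers of $D$ and $c$, and they are right.

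\textbf{Comparison.} Your argument avoids repeating the second-derivative bookkeeping and needs only two elementary comparisons. The paper's computation is independent of the $G_{\rm N}$ transformation law, and its factorization displays the vanishing factor $d-1$ (hence the equality case $\varpi=0$) as an explicit product.
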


\begin{proof}
\bse
The derivatives below are evaluated at \(\mu\) or at \(m=\meff{\varpi}(\mu)\), respectively. We set \(c=\eps\sqrt{p'(\mu)}\), \(\sigma=\sigma_\varpi(\mu)\), \(D=D_\varpi(\mu)=1+\sigma(1-c^2)\), and \(d\coloneqq\sqrt D\). Differentiating \(\eps^{2}p_{\varpi}'=\eps^{2}p'/D\) and using \(\sigma'=-2\eps^{2}p'\sigma/h\) gives
\be
\begin{aligned}
& D^3\bigl((m+\eps^{2}p_{\varpi})\eps^{2}p_{\varpi}''
+2\eps^{2}p_{\varpi}'(1-\eps\sqrt{p_{\varpi}'})^2\bigr)
\\
& = (1+\sigma)^2h\eps^{2}p''+2(1+\sigma)\eps^{4}(p')^{2}\sigma(1-\eps^{2}p') +2\eps^{2}p'D(d-c)^2.
\end{aligned}
\ee
The hypothesis bounds the right-hand side from below by \(2\eps^{2}p'\Delta\), where
\be
\Delta=D(d-c)^2+(1+\sigma)c^2\sigma(1-c^2) -(1+\sigma)^2(1-c)^2.
\ee
Since \(1+\sigma=(d^2-c^2)/(1-c^2)\), direct factorization yields
\be
\Delta=\frac{c(d-1)(d-c)^2((2+c)d+c)}{(1+c)^2} +\frac{c^2(d-c)(d+c)(d-1)(d+1)}{1-c^2}\geq0.
\ee
Here \(0<c<1\) and \(d\geq1\), so every factor on the right is non-negative.
\ese
\end{proof}

\subsection{Admissibility of the effective pressure}

The algebraic reduction gives a pressure with the required hyperbolicity. We verify its asymptotics at $\mu\to 0,+\infty$ for each fixed parallel momentum, so that we can construct the kernels below for this pressure. 
\begin{proposition}[Admissibility of a frozen effective pressure]
\label{prop:fixed-effective-admissibility}
Let $p$ be an admissible equation of state in the sense of Definition~\ref{def-pression} and fix $\varpi\geq0$. Then $p_\varpi$, as a function of the effective density $m$, is admissible. Its vacuum exponent is $\gamma$, its vacuum coefficient is $\kappa_0(1+\varpi/4)^{-\gamma}$, and its large-density sound-speed limit is $k_\infty$. The constants in this assertion may depend on the fixed $\varpi$ and $\eps$.
\end{proposition}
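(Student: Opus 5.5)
The plan is to transport every ingredient of Definition~\ref{def-pression} through the change of variable $m=\meff{\varpi}(\mu)$, using the identities already proved in Proposition~\ref{prop:section-6-effective-pressure}. The basic dictionary is
\[
\eps^2p_\varpi'(m)=D_\varpi^{-1}\eps^2p'(\mu),\qquad m+\eps^2p_\varpi(m)=h(\mu)(1+\sigma_\varpi(\mu)),\qquad \frac{dm}{d\mu}=D_\varpi(\mu).
\]
Here $D_\varpi=1+\sigma_\varpi(1-\eps^2p')\geq1$, and $m$ is a strictly increasing bijection of $(0,+\infty)$.

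\textbf{Regularity, hyperbolicity and subluminality.} First I will check that $\Numb$ and $h$ are of class $C^3$ on $(0,+\infty)$, which follows from $\Numb'=\Numb/h$. Then $\meff{\varpi}\in C^3$ with $D_\varpi>0$, so its inverse is $C^3$ and $p_\varpi=p\circ\meff{\varpi}^{-1}\in C^3((0,+\infty))$. Continuity of $p_\varpi'$ up to $m=0$ follows from $D_\varpi\to1+\varpi/4$, and gives $p_\varpi'(0)=p'(0)/(1+\varpi/4)$. The bounds~\eqref{hyperbolic-eos-1} then follow from~\eqref{eq:section-6-effective-sound-speed}. For~\eqref{hyperbolic-eos-2}, the inequality $m\geq\mu$ gives $p_\varpi(m)/m\leq p(\mu)/\mu$, which handles the supremum. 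For the liminf, I will show $m/\mu\to1$ at large density. This follows from $\sigma_\varpi=\varpi\Numb^2/(4h^2)\lesssim\mu^{2\theta-2}\to0$, by Lemma~\ref{lem-hN}.

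\textbf{Asymptotics of the key coefficient.} The central identity is
\[
B_\varpi(m)\coloneqq\frac{\eps^2m^2p_\varpi'(m)}{(m+\eps^2p_\varpi(m))^2}=B(\mu)\,\Phi_\varpi(\mu),\qquad \Phi_\varpi\coloneqq\frac{(\meff{\varpi}(\mu)/\mu)^2}{D_\varpi(\mu)(1+\sigma_\varpi(\mu))^2}.
\]
At vacuum, $\meff{\varpi}/\mu$, $D_\varpi$ and $1+\sigma_\varpi$ all tend to $1+\varpi/4$, so $\Phi_\varpi\to(1+\varpi/4)^{-1}$. Writing $\mu^{\gamma-1}=(m/\meff{\varpi}'(0))^{\gamma-1}(1+o(1))$, I obtain exponent $\gamma$ and coefficient $\kappa_0(1+\varpi/4)^{-1-(\gamma-1)}=\kappa_0(1+\varpi/4)^{-\gamma}$. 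At large density, $\Phi_\varpi\to1$ because $\sigma_\varpi\to0$. Hence the limit $k_\infty^2/(1+\eps^2k_\infty^2)^2$ is unchanged and $k_\infty$ is preserved.

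\textbf{Integrability of the remainders.} This is the main obstacle. The new remainders are determined by
\[
1+r_\mp^\varpi(m)=(1+r_\mp(\mu))\,\frac{\Phi_\varpi(\mu)}{\Phi_\varpi(\text{endpoint})}\,\Bigl(\frac{\mu}{m}\Bigr)^{\gamma-1}(\meff{\varpi}'(0))^{\gamma-1},
\]
where the last two factors appear only at vacuum. I need $\int(|dr/dm|+|m\,d^2r/dm^2|)\,dm<\infty$. Since $dm=D_\varpi\,d\mu$ with $D_\varpi$ bounded above and below, and since $m\simeq\mu$, it suffices to bound the $\mu$-derivatives. The product and chain rules reduce the question to showing that $\log\Phi_\varpi$ and $\log(m/\mu)$ satisfy $\int(|f'|+\mu|f''|)\,d\mu<\infty$ near each endpoint, together with $\mu|D_\varpi'|$ being integrable against $d\mu/\mu$ for the chain-rule correction $d/dm=D_\varpi^{-1}d/d\mu$.

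To do this, I will express every derivative through $\sigma_\varpi'=-2\eps^2p'\sigma_\varpi/h$ and $(\log(\Numb/\mu))'=-\eps^2p/(\mu h)$. I will also use $\eps^2p'=B h^2/\mu^2$, which is how I avoid needing direct bounds on $p''$ and $p'''$. Indeed, $p''$ and $\mu p'''$ can be written in terms of $r_\mp'$ and $\mu r_\mp''$ through~\eqref{eq:intro-vacuum-B} and~\eqref{eq:intro-infty-B}.

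Near vacuum, each resulting term is either $O(\mu^{\gamma-2})$, which is integrable since $\gamma>1$, or a bounded multiple of $|r_-'|+\mu|r_-''|$. Near infinity, the terms are $O(\mu^{2\theta-3})$, again with $2\theta-2<0$ so these are integrable, or bounded multiples of $|r_+'|+\mu|r_+''|$.

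I expect the careful bookkeeping of the second derivatives, in particular $\mu D_\varpi''$, which involves $p''$ and $\mu p'''$, to be the only delicate point. I would organize it by differentiating the identity $\eps^2p'=Bh^2/\mu^2$ twice.
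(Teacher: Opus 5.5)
Your proposal is correct and follows the paper's proof essentially step for step. Your factor $\Phi_\varpi$ coincides with the paper's $F$, because $\zeta+\eps^2A=(1+\eps^2A)(1+\sigma_\varpi)$ with $\zeta=m/\mu$ and $A=p/\mu$; your normalized remainders are exactly the paper's, and the passage from $\mu$- to $m$-derivatives is the same. The one variation is at large density: the paper imports $A',\mu A''\in L^1$ from the proof of Lemma~\ref{lem:epsilon-pressure-admissibility}, whereas you use that $A$ and $c^2$ enter $\Phi_\varpi$ and $m/\mu$ only through products with $\sigma_\varpi\lesssim\mu^{2\theta-2}$, so the crude bound $|A'|\leq C/\mu$ suffices. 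You should state explicitly the weighted bound $\sup|\mu r_\pm'|<\infty$ from~\eqref{eq:weighted-remainder-first-derivative}, which the cross terms in your product-rule reduction require.
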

\begin{proof}
{\bf 1.} \emph{Derivative bounds.} We write $A\coloneqq p/\mu$, $c^2=\eps^2p'$, $\sigma=\sigma_\varpi$, and $D=D_\varpi$ in this proof, with all derivatives taken in $\mu$. The proof of Lemma~\ref{lem:epsilon-pressure-admissibility}, below, gives $A',\mu A''\in L^1$ at both endpoints, with bounded $A$ and $\mu A'$. The local coefficient $B_{\rm ref}\coloneqq B/\eps^2=\mu^2p'/(\mu+\eps^2p)^2$ satisfies $B_{\rm ref}',\mu B_{\rm ref}''\in L^1$ there, by its prescribed remainders. The identity $c^2=\eps^2B_{\rm ref}(1+\eps^2A)^2$ then gives $(c^2)',\mu (c^2)''\in L^1$. For the product terms, the estimate~\eqref{eq:weighted-remainder-first-derivative}, below, bounds $\mu B_{\rm ref}'$ and $\mu A'$; for instance, $\mu|B_{\rm ref}'A'|\leq\sup|\mu A'|\,|B_{\rm ref}'|$ and $\mu(A')^2\leq\sup|\mu A'|\,|A'|$.

The identity $\sigma'=-2c^2\sigma/h$ gives $\sigma'=\Ocal(\mu^{\gamma-2})$ at vacuum. At infinity it gives $\sigma\leq C\mu^{-\theta}$ for every $0<\theta<2\eps^2k_\infty^2/(1+\eps^2k_\infty^2)$, hence $|\sigma'|\leq C\sigma/\mu$. Differentiating once gives
\be
\mu|\sigma''|\leq C\bigl(\sigma|(c^2)'|+c^2|\sigma'|+c^2\sigma/\mu\bigr).
\ee
The right-hand side is integrable at both endpoints: its last term is $\Ocal(\mu^{\gamma-2})$ at vacuum and $\Ocal(\mu^{-1-\theta})$ at infinity. Thus $\sigma',\mu\sigma''\in L^1$, and $\mu\sigma'$ is bounded. A smooth function of the bounded quantities $A,c^2,\sigma$, with denominators separated from zero, has the same two derivative-integrability properties. Its second derivative contains products controlled by the same weighted derivative estimate.

{\bf 2.} \emph{Endpoint coefficients.} We set $\zeta\coloneqq m/\mu=1+\sigma(1+\eps^2A)$ and compare the pressure coefficients. Direct substitution gives
\bse\label{eq:effective-coefficient-and-normalized-remainders}
\be
\label{eq:fixed-effective-coefficient-ratio}
B_{{\rm ref},\varpi}(m)\coloneqq\frac{m^2p_\varpi'(m)}{(m+\eps^2p_\varpi(m))^2}=B_{\rm ref}(\mu)F(\mu),\qquad F(\mu)\coloneqq\frac{\zeta^2(1+\eps^2A)^2}{D(\zeta+\eps^2A)^2}.
\ee
The functions $F,\zeta,D$ have the derivative bounds just proved, and $1\leq D,\zeta\leq C_\varpi$. At vacuum, $\zeta,D\to D_{\varpi,0}=1+\varpi/4$, so $F\to D_{\varpi,0}^{-1}$. At infinity, $\sigma\to0$, so $\zeta,D,F\to1$. The two normalized remainders are therefore
\be
\frac{B_{{\rm ref},\varpi}(m)}{\kappa_0D_{\varpi,0}^{-\gamma}m^{\gamma-1}}=(1+r_-(\mu))D_{\varpi,0}^\gamma F(\mu)\zeta^{1-\gamma},\qquad \frac{B_{{\rm ref},\varpi}(m)}{k_\infty^2/(1+\eps^2k_\infty^2)^2}=(1+r_+(\mu))F(\mu).
\ee
\ese
The normalized factors tend to one and have integrable first and weighted second derivatives in $\mu$. Products with $r_\pm$ preserve these properties, since $\mu r_\pm'$ is bounded. Passing to $m$ uses $dm=D\,d\mu$, $m\asymp_\varpi\mu$, and
\be
\frac{d^2g}{dm^2}=\frac{g''}{D^2}-\frac{g'D'}{D^3}.
\ee
The product term is integrable with weight $m$ because $\mu D'$ is bounded and $g'\in L^1$. This proves the endpoint remainder conditions in the effective density. The reduction already gives $p_\varpi(0)=0$, $0<p_\varpi'<\eps^{-2}$, and $C^3$ regularity at positive density. Since $p_\varpi'(m)=p'(\mu)/D\to0$ at vacuum, $p_\varpi$ is also $C^1$ up to zero. Moreover, $m\geq\mu$ gives $p_\varpi(m)/m\leq p(\mu)/\mu$, so the uniform strictly subluminal upper bound is preserved. At infinity, $m/\mu\to1$ and $p(\mu)/\mu\to k_\infty^2$ give $p_\varpi(m)/m\to k_\infty^2>0$, which verifies~\eqref{hyperbolic-eos-2}. The coefficient limits yield the three parameters in the statement.
\end{proof}

\subsection{Quasi-currents and source factors}

\subsubsection{Quasi-balance law and source factors}

The kernels of Sections~\ref{section-6} to~\ref{section-flux-kernels}, below, give these density--flux pairs through the Lorentz transformation~\eqref{eq:section-3-rest-frame-and-fixed-frame-components}.

\begin{proposition}[Quasi-current defined from an effective density-flux pair]
\label{prop:section-6-effective-entropy-pair}
For every \(\varpi\ge0\), let \((F^{0}_{\varpi},F^{1}_{\varpi})\) be a density-flux pair for the effective Euler system in one spatial variable with pressure \(\eps^{2}p_{\varpi}\), depending \(C^1\)-smoothly on \(\varpi\). Define, on the non-vacuum state space,
\bel{eq:section-6-effective-quasi-current}
F^{0}(\Jbb) \coloneqq F^{0}_{\varpi}\bigl(\meff{\varpi}(\mu),v \bigr), \qquad F^{1}(\Jbb) \coloneqq F^{1}_{\varpi}\bigl(\meff{\varpi}(\mu),v \bigr), \qquad \varpi=|\Jhatpar|^2.
\ee
Then \(F=(F^0, F^1)\) is a quasi-current. Its shear-induced factors are
\bel{eq:section-6-effective-transport-factors}
\Fvee_m=2\widehat J_m\frac{\del}{\del\varpi}\Bigl[ F^{1}_{\varpi}\bigl(\meff{\varpi}(\mu),v \bigr) + \frac{J_1}{J_0}F^{0}_{\varpi}\bigl(\meff{\varpi}(\mu),v \bigr)\Bigr]_{\mu,v\ {\rm fixed}}, \qquad m = 2, \ldots, n.
\ee
The derivative includes the change of effective density: with derivatives of \(F^{0}_{\varpi}(m,v)\) and \(F^{1}_{\varpi}(m,v)\) taken at fixed \((m,v)\) instead, this becomes
\be
\Fvee_m=2\widehat J_m\Bigl[
\del_\varpi F^{1}_{\varpi}+ \frac{J_1}{J_0}\del_\varpi F^{0}_{\varpi} + \frac{\Numb(\mu)^2}{4(\mu+\eps^{2}p(\mu))} \Bigl( \del_mF^{1}_{\varpi}+ \frac{J_1}{J_0}\del_mF^{0}_{\varpi} \Bigr) \Bigr] , \qquad m= \meff{\varpi}(\mu).
\ee  
\end{proposition}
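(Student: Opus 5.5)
The plan is to reduce the statement to Lemma~\ref{lem:Fvee-expr-prelim}, using Proposition~\ref{prop:section-6-effective-pressure} to recognise the effective pair as a current at each frozen $\varpi$.

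\textbf{Step 1: a current at fixed $\varpi$.} By Proposition~\ref{prop:section-6-effective-pressure}, the first two reduced equations with $\Jhatpar$ fixed are exactly the one-dimensional Euler equations for $(\meff{\varpi},v)$ with pressure $p_\varpi$. Their stress components coincide with those of the original system. Hence, for each fixed $\varpi$, the composed pair $(F^0,F^1)(\mu,v)$ in~\eqref{eq:section-6-effective-quasi-current}, viewed as a function of $\Sigma=(y,v)$ with $\Jhatpar$ frozen, is a density--flux pair of the system $\del_t\Sigma=A(\Sigma,\Jhatpar)\del_x\Sigma$. The reason is that $A$ is built from $U=(\Mbf_{00},J_0J_1)$ and $G=(J_0J_1,\Mbf_{11})$, which are twice the stress components and are therefore the effective conserved quantities and fluxes. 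Since $y\mapsto\mu\mapsto m$ is a $C^1$ diffeomorphism at positive density, the multiplier relation $D_mF^1_\varpi=-W\,D_m G_\varpi$ transfers to $D_\Sigma F^1=-W\,D_\Sigma G$. This gives the compatibility relation~\eqref{eq:section-4-acoustic-current-compatibility}.

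\textbf{Step 2: substitution into the balance law.} We expand $\del_tF^0+\del_xF^1$ by the chain rule in $(\Sigma,\Jhatpar)$ and substitute~\eqref{Sigmat}. The $\del_x\Sigma$ terms cancel by Step~1, exactly as in the proof of Lemma~\ref{prop:section-4-transported-parameter-current}. What remains is $\sum_m\bigl(\del_{\widehat J_m}F^1-\tanh v\,\del_{\widehat J_m}F^0\bigr)\del_x\widehat J_m$, with the derivatives taken at fixed $\Sigma$, i.e.\ at fixed $(\mu,v)$. So $F$ is a quasi-current, and Lemma~\ref{lem:Fvee-expr-prelim} gives $\Fvee_m$. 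Two substitutions then produce~\eqref{eq:section-6-effective-transport-factors}: $-\tanh v=J_1/J_0$ by~\eqref{equa-rapi}, and $\del_{\widehat J_m}=2\widehat J_m\del_\varpi$ on functions of $\varpi=|\Jhatpar|^2$. Continuity of $\Fvee_m$ follows from the $C^1$ dependence on $\varpi$.

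\textbf{Step 3: derivatives at fixed $(m,v)$.} To obtain the second form, we differentiate $F^a_\varpi(\meff{\varpi}(\mu),v)$ at fixed $(\mu,v)$. The chain rule gives
\[
\del_\varpi F^a_\varpi+\del_\varpi\meff{\varpi}(\mu)\,\del_mF^a_\varpi,\qquad \del_\varpi\meff{\varpi}(\mu)=\frac{\Numb(\mu)^2}{4h(\mu)},
\]
where the second identity follows from~\eqref{eq:section-6-effective-pressure}. The factor $J_1/J_0=-\tanh v$ is independent of $\varpi$ at fixed $(\mu,v)$.

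\textbf{Main obstacle.} The only delicate point is Step~1, namely justifying that the effective density--flux pair satisfies the compatibility relation in the $\Sigma$ variables. This requires checking that $U$ and $G$, written at fixed $\Jhatpar$, are (up to the factor $2$ and sign conventions) the effective conserved quantities $(T^{00},T^{01})$ and fluxes $(T^{01},T^{11})$. It also requires tracking the sign convention in~\eqref{eq:section-4-acoustic-multiplier}. Both follow from~\eqref{eq:section-6-effective-stress} together with invertibility of $D_\Sigma U$ from Lemma~\ref{lem:section-4-acoustic-block-diagonalization}.
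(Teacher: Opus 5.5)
Your proposal is correct and is essentially the paper's argument: freezing $\Jhatpar$ makes the composed pair compatible with the system~\eqref{Sigmat}, the $\del_x\Sigma$ terms cancel, the remaining parameter terms give the shear-induced factors, and the fixed-$(m,v)$ form follows from $\del_\varpi\meff{\varpi}(\mu)=\Numb(\mu)^2/(4h(\mu))$. The only difference is cosmetic: the paper transports $\varpi$ directly and then substitutes $\del_x\varpi=2\sum_m\widehat J_m\del_x\widehat J_m$, whereas you transport each $\widehat J_m$ and convert with $\del_{\widehat J_m}=2\widehat J_m\del_\varpi$ afterwards.
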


\begin{proof}
\bse
The transport equations \eqref{eq:Jhat-transport-Minkowski} give
\bel{eq:section-6-varpi-transport}
\del_t\varpi-\frac{J_1}{J_0}\del_x\varpi=0, \qquad \del_x\varpi=2\sum_{m=2}^n\widehat J_m\del_x\widehat J_m.
\ee
Keeping \(\Sigma\) fixed is equivalent to keeping the original \((\mu,v)\) fixed. Lemma~\ref{lem:section-4-acoustic-block-diagonalization} cancels the parameter derivatives in the Euler system in these variables. The frozen compatibility condition then cancels the derivatives of the current. Thus, writing \(\Fcomp^a= \Fcomp^a(\mu,v,\varpi)\), the chain rule gives
\be
\del_t\Fcomp^0+ \del_x\Fcomp^1= \Bigl( \del_\varpi\Fcomp^1+ \frac{J_1}{J_0}\del_\varpi\Fcomp^0 \Bigr) _{\mu,v\ {\rm fixed}}\del_x\varpi.
\ee
Substitution of the second identity in \eqref{eq:section-6-varpi-transport} yields the equation~\eqref{eq:section-6-effective-transport-factors}.
\ese
\end{proof}

\subsubsection{Reference currents}

The effective energy, momentum orthogonal to the symmetry orbits, and particle currents give conserved currents of the original reduced system even when the momentum parallel to the symmetry orbits varies. To verify this directly, we write \(m= \meff{\varpi}(\mu)\), \(h_\varpi\coloneqq m+ \eps^{2}p_{\varpi}(m)\),  and use the rapidity \(v\) defined in the equation~\eqref{equa-rapi}. Their density--flux pairs are respectively
\bse\label{eq:reference-pairs-and-source-cancellation}
\be
\label{eq:effective-reference-pairs}
\begin{aligned}
(F^{0}_{\rm E},F^{1}_{\rm E})&=(h_\varpi\cosh^2v-\eps^{2}p,\ h_\varpi\sinh v\cosh v),\\
(F^{0}_{\rm P},F^{1}_{\rm P})&=(h_\varpi\sinh v\cosh v,\ h_\varpi\sinh^2v+\eps^{2}p),\\
(F^{0}_{\rm N},F^{1}_{\rm N})&=\Neff{\varpi}(m)(\cosh v,\sinh v),
\end{aligned}
\ee
where \(p_\varpi(m)=p(\mu)\). Since \(J_1/J_0=-\tanh v\), the combinations entering the equation~\eqref{eq:section-6-effective-transport-factors} satisfy
\be
\label{eq:effective-reference-transport-cancellation}
\aligned
F^{1}_{\rm E}-\tanh(v)F^{0}_{\rm E}&=\tanh(v)\eps^{2}p(\mu),\\
F^{1}_{\rm P}-\tanh(v)F^{0}_{\rm P}&=\eps^{2}p(\mu),\\
F^{1}_{\rm N}-\tanh(v)F^{0}_{\rm N}&=0.
\endaligned
\ee
\ese
Their derivatives with respect to \(\varpi\), at fixed original density and velocity, vanish. Thus the shear-induced factors of all three pairs vanish. The first two pairs are the original stress-energy rows. The third is \( \bigl(1+\frac{\varpi}{4}\bigr)^{1/2} (\vNumb^0,\vNumb^1)\), by the equation~\eqref{eq:section-6-effective-particle-density}; multiplication by \(D_{\varpi,0}^{-1/2}\) recovers the original particle current. All three resulting currents are conserved since \(\varpi\) is advected, as also follows from  \autoref{prop:section-4-transported-currents}. In particular, the effective vacuum normalization must be included when comparing the particle currents of the original and reduced fluids.

\subsection{Current equations in physical variables}
\label{section-entropy-density-rapidity}

\subsubsection{The effective Euler system} 

The effective reduction described so far in this section leads to an effective Euler system in one spatial variable, for which we now derive the current equation in the rest frame and describe its singularity at vacuum. This prepares the Fourier construction of Section~\ref{section-6}, below.

We consider the \((1+1)\)-dimensional relativistic Euler system with an admissible pressure~\(p(\mu)\), as in~\autoref{def-pression}. In terms of the mass-energy density \(\mu \geq 0\) and scalar velocity \(\underline\upsilon\coloneqq u^1/u^0\in(-1,1)\), the equations read
\bel{eq:rel-euler-mu-u}
\aligned
\del_t  \Bigl(  (\mu+\eps^{2}p(\mu)) \frac{\underline\upsilon^2}{1-\underline\upsilon^2} + \mu  \Bigr) 
+ \del_x  \Bigl(  (\mu+\eps^{2}p(\mu)) \frac{\underline\upsilon}{1-\underline\upsilon^2}  \Bigr)  
 & =0,
\\
\del_t  \Bigl(  (\mu+\eps^{2}p(\mu)) \frac{\underline\upsilon}{1-\underline\upsilon^2}  \Bigr) 
+ \del_x  \Bigl(  (\mu+\eps^{2}p(\mu)) \frac{\underline\upsilon^2}{1-\underline\upsilon^2} + \eps^{2}p(\mu)  \Bigr) 
 & =0.
\endaligned
\ee
We denote by $ c(\mu) = \eps\sqrt{p'(\mu)}\in(0,1) $ the sound speed, while the characteristic speeds are
\bse\label{eq:acoustic-speeds-and-subluminality}
\bel{eq:lambdapm}
\lambda_\pm(\mu,\underline\upsilon) = \frac{\underline\upsilon\pm c(\mu)}{1\pm \underline\upsilon\,c(\mu)}.
\ee
These speeds satisfy
\be
-1<\lambda_- (\mu,\underline\upsilon)<\lambda_+(\mu,\underline\upsilon)<1 \qquad \text{for all }\mu>0,\ |\underline\upsilon|<1.
\ee
\ese
We construct the fundamental density and flux kernels for this system; Sections~\ref{section-9} and~\ref{section-10}, below, give the convolution representation and convexity criteria.

\subsubsection{Weak and strong currents} 

A \textit{current density} \(F^{0}= F^{0}(\mu,\underline\upsilon)\) and its \textit{current flux} \(F^{1}= F^{1}(\mu,\underline\upsilon)\) form a pair such that every sufficiently regular solution of the system~\eqref{eq:rel-euler-mu-u} satisfies
\be
\del_t F^{0}(\mu,\underline\upsilon)+ \del_x F^{1}(\mu,\underline\upsilon) =0.
\ee
Differentiating along solutions of the system~\eqref{eq:rel-euler-mu-u} and eliminating \(F^{1}\) gives a scalar second-order linear equation for \(F^{0}\), with pressure-dependent coefficients.

At the \emph{vacuum boundary}, normalization distinguishes two classes of currents.

\bei 

\item A \textit{weak current density} is a current density \(F^{0}\) that vanishes at vacuum, namely
\be
F^{0}(0,\underline\upsilon) =0, \qquad \underline\upsilon \in (-1,1).
\ee
We also normalize $F^{1}(0,\underline\upsilon)=0$, removing the irrelevant constant current $(0,C)$. The vanishing of both components at vacuum is Lorentz invariant. The kernel below parametrizes weak currents by the vacuum derivative of their rest-frame density.

\item On the other hand, a \textit{strong current density} has a  nonzero vacuum value. This is standard terminology for singular hyperbolic equations, namely for the equation \eqref{eq:section-6-normalized-entropy-equation} derived below.

\eei

\subsubsection{The relevance of Lorentz covariance}

It is convenient to replace the velocity \(\underline\upsilon\) by the rapidity variable
\be
v = \artanh(\underline\upsilon) = \frac12\log\frac{1+\underline\upsilon}{1-\underline\upsilon}, \qquad \underline\upsilon= \tanh(v).
\ee
With \(h= \mu+\eps^{2}p(\mu)\), the Euler equations in \((\mu,v)\) become
\bel{eq:rel-euler-mu-v}
\del_t\bigl(\mu+h\sinh^2v \bigr)+ \del_x\bigl(h\sinh v\cosh v \bigr)=0,\qquad \del_t\bigl(h\sinh v\cosh v \bigr)+ \del_x\bigl(\eps^{2}p+h\sinh^2v \bigr)=0.
\ee
Here we used \(\underline\upsilon/(1-\underline\upsilon^2) = \sinh v\cosh v\) and \(\underline\upsilon^2/(1-\underline\upsilon^2) = \sinh^2v\). The rapidity variable also describes changes of inertial frame: \emph{Lorentz boosts act by translation in \(v\)}. The corresponding transformation of the current gives the scalar current equation below.

The \emph{sound-speed rapidity} is defined by
\bel{eq:def-kappa}
\kappa(\mu) \coloneqq \artanh(c(\mu)).
\ee
The  density potential, or acoustic distance, is defined by
\bel{eq:standard-acoustic-distance}
k(\mu) = \int_0^\mu \frac{c(r)}{r+\eps^{2}p(r)}\,dr.
\ee
{Thus \(k'(\mu)=c(\mu)/h(\mu)\) and \(k(0)=0\).} By Lemma~\ref{prop:section-3-current-reconstruction}, a boost of rapidity~\(\theta\) replaces the fluid rapidity by \(v-\theta\) and transforms the density--flux pair according to
\bel{eq:section-6-lorentz-current-transformation}
\begin{pmatrix}F^{\theta0}\\F^{\theta1}\end{pmatrix}(\mu,v-\theta) = \begin{pmatrix}\cosh(\theta)&-\sinh(\theta)\\-\sinh(\theta)&\cosh(\theta)\end{pmatrix}  \begin{pmatrix}F^{0}\\F^{1}\end{pmatrix}(\mu,v).
\ee
In particular, the boost \(\theta=v\) takes the fluid state to rest. We denote the two corresponding scalar components by \((\alpha_{F},\beta_{F})=(F^{v0},F^{v1})(\mu,0)\), in agreement with the equation~\eqref{eq:section-3-rest-frame-scalars}, so that
\bel{eq:section-6-rest-frame-entropy-current}
\begin{pmatrix}\alpha_{F}\\\beta_{F}\end{pmatrix}
= \begin{pmatrix}\cosh(v)&-\sinh(v)\\-\sinh(v)&\cosh(v)\end{pmatrix}\begin{pmatrix}F^{0}\\F^{1}\end{pmatrix}.
\ee
The spatial rest-frame component therefore satisfies
\bel{eq:section-6-rest-frame-flux-identity}
F^{1}-\tanh(v)F^{0}=(\cosh(v))^{-1}\beta_{F}.
\ee
The rest-frame density $\alpha_{F}$ satisfies a scalar second-order equation, while the associated first-order system reconstructs $\beta_{F}$. For the kernel current in~\autoref{section-11}, these components are denoted by $(\alpha_{\chi},\beta_{\chi})$, and the original time component is $\cosh(v)\alpha_{\chi}+\sinh(v)\beta_{\chi}$.

\subsubsection{Formulation in the rest frame}

In the fixed frame, the equation for the time component has coefficients depending on \(v\), because a boost mixes the two current components; its derivation is given in Appendix~\ref{appendix:AB-from-ab}, below. Using the Lorentz transformation above, we obtain a scalar equation for the density measured in the rest frame.

\begin{proposition}[Equations for the rest-frame current]
\label{prop:section-6-normalized-entropy-equation}
Let \((F^{0},F^{1})\) be a smooth density-flux pair for the Euler system~\eqref{eq:rel-euler-mu-u}, and let \((\alpha_{F},\beta_{F})\) be its rest-frame components defined by the equation~\eqref{eq:section-6-rest-frame-entropy-current}. Then one has
\bel{eq:section-6-normalized-first-order-system}
h(\mu)\del_\mu\alpha_{F}-\alpha_{F}= \del_v\beta_{F},\qquad h(\mu)\del_\mu\beta_{F}-c(\mu)^2\beta_{F}=c(\mu)^2\del_v\alpha_{F}.
\ee
Consequently, the rest-frame current density satisfies the scalar equation
\bel{eq:section-6-normalized-entropy-equation}
\del_\mu^2\alpha_{F}-\frac{c(\mu)^2}{h(\mu)^2}\bigl(\del_v^2\alpha_{F}-\alpha_{F}\bigr)=0.
\ee
All its coefficients depend only on \(\mu\).
\end{proposition}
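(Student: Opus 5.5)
The plan is to derive the first-order system directly from the current compatibility relation (the multiplier condition of Definition~\ref{def:section-3-current}), written in the variables $(\mu,v)$. Lorentz covariance then reduces everything to a computation at rest, $v=0$. Finally, the second-order equation follows by elimination of $\beta_F$.

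\textbf{Step 1: the compatibility condition.} Write the system~\eqref{eq:rel-euler-mu-v} as $\del_t U+\del_x G=0$ with
\[
U=(\mu+h\sinh^2v,\;h\sinh v\cosh v),\qquad G=(h\sinh v\cosh v,\;\eps^2p+h\sinh^2v).
\]
A pair $(F^0,F^1)$ is a current exactly when there is a multiplier $W=(W_0,W_1)$ with $dF^0=W\cdot dU$ and $dF^1=W\cdot dG$ as differentials in $(\mu,v)$. Eliminating $W$ gives two linear first-order equations relating $\del_\mu F^a$ and $\del_v F^a$.

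\textbf{Step 2: reduction to the rest frame.} The coefficients of these equations depend on $v$. However, boosts act by translation in $v$ and rotate $(F^0,F^1)$ by~\eqref{eq:section-6-lorentz-current-transformation}, and the Euler system is boost invariant. Hence the boosted pair $(F^{\theta0},F^{\theta1})$ is again a current. So it suffices to evaluate the compatibility relations at $v=0$ for an arbitrary current, and then apply them to the boosted current with $\theta=v$.

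At $v=0$ we have
\[
dU=(d\mu,\;h\,dv),\qquad dG=(h\,dv,\;c^2d\mu),
\]
using $d(\eps^2p)=c^2d\mu$. Therefore
\[
\del_\mu F^0=W_0,\quad \del_vF^0=hW_1,\quad \del_vF^1=hW_0,\quad \del_\mu F^1=c^2W_1.
\]
Eliminating $W$ yields
\[
h\,\del_\mu F^0=\del_v F^1,\qquad h\,\del_\mu F^1=c^2\del_v F^0\qquad\text{at } v=0.
\]

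\textbf{Step 3: translating to $(\alpha_F,\beta_F)$.} Define $\alpha_F(\mu,v)$ and $\beta_F(\mu,v)$ from~\eqref{eq:section-6-rest-frame-entropy-current}. The main care needed is the following point. The $v$-derivative of the boosted pair at its rest point is not simply $\del_v\alpha_F$: differentiating the rotation matrix produces the extra terms. Concretely, with $(F^{v0},F^{v1})(\mu,0)=(\alpha_F,\beta_F)$, the derivative of the boosted current in its own rapidity variable at $0$ is
\[
\bigl(\del_v\alpha_F+\beta_F,\;\del_v\beta_F+\alpha_F\bigr).
\]
This comes from $\del_v$ of $R(v)F(\mu,v)$, with the rotation contribution supplying $R'R^{-1}=\begin{pmatrix}0&-1\\-1&0\end{pmatrix}$ with the appropriate sign; I will check the sign with the energy current $(h\cosh^2v-\eps^2p,\,h\sinh v\cosh v)$, which must satisfy the result. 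Since $\mu$-derivatives commute with the boost, substituting into Step~2 gives
\[
h\del_\mu\alpha_F=\del_v\beta_F+\alpha_F,\qquad h\del_\mu\beta_F=c^2(\del_v\alpha_F+\beta_F),
\]
which is~\eqref{eq:section-6-normalized-first-order-system}. As a check, the particle current $\alpha=\Numb$, $\beta=0$ gives $h\Numb'=\Numb$, which is consistent.

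\textbf{Step 4: the scalar equation.} Write $\del_\mu(h\del_\mu\alpha_F-\alpha_F)=\del_v\del_\mu\beta_F$ and use the second equation for $\del_\mu\beta_F$. Then replace $\del_v\beta_F$ by $h\del_\mu\alpha_F-\alpha_F$. This gives
\[
h\del_\mu^2\alpha_F+(h'-1)\del_\mu\alpha_F=\frac{c^2}{h}\Bigl(\del_v^2\alpha_F+h\del_\mu\alpha_F-\alpha_F\Bigr).
\]
Since $h'-1=c^2$, the first-order terms cancel, and dividing by $h$ yields~\eqref{eq:section-6-normalized-entropy-equation}.

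The main obstacle is the sign and bookkeeping of the rotation term in Step~3. I will settle it by the explicit check against the energy and particle currents.
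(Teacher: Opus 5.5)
Your argument is correct and Step~4 matches the paper's elimination. The difference is in how you reach the first-order system.

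\textbf{The paper's route.} It works at arbitrary rapidity in the Riemann invariants $w_\pm=v\pm k(\mu)$. It differentiates the rest-frame transformation \eqref{eq:section-6-rest-frame-entropy-current} along $\partial_{w_\pm}$ and inserts the characteristic compatibility relations \eqref{eq:entropy-compat-riemann}, with $\lambda_\pm=\tanh(v\pm\kappa)$. This gives the characteristic form \eqref{eq:section-6-rest-frame-riemann-system}. It then adds and subtracts, using $\partial_{w_+}+\partial_{w_-}=\partial_v$ and $\partial_{w_+}-\partial_{w_-}=(h/c)\partial_\mu$.

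\textbf{Your route.} You use only the conservative form \eqref{eq:rel-euler-mu-v}, whose Jacobians are trivial at $v=0$, and transport the result to arbitrary rapidity by Lorentz covariance. The sign you left open comes out as you wrote it. With $R(v)=\bigl(\begin{smallmatrix}\cosh v&-\sinh v\\-\sinh v&\cosh v\end{smallmatrix}\bigr)$ one has $R'(v)R(v)^{-1}=-\bigl(\begin{smallmatrix}0&1\\1&0\end{smallmatrix}\bigr)$. Hence $R(v)\partial_vF=(\partial_v\alpha_F+\beta_F,\ \partial_v\beta_F+\alpha_F)$. The energy current, with $(\alpha_F,\beta_F)=(\mu\cosh v,\eps^2p\sinh v)$, does satisfy the resulting system.

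\textbf{What each buys.} Your route needs neither the acoustic distance nor the characteristic speeds, and it never divides by $c$. It also shows cleanly that the $v$-independence of the coefficients is exactly boost invariance. The zero-order terms $\alpha_F$ and $c^2\beta_F$ arise only from differentiating the boost matrix. Its one extra input, which you should state, is that a boosted current is again a current. This is what \eqref{eq:section-6-lorentz-current-transformation} records. It follows from boost invariance of the Euler system together with the equivalence you already invoke in Step~1, between conservation along all smooth solutions and the multiplier relation. The paper's direct computation needs no such argument, since the invariance is built into $\lambda_\pm=\tanh(v\pm\kappa)$. It also produces the characteristic system \eqref{eq:section-6-rest-frame-riemann-system} as a by-product, which fits the Riemann-invariant formulation of Section~\ref{section-5-4}.
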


\begin{proof}
The Riemann invariants are \(w_\pm\coloneqq v\pm{k}(\mu)\), where \(k\) is defined in the equation~\eqref{eq:standard-acoustic-distance}. The compatibility relations in the Riemann invariants are given in the equation~\eqref{eq:entropy-compat-riemann}, below. Differentiating the rest-frame transformation \eqref{eq:section-6-rest-frame-entropy-current} with respect to \(w_+\) and \(w_-\), and using the equation~\eqref{eq:lambdapm-riemann}, below, gives
\bel{eq:section-6-rest-frame-riemann-system}
\del_{w_+}\beta_{F}+ \frac12\alpha_{F}=c \, \Bigl( \del_{w_+}\alpha_{F}+ \frac12\beta_{F} \Bigr),\qquad \del_{w_-}\beta_{F}+ \frac12\alpha_{F}=-c\, \Bigl(\del_{w_-}\alpha_{F}+ \frac12\beta_{F} \Bigr).
\ee
Since   \(\del_{w_+}+ \del_{w_-}= \del_v\), \(\del_{w_+}-\del_{w_-}=(h/c)\del_\mu\), and \(h'=1+c^2\), the sum and difference of these identities give the system~\eqref{eq:section-6-normalized-first-order-system}. Differentiating its first equation with respect to \(\mu\), and using the second equation to replace \(\del_\mu\beta_{F}\), yields the equation~\eqref{eq:section-6-normalized-entropy-equation}.
\end{proof}

\subsubsection{Weak density kernel}

The vector kernel $\chi$ is determined by its two fluid-frame coefficients through $\chi=\chi^0e_0+\chi^1e_1=\alpha_\chi\overline u+\beta_\chi\overline u^\perp$. We first construct $\alpha_\chi=\alpha_\chi(\mu,v;s)$ as the solution of the singular boundary problem
\bel{eq:kernel-physical}
\aligned
&\del_\mu^2\alpha_{\chi}-\frac{c(\mu)^2}{h(\mu)^2}\bigl(\del_v^2\alpha_{\chi}-\alpha_{\chi}\bigr)=0,\qquad\mu>0,\\
&\alpha_{\chi}|_{\mu=0}=0,\qquad\del_\mu\alpha_{\chi}|_{\mu=0}=\delta_{v=s}.
\endaligned
\ee
Here \(s\in\Rbb\) labels the base vacuum state. The condition \(\alpha_{\chi}|_{\mu=0}=0\) is the weak normalization, while \(\del_\mu\alpha_{\chi}\) prescribes the point-supported vacuum profile centered at~\(s\). We solve this singular boundary problem on $\mu>0$ by Fourier transformation. Since Lorentz boosts translate rapidity, the kernel at any vacuum state follows from the kernel at rest:
\bel{eq:kernel-shift}
\alpha_{\chi}(\mu,v;s) = \alpha_{\chi}(\mu,v-s;0) =: \alpha_{\chi}(\mu,v-s).
\ee

We therefore construct the kernel at \(s=0\) in Section~\ref{section-6}, with the corresponding Fourier normalization.

\subsection{Formulation in Riemann invariants}
\label{section-5-4}

\subsubsection{Riemann invariants}

We express the equations in \emph{Riemann invariant} variables, defined as
\bse\label{eq:riemann-change-and-inverse}
\bel{eq:def-riemann}
w_+ = v+ k(\mu), \qquad w_- = v-k(\mu)
\ee
and, conversely,
\bel{eq:inverse-riemann}
v= \frac{w_++w_-}{2}, \qquad {k}(\mu) = \frac{w_+-w_-}{2}.
\ee
\ese
Since \(k\) is strictly increasing, the density \(\mu\) is recovered from \(w_+-w_-\). The vacuum corresponds to the diagonal \(w_+=w_-\). The characteristic speeds then become
\bel{eq:lambdapm-riemann}
\lambda_+(w_+,w_-) = \tanh(v+ \kappa), \qquad \lambda_- (w_+,w_-) = \tanh(v-\kappa).
\ee
If   \(F^{0}= F^{0}(w_+,w_-)\)  is a current density and   \(F^{1}= F^{1}(w_+,w_-)\)  the auxiliary flux, the compatibility relations take the form
\bel{eq:entropy-compat-riemann}
\del_{w_+} F^{1}= \lambda_+ \,\del_{w_+}F^{0}, \qquad \del_{w_-} F^{1}= \lambda_-\,\del_{w_-}F^{0}.
\ee
We define the coefficients of the second-order equation by
\bse
\label{eq:entropy-eq-riemann}
\bel{eq:ab-def}
a(w_+,w_-) \coloneqq \frac{\del_{w_-}\lambda_+}{\lambda_+-\lambda_-}, \qquad b(w_+,w_-) \coloneqq -\frac{\del_{w_+}\lambda_-}{\lambda_+-\lambda_-}.
\ee
Eliminating \(F^{1}\) from the compatibility relations then gives
\be
L_{w_+w_-}(F^{0}) \coloneqq F^{0}_{w_+w_-}+a(w_+,w_-)F^{0}_{w_+}+b(w_+,w_-)F^{0}_{w_-}=0.
\ee
We define the derivative of sound-speed rapidity with respect to acoustic distance by
\bel{eq:kappa-k}
\kappa_k\coloneqq\frac{d\kappa/d\mu}{dk/d\mu} = \frac{\mu+\eps^{2}p(\mu)}{c(\mu)(1-c(\mu)^2)}\,c'(\mu).
\ee
A direct computation then gives the coefficients
\bel{eq:ab-explicit}
a(w_+,w_-) = \frac{1-\kappa_k}{2}\, \frac{\cosh(v-\kappa)} {\cosh(v+ \kappa)\,\sinh(2\kappa)}, \qquad b(w_+,w_-) = -\frac{1-\kappa_k}{2}\, \frac{\cosh(v+ \kappa)} {\cosh(v-\kappa)\,\sinh(2\kappa)}.
\ee

\ese

\subsubsection{The Riemann function}

An alternative to using fundamental kernels is the Riemann function, defined by data specified along constant $w_-$ and constant $w_+$ characteristic boundaries of its support.
The diagonal   \(w_+=w_-\)  is the vacuum boundary, where the coefficients of the current equation may be singular. Away from this diagonal, the adjoint operator is
\bel{eq:def-Lstar}
L_{w_+w_-}^*(R) \coloneqq R_{w_+w_-}- (a \, R)_{w_+}- (b \, R)_{w_-}.
\ee
For   \(w_{+,0}>w_{-,0}\), we consider a compact rectangle with   \(w_+\geq w_{+,0}\) and \(w_-\leq w_{-,0}\). The Riemann function   \(\Rcal(w_+,w_-;w_{+,0},w_{-,0})\)  satisfies
\bse\label{eq:riemann-adjoint-boundary-problem}
\bel{eq:riemann-function}
L_{w_+w_-}^*\Rcal(w_+,w_-;w_{+,0},w_{-,0}) =0
\ee
and the characteristic boundary conditions
\bel{eq:riemann-goursat}
\aligned
\del_{w_-}\Rcal &= a \, \Rcal
\qquad \text{on } w_+=w_{+,0},\qquad \del_{w_+}\Rcal = b \, \Rcal
\qquad \text{on } w_-=w_{-,0},\\
\Rcal(w_{+,0},w_{-,0};w_{+,0},w_{-,0})&=1.
\endaligned
\ee
\ese
Indeed, integration of the equation and its boundary conditions gives the Volterra equation
\be
\Rcal(w_+,w_-) =1+ \int_{w_{+,0}}^{w_+} b(r,w_-)\Rcal(r,w_-)\,dr + \int_{w_{-,0}}^{w_-} a(w_+,r)\Rcal(w_+,r)\,dr.
\ee
Successive substitution converges on every such rectangle: if \(L\) is the sum of its side lengths and \(C\) bounds \(|a|+|b|\), the terms of order \(j\) are bounded by \((2CL)^j/j!\). To obtain this bound, we group the iterated integrals by the numbers of integrations in each variable and use \(\binom{j}{k}\leq2^j\). The same estimate applied to a difference proves uniqueness, and differentiation, using the regular coefficients away from vacuum, gives the equations~\eqref{eq:riemann-function}--\eqref{eq:riemann-goursat}.

The auxiliary Riemann kernel obtained by extension by zero is
\bse\label{eq:riemann-fundamental-solution}
\bel{eq:kernel-riemann}
\mathcal G^{\rm R}_{w_{+,0},w_{-,0}}(w_+,w_-) = \mathbf 1_{\{w_+\geq w_{+,0},\ w_-\leq w_{-,0}\}}\Rcal(w_+,w_-;w_{+,0},w_{-,0}).
\ee
The boundary conditions cancel the distributions on the two open characteristic sides, while the corner term is   \(-\Rcal(w_{+,0},w_{-,0};w_{+,0},w_{-,0})\delta_{w_+=w_{+,0}}\otimes\delta_{w_-=w_{-,0}}\). Consequently, in a neighborhood of the rectangle excluding its other two sides, we have
\bel{eq:entropy-krl-eq}
L_{w_+w_-}^*\mathcal G^{\rm R}_{w_{+,0},w_{-,0}} =-\delta_{w_+=w_{+,0}}\otimes\delta_{w_-=w_{-,0}}.
\ee
\ese
This regular adjoint fundamental solution differs from the weak kernel: sending its base point to \(w_{+,0}=w_{-,0}=s\) requires singular-coefficient estimates beyond the regular Goursat construction. We therefore use the singular boundary problem~\eqref{eq:kernel-physical}.

\subsection{Vacuum and large density}
\label{section-5-5}

\subsubsection{Behavior of the pressure law}

The kernel equation has distinct regimes at vacuum and large density. The coefficient associated with the pressure law is
\bse\label{eq:current-coefficient-and-endpoints}
\be
B(\mu) = \frac{\mu^2\eps^{2}p'(\mu)}{\bigl(\mu+\eps^{2}p(\mu) \bigr)^2}.
\ee
By~\autoref{def-pression} and Lemma~\ref{lem:epsilon-pressure-admissibility}, below, there exist \(\kappa_0>0\) and \(\gamma>1\), together with $k_\infty\in(0,\eps^{-1})$ such that
\be
B(\mu) \sim\eps^{2}\kappa_0\mu^{\gamma-1} \quad \text{ as }\mu\to 0, \qquad B(\mu) \to \frac{\eps^{2}k_\infty^{2}}{(1+\eps^{2}k_\infty^{2})^2} \quad \text{ as }\mu \to + \infty.
\ee
\ese
At large density, the current equation is asymptotically isothermal, and the weak vacuum branch determines both asymptotic coefficients by continuation of its value and density derivative, without an additional condition at infinity.

\subsubsection{Formal asymptotics near the vacuum}

The pressure behavior near \(\mu=0\) determines the singular vacuum boundary. We have \(p'(0)=0\), whereas the case \(p'(0)>0\) would have a different singular boundary: the coefficient \(\eps^{2}p'/(\mu+\eps^{2}p)^2\) is then of order \(\mu^{-2}\), and the integral defining \(k\) diverges at zero. In particular, the vacuum cannot then be identified with the finite diagonal \(w_+=w_-\) used here.

Under the present assumptions, we have $p(\mu)\sim(\kappa_0/\gamma)\mu^\gamma$, where $\kappa_0$ is the vacuum coefficient in the equation~\eqref{eq:intro-vacuum-B}. We recall the boundary exponent of Section~\ref{section-main-results},
\bel{eq:lambda-def}
\lambda= \frac{3-\gamma}{2(\gamma-1)}.
\ee
The formulas~\eqref{eq:ab-explicit} therefore give, locally uniformly in rapidity,
\bel{eq:ab-singular}
a(w_+,w_-) = \frac{\lambda+o(1)}{w_+-w_-}, \qquad b(w_+,w_-) =-\frac{\lambda+o(1)}{w_+-w_-}.
\ee
Thus, near vacuum, the current equation is a perturbation of the Poisson--Darboux equation, namely
\bel{eq:entropy-model}
F^{0}_{w_+w_-}+ \frac{\lambda}{w_+-w_-}(F^{0}_{w_+}-F^{0}_{w_-}) = \text{lower-order terms}.
\ee

The derivative bounds for the pressure remainders give \(c\sim\eps\sqrt{\kappa_0}\,\mu^{(\gamma-1)/2}\), \(k\sim2c/(\gamma-1)\), and \(\kappa_k\to(\gamma-1)/2\); substitution in~\eqref{eq:ab-explicit} gives~\eqref{eq:ab-singular}. We treat the exact current equation as a singular perturbation of the Poisson--Darboux model~\eqref{eq:entropy-model}, also arising for non-relativistic polytropic fluids.

\section{Weak and strong Fourier branches}
\label{section-6}

\subsection{Global Fourier branches}
\label{section-6-1}

\subsubsection{Fourier transform of the weak kernel}

We now use the \emph{Fourier transform in the rapidity variable} to turn the current equation into an ordinary differential equation in density. After stating the global results, we construct the branches selected at vacuum, estimate them up to a fixed positive density, and construct bases describing their behavior at large density.

The current equation~\eqref{eq:section-6-normalized-entropy-equation} has coefficients depending only on~\(\mu\), and its natural Fourier variable is dual to the rapidity~\(v\). We consider the kernel based at the vacuum rest state and write \(\alpha_{\chi}(\mu,v)\coloneqq\alpha_{\chi}(\mu,v,0).\) We define its Fourier transform by
\bel{eq:def-chi-tilde}
\widehat{\alpha_{\chi}}(\mu,\xi)\coloneqq\int_{\Rbb}e^{-i\xi v}\alpha_{\chi}(\mu,v)\,dv, \qquad \xi\in\Rbb.
\ee
Fourier transformation of the boundary problem~\eqref{eq:kernel-physical}, with $s=0$, gives the initial conditions
\bel{eq:chi-hat-vacuum}
\widehat{\alpha_{\chi}}(0,\xi)=0, \qquad \del_\mu\widehat{\alpha_{\chi}}(0,\xi)=1.
\ee 
By introducing the ``Fourier speed coefficient'' 
\bel{eq:def-alpha0-fourier}
\alpha_0(\mu,\xi)\coloneqq(1+\xi^2)\frac{c(\mu)^2}{h(\mu)^2}=(1+\xi^2)\frac{\eps^{2}p'(\mu)}{(\mu+\eps^{2}p(\mu))^2},
\ee
the current equation in Fourier variable becomes
\bel{eq:chi-tilde-rho}
\del_\mu^2\widehat{\alpha_{\chi}}+\alpha_0(\mu,\xi)\widehat{\alpha_{\chi}}=0.
\ee 

It is convenient to introduce the \textit{logarithmic mass-energy density}
\bel{eq:def-z-logmu}
z\coloneqq\log\mu,
\ee
so that \(\mu\to0\) corresponds to \(z\to-\infty\), while \(\mu\to+\infty\) corresponds to \(z\to+\infty\). Writing \(\widehat{\alpha_{\chi}}(\mu,\xi)=\widehat{\alpha_{\chi}}(e^z,\xi)\), the equation~\eqref{eq:chi-tilde-rho} becomes
\bel{eq:chi-tilde-z}
\del_z^2\widehat{\alpha_{\chi}}-\del_z\widehat{\alpha_{\chi}}+(1+\xi^2)B(e^z)\widehat{\alpha_{\chi}}=0,
\ee
where the Fourier coefficient $B$ is recalled from~\eqref{eq:intro-def-B}:
\bel{eq:def-B-sec9}
B(\mu)=\frac{\mu^2\eps^{2}p'(\mu)}{(\mu+\eps^{2}p(\mu))^2}.
\ee
For the analysis in this section, we introduce the function~\(X\) by
\bel{eq:def-X-canonical}
\widehat{\alpha_{\chi}}(e^z,\xi)=e^{z/2}X(z,\xi).
\ee
We define the `Fourier speed potential' by
\bel{eq:X-canonical-sec9}
V(z,\xi)\coloneqq(1+\xi^2)B(e^z) -\frac14,
\ee
so that  a substitution into the equation~\eqref{eq:chi-tilde-z} gives us 
\bel{eq:X-canonical}
\del_z^2X+V(z,\xi)X=0.
\ee
Equivalently, in terms of the original pressure coefficients, $V$ is given by 
\bel{eq:V-canonical-expanded}
V(z,\xi)=\frac{e^{2z}c(e^z)^2}{h(e^z)^2}(1+\xi^2) -\frac14, \qquad h(\mu)=\mu+\eps^{2}p(\mu), \qquad c(\mu)=\eps\sqrt{p'(\mu)}.
\ee 
Hence, the Fourier analysis is governed by the single coefficient~\(B\), on which we made asymptotic assumptions in the introduction.

\subsubsection{Branches selected at vacuum}

The vacuum normalizations determine a weak branch and an affine family of strong branches of the equation~\eqref{eq:X-canonical} on the full logarithmic-density line. Theorem~\ref{theo:section-7-fourier-branches}, below, describes these branches and their large-density decompositions, while Proposition~\ref{cor:section-7-fourier-kernel-bounds}, below, gives the global bounds needed for Fourier inversion. Their proofs combine the construction near vacuum, estimates uniform in frequency up to a fixed positive density, and the Liouville--Green estimates, which we postponed in~Appendix~\ref{appendix-olver}.

\begin{theorem}[Weak and strong  global Fourier branches]
\label{theo:section-7-fourier-branches}
Let \(p\) be an admissible equation of state in the sense of~\autoref{def-pression}, and let \(V\) be defined by the equation~\eqref{eq:X-canonical-sec9}. The following properties hold for every \(\xi\in\Rbb\).
\bei
\item There exists a unique \emph{global weak Fourier branch} \(X^{\rm weak}(\cdot,\xi)\) solving the equation~\eqref{eq:X-canonical} and satisfying the normalization
\bel{eq:section-7-weak-vacuum-normalization}
e^{-z/2}X^{\rm weak}(z,\xi)\to1\qquad\text{as }z\to-\infty.
\ee

\item For every compact interval \(I\subset\Rbb\), there exists a constant \(C_I>0\) (independent of~\(\xi\)) such that
\bel{eq:section-7-uniform-canonical-bound}
\sup_{z\in I}\Bigl(|X^{\rm weak}(z,\xi)|+|\del_zX^{\rm weak}(z,\xi)|\Bigr)\le C_I(1+|\xi|)\qquad\text{for all }\xi\in\Rbb.
\ee

\item The strong vacuum condition
\bel{eq:section-7-strong-vacuum-normalization}
e^{z/2}X(z,\xi)\to1\qquad\text{as }z\to-\infty
\ee
defines a non-empty affine family \(\mathscr S^{\rm strong}_\xi\) of \emph{global strong Fourier branches}. If \(X_1,X_2\in\mathscr S^{\rm strong}_\xi\), then \(X_1-X_2\) is a multiple of \(X^{\rm weak}\). 

\item Every representative of \(\mathscr S^{\rm strong}_\xi\), together with \(X^{\rm weak}\), is a fundamental system of solutions for~\eqref{eq:X-canonical}.

\item Consider \(\xi_\infty\coloneqq(1-\eps^{2}k_\infty^{2})/(2\eps k_\infty)\). 

\bei 

\item When \(|\xi|>\xi_\infty\) (\emph{supercritical frequencies}), both branches have a unique decomposition in the \emph{oscillatory basis} constructed in~\autoref{prop:section-7-oscillatory-large-density-branches}, below.

\item When \(|\xi|<\xi_\infty\) (\emph{subcritical frequencies}), they have a unique decomposition in the \emph{exponential basis} constructed in~\autoref{prop:section-7-exponential-large-density-branches}, below. 

\item At the two  critical frequencies \(\xi= \pm \xi_\infty\), the global branches remain well defined, although these two asymptotic bases at large~$\mu$ do not apply.
\eei
\eei
\noindent Moreover, the weak branch depends continuously on~\(\xi\), including at the threshold frequencies, and its connection coefficients\footnote{For a fixed frequency, a branch is written as \(X=a(\xi)X_1+b(\xi)X_2\) in the oscillatory or exponential basis specified above. The coefficients \(a(\xi)\) and \(b(\xi)\) are independent of logarithmic density and are given by the Wronskian ratios~\eqref{eq:fourier-wronskian-coefficients}, below.} are continuous on any compact frequency set contained in either open regime. The equation~\eqref{eq:strong-Cauchy-selection}, below, selects a strong representative with continuous real-frequency data, and its connection coefficients are continuous on the same compact frequency sets.
\end{theorem}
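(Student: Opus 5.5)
We plan to treat \eqref{eq:X-canonical} as a perturbation of explicit model equations at the two ends of the logarithmic-density line and to connect them through the regular middle region.

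\textbf{Vacuum end.} As \(z\to-\infty\) the assumption \eqref{eq:intro-vacuum-B} gives \(V(z,\xi)=-\tfrac14+(1+\xi^2)\eps^2\kappa_0e^{(\gamma-1)z}(1+r_-(e^z))\). The unperturbed equation \(X''-X/4=0\) has the solutions \(e^{z/2}\) and \(e^{-z/2}\). Writing \(X=e^{z/2}(1+\eta)\), we will construct the weak branch from the Volterra equation
\[
\eta(z)=\int_{-\infty}^z\bigl(1-e^{z'-z}\bigr)\bigl(\tfrac14-V(z',\xi)\bigr)\bigl(1+\eta(z')\bigr)\,dz'.
\]
The perturbation \(V+\tfrac14\) is \(O((1+\xi^2)e^{(\gamma-1)z})\), hence integrable on \((-\infty,z_0]\). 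The iteration therefore converges, which gives existence and the normalization \eqref{eq:section-7-weak-vacuum-normalization}. Uniqueness follows because a recessive solution with \(e^{-z/2}X\to0\) must vanish: any solution is a combination of the weak branch and a solution behaving like \(e^{-z/2}\), and the latter violates the limit.

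For strong branches, we use the same integral equation around \(e^{-z/2}\), with the kernel integrated from a finite base point \(z_0\) to \(z\) (the dominant direction). This yields one solution satisfying \eqref{eq:section-7-strong-vacuum-normalization}. Since the weak branch is \(o(e^{-z/2})\), adding any multiple of it preserves the normalization, so the strong set is affine over the weak branch. Conversely, two strong branches differ by a solution \(Y\) with \(e^{z/2}Y\to0\). Such a \(Y\) is a multiple of \(X^{\rm weak}\): it is a combination \(aX^{\rm weak}+bX^{\rm str}\), and the condition forces \(b=0\). The Wronskian of the weak and strong branches equals the Wronskian of the model pair \(e^{z/2},e^{-z/2}\), namely \(-1\) (nonzero), evaluated at \(-\infty\). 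Since the Wronskian is constant, the two branches form a fundamental system.

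\textbf{Uniform bound \eqref{eq:section-7-uniform-canonical-bound}.} The naive Gronwall estimate gives only growth like \(e^{C|\xi|}\), so a more refined argument is needed. For \(z\) below a threshold \(z_\xi\), where \((1+\xi^2)B(e^z)\) is small, the Volterra series gives \(O(1)\) bounds. Above the turning point, \(V>0\) and \(V\asymp(1+\xi^2)B\). There we will use a Liouville--Green energy \(E=V^{1/2}|X|^2+V^{-1/2}|X'|^2\), whose logarithmic derivative is bounded by \(|V'|/V\), an integrable quantity on compact \(I\). Near the turning point, where \(B(e^z)\sim\xi^{-2}\), we invoke the Airy-type comparison from Appendix~\ref{appendix-olver}. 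These pieces give \(|X|\lesssim1\) and \(|X'|\lesssim V^{1/2}\lesssim1+|\xi|\) on \(I\). We expect this turning-point matching, uniformly in \(\xi\), to be the main obstacle.

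\textbf{Large density.} By \eqref{eq:intro-infty-B}, \(V\to V_\infty(\xi)=(1+\xi^2)\eps^2k_\infty^2/(1+\eps^2k_\infty^2)^2-\tfrac14\). This limit is positive exactly when \(|\xi|>\xi_\infty\), since \((1+\xi_\infty^2)=(1+\eps^2k_\infty^2)^2/(4\eps^2k_\infty^2)\). The integrability of \(r_+'\) and \(\mu r_+''\) (in \(z\): \(\partial_zV\in L^1\)) is exactly the Liouville--Green hypothesis. Hence the oscillatory basis \(V^{-1/4}e^{\pm i\int V^{1/2}}(1+o(1))\) and the exponential basis, stated in the cited propositions, exist with nonzero Wronskians. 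The unique decomposition of any solution follows, with coefficients given by Wronskian ratios. At \(\xi=\pm\xi_\infty\) nothing beyond global existence is claimed.

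\textbf{Continuity.} The Volterra series for the weak branch converges locally uniformly in \(\xi\), and propagation through compact \(z\)-sets is continuous in the coefficients, so the weak branch is continuous in \(\xi\). The connection coefficients are Wronskians of the branch with basis elements that depend continuously on \(\xi\) away from \(\pm\xi_\infty\), so they are continuous there. For the strong representative, we fix Cauchy data at a chosen \(z_*\) continuously in \(\xi\), as in \eqref{eq:strong-Cauchy-selection}, and argue identically.
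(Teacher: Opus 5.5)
Your handling of the vacuum normalizations, the affine strong class, the Wronskian $-1$, and the Wronskian decompositions at large density follows the paper's proof. Your Volterra equation in $z$ is equivalent to the paper's in $\mu$, with two small corrections. The forcing should be $-(V+\tfrac14)=-(1+\xi^2)B$. The strong branch obtained by integrating from $z_0$ gives $e^{z/2}X\to1+\eta_\infty$ and must be renormalized.

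\textbf{The genuine gap: the frequency-uniform bound \eqref{eq:section-7-uniform-canonical-bound}.} You propose to cross the turning point with an ``Airy-type comparison from Appendix~\ref{appendix-olver}''. That appendix contains only Liouville--Green estimates on half-lines free of turning points. So the step you yourself call the main obstacle is left unproved.

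The missing idea is that no turning-point analysis is needed.
\begin{itemize}
\item For large $|\xi|$ the turning point lies near vacuum. There Lemma~\ref{lem:section-7-logarithmic-vacuum-control} pins $\mu B_\mu/B$ between $\tfrac{\gamma-1}2$ and $\tfrac{3(\gamma-1)}2$.
\item Hence the layer $\{\beta_\gamma\le(1+\xi^2)B(e^z)\le\tfrac12\}$ containing the turning point has $z$-length at most $\tfrac2{\gamma-1}\log\tfrac1{2\beta_\gamma}$, independently of $\xi$ (Lemma~\ref{lem:section-7-transition-density}). On this layer $|V|\le\tfrac14$.
\item A plain Gronwall estimate for the first-order system therefore carries the Volterra bound $|X|+|X_z|\lesssim\mu_\xi^{1/2}$ of \eqref{eq:section-7-transition-Cauchy-bound} across the turning point, with a $\xi$-independent constant.
\end{itemize}

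\textbf{The energy step above the turning point.} The interval from the turning point (which tends to $-\infty$ as $|\xi|\to\infty$) up to $I$ is not compact uniformly in $\xi$. Over it, $\int|V_z|/V\,dz$ grows like $\log(1+\xi^2)$. The inequality $|E'|\le\tfrac{|V_z|}{2V}E$ still yields $|X|\lesssim1$, but only because $V_z\ge0$ there. Then $\exp(\tfrac12\int|V_z|/V)=\sqrt{V(z)/V(z_0)}$ is exactly absorbed by the weights $V^{\pm1/2}$. This is the monotonicity of $E/V$ in \eqref{eq:section-7-vacuum-transfer-weighted-monotonicity}. A Gronwall step on the fixed positive-density interval, where $V\asymp1+\xi^2$, then finishes.

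\textbf{Two points at large density.}
\begin{itemize}
\item The Liouville--Green hypothesis is $\int(|B_{zz}|+B_z^2)\,dz<\infty$ (Lemma~\ref{lem:large-density-B-condition}), not merely $\del_zV\in L^1$.
\item Continuity of the connection coefficients does not follow just from citing the basis propositions. Their thresholds $z_+(\xi)$ vary with $\xi$, and the growing exponential solution is defined only modulo the decaying one. You must fix a common phase origin on each compact frequency set and use uniformly small Liouville--Green tails. You must also select the growing solution continuously, for instance by reduction of order from the recessive one as in \eqref{eq:connection-continuous-growing-solution}.
\end{itemize}
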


Figure~\ref{fig:weak-fourier-kernel} illustrates the weak Fourier branch for the pressure with $\gamma=5/3$ used in Figure~\ref{fig:kernel-pressure-transition}.  Its value at zero frequency is $\Numb(\mu)$; the figure divides by this value to compare oscillations at different densities. We use the same Fourier solutions when reconstructing the physical kernels and their boundary profiles in Figure~\ref{fig:kernel-pressure-transition}. 
The branches of Theorem~\ref{theo:section-7-fourier-branches} determine Fourier kernels after returning to the density variable $\mu$. The following proposition  gives bounds on compact density intervals and over the full density range, which will justify inverse Fourier transformation.

\begin{figure}[htbp]
\centering
\includegraphics[width=\linewidth]{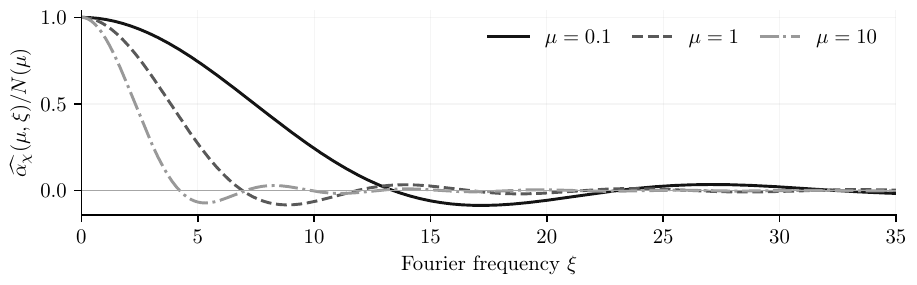}
\caption{The  normalized Fourier transform $\widehat{\alpha_\chi}(\mu,\xi)/N(\mu)$ of the weak density kernel for $p(0)=0$, $p'(\mu)=\mu^{2/3}/(4(1+\mu^{2/3}))$, $\eps=1/2$, and densities $\mu= 0.1,1,10$. The curves are computed from the Fourier equation with vacuum data $\widehat{\alpha_\chi}(0,\xi)=0$ and $\del_\mu\widehat{\alpha_\chi}(0,\xi)=1$. Only non-negative frequencies are shown, since the transform is real and even.  Division by $N(\mu)=\widehat{\alpha_\chi}(\mu,0)$ gives every curve value one at zero frequency, making their different oscillation scales visible.}
\label{fig:weak-fourier-kernel}
\end{figure}

\begin{proposition}[Global bounds for the Fourier kernels]
\label{cor:section-7-fourier-kernel-bounds}
Let \(X^{\rm weak}\) and \(\mathscr S^{\rm strong}_\xi\) be the branches in~\autoref{theo:section-7-fourier-branches}. For the weak branch,  write \(\widehat{\alpha_{\chi}^{\rm weak}}(\mu,\xi)=e^{z/2}X^{\rm weak}(z,\xi)\) where \(z=\log\mu\). \bei 

\item Then \(\widehat{\alpha_{\chi}^{\rm weak}}\) depends continuously on~\(\xi\), including at the threshold frequencies, and satisfies \(\widehat{\alpha_{\chi}^{\rm weak}}(\mu,\xi)=\mu(1+o(1))\) as \(\mu\to0\). For every compact interval \(K\subset(0,+\infty)\), there exists a constant \(C_K>0\) such that
\bel{eq:section-7-uniform-fourier-bound}
\sup_{\mu\in K}\bigl(|\widehat{\alpha_{\chi}^{\rm weak}}(\mu,\xi)|+|\mu\del_\mu\widehat{\alpha_{\chi}^{\rm weak}}(\mu,\xi)|\bigr)\le C_K(1+|\xi|)\qquad\text{for all }\xi\in\Rbb.
\ee
\item In addition,  there exists a constant \(C(\eps)>0\), independent of density and frequency, such that
\bel{eq:Fourier-global-density-polynomial}
|\widehat{\alpha_{\chi}^{\rm weak}}(\mu,\xi)|+\frac{\mu}{\sqrt{1+\xi^2}}|\del_\mu\widehat{\alpha_{\chi}^{\rm weak}}(\mu,\xi)|\le C(\eps) \mu,
\qquad \mu>0,\quad \xi\in\Rbb.
\ee
\item For the strong representative selected by the equation~\eqref{eq:strong-Cauchy-selection}, below, the associated Fourier transform \(e^{z/2}X(z,\xi)\) has \emph{at most polynomial growth} in~\(\xi\), uniformly in density  with weight \(1+\mu\); cf.~the estimate~\eqref{eq:strong-global-density-bound}, below. 

\item Finally, for every \(\auxeps>0\), there are constants \(C_\auxeps>0\) and a neighborhood of the threshold frequencies such that the Fourier kernels \(\widehat{\alpha_{\chi}}=e^{z/2}X\) associated with the weak branch and with the strong representative (as selected by~\eqref{eq:strong-Cauchy-selection}, below) satisfy
\be
|\widehat{\alpha_{\chi}}(\mu,\xi)|\le C_\auxeps\mu^{1/2+\auxeps}, \qquad \mu\ge1.
\ee
\eei
\noindent  Once established below, these polynomial bounds\footnote{Proposition~\ref{cor:kernel-Sobolev-regularity}, below, gives sharper weighted frequency decay.} 
permit to apply the inverse Fourier transform in the space of tempered distributions throughout the whole density range. \end{proposition}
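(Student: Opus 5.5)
The plan is to deduce the proposition from Theorem~\ref{theo:section-7-fourier-branches} by undoing the change of variables $\widehat{\alpha_\chi}=e^{z/2}X$, $z=\log\mu$, and then estimating the branches separately in three density regimes: near vacuum, on compact density intervals, and at large density. Continuity in $\xi$ and the vacuum asymptotics $\widehat{\alpha_{\chi}^{\rm weak}}=\mu(1+o(1))$ follow directly from the theorem and the normalization~\eqref{eq:section-7-weak-vacuum-normalization}. For a compact $K\subset(0,+\infty)$, the image $I=\log K$ is compact, and $\mu\del_\mu\widehat{\alpha_\chi}=e^{z/2}(\tfrac12X+\del_zX)$. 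The compact-interval estimate~\eqref{eq:section-7-uniform-fourier-bound} is then exactly~\eqref{eq:section-7-uniform-canonical-bound}, after multiplication by $\sup_K\mu^{1/2}$.

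For the global bound~\eqref{eq:Fourier-global-density-polynomial}, I would split $(0,+\infty)$ into three regimes.

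\emph{Vacuum regime.} Here I work directly with~\eqref{eq:chi-tilde-rho} and its vacuum data~\eqref{eq:chi-hat-vacuum}. I would introduce the energy
\[
\mathcal E=|\del_\mu\widehat{\alpha_\chi}|^2+\alpha_0|\widehat{\alpha_\chi}|^2 .
\]
Since $\del_\mu\mathcal E=(\del_\mu\alpha_0)|\widehat{\alpha_\chi}|^2$ and $\alpha_0\sim(1+\xi^2)\eps^2\kappa_0\mu^{\gamma-3}$ is decreasing near vacuum, $\mathcal E$ is non-increasing there. Hence $|\del_\mu\widehat{\alpha_\chi}|\le1$, and integrating from $\mu=0$ gives $|\widehat{\alpha_\chi}|\le\mu$. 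To make the monotonicity rigorous, I would use a Liouville--Green weight from Appendix~\ref{appendix-olver} in place of exact monotonicity of $\alpha_0$; the bounded-variation conditions~\eqref{eq:intro-vacuum-r} control the resulting error. This regime can be pushed up to a fixed density $\mu_0$, uniformly in $\xi$.

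\emph{Intermediate and large density.} I would pass to $X$ and use the Liouville--Green bounds of Appendix~\ref{appendix-olver} for~\eqref{eq:X-canonical}, with potential $V\to V_\infty=(1+\xi^2)\eps^2k_\infty^2/(1+\eps^2k_\infty^2)^2-\tfrac14$. For supercritical $\xi$, the oscillatory basis keeps $|X|$ bounded by its size at $z_0=\log\mu_0$, which is at most $C\mu_0^{1/2}$. This gives $|\widehat{\alpha_\chi}|\le C\mu^{1/2}\le C\mu$ for $\mu\ge1$. For subcritical $\xi$, the exponential basis grows like $e^{\sqrt{-V_\infty}\,z}$ with $\sqrt{-V_\infty}\le1/2$. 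This yields $|\widehat{\alpha_\chi}|\le C\mu^{1/2+\sqrt{-V_\infty}}\le C\mu$, using $\int|r_+'|<\infty$ to control the perturbation of the constant potential. The derivative bound $\mu|\del_\mu\widehat{\alpha_\chi}|/\sqrt{1+\xi^2}\le C\mu$ comes out of the same Liouville--Green representation, since $|\del_zX|\lesssim|V|^{1/2}|X|$ and $|V|^{1/2}\lesssim\sqrt{1+\xi^2}$.

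\emph{Near-threshold bound.} I would use that $\sqrt{|V_\infty|}$ is small near $\pm\xi_\infty$. The growth exponent is then at most $1/2+\auxeps$, and the remainder contributes only a bounded factor because of~\eqref{eq:intro-infty-r}.

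\emph{Strong representative.} The selection~\eqref{eq:strong-Cauchy-selection} fixes Cauchy data at a positive density with polynomial size in $\xi$. Propagating these data forward with the same large-density bounds, and backward with the strong vacuum asymptotics, should give polynomial growth with weight $1+\mu$.

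\textbf{Main obstacle.} The hard part is uniformity across the two threshold frequencies. There the Liouville--Green error control degenerates, because $V_\infty$ vanishes and the turning point escapes to infinity. I would first try to handle a neighborhood of the thresholds by a Grönwall argument on the first-order system for $(X,\del_zX)$ with the nearly vanishing potential. This gives a linear bound in $z$, that is, a logarithmic loss $\mu^{1/2}\log\mu\le C\mu$. Away from the thresholds I would use the Liouville--Green bounds as above. Continuity in $\xi$ of the weak branch is then what glues these two estimates into a constant independent of $\xi$.
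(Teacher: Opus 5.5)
Your deduction of the first bullet from Theorem~\ref{theo:section-7-fourier-branches} is fine, but the global bound \eqref{eq:Fourier-global-density-polynomial} has a genuine gap in the vacuum regime.

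Your energy $\mathcal E=|\del_\mu\widehat{\alpha_\chi}|^2+\alpha_0|\widehat{\alpha_\chi}|^2$ satisfies $\del_\mu\mathcal E=(\del_\mu\alpha_0)|\widehat{\alpha_\chi}|^2$, with $\del_\mu\log\alpha_0=(\gamma-3)/\mu+r_-'/(1+r_-)$.

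\emph{The case $1<\gamma\le3$.} Here your argument works. Gronwall absorbs the integrable term $r_-'/(1+r_-)$. On compact density intervals, $|\del_\mu\alpha_0|/\alpha_0$ is bounded independently of $\xi$. You even obtain $|\del_\mu\widehat{\alpha_\chi}|\le C$.

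\emph{The case $\gamma>3$, which the paper covers.} Here $\alpha_0\sim(1+\xi^2)\eps^2\kappa_0\mu^{\gamma-3}$ is increasing, so $\mathcal E$ grows. No bounded-variation or Liouville--Green correction can reverse the non-integrable term $(\gamma-3)/\mu$. In fact your conclusion $|\del_\mu\widehat{\alpha_\chi}|\le1$ is false in this range. By \eqref{eq:weak-current-differentiated-Fourier-profile}, at fixed density the derivative has size $(1+\xi^2)^{-\lambda/2}$ along a sequence of frequencies, which is unbounded since $\lambda<0$. The reversed energy $|\del_\mu\widehat{\alpha_\chi}|^2/\alpha_0+|\widehat{\alpha_\chi}|^2$ is monotone for $\gamma>3$, but it is infinite at $\mu=0$. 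Liouville--Green near vacuum only applies below a threshold $z_-(\xi)\to-\infty$, so it gives no uniformity in $\xi$.

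\emph{The missing idea.} It is the frequency-dependent transition scale of Lemma~\ref{lem:section-7-transition-density}.
\begin{itemize}
\item On $(0,\mu_\xi]$, with $(1+\xi^2)B(\mu_\xi)=\beta_\gamma$, a rescaled Volterra estimate gives $|X|+|X_z|\le C\mu_\xi^{1/2}$.
\item Gronwall crosses the layer up to $\mu_\xi^+$, whose logarithmic length is bounded by \eqref{eq:section-7-transition-layer-width}.
\item Beyond the layer, $E/V$ with $E=|X_z|^2+V|X|^2$ is non-increasing because $V_z\ge0$. This uses only that $B$, not $B/\mu^2$, increases near vacuum, which holds for every $\gamma>1$.
\end{itemize}
Since $\mu_\xi\le\mu$ there, this yields $|\widehat{\alpha_\chi}|\le C\mu$ and $|\del_\mu\widehat{\alpha_\chi}|\le C(1+|\xi|)$. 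That is exactly the frequency loss allowed in \eqref{eq:Fourier-global-density-polynomial}.

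\emph{Large density.} Your near-threshold claim is wrong as stated. Gronwall for $X''=-VX$ with small $V$ does not give growth linear in $z$ unless $\int^\infty z|V|\,dz<\infty$. That fails off the thresholds, where $V\to V_\infty\ne0$. At the thresholds it is not implied either, since no decay rate is imposed on $r_+$. What holds, and suffices, is the bound $\mu^{1/2+\auxeps}$ of Lemma~\ref{lem:threshold-sublinear-bound}, from the energy $|X_z|^2+\auxeps^2|X|^2$. Likewise, your subcritical exponent $1/2+\sqrt{-V_\infty}$ needs a $\delta$-loss. This is harmless because $1/2+\sqrt{-V_\infty}\le1/(1+\eps^2k_\infty^2)<1$.

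More to the point, the regime splitting is unnecessary. The paper uses the single energy $H=|X_z|^2+((1+\xi^2)B+\tfrac14)|X|^2$ of \autoref{prop:global-density-Fourier-bound}. It is positive in every regime. $H^{1/2}$ grows at most like $e^{L_0/2}(\mu/\mu_0)^{\theta}$ with $\theta=1/(4\sqrt{(1+\xi^2)b_0+1/4})<1/2$, thresholds included. Combined with $H(z_0)^{1/2}\le C(1+\xi^2)^{1/2}$ and $|X|\le H^{1/2}/((1+\xi^2)b_0)^{1/2}$, this gives $|\widehat{\alpha_\chi}|\le C\mu$ directly.

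\emph{Strong representative.} Polynomial size of the Cauchy vector $(b,-a)/(a^2+b^2)$ is not automatic. It requires the lower bound \eqref{eq:weak-Cauchy-two-sided-frequency} on the weak Cauchy data. Upper bounds on the weak branch cannot supply this. The paper extracts it from the Bessel phase asymptotics \eqref{eq:weak-Cauchy-phase-pair}. You also cannot propagate back to vacuum using Proposition~\ref{thm:olver-vacuum}, because its constants depend on $\xi$. The paper instead uses that $\mu H$ is non-decreasing in $z$ near vacuum, since $H_z\ge-H$ when $B_z\ge0$.
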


Sections~\ref{section-6-2} to~\ref{section-6-4} construct the vacuum branches and prove bounds uniform in frequency from vacuum to positive density, together with the large-density estimates.  Section~\ref{section-global-fourier-continuation}, below, completes the proof of Theorem~\ref{theo:section-7-fourier-branches} and the polynomial weak-branch bounds of Proposition~\ref{cor:section-7-fourier-kernel-bounds}. Sections~\ref{section-7-5} and~\ref{section-strong-kernel}, below, then prove sharper weak-branch decay and the polynomial frequency bound~\eqref{eq:strong-global-density-bound} for the selected strong branch, respectively. Proposition~\ref{prop:section-8-selected-strong-branch}, below, gives the strong distributional vacuum value; Section~\ref{section-7} will reconstruct the weak kernel in physical variables.

\subsection{The Volterra construction}
\label{section-6-2}

\subsubsection{Vacuum weak and strong kernels}

At this stage, we first solve the vacuum problem at \emph{each fixed frequency}. Next, in Section~\ref{section-6-3} we will make our bounds uniform in frequency. For the equation~\eqref{eq:X-canonical}, with a pressure satisfying Definition~\ref{def-pression}, we introduce the \emph{vacuum scale}
\bel{eq:def-k-vacuum-sec9}
\zeta \coloneqq \sqrt{(1+\xi^2)B(\mu)} =\sqrt{1+\xi^2}\,\frac{\mu c(\mu)}{\mu+\eps^{2}p(\mu)}.
\ee
Thus the Fourier parameter enters through $ (1+\xi^2)B(\mu)=\zeta^2. $ Since \(B(\mu)\to0\) as \(\mu\to0\),
\be
V(z,\xi)\to-\frac14 \qquad\text{as }z\to-\infty.
\ee
Thus the equation is of \emph{exponential type near vacuum}. We set
\bel{eq:def-Qminus-sec9}
Q_- (z,\xi) \coloneqq \frac14- (1+ \xi^2)B(e^z), \qquad \omega_- (z,\xi) \coloneqq \sqrt{Q_- (z,\xi)},
\ee
whenever \(Q_- (z,\xi)>0\). Then we have \(X''-\omega_-^2X=0.\)

\begin{proposition}[Weak and strong kernels defined from the vacuum]
\label{thm:olver-vacuum} 
Suppose that \(B(\mu) =\Ocal(\mu^{\gamma-1})\) as \(\mu\to 0\), with \(\gamma>1\). 

\bei 

\item 
\bse For each fixed real frequency \(\xi\), there is a {unique weak solution \(y=y_{\rm weak}(\mu,\xi)\) of}
\be
y''+(1+ \xi^2)\frac{B(\mu)}{\mu^2}y=0, \qquad y_{\rm weak}(\mu,\xi)/\mu \to 1.
\ee
Writing \(H_\xi(\mu) \coloneqq (1+\xi^2) \int_0^\mu B(r)\,dr/r\), one has
\bel{eq:vacuum-olver-error-sec9}
\bigl|\frac{y_{\rm weak}(\mu,\xi)}\mu-1\bigr| +|y_{\rm weak}'(\mu,\xi) -1| \leq 2\bigl(e^{H_\xi(\mu)}-1 \bigr).
\ee
\ese

\item 
\bse
There also exists a solution \(y_{\rm strong}\) with vacuum value one. Define the error scale by
\be
 R_\gamma(\mu) \coloneqq
 \begin{cases}
 \mu^{\gamma-1},&1<\gamma<2,\\
 \mu(1+|\log\mu|),&\gamma=2,\\
 \mu,&\gamma>2.
 \end{cases}
\ee
The strong branch satisfies \(|y_{\rm strong}-1|+|\mu y_{\rm strong}'|={\Ocal}_\xi(R_\gamma(\mu))\). 
\item The corresponding canonical solutions satisfy
\bel{eq:section-7-local-weak-strong-normalizations}
X^{\rm weak} \, = \mu^{-1/2}y_{\rm weak}\sim\mu^{1/2}, \qquad X^{\rm strong} \, = \mu^{-1/2}y_{\rm strong}\sim\mu^{-1/2}.
\ee 
The canonical solutions with the strong normalization in the equation~\eqref{eq:section-7-strong-vacuum-normalization} are exactly
\bel{eq:section-7-local-strong-affine-class}
X^{\rm strong}+a(\xi)X^{\rm weak}, \qquad a(\xi)\in\Rbb.
\ee
\ese
\eei
\noindent The error constants are locally uniform in frequency (while global bounds follow below). 
\end{proposition}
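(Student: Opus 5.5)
We plan to work directly with the density variable and the equation $y''+q(\mu)y=0$, where $q(\mu)\coloneqq(1+\xi^2)B(\mu)/\mu^2$. The hypothesis $B=\Ocal(\mu^{\gamma-1})$ gives $q=\Ocal(\mu^{\gamma-3})$, so $\mu q\in L^1(0,\mu_*)$ but $q$ itself may fail to be integrable. Since $\widehat{\alpha_\chi}=e^{z/2}X$ and $\mu=e^z$, every solution $y$ corresponds to $X=\mu^{-1/2}y$ solving~\eqref{eq:X-canonical}. The two normalizations~\eqref{eq:section-7-weak-vacuum-normalization} and~\eqref{eq:section-7-strong-vacuum-normalization} are therefore $y/\mu\to1$ and $y\to1$, which immediately gives~\eqref{eq:section-7-local-weak-strong-normalizations}.

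\emph{Weak branch.} We write $y=\mu+\int_0^\mu(\mu-r)(-q(r))y(r)\,dr$, which is the Volterra equation for data $y(0)=0$, $y'(0)=1$. For $g\coloneqq y/\mu$ this becomes $g(\mu)=1-\int_0^\mu K(\mu,r)q(r)r\,g(r)\,dr$ with $0\le K=(\mu-r)/\mu\le1$. Since $rq(r)=(1+\xi^2)B(r)/r$ is integrable at zero, Picard iteration converges: the $j$-th term is bounded by $H_\xi(\mu)^j/j!$, because the kernel has the ordered-simplex structure. This yields existence, uniqueness in the class $y/\mu$ bounded, and $|g-1|\le e^{H_\xi}-1$. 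Differentiating gives $y'-1=-\int_0^\mu q r g\,dr$, which is bounded by $H_\xi e^{H_\xi}\le e^{H_\xi}-1+ (e^{H_\xi}-1)$ after a crude estimate, so the total is at most $2(e^{H_\xi}-1)$. Uniqueness among all solutions with $y/\mu\to1$ follows once we know that the second solution behaves like a nonzero constant at vacuum; this is supplied by the strong-branch construction below.

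\emph{Strong branch.} This is the main obstacle, since $q$ can be non-integrable when $\gamma\le2$. We use reduction of order: $y_{\rm strong}\coloneqq y_{\rm weak}(\mu)\int_\mu^{\mu_*}y_{\rm weak}(r)^{-2}\,dr$, with $\mu_*$ small enough that $y_{\rm weak}>0$ on $(0,\mu_*]$. Writing $y_{\rm weak}=\mu(1+\eta)$, where $\eta=\Ocal(H_\xi)=\Ocal(\mu^{\gamma-1})$ and also $\eta'=\Ocal(\mu^{\gamma-2})$, we get
\[
\int_\mu^{\mu_*}\frac{dr}{r^2(1+\eta)^2}=\frac1\mu+C_0+\Ocal\Bigl(\int_\mu^{\mu_*}r^{\gamma-3}\,dr\Bigr).
\]
The error integral equals $\Ocal(\mu^{\gamma-2})$, $\Ocal(|\log\mu|)$, or $\Ocal(1)$ according to whether $\gamma<2$, $\gamma=2$, or $\gamma>2$. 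Multiplying by $\mu(1+\eta)$ gives $y_{\rm strong}=1+\Ocal(R_\gamma)$, and the same computation applied to $y_{\rm strong}'$ gives $|\mu y'_{\rm strong}|=\Ocal(R_\gamma)$. The Wronskian equals $-1$, so $\{y_{\rm weak},y_{\rm strong}\}$ is a fundamental system. Extension to all $\mu>0$ follows by linear ODE theory.

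\emph{Affine class.} Any solution is $Ay_{\rm strong}+Ay_{\rm weak}'$, that is, $A\,y_{\rm strong}+a\,y_{\rm weak}$. Since $y_{\rm weak}\to0$, the limit $y\to1$ forces $A=1$ while $a$ stays free; this is~\eqref{eq:section-7-local-strong-affine-class}. Similarly, $y/\mu\to1$ forces $A=0$, because $y_{\rm strong}/\mu\to\infty$, and then $a=1$, which completes the uniqueness of the weak branch. All constants depend on $\xi$ only through $1+\xi^2$ and are therefore locally uniform in frequency.
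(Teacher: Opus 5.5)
Your proposal follows the paper's proof essentially step for step. You use the same Volterra equation for $y_{\rm weak}/\mu$, with majorant density $(1+\xi^2)B(r)/r$ and factorial bounds $H_\xi^j/j!$. You build the strong branch by the same reduction of order $y_{\rm strong}=y_{\rm weak}\int_\mu^{\mu_*}y_{\rm weak}^{-2}\,dr$, integrating the $\Ocal(r^{\gamma-3})$ error in three cases. You obtain the affine class from the same Wronskian $-1$.

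The only structural difference is how you get uniqueness of the weak branch. You derive it from the fundamental system: $y_{\rm strong}/\mu\to\infty$ forces the strong coefficient to vanish. The paper instead shows directly that any solution with $y/\mu\to1$ has $y''=-qy$ integrable at zero, hence $y'\to1$, so it satisfies the same Volterra equation. Both routes are valid; yours reuses the basis, while the paper's keeps weak-branch uniqueness independent of the strong construction.

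One step is wrong as written. The ``crude estimate'' $H e^{H}\le 2(e^{H}-1)$ fails once $H\gtrsim1.6$, yet \eqref{eq:vacuum-olver-error-sec9} is asserted for every $\mu>0$. The fix is to use the pointwise bound $|g(r)|\le e^{H_\xi(r)}$ inside the integral, rather than bounding $g$ by its value at the endpoint $\mu$. This gives $|y_{\rm weak}'(\mu)-1|\le\int_0^\mu H_\xi'(r)e^{H_\xi(r)}\,dr=e^{H_\xi(\mu)}-1$. Adding $|g-1|\le e^{H_\xi}-1$ then yields the stated factor $2$.
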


\begin{proof}
\bse
{\bf 1.} \emph{The weak normalization.} We construct the weak solution from the integral equation
\bel{eq:vacuum-plus-branch-sec9}
y_{\rm weak}(\mu) = \mu- (1 + \xi^2) \int_0^\mu(\mu-r)\frac{B(r)}{r^2}y_{\rm weak}(r)\,dr.
\ee
For \(W(\mu) =y_{\rm weak}(\mu)/\mu\), the absolute value of the integral operator is bounded by the positive Volterra operator with density    \((1 + \xi^2) B(r)/r\). Its iterates are bounded by \(H_\xi(\mu)^j/j!\). The Neumann series therefore converges uniformly on every sufficiently small closed density interval, gives a unique continuous \(W\) with \(W(0) =1\), and yields
\be
|W(\mu)|\leq e^{H_\xi(\mu)}, \qquad |W(\mu) -1|\leq e^{H_\xi(\mu)}-1.
\ee
Differentiating the integral equation is legitimate since    \(B(r)/r\)  is integrable. It gives
\be
y_{\rm weak}'(\mu) =1- (1 + \xi^2) \int_0^\mu\frac{B(r)}r W(r)\,dr,
\ee
and hence the estimate~\eqref{eq:vacuum-olver-error-sec9}. Conversely, any solution with \(y/\mu\to1\) has integrable second derivative near zero. Its first derivative thus has a limit, necessarily one, and integration twice gives the same integral equation. This proves uniqueness under this vacuum normalization.

\ese

{\bf 2.} \emph{The strong normalization.} We choose \(a_\xi>0\) so small that \(y_{\rm weak}>0\) on \((0,a_\xi]\), and define
\bel{eq:vacuum-minus-branch-sec9}
y_{\rm strong}(\mu) \coloneqq y_{\rm weak}(\mu)\int_\mu^{a_\xi}\frac{dr}{y_{\rm weak}(r)^2}.
\ee
Direct differentiation proves the differential equation and the Wronskian identity \(W(y_{\rm weak},y_{\rm strong}) =-1\). Since \(y_{\rm weak}(r) =r(1+{\Ocal}_\xi(r^{\gamma-1}))\), the integrand is \(r^{-2}(1+{\Ocal}_\xi(r^{\gamma-1}))\). Integrating this expression gives \(\mu\int_\mu^{a_\xi}y_{\rm weak}(r)^{-2}\,dr=1+{\Ocal}_\xi(R_\gamma(\mu))\). Using \(y_{\rm weak}'=1+{\Ocal}_\xi(\mu^{\gamma-1})\) in the derivative of the equation~\eqref{eq:vacuum-minus-branch-sec9} gives the same bound for \(\mu y_{\rm strong}'\). The non-zero Wronskian proves that these solutions form a basis, whose vacuum values give the affine class. On each compact frequency set, the estimates permit a choice of \(a_\xi\) uniform in \(\xi\).

\end{proof}

The vacuum normalization selects a unique weak solution, whereas a strong solution can be changed by adding a multiple of that weak solution.  At \(\gamma=2\), the estimates just proved give \(y_{\rm weak}/\mu-1={\Ocal}_\xi(\mu)\) and \(y_{\rm strong}-1={\Ocal}_\xi(\mu(1+|\log\mu|))\); this is why we estimate the two remainders separately.\footnote{The logarithm comes from integrating a term of order \(r^{-1}\) in the reduction-of-order formula~\eqref{eq:vacuum-minus-branch-sec9}, above.} The integral construction above already gives both branches near vacuum.
In Appendix~\ref{appendix-pressure-endpoints}, below, we work out detailed coefficient estimates that can be used to apply the Liouville--Green method near vacuum.

\subsubsection{An example: polytropic fluids}

If \(r_-=0\), the canonical equation is the explicit polytropic model
\bel{eq:section-7-polytropic-model}
\del_z^2X+ \Bigl( (1+ \xi^2)\eps^{2}\kappa_0 e^{(\gamma-1) z}-\frac14 \Bigr) X=0.
\ee
We introduce the variables
\bel{eq:section-7-bessel-variable}
y \coloneqq \frac{2\eps\sqrt{\kappa_0(1+\xi^2)}}{\gamma-1}e^{(\gamma-1) z/2}, \qquad \nu \coloneqq \frac1{\gamma-1},
\ee
in which the equation becomes \(y^2X_{yy}+yX_y+(y^2-\nu^2)X=0\). The weak solution is therefore
\bel{eq:section-7-polytropic-weak-branch}
X_{\rm pol}^{\rm weak}(z,\xi) \coloneqq \Gamma(1+ \nu) \Bigl( \frac{\gamma-1}{\eps\sqrt{\kappa_0(1+\xi^2)}} \Bigr) ^\nu J_\nu(y).
\ee
Indeed, the expansion of \(J_\nu\) at the origin gives \(e^{-z/2}X_{\rm pol}^{\rm weak}(z,\xi)\to1\). In this reference model, the scale \(\zeta=\sqrt{(1+\xi^2)B(\mu)}\) introduced in the definition~\eqref{eq:def-k-vacuum-sec9} satisfies \(y=2\zeta/(\gamma-1)\), including at zero frequency. We use this scale to connect the vacuum estimates to the global energy estimates, with constants independent of \(\xi\).

\subsection{From vacuum to positive density}
\label{section-6-3}

\subsubsection{The uniform transition scale}

Our aim in this section is the compact-density bound~\eqref{eq:section-7-uniform-canonical-bound} in Theorem~\ref{theo:section-7-fourier-branches}. Relying on the assumed normalization at the vacuum, we now bound the weak Fourier branch at any fixed positive density, with constants \emph{independent of frequency}. We introduce a \emph{transition scale} \(\mu_\xi\), with \((1+\xi^2)B(\mu_\xi)\simeq1\), which separates the interval where the Volterra argument is applied from the interval where a \emph{positive energy} for the ordinary differential equation suffices for the bounds of interest. 
We prove that the intervening interval has uniformly bounded length in logarithmic density, and we then propagate the polynomial frequency bounds over compact density intervals. 

\begin{lemma}[Logarithmic control of the vacuum coefficient]
\label{lem:section-7-logarithmic-vacuum-control}
There exist a {vacuum threshold} \(\mu_{\rm vac}\in(0,\mu_0]\)\footnote{The subscript \({\rm vac}\) refers to vacuum: this threshold may be smaller than \(\mu_0\) and is chosen independently of the Fourier frequency \(\xi\).} and positive constants \(c_{\rm vac},C_{\rm vac}\) such that
\bel{eq:section-7-logarithmic-vacuum-control}
c_{\rm vac}\mu^{\gamma-1}\le B(\mu)\le C_{\rm vac}\mu^{\gamma-1}, \qquad \frac{\gamma-1}{2}\le\frac{\mu B_\mu(\mu)}{B(\mu)}\le\frac{3(\gamma-1)}2 \qquad (0<\mu\le\mu_{\rm vac}).
\ee

\end{lemma}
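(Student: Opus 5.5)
The plan is to read off both inequalities directly from the vacuum assumption~\eqref{eq:intro-vacuum-B}--\eqref{eq:intro-vacuum-r}. By the definition~\eqref{eq:def-B-sec9}, $B(\mu)=\eps^2\kappa_0\mu^{\gamma-1}(1+r_-(\mu))$ for $0<\mu\le\mu_0$.

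\textbf{The two-sided power bound.} Since $r_-(\mu)\to0$ as $\mu\to0$, we choose $\mu_{\rm vac}\le\mu_0$ so small that $|r_-|\le1/2$ on $(0,\mu_{\rm vac}]$. This gives the first pair of inequalities with $c_{\rm vac}=\eps^2\kappa_0/2$ and $C_{\rm vac}=3\eps^2\kappa_0/2$.

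\textbf{The logarithmic derivative.} Differentiating the same formula gives
\[
\frac{\mu B_\mu}{B}=(\gamma-1)+\frac{\mu r_-'(\mu)}{1+r_-(\mu)}.
\]
It therefore suffices to show $\mu r_-'(\mu)\to0$ as $\mu\to0$. After shrinking $\mu_{\rm vac}$ further, we then have $|\mu r_-'|/(1+r_-)\le(\gamma-1)/2$, which is exactly the stated window $[(\gamma-1)/2,\,3(\gamma-1)/2]$.

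\textbf{Main step: $\mu r_-'(\mu)\to0$.} This is the only nontrivial point, and it uses the integrability condition~\eqref{eq:intro-vacuum-r}. Write
\[
(\mu r_-')'=r_-'+\mu r_-'',
\]
which is integrable on $(0,\mu_0)$ by~\eqref{eq:intro-vacuum-r}. Hence $\mu r_-'(\mu)$ has a finite limit $L$ as $\mu\to0$.

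Suppose $L\neq0$. Then $|r_-'|\ge|L|/(2\mu)$ near zero, which is not integrable at the origin. This contradicts $r_-'\in L^1$, so $L=0$.

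This is the same weighted first-derivative estimate that Appendix~\ref{appendix-pressure-endpoints} records in~\eqref{eq:weighted-remainder-first-derivative}, and one could cite it instead. Finally, the regularity $p\in C^3$ on $(0,+\infty)$ ensures that $r_-$ is $C^2$ at positive density, so the differentiation above is legitimate.
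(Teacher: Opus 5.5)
Your proposal is correct and follows the paper's proof: it uses the same identity $\mu B_\mu/B=(\gamma-1)+\mu r_-'/(1+r_-)$, and it obtains $\mu r_-'\to0$ exactly as in the proof of Lemma~\ref{prop:olver-vacuum-pressure}, where integrability of $(\mu r_-')'=r_-'+\mu r_-''$ yields a limit that must vanish because $r_-'\in L^1$. The only difference is that you state the choice $|r_-|\le 1/2$ for the two-sided power bound explicitly, whereas the paper leaves it implicit.
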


\begin{proof}
The identity \(B(\mu) = \eps^{2}\kappa_0\mu^{\gamma-1}(1+r_- (\mu))\) and the proof of   Lemma~\ref*{prop:olver-vacuum-pressure} show that \(r_- (\mu)\to0\) and \(\mu r_-'(\mu)\to0\) as \(\mu\to 0\). Hence we obtain
\bel{eq:section-7-logarithmic-derivative-B}
\frac{\mu B_\mu(\mu)}{B(\mu)} \, = (\gamma-1)+ \frac{\mu r_-'(\mu)}{1+r_- (\mu)}.
\ee
After decreasing \(\mu_{\rm vac}\), the second term has modulus at most \((\gamma-1)/2\); the two conclusions follow at once.

\end{proof}

\begin{lemma}[Frequency-dependent transition scale]
\label{lem:section-7-transition-density}
Choose \(0<\beta_\gamma\leq\min\{1/16,(\gamma-1)/8\}\). The constants below may depend on this choice. The lower level \(\beta_\gamma\) makes the Volterra integral small; the upper level \(1/2\) ensures positivity of the Fourier energy. There exist \(\Xi\ge1\) and constants \(c,C>0\) such that, for \(|\xi|\ge\Xi\), there is a unique \emph{transition scale} \(\mu_\xi\in(0,\mu_{\rm vac})\) satisfying
\bse
\bel{eq:section-7-transition-density}
(1+ \xi^2)B(\mu_\xi) = \beta_\gamma.
\ee
Moreover, one has
\bel{eq:section-7-transition-density-comparison}
c(1+ \xi^2)^{-1/(\gamma-1)}\le\mu_\xi\le C(1+ \xi^2)^{-1/(\gamma-1)}.
\ee
The upper endpoint \(\mu_\xi^+\) of the transition interval is chosen so that \(V(\log\mu_\xi^+,\xi)=1/4\). If \(\mu_\xi^+ \) is defined by \((1+ \xi^2)B(\mu_\xi^+) =1/2\), then
\bel{eq:section-7-transition-layer-width}
0\le\log\frac{\mu_\xi^+}{\mu_\xi}\le\frac2{\gamma-1}\log\frac1{2\beta_\gamma}.
\ee
\ese

\end{lemma}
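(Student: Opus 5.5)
The whole argument runs on Lemma~\ref{lem:section-7-logarithmic-vacuum-control}, which says that on $(0,\mu_{\rm vac}]$ the coefficient $B$ is comparable to $\mu^{\gamma-1}$ and that its logarithmic derivative satisfies $\mu B_\mu/B\in[(\gamma-1)/2,3(\gamma-1)/2]$. In particular $B$ is strictly increasing on $(0,\mu_{\rm vac}]$, with $B(\mu)\to0$ as $\mu\to0$.

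\textbf{Existence and uniqueness of $\mu_\xi$.} The map $\mu\mapsto(1+\xi^2)B(\mu)$ is continuous and strictly increasing on $(0,\mu_{\rm vac}]$, tends to $0$ at vacuum, and takes the value $(1+\xi^2)B(\mu_{\rm vac})$ at the right endpoint. Choose $\Xi\ge1$ so large that $(1+\Xi^2)B(\mu_{\rm vac})>1/2$. Then for $|\xi|\ge\Xi$ the intermediate value theorem and strict monotonicity give a unique $\mu_\xi\in(0,\mu_{\rm vac})$ with $(1+\xi^2)B(\mu_\xi)=\beta_\gamma$. Since $1/2>\beta_\gamma$, they also give a unique $\mu_\xi^+\in(\mu_\xi,\mu_{\rm vac})$ with $(1+\xi^2)B(\mu_\xi^+)=1/2$, which is the same as $V(\log\mu_\xi^+,\xi)=1/4$.

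\textbf{The comparison bounds.} Insert $c_{\rm vac}\mu_\xi^{\gamma-1}\le B(\mu_\xi)=\beta_\gamma/(1+\xi^2)\le C_{\rm vac}\mu_\xi^{\gamma-1}$ and solve for $\mu_\xi$. This gives the bound \eqref{eq:section-7-transition-density-comparison} with $c=(\beta_\gamma/C_{\rm vac})^{1/(\gamma-1)}$ and $C=(\beta_\gamma/c_{\rm vac})^{1/(\gamma-1)}$.

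\textbf{The layer width.} Integrate the lower logarithmic-derivative bound from $\mu_\xi$ to $\mu_\xi^+$, both of which lie in $(0,\mu_{\rm vac}]$:
\[
\log\frac{B(\mu_\xi^+)}{B(\mu_\xi)}\ \ge\ \frac{\gamma-1}{2}\log\frac{\mu_\xi^+}{\mu_\xi}.
\]
The left-hand side equals $\log\bigl(1/(2\beta_\gamma)\bigr)$, which gives the upper bound in \eqref{eq:section-7-transition-layer-width}. Non-negativity holds because $\mu_\xi^+>\mu_\xi$.

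The only delicate point is to make sure $\mu_\xi^+$ also stays below $\mu_{\rm vac}$, since otherwise the logarithmic-derivative bound could not be integrated across the layer. The choice of $\Xi$ above takes care of this, so I expect no real obstacle.
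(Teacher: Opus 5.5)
Your proposal is correct and follows essentially the same argument as the paper. Both use Lemma~\ref{lem:section-7-logarithmic-vacuum-control} for strict monotonicity of $B$, choose $\Xi$ with $(1+\Xi^2)B(\mu_{\rm vac})>1/2$ so that both transition densities lie in $(0,\mu_{\rm vac})$, use the two-sided bound $c_{\rm vac}\mu^{\gamma-1}\le B\le C_{\rm vac}\mu^{\gamma-1}$ for the comparison, and integrate the lower logarithmic-derivative bound from $\mu_\xi$ to $\mu_\xi^+$ for the layer width.
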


\begin{proof}
Lemma~\ref{lem:section-7-logarithmic-vacuum-control} implies that \(B\) is strictly increasing on \((0,\mu_{\rm vac}]\). We choose \(\Xi\geq1\) such that \((1+ \Xi^2)B(\mu_{\rm vac})>1/2\). 
Both transition densities then exist uniquely in \((0,\mu_{\rm vac})\). Their comparison follows from the first estimate in \eqref{eq:section-7-logarithmic-vacuum-control}. The lower bound for the logarithmic derivative, integrated from \(\mu_\xi\) to \(\mu_\xi^+ \), yields the equation~\eqref{eq:section-7-transition-layer-width}.
\end{proof}

\begin{lemma}[Continuous dependence on the Fourier parameter]
\label{lem:ode-parameter-dependence}
Let \(I\subset\Rbb\) be compact and let \(K\subset\Rbb\) be a compact frequency set. If the  initial data of a solution of the equation~\eqref{eq:X-canonical} are continuous in \(\xi\in K\) at one point of \(I\), then the solution and its first \(z\)-derivative depend continuously on \((z,\xi)\in I\times K\).
\end{lemma}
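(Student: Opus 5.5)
This is the standard continuous dependence result for linear ODEs with parameters, applied to the first-order system for $Y\coloneqq(X,\del_zX)$:
\[
\del_zY=M(z,\xi)Y,\qquad M(z,\xi)\coloneqq\begin{pmatrix}0&1\\-V(z,\xi)&0\end{pmatrix}.
\]
Here $V(z,\xi)=(1+\xi^2)B(e^z)-\tfrac14$, with $B$ as in~\eqref{eq:def-B-sec9}.

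\textbf{Step 1: the coefficient.} Since $p\in C^3((0,+\infty))$, the function $B$ is continuous on $(0,+\infty)$, so $V$ is continuous on $I\times K$. Because $I\times K$ is compact, we have $L\coloneqq\sup_{I\times K}\|M\|<+\infty$. For fixed $\xi$, existence and uniqueness on all of $I$ follow from linearity, by Picard iteration on the integral equation
\[
Y(z,\xi)=Y(z_0,\xi)+\int_{z_0}^zM(r,\xi)Y(r,\xi)\,dr,
\]
where $z_0\in I$ is the point at which the data are given.

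\textbf{Step 2: a uniform bound.} Grönwall's lemma applied to the integral equation gives
\[
|Y(z,\xi)|\le|Y(z_0,\xi)|\,e^{L|I|}.
\]
The data $Y(z_0,\cdot)$ are continuous on the compact set $K$, hence bounded. So $Y$ is bounded uniformly on $I\times K$ by some constant $C$.

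\textbf{Step 3: continuity in $\xi$.} For $\xi,\xi'\in K$, subtract the two integral equations:
\[
Y(z,\xi)-Y(z,\xi')=Y(z_0,\xi)-Y(z_0,\xi')+\int_{z_0}^z\bigl(M(r,\xi)-M(r,\xi')\bigr)Y(r,\xi)\,dr+\int_{z_0}^zM(r,\xi')\bigl(Y(r,\xi)-Y(r,\xi')\bigr)\,dr.
\]
Apply Grönwall once more. The difference is bounded, uniformly in $z\in I$, by
\[
e^{L|I|}\Bigl(|Y(z_0,\xi)-Y(z_0,\xi')|+C|I|\sup_{r\in I}|M(r,\xi)-M(r,\xi')|\Bigr).
\]
The first term tends to zero as $\xi'\to\xi$ because the data are continuous. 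The second tends to zero because $V$ is uniformly continuous on the compact set $I\times K$. Therefore $\xi\mapsto Y(\cdot,\xi)$ is continuous into $C(I)$ with the sup norm.

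\textbf{Step 4: joint continuity.} Write
\[
|Y(z,\xi)-Y(z',\xi')|\le|Y(z,\xi)-Y(z,\xi')|+|Y(z,\xi')-Y(z',\xi')|.
\]
The first term is controlled by Step 3, uniformly in $z$. For the second, the integral equation gives $|Y(z,\xi')-Y(z',\xi')|\le LC|z-z'|$, which is equicontinuity in $z$ uniformly in $\xi'$. Together these give continuity of $(X,\del_zX)$ on $I\times K$.

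There is no real obstacle. The only point to check is that $I$ is a compact subset of $\Rbb$ in the variable $z=\log\mu$. It therefore stays away from the vacuum $z\to-\infty$, where $B(e^z)$ degenerates. The singular behaviour there is handled separately by Proposition~\ref{thm:olver-vacuum}.
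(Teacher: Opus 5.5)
Your proof is correct and follows the same route as the paper. The paper also writes \eqref{eq:X-canonical} as a first-order system whose coefficient matrix is continuous and bounded on $I\times K$, then concludes from the integral formulation and Gronwall's lemma; you supply the details it compresses into one sentence, namely the uniform bound, the difference estimate, and equicontinuity in $z$.
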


\begin{proof}
We write the equation~\eqref{eq:X-canonical} as a first-order system. Its coefficient matrix is continuous and bounded on \(I\times K\); the conclusion follows from the integral formulation and Gronwall's lemma.
\end{proof}

\subsubsection{Uniform Fourier bounds at a fixed small density}

\begin{lemma}[Uniform Fourier bounds at the fixed density \(\mu_{\rm vac}\)]
\label{prop:section-7-uniform-vacuum-to-interior}
Let \(z_{\rm vac} \coloneqq \log\mu_{\rm vac}\), fixed independently of \(\xi\). There exists a constant \(C_{\rm vac}>0\) such that the weak branch satisfies
\bel{eq:section-7-uniform-vacuum-to-interior}
|X^{\rm weak}(z_{\rm vac},\xi)|+|\del_zX^{\rm weak}(z_{\rm vac},\xi)|\le C_{\rm vac}(1+|\xi|) \qquad\text{for all }\xi\in\Rbb.
\ee

\end{lemma}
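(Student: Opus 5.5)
The plan is to split into bounded and large frequencies. For $|\xi|\le\Xi$, with $\Xi$ from Lemma~\ref{lem:section-7-transition-density}, we use compactness. For $|\xi|\ge\Xi$, we follow the weak branch from vacuum to $z_{\rm vac}$ in three stages, separated by the transition scales $\mu_\xi<\mu_\xi^+<\mu_{\rm vac}$ of Lemma~\ref{lem:section-7-transition-density}: a Volterra stage, a bounded layer, and an energy stage.

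\emph{Bounded frequencies.} Proposition~\ref{thm:olver-vacuum} has locally uniform error constants in $\xi$. So for a small fixed density $a>0$, chosen uniformly for $|\xi|\le\Xi$, the data $(X^{\rm weak},\del_zX^{\rm weak})$ at $\log a$ are bounded and depend continuously on $\xi$. Lemma~\ref{lem:ode-parameter-dependence} on $[\log a,z_{\rm vac}]\times[-\Xi,\Xi]$ then gives a uniform bound at $z_{\rm vac}$.

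\emph{Volterra stage, $|\xi|\ge\Xi$.} On $(0,\mu_\xi]$ we apply the estimate~\eqref{eq:vacuum-olver-error-sec9}. By Lemma~\ref{lem:section-7-logarithmic-vacuum-control}, $B(r)\le B(\mu_\xi)(r/\mu_\xi)^{(\gamma-1)/2}$ for $r\le\mu_\xi$. Integrating,
\[
H_\xi(\mu_\xi)=(1+\xi^2)\int_0^{\mu_\xi}B(r)\,\frac{dr}{r}\le\frac{2\beta_\gamma}{\gamma-1}\le\frac14 .
\]
Hence $y_{\rm weak}/\mu$ and $y_{\rm weak}'$ stay within a fixed constant of $1$. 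Translating through $X=\mu^{-1/2}y$ gives $|X|+|\del_zX|\le C\mu_\xi^{1/2}\le C$ at $z_\xi\coloneqq\log\mu_\xi$.

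\emph{Transition layer.} On $[z_\xi,z_\xi^+]$, where $z_\xi^+\coloneqq\log\mu_\xi^+$, the potential satisfies $-1/4\le V\le 1/4$. The length $z_\xi^+-z_\xi$ is bounded independently of $\xi$ by~\eqref{eq:section-7-transition-layer-width}. Gronwall's inequality for the first-order system therefore keeps $|X|+|\del_zX|\le C$ at $z_\xi^+$.

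\emph{Energy stage.} On $[z_\xi^+,z_{\rm vac}]$ we have $V\ge1/4$, and $V$ is nondecreasing in $z$ because $B$ is increasing on $(0,\mu_{\rm vac}]$ by Lemma~\ref{lem:section-7-logarithmic-vacuum-control}. We use the normalized energy $\mathcal E\coloneqq X^2+(\del_zX)^2/V$. It satisfies
\[
\del_z\mathcal E=-\frac{\del_zV}{V^2}(\del_zX)^2\le0 .
\]
So $X^2\le\mathcal E(z_\xi^+)\le C$ and $(\del_zX)^2\le V(z_{\rm vac},\xi)\,\mathcal E(z_\xi^+)\le C(1+\xi^2)$ at $z_{\rm vac}$, which is the bound~\eqref{eq:section-7-uniform-vacuum-to-interior}.

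\emph{Main obstacle.} The delicate step is keeping every constant independent of $\xi$ at the two junctions. In the Volterra stage, the smallness of $H_\xi(\mu_\xi)$ must come only from $\beta_\gamma$ and the logarithmic-derivative bound, not from $\xi$. In the layer, we need the $\xi$-independent width~\eqref{eq:section-7-transition-layer-width}. Monotonicity of $V$, which the energy argument needs, holds only below $\mu_{\rm vac}$; this is why the statement is made at $z_{\rm vac}$.
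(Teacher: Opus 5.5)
Your proposal is correct and follows the paper's proof essentially step by step. It uses the same split at $\Xi$, a Volterra estimate up to $\mu_\xi$, Gronwall across the layer $[\mu_\xi,\mu_\xi^+]$ of $\xi$-independent logarithmic width, and monotonicity of $E/V$ (your $\mathcal E$) on $[\mu_\xi^+,\mu_{\rm vac}]$.

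The differences are cosmetic. You invoke the explicit bound $2(e^{H_\xi}-1)$ of Proposition~\ref{thm:olver-vacuum} with $H_\xi(\mu_\xi)\le 2\beta_\gamma/(\gamma-1)\le 1/4$, where the paper uses a rescaled Volterra equation for $Y_\xi$. You also discard the factor $\mu_\xi^{1/2}$; the paper keeps it for reuse in Corollary~\ref{cor:uniform-vacuum-traces}, but the lemma as stated does not need it.
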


\begin{proof}
\bse
\noindent{\bf 1.} We first consider \(|\xi|\ge\Xi\) and use the transition scale of Lemma~\ref{lem:section-7-transition-density}. We define
\bel{eq:section-7-rescaled-weak-branch}
Y_\xi(r) \coloneqq \mu_\xi^{-1}\widehat{\alpha_{\chi}^{\rm weak}}(\mu_\xi r,\xi), \qquad 0\le r\le1.
\ee
The vacuum normalization gives \(Y_\xi(0) =0\) and \(\del_r Y_\xi(0) =1\). Moreover, \(Y_\xi\) solves
\bel{eq:section-7-rescaled-weak-equation}
\del_r^2Y_\xi+a_\xi(r)Y_\xi=0, \qquad a_\xi(r) \coloneqq (1+ \xi^2)\frac{B(\mu_\xi r)}{r^2}.
\ee
The lower logarithmic bound in \eqref{eq:section-7-logarithmic-vacuum-control} gives \(B(\mu_\xi r)/B(\mu_\xi)\le r^{(\gamma-1)/2}\). Hence we obtain
\bel{eq:section-7-rescaled-coefficient-bound}
0\le a_\xi(r)\le\beta_\gamma r^{(\gamma-1)/2-2} \qquad (0<r\le1).
\ee
\ese
\bse
The Volterra formula is
\bel{eq:section-7-rescaled-volterra}
Y_\xi(r) = r-\int_0^r(r-t)a_\xi(t)Y_\xi(t)\,dt.
\ee
If \(M_\xi \coloneqq \sup_{0<r\le1}r^{-1}|Y_\xi(r)|\), then \eqref{eq:section-7-rescaled-coefficient-bound} and \eqref{eq:section-7-rescaled-volterra} imply
\bel{eq:section-7-rescaled-Volterra-bound}
M_\xi\le1+ \frac{2\beta_\gamma}{\gamma-1}M_\xi\le1+ \frac14M_\xi.
\ee
Thus \(M_\xi\le2\). Differentiating the integral equation~\eqref{eq:section-7-rescaled-volterra} gives \(\sup_{[0,1]}|\del_r Y_\xi|\le2\). Since \(\widehat{\alpha_{\chi}}=e^{z/2}X\), these estimates yield
\bel{eq:section-7-transition-Cauchy-bound}
|X^{\rm weak}(\log\mu_\xi,\xi)|+|\del_zX^{\rm weak}(\log\mu_\xi,\xi)|\le C\mu_\xi^{1/2}.
\ee
On the transition interval \([\log\mu_\xi,\log\mu_\xi^+]\), the potential \(V\) and its \(z\)-derivative are bounded independently of \(\xi\), by Lemma~\ref{lem:section-7-logarithmic-vacuum-control} and \eqref{eq:section-7-transition-layer-width}. The first-order system associated with \eqref{eq:X-canonical} and Gronwall's lemma therefore give the same estimate as \eqref{eq:section-7-transition-Cauchy-bound} at \(z= \log\mu_\xi^+ \).

\ese

\bse
\noindent{\bf 2.} For \(\log\mu_\xi^+ \le z\le z_{\rm vac}\), we have \(V(z,\xi)\ge1/4\).  We introduce
\bel{eq:section-7-vacuum-transfer-energy}
E(z,\xi) \coloneqq |\del_zX^{\rm weak}(z,\xi)|^2+V(z,\xi)|X^{\rm weak}(z,\xi)|^2.
\ee
The equation gives \(\del_zE=(\del_zV)|X^{\rm weak}|^2\). We use the sign of \(\del_zV\), which is positive by the equation~\eqref{eq:section-7-logarithmic-vacuum-control}. Differentiating the weighted energy gives
\be
\label{eq:section-7-vacuum-transfer-weighted-monotonicity}
\del_z\bigl(\frac{E}{V}\bigr)=-\frac{\del_zV}{V^2}|\del_zX^{\rm weak}|^2\leq0.
\ee
{At \(z=\log\mu_\xi^+\), we have \(V=1/4\) and \(E/V\leq C\mu_\xi\) by the transition estimate. Therefore, throughout \([\log\mu_\xi^+,z_{\rm vac}]\), we obtain}
\be
\label{eq:section-7-vacuum-transfer-pointwise-bound}
|X^{\rm weak}(z,\xi)|\leq C\sqrt{\mu_\xi},\qquad |\del_zX^{\rm weak}(z,\xi)|\leq C\sqrt{\mu_\xi V(z,\xi)}.
\ee
{In particular, at the fixed density \(\mu_{\rm vac}\), we have}
\bel{eq:section-7-vacuum-transfer-energy-growth}
E(z_{\rm vac},\xi)\leq C\mu_\xi V(z_{\rm vac},\xi)\leq C\mu_\xi(1+\xi^2).
\ee
{Here \(V(z_{\rm vac},\xi)\leq B(\mu_{\rm vac})(1+\xi^2)\) and \(\mu_\xi\leq\mu_{\rm vac}\). Thus~the estimate~\eqref{eq:section-7-vacuum-transfer-pointwise-bound} proves the equation~\eqref{eq:section-7-uniform-vacuum-to-interior} for \(|\xi|\geq\Xi\).} 

\noindent{{\bf 3.}} For \(|\xi|\le\Xi\), we choose \(\mu_*\in(0,\mu_{\rm vac})\) so small that the rescaled Volterra argument above, with \(\mu_\xi\) replaced by \(\mu_*\), has coefficient at most \((\gamma-1)/8\). It gives initial data bounded uniformly on the compact frequency interval at \(\mu=\mu_*\). The first-order formulation on \([\log\mu_*,z_{\rm vac}]\) then proves the same conclusion.
\ese
\end{proof}

\subsubsection{Estimates on compact density intervals}

\begin{corollary}[Uniform vacuum limits]
\label{cor:uniform-vacuum-traces}
For the weak Fourier kernel, one has
\bel{eq:uniform-vacuum-traces}
|\widehat{\alpha_{\chi}^{\rm weak}}(\mu,\xi)|\leq C\mu, \qquad |\del_\mu\widehat{\alpha_{\chi}^{\rm weak}}(\mu,\xi)|\leq C(1+|\xi|), \qquad 0<\mu\leq\mu_{\rm vac},\quad \xi\in\Rbb.
\ee
{Moreover, one has} \(\widehat{\alpha_{\chi}^{\rm weak}}(\mu,\cdot)/\mu\to1\) and \(\del_\mu\widehat{\alpha_{\chi}^{\rm weak}}(\mu,\cdot)\to1\)  in the space of tempered distributions \(\mathcal S'(\Rbb)\) as \(\mu\to 0\). 
One has
\be
\label{eq:joint-vacuum-frequency-trace}
\bigl|\frac{\widehat{\alpha_{\chi}^{\rm weak}}(\mu,\xi)}{\mu}-1\bigr| + \bigl|\del_\mu\widehat{\alpha_{\chi}^{\rm weak}}(\mu,\xi) -1\bigr| \leq C (1 + \xi^2) k(\mu)^2, \qquad (1 + \xi^2)^{1/2} k(\mu)\leq1,\quad 0<\mu\leq\mu_{\rm vac}.
\ee
Thus the two  limits converge uniformly on frequency ranges for which \((1 + \xi^2)^{1/2} k(\mu)\to0\).
\end{corollary}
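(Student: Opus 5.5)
The plan is to prove Corollary~\ref{cor:uniform-vacuum-traces} in three steps: a frequency-uniform bound for $|\xi|\ge\Xi$ via the rescaled Volterra argument, a compactness argument for $|\xi|\le\Xi$, and a joint small-parameter estimate for the vacuum traces.

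\textbf{Step 1: the two uniform bounds for $|\xi|\ge\Xi$.} I reuse the rescaled Volterra construction from the proof of Lemma~\ref{prop:section-7-uniform-vacuum-to-interior}.

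On $0<\mu\le\mu_\xi$, the bound $M_\xi\le2$ gives $|\widehat{\alpha_{\chi}^{\rm weak}}(\mu,\xi)|\le2\mu$ directly. It also gives $|\del_\mu\widehat{\alpha_{\chi}^{\rm weak}}|=|\del_rY_\xi|\le2$.

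On $[\mu_\xi,\mu_{\rm vac}]$, I return to $\widehat{\alpha_\chi}=e^{z/2}X$.
\begin{itemize}
\item The Gronwall step across the transition layer, followed by the monotone weighted energy $E/V$, gives $|X|\le C\sqrt{\mu_\xi}$ and $|\del_zX|\le C\sqrt{\mu_\xi V}$.
\item Hence $|\widehat{\alpha_\chi}|\le C\sqrt{\mu\mu_\xi}\le C\mu$, since $\mu\ge\mu_\xi$.
\item For the derivative, $\del_\mu\widehat{\alpha_\chi}=\mu^{-1/2}(\tfrac12X+\del_zX)$, so
\[
|\del_\mu\widehat{\alpha_\chi}|\le C\sqrt{\mu_\xi/\mu}\bigl(1+\sqrt{V}\bigr).
\]
\item Using $V\le(1+\xi^2)C_{\rm vac}\mu^{\gamma-1}$ and $\mu_\xi/\mu\le1$, this is at most $C(1+|\xi|)$, because $\mu^{\gamma-1}\le\mu_{\rm vac}^{\gamma-1}$ is bounded.
\end{itemize}

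\textbf{Step 2: the range $|\xi|\le\Xi$.} This case follows from the same Volterra argument with $\mu_*$ in place of $\mu_\xi$, together with Lemma~\ref{lem:ode-parameter-dependence}. That lemma gives bounds on $[\mu_*,\mu_{\rm vac}]$ that are uniform on the compact frequency set. Since $\mu\ge\mu_*$ there, the bound $|\widehat{\alpha_\chi}|\le C\le C\mu/\mu_*$ gives the linear-in-$\mu$ estimate.

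\textbf{Step 3: the joint trace estimate \eqref{eq:joint-vacuum-frequency-trace}.} I use the Volterra bound \eqref{eq:vacuum-olver-error-sec9}, which bounds both errors by $2(e^{H_\xi(\mu)}-1)$.

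The key comparison is between $H_\xi$ and $k^2$.
\begin{itemize}
\item Near vacuum, Lemma~\ref{lem:section-7-logarithmic-vacuum-control} gives $\int_0^\mu B(r)\,dr/r\le C B(\mu)$.
\item Also $B(\mu)=\mu^2c^2/h^2\le C k(\mu)^2$, since $k(\mu)\sim 2c/(\gamma-1)$ and $\mu/h\le1$.
\item Hence $H_\xi(\mu)\le C(1+\xi^2)k(\mu)^2\le C$ under the hypothesis $(1+\xi^2)^{1/2}k\le1$.
\item Then $e^{H}-1\le C'H$, which yields the stated estimate.
\end{itemize}
The main point to check is that the Neumann-series bound in Proposition~\ref{thm:olver-vacuum} holds without a smallness restriction on the interval. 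This is true because the iterates are bounded by $H^j/j!$ and the constant there is uniform in $\xi$.

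\textbf{Tempered convergence.} Pair the two differences against a Schwartz function $\phi$ and split the frequencies at $|\xi|\le R$.
\begin{itemize}
\item On $|\xi|\le R$, the joint trace estimate gives uniform convergence to zero as $\mu\to0$, since $k(\mu)\to0$.
\item On the tail $|\xi|>R$, the uniform bounds $|\widehat{\alpha_\chi}|/\mu\le C$ and $|\del_\mu\widehat{\alpha_\chi}|\le C(1+|\xi|)$ from Steps 1--2 make the contribution small once $R$ is large, by the decay of $\phi$.
\end{itemize}
Dominated convergence therefore gives convergence in $\mathcal S'(\Rbb)$.

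I expect the only delicate step to be the derivative bound on the transition and energy region in Step 1. There one must verify that the factor $\sqrt{\mu_\xi V/\mu}$ is controlled by $1+|\xi|$ uniformly in $\mu$, rather than only at the endpoint $\mu=\mu_{\rm vac}$.
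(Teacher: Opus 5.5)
Your proposal is correct and follows the paper's proof essentially step by step. For $|\xi|\ge\Xi$ it uses the same ingredients: the rescaled Volterra bound below $\mu_\xi$, Gronwall across the transition layer, and then the monotone weighted energy $E/V$ above $\mu_\xi^+$ together with $\mu_\xi\le\mu$. It treats bounded frequencies by Volterra plus ODE propagation, proves the joint estimate through $H_\xi(\mu)\le C(1+\xi^2)k(\mu)^2$, and obtains the $\mathcal S'$ limits by dominated convergence with the polynomial majorants. One cosmetic correction: on the transition layer $V$ can be negative, so the bound there is $|\del_zX|\le C\sqrt{\mu_\xi}$ rather than $C\sqrt{\mu_\xi V}$. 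Your factor $1+\sqrt{V}$ still covers this case if $V$ is read as its positive part.
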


\begin{proof} 
For $|\xi|\geq\Xi$, the proof of Lemma~\ref{prop:section-7-uniform-vacuum-to-interior} gives $|\widehat{\alpha_{\chi}}|\leq C\mu$ and $|\del_\mu\widehat{\alpha_{\chi}}|\leq C$ on $0<\mu\leq\mu_\xi$ by its rescaled Volterra estimate. On $[\mu_\xi,\mu_\xi^+]$, it gives $|X|+|X_z|\leq C\sqrt{\mu_\xi}$; on $[\mu_\xi^+,\mu_{\rm vac}]$, estimate~\eqref{eq:section-7-vacuum-transfer-pointwise-bound} gives $|X|\leq C\sqrt{\mu_\xi}$ and $|X_z|\leq C (1 + \xi^2)^{1/2} \sqrt{\mu_\xi}$. The identities $\widehat{\alpha_{\chi}}=\sqrt\mu X$ and $\del_\mu\widehat{\alpha_{\chi}}=\mu^{-1/2}(X_z+X/2)$, with $\mu_\xi\leq\mu$ in the latter two intervals, prove the equation~\eqref{eq:uniform-vacuum-traces}. For bounded frequencies, the vacuum Volterra estimate~\eqref{eq:vacuum-olver-error-sec9} gives these bounds near zero, and ODE propagation covers the remaining compact interval.

For the joint estimate, the quantity $H_\xi$ in the equation~\eqref{eq:vacuum-olver-error-sec9} satisfies $H_\xi(\mu)\leq C (1 + \xi^2) k(\mu)^2$, since $B(\mu)\simeq\mu^{\gamma-1}$ and $k(\mu)^2\simeq\mu^{\gamma-1}$. When $(1 + \xi^2)^{1/2}  k(\mu)\leq1$, the same estimate therefore gives the equation~\eqref{eq:joint-vacuum-frequency-trace}, using $e^{H_\xi}-1\leq C (1 + \xi^2) k(\mu)^2$.
Moreover, at each fixed frequency, the Volterra estimate gives convergence of $\widehat{\alpha_{\chi}}/\mu$ and $\del_\mu\widehat{\alpha_{\chi}}$ to one. The polynomial majorants~\eqref{eq:uniform-vacuum-traces}, independent of $\mu$, permit dominated convergence against Schwartz functions and prove the convergence in $\mathcal S'(\Rbb)$.
\end{proof}

On a compact positive-density interval, the energy controls \(|X_z|\) and \((1 + \xi^2)^{1/2} |X|\); the initial bound below uses no frequency weight and therefore introduces a frequency factor. The weighted energy~\eqref{eq:section-7-vacuum-transfer-weighted-monotonicity} avoids that loss near vacuum. These bounds suffice for Fourier inversion in \(\mathcal S'(\Rbb)\); Section~\ref{section-7-5}, below, gives \emph{sharper decay} through Bessel and Liouville--Green estimates.

\begin{lemma}[Extension of polynomial frequency bounds to compact density intervals]
\label{lem:uniform-propagation-weak-branch}
Let \(I\subset\Rbb\) be compact and let \(z_0\in I\). Assume that
\bel{eq:section-7-polynomial-Cauchy-data}
|X(z_0,\xi)|+|\del_zX(z_0,\xi)|\le C_0(1+|\xi|)^{N_0} \qquad\text{for all }\xi\in\Rbb.
\ee
Then there exists a constant \(C_I>0\) such that
\bel{eq:section-7-polynomial-compact-propagation}
\sup_{z\in I}\Bigl(|X(z,\xi)|+|\del_zX(z,\xi)|\Bigr)\le C_I(1+|\xi|)^{N_0+1}\qquad\text{for all }\xi\in\Rbb.
\ee

\end{lemma}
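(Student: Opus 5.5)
We propose an energy estimate for the first-order system associated with~\eqref{eq:X-canonical}, tuned so that the growth in $\xi$ costs exactly one power. On the compact interval $I$ the coefficient $B(e^z)$ is continuous and positive. There are therefore constants $0<b_-\le b_+$, depending only on $I$, such that $b_-\le B(e^z)\le b_+$ for all $z\in I$. Likewise $|\del_zB(e^z)|\le b_1$, since $p\in C^3((0,+\infty))$ gives $B\in C^1$ at positive density. The rough choice, namely Gronwall applied to the unweighted quantity $|X|^2+|X_z|^2$, would give growth $e^{C(1+\xi^2)|I|}$, which is not polynomial. So we must use a weighted energy adapted to the frequency.

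We first treat large frequencies, $|\xi|\ge\Xi_I$, where $\Xi_I$ is chosen so that $V(z,\xi)\ge\frac12(1+\xi^2)b_-$ on $I$. For these $\xi$ the potential $V$ is positive and we set
\[
E(z,\xi)\coloneqq|\del_zX|^2+V(z,\xi)|X|^2 .
\]
The equation gives $\del_zE=(\del_zV)|X|^2$, and $\del_zV=(1+\xi^2)\del_z B(e^z)$. Hence
\[
|\del_zE|\le\frac{(1+\xi^2)b_1}{V}\,V|X|^2\le\frac{2b_1}{b_-}\,E .
\]
The key point is that the ratio $\del_zV/V$ is bounded independently of $\xi$. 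Gronwall on $I$ (forward and backward from $z_0$) then gives
\[
E(z,\xi)\le e^{2b_1|I|/b_-}E(z_0,\xi).
\]
By the hypothesis~\eqref{eq:section-7-polynomial-Cauchy-data} we have $E(z_0,\xi)\le C(1+\xi^2)\,C_0^2(1+|\xi|)^{2N_0}$. Therefore
\[
|X_z|\le CE^{1/2}\le C(1+|\xi|)^{N_0+1}
\qquad\text{and}\qquad
|X|\le V^{-1/2}E^{1/2}\le C(1+|\xi|)^{N_0}.
\]
Summing the two bounds yields~\eqref{eq:section-7-polynomial-compact-propagation} for $|\xi|\ge\Xi_I$, and explains the loss of exactly one power.

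For bounded frequencies, $|\xi|\le\Xi_I$, the coefficient $V$ is bounded on $I\times[-\Xi_I,\Xi_I]$, possibly with either sign. Here we use the plain first-order system and Gronwall, as in Lemma~\ref{lem:ode-parameter-dependence}. This gives $|X|+|X_z|\le e^{C|I|}(|X(z_0)|+|X_z(z_0)|)$ with a constant uniform on this compact frequency set, and the bound is absorbed into $C_I(1+|\xi|)^{N_0+1}$.

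The main point needing care is the uniform control of $\del_zV/V$ on $I$ for large frequencies. It relies on the positivity of $B$ at positive density, which follows from $p'>0$ in~\eqref{hyperbolic-eos-1}, and on $B\in C^1$. The remaining steps are routine.
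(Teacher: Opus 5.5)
Your proof is correct and matches the paper's argument. The paper also uses the weighted energy $E=|\partial_zX|^2+V|X|^2$ with $\partial_zE=(\partial_zV)|X|^2$ and a uniform bound on $|\partial_zV|/V$ for large $|\xi|$ on $I$. It then applies Gronwall in both directions from $z_0$, reads off $|X_z|\lesssim(1+|\xi|)^{N_0+1}$ and $|X|\lesssim(1+|\xi|)^{N_0}$, and handles bounded frequencies by plain Gronwall on the first-order system.
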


\begin{proof}
\bse
We define \(\beta_I\coloneqq\min_{z\in I}B(e^z)>0\). For all sufficiently large \(\xi\), \(V(z,\xi)= (1 + \xi^2) B(e^z) -1/4\geq c_I (1 + \xi^2)\) on \(I\). For \(E\coloneqq|X_z|^2+V|X|^2\), the equation gives \(E_z=V_z|X|^2\). Therefore, we have
\be
|E_z|\leq \frac{|V_z|}{V}E\leq C_I E,
\ee
since \(|V_z|/V\leq C_I\) uniformly for large \(\xi\), by the positive lower bound for \(B\) and boundedness of \(B_z\) on \(I\). Gronwall's inequality, applied in either direction from \(z_0\), yields \(E(z,\xi)\leq C_I E(z_0,\xi)\). The  prescribed initial-data bound then gives \(E(z_0,\xi)^{1/2}\leq C_I (1 + \xi^2)^{(N_0+1)/2}\),  and we obtain
\be
|X_z(z,\xi)|\leq C_I (1 + \xi^2)^{(N_0+1)/2}, \qquad |X(z,\xi)|\leq C_I (1 + \xi^2)^{N_0/2}, \qquad z\in I.
\ee
For bounded \(\xi\), the first-order formulation of the equation~\eqref{eq:X-canonical} has uniformly bounded coefficients on \(I\), so Gronwall gives the same bound. This proves the estimate~\eqref{eq:section-7-polynomial-compact-propagation}.
\ese
\end{proof}

\begin{corollary}[Uniform bounds for the weak Fourier kernel]
\label{cor:section-7-uniform-fourier-kernel}
For every compact interval \(I\subset\Rbb\), the weak branch satisfies the estimate~\eqref{eq:section-7-uniform-canonical-bound}. Consequently, for every compact interval \(K\subset(0,+ \infty)\), the Fourier kernel satisfies the estimate~\eqref{eq:section-7-uniform-fourier-bound}.
\end{corollary}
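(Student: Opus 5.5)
The plan is to combine the fixed-density bound of Lemma~\ref{prop:section-7-uniform-vacuum-to-interior} with the compact propagation of Lemma~\ref{lem:uniform-propagation-weak-branch}, and then to change variables from $z$ back to $\mu$. Let $I\subset\Rbb$ be a compact interval, and enlarge it if necessary so that it contains $z_{\rm vac}=\log\mu_{\rm vac}$; this only weakens the assertion. Lemma~\ref{prop:section-7-uniform-vacuum-to-interior} gives Cauchy data at $z_0=z_{\rm vac}$ bounded by $C_{\rm vac}(1+|\xi|)$, so the hypothesis~\eqref{eq:section-7-polynomial-Cauchy-data} holds with $N_0=1$.

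Applied verbatim, Lemma~\ref{lem:uniform-propagation-weak-branch} would give $(1+|\xi|)^2$, which loses one power. To avoid this loss, we repeat its energy argument with the sharper initial information already obtained. Step~2 of the proof of Lemma~\ref{prop:section-7-uniform-vacuum-to-interior}, specifically~\eqref{eq:section-7-vacuum-transfer-energy-growth}, gives
\be
E(z_{\rm vac},\xi)\leq C(1+\xi^2),
\ee
not merely the square of the Cauchy bound. For large $|\xi|$, the energy $E=|X_z|^2+V|X|^2$ satisfies $|E_z|\leq C_IE$ on $I$, since $V\geq c_I(1+\xi^2)$ there and $|V_z|/V$ is bounded. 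Gronwall's inequality then gives $E\leq C_I(1+\xi^2)$ throughout $I$. Hence $|X_z|\leq C_I(1+|\xi|)$ and $|X|\leq C_I(1+\xi^2)^{1/2}V^{-1/2}\leq C_I$.

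For $|\xi|$ in a bounded set, the first-order system has coefficients bounded on $I$ uniformly in $\xi$, and Gronwall again gives a uniform bound. Together these two cases prove~\eqref{eq:section-7-uniform-canonical-bound}.

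For the Fourier kernel, let $K\subset(0,+\infty)$ be compact and take $I=\log K$. The identities $\widehat{\alpha_{\chi}^{\rm weak}}=\mu^{1/2}X^{\rm weak}$ and
\be
\mu\del_\mu\widehat{\alpha_{\chi}^{\rm weak}}=\mu^{1/2}\bigl(\del_zX^{\rm weak}+\tfrac12X^{\rm weak}\bigr)
\ee
convert the bound into~\eqref{eq:section-7-uniform-fourier-bound}, because $\mu^{1/2}$ is bounded on $K$.

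The main obstacle is the one-power loss in Lemma~\ref{lem:uniform-propagation-weak-branch}. It is overcome by propagating the energy estimate rather than the pointwise Cauchy bound; the remaining steps are routine.
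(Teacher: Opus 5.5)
Your proposal is correct and essentially reproduces the paper's proof. The paper likewise reruns the energy and Gronwall argument of Lemma~\ref{lem:uniform-propagation-weak-branch} on the convex hull of \(I\cup\{z_{\rm vac}\}\), starting from \(E(z_{\rm vac},\xi)\leq C(1+\xi^2)\) in~\eqref{eq:section-7-vacuum-transfer-energy-growth} rather than from the pointwise Cauchy bound (this avoids the one-power loss). It then handles bounded frequencies by continuous dependence and converts to \(\mu\) through \(\widehat{\alpha_{\chi}}=e^{z/2}X\) and \(\mu\del_\mu\widehat{\alpha_{\chi}}=e^{z/2}(\del_zX+X/2)\).
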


\begin{proof}
We use the energy calculation in Lemma~\ref{lem:uniform-propagation-weak-branch} on the convex hull of \(I\cup\{z_{\rm vac}\}\). For large \(|\xi|\), estimate~\eqref{eq:section-7-vacuum-transfer-energy-growth} gives \(E(z_{\rm vac},\xi)\leq C(1+\xi^2)\), and the coefficient \(V\) is bounded above and below by positive multiples of \(1+\xi^2\) on this fixed interval. Gronwall's inequality therefore yields \(|X|\leq C_I\) and \(|\del_zX|\leq C_I(1+|\xi|)\), which prove the estimate~\eqref{eq:section-7-uniform-canonical-bound}. Bounded frequencies follow by continuous dependence. The identities \(\widehat{\alpha_{\chi}}=e^{z/2}X\) and \(\mu\del_\mu\widehat{\alpha_{\chi}}=e^{z/2}(\del_zX+X/2)\) give the estimate~\eqref{eq:section-7-uniform-fourier-bound}. This establishes the compact-density assertion of Theorem~\ref{theo:section-7-fourier-branches}; its vacuum-to-large-density continuation and its asymptotic decompositions will be completed in Section~\ref{section-fourier-continuation}, below.
\end{proof}

\begin{lemma}[Fourier multiplier bounds]
\label{lem:section-7-Fourier-multiplier-bounds}
For every compact interval \(K\Subset(0,+ \infty)\) and every integer \(j\geq0\), there exist constants  \(C_{K,j}>0\) and  \(N_{K,j}\geq0\) such that
\bel{eq:section-7-Fourier-multiplier-bounds}
\sup_{\mu\in K} \Bigl( |\del_\xi^j\widehat{\alpha_{\chi}}(\mu,\xi)|+|\mu\del_\mu\del_\xi^j\widehat{\alpha_{\chi}}(\mu,\xi)| \Bigr) \leq C_{K,j}(1+|\xi|)^{N_{K,j}}
\ee
for all \(\xi\in\Rbb\). Consequently, for every fixed positive density, \(\xi\mapsto\widehat{\alpha_{\chi}}(\mu,\xi)\) is a multiplier of \(\mathcal S'(\Rbb_\xi)\), locally uniformly in \(\mu\).
\end{lemma}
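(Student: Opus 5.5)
We will prove the bound \eqref{eq:section-7-Fourier-multiplier-bounds} by induction on $j$, differentiating the Fourier equation in $\xi$. We work in the canonical variable, $\widehat{\alpha_{\chi}}=e^{z/2}X$. Since $V(z,\xi)=(1+\xi^2)B(e^z)-1/4$, the derivative $X_j\coloneqq\del_\xi^jX$ satisfies the inhomogeneous equation
\be
\del_z^2X_j+VX_j=-\sum_{i=1}^{\min\{j,2\}}\binom{j}{i}(\del_\xi^iV)X_{j-i},
\ee
with $\del_\xi V=2\xi B$, $\del_\xi^2V=2B$, and $\del_\xi^iV=0$ for $i\geq3$. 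The base case $j=0$ is Corollary~\ref{cor:section-7-uniform-fourier-kernel}.

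We first need control of the Cauchy data of $X_j$ at one fixed point $z_{\rm vac}$. We will not differentiate the energy argument of Lemma~\ref{prop:section-7-uniform-vacuum-to-interior}. Instead we differentiate the Volterra formulation \eqref{eq:vacuum-plus-branch-sec9} in $\xi$, which is legitimate because the kernel $(1+\xi^2)B(r)/r$ is a polynomial in $\xi$ with integrable density. This gives $\del_\xi^jy_{\rm weak}(\mu)=\Ocal_j\bigl(\mu(1+|\xi|)^{j}e^{H_\xi(\mu)}\bigr)$ on a small interval. The resulting bounds are polynomial in $\xi$ only where $H_\xi(\mu)\lesssim1$, that is, for $\mu\leq\mu_\xi$. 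So we start at the transition scale. At $\mu_\xi$, the rescaled Volterra equation \eqref{eq:section-7-rescaled-volterra}, differentiated in $\xi$, gives polynomial bounds for $X_j$ and $\del_zX_j$. The rescaling factor $\mu_\xi$ itself is $C^\infty$ in $\xi$ with polynomially bounded derivatives by \eqref{eq:section-7-transition-density-comparison}, and we can avoid differentiating it altogether by evaluating at the fixed density $\mu_{\rm vac}$.

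Next we propagate from $\log\mu_\xi$ to any compact $I$. We use variation of constants for the inhomogeneous equation, with a fundamental system bounded via Lemma~\ref{lem:uniform-propagation-weak-branch}. Alternatively, we use the energy $E_j=|\del_zX_j|^2+V|X_j|^2$, which satisfies
\be
\del_zE_j\leq\frac{|V_z|}{V}E_j+2|\del_zX_j|\,|\mathrm{RHS}_j|.
\ee
By induction, the right-hand side of the $X_j$ equation is polynomially bounded, with $|\xi B X_{j-1}|\lesssim(1+|\xi|)^{N}$. The logarithmic length of $[\log\mu_\xi,\log\mu_{\rm vac}]$ is $\Ocal(\log(1+|\xi|))$. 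Gronwall would then give a factor $e^{C\log(1+|\xi|)}$, which is still polynomial, and that is all we need; this is why the exponents $N_{K,j}$ are unspecified. On the region $[\log\mu_\xi^+,z_{\rm vac}]$ we use the weighted energy $E_j/V$ as in \eqref{eq:section-7-vacuum-transfer-weighted-monotonicity} to keep $|V_z|/V$ bounded, and the source term is then handled by Duhamel. For bounded $\xi$, smooth dependence on parameters (as in Lemma~\ref{lem:ode-parameter-dependence}) suffices. The $\mu\del_\mu$ bound follows from $\mu\del_\mu\widehat{\alpha_{\chi}}=e^{z/2}(\del_zX+X/2)$.

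The main obstacle is the vacuum end. Differentiating in $\xi$ near the transition layer must not produce losses worse than polynomial, and the $\xi$-dependence of the starting point must be handled cleanly. To deal with this, we will check that derivatives of the Neumann series of the rescaled Volterra operator converge, using the bound \eqref{eq:section-7-rescaled-coefficient-bound} multiplied by $(1+|\xi|)^{i}$.

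Finally, a smooth function of $\xi$ whose derivatives all grow at most polynomially is a multiplier on $\mathcal S(\Rbb)$, hence by duality on $\mathcal S'(\Rbb_\xi)$. This holds locally uniformly in $\mu\in K$.
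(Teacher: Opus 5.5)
Your argument is sound, but it does not follow the paper's route. The paper never writes down the differentiated equations.

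\begin{itemize}
\item \textbf{The paper's proof.} It extends the frequency to complex values $\zeta$ and observes that the vacuum Volterra series (whose coefficient is the polynomial $1+\zeta^2$) and its ODE continuation are holomorphic. It then proves polynomial bounds for $X$ itself, uniformly on the disks $|\zeta-\xi|\leq1/4$ around each real $\xi$. The only new ingredient is that the potential becomes complex. The paper keeps the real weight $V_\xi=(1+\xi^2)B-1/4$ in the energy and treats $\Delta=(\zeta^2-\xi^2)B$ as a perturbation, using that $|\Delta|/\sqrt{V_\xi}\leq C\sqrt{B(e^t)}$ is integrable towards vacuum. Cauchy's estimates then control all $\del_\xi^j$ at once, with an exponent independent of $j$ (constants of size $j!\,4^j$), and real-analyticity in $\xi$ comes for free.
\item \textbf{Your proof.} Your real-variable induction avoids complexification. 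The price is that you must redo the three-regime argument (Volterra below $\mu_\xi$, bounded transition layer, positive energy above $\mu_\xi^+$) for an inhomogeneous equation at every order. Your Gronwall step over an interval of length $\Ocal(\log(1+|\xi|))$ may also make the exponent grow with $j$, which the statement permits.
\item \textbf{The loss is avoidable.} The same mechanism as in the paper removes it. You have $\del_z\sqrt{E_j/V}\leq|{\rm RHS}_j|/\sqrt V$, and $V\geq(1+\xi^2)B/2$ above $\mu_\xi^+$. Hence the forcing satisfies $|\xi|B|X_{j-1}|/\sqrt V\leq\sqrt{2B}\,\sqrt{E_{j-1}/V}$, and it is absorbed by $\int\sqrt{B(e^z)}\,dz<\infty$.
\end{itemize}

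A few minor points need correcting.

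\begin{itemize}
\item \textbf{Smoothness of $\mu_\xi$.} $\mu_\xi$ is only as smooth as $B^{-1}$, that is $C^2$, not $C^\infty$. Also, \eqref{eq:section-7-transition-density-comparison} gives no bounds on its derivatives. As you say, you never need to differentiate it: differentiate the unscaled equation \eqref{eq:vacuum-plus-branch-sec9} in $\xi$ and evaluate at $\mu=\mu_\xi$, where $H_\xi\leq2\beta_\gamma/(\gamma-1)\leq1/4$.
\item \textbf{Range of the induction hypothesis.} It must hold on the whole $\xi$-dependent range $[\log\mu_\xi,z_{\rm vac}]$, not only on a compact interval. So the base case comes from \eqref{eq:section-7-vacuum-transfer-pointwise-bound} in the proof of Lemma~\ref{prop:section-7-uniform-vacuum-to-interior}, not from Corollary~\ref{cor:section-7-uniform-fourier-kernel}.
\item \textbf{Transition layer.} There $V$ changes sign, so $E_j$ is not a positive energy. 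Use first-order Gronwall instead, which works because the layer has bounded length and $|V|\leq1/4$.
\end{itemize}
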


\begin{proof}
\bse
We first extend the frequency to a complex variable \(z\). The vacuum Volterra series is holomorphic in \(z\), uniformly on compact subsets, since its coefficient is the polynomial \(1+z^2\), and ODE continuation preserves this holomorphic dependence. The estimates below are polynomial on the disks \(|z-\xi|\leq1/4\), uniformly for real \(\xi\). For large \(|\xi|\), we keep the real transition densities \(\mu_\xi\) and \(\mu_\xi^+ \). On such a disk, \(|1+z^2|\leq2(1+ \xi^2)\). Thus the rescaled Volterra operator has norm at most \(1/2\), by the choice of \(\beta_\gamma\), and the  transition estimate for the initial data remains \(C\sqrt{\mu_\xi}\). Propagation across the transition interval has uniformly bounded coefficients and length, so the same estimate holds at \(\mu_\xi^+ \). To avoid confusing the complex frequency with logarithmic density, we write \(t\coloneqq \log\mu\), set \(V_\xi(t) \coloneqq(1+ \xi^2)B(e^t) -1/4\), and put
\be
E=|X_t(t,z)|^2+V_\xi(t)|X(t,z)|^2, \qquad \Delta(t,z) \coloneqq(z^2-\xi^2)B(e^t).
\ee
On the remaining vacuum interval, \(V_\xi\geq1/4\) and \(V_\xi'\geq0\). Direct differentiation yields
\be
\del_t(E/V_\xi) \leq\frac{|\Delta|}{\sqrt{V_\xi}}\frac{E}{V_\xi} \leq C\sqrt{B(e^t)}\frac{E}{V_\xi}.
\ee
Here \(|z^2-\xi^2|\leq C(1+|\xi|)\) and \(V_\xi\geq(1+ \xi^2)B/2\). The integral of \(\sqrt{B(e^t)}\) from \(-\infty\) to \(\log\mu_{\rm vac}\) is finite. Hence \(E/V_\xi\leq C\mu_\xi\), uniformly on these complex disks. On a fixed compact positive-density interval, the same energy calculation gives
\be
|E'|\leq \Bigl( \frac{|V_\xi'|}{V_\xi} + \frac{|\Delta|}{\sqrt{V_\xi}} \Bigr) E\leq C_I E
\ee
for sufficiently large \(|\xi|\). It gives a polynomial bound on \(X\) and \(X_t\) throughout the interval. The bounded real-frequency range is handled on one compact complex set by the vacuum Volterra series and ordinary  ODE continuation.  Cauchy's estimates on the disks \(|z-\xi|\leq1/4\) then give the bounds for every frequency derivative \(\del_\xi^jX\) and \(\del_\xi^jX_t\). Returning to \(\widehat{\alpha_{\chi}}=e^{t/2}X\) proves the estimate~\eqref{eq:section-7-Fourier-multiplier-bounds}.
\ese
\end{proof}

\subsection{Large-density branches}
\label{section-6-4}

\subsubsection{Oscillatory large-density branches}

Thanks to the results just proven, the vacuum branch is now constructed and bounded on compact positive-density intervals. We  now build bases for its large-density behavior in the two open frequency regimes.

 Under the assumption~\eqref{eq:intro-infty-B}--\eqref{eq:intro-infty-r}, we have
\be
B(\mu) \to B_\infty \coloneqq \frac{\eps^{2}k_\infty^{2}}{(1+\eps^{2}k_\infty^{2})^2} \qquad\text{as }\mu\to+ \infty.
\ee
The potential therefore satisfies
\be
V(z,\xi) \to V_\infty(\xi) \coloneqq B_\infty(1+ \xi^2) -\frac14,
\ee
so the frequencies of the oscillatory limiting equation are characterized by
\be
V_\infty(\xi)>0 \qquad\Longleftrightarrow\qquad |\xi|>\frac{1-\eps^{2}k_\infty^{2}}{2\eps k_\infty}.
\ee
We also define
\bel{eq:def-Qplus-sec9}
Q_+(z,\xi) \coloneqq (1+ \xi^2)B(e^z) -\frac14, \qquad \omega_+(z,\xi) \coloneqq \sqrt{Q_+(z,\xi)},
\ee
and the associated  error-control function and its integral are
\bel{eq:def-Eplus-Q-sec9}
E_{\rm LG,+}(z,\xi) \coloneqq \frac{4Q_+(z,\xi)\,Q_{+,zz}(z,\xi) -5Q_{+,z}(z,\xi)^2} {16\,Q_+(z,\xi)^{5/2}}, \qquad \mathcal V_+(z,\xi) \coloneqq \int_z^{+ \infty} | E_{\rm LG,+}(r,\xi)|\,dr.
\ee

\begin{lemma}[Large-density condition on \(B\)]
\label{lem:large-density-B-condition}
 Under the assumption~\eqref{eq:intro-infty-B}--\eqref{eq:intro-infty-r},  one has
\be
\int_{\mu_1}^{+ \infty} \Bigl( |\mu^2B_{\mu\mu}(\mu)+ \mu B_\mu(\mu)| + |\mu B_\mu(\mu)|^2 \Bigr) \frac{d\mu}{\mu} <+ \infty.
\ee

\end{lemma}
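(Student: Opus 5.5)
We need to prove the integrability of $(|\mu^2B''+\mu B'|+|\mu B'|^2)/\mu$ on $[\mu_1,\infty)$, given $B=\eps^2 B_\infty'(1+r_+)$ there. Here we write $B=\eps^2\frac{k_\infty^2}{(1+\eps^2k_\infty^2)^2}(1+r_+)=B_\infty(1+r_+)$, which is assumption~\eqref{eq:intro-infty-B} multiplied by $\eps^2$. The plan is to substitute this factorization directly, so that every derivative of $B$ becomes the constant $B_\infty$ times the corresponding derivative of $r_+$. The hypothesis~\eqref{eq:intro-infty-r} then controls the resulting integrals.

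For the first term, we have $\mu^2B''+\mu B'=B_\infty(\mu^2r_+''+\mu r_+')$. Dividing by $\mu$ gives the integrand $B_\infty|\mu r_+''+r_+'|\le B_\infty(|\mu r_+''|+|r_+'|)$, which is integrable on $[\mu_1,\infty)$ by~\eqref{eq:intro-infty-r}.

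For the quadratic term, $|\mu B'|^2/\mu=B_\infty^2\,\mu|r_+'|^2$. It suffices to show that $\mu r_+'$ is bounded on $[\mu_1,\infty)$, since then $\mu|r_+'|^2\le \sup|\mu r_+'|\cdot|r_+'|\in L^1$. This boundedness is the only step needing an argument, and I would prove it via the weighted estimate~\eqref{eq:weighted-remainder-first-derivative} cited from the appendix. Directly, the argument goes as follows. Since $r_+'\in L^1$ and $r_+\to0$, integrating $(\mu r_+')'=r_+'+\mu r_+''$ shows that $\mu r_+'$ has bounded variation on $[\mu_1,\infty)$, with total variation at most $\int(|r_+'|+\mu|r_+''|)<\infty$. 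Hence $\mu r_+'$ has a finite limit $L$ at infinity and is bounded. Necessarily $L=0$, since otherwise $r_+'\sim L/\mu$ would fail to be integrable. Boundedness alone suffices here.

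Combining the two bounds gives the finite integral. The one step requiring care is the bounded variation of $\mu r_+'$; everything else is a direct substitution.
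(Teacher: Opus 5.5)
Your proof is correct and is essentially the paper's argument: substitute $B=B_\infty(1+r_+)$, bound the linear term directly by \eqref{eq:intro-infty-r}, and handle the quadratic term via $\mu|r_+'|^2\le\sup|\mu r_+'|\,|r_+'|$, with boundedness of $\mu r_+'$ coming from the integrability of $(\mu r_+')'=r_+'+\mu r_+''$ (the paper cites \eqref{eq:weighted-remainder-first-derivative} for this). The stray expression $\eps^2B_\infty'$ in your first line should simply read $B_\infty$, as you yourself clarify immediately afterward.
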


\begin{proof}
We write \(B(\mu) =B_\infty(1+r_+(\mu))\) and obtain
\be
\mu B_\mu=B_\infty\,\mu r_+'(\mu), \qquad \mu^2B_{\mu\mu}+ \mu B_\mu = B_\infty\bigl(\mu^2r_+''(\mu)+ \mu r_+'(\mu) \bigr).
\ee
The weighted derivative estimate~\eqref{eq:weighted-remainder-first-derivative}, below, applied to \(r_+\), gives \(\mu r_+'\to0\) and boundedness. Thus \(\int_{\mu_1}^{+\infty}\mu|r_+'|^2\,d\mu\leq\|\mu r_+'\|_{L^\infty}\|r_+'\|_{L^1}<+\infty\); the other term follows directly from the condition~\eqref{eq:intro-infty-r}.
\end{proof}

\begin{proposition}[Oscillatory large-density branches]
\label{prop:section-7-oscillatory-large-density-branches}
Let \(\xi\in\Rbb\) be such that \(V_\infty(\xi)>0\). Then there exists a \emph{density threshold} \(z_+(\xi)\in\Rbb\) such that
\bse
\be
Q_+(z,\xi)>0 \qquad\text{for all }z\ge z_+(\xi),
\ee
and \(\mathcal V_+(z_+(\xi),\xi)<+ \infty\). Moreover, the equation~\eqref{eq:X-canonical} admits on \([z_+(\xi),+ \infty)\) two oscillatory solutions
\be
X_\infty^\pm(\cdot,\xi) = \omega_+(z,\xi)^{-1/2} \exp\!\Bigl(\pm i\int_{z_+(\xi)}^{z}\omega_+(r,\xi)\,dr\Bigr) \bigl(1+ \auxeps_+^\pm(z,\xi) \bigr),
\ee
whose relative errors satisfy
\be
|\auxeps_+^\pm(z,\xi)| \le C\Bigl(\exp\!\bigl(C\mathcal V_+(z,\xi) \bigr) -1\Bigr),
\ee
\ese
where \(C>0\) is universal.
\end{proposition}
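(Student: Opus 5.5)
The approach is the standard Liouville--Green (WKB) construction with error control in the form of Olver's theorem, which the paper records in Appendix~\ref{appendix-olver}. Fix $\xi$ with $V_\infty(\xi)>0$. Since $B(e^z)\to B_\infty$ as $z\to+\infty$, we have $Q_+(z,\xi)\to V_\infty(\xi)>0$. We choose $z_+(\xi)\geq\log\mu_1$ so large that $Q_+(z,\xi)\geq V_\infty(\xi)/2$ for all $z\geq z_+(\xi)$. In particular $\omega_+$ is real, positive, bounded above and below, and of class $C^2$ there, because $B\in C^2$ at positive density; this last point follows from $p\in C^3$.

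The next step is to check $\mathcal V_+(z_+(\xi),\xi)<+\infty$. In the variable $z=\log\mu$ we have $Q_{+,z}=(1+\xi^2)\mu B_\mu$ and $Q_{+,zz}=(1+\xi^2)(\mu^2B_{\mu\mu}+\mu B_\mu)$. Because $Q_+$ is bounded below by $V_\infty/2$, this gives
\[
|E_{\rm LG,+}|\leq C_\xi\bigl(|\mu^2B_{\mu\mu}+\mu B_\mu|+|\mu B_\mu|^2\bigr).
\]
Moreover $dz=d\mu/\mu$, so the integral of $|E_{\rm LG,+}|$ over $[z_+(\xi),+\infty)$ is finite exactly by Lemma~\ref{lem:large-density-B-condition}. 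This is where the remainder conditions~\eqref{eq:intro-infty-r} enter. It is also the only step that needs care: one must verify that the control function in~\eqref{eq:def-Eplus-Q-sec9} is, up to the factor $Q_+^{-5/2}$, the expression $4QQ''-5Q'^2$, and that the cross term $|\mu B_\mu|^2$ is integrable. The latter was already handled in that lemma via boundedness of $\mu r_+'$.

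With these inputs we apply the oscillatory Liouville--Green theorem of Appendix~\ref{appendix-olver} to $X''+Q_+X=0$ on $[z_+(\xi),+\infty)$. We write $X=\omega_+^{-1/2}e^{\pm i\int\omega_+}(1+\auxeps)$ and change variables to the phase $\tau=\int_{z_+}^z\omega_+\,dr$. The relative error then solves a Volterra equation posed on $[z,+\infty)$ whose kernel is bounded by $|E_{\rm LG,+}|$ times a bounded oscillatory factor. Since the kernel is integrable up to $+\infty$, Picard iteration converges, and the $j$-th iterate is bounded by $(C\mathcal V_+)^j/j!$. This yields existence of $X_\infty^\pm$ together with the error bound $|\auxeps_+^\pm|\leq C(e^{C\mathcal V_+}-1)$, where $C$ is universal because it comes only from the universal bound on the Volterra kernel. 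The same iteration bounds the derivative of $\auxeps$, so the solutions are genuine $C^2$ solutions of~\eqref{eq:X-canonical}. Conjugation exchanges the two branches, since $Q_+$ is real.
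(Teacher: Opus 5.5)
Your proposal is correct and follows essentially the same route as the paper. The limit $Q_+\to V_\infty(\xi)>0$ fixes the threshold $z_+(\xi)$, and Lemma~\ref{lem:large-density-B-condition} gives $\mathcal V_+(z_+(\xi),\xi)<+\infty$; your bound on $|E_{\rm LG,+}|$, using upper and lower bounds on $Q_+$ together with $dz=d\mu/\mu$, is exactly the reduction needed for that. Proposition~\ref{theo:appendix-olver-oscillatory} then supplies the basis by Volterra iteration in the phase variable. The one hypothesis of that appendix result you leave unstated, $\int_{z_+(\xi)}^{+\infty}\omega_+\,dz=+\infty$, follows immediately from your lower bound $Q_+\geq V_\infty(\xi)/2$.
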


\begin{proof}
The limit \(Q_+(z,\xi)\to V_\infty(\xi)>0\) gives the sign, and Lemma~\ref{lem:large-density-B-condition} gives \(\mathcal V_+<+\infty\). Proposition~\ref{theo:appendix-olver-oscillatory}  in the Appendix  gives the basis.
\end{proof}

\subsubsection{Exponential and threshold regimes}

The critical frequency separating exponential and oscillatory behavior was introduced in Theorem~\ref{theo:section-7-fourier-branches}; we recall its value
\bel{eq:section-7-critical-frequency}
\xi_\infty = \frac{1-\eps^{2}k_\infty^{2}}{2\eps k_\infty}.
\ee
Since \(0<\eps k_\infty<1\), we have \(V_\infty(\xi)< 0\) if and only if \(|\xi|<\xi_\infty\). In this frequency range, the large-density equation is of exponential type.

Figure~\ref{fig:fourier-regimes} illustrates the potential and its turning points for the pressure already used in Figure~\ref{fig:effective-pressure-reduction}. The level $1/4$ in $(1+\xi^2)B(e^z)$ determines the sign of $V$; the critical frequency $\xi_\infty$ instead describes its large-density limit. For this example, $B$ is increasing, so a frequency above $\xi_\infty$ encounters one turning point.   The continuation constructed in Section~\ref{section-fourier-continuation}, below, remains valid through every finite zero of \(V\), without requiring these zeros to be simple or isolated.

\begin{figure}[htbp]
\centering
\includegraphics[width=\linewidth]{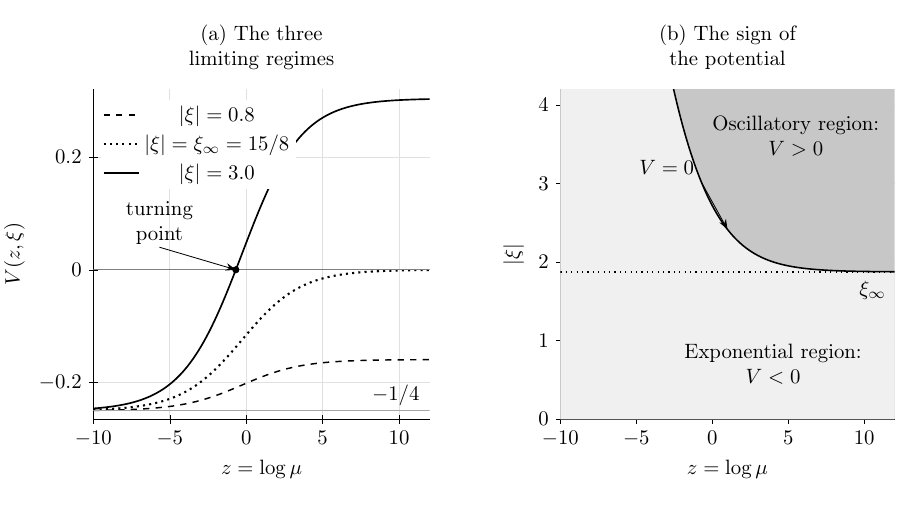}
\caption{Fourier regimes for $p'(\mu)=\sqrt\mu/(4(1+\sqrt\mu))$, $p(0)=0$, and $\eps=1/2$, for which $k_\infty=1/2$ and $\xi_\infty=15/8$. Left: the potential $V=(1+\xi^2)B(e^z)-1/4$ for a subcritical, a critical, and a supercritical frequency. The critical curve tends to zero at large density. Right: the curve $V=0$ separates the regions of exponential and oscillatory type. The horizontal dotted line is the limiting critical frequency; it is not a line of finite-density turning points.}
\label{fig:fourier-regimes}
\end{figure}

\begin{proposition}[Exponential large-density branches]
\label{prop:section-7-exponential-large-density-branches}
Let \(\xi\in\Rbb\) be such that \(|\xi|<\xi_\infty\). 
\bei 

\item Then there exists a \emph{density threshold} \(z_+(\xi)\in\Rbb\) such that
\bse
\bel{eq:section-7-Q-plus-exponential}
Q_+^{\rm exp}(z,\xi) \coloneqq \frac14- (1+ \xi^2)B(e^z)>0 \qquad\text{for all }z\ge z_+(\xi).
\ee

\item 
Define
\bel{eq:section-7-exponential-olver-variation}
\begin{aligned}
E_{\rm LG,+}^{\rm exp}(z,\xi) &\coloneqq \frac{4Q_+^{\rm exp}(z,\xi)\,\del_z^2Q_+^{\rm exp}(z,\xi) -5\bigl(\del_zQ_+^{\rm exp}(z,\xi) \bigr)^2}{16\bigl(Q_+^{\rm exp}(z,\xi) \bigr)^{5/2}},\\
\mathcal V_{+,\rm exp}(z,\xi) &\coloneqq \int_z^{+ \infty}| E_{\rm LG,+}^{\rm exp}(r,\xi)|\,dr.
\end{aligned}
\ee
The equation~\eqref{eq:X-canonical} admits on \([z_+(\xi),+ \infty)\) two linearly independent solutions
\bel{eq:section-7-exponential-large-density-basis}
X_{\infty,\rm exp}^{\pm}(z,\xi) = \bigl(Q_+^{\rm exp}(z,\xi) \bigr)^{-1/4}\exp\!\Bigl(\pm\int_{z_+(\xi)}^z\sqrt{Q_+^{\rm exp}(r,\xi)}\,dr\Bigr)\bigl(1+ \auxeps_{+,\rm exp}^{\pm}(z,\xi) \bigr),
\ee
and the relative error of the decreasing solution satisfies
\bel{eq:section-7-exponential-large-density-error}
|\auxeps_{+,\rm exp}^{-}(z,\xi)|\le C\Bigl(\exp\!\bigl(C\mathcal V_{+,\rm exp}(z,\xi) \bigr) -1\Bigr),
\ee
\ese
where \(C>0\) is universal, and \(\mathcal V_{+,\rm exp}\) is the Liouville--Green variation associated with \(Q_+^{\rm exp}\). 

\item In particular, \(\mathcal V_{+,\rm exp}(z_+(\xi),\xi)<+ \infty\). The growing solution also has relative error tending to zero. Writing    \(\tau= \int_{z_+(\xi)}^z\sqrt{Q_+^{\rm exp}(r,\xi)}\,dr\)  and \(v(\tau) = \mathcal V_{+,\rm exp}(z(\tau),\xi)\), one may choose it so that, for sufficiently large \(T\),
\be
|\auxeps_{+,\rm exp}^{+}(z(\tau),\xi)| \leq C \Bigl( v(\tau)+e^{-2(\tau-T)} + \int_T^\tau e^{-2(\tau-r)}v(r)\,dr \Bigr) , \qquad\tau\geq T.
\ee
This distinction between the two errors is necessary even though their relative limits are both zero.
\eei
\end{proposition}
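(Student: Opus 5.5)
The proof mirrors that of Proposition~\ref{prop:section-7-oscillatory-large-density-branches}, with the oscillatory Liouville--Green estimate replaced by its exponential counterpart from Appendix~\ref{appendix-olver}.

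\textbf{Threshold and finite variation.} Since \(|\xi|<\xi_\infty\), we have \(V_\infty(\xi)<0\), and \(Q_+^{\rm exp}(z,\xi)\to -V_\infty(\xi)>0\) as \(z\to+\infty\). Hence there is \(z_+(\xi)\) with \(Q_+^{\rm exp}\geq -V_\infty(\xi)/2>0\) on \([z_+(\xi),+\infty)\), which gives~\eqref{eq:section-7-Q-plus-exponential}. For the variation, note that \(\del_zQ_+^{\rm exp}=-(1+\xi^2)\mu B_\mu\) and \(\del_z^2Q_+^{\rm exp}=-(1+\xi^2)(\mu^2B_{\mu\mu}+\mu B_\mu)\), with \(\mu=e^z\). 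Because \(Q_+^{\rm exp}\) is bounded below away from zero, the integrand of \(\mathcal V_{+,\rm exp}\) is bounded by \(C_\xi\bigl(|\mu^2B_{\mu\mu}+\mu B_\mu|+|\mu B_\mu|^2\bigr)\). Since \(dz=d\mu/\mu\), Lemma~\ref{lem:large-density-B-condition} gives \(\mathcal V_{+,\rm exp}(z_+(\xi),\xi)<+\infty\), after enlarging \(z_+(\xi)\) beyond \(\log\mu_1\) if needed.

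\textbf{The recessive solution.} Change variables to \(\tau=\int_{z_+}^z\sqrt{Q_+^{\rm exp}}\,dr\) and set \(X=(Q_+^{\rm exp})^{-1/4}W\). The equation becomes \(W_{\tau\tau}=(1+\psi)W\), where \(\psi\,d\tau\) has total variation controlled by \(|E^{\rm exp}_{\rm LG,+}|\,dz\). For the decreasing branch, write \(W=e^{-\tau}(1+\auxeps^-)\) and solve the Volterra equation
\[
\auxeps^-(\tau)=\int_\tau^{\infty}\tfrac12\bigl(1-e^{-2(r-\tau)}\bigr)\psi(r)\bigl(1+\auxeps^-(r)\bigr)\,dr .
\]
Its kernel is bounded by one, so Picard iteration gives the estimate~\eqref{eq:section-7-exponential-large-density-error}. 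This is exactly the exponential-type Olver theorem stated in Appendix~\ref{appendix-olver}, which I would invoke.

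\textbf{The dominant solution (main obstacle).} The growing branch cannot be normalized at \(+\infty\): the Volterra equation integrated from infinity has the kernel \(e^{2(r-\tau)}\), which grows. Instead I would fix a large \(T\) with \(v(T)\) small and write \(W=e^{\tau}(1+\auxeps^+)\), with the integral equation
\[
\auxeps^+(\tau)=c\,e^{-2(\tau-T)}+\int_T^\tau \tfrac12\bigl(1-e^{-2(\tau-r)}\bigr)\psi(r)\bigl(1+\auxeps^+(r)\bigr)\,dr .
\]
Only the oscillatory part \(-\tfrac12 e^{-2(\tau-r)}\) of the kernel decays. The non-decaying part \(\tfrac12\int_T^\tau\psi\) is handled by integrating by parts against the tail \(v(r)=\int_r^\infty|\psi|\). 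This converts it into \(v(\tau)\) plus \(\int_T^\tau e^{-2(\tau-r)}v(r)\,dr\), up to a constant that is absorbed into the normalization of the solution.

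A Gronwall argument with the smallness of \(v(T)\) then gives the stated bound. Each of its terms tends to zero, because \(v\to0\) and a convolution of \(v\) with \(e^{-2s}\) also tends to zero. Finally, linear independence follows from the Wronskian. Computed as \(\tau\to\infty\) from the two asymptotic forms, it equals \(2\) up to \(o(1)\), so it is nonzero.
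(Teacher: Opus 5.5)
Your proposal is correct. The threshold, the finiteness of \(\mathcal V_{+,\rm exp}\) via Lemma~\ref{lem:large-density-B-condition}, and the recessive solution match the paper. The paper's proof is only the sign of \(-V_\infty(\xi)\), that lemma, and Proposition~\ref{theo:appendix-olver-exponential}, whose proof solves exactly your Volterra equation from \(+\infty\).

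\textbf{The growing solution.} Here you take a different route. The paper uses reduction of order, \(g(\tau)=2f(\tau)\int_T^\tau f(r)^{-2}\,dr\) with \(f=e^{-\tau}(1+\auxeps^-)\), which gives the exact identity
\[
e^{-\tau}g(\tau)=\bigl(1+\auxeps^-(\tau)\bigr)\Bigl(1-e^{-2(\tau-T)}+2\int_T^\tau e^{-2(\tau-r)}\bigl((1+\auxeps^-(r))^{-2}-1\bigr)\,dr\Bigr).
\]
The bound then follows from \(|(1+\auxeps^-)^{-2}-1|\leq C|\auxeps^-|\leq Cv\). The Wronskian \(W(f,g)=2\) comes for free. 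The Cauchy data \((0,2/f(T))\) at a fixed lower point also give continuity in \(\xi\) directly, which the paper reuses in~\eqref{eq:connection-continuous-growing-solution}. Your route, a Volterra equation from \(T\) followed by renormalization, does not need the recessive solution and gives derivative bounds by direct differentiation. In exchange it needs a renormalization step.

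\textbf{Bookkeeping corrections.} As written, the bounds are attributed to the wrong terms; with the following fixes the argument goes through.
\begin{itemize}
\item Solve your equation first. Gronwall with kernel bounded by \(\tfrac12|\psi|\) gives \(|1+\auxeps^+|\leq(1+|c|)e^{v(T)/2}\).
\item The non-decaying term converges to \(L=\tfrac12\int_T^\infty\psi(1+\auxeps^+)\,dr\), which depends on the solution. Divide the solution by \(1+L\), which is nonzero once \(v(T)\) is small.
\item The new relative error is \((\auxeps^+-L)/(1+L)\), where
\[
\auxeps^+-L=c\,e^{-2(\tau-T)}-\tfrac12\int_\tau^\infty\psi(1+\auxeps^+)\,dr-\tfrac12\int_T^\tau e^{-2(\tau-r)}\psi(1+\auxeps^+)\,dr.
\]
\item The tail term is bounded by \(Cv(\tau)\) directly, with no integration by parts.
\item It is the decaying term that integration by parts against \(v'=-|\psi|\) turns into \(e^{-2(\tau-T)}v(T)+2\int_T^\tau e^{-2(\tau-r)}v(r)\,dr\). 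It is not the non-decaying part, as you state.
\item The piece \(-\tfrac12 e^{-2(\tau-r)}\) of the kernel is exponentially decaying, not oscillatory.
\item Computing the Wronskian as \(\tau\to\infty\) also requires \(\del_\tau\auxeps^\pm\to0\), which you have not derived. It follows from differentiating the integral equations. Alternatively, independence is immediate because one solution grows and the other decays.
\end{itemize}
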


\begin{proof}
The limit \(Q_+^{\rm exp}(z,\xi)\to-V_\infty(\xi)>0\) gives the sign, and Lemma~\ref{lem:large-density-B-condition} makes the Liouville--Green variation finite. Proposition~\ref{theo:appendix-olver-exponential} in the appendix gives the basis.
\end{proof}

\subsubsection{A bound valid at every frequency}

\begin{proposition}[Large-density bound across the frequency thresholds]
\label{prop:global-density-Fourier-bound}
Let \(z_0\in\Rbb\), and suppose that
\be
b_0 \coloneqq \inf_{z\geq z_0}B(e^z)>0, \qquad L_0 \coloneqq \int_{z_0}^{+ \infty}\frac{|\del_z B(e^z)|}{B(e^z)}\,dz<+ \infty.
\ee
For a solution of the equation~\eqref{eq:X-canonical}, define 
\be
H(z,\xi) \coloneqq |\del_zX(z,\xi)|^2+( (1 + \xi^2) B(e^z)+1/4)|X(z,\xi)|^2.
\ee
Then, for all real \(\xi\) and all \(z\geq z_0\), one has
\bel{eq:global-density-Fourier-energy}
H(z,\xi)^{1/2} \leq H(z_0,\xi)^{1/2} \exp \Bigl( \frac{L_0}{2}+ \frac{z-z_0}{4\sqrt{(1 + \xi^2) b_0+1/4}} \Bigr).
\ee
In particular, with \(\mu_0=e^{z_0}\) and \(\widehat{\alpha_{\chi}}(\mu,\xi) = \sqrt\mu X(\log\mu,\xi)\), one has
\bel{eq:global-density-Fourier-kernel}
\aligned
 &|\widehat{\alpha_{\chi}}(\mu,\xi)|+\frac{\mu}{(1 + \xi^2)^{1/2}}|\del_\mu\widehat{\alpha_{\chi}}(\mu,\xi)|
 \\
 &\leq C_{b_0}e^{L_0/2}\frac{\sqrt\mu}{(1 + \xi^2)^{1/2}}\,
 H(z_0,\xi)^{1/2}
  \Bigl( \frac{\mu}{\mu_0} \Bigr) ^{1/(4\sqrt{(1 + \xi^2) b_0+1/4})},
 \qquad \mu\geq\mu_0.
\endaligned
\ee
These estimates include the threshold frequencies, with constants depending only on the quantities specified above, and not on the density or the frequency. Since the exponent of \(\mu/\mu_0\) is less than \(1/2\), the right-hand side is bounded by \(C_{b_0}e^{L_0/2}\mu\mu_0^{-1/2}(1 + \xi^2)^{-1/2}H(z_0,\xi)^{1/2}\).
\end{proposition}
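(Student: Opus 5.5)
We will prove \eqref{eq:global-density-Fourier-energy} with a weighted energy argument, in the same spirit as Lemma~\ref{lem:uniform-propagation-weak-branch}. The main difference is that the potential \(V=(1+\xi^2)B(e^z)-1/4\) may change sign across the threshold frequencies. We therefore use the shifted weight \(W(z,\xi)\coloneqq(1+\xi^2)B(e^z)+1/4\), which is bounded below by \(W_0\coloneqq(1+\xi^2)b_0+1/4>0\) for \(z\ge z_0\). In terms of this weight the equation~\eqref{eq:X-canonical} reads \(X_{zz}+WX=X/2\), and \(H=|X_z|^2+W|X|^2\).

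\textbf{Step 1.} We differentiate \(H\) for complex-valued \(X\):
\[
\del_zH=2\operatorname{Re}(\overline{X_z}X_{zz})+W_z|X|^2+2W\operatorname{Re}(\overline X X_z)=W_z|X|^2+\operatorname{Re}(\overline{X_z}X).
\]
For the first term, \(|W_z||X|^2\le(|W_z|/W)H\). For the second, Cauchy--Schwarz gives \(|X_z||X|\le H/(2\sqrt W)\). Together these yield
\[
\del_z\log H\le\frac{|W_z|}{W}+\frac{1}{2\sqrt{W}}.
\]

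\textbf{Step 2.} We bound both terms. Since \(W_z=(1+\xi^2)\del_zB(e^z)\) and \(W\ge(1+\xi^2)B\), we get \(|W_z|/W\le|\del_zB|/B\), whose integral over \([z_0,+\infty)\) is at most \(L_0\). For the second term, \(W\ge W_0\) gives \(1/(2\sqrt W)\le 1/(2\sqrt{W_0})\). Integrating from \(z_0\) to \(z\) and taking square roots gives \eqref{eq:global-density-Fourier-energy}.

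\textbf{Step 3.} We return to \(\widehat{\alpha_\chi}=\sqrt\mu X\) and \(\mu\del_\mu\widehat{\alpha_\chi}=\sqrt\mu(X_z+X/2)\). Since \(W\ge c_{b_0}(1+\xi^2)\), we have \(|X|\le H^{1/2}/\sqrt W\le C_{b_0}(1+\xi^2)^{-1/2}H^{1/2}\). We also have \(|X_z|\le H^{1/2}\), and after dividing by \((1+\xi^2)^{1/2}\) the derivative term is bounded by the same quantity. Inserting \(z-z_0=\log(\mu/\mu_0)\) gives \eqref{eq:global-density-Fourier-kernel}. The final remark follows because \(W_0>1/4\), so the exponent \(1/(4\sqrt{W_0})\) is less than \(1/2\), and \((\mu/\mu_0)^{1/2}\ge(\mu/\mu_0)^{\text{exponent}}\) for \(\mu\ge\mu_0\).

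The main point requiring care is the cross term \(\operatorname{Re}(\overline{X_z}X)\) in Step 1. It arises because the weight is shifted by \(1/2\) relative to the true potential, and it is controlled only through the uniform lower bound on \(W\). This shift is what makes the estimate insensitive to the sign of \(V\), and hence valid at the critical frequencies.
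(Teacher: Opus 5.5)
Your proof is correct and follows essentially the same route as the paper: the same energy $H$, the same identity $\del_zH=\operatorname{Re}(\overline{X_z}X)+(1+\xi^2)\del_zB(e^z)|X|^2$ controlled by $2\sqrt{W}|X||X_z|\le H$ and $(1+\xi^2)B(e^z)|X|^2\le H$, followed by integration from $z_0$ and the conversion $\mu\del_\mu\widehat{\alpha_\chi}=\sqrt\mu(X_z+X/2)$ with $|X|\le H^{1/2}/\sqrt{(1+\xi^2)b_0}$. Writing $\del_z\log H$ needs $H>0$. This holds automatically for a nontrivial solution, since $H$ vanishing at one point forces $X\equiv0$ by uniqueness for the linear equation, and the trivial solution satisfies the bound anyway.
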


For the pressure laws considered here, these hypotheses hold once the reference density is sufficiently large. Indeed, writing \(B_\infty=\eps^{2}k_\infty^{2}/(1+\eps^{2}k_\infty^{2})^2\), we choose \(\mu_0\) so that \(|r_+(\mu)|\leq1/2\) for \(\mu\geq\mu_0\). Then we have
\be
B(e^z)\geq B_\infty/2, \qquad \int_{\log\mu_0}^{+ \infty}\frac{|\del_zB(e^z)|}{B(e^z)}\,dz = \int_{\mu_0}^{+ \infty}\frac{|r_+'(\mu)|}{1+r_+(\mu)}\,d\mu \leq2\int_{\mu_0}^{+ \infty}|r_+'(\mu)|\,d\mu<+ \infty.
\ee

\begin{proof}
\bse
We write \(B_z\coloneqq\del_zB(e^z)\). Since the equation is \(X''+((1 + \xi^2) B-1/4)X=0\), differentiation gives, also for complex-valued solutions,
\be
\del_zH= \operatorname{Re}(X\overline{X'})+  (1 + \xi^2) B_z|X|^2.
\ee
The inequalities \(2\sqrt{(1 + \xi^2) B+1/4}|X||X'|\leq H\) and \((1 + \xi^2) B|X|^2\leq H\) imply
\be
\del_zH\leq \Bigl( \frac{1}{2\sqrt{(1 + \xi^2) B+1/4}}+ \frac{|B_z|}{B} \Bigr) H \leq \Bigl( \frac{1}{2\sqrt{(1 + \xi^2) b_0+1/4}}+ \frac{|B_z|}{B} \Bigr) H.
\ee
Integration and taking the square root prove the equation~\eqref{eq:global-density-Fourier-energy}. Finally, \(\mu\del_\mu\widehat{\alpha_{\chi}}= \sqrt\mu(X'+X/2)\), \(|X'|\leq H^{1/2}\), and \(|X|\leq H^{1/2}/((1 + \xi^2)^{1/2}\sqrt{b_0})\) give \(|\widehat{\alpha_{\chi}}|+\mu(1 + \xi^2)^{-1/2}|\del_\mu\widehat{\alpha_{\chi}}|\leq C_{b_0}\sqrt\mu\,(1 + \xi^2)^{-1/2}H^{1/2}\). Together with the energy estimate~\eqref{eq:global-density-Fourier-energy}, this proves the equation~\eqref{eq:global-density-Fourier-kernel}.
\ese
\end{proof}

\subsubsection{Sublinear density growth near the thresholds}

The asymptotic bases just constructed describe frequencies away from the thresholds. To estimate solutions near the thresholds as well, we now prove a bound valid for either vacuum branch, and more generally for every solution whose initial data are bounded on the indicated compact frequency set.

\begin{lemma}[A bound in a neighborhood of the thresholds]
\label{lem:threshold-sublinear-bound}
Let \(0<\auxeps<1/2\). There is \(d_\auxeps>0\) such that, for solutions \(X\) of the canonical equation with \(\bigl||\xi|-\xi_\infty \bigr|\leq d_\auxeps\), one has
\be
\label{eq:threshold-sublinear-bound}
|f(\mu,\xi)|+|\mu\del_\mu f(\mu,\xi)| \leq C_\auxeps\bigl(|X(0,\xi)|+|\del_zX(0,\xi)| \bigr)\mu^{1/2+ \auxeps}, \qquad \mu\geq1,
\ee
where \(f(\mu,\xi) = \sqrt \mu X(\log \mu,\xi)\). The constant is \emph{independent} of density and of the frequency \emph{in this neighborhood}.
\end{lemma}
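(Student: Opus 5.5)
The plan is to replace the crude energy of Proposition~\ref{prop:global-density-Fourier-bound} by an energy adapted to the small potential near the thresholds. That proposition's energy, with weight $(1+\xi^2)B+1/4$, only yields growth $\mu^{1/2}(\mu/\mu_0)^{\sqrt2/4+o(1)}$ at $|\xi|=\xi_\infty$, which is not good enough. The reason is that at the threshold the limiting potential $V_\infty(\pm\xi_\infty)=0$, so the natural energy must be built on the small quantity $V$ plus a small regularization. For $0<\auxeps<1/2$ we set $\eta\coloneqq\auxeps$ and
\be
H_\eta(z,\xi)\coloneqq|\del_zX|^2+\bigl(V(z,\xi)+\eta^2\bigr)|X|^2 .
\ee
Using $X''=-VX$, the $V$-contributions cancel and we obtain
\be
\del_zH_\eta=2\eta^2\operatorname{Re}(X\overline{\del_zX})+(\del_zV)|X|^2 .
\ee

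The key step is to choose the parameters $z_0=z_0(\auxeps)\geq0$ and $d_\auxeps>0$ so that $V(z,\xi)+\eta^2\geq\eta^2/4$ for all $z\geq z_0$ and $\bigl||\xi|-\xi_\infty\bigr|\leq d_\auxeps$. This is possible because $V(z,\xi)-V_\infty(\xi)=(1+\xi^2)B_\infty r_+(e^z)$ tends to zero uniformly on the bounded frequency neighbourhood, while $|V_\infty(\xi)|\leq C d_\auxeps$ can be made at most $\eta^2/4$. With this lower bound, $2\sqrt{V+\eta^2}\,|X||\del_zX|\leq H_\eta$ gives $2\eta^2|X||\del_zX|\leq 2\eta H_\eta$. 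Similarly $(\del_zV)|X|^2\leq 4\eta^{-2}|\del_zV|\,H_\eta$, and $\int_{z_0}^{\infty}|\del_zV|\,dz\leq C\int|r_+'|\,d\mu<\infty$ uniformly in $\xi$ by the computation following Proposition~\ref{prop:global-density-Fourier-bound}. Gronwall's lemma then yields
\be
H_\eta(z,\xi)\leq C_\auxeps\, H_\eta(z_0,\xi)\,e^{2\eta(z-z_0)},\qquad z\geq z_0 .
\ee
Hence $|\del_zX|\leq C_\auxeps H_\eta(z_0)^{1/2}\mu^{\auxeps}$ and $|X|\leq 2\eta^{-1}H_\eta^{1/2}\leq C_\auxeps H_\eta(z_0)^{1/2}\mu^{\auxeps}$.

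It remains to connect $z=0$ to $z_0$ and to return to $f$. On the fixed compact interval $[0,z_0]$ and for the bounded frequency set, the first-order system for $(X,\del_zX)$ has uniformly bounded coefficients. Gronwall (as in Lemma~\ref{lem:ode-parameter-dependence}) therefore bounds $|X|+|\del_zX|$ on $[0,z_0]$, and in particular $H_\eta(z_0,\xi)^{1/2}$, by $C_\auxeps(|X(0,\xi)|+|\del_zX(0,\xi)|)$. Finally, $f=\sqrt\mu\,X$ and $\mu\del_\mu f=\sqrt\mu\,(\del_zX+X/2)$ give the estimate~\eqref{eq:threshold-sublinear-bound} for $\mu\geq e^{z_0}$, and the compact-interval bound covers $1\leq\mu\leq e^{z_0}$.

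The only delicate point is the uniform positivity $V+\eta^2\geq\eta^2/4$, which forces the frequency window $d_\auxeps$ and the density $z_0$ to depend on $\auxeps$ (and yields constants blowing up as $\auxeps\to0$). I expect no further obstacle, since the $\del_zV$ term is handled by the integrability of $r_+'$.
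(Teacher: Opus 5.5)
Your proof is correct, but it uses a different energy from the paper's. The paper takes the frozen-weight energy $E_\auxeps=|\del_zX|^2+\auxeps^2|X|^2$. Its derivative is $\del_zE_\auxeps=2(\auxeps^2-V)\operatorname{Re}(X\overline{\del_zX})$, which is at most $2\auxeps E_\auxeps$ as soon as $|V|\leq\auxeps^2$. No derivative of $V$ appears, so the paper only needs the uniform convergence $B(e^z)\to B_\infty$: it chooses $d_\auxeps$ with $|V_\infty|\leq\auxeps^2/2$, and then $z_\auxeps$ independent of $\xi$.

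Your $V$-adapted energy $|\del_zX|^2+(V+\eta^2)|X|^2$ requires only the one-sided bound $V\geq-3\eta^2/4$. In exchange it produces the term $(\del_zV)|X|^2$, which you absorb through $\int|\del_zV|\,dz\leq C\int|r_+'|\,d\mu$. This bound is available under \eqref{eq:intro-infty-r} once $z_0\geq\log\mu_1$, so your argument goes through. However, it relies on derivative integrability of the remainder, which this particular lemma does not need.

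Your route is essentially the template of step~2 of Lemma~\ref{lem:isothermal-growth-frequency-regimes}. The paper's route is the leaner one here, because in the threshold window $V$ is uniformly small in absolute value, so a constant weight already suffices.
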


In particular, at either threshold the original Fourier branch grows at most as \(\mu^{1/2+ \auxeps}\) for every \(\auxeps>0\), with an \(\auxeps\)-dependent constant. If \(\omega\in C_c^\infty(\Rbb)\) is supported in the above neighborhood and the  initial data there are bounded by \(M\), Plancherel's identity gives, for every real \( \ell\),
\be
\label{eq:threshold-localized-inverse}
\bigl\|\mathcal F_\xi^{-1}(\omega f(\mu,\xi))\bigr\|_{H^{\ell}} + \bigl\|\mathcal F_\xi^{-1}(\omega \mu\del_\mu f(\mu,\xi))\bigr\|_{H^{\ell}} \leq C_{\auxeps, \ell,\omega}M \mu^{1/2+ \auxeps}.
\ee
The weak branch and the strong representative selected in the equation~\eqref{eq:strong-Cauchy-selection}, below, have such bounded  initial data on compact frequency sets. Thus the near-threshold part of Fourier inversion has sublinear large-density growth. This estimate does not select an asymptotic basis at the threshold itself.

\begin{proof}
\bse
The limiting potential \(V_\infty(\xi) =(1+ \xi^2)B_\infty-1/4\) vanishes at both thresholds. We choose \(d_\auxeps\) so that \(|V_\infty|\leq\auxeps^2/2\) on this neighborhood. Uniform convergence of \(B(e^z)\) then gives \(z_\auxeps\geq0\), independent of~\(\xi\), such that \(|V(z,\xi)|\leq\auxeps^2\) for \(z\geq z_\auxeps\). For the positive quadratic energy
\be
\label{eq:threshold-positive-energy}
E_\auxeps=|\del_zX|^2+ \auxeps^2|X|^2,
\ee
we calculate
\be
\label{eq:threshold-positive-energy-growth}
\del_zE_\auxeps =2(\auxeps^2-V)\operatorname{Re}(X\overline{\del_zX}) \leq2\auxeps E_\auxeps, \qquad E_\auxeps(z)\leq E_\auxeps(z_\auxeps)e^{2\auxeps(z-z_\auxeps)}.
\ee
The coefficients are uniformly bounded on \([0,z_\auxeps]\), so finite-interval  ODE estimates control \(E_\auxeps(z_\auxeps)\) by the squared initial norm in the estimate~\eqref{eq:threshold-sublinear-bound}. We use \(|X|\leq\auxeps^{-1}E_\auxeps^{1/2}\), \(|\del_zX|\leq E_\auxeps^{1/2}\), and \(\mu\del_\mu f= \sqrt \mu(\del_zX+X/2)\). These identities prove the bound for \(\mu\geq e^{z_\auxeps}\); enlarging the constant covers \(1\leq \mu\leq e^{z_\auxeps}\).
\ese
\end{proof}

\subsubsection{Growth rates in the isothermal density regime}

The limiting coefficient determines the large-density growth, while the vacuum exponent determines the frequency weights. The next lemma separates these two roles. For the pressure coefficient~\eqref{eq:current-coefficient-and-endpoints}, the large-density assumptions~\eqref{eq:intro-infty-B}--\eqref{eq:intro-infty-r} imply the coefficient conditions below, by Lemma~\ref{lem:large-density-B-condition}.

\begin{lemma}[Growth rates at large density]
\label{lem:isothermal-growth-frequency-regimes}
Assume that \(B(e^z)>0\) is of class \(C^1\), that \(B(e^z)\to B_\infty\in(0,1/4)\), and that \(\int_{z_0}^{+\infty}|\del_z B(e^z)|\,dz<+\infty\), where \(\mu_0=e^{z_0}\geq1\). Fix \(d_0\in\Rbb\) and \(A>0\). For every real \(\xi\), let \(X(\cdot,\xi)\) solve~\eqref{eq:X-canonical} and satisfy
\be
|X(z_0,\xi)|+(1+\xi^2)^{-1/2}|X_z(z_0,\xi)|\leq A(1+\xi^2)^{d_0/2}.
\ee
Set \(f(\mu,\xi)\coloneqq \sqrt\mu X(\log\mu,\xi)\). For every \(\delta>0\), there is a constant \(C_\delta\), independent of \(\mu\) and \(\xi\), such that
\bse
\be
\label{eq:isothermal-growth-all-frequencies}
\aligned
|f(\mu,\xi)|+\frac{\mu|f_\mu(\mu,\xi)|}{\sqrt{1+\xi^2}}
&\leq C_\delta A\sqrt\mu(1+\xi^2)^{d_0/2}
\Bigl(\frac\mu{\mu_0}\Bigr)^{\sqrt{(1/4-(1+\xi^2)B_\infty)_+}+\delta},\qquad \mu\geq\mu_0.
\endaligned
\ee
For every \(\eta>0\), the supercritical range \(|\xi|\geq\xi_\infty+\eta\), where \(\xi_\infty=\sqrt{1/(4B_\infty)-1}\), satisfies the stronger bound
\be
\label{eq:isothermal-growth-oscillatory}
|f(\mu,\xi)|+\frac{\mu|f_\mu(\mu,\xi)|}{\sqrt{1+\xi^2}}
\leq C_\eta A\sqrt\mu(1+\xi^2)^{d_0/2},\qquad \mu\geq\mu_0.
\ee
\ese
The constants depend on the coefficient bounds and variation, on \(\mu_0\), and on \(\delta\) or \(\eta\). In the fluid problem,
\be
B_\infty=\frac{\eps^2k_\infty^2}{(1+\eps^2k_\infty^2)^2},\qquad
\frac12+\sqrt{\frac14-B_\infty}=\frac1{1+\eps^2k_\infty^2}.
\ee
\end{lemma}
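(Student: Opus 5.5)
The approach is an energy method for $\del_z^2X+V(z,\xi)X=0$, in the spirit of Lemma~\ref{lem:threshold-sublinear-bound} and Proposition~\ref{prop:global-density-Fourier-bound}. I choose the quadratic energy according to the frequency regime, and then return to $f=\sqrt\mu X$ through $\mu f_\mu=\sqrt\mu(X_z+X/2)$. Write $V_\infty(\xi)=(1+\xi^2)B_\infty-1/4$ and $s(\xi)\coloneqq\sqrt{(-V_\infty(\xi))_+}$, which is exactly the exponent appearing in~\eqref{eq:isothermal-growth-all-frequencies}. Fix $\eta>0$ small relative to $\delta$, and split the real line into three frequency ranges.

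\textbf{Supercritical range $|\xi|\ge\xi_\infty+\eta$.} For $z\ge z_\eta$, with $z_\eta$ independent of $\xi$, uniform convergence of $B(e^z)$ gives $V\ge c_\eta(1+\xi^2)$. For large $|\xi|$ this holds already from $z_0$, since $B$ is bounded below on $[z_0,\infty)$. I then use the weighted energy $H=|X_z|^2+V|X|^2$. It satisfies $\del_zH=\del_zV\,|X|^2\le(|\del_zB|/B)H$, so
\[
H(z)\le H(z_0)\exp\Bigl(\int_{z_0}^{\infty}|\del_z B(e^z)|/B\,dz\Bigr).
\]
The exponent is finite by the integrability hypothesis and the lower bound on $B$. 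The hypothesis on the data gives $H(z_0)\le CA^2(1+\xi^2)^{d_0+1}$. Hence
\[
|X|\le H^{1/2}/\sqrt V\le CA(1+\xi^2)^{d_0/2},\qquad |X_z|\le CA(1+\xi^2)^{(d_0+1)/2}.
\]
This is~\eqref{eq:isothermal-growth-oscillatory}. On the compact stretch $[z_0,z_\eta]$, needed only for bounded $\xi$, I use Proposition~\ref{prop:global-density-Fourier-bound} or plain Gronwall with bounded coefficients.

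\textbf{Remaining bounded range $|\xi|\le\xi_\infty+\eta$.} Set $a\coloneqq s(\xi)+\delta$ and $E_a=|X_z|^2+a^2|X|^2$. As in~\eqref{eq:threshold-positive-energy-growth},
\[
\del_zE_a=2(a^2-V)\operatorname{Re}(X\overline{X_z})\le\frac{|a^2-V|}{a}E_a .
\]
Choose $z_\delta$, uniformly in this bounded frequency set, so that $|V-V_\infty|\le\delta^2/2$ for $z\ge z_\delta$. Shrink $\eta$ so that $V_\infty\le\delta^2/2$ on this range. Then $|a^2-V|\le a^2+s^2+\delta^2$ when $V\le0$, and $|a^2-V|\le a^2$ when $0\le V\le\delta^2$. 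In either case $|a^2-V|/(2a)\le a$, up to an adjustment of $\delta$. Gronwall gives
\[
E_a(z)^{1/2}\le E_a(z_\delta)^{1/2}e^{(s+\delta)(z-z_\delta)} .
\]
Since $a\ge\delta$, I get $|X|\le a^{-1}E_a^{1/2}\le\delta^{-1}E_a^{1/2}$. On the compact interval $[z_0,z_\delta]$ with bounded frequencies, finite-interval ODE estimates control $E_a(z_\delta)$ by $A^2$. Converting with $\mu=e^z$ yields the factor $(\mu/\mu_0)^{s+\delta}$ in~\eqref{eq:isothermal-growth-all-frequencies}. Combining with the supercritical range, where the exponent is $0\le\delta$, covers all $\xi$. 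The identity $\tfrac12+\sqrt{1/4-B_\infty}=1/(1+\eps^2k_\infty^2)$ is a direct algebraic check from the formula for $B_\infty$.

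\textbf{Expected main obstacle.} The delicate point is uniformity in $\xi$ at the interfaces between the regimes. The threshold $z_\delta$ must be independent of $\xi$, which is fine only on a bounded frequency set and is why large $|\xi|$ is routed to the weighted energy. The choice $a=s+\delta$ is also sensitive. It must absorb both the negative part of $V$ and small positive values of $V$ without losing more than $\delta$ in the exponent. If the one-line bound $|a^2-V|/(2a)\le a$ fails near $s\approx\delta$, I would instead take $a=\max\{s,\delta\}+\delta$ and re-tune $\eta$.
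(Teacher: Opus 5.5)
Your proof is correct and follows essentially the paper's argument: a weighted energy $|X_z|^2+V|X|^2$ controlled by an integrable $\partial_zB$ term on the supercritical range, and a shifted energy with growth rate $\sqrt{(-V_\infty)_+}+\delta$ on a bounded frequency range. On the supercritical range, since $(1+\xi^2)B=V+1/4$, the Gronwall factor is $C_\eta|\partial_zB|$ rather than exactly $|\partial_zB|/B$; this only changes constants.

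The one variation is on the bounded range. There you use the constant-coefficient energy $|X_z|^2+a^2|X|^2$, which requires tying $\eta$ to $\delta$ so that $V\le\delta^2$. The paper instead uses $|X_z|^2+(V+2\kappa_\xi^2)|X|^2$, which works on any bounded frequency interval at the cost of an integrable $\partial_zB$ term. Your worry about $s\approx\delta$ is unfounded: $-a^2\le V\le a^2$ gives $|a^2-V|\le 2a^2$ directly.
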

\begin{proof}
{\bf 1.} \emph{Frequencies in the oscillatory regime.} We first take \(|\xi|\geq\xi_\infty+\eta\). Since \(B\to B_\infty\), there are \(Z\geq z_0\) and \(c_\eta>0\) such that \(V(z,\xi)\geq c_\eta(1+\xi^2)\) for \(z\geq Z\) throughout this frequency range. The positive energy
\be
E=|X_z|^2+V|X|^2,\qquad E_z=(1+\xi^2)\del_zB(e^z)|X|^2
\ee
therefore satisfies \(|E_z|\leq C_\eta|\del_zB(e^z)|E\). Its integrated bound, together with the weighted Cauchy estimate~\eqref{eq:connection-frequency-uniform}, below, on \([z_0,Z]\), gives
\be
|X(z,\xi)|+(1+\xi^2)^{-1/2}|X_z(z,\xi)|\leq C_\eta A(1+\xi^2)^{d_0/2},\qquad z\geq z_0.
\ee
The identities \(f=\sqrt\mu X\) and \(\mu f_\mu=\sqrt\mu(X_z+X/2)\) prove~\eqref{eq:isothermal-growth-oscillatory}.

{\bf 2.} \emph{A bounded interval of frequencies.} It remains to prove~\eqref{eq:isothermal-growth-all-frequencies} on a bounded interval \(|\xi|\leq R\), since the preceding argument covers all sufficiently large frequencies. In this proof we write \(\kappa_\xi\coloneqq \sqrt{(-V_\infty(\xi))_+}+\delta\). We choose \(Z\) independently of \(\xi\) in this interval so that \(|V-V_\infty|\leq\delta^2/2\) for \(z\geq Z\). Since \(V_\infty+\kappa_\xi^2\geq\delta^2\), the coefficient \(V+2\kappa_\xi^2\) is at least \(\kappa_\xi^2\). The energy
\be
E_\delta\coloneqq|X_z|^2+(V+2\kappa_\xi^2)|X|^2
\ee
satisfies
\be
(E_\delta)_z=4\kappa_\xi^2\operatorname{Re}(X\overline{X_z})+(1+\xi^2)\del_zB(e^z)|X|^2
\leq\bigl(2\kappa_\xi+C_{R,\delta}|\del_zB(e^z)|\bigr)E_\delta.
\ee
Here we used \(E_\delta\geq |X_z|^2+\kappa_\xi^2|X|^2\). Integration and the compact-interval Cauchy bound give \(|X|+|X_z|\leq C_\delta A(1+\xi^2)^{d_0/2}e^{\kappa_\xi(z-z_0)}\). This proves~\eqref{eq:isothermal-growth-all-frequencies} on the bounded frequency interval. Together with the first step, it proves the estimate for all real frequencies.
\end{proof}

\section{Global Fourier estimates}
\label{section-fourier-continuation}

\subsection{Continuation and matching}
\label{section-global-fourier-continuation}

\subsubsection{Continuation of the vacuum branches}
At this stage, the weak and strong branches have been constructed near vacuum, and the compact-density bound~\eqref{eq:section-7-uniform-canonical-bound} of Theorem~\ref{theo:section-7-fourier-branches} has been proved. We have also constructed the large-density bases for subcritical and supercritical frequencies, together with estimates near the critical frequencies. In this section, we complete the theorem by continuing the vacuum branches through all finite densities, expressing them in these bases, and proving the stated frequency continuity. The polynomial bounds of Proposition~\ref{cor:section-7-fourier-kernel-bounds} will be completed for the weak branch here and for the selected strong branch in Section~\ref{section-strong-kernel}, below.

By definition, a \emph{turning point} is a point \(z_t=z_t(\xi)\) such that
\be
V(z_t,\xi) =0, \qquad\text{equivalently}\qquad (1+ \xi^2)B(e^{z_t}) = \frac14.
\ee
At a simple zero, the sign changes from exponential to oscillatory type, or conversely; at a multiple zero, the sign need not change. The standard WKB expansions are not uniform there, since the local frequency vanishes and the approximation error terms contain inverse powers of \(Q_\pm=|V|\). Near a simple turning point, the local model is of Airy type. Turning points do not obstruct the global  ODE continuation of either vacuum branch; the energy estimate \eqref{eq:global-density-Fourier-energy} remains valid without a simplicity assumption on the zeros.

\subsubsection{Connection coefficients}

Standard ODE theory extends the weak vacuum branch and each strong representative uniquely to \emph{global solutions} of the equation~\eqref{eq:X-canonical}; the Wronskian formulas below give their asymptotic coefficients. To make the \emph{connection through threshold frequencies} explicit, we choose one finite reference point \(z_*\). We denote by \(C(z,\xi)\) and \(S(z,\xi)\) the solutions with  initial data \((1,0)\) and \((0,1)\) at \(z_*\), respectively, and put
\be
\label{eq:Fourier-Cauchy-connection-matrix}
 \mathcal U(z,z_*,\xi) \coloneqq
 \begin{pmatrix}C(z,\xi)&S(z,\xi)\\ C_z(z,\xi)&S_z(z,\xi)\end{pmatrix},
 \qquad
 \begin{pmatrix}X(z,\xi)\\X_z(z,\xi)\end{pmatrix}
 = \mathcal U(z,z_*,\xi)
 \begin{pmatrix}X(z_*,\xi)\\X_z(z_*,\xi)\end{pmatrix}.
\ee
The fundamental matrix of the first-order equation is defined by 
\be
\label{eq:Fourier-Cauchy-matrix-equation}
 \del_z\mathcal U=
 \begin{pmatrix}0&1\\ \tfrac14- (1+ \xi^2)B(e^z)&0\end{pmatrix}\mathcal U,
 \qquad \mathcal U(z_*,z_*,\xi) = \mathrm{Id},
 \qquad \det\mathcal U=1.
\ee
Its trace is zero, which proves the determinant identity. Picard iteration of this matrix equation on any finite interval converges locally uniformly for complex \(\xi\): on a compact frequency set its \(j\)-th term is bounded by \(M^j|z-z_*|^j/j!\). Thus \(C,S\), and their \(z\)-derivatives are \emph{entire even functions} of \(\xi\). In particular, this basis is \emph{non-singular} at both threshold frequencies and at every turning point.

\subsubsection{Matching on finite density intervals}

The  fundamental matrix has a uniform bound in the frequency-weighted norm. For a compact interval \(I=[a,b]\) of logarithmic density, we put 
\(D_\xi\coloneqq\operatorname{diag}((1 + \xi^2)^{1/2},1)\) and \(\mathcal U_\xi\coloneqq D_\xi\mathcal U D_\xi^{-1}\).  We assume that \(b_0\leq B(e^z)\leq b_1\) on \(I\), and  put \(L_I\coloneqq\int_I|\del_z B(e^z)|/B(e^z)\,dz\). The energy calculation above, with an absolute value on \(\del_zH\), gives
\be
\label{eq:connection-two-sided-energy}
|\del_z\log H| \leq \frac{1}{2\sqrt{(1 + \xi^2) b_0+1/4}}+ \frac{|\del_zB(e^z)|}{B(e^z)}
\ee
for every nonzero solution. Since \(H\) is uniformly equivalent to \((1 + \xi^2) |X|^2+|X_z|^2\), integration in either direction proves
\be
\label{eq:connection-frequency-uniform}
\|\mathcal U_\xi(z,s,\xi)\|+ \|\mathcal U_\xi(z,s,\xi)^{-1}\| \leq C_I, \qquad z,s\in I,\quad\xi\in\Rbb.
\ee
The Euclidean operator norm is used throughout. The constant depends only on \(b_0,b_1,L_I\), and \(|I|\); matching value and derivative at finite density therefore preserves the frequency weight.

\subsubsection{Continuous selection of the strong branch}

\bei
\item {\emph{Definition and non-vanishing denominator.}} The fundamental basis just constructed also allows us to choose a strong branch \emph{continuously for all real frequencies.} We write \(a(\xi) \coloneqq X^{\rm weak}(z_*,\xi)\), \(b(\xi) \coloneqq X_z^{\rm weak}(z_*,\xi)\), and define
\be
\label{eq:strong-Cauchy-selection}
X^{\rm strong}(z,\xi) \coloneqq \frac{b(\xi)C(z,\xi) -a(\xi)S(z,\xi)}{a(\xi)^2+b(\xi)^2}, \qquad W(X^{\rm weak},X^{\rm strong}) =-1.
\ee
The weak branch is real for real \(\xi\), and its  initial-data vector cannot vanish, by uniqueness and its nonzero vacuum normalization. The denominator is therefore strictly positive, and at \(z_*\) the Wronskian equals \(-1\), hence has this value at every density. 

\item {\emph{Vacuum normalization.}} A strong representative  provided by  \autoref{thm:olver-vacuum} has the same Wronskian with the weak branch, as follows from their differentiated vacuum asymptotics. Their difference consequently has zero Wronskian with the weak branch and is a multiple of it. Thus the selection~\eqref{eq:strong-Cauchy-selection} has the required strong vacuum normalization. 

\item {\emph{Continuity and frequency bounds.}} Its  initial data are continuous in real frequency, including both thresholds; on each compact frequency interval their denominator has a positive minimum. This choice depends on \(z_*\), as expected for the affine strong family. The quantitative bound \eqref{eq:strong-compact-frequency-bound}, below, proved from the weak Bessel estimates in~\autoref{section-7-5}, also gives polynomial growth at large real frequency. It involves no division by the value of the weak branch alone and so remains valid at its zeros.

The matrix matches both Cauchy components at the reference point. For real frequencies and \(z\geq z_*\), the energy estimate~\eqref{eq:global-density-Fourier-energy} controls the square root of the energy \(H\) of the continued solution, including at the thresholds. Fourier inversion uses this continued weak branch; the large-density bases describe it in their two open regimes.
\eei

Figure~\ref{fig:fourier-matching} summarizes this construction. The vacuum branches are continued by their value and derivative, and the large-density bases then describe the continued solutions in the two open frequency regimes. At a critical frequency, the bound from Lemma~\ref{lem:threshold-sublinear-bound} replaces those asymptotic descriptions.

\begin{figure}[!htb]
\centering
\begin{tabular}{c|c|c|c}
  \hline
  Near vacuum & \multicolumn{3}{c}{\rule[-3pt]{0pt}{14pt}All frequencies: $X^{\rm weak}\sim\mu^{1/2}$, $X^{\rm strong}\sim\mu^{-1/2}$} \\
  \hline
  Finite densities & \multicolumn{3}{c}{\rule[-12pt]{0pt}{30pt}Continuation through $\begin{pmatrix}X(z)\\X_z(z)\end{pmatrix}=\mathcal U(z,z_*,\xi)\begin{pmatrix}X(z_*)\\X_z(z_*)\end{pmatrix}$}
  \\
  \hline
  & \rule[12pt]{0pt}{0pt} subcritical: $|\xi|<\xi_\infty$ & near-critical: $|\xi|\simeq\xi_\infty$ & supercritical: $|\xi|>\xi_\infty$
  \\
  Large densities & Exponential basis & Threshold bound & Oscillatory basis
  \\
  & \rule[-7pt]{0pt}{0pt} $X^+_{\infty,\rm exp},X^-_{\infty,\rm exp}$ & $|\sqrt{\mu} X|\leq C_{\delta_1}\mu^{1/2+\delta_1}$ & $X^+_\infty,X^-_\infty$
  \\
  \hline
\end{tabular}
\caption{The same vacuum-normalized branches are continued through all finite densities and described at large density according to frequency. The critical bound holds for $\mu\geq1$ and $0<\delta_1<1/2$, near either threshold.}
\label{fig:fourier-matching}
\end{figure}

\subsubsection{Wronskian formulas} On \(I=[z_0,z_1]\subset\Rbb\),  we choose \(X_1(\cdot,\xi)\) and \(X_2(\cdot,\xi)\)  as independent solutions of the equation~\eqref{eq:X-canonical}. For any other solution \(X\), solving the two  initial-value equations at one point gives
\be
\label{eq:fourier-wronskian-representation}
X(z,\xi) =a(\xi)X_1(z,\xi)+b(\xi)X_2(z,\xi), \qquad z\in I,
\ee
where the coefficients are
\be
\label{eq:fourier-wronskian-coefficients}
a(\xi) = \frac{W(X,X_2)}{W(X_1,X_2)}, \qquad b(\xi) =-\frac{W(X,X_1)}{W(X_1,X_2)}.
\ee
The Wronskians are constant in \(z\), since the equation has no first-derivative term. Independence makes the denominator nonzero, and uniqueness extends the identity throughout~\(I\).

\subsubsection{\texorpdfstring{Continuous dependence of the asymptotic coefficients}{Continuous dependence of the asymptotic coefficients}}

We verify the continuity of these coefficients within each open large-density regime. We fix a compact frequency interval \(\mathcal I\) contained either in \(\{V_\infty>0\}\) or in \(\{V_\infty<0\}\). Since \(V(z,\xi)\to V_\infty(\xi)\) uniformly on \(\mathcal I\), we can choose \(z_+\) independently of \(\xi\in\mathcal I\) such that
\bel{eq:connection-positive-potential}
Q_\xi(z)\coloneqq|V(z,\xi)|\geq q_*>0,\qquad z\geq z_+,\quad\xi\in\mathcal I.
\ee
We use the same origin \(z_+\) for every phase \(\tau_\xi(z)\coloneqq\int_{z_+}^z\sqrt{Q_\xi(r)}\,dr\). For a positive \(C^2\) coefficient \(Q\) on an interval, we define the \emph{error-control function} by
\bel{eq:appendix-olver-error-control-density}
\mathcal E_Q(z) \coloneqq \frac{4Q(z)\del_z^2Q(z) -5\bigl(\del_zQ(z) \bigr)^2}{16Q(z)^{5/2}}.
\ee
With \(Q=Q_\xi\), the large-density conditions~\eqref{eq:intro-infty-B}--\eqref{eq:intro-infty-r} give
\be
\label{eq:connection-uniform-LG-tail}
|\mathcal E_{Q_\xi}(z)|\leq C_{\mathcal I}\bigl(|B_{zz}(e^z)|+|B_z(e^z)|^2\bigr), \qquad \sup_{\xi\in\mathcal I}\int_Z^{+\infty}|\mathcal E_{Q_\xi}(z)|\,dz\to0\quad(Z\to+\infty).
\ee
Here $B_z(e^z)=\del_z(B(e^z))$ and $B_{zz}(e^z)=\del_z^2(B(e^z))$. Lemma~\ref{lem:large-density-B-condition} makes the majorant integrable and  gives the uniform derivative tails in the parameter argument in Appendix~\ref{appendix-olver}, with parameter $\xi\in\mathcal I$. This gives continuous bases and first logarithmic-density derivatives, with uniform remainder decay.

In the oscillatory regime, we take the conjugate solutions \(X_\infty^+,X_\infty^-\) constructed from these terminal equations. Their Wronskian is \(-2i\), by Proposition~\ref{theo:appendix-olver-oscillatory}  in the Appendix. In the exponential regime, the terminal equation gives the decreasing solution \(D_\xi=Q_\xi^{-1/4}e^{-\tau_\xi}(1+\auxeps_\xi^-)\). The uniform remainder bound permits \(Z\geq z_+\) independent of \(\xi\in\mathcal I\) such that \(D_\xi\ne0\) on \([Z,+\infty)\). We define the growing solution by
\be
\label{eq:connection-continuous-growing-solution}
G_\xi(z)\coloneqq2D_\xi(z)\int_Z^zD_\xi(r)^{-2}\,dr, \qquad W(G_\xi,D_\xi)=-2.
\ee
Its Cauchy data at $Z$ are $(0,2/D_\xi(Z))$. Proposition~\ref{theo:appendix-olver-exponential} in the appendix, and the parameter argument in Appendix~\ref{appendix-olver} give the uniform differentiated asymptotic formula $G_\xi=Q_\xi^{-1/4}e^{\tau_\xi}(1+o(1))$ and continuity through finite densities.

For either basis, the Wronskian denominator is a fixed nonzero constant. For each global branch continuous in frequency, evaluating the ratios following~\eqref{eq:fourier-wronskian-representation} at a fixed finite density gives coefficients continuous and bounded on \(\mathcal I\), with matching Cauchy data. The argument uses the positive lower bound for \(Q_\xi\) on a compact frequency interval separated from the thresholds. At a threshold, \(V_\infty(\xi)=0\), so this lower bound is lost and the preceding asymptotic estimates are no longer uniform. The same global branches nevertheless remain continuous in frequency: the Cauchy matrix~\eqref{eq:Fourier-Cauchy-connection-matrix} continues their value and derivative through every finite density, including at the thresholds. Their large-density growth is then controlled by~\eqref{eq:threshold-sublinear-bound}, uniformly in a sufficiently small frequency neighborhood of either threshold. Thus we change the asymptotic description and the growth estimate near the thresholds, while continuing the same weak and strong branches throughout.

\subsubsection{Proof of the Fourier statements}

\begin{proof}[Proof of Theorem~\ref{theo:section-7-fourier-branches}]
\noindent{{\bf 1.}} For each fixed real frequency, Proposition~\ref{thm:olver-vacuum} constructs the unique solution $y_{\rm weak}$ with $y_{\rm weak}(\mu,\xi)/\mu\to1$. Under the transformation~\eqref{eq:def-X-canonical}, this is the weak normalization~\eqref{eq:section-7-weak-vacuum-normalization}. The coefficient $V$ is continuous on every finite logarithmic-density interval, so the associated first-order system extends the solution uniquely through every finite density, including all zeros of $V$. Any other global solution with the same vacuum normalization agrees with it near vacuum, by Proposition~\ref{thm:olver-vacuum}, and hence everywhere by uniqueness for this first-order system. This proves the first assertion, while Corollary~\ref{cor:section-7-uniform-fourier-kernel} proves the bound~\eqref{eq:section-7-uniform-canonical-bound} in the second assertion.

\noindent{{\bf 2.}} Proposition~\ref{thm:olver-vacuum} also  gives a strong solution with $y_{\rm strong}(\mu,\xi)\to1$ and $W(y_{\rm weak},y_{\rm strong})=-1$. The canonical transformation preserves this Wronskian, since $\del_z=\mu\del_\mu$ and each solution is multiplied by $\mu^{-1/2}$. The transformed pair is therefore independent and extends globally by the same first-order system. Every solution is a linear combination of this pair; its strong normalization requires the coefficient of the strong vector to be one, and leaves the weak coefficient arbitrary. Consequently, the normalized strong solutions form the affine class in the third assertion, and every member forms a fundamental system with $X^{\rm weak}$.

\noindent{{\bf 3.}} The vacuum Volterra series and its first density derivative converge locally uniformly on compact frequency sets at a fixed sufficiently small positive density. Their Cauchy data are continuous in frequency, and Lemma~\ref{lem:ode-parameter-dependence} preserves this continuity at every finite logarithmic density. In particular, the weak branch is continuous across both threshold frequencies. For the selected strong branch, we put $a=X^{\rm weak}(z_*,\xi)$ and $b=\del_zX^{\rm weak}(z_*,\xi)$. The pair $(a,b)$ cannot vanish, since the weak branch is nonzero. The Cauchy vector $(b,-a)/(a^2+b^2)$ in~\eqref{eq:strong-Cauchy-selection} is therefore continuous for every real frequency and has Wronskian $-1$ with the weak vector. Its solution differs from the strong vector just constructed by a multiple of the weak vector, so it has the required strong vacuum normalization as well as continuous real-frequency data.

\noindent{{\bf 4.}} Finally, $B(e^z)\to B_\infty=\eps^2k_\infty^2/(1+\eps^2k_\infty^2)^2$ gives $V_\infty(\xi)=B_\infty(1+\xi^2)-1/4$, whose zeros are $\pm\xi_\infty$. Propositions~\ref{prop:section-7-oscillatory-large-density-branches} and~\ref{prop:section-7-exponential-large-density-branches} provide fundamental pairs in the two open regimes. The Wronskian representation~\eqref{eq:fourier-wronskian-representation} therefore gives a unique decomposition of either global branch in the corresponding pair. On each compact frequency set within an open regime, the construction preceding this proof chooses a common phase origin and yields bases and first derivatives continuous in frequency. Their Wronskians are the fixed nonzero constants $-2i$ and $-2$, respectively. Evaluation of the Wronskian ratios at a fixed finite density then proves continuity of the connection coefficients for the weak branch and the selected strong branch. At the thresholds, the Cauchy matrix~\eqref{eq:Fourier-Cauchy-connection-matrix} remains nonsingular, which proves the remaining continuation assertion without requiring an asymptotic basis there.
\end{proof}

For the weak-branch assertion of Proposition~\ref{cor:section-7-fourier-kernel-bounds} and its global estimate~\eqref{eq:Fourier-global-density-polynomial}, Corollary~\ref{cor:uniform-vacuum-traces} gives \(|\widehat{\alpha_{\chi}}|\leq C\mu\) and \(|\del_\mu\widehat{\alpha_{\chi}}|\leq C (1 + \xi^2)^{1/2}\) on \(0<\mu\leq\mu_{\rm vac}\). On each fixed positive-density interval, the weighted energy argument in Corollary~\ref{cor:section-7-uniform-fourier-kernel} gives \(|X|\leq C\) and \(|X_z|\leq C (1 + \xi^2)^{1/2}\), uniformly in frequency. We choose \(\mu_0\) so that the hypotheses of~\autoref{prop:global-density-Fourier-bound} hold on \([\mu_0,+\infty)\). At \(z_0=\log\mu_0\), the preceding compact-density bounds imply \(H(z_0,\xi)^{1/2}\leq C (1 + \xi^2)^{1/2}\). Estimate~\eqref{eq:global-density-Fourier-kernel} therefore proves the same linear density bound on this half-line. The positive lower density on the intervening compact interval gives the same bound, uniformly for all \(\mu>0\) and real frequencies. The strong-branch assertion of this proposition is completed in Section~\ref{section-strong-kernel}, below.

\subsection{The weak branch}
\label{section-7-5}

\subsubsection{The leading vacuum equation}

Theorem~\ref{theo:section-7-fourier-branches} and Proposition~\ref{thm:olver-vacuum} are now proved, together with the weak-branch bounds of Proposition~\ref{cor:section-7-fourier-kernel-bounds}. We now prove \emph{sharper estimates} for the weak branch, describing its decay as the Fourier frequency increases and its dependence on density, including near vacuum and at large density. We first compare the equation near vacuum with a Bessel equation and estimate their difference by a weighted Volterra integral equation; we then extend the resulting bounds to all positive densities and deduce Sobolev regularity. In Section~\ref{section-strong-kernel}, below, we will complete the strong-branch bounds of Proposition~\ref{cor:section-7-fourier-kernel-bounds} and establish the corresponding distributional vacuum trace. We write $\mu(y)\coloneqq k^{-1}(y)$ for the inverse acoustic map and put \(y=k(\mu)\) and \(a_k(y)\coloneqq k''(\mu)/k'(\mu)^2\).
We introduce the \emph{weighted Fourier solution}
\be
\aligned
&\Psi(y,\xi) \coloneqq \sqrt{k'(\mu(y))}\,\widehat{\alpha_{\chi}}(\mu(y),\xi),\\
&\nu=\lambda+\tfrac12>0,
\endaligned
\ee
for which the Fourier equation takes the form
\be
\Psi_{yy}+ \Bigl( (1 + \xi^2) -\frac{\nu^2-1/4}{y^2} \Bigr) \Psi = \mathcal R(y)\Psi, \qquad \mathcal R(y) \coloneqq \tfrac12a_k'(y)+ \tfrac14a_k(y)^2 -\frac{\lambda(\lambda+1)}{y^2}.
\ee
Indeed, \(\widehat{\alpha_{\chi,yy}}+a_k\widehat{\alpha_{\chi,y}}+ (1 + \xi^2) \widehat{\alpha_{\chi}}=0\), and multiplication by \(\sqrt{k'}\) removes the first derivative. For the exact power profile \(k(\mu) =C\mu^{(\gamma-1)/2}\), we have \(a_k=-2\lambda/y\), so \(\mathcal R=0\). With $J_\nu$ and $Y_\nu$ denoting the Bessel functions of the first and second kind, the independent solutions are $\sqrt y J_\nu(y (1+\xi^2)^{1/2})$ and $\sqrt y Y_\nu(y (1+\xi^2)^{1/2})$. We use this singular Bessel operator to estimate the perturbation at vacuum. The non-relativistic Bessel construction was developed in~\cite[Section~2]{CL2-2003}; here the Fourier parameter \(\xi\) includes the relativistic zero-order term.

\begin{lemma}[Weighted integrability of the Bessel perturbation]
\label{lem:Bessel-weighted-remainder}
Assume the vacuum expansion~\eqref{eq:intro-vacuum-B}, with \(\gamma>1\), and the weighted derivative bounds~\eqref{eq:intro-vacuum-r} on \(r_-\). Then the following integral is finite for every sufficiently small \(Y>0\) and tends to zero as \(Y\to0\):
\be
\int_0^Y y|\mathcal R(y)|\,dy<+ \infty.
\ee
\end{lemma}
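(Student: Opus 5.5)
We plan to express \(\mathcal R\) through the pressure coefficients and then use the remainder conditions on \(r_-\). Since \(y=k(\mu)\) gives \(dy=k'(\mu)\,d\mu\), the function \(a_k(y)=k''(\mu)/k'(\mu)^2\) satisfies \(a_k(y)=\frac{d}{dy}\log k'(\mu(y))\). Its derivative is
\[
a_k'(y)=\frac{1}{k'(\mu)}\frac{d}{d\mu}\Bigl(\frac{k''}{(k')^2}\Bigr).
\]
With \(k'=c/h\), we write \(k'(\mu)=\eps\sqrt{B(\mu)}/\mu\), using \(B=\mu^2c^2/(\eps^2\cdots)\) up to the fixed normalization in~\eqref{eq:intro-def-B}. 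Then
\[
\log k'=\log(\eps\sqrt{\kappa_0})+\tfrac{\gamma-3}{2}\log\mu+\tfrac12\log(1+r_-(\mu)).
\]
Hence
\[
\frac{k''}{k'}=\frac{\gamma-3}{2\mu}+\frac{r_-'}{2(1+r_-)},
\]
and the power part reproduces exactly the profile \(a_k=-2\lambda/y\) of the pure polytropic case, for which \(\mathcal R=0\).

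The first step is to split \(a_k=-2\lambda/y+\rho(y)\). Expanding gives
\[
\mathcal R=\tfrac12\rho'+\tfrac14\rho^2-\lambda\rho/y
+\Bigl[\lambda/y^2+\lambda^2/y^2-\lambda(\lambda+1)/y^2\Bigr],
\]
and the bracket vanishes. The remainder \(\rho\) has two sources. The first is \(r_-'/(2(1+r_-)k')\). The second is the mismatch between \((\gamma-3)/(2\mu k'(\mu))\) and \(-2\lambda/k(\mu)\). This mismatch comes from the deviation of \(k(\mu)\) from \(\mu k'(\mu)\cdot 2/(\gamma-1)\). Setting \(\phi(\mu)\coloneqq \mu k'(\mu)/k(\mu)-\frac{\gamma-1}{2}\), a short computation gives
\[
\rho=\frac{1}{y}\Bigl(\frac{\gamma-3}{\gamma-1+2\phi}\cdot\ldots\Bigr),
\]
that is, \(y\rho=\mathcal O(|\phi|)+\mathcal O(\mu|r_-'|)\). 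The second step is to estimate \(\phi\). Integrating by parts in \(k(\mu)=\int_0^\mu \eps\sqrt{\kappa_0}s^{(\gamma-3)/2}(1+r_-)^{1/2}ds\) shows that \(|\phi(\mu)|\lesssim \mu^{-(\gamma-1)/2}\int_0^\mu s^{(\gamma-1)/2}|r_-'(s)|\,ds\). This is bounded by \(\sup_{s\le\mu}|r_-(s)-r_-(\mu)|\) type quantities, which tend to zero.

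The third step, and the main obstacle, is to convert everything into \(\int y|\mathcal R|\,dy\) in the \(\mu\) variable, using \(y\,dy=kk'\,d\mu\) and \(k\simeq \mu k'\). The terms \(y\rho^2\) and \(\rho\) contribute \(\int |y\rho|^2\,dy/y\lesssim\int(|\phi|^2+\mu^2|r_-'|^2)\,d\mu/\mu\). Using \(\mu|r_-'|\) bounded, which is the weighted estimate~\eqref{eq:weighted-remainder-first-derivative} deduced from~\eqref{eq:intro-vacuum-r}, the second part is bounded by \(\int|r_-'|\,d\mu\). For \(|\phi|\), we bound \(\int_0^Y|\phi|\,d\mu/\mu\) by Fubini: we swap the order in \(\int_0^{\mu_Y}\mu^{-1-(\gamma-1)/2}\int_0^\mu s^{(\gamma-1)/2}|r_-'(s)|\,ds\,d\mu\), which gives \(C\int_0^{\mu_Y}|r_-'|\). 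The term \(y\rho'\) is the delicate one, because it involves \(r_-''\). Here we write \(y\rho'\,dy=y\,d\rho\) and differentiate in \(\mu\), which produces the integrand \(\mu|r_-''|+|r_-'|+|\phi'|\mu\) and lower-order products. The quantity \(\mu\phi'\) is controlled by \(\mu|r_-'|\) plus \(\phi\) itself, since \(\phi\) satisfies a first-order linear ODE driven by \(\mu r_-'\). All these terms are integrable by~\eqref{eq:intro-vacuum-r}. Each bound is an integral over \((0,\mu(Y))\) of an \(L^1\) function, so it tends to zero as \(Y\to0\).
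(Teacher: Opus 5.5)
Your proposal is correct and follows essentially the same route as the paper. Your \(y\rho\) is the paper's \(d=(q-1)w+eL\), where \(q=(\gamma-1)/2\), \(L=k/\sqrt{B}\), \(w=L-1/q\) and \(e=\mu r_-'/(2(1+r_-))\). Both arguments reduce \(\int_0^Y y|\mathcal R|\,dy\) to \(L^1(d\mu/\mu)\) bounds on \(y\rho\) and its logarithmic derivative, via \(y^2\mathcal R=\tfrac12 y\,\del_y(y\rho)-(\lambda+\tfrac12)y\rho+\tfrac14(y\rho)^2\).

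The one difference is how the mismatch is controlled. The paper uses the linear equation \(\mu\del_\mu L=1-(q+e)L\) and variation of constants, whereas you use integration by parts and Fubini. Your \(\phi=1/L-q\) actually satisfies the Riccati equation \(\mu\phi'=(q+\phi)(e-\phi)\), not a linear one, but this still gives the bound \(|\mu\phi'|\lesssim|\phi|+\mu|r_-'|\) that you need.
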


\begin{proof}
\bse
We write \(t\coloneqq\log\mu\), \(q\coloneqq(\gamma-1)/2>0\), and \(b(t) \coloneqq\sqrt{B(e^t)}\). These definitions give
\be
k_t=b, \qquad \frac{b_t}{b} \, =q+e(t), \qquad e(t) \coloneqq\frac{\del_t r_- (e^t)}{2(1+r_- (e^t))}.
\ee
The vacuum conditions~\eqref{eq:intro-vacuum-B}--\eqref{eq:intro-vacuum-r} imply \(e,e_t\in L^1(( -\infty,t_0])\) and \(e(t)\to0\). We put \(L\coloneqq k/b\) and \(w\coloneqq L-1/q\). The integral definition of \(k\) gives \(L\to1/q\), and
\be
w_t=- (q+e)w-e/q.
\ee
On a sufficiently negative half-line, \(q+e\geq q/2\). Variation of constants from \(-\infty\) therefore gives \(\|w\|_{L^1}\leq C\|e\|_{L^1}\), and the equation also gives \(w_t\in L^1\). These identities give the following expression for the singular coefficient:
\be
{y\,a_k(y)} =(q+e-1)L=-2\lambda+d(y), \qquad d\coloneqq(q-1)w+eL.
\ee
Thus \(d,d_t\in L^1(dt)\), \(d\to0\), and \(d\) is bounded. Since \(dy/y=dt/L\), with \(L\) bounded above and below, substitution into the definition of \(\mathcal R\) yields
\be
y^2\mathcal R= \tfrac12 y d_y- (\lambda+ \tfrac12)d+ \tfrac14d^2.
\ee
Consequently \(\int_0^Y y|\mathcal R|\,dy\) is controlled by the tails of \(\int(|d_t|+|d|+d^2)\,dt\), which are finite and tend to zero.
\ese
\end{proof}

\begin{proposition}[One-term Fourier expansion with an integral remainder]
\label{prop:Bessel-one-term-bound}
Fix an admissible equation of state \(p\) in the sense of Definition~\ref{def-pression}. Let \(\nu=\lambda+1/2>0\)
and set
\be
M_\xi(y) \coloneqq \frac{y^{\nu+1/2}}{(1+ y (1+\xi^2)^{1/2})^{\nu+1/2}}, \qquad {\mathcal H_\xi}(Y) \coloneqq \int_0^Y\frac{t|\mathcal R(t)|}{1+ (1+\xi^2)^{1/2} t}\,dt.
\ee
Writing \(q=(\gamma-1)/2\) and \(K\coloneqq\lim_{\mu\to 0}k(\mu)/\mu^q\), the weak vacuum normalization gives \(C_*=2^\nu\Gamma(\nu+1)\sqrt q\,K^{-\nu}\), where \(K=\eps\sqrt{\kappa_0}/q\).  Define
\be
\Psi_0(y,\xi) \coloneqq C_* (1+\xi^2)^{-\nu/2}\sqrt y J_\nu(y (1+\xi^2)^{1/2}).
\ee
The weak solution satisfies the following bound for all \(y>0\), with \(C\) depending only on \(\nu\):
\be
|\Psi(y,\xi) -\Psi_0(y,\xi)| \leq C C_* M_\xi(y)\bigl(e^{C \mathcal H_\xi(y)}-1 \bigr).
\ee
For each fixed \(Y\), one has \(\mathcal H_\xi(Y)\to0\) as \(|\xi|\to\infty\). The derivative satisfies, for all \(y>0\),
\be
\label{eq:Bessel-derivative-remainder}
|\del_y(\Psi-\Psi_0)(y,\xi)|\leq C C_*((1+\xi^2)^{1/2}+y^{-1})M_\xi(y) \bigl(e^{C \mathcal H_\xi(y)}-1 \bigr).
\ee
For a fixed \(Y_0>0\), there is also a constant \(C_0\) such that
\be
\label{eq:Bessel-global-error-growth}
\mathcal H_\xi(y)\leq {\mathcal H_\xi}(Y_0)+ \frac{C_0(1+y)}{(1+\xi^2)^{1/2}}, \qquad y\geq Y_0.
\ee
The constant \(C_0\) may depend on the pressure law and \(\eps\), while the constants in the two Bessel remainder bounds depend only on \(\nu\) after the factor \(C_*\) has been displayed. All are independent of frequency, and \(\mathcal H_\xi(Y_0)\to0\) as \(|\xi|\to\infty\). On an unbounded density range, smallness of the high-frequency error requires control of \(k(\mu)/(1+\xi^2)^{1/2}\), with \(y=k(\mu)\).
\end{proposition}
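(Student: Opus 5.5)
The plan is to treat the weak solution as a perturbation of the exact Bessel solution through a Volterra integral equation built from the Bessel Green's function. First identify $\Psi_0$ and the constant $C_*$. Near vacuum, the weak normalization $\widehat{\alpha_{\chi}}\sim\mu$ together with $k(\mu)\sim K\mu^q$ and $k'(\mu)\sim qK\mu^{q-1}$ gives
\[
\Psi=\sqrt{k'}\,\widehat{\alpha_\chi}\sim \sqrt{qK}\,\mu^{(q+1)/2}\sim \sqrt{qK}\,K^{-(q+1)/(2q)}\,y^{(q+1)/(2q)} .
\]
Since $(q+1)/(2q)=\nu+1/2$, this is $\sqrt q\,K^{-\nu}y^{\nu+1/2}$. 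Matching with $\sqrt y J_\nu(y\sqrt{1+\xi^2})(1+\xi^2)^{-\nu/2}\sim y^{\nu+1/2}/(2^\nu\Gamma(\nu+1))$ gives the stated $C_*$. The value $K=\eps\sqrt{\kappa_0}/q$ follows from integrating $k'=\sqrt{B}/\mu$ with $B\sim\eps^2\kappa_0\mu^{\gamma-1}$.

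Next, with $\omega=\sqrt{1+\xi^2}$, write
\[
\Psi(y)=\Psi_0(y)+\int_0^y G_\omega(y,t)\,\mathcal R(t)\Psi(t)\,dt,
\]
where the kernel is $G_\omega(y,t)=\tfrac{\pi}{2}\sqrt{yt}\,\bigl(J_\nu(\omega t)Y_\nu(\omega y)-J_\nu(\omega y)Y_\nu(\omega t)\bigr)$, the standard variation-of-constants Green's function. One must check that the weak solution satisfies this equation. Both sides solve the same ODE, and the integral term is $o(y^{\nu+1/2})$ by Lemma~\ref{lem:Bessel-weighted-remainder}. Uniqueness of the weak branch (Proposition~\ref{thm:olver-vacuum}, transported to the variable $y$) then identifies them.

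The key estimate is the standard uniform Bessel bound, valid for $0<t\le y$ with $C$ depending only on $\nu$:
\[
|G_\omega(y,t)|\le C\,\frac{M_\xi(y)}{M_\xi(t)}\,\frac{t}{1+\omega t}.
\]
It follows from $|\sqrt x J_\nu(x)|\lesssim (x/(1+x))^{\nu+1/2}$ and $|\sqrt x Y_\nu(x)|\lesssim (x/(1+x))^{-\nu+1/2}$ (with a logarithm if $\nu=0$, which is excluded). One also uses $|\Psi_0|\le CC_*M_\xi$. Then $h\coloneqq|\Psi|/M_\xi$ satisfies
\[
h(y)\le CC_*+C\int_0^y \frac{t|\mathcal R(t)|}{1+\omega t}\,h(t)\,dt .
\]
Iterating the Neumann series, whose $j$-th term is bounded by $(C\mathcal H_\xi)^j/j!$, yields the value estimate with the factor $e^{C\mathcal H_\xi}-1$. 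For the derivative, differentiate the integral equation. The kernel $\del_yG_\omega$ gains the factor $\omega+y^{-1}$ relative to $G_\omega$, which gives~\eqref{eq:Bessel-derivative-remainder}.

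The remaining assertions about $\mathcal H_\xi$ are straightforward. That $\mathcal H_\xi(Y)\to0$ follows by dominated convergence, since $t|\mathcal R|\in L^1(0,Y)$ by Lemma~\ref{lem:Bessel-weighted-remainder} and the weight tends to zero pointwise. For~\eqref{eq:Bessel-global-error-growth}, bound the weight by $1/(\omega t)$ on $[Y_0,y]$. It then suffices that $|\mathcal R|\le C_0$ on $[Y_0,\infty)$, i.e. that $a_k$ and $a_k'$ are bounded there. These follow from the admissibility conditions: at large density $a_k=k''/k'^2$ is expressible through $B$ and its logarithmic derivative, which are bounded by the large-density remainder estimates. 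This gives the bound $C_0(y-Y_0)/\omega$.

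The main obstacle is the uniform Green's function bound across the regimes $\omega t\lessgtr1$ and $\omega y\lessgtr1$, with constants depending only on $\nu$. This needs a case split into four regions, using the small-argument power behaviour and the large-argument $x^{-1/2}$ decay of $J_\nu$ and $Y_\nu$. Once it is in place, the rest is routine Volterra iteration.
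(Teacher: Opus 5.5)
Your route is the paper's. Both proofs use the Bessel Green kernel built from $\sqrt y J_\nu(\omega y)$ and $\sqrt y Y_\nu(\omega y)$, with $\omega=(1+\xi^2)^{1/2}$, and the weighted bound $|K_\xi(y,t)|\,M_\xi(t)/M_\xi(y)\le Ct/(1+\omega t)$. They then iterate with weight $t|\mathcal R(t)|/(1+\omega t)$, use the extra factor $\omega+y^{-1}$ for $\partial_yK_\xi$ together with $K_\xi(y,y)=0$, and obtain $\mathcal H_\xi(Y)\to0$ by dominated convergence. Your computation of $C_*$ and $K$ is correct.

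The only difference in this part is the order of identification. You start from the already-constructed weak branch and check that $\Psi-\Psi_0-\int_0^yK_\xi\mathcal R\Psi$ is an $o(y^{\nu+1/2})$ solution of the homogeneous Bessel equation, hence zero. The paper instead solves the Volterra equation by successive approximation and identifies the result through uniqueness of the weak branch, which also gives an independent existence proof. Either works.

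The four-region case split you single out as the main obstacle is not needed. Write $m(x)=x/(1+x)$. Your two uniform Bessel bounds give $|K_\xi(y,t)|M_\xi(t)/M_\xi(y)\lesssim\omega^{-1}\bigl(m(\omega t)^{2\nu+1}m(\omega y)^{-2\nu}+m(\omega t)\bigr)\le 2\omega^{-1}m(\omega t)$ for $t\le y$, because $m$ is increasing and $\nu>0$.

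One step fails as stated. For \eqref{eq:Bessel-global-error-growth} you assert $|\mathcal R|\le C_0$ on $[Y_0,\infty)$ because $a_k$ and $a_k'$ are bounded.
\begin{itemize}
\item Boundedness of $a_k=b_z/b^2-1/b$, with $b(z)=\sqrt{B(e^z)}$, is fine: $b$ is bounded below on the tail, and $\mu B_\mu=B_\infty\mu r_+'$ is bounded by \eqref{eq:weighted-remainder-first-derivative}.
\item Boundedness of $a_k'$ does not follow. We have $a_k'(y)=b^{-1}\bigl(b_{zz}/b^2-2b_z^2/b^3+b_z/b^2\bigr)$, which contains $B_{zz}=\mu^2B_{\mu\mu}+\mu B_\mu$.
\item The large-density hypothesis \eqref{eq:intro-infty-r} is only the integral condition $\int(|r_+'|+|\mu r_+''|)\,d\mu<\infty$. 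Via Lemma~\ref{lem:large-density-B-condition} it gives $b_{zz}\in L^1(dz)$, not a pointwise bound. So $\mathcal R$ may be unbounded on the tail.
\end{itemize}

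What you need, and what the hypotheses do give, is the integrated bound $\int_{Y_0}^y|\mathcal R(t)|\,dt\le C_0(1+y)$.
\begin{itemize}
\item On the tail, $a_k$ is bounded and $(a_k)_y\in L^1(dy)$. This holds because $dy=b\,dz$, $b$ is bounded above and below, and $b_z,b_{zz}\in L^1(dz)$.
\item On the intervening compact range, $\mathcal R$ is continuous because $p\in C^3((0,+\infty))$.
\end{itemize}
Combined with $t/(1+\omega t)\le\omega^{-1}$, this gives \eqref{eq:Bessel-global-error-growth}, which is exactly how the paper argues.
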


\begin{proof}
\bse
{\bf 1.} \emph{The Volterra equation and its derivative.} We set \(f_\xi(y) \coloneqq \sqrt y J_\nu(y (1+\xi^2)^{1/2})\) and \(g_\xi(y) \coloneqq \sqrt y Y_\nu(y (1+\xi^2)^{1/2})\). Their Wronskian is \(2/\pi\). The Green kernel for zero incoming weak correction is
\be
K_\xi(y,t) \coloneqq \frac\pi2\bigl(f_\xi(t)g_\xi(y) -g_\xi(t)f_\xi(y) \bigr), \qquad 0<t\leq y.
\ee
The small-argument and large-argument Bessel bounds give
\be
|\Psi_0(y,\xi)|\leq C C_*M_\xi(y), \qquad |K_\xi(y,t)|\frac{M_\xi(t)}{M_\xi(y)} \leq\frac{Ct}{1+  t (1+\xi^2)^{1/2}}.
\ee
For clarity, when \(y (1+\xi^2)^{1/2}\leq1\), these follow from \(J_\nu(x) =\Ocal(x^\nu)\), \(Y_\nu(x) =\Ocal(x^{-\nu})\). When \((1+\xi^2)^{1/2} t\geq1\), both Bessel functions are \(\Ocal(x^{-1/2})\), giving \(C/(1+\xi^2)^{1/2}\). In the remaining case \(t (1+\xi^2)^{1/2}\leq1\leq y (1+\xi^2)^{1/2}\), the mixed small- and large-argument bounds give \(Ct\). These bounds follow from the Bessel asymptotics, recalled for instance in \cite[equations~(2.5)--(2.8)]{CL2-2003}. The Bessel normalization in that reference differs from ours, changing the constants but not these inequalities. Variation of constants gives
\be
\Psi(y,\xi) =\Psi_0(y,\xi)+ \int_0^yK_\xi(y,t)\mathcal R(t)\Psi(t,\xi)\,dt.
\ee
After division by \(M_\xi(y)\), successive approximation is dominated by the iterates of the positive integral  operator with weight \(Ct|\mathcal R(t)|/(1+ t (1+\xi^2)^{1/2})\). Their factorial bounds prove existence with the weak normalization and the remainder estimate on every finite interval, with the same Bessel constants. Uniqueness of the weak Fourier branch identifies this solution with the one already constructed. To differentiate, we use \(K_\xi(y,y) =0\) and
\be
|\del_yK_\xi(y,t)|\frac{M_\xi(t)}{M_\xi(y)} \leq\frac{C( (1+\xi^2)^{1/2} +y^{-1})t}{1+ t (1+\xi^2)^{1/2}}.
\ee
The bound follows from the same three Bessel regimes: differentiating at \(y (1+\xi^2)^{1/2}\leq1\) costs \(C/y\), and at \(y (1+\xi^2)^{1/2}\geq1\) costs \(C(1+\xi^2)^{1/2}\). It provides an integrable majorant at \(t=0\) for each positive \(y\), so differentiation under the Volterra integral is justified. Inserting the bound for \(\Psi/M_\xi\) proves the equation~\eqref{eq:Bessel-derivative-remainder}. Dominated convergence with majorant \(t|\mathcal R(t)|\) proves \(\mathcal H_\xi(Y)\to0\).

\ese

{\bf 2.} \emph{Large acoustic distances.} To control large \(y\), we write \(z= \log\mu\) and \(b(z) = \sqrt{B(e^z)}\). On a sufficiently large half-line, \(b\) is bounded above and below, \(b_z\) is bounded, and \(b_z,b_{zz}\in L^1(dz)\), by~\eqref{eq:intro-infty-B}--\eqref{eq:intro-infty-r} and Lemma~\ref{lem:large-density-B-condition}. Since \(dy=b\,dz\), the identities
\be
{a_k}= \frac{b_z}{b^2}-\frac1b, \qquad {(a_k)_y}= \frac{b_{zz}}{b^3}-\frac{2b_z^2}{b^4}+ \frac{b_z}{b^3}
\ee
show that \(a_k\) is bounded and \((a_k)_y\in L^1(dy)\) there. The definition of \(\mathcal R\) therefore gives \(\int_{Y_0}^y|\mathcal R(t)|\,dt\leq C_0(1+y)\), after including the intervening compact interval. Using \(t/(1+ t (1+\xi^2)^{1/2})\leq(1+\xi^2)^{-1/2}\) proves the equation~\eqref{eq:Bessel-global-error-growth}.

\end{proof}

\noindent\emph{Remark on the construction.} The Volterra equation used in this proof also gives existence of the weak branch under Definition~\ref{def-pression}. We could therefore begin with this Bessel construction and then obtain the strong branch by reduction of order. We have chosen to present the elementary vacuum construction first, since it requires only \(B(\mu)=\Ocal(\mu^{\gamma-1})\), with \(\gamma>1\), and establishes both vacuum normalizations and uniqueness of the weak branch before we derive estimates uniform in frequency.

In the density variable, these estimates have explicit weights. We put
\be
\label{eq:Bessel-density-remainder-weights}
W(\mu,\xi) \coloneqq \frac{k(\mu)^{\nu+1/2}}{\sqrt{k'(\mu)}(1+ k(\mu) (1+\xi^2)^{1/2})^{\nu+1/2}}, \qquad r(\mu,\xi) \coloneqq \widehat{\alpha_{\chi}}(\mu,\xi) -\frac{\Psi_0(k(\mu),\xi)}{\sqrt{k'(\mu)}}.
\ee
The preceding proposition and the chain rule give, for every \(\mu>0\),
\be
\label{eq:Bessel-density-remainder-global}
\begin{aligned}
 |r(\mu,\xi)|&\leq C C_*W(\mu,\xi)\bigl(e^{C\mathcal H_\xi(k(\mu))}-1 \bigr),\\
 |\del_\mu r(\mu,\xi)|&\leq C C_* \Bigl( k'(\mu)((1+\xi^2)^{1/2}+k(\mu)^{-1})+
                  \frac{|k''(\mu)|}{k'(\mu)} \Bigr) 
 W(\mu,\xi)\bigl(e^{C\mathcal H_\xi(k(\mu))}-1 \bigr).
\end{aligned}
\ee
Near vacuum, \(W \simeq \mu(1+  k (1+\xi^2)^{1/2})^{-\nu-1/2}\) and \(\mathcal H_\xi(k)\leq H_0(k)\to0\). At large density, \(k \simeq 1+ \log\mu\), \(k' \simeq \mu^{-1}\), and \eqref{eq:Bessel-global-error-growth} bounds the exponential by a constant times \(\exp(C(1+ \log\mu)/(1+\xi^2)^{1/2})\). Here \(H_0(y) \coloneqq \int_0^yt|\mathcal R(t)|\,dt\). These global growth bounds are distinct from high-frequency asymptotic smallness.

\subsubsection{Simultaneous large-frequency and large-density limit}
For a density interval growing with frequency, we choose \(M(\xi)\geq1\)  such that \(\log M(\xi) =o((1+\xi^2)^{1/2})\), and put
\be
\label{eq:Bessel-remainder-derivative-weight}
A(\mu,\xi) \coloneqq k'(\mu)((1+\xi^2)^{1/2}+k(\mu)^{-1})+ \frac{|k''(\mu)|}{k'(\mu)}.
\ee
Then the estimates above imply
\be
\label{eq:Bessel-growing-density-window}
\sup_{0<\mu\leq M(\xi)} \Bigl( \frac{|r(\mu,\xi)|}{W(\mu,\xi)} + \frac{|\del_\mu r(\mu,\xi)|}{A(\mu,\xi)W(\mu,\xi)} \Bigr) \to 0, \qquad 
\xi \to \pm \infty.
\ee
Indeed, \(\mathcal H_\xi\) is nondecreasing in its argument, \(k(\mu)\leq C(1+ \log\mu)\) for \(\mu\geq1\), and \eqref{eq:Bessel-global-error-growth} bounds its supremum on this density interval by \(\mathcal H_\xi(Y_0)+C(1+ \log M(\xi))/(1+\xi^2)^{1/2}\). The preceding expression tends to zero, and the estimates also cover the portion where \(k(\mu)\leq Y_0\), by monotonicity. Thus \(M(\xi) = \exp((1+\xi^2)^{1/4})\), for example, gives an unbounded density interval including every positive density near vacuum. The denominators are the explicit envelopes of the remainder, not the oscillatory leading Bessel function, which has zeros.

\begin{proposition}[Polynomial Fourier decay of the kernel]
\label{cor:kernel-Sobolev-regularity}
Fix an admissible equation of state \(p\) in the sense of Definition~\ref{def-pression}. Set \(\nu= \lambda+1/2>0\) and \(k_{\min}(\mu) =\min\{k(\mu),1\}\). There is a constant \(C(\eps)\), independent of \(\mu>0\) and \(\xi\in\Rbb\), such that
\be
\label{eq:global-krl-frequency-decay}
|\widehat{\alpha_{\chi}}(\mu,\xi)| + \frac{\mu|\del_\mu\widehat{\alpha_{\chi}}(\mu,\xi)|}{1+ \sqrt{1+ \xi^2}\,k_{\min}(\mu)} \leq \frac{C\mu}{(1+ \sqrt{1+ \xi^2}\,k_{\min}(\mu))^{\nu+1/2}}.
\ee

At large density these bounds can be sharpened. Set \(d=\nu+1/2\) and recall \(B_\infty=\eps^2k_\infty^2/(1+\eps^2k_\infty^2)^2\). For every \(\delta>0\), there is \(C_\delta(\eps)\) such that
\bse
\be
\label{eq:weak-kernel-isothermal-growth}
\aligned
|\widehat{\alpha_\chi}(\mu,\xi)|+\frac{\mu|\del_\mu\widehat{\alpha_\chi}(\mu,\xi)|}{\sqrt{1+\xi^2}}
&\leq C_\delta(\eps)\sqrt\mu(1+\xi^2)^{-d/2}
\mu^{\sqrt{(1/4-(1+\xi^2)B_\infty)_+}+\delta},\qquad \mu\geq1.
\endaligned
\ee
For every \(\eta>0\), on \(|\xi|\geq\xi_\infty+\eta\), one has
\be
\label{eq:weak-kernel-isothermal-oscillatory}
|\widehat{\alpha_\chi}(\mu,\xi)|+\frac{\mu|\del_\mu\widehat{\alpha_\chi}(\mu,\xi)|}{\sqrt{1+\xi^2}}\leq C_\eta(\eps)\sqrt\mu(1+\xi^2)^{-d/2},\qquad \mu\geq1.
\ee
\ese
Thus the vacuum exponent determines the power of frequency, while the isothermal limit determines the large-density growth. The combined frequency scale in~\eqref{eq:global-krl-frequency-decay} is explicitly
\be
\sqrt{1+\xi^2}\,k(\mu)=\eps\sqrt{1+\xi^2}\,\frac{k(\mu)}\eps.
\ee
All constants here are for the fixed light-speed parameter; their stated independence concerns density and frequency.
\end{proposition}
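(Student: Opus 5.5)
We split the density range into three regimes: a vacuum-and-intermediate range $0<\mu\leq\mu_2$, where $k(\mu_2)=Y_0$ is a fixed small acoustic distance; a compact intermediate range; and large density $\mu\geq1$. On each we combine the Bessel comparison of Proposition~\ref{prop:Bessel-one-term-bound} with the energy estimates of Section~\ref{section-6-4}.

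\emph{Near vacuum.} On $k(\mu)\leq Y_0$ we use the density form~\eqref{eq:Bessel-density-remainder-global} of the remainder, together with the leading term $\Psi_0(k,\xi)/\sqrt{k'}$. The standard bound $|J_\nu(x)|\leq Cx^\nu(1+x)^{-\nu-1/2}$ gives
\[
|\Psi_0(y,\xi)|\leq CC_*M_\xi(y).
\]
Hence
\[
|\widehat{\alpha_\chi}|\leq CW(\mu,\xi)\bigl(1+e^{CH_0(Y_0)}\bigr).
\]
Since $k\simeq\mu^{q}$ and $k'\simeq\mu^{q-1}$ with $q=(\gamma-1)/2$, and $q(\nu+1/2)-(q-1)/2=1$ because $\nu+1/2=\lambda+1=1/(\gamma-1)$, we get
\[
W\simeq\mu\bigl(1+k\sqrt{1+\xi^2}\bigr)^{-\nu-1/2}.
\]
For the derivative we use $|J_\nu'|$ bounds and the second line of~\eqref{eq:Bessel-density-remainder-global}, with weight $A(\mu,\xi)$. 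Here $\mu A\lesssim\mu k'(\sqrt{1+\xi^2}+k^{-1})+\mu|k''|/k'\lesssim 1+k\sqrt{1+\xi^2}$, since $\mu k'/k$ and $\mu|k''|/k'$ are bounded near vacuum by the remainder conditions. This is exactly the extra factor $1+\sqrt{1+\xi^2}\,k_{\min}$ allowed in~\eqref{eq:global-krl-frequency-decay}.

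\emph{Intermediate and large density.} On $[\mu_2,1]$, or any compact interval, we use the frequency-weighted transfer matrix bound~\eqref{eq:connection-frequency-uniform}. The Cauchy data at $\mu_2$ satisfy
\[
|X|+(1+\xi^2)^{-1/2}|X_z|\lesssim(1+\xi^2)^{-d/2},
\]
with $d=\nu+1/2$, by the vacuum step. This data bound is preserved, which gives~\eqref{eq:global-krl-frequency-decay} there since $k_{\min}\simeq1$. For $\mu\geq1$ we apply Lemma~\ref{lem:isothermal-growth-frequency-regimes} with $d_0=-d$ and $z_0=0$. It yields~\eqref{eq:weak-kernel-isothermal-growth} and, on the supercritical range, \eqref{eq:weak-kernel-isothermal-oscillatory}.

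For the global linear bound~\eqref{eq:global-krl-frequency-decay} at large $\mu$ we need the exponent $1/2+\sqrt{(1/4-(1+\xi^2)B_\infty)_+}+\delta\leq1$. Choosing $\delta$ small works except near $\xi=0$, where the exponent approaches $1/(1+\eps^2k_\infty^2)<1$. So $\delta<\eps^2k_\infty^2/(1+\eps^2k_\infty^2)$ suffices uniformly in $\xi$, and $\sqrt\mu\,\mu^{\text{exponent}-1/2}\leq\mu$ for $\mu\geq1$. The derivative weight $(1+\xi^2)^{-1/2}$ in~\eqref{eq:weak-kernel-isothermal-growth} is matched to $(1+\sqrt{1+\xi^2})^{-1}$ in~\eqref{eq:global-krl-frequency-decay}.

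\emph{Main obstacle.} The delicate step is the vacuum regime uniformly in frequency, specifically the derivative bound. One must check that the $y^{-1}$ and $|k''|/k'$ terms in~\eqref{eq:Bessel-density-remainder-global} are absorbed into $\mu^{-1}(1+k\sqrt{1+\xi^2})$. This needs $\mu k''/k'$ bounded, which we extract from the identity $k_t=b$ and $b_t/b=q+e(t)$ in the proof of Lemma~\ref{lem:Bessel-weighted-remainder}; there $e$ is bounded on a vacuum half-line because $\mu r_-'\to0$. A second point needs care: the Bessel remainder exponential $e^{C\mathcal H_\xi}$ must stay bounded on $k\leq Y_0$ uniformly in $\xi$. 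This follows from $\mathcal H_\xi\leq H_0(Y_0)<\infty$, by Lemma~\ref{lem:Bessel-weighted-remainder}.
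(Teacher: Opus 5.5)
Your argument is correct. Its architecture matches the paper's: a Bessel comparison near vacuum, transfer of frequency-weighted Cauchy data across a compact interval, then Lemma~\ref{lem:isothermal-growth-frequency-regimes} with $d_0=-d$ at large density. Two steps, however, are carried out differently.

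\emph{Derivative near vacuum.} The paper bounds only $\widehat{\alpha_\chi}$ by the Bessel estimate. It then obtains $\mu|\del_\mu\widehat{\alpha_\chi}|$ by Taylor's formula on an interval of length $c\mu/(1+k\sqrt{1+\xi^2})$, using $|\del_\mu^2\widehat{\alpha_\chi}|\leq C(1+\xi^2)k^2\mu^{-2}|\widehat{\alpha_\chi}|$ from the Fourier equation. You instead use the differentiated remainder \eqref{eq:Bessel-derivative-remainder} together with $J_\nu'$ bounds. This costs the extra bookkeeping of $\mu k''/k'$, which is bounded since $\mu k''/k'=\mu B_\mu/(2B)-1$ and Lemma~\ref{lem:section-7-logarithmic-vacuum-control} applies. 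It is otherwise equally valid.

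\emph{Linear bound at large density.} The paper proves this separately through the coarse energy estimate of Proposition~\ref{prop:global-density-Fourier-bound}, with a Taylor step to bound the initial energy. That estimate has exponent $1/(4\sqrt{(1+\xi^2)b_0+1/4})<1/2$ at every frequency with no $\delta$-loss, and uses only $\inf B>0$ and the finite variation of $\log B$. The isothermal refinements then come in a third step. You instead deduce the linear bound from the refinement by fixing $\delta<\eps^2k_\infty^2/(1+\eps^2k_\infty^2)$. This is shorter, and it is exactly the remark the paper makes for the strong branch. Your use of \eqref{eq:connection-frequency-uniform} on the compact interval, in place of the Bessel bound combined with \eqref{eq:Bessel-global-error-growth}, is also fine.

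One slip needs fixing: $\nu+1/2=\lambda+1=1/2+1/(\gamma-1)$, not $1/(\gamma-1)$. The identity $q(\nu+1/2)-(q-1)/2=1$ does hold, but the reason is $\nu=1/(\gamma-1)$, that is, $q\nu=1/2$.
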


\begin{proof}
{\bf 1.} \emph{Vacuum and bounded densities.} We first consider \(0<\mu\leq\mu_{\rm vac}\). The Bessel estimate and the weighted integrability of \(\mathcal R\) give
\be
|\widehat{\alpha_{\chi}}(\mu,\xi)| \leq\frac{C C_* k(\mu)^{\nu+1/2}}{\sqrt{k'(\mu)}(1+ k(\mu) (1+\xi^2)^{1/2})^{\nu+1/2}}.
\ee
Writing \(q=(\gamma-1)/2\), we have \(k(\mu) \simeq \mu^q\), \(k'(\mu) \simeq \mu^{q-1}\), and \(q\nu=1/2\). The normalization satisfies \(C_*k^{\nu+1/2}/\sqrt{k'}\sim2^\nu\Gamma(\nu+1)\mu\), so the powers of \(K=\eps\sqrt{\kappa_0}/q\) cancel in the leading vacuum coefficient. This proves the equation~\eqref{eq:global-krl-frequency-decay} near vacuum. On every fixed compact positive-density interval, the same Bessel bound gives \(C (1+\xi^2)^{-(\nu+1/2)/2}\).

\bse
{\bf 2.} \emph{Global coarse bounds.} We next extend the frequency decay to arbitrarily large density. We choose \(\mu_0>0\) as in  \autoref{prop:global-density-Fourier-bound} and use a fixed compact neighborhood of \(\mu_0\). On this neighborhood, the Fourier equation gives \(|\del_\mu^2\widehat{\alpha_{\chi}}|\leq C (1+\xi^2) |\widehat{\alpha_{\chi}}|\). Taylor's formula with an increment    \( \delta_\mu=c/(1+\xi^2)^{1/2}\), contained in the neighborhood, yields
\be
|\del_\mu\widehat{\alpha_{\chi}}(\mu_0,\xi)| \leq (2/\delta_\mu+C\delta_\mu (1+\xi^2))\sup_{|\mu-\mu_0|\leq\delta_\mu}|\widehat{\alpha_{\chi}}(\mu,\xi)| \leq C(1+\xi^2)^{1/4-\nu/2}.
\ee
It follows that the energy in that proposition satisfies \(H(\log\mu_0,\xi)^{1/2}\leq C (1+\xi^2)^{1/4-\nu/2}\). We keep the factor \((1+\xi^2)^{-1/2}\) in the estimate \(|X|\leq H^{1/2}/( (1+\xi^2)^{1/2}\sqrt{b_0})\), obtaining
\be
|\widehat{\alpha_{\chi}}(\mu,\xi)| \leq C\sqrt\mu\,(1+\xi^2)^{-\nu/2-1/4} \Bigl( \frac{\mu}{\mu_0} \Bigr) ^{1/(4\sqrt{(1+\xi^2) b_0+1/4})} \leq C\mu (1+\xi^2)^{-\nu/2-1/4}, \qquad \mu\geq\mu_0.
\ee
The same energy estimate gives \(\mu|\del_\mu\widehat{\alpha_{\chi}}|\leq C\mu (1+\xi^2)^{1/4-\nu/2}\) on this half-line. Near vacuum we apply Taylor's formula on an interval of length    \(\delta_\mu=c\mu/(1+  k(\mu) (1+\xi^2)^{1/2})\), with \(c>0\) sufficiently small. Throughout that interval the density is comparable to \(\mu\), the distance is comparable to \(k(\mu)\), and \(|\del_\mu^2\widehat{\alpha_{\chi}}|\leq C (1+\xi^2) k(\mu)^2\mu^{-2}|\widehat{\alpha_{\chi}}|\). The already proved bound for \(\widehat{\alpha_{\chi}}\) therefore gives
\be
\mu|\del_\mu\widehat{\alpha_{\chi}}(\mu,\xi)| \leq C(1+ k(\mu) (1+\xi^2)^{1/2}) \frac{\mu}{(1+ k(\mu) (1+\xi^2)^{1/2})^{\nu+1/2}}.
\ee
The estimate on the intervening compact density interval completes the proof of the estimate~\eqref{eq:global-krl-frequency-decay}.

\ese

{\bf 3.} \emph{Isothermal growth rates.} On any fixed positive-density interval, the Bessel estimate and its differentiated form give
\be
|X^{\rm weak}|+(1+\xi^2)^{-1/2}|X_z^{\rm weak}|\leq C(\eps)(1+\xi^2)^{-d/2}.
\ee
Applying Lemma~\ref{lem:isothermal-growth-frequency-regimes} with \(d_0=-d\) proves~\eqref{eq:weak-kernel-isothermal-growth} and~\eqref{eq:weak-kernel-isothermal-oscillatory}; a fixed compact density interval fills the range down to \(\mu=1\). The loss \(\delta\) allows for an arbitrarily slow approach of \(B\) to its isothermal limit. At zero frequency the weak solution is \(\Numb(\mu)\), with \(\Numb'/\Numb=1/h\), so
\be
\frac{\log\Numb(\mu)}{\log\mu}\longrightarrow\frac1{1+\eps^2k_\infty^2}\qquad(\mu\to+\infty).
\ee
This identifies the largest limiting density exponent in~\eqref{eq:weak-kernel-isothermal-growth}. The hypotheses do not impose a rate on the remainder \(r_+\), so a bound by a constant times exactly this power is not asserted.

\end{proof}

\begin{corollary}[Sobolev regularity of the kernel in physical variables]
\label{cor:kernel-physical-Sobolev}
With the notation of Proposition~\ref{cor:kernel-Sobolev-regularity}, for \(0\leq\ell<\nu\), one has
\be
\label{eq:global-krl-Sobolev}
\|\alpha_{\chi}(\mu,\cdot)\|_{H^{\ell}(\Rbb)} \leq C_{\ell}\mu\,k_{\min}(\mu)^{- \ell-1/2}, \qquad \mu>0.
\ee
In particular, near vacuum this bound is \(C_{\ell}\mu^{1- (\gamma-1)( \ell+1/2)/2}\), while at large density it improves to \(C_{\ell,\delta}(\eps)\mu^{1/(1+\eps^2k_\infty^2)+\delta}\) for every \(\delta>0\), by~\eqref{eq:weak-kernel-isothermal-growth}. Thus the kernel belongs to \(L^2\) for every positive density and every \(\gamma>1\). Its \(L^1\) norm satisfies \(\|\alpha_{\chi}(\mu)\|_{L^1}\leq C\mu\sqrt{k(\mu)/k_{\min}(\mu)}\); near vacuum this is \(C\mu\). Once compact support is proved in Lemma~\ref{prop:kernel-positive-finite-speed-prelim}, below, these bounds will justify convolution with any continuous vacuum profile.
\end{corollary}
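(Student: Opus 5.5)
The plan is to apply Plancherel to the pointwise Fourier bound~\eqref{eq:global-krl-frequency-decay} of Proposition~\ref{cor:kernel-Sobolev-regularity}. With $\langle\xi\rangle\coloneqq\sqrt{1+\xi^2}$ and $\kappa\coloneqq k_{\min}(\mu)\in(0,1]$, we have
\[
\|\alpha_\chi(\mu,\cdot)\|_{H^\ell}^2=\frac1{2\pi}\int_\Rbb\langle\xi\rangle^{2\ell}|\widehat{\alpha_\chi}(\mu,\xi)|^2\,d\xi\leq C\mu^2\int_\Rbb\frac{\langle\xi\rangle^{2\ell}\,d\xi}{(1+\langle\xi\rangle\kappa)^{2\nu+1}} .
\]
We split the integral at $|\xi|\simeq\kappa^{-1}$, which is at least of order one because $\kappa\leq1$. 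On $|\xi|\leq\kappa^{-1}$ the denominator is bounded below by one, and the contribution is at most $C\kappa^{-2\ell-1}$. On $|\xi|\geq\kappa^{-1}$ the integrand is bounded by $\kappa^{-2\nu-1}|\xi|^{2\ell-2\nu-1}$. This is integrable at infinity precisely because $\ell<\nu$, and it again gives $C_\ell\kappa^{-2\nu-1}\kappa^{2\nu-2\ell}=C_\ell\kappa^{-2\ell-1}$. Taking square roots yields~\eqref{eq:global-krl-Sobolev}, and the only delicate point is the strict inequality $\ell<\nu$. The near-vacuum form follows from $k(\mu)\simeq\mu^{(\gamma-1)/2}$, which comes from Proposition~\ref{prop:pressure-endpoint-asymptotics} (equivalently, from $k\sim2c/(\gamma-1)$ recalled in Section~\ref{section-5-5}).

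For the large-density improvement, we use~\eqref{eq:weak-kernel-isothermal-growth} instead, which gives the pointwise bound
\[
|\widehat{\alpha_\chi}|\leq C_\delta\,\mu^{1/2+\sqrt{1/4-B_\infty}+\delta}\langle\xi\rangle^{-d},\qquad d=\nu+\tfrac12 ,
\]
after bounding the frequency-dependent exponent $\sqrt{(1/4-\langle\xi\rangle^2B_\infty)_+}$ by its value at $\xi=0$. The identity at the end of Lemma~\ref{lem:isothermal-growth-frequency-regimes} converts $\tfrac12+\sqrt{1/4-B_\infty}$ into $1/(1+\eps^2k_\infty^2)$. Since $\langle\xi\rangle^{2\ell-2d}$ is integrable for $\ell<\nu$, Plancherel gives the claimed power $\mu^{1/(1+\eps^2k_\infty^2)+\delta}$. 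The $L^2$ statement is the case $\ell=0$, which is allowed because $\nu>0$.

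For the $L^1$ bound, we use the support of $\alpha_\chi(\mu,\cdot)$ in $[-k(\mu),k(\mu)]$; this is proved later in Lemma~\ref{prop:kernel-positive-finite-speed-prelim}, which the statement explicitly defers to. Cauchy--Schwarz on this interval gives
\[
\|\alpha_\chi\|_{L^1}\leq\sqrt{2k(\mu)}\,\|\alpha_\chi\|_{L^2}\leq C\mu\sqrt{k(\mu)}\,k_{\min}^{-1/2}=C\mu\sqrt{k/k_{\min}} .
\]
Near vacuum $k=k_{\min}$, so this reduces to $C\mu$. Convolution against a continuous profile then converges absolutely, being an integral of an $L^1$ function with compact support against a locally bounded function.
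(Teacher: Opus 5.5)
Your proposal is correct and follows the paper's proof essentially verbatim. You apply Plancherel to the Fourier bound~\eqref{eq:global-krl-frequency-decay}, splitting the frequency integral at \(|\xi|=k_{\min}(\mu)^{-1}\); you feed the same integral with~\eqref{eq:weak-kernel-isothermal-growth} for the large-density refinement, and you use Cauchy--Schwarz on the acoustic support for the \(L^1\) bound. Your explicit checks that the frequency-dependent exponent is largest at \(\xi=0\) when \(\mu\geq1\), and that \(\tfrac12+\sqrt{1/4-B_\infty}=1/(1+\eps^2k_\infty^2)\), only spell out steps the paper leaves implicit.
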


\begin{proof}
With \(a=k_{\min}(\mu)\in(0,1]\), splitting the frequency integral at \(|\xi|=a^{-1}\) gives, for \(0\leq  \ell<\nu\),
\be
\int_\Rbb\frac{(1+ \xi^2)^{\ell}}{(1+a\sqrt{1+ \xi^2})^{2\nu+1}}\,d\xi \leq C_{\ell} a^{-2 \ell-1}.
\ee
Plancherel's identity proves \eqref{eq:global-krl-Sobolev}; the same integral with~\eqref{eq:weak-kernel-isothermal-growth} proves the large-density refinement. Lemma~\ref{prop:kernel-positive-finite-speed-prelim}, proved below,  gives the support bound; together with the bound~\eqref{eq:global-krl-Sobolev} at \(\ell=0\), Cauchy--Schwarz then gives \(L^1\) integrability.
\end{proof}

\subsection{The strong branch}
\label{section-strong-kernel} 

\subsubsection{Strong branch}

The strong representative has already been selected by the equation~\eqref{eq:strong-Cauchy-selection}. We now use the sharper weak-branch estimates to bound that representative polynomially in frequency and justify its inverse Fourier transform and vacuum trace.

\begin{proposition}[The selected strong branch]
\label{prop:section-8-selected-strong-branch}
Fix an admissible equation of state \(p\) in the sense of Definition~\ref{def-pression}. Let \(z_*\in\Rbb\) be a reference point and let \(X^{\rm strong}\) be defined by the equation~\eqref{eq:strong-Cauchy-selection}. This function is a representative of the affine strong class in \autoref{theo:section-7-fourier-branches}, depends continuously on the real frequency, and satisfies the compact-density estimate~\eqref{eq:strong-compact-frequency-bound} and the global estimate~\eqref{eq:strong-global-density-bound}, below. Consequently, the inverse Fourier transform \(\alpha_{\chi}^{\rm strong}(\mu,\cdot)\) is a tempered distribution for every \(\mu>0\). With \(d_{\rm F}=\nu+3/2\), the following assertions hold. For every \( \ell>d_{\rm F}+1/2\), it satisfies the global bound and  vacuum limits in the equation~\eqref{eq:strong-vacuum-Sobolev-trace}, below. Moreover, \(\alpha_{\chi}^{\rm strong}\in C^2((0,+ \infty);H^{- \ell}(\Rbb))\) for \(\ell>\nu+3\). The constants may depend on \(\eps\) and \(z_*\). The large-density refinements~\eqref{eq:strong-isothermal-frequency-growth} and~\eqref{eq:strong-isothermal-oscillatory}, below, also hold. Near vacuum the quantitative estimate~\eqref{eq:strong-vacuum-rate-frequency}, below, yields convergence at rate \(R_\gamma(\mu)\) in \(H^{-\ell}\) whenever \(\ell>\nu+4\).
\end{proposition}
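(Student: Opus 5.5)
We plan to work directly from the Cauchy-data formula~\eqref{eq:strong-Cauchy-selection}, $X^{\rm strong}=(b\,C-a\,S)/(a^2+b^2)$ with $(a,b)=(X^{\rm weak},X^{\rm weak}_z)(z_*,\xi)$. Membership in the affine strong class, the Wronskian identity $W(X^{\rm weak},X^{\rm strong})=-1$ and continuity in real frequency were already obtained in the proof of Theorem~\ref{theo:section-7-fourier-branches}, step 3. So the work is quantitative.

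\textbf{Compact-density bound.} The first goal is a polynomial compact-density bound, which will be~\eqref{eq:strong-compact-frequency-bound}. Its numerator is controlled by the weighted fundamental-matrix bound~\eqref{eq:connection-frequency-uniform}: $|C|+|S|\lesssim(1+|\xi|)$ and $|C_z|+|S_z|\lesssim(1+|\xi|)^{2}$ on compact $I$. The upper bounds on $(a,b)$ come from Proposition~\ref{cor:kernel-Sobolev-regularity}.

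The key step is a \emph{lower} bound for the denominator $a^2+b^2$ at large $|\xi|$. For this we would use the one-term Bessel expansion of Proposition~\ref{prop:Bessel-one-term-bound} at the fixed acoustic distance $y_*=k(e^{z_*})$. There $\Psi_0\sim C_*(1+\xi^2)^{-\nu/2-1/4}\sqrt{2/\pi}\cos(y_*\sqrt{1+\xi^2}-\phi_\nu)$ with relative error $o(1)$, and the derivative is the corresponding sine term times $\sqrt{1+\xi^2}$. Since $\cos^2+\sin^2=1$, after converting back through $\sqrt{k'}$ and $e^{z/2}$ we get $a^2+(1+\xi^2)^{-1}b^2\gtrsim(1+\xi^2)^{-\nu-1/2}$ for large $|\xi|$. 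Hence $a^2+b^2\gtrsim(1+\xi^2)^{-\nu-1/2}$, and the inverse is polynomial. This is where the exponent $d_{\rm F}=\nu+3/2$ should come from: $\nu+1/2$ from the denominator, plus one power from the frequency weights of $C,S$. Bounded frequencies are handled by continuity and the positive minimum of $a^2+b^2$.

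\textbf{Global density bound.} For~\eqref{eq:strong-global-density-bound} we would propagate from $z_*$ in both directions. Toward large density, apply Proposition~\ref{prop:global-density-Fourier-bound}, giving weight $\mu$ uniformly across the thresholds. Lemma~\ref{lem:isothermal-growth-frequency-regimes} with $d_0=d_{\rm F}$ then gives the isothermal refinements~\eqref{eq:strong-isothermal-frequency-growth}--\eqref{eq:strong-isothermal-oscillatory}.

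Toward vacuum, we would use the Wronskian reduction of order rather than forward energy estimates, which lose control for a growing solution. From $W=-1$ we get $X^{\rm strong}=X^{\rm weak}\bigl(c(\xi)+\int_z^{z_*}(X^{\rm weak})^{-2}\bigr)$ wherever $X^{\rm weak}\neq0$. Zeros of $X^{\rm weak}$ are avoided for $z$ below the transition scale, where the rescaled Volterra estimate gives $X^{\rm weak}\simeq\mu^{1/2}$ for $\mu\le\mu_\xi$ uniformly. On $[\mu_\xi,\mu_{\rm vac}]$ we would instead use the weighted energy $E/V$ run backward, with the polynomial initial data at $z_{\rm vac}$. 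Below $\mu_\xi$ the explicit formula yields $|e^{z/2}X^{\rm strong}-1|\lesssim(1+|\xi|)^{N}R_\gamma(\mu)$: the Volterra remainder for $y_{\rm weak}$ is controlled by $H_\xi(\mu)\lesssim(1+\xi^2)k(\mu)^2$, and the $\mu\,y'$ estimate follows as in Proposition~\ref{thm:olver-vacuum}. This gives~\eqref{eq:strong-vacuum-rate-frequency}.

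\textbf{Distributional conclusions.} Tempered inversion, the $H^{-\ell}$ bound and the vacuum trace $\alpha_\chi^{\rm strong}\to\delta_0$ in~\eqref{eq:strong-vacuum-Sobolev-trace} follow by Plancherel. For this we divide the polynomial bound by $(1+\xi^2)^{\ell/2}$, which is square-integrable once $\ell>d_{\rm F}+1/2$, and pass to the limit with dominated convergence. The rate statement requires $\ell>\nu+4$ to absorb the extra frequency powers in the $R_\gamma$ estimate.

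For $C^2$ regularity in $H^{-\ell}$, the equation gives $\del_\mu^2\widehat{\alpha}=-\alpha_0\widehat\alpha$, which costs $(1+\xi^2)$, i.e. two more derivatives. Hence $\ell>\nu+3$ suffices, using local uniformity of the bounds on compact density intervals.

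The main obstacle is the uniform lower bound on $a^2+b^2$. It needs the remainder $e^{C\mathcal H_\xi(y_*)}-1$ to be small, which holds because $\mathcal H_\xi(y_*)\to0$ as $|\xi|\to\infty$. It also needs precise control of the derivative term through~\eqref{eq:Bessel-derivative-remainder}, so that the cosine and sine phases cannot vanish simultaneously.
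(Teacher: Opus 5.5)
Your lower bound on $a^2+(1+\xi^2)^{-1}b^2$ from the Bessel phase pair is the right key input; it is the paper's \eqref{eq:weak-Cauchy-two-sided-frequency}. The way you convert it into \eqref{eq:strong-compact-frequency-bound}, however, loses powers of frequency, and the proposition is precisely about those powers. Write $\langle\xi\rangle=(1+\xi^2)^{1/2}$ and $d=\nu+1/2$.

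\textbf{The compact-density exponent.} First, \eqref{eq:connection-frequency-uniform} says that $C$, $\langle\xi\rangle S$, $\langle\xi\rangle^{-1}C_z$ and $S_z$ are bounded. Thus $|C|\lesssim1$, $|S|\lesssim\langle\xi\rangle^{-1}$, $|C_z|\lesssim\langle\xi\rangle$ and $|S_z|\lesssim1$, which is stronger than what you wrote. Second, and more importantly, you bound $|bC-aS|$ from above and $a^2+b^2$ from below separately. Since $|b|$ can reach $\langle\xi\rangle^{1-d}$ only where $a^2+b^2\geq b^2$ is correspondingly large, this discards a cancellation. With your inputs it gives $|X^{\rm strong}|\lesssim\langle\xi\rangle^{d+2}$. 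Even with the correct propagator bounds it gives only $\langle\xi\rangle^{d+1}$, instead of $\langle\xi\rangle^{d}$.

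The missing observation is that the Cauchy vector $(b,-a)/(a^2+b^2)$ has Euclidean norm exactly $(a^2+b^2)^{-1/2}\leq C\langle\xi\rangle^{d}$; equivalently, $|bC-aS|\leq(a^2+b^2)^{1/2}(C^2+S^2)^{1/2}$. Feeding this into the weighted propagator gives $|X^{\rm strong}|+\langle\xi\rangle^{-1}|X^{\rm strong}_z|\leq C_I\langle\xi\rangle^{d}$.

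Your accounting of $d_{\rm F}$ is also off. The extra unit in $d_{\rm F}=d+1$ does not come from $C,S$; it comes from passing to the unweighted derivative. Near vacuum the energy $H$ contains $|X_z|^2$, and at large density one multiplies $\mu|f_\mu|/\sqrt{1+\xi^2}$ by $\sqrt{1+\xi^2}$. So Lemma~\ref{lem:isothermal-growth-frequency-regimes} must be applied with $d_0=d$, not $d_0=d_{\rm F}$. As written, you would obtain \eqref{eq:strong-isothermal-frequency-growth}--\eqref{eq:strong-isothermal-oscillatory} only with $(1+\xi^2)^{d_{\rm F}/2}$, and a worse global power. The thresholds $\ell>d_{\rm F}+1/2$, $\ell>\nu+3$ and $\ell>\nu+4$ would then all move up; note that $\ell>\nu+3$ needs $(1+\xi^2)^{(d+j)/2}$ for the $j$-th density derivative on compact intervals.

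\textbf{Near vacuum.} Your claim that energy estimates lose control of the growing branch is mistaken. Where $B_z\geq0$, the canonical energy $H=|X_z|^2+((1+\xi^2)B+1/4)|X|^2$ satisfies $H_z\geq-H$, so $\mu H$ is nondecreasing in $z$. This gives $|f|+\mu|f_\mu|\lesssim(\mu_{\rm vac}H(\log\mu_{\rm vac}))^{1/2}\lesssim\langle\xi\rangle^{d+1}$ on all of $(0,\mu_{\rm vac}]$ at once. Integrating $f_{\mu\mu}=-(1+\xi^2)B\mu^{-2}f$ twice then yields \eqref{eq:strong-vacuum-rate-frequency} with the explicit factor $(1+\xi^2)^{(d+3)/2}$, which is exactly why $\ell>d+7/2=\nu+4$ suffices.

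Your detour has its own defects. $E/V$ is nonincreasing in $z$, so it grows as you run toward vacuum. It is also undefined on the part of $[\mu_\xi,\mu_\xi^+]$ where $V\leq0$; at $\mu_\xi$ one has $V=\beta_\gamma-1/4<0$. You would need $E$ itself, which is nondecreasing in $z$, on $[\mu_\xi^+,\mu_{\rm vac}]$, together with a bounded-coefficient Gronwall step across the layer. The reduction-of-order formula below $\mu_\xi$ can then be made to work. However, you leave the power $N$ unspecified, so the threshold $\ell>\nu+4$ is asserted rather than derived.
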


The spatial component will be constructed in~\autoref{section-strong-flux-kernel}. To bound the density branch, we first quantify the nonvanishing denominator in the equation~\eqref{eq:strong-Cauchy-selection} by the differentiated Bessel estimate. We set \(d\coloneqq\nu+1/2= \lambda+1\).
At the reference point \(z_*\), the weak  initial data \(a= X^{\rm weak}(z_*,\xi)\), \(b= X_z^{\rm weak}(z_*,\xi)\) satisfy
\be
\label{eq:weak-Cauchy-two-sided-frequency}
c_{\rm ref} (1+\xi^2)^{-d}\leq a(\xi)^2+ (1+\xi^2)^{-1} b(\xi)^2 \leq C_{\rm ref}(1+\xi^2)^{-d}, \qquad \xi\in\RR.
\ee
Here the positive constants depend on the reference density and on the pressure law. To prove the large-frequency bounds, we put \(y_*\coloneqq k(e^{z_*})\). The Bessel asymptotics and \eqref{eq:Bessel-derivative-remainder} give
\be
\label{eq:weak-Cauchy-phase-pair}
 \begin{pmatrix}\Psi(y_*,\xi)\\ (1+\xi^2)^{-1/2}\Psi_y(y_*,\xi)\end{pmatrix}
 =A_* (1+\xi^2)^{-d/2}
 \left[\begin{pmatrix}\cos\omega_*\\-\sin\omega_*\end{pmatrix}+o(1)\right],
 \qquad \omega_*=y_*(1+\xi^2)^{1/2}-\frac{\pi\nu}{2}-\frac\pi4,
\ee
where \(A_*>0\). The error tends to zero as \(|\xi|\to+ \infty\). The change from \((\Psi,(1+\xi^2)^{-1/2} \Psi_y)\) to \((a,(1+\xi^2)^{-1/2} b)\) is triangular: writing \(\ell(z) \coloneqq e^{-z/2}/\sqrt{k'(e^z)}\), it has diagonal entries \(\ell(z_*)\) and \(\ell(z_*)e^{z_*}k'(e^{z_*})\), and lower off-diagonal entry \((1+\xi^2)^{-1/2} \ell_z(z_*)\). Its norm and inverse norm are uniformly bounded. The identity \(\cos^2\omega_*+ \sin^2\omega_*=1\) gives both large-frequency bounds; continuity and the nonvanishing initial vector give them on bounded frequency intervals.

\noindent\emph{Frequency growth for the chosen representative.} The exponent \(d\) in~\eqref{eq:strong-compact-frequency-bound}, below, cannot be lowered uniformly on an interval containing \(z_*\). Indeed, the phase formula~\eqref{eq:weak-Cauchy-phase-pair} gives opposite signs for \(b-a\) at frequencies whose phases are \(j\pi-\pi/4\) and \(j\pi+\pi/4\), for all sufficiently large integers \(j\). By continuity, there is a frequency between them with \(b=a\). The two-sided bound~\eqref{eq:weak-Cauchy-two-sided-frequency} then gives \(|a|\simeq(1+\xi^2)^{-d/2}\), whereas the selection~\eqref{eq:strong-Cauchy-selection} gives \(|X^{\rm strong}(z_*,\xi)|=1/(2|a|)\). Thus the power \((1+\xi^2)^{d/2}\) is attained along an unbounded sequence. This assertion concerns the selected representative; the global Sobolev bounds below are not claimed to have optimal order.

\subsubsection{Selection at a positive density}

The strong initial vector \((b,-a)/(a^2+b^2)\) has Euclidean norm \((a^2+b^2)^{-1/2}\leq C(1+\xi^2)^{d/2}\). Applying the weighted propagation estimate~\eqref{eq:connection-frequency-uniform} on a compact logarithmic-density interval \(I\) containing \(z_*\) gives
\be
\label{eq:strong-compact-frequency-bound}
\sup_{z\in I} \Bigl( |X^{\rm strong}(z,\xi)|+ (1+\xi^2)^{-1/2} |X_z^{\rm strong}(z,\xi)| \Bigr) \leq C_I (1+\xi^2)^{d/2}.
\ee
 The branch and its first two density derivatives therefore have tempered inverse transforms on compact positive-density intervals. Compact support is established below for the weak kernel only.

\subsubsection{Bounds near vacuum}

We put \(f(\mu,\xi)=\sqrt\mu X^{\rm strong}(\log\mu,\xi)\) and \(d_{\rm F}=d+1=\nu+3/2\). For every \(\delta>0\), the global bound is
\bel{eq:strong-global-density-bound}
|f(\mu,\xi)|+\mu|f_\mu(\mu,\xi)|
\leq C_\delta(\eps)(1+\mu)^{1/(1+\eps^2k_\infty^2)+\delta}(1+\xi^2)^{d_{\rm F}/2},\qquad \mu>0.
\ee
To prove it near vacuum, we choose a fixed \(\mu_{\rm vac}<e^{z_*}\) so small that \(B_z\geq0\) for \(z\leq\log\mu_{\rm vac}\), as guaranteed by~\eqref{eq:section-7-logarithmic-vacuum-control}. The canonical energy satisfies
\be
H=|X_z|^2+\bigl((1+\xi^2)B+1/4\bigr)|X|^2,\qquad
H_z=\operatorname{Re}(X\overline{X_z})+(1+\xi^2)B_z|X|^2\geq-H.
\ee
Consequently \(\mu H\) is nondecreasing in logarithmic density on this interval. The compact-density bound~\eqref{eq:strong-compact-frequency-bound} gives \(H(\log\mu_{\rm vac},\xi)\leq C(1+\xi^2)^{d+1}\). Since \(f=\sqrt\mu X\) and \(\mu f_\mu=\sqrt\mu(X_z+X/2)\), it follows that
\be
|f(\mu,\xi)|+\mu|f_\mu(\mu,\xi)|\leq C(\eps)(1+\xi^2)^{(d+1)/2},\qquad0<\mu\leq\mu_{\rm vac}.
\ee
This proves~\eqref{eq:strong-global-density-bound} near vacuum without a frequency-dependent density cutoff.

The vacuum normalization has a quantitative version. With the function \(R_\gamma\) defined in Proposition~\ref{thm:olver-vacuum}, one has
\be
\label{eq:strong-vacuum-rate-frequency}
|f(\mu,\xi)-1|+\mu|f_\mu(\mu,\xi)|
\leq C(\eps)(1+\xi^2)^{(d+3)/2}R_\gamma(\mu),\qquad0<\mu\leq\mu_{\rm vac}.
\ee
Indeed, the equation for \(f\) and the bound just proved give \(|f_{\mu\mu}|\leq C(1+\xi^2)^{(d+3)/2}\mu^{\gamma-3}\). Integration from \(\mu\) to \(\mu_{\rm vac}\), followed by multiplication by \(\mu\), bounds \(\mu|f_\mu|\) by the right-hand side of~\eqref{eq:strong-vacuum-rate-frequency}. The three cases \(\gamma<2\), \(\gamma=2\), and \(\gamma>2\) give precisely the three definitions of \(R_\gamma\). Integrating once more from zero and using \(f(0,\xi)=1\) proves the bound for \(f-1\).

\subsubsection{Bounds at large density}

Applying Lemma~\ref{lem:isothermal-growth-frequency-regimes} to the compact-density data~\eqref{eq:strong-compact-frequency-bound}, with \(d_0=d\), gives, for every \(\delta>0\),
\be
\label{eq:strong-isothermal-frequency-growth}
\aligned
|f(\mu,\xi)|+\frac{\mu|f_\mu(\mu,\xi)|}{\sqrt{1+\xi^2}}
&\leq C_\delta(\eps)\sqrt\mu(1+\xi^2)^{d/2}
\mu^{\sqrt{(1/4-(1+\xi^2)B_\infty)_+}+\delta},\qquad\mu\geq1.
\endaligned
\ee
For every \(\eta>0\), the range \(|\xi|\geq\xi_\infty+\eta\) has the sharper bound
\be
\label{eq:strong-isothermal-oscillatory}
|f(\mu,\xi)|+\frac{\mu|f_\mu(\mu,\xi)|}{\sqrt{1+\xi^2}}
\leq C_\eta(\eps)\sqrt\mu(1+\xi^2)^{d/2},\qquad\mu\geq1.
\ee
The largest exponent in~\eqref{eq:strong-isothermal-frequency-growth} is \(1/(1+\eps^2k_\infty^2)+\delta\). Multiplying its derivative term by \(\sqrt{1+\xi^2}\) proves~\eqref{eq:strong-global-density-bound} at large density; the compact-density estimate covers the remaining interval. Choosing \(\delta<1-1/(1+\eps^2k_\infty^2)\) recovers the former linear growth bound as a consequence. The estimates distinguish exponential growth, oscillatory growth, and the threshold regime, without imposing a rate of convergence on the isothermal pressure remainder.

\subsubsection{The prescribed vacuum value}

The differentiated strong vacuum normalization gives \(f(\mu,\xi)\to1\) and \(\mu f_\mu(\mu,\xi)\to0\) at each real frequency. The polynomial majorant in the estimate~\eqref{eq:strong-global-density-bound} permits dominated convergence against Schwartz functions. Consequently, the selected strong kernel  converges to \(\delta_0\) in \(\mathcal S'\) at vacuum, while its density derivative multiplied by \(\mu\) tends to zero. For the weak kernel, the density tends to zero and its unweighted density derivative tends to \(\delta_0\), as proved in Corollary~\ref{cor:uniform-vacuum-traces}.

More quantitatively, we denote this inverse Fourier transform by \(\alpha_{\chi}^{\rm strong}(\mu,\cdot)\). For every \( \ell>d_{\rm F}+1/2\), the global bound and the  vacuum limits give
\be
\label{eq:strong-vacuum-Sobolev-trace}
 \begin{aligned}
 \|\alpha_{\chi}^{\rm strong}(\mu,\cdot)\|_{H^{- \ell}}+
 \|\mu\del_\mu\alpha_{\chi}^{\rm strong}(\mu,\cdot)\|_{H^{- \ell}}
 &\leq C_{\ell,\delta}(\eps)(1+\mu)^{1/(1+\eps^2k_\infty^2)+\delta},
 \\
 \|\alpha_{\chi}^{\rm strong}(\mu,\cdot) -\delta_0\|_{H^{- \ell}}+
 \|\mu\del_\mu\alpha_{\chi}^{\rm strong}(\mu,\cdot)\|_{H^{- \ell}}
 & \to 0\quad(\mu\to 0).
 \end{aligned}
\ee
Indeed, the squared Fourier integrands are bounded near vacuum by \(C (1+\xi^2)^{d_{\rm F}- \ell}\), which is integrable on \(\RR\). Dominated convergence proves the strong convergence in \(H^{- \ell}\). At positive density, the equation \(f_{\mu\mu} \, =-(1+\xi^2) B(\mu)\mu^{-2}f\) similarly gives \(\alpha_{\chi}^{\rm strong}\in C^2((0,+ \infty);H^{- \ell})\) whenever \(\ell>d+5/2\). On each compact density interval the Fourier derivatives of orders \(j=0,1,2\) are bounded by \(C(1+\xi^2)^{(d+j)/2}\); continuity and differentiation under Fourier inversion follow by the same weighted dominated-convergence argument. For \(\ell>d+7/2\), integrating the square of~\eqref{eq:strong-vacuum-rate-frequency} against \((1+\xi^2)^{-\ell}\) gives the stated \(H^{-\ell}\) convergence rate \(R_\gamma(\mu)\). The current equation and vacuum trace thus hold in these spaces. These estimates prove Proposition~\ref{prop:section-8-selected-strong-branch} and complete the strong-branch assertions of Proposition~\ref{cor:section-7-fourier-kernel-bounds}.

Theorem~\ref{theo:section-7-fourier-branches} is now proved in full, including global continuation, the decompositions in the two large-density bases, and the asserted frequency continuity. The bounds of Proposition~\ref{cor:section-7-fourier-kernel-bounds} have also been established for both vacuum normalizations. Proposition~\ref{cor:kernel-Sobolev-regularity} gives the sharper weighted frequency estimates for the weak branch, while Proposition~\ref{prop:section-8-selected-strong-branch} proves the polynomial bounds and the distributional vacuum trace of the selected strong density branch. Sections~\ref{section-7} and~\ref{section-flux-kernels}, below, reconstruct the density and flux kernels in physical variables; the support, characteristic-boundary behavior and positivity of the weak kernel are established in Section~\ref{section-7}.

\section{Weak spacetime kernel in physical variables}
\label{section-7}  

\subsection{Fourier inversion}
\label{section-7-1}

\subsubsection{Reconstruction of the density kernel}

 We now apply the inverse Fourier transform. The Fourier estimates~\eqref{eq:Bessel-density-remainder-global} and~\eqref{eq:global-krl-frequency-decay} provide us with the key control on the weak density kernel. We now pass to the state variables and investigate the
support of the kernel and its positivity.  
Recall that the transformation~\eqref{eq:def-X-canonical} relates $\widehat{\alpha_{\chi}}$ to $X^{\rm weak}$, which solves the canonical equation~\eqref{eq:X-canonical} with potential~\eqref{eq:X-canonical-sec9}. We denote by $\mathcal S'(\Rbb_v)$ the space of tempered distributions. 

\begin{proposition}[Inverse Fourier reconstruction of the weak kernel]
\label{thm:physical-krl-existence}
Let \(p\) be an admissible equation of state in the sense of Definition~\ref{def-pression}. For every \(\mu>0\),  define
\bel{eq:def-physical-krl-inv-fourier-prelim}
\alpha_{\chi}(\mu,\cdot) \coloneqq \mathcal F_{\xi\to v}^{-1}\bigl(\widehat{\alpha_{\chi}}(\mu,\xi) \bigr) \qquad\text{in }\mathcal S'(\Rbb_v).
\ee
Then the following properties hold.

\bei
\item For every \(\mu>0\), the weak kernel \(\alpha_{\chi}(\mu,\cdot)\) exists as a tempered distribution, and the map $ \mu\longmapsto \alpha_{\chi}(\mu,\cdot) $ is continuous from \((0,+ \infty)\) into \(\mathcal S'(\Rbb_v)\).

\item The distribution \(\alpha_{\chi}\) solves the  weak current equation in \((0,+ \infty)\times\Rbb\), together with the vacuum normalization
\be
\alpha_{\chi}|_{\mu=0} \, =0, \qquad \del_\mu\alpha_{\chi}|_{\mu=0} \, = \delta_{v=0},
\ee
in the sense of distributions.

\item For every compact interval \(I\subset(0,+ \infty)\) and every pair \((a,b)\in\Nbb^2\) with \(a+b\le2\), there exist \(C_{I,a,b},N_{I,a,b}>0\) such that
\bel{eq:section-8-uniform-physical-krl-bound}
\bigl|\bigl\langle \del_\mu^a\del_v^b\alpha_{\chi}(\mu,\cdot),\varphi\bigr\rangle\bigr| \le C_{I,a,b}\|\varphi\|_{\mathcal S,N_{I,a,b}} \qquad \text{for all }\mu\in I,\ \varphi\in\mathcal S(\Rbb),
\ee
where \(\|\cdot\|_{\mathcal S,N}\) denotes the Schwartz seminorm\footnote{It controls finitely many derivatives of the test function, weighted by powers of the variable, with orders bounded by~$N$.} of order \(N\).
\item There exist \(C>0\) and the Schwartz seminorm \(\|\cdot\|_{\mathcal S,N}\) such that, for every \(\mu>0\),
\bel{eq:physical-krl-all-densities}
|\langle\alpha_{\chi}(\mu),\varphi\rangle| + \mu|\langle\del_\mu\alpha_{\chi}(\mu),\varphi\rangle| \leq C\mu\|\varphi\|_{\mathcal S,N}.
\ee
Moreover,  one has
\be
\alpha_{\chi}(\mu)/\mu\to\delta_0, \qquad \quad \del_\mu\alpha_{\chi}(\mu)\to\delta_0 \qquad \text{ in } \mathcal S'
\ee
at vacuum. The estimate covers all frequencies, including the large-density thresholds.
\eei
\end{proposition}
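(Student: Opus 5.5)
The plan is to define everything on the Fourier side and transfer the earlier estimates by duality: for $\varphi\in\mathcal S(\Rbb)$ we have $\langle\alpha_{\chi}(\mu),\varphi\rangle=(2\pi)^{-1}\int_\Rbb\widehat{\alpha_{\chi}}(\mu,\xi)\,\widehat\varphi(-\xi)\,d\xi$. By Lemma~\ref{lem:section-7-Fourier-multiplier-bounds}, $\widehat{\alpha_{\chi}}(\mu,\cdot)$ is a smooth function of polynomial growth, locally uniformly in $\mu$, hence a multiplier of $\mathcal S'$, so the inverse transform exists in $\mathcal S'(\Rbb_v)$. Continuity of $\mu\mapsto\alpha_{\chi}(\mu,\cdot)$ follows from Theorem~\ref{theo:section-7-fourier-branches}: $X^{\rm weak}$ is continuous in $(z,\xi)$, and the compact-density bound~\eqref{eq:section-7-uniform-fourier-bound} supplies a polynomial majorant, so dominated convergence applies against $\widehat\varphi$.

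For the uniform bounds~\eqref{eq:section-8-uniform-physical-krl-bound}, I would write $\del_\mu^a\del_v^b\alpha_\chi$ on the Fourier side as $(i\xi)^b\del_\mu^a\widehat{\alpha_{\chi}}$. For $a\le1$ this is covered by~\eqref{eq:section-7-uniform-fourier-bound}. For $a=2$ I would use the ODE~\eqref{eq:chi-tilde-rho}, $\del_\mu^2\widehat{\alpha_{\chi}}=-\alpha_0(\mu,\xi)\widehat{\alpha_{\chi}}$, with $\alpha_0\le C_I(1+\xi^2)$ on $I$. The result is a bound $C(1+|\xi|)^{N}$, and pairing with $\widehat\varphi$ is then controlled by a Schwartz seminorm of $\varphi$. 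The same computation shows that $\del_\mu\alpha_\chi$ and $\del_\mu^2\alpha_\chi$ exist as $\mathcal S'$-valued derivatives (difference quotients converge by dominated convergence using the mean value theorem with these majorants). The current equation~\eqref{eq:section-6-normalized-entropy-equation} then holds in $\mathcal S'$ for each $\mu$, because its Fourier transform is exactly~\eqref{eq:chi-tilde-rho}, and $-\xi^2-1$ corresponds to $\del_v^2-1$. Pairing against test functions in $C_c^\infty((0,\infty)\times\Rbb)$ gives the distributional equation.

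For the global bound~\eqref{eq:physical-krl-all-densities}, I would insert~\eqref{eq:Fourier-global-density-polynomial}: $|\widehat{\alpha_{\chi}}|\le C\mu$ and $\mu|\del_\mu\widehat{\alpha_{\chi}}|\le C\mu(1+\xi^2)^{1/2}$, uniformly in $\mu>0$ and in $\xi$, threshold frequencies included. Integrating these against $|\widehat\varphi|$ gives $C\mu\|\varphi\|_{\mathcal S,N}$ with $N=2$, say. The vacuum limits follow from Corollary~\ref{cor:uniform-vacuum-traces}: $\widehat{\alpha_\chi}/\mu\to1$ and $\del_\mu\widehat{\alpha_\chi}\to1$ pointwise, with polynomial majorants independent of $\mu$. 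Dominated convergence yields $\alpha_\chi(\mu)/\mu\to\delta_0$ and $\del_\mu\alpha_\chi\to\delta_0$ in $\mathcal S'$, which is the distributional meaning of the normalization $\alpha_\chi|_{\mu=0}=0$, $\del_\mu\alpha_\chi|_{\mu=0}=\delta_{v=0}$.

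The main obstacle I expect is making the sense of "distributional solution up to vacuum" precise. The plan is to interpret the boundary conditions as these $\mathcal S'$ limits, and to check that $\mu\mapsto\alpha_\chi$ is $C^1$ on $(0,\infty)$ into $\mathcal S'$ with a continuous extension of $\alpha_\chi$ and $\del_\mu\alpha_\chi$ to $\mu=0$. The uniform-in-$\mu$ majorants near vacuum, from Corollary~\ref{cor:uniform-vacuum-traces}, are what justify this step. Everything else is a direct transfer of the Fourier estimates already proved.
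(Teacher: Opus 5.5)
Your proposal is correct and follows the paper's proof essentially step for step. It uses the polynomial Fourier majorants on compact density intervals, with the ODE~\eqref{eq:chi-tilde-rho} supplying the second density derivative, then the global bound~\eqref{eq:Fourier-global-density-polynomial} for~\eqref{eq:physical-krl-all-densities}, and dominated convergence with the $\mu$-independent majorants of Corollary~\ref{cor:uniform-vacuum-traces} for the vacuum limits. Your mean-value argument for $\mathcal S'$-valued differentiability in $\mu$ makes explicit a step the paper leaves implicit. Invoking the multiplier lemma is more than needed, since a continuous, polynomially bounded $\widehat{\alpha_\chi}(\mu,\cdot)$ is already a tempered distribution.
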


\begin{proof}
Corollary~\ref{cor:section-7-uniform-fourier-kernel} bounds \(\widehat{\alpha_{\chi}}(\mu,\xi)\) polynomially in \(\xi\), locally uniformly in~\(\mu\). This gives \(\widehat{\alpha_{\chi}}(\mu,\cdot)\in\mathcal S'(\Rbb_\xi)\) and defines the inverse transform~\eqref{eq:def-physical-krl-inv-fourier-prelim} in \(\mathcal S'(\Rbb_v)\). Continuity of \(X^{\rm weak}(z,\xi)\) and the polynomial bound on compact \(z\)-intervals give continuity in \(\mu\). The Fourier isomorphism of \(\mathcal S'(\Rbb)\) transfers the weak current equation to physical variables. For the vacuum normalization, Corollary~\ref{cor:uniform-vacuum-traces} bounds both \(\widehat{\alpha_{\chi}}/\mu\) and \(\del_\mu\widehat{\alpha_{\chi}}\) by polynomial majorants independent of \(0<\mu\leq\mu_{\rm vac}\); both tend pointwise to one. Dominated convergence against Fourier transforms of Schwartz test functions gives the two vacuum limits and \(\alpha_{\chi}(\mu)\to0\).

For \(a=0\), the polynomial Fourier bounds on compact \(\mu\)-intervals give~\eqref{eq:section-8-uniform-physical-krl-bound}. A \(v\)-derivative multiplies by \(i\xi\); Corollary~\ref{cor:section-7-uniform-fourier-kernel} controls one \(\mu\)-derivative. The Fourier equation expresses the second as a bounded multiple of \((1+\xi^2)\widehat{\alpha_{\chi}}\) on \(I\), proving the estimate for \(a+b\leq2\).
Estimate~\eqref{eq:Fourier-global-density-polynomial} gives \(|\widehat{\alpha_{\chi}}|+\mu|\del_\mu\widehat{\alpha_{\chi}}|\leq C\mu\sqrt{1+\xi^2}\) for all densities and frequencies. Integrating against \(|\widehat\varphi(\xi)|\) proves the equation~\eqref{eq:physical-krl-all-densities}; the integral \(\int_\Rbb\sqrt{1+\xi^2}\,|\widehat\varphi(\xi)|\,d\xi\) is bounded by a fixed Schwartz seminorm.
\end{proof}

\subsubsection{\texorpdfstring{Rest-frame spatial component}{Rest-frame spatial component}}

We also recover the \emph{rest-frame spatial component} of the weak kernel from the same Fourier branch. For \(\xi\ne0\), we define
\bel{eq:tau-recovery-sec9}
\widehat{\beta_{\chi}}(\mu,\xi)
= \frac{1}{i\xi} \Bigl( h(\mu)\,\del_\mu\widehat{\alpha_{\chi}}(\mu,\xi)-\widehat{\alpha_{\chi}}(\mu,\xi) \Bigr).
\ee
At zero frequency, the numerator and its first derivative in~\(\xi\) vanish. Indeed, \(\widehat{\alpha_{\chi}}(\mu,0)=\Numb(\mu)\), \(h(\mu)\Numb'(\mu)=\Numb(\mu)\), and the weak branch is even in~\(\xi\). Differentiation of the vacuum Volterra equation and continuation of its initial data show that the numerator is \(\Ocal(\xi^2)\), locally uniformly at positive density. By continuity, we therefore define \(\widehat{\beta_{\chi}}(\mu,0)\coloneqq0\), obtaining the unique odd continuous extension. Together with the density kernel already reconstructed, the formula~\eqref{eq:tau-recovery-sec9} determines the two fluid-frame coefficients of the current kernel \(\chi\). Section~\ref{section-11}, below, proves the flux estimates.

The scalar Fourier equation and $h'=1+c^2$ give, for $\xi\ne0$,
\be
h\del_\mu\widehat{\beta_\chi}-c^2\widehat{\beta_\chi}
=i\xi c^2\widehat{\alpha_\chi}.
\ee
The smooth dependence on frequency given by the vacuum Volterra equation and its positive-density continuation also holds for the first two density derivatives of $\widehat{\alpha_\chi}$. Both $h\del_\mu\widehat{\alpha_\chi}-\widehat{\alpha_\chi}$ and its density derivative are even in frequency and vanish at $\xi=0$, so they are $\mathcal O(\xi^2)$ locally uniformly at positive density. Thus $\widehat{\beta_\chi}$ and its density derivative extend continuously with value zero at $\xi=0$, and the preceding identity holds at every real frequency. Together with the defining formula~\eqref{eq:tau-recovery-sec9}, it proves both first-order current equations. The vacuum trace of the flux is established independently by the bounds in Section~\ref{section-11-1}, below.

\subsection{Support and characteristic boundary}
\label{section-7-4}

\subsubsection{The fluid cone from a vacuum point}

We next prove that the reconstructed weak kernel is \emph{supported in the acoustic cone}, which will justify convolution with continuous vacuum profiles. Using the acoustic distance~\eqref{eq:standard-acoustic-distance}, we introduce the characteristic profiles:
\bel{eq:section-8-acoustic-radius}
k(\mu) = \int_0^\mu\frac{c(r)}{r+\eps^{2}p(r)}\,dr, \qquad \theta \coloneqq v-s, \qquad G_b(\mu,\theta) \coloneqq \bigl[k(\mu)^2-\theta^2 \bigr]_+^b.
\ee
At positive density, for \(b>-1\), this last expression denotes the \emph{function} \((k(\mu)^2-\theta^2)^b\) on \(|\theta|<k(\mu)\), \emph{extended by zero} on the exterior. In particular, \(G_0\) is the indicator of this interval. Negative exponents give a locally integrable \emph{boundary singularity}.
The two curves \(\theta= \pm k(\mu)\) are the characteristics issuing from the vacuum point \((0,s)\). After finite propagation, the Fourier asymptotics give the leading profile in~\autoref{section-8-1}. We emphasize that the zero-order term in the scalar equation for \(\alpha_{\chi}\) \emph{prevents a direct sign argument}. We instead use \((\alpha_{\chi},\beta_{\chi})\): its characteristic combinations have non-negative coupling under the genuine nonlinearity condition~\eqref{hyperbolic-eos-3}, extending the \emph{hyperbolic maximum-principle} in~\cite{CL1-2000}.

\subsubsection{Finite propagation through the vacuum singularity} 

We begin with an inclusion concerning the support. Next, under the condition~\eqref{hyperbolic-eos-3}, in \autoref{theo:section-8-causal-krl-current}, below, we will strengthen the following inclusion to equality and proves strict interior positivity of \(\alpha_{\chi}\).

\begin{lemma}[Finite propagation of the physical kernel]
\label{prop:kernel-positive-finite-speed-prelim}
Let \(p\) be an admissible equation of state in the sense of Definition~\ref{def-pression}. Then, for every \(\mu>0\) and \(s\in\Rbb\), one has
\bel{eq:kernel-support-sec10-prelim}
\supp\alpha_{\chi}(\mu,\cdot\,;s)\subseteq\{v\in\Rbb;\ |v-s|\leq k(\mu)\}.
\ee
\end{lemma}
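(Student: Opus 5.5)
By the translation identity \eqref{eq:kernel-shift} it suffices to take \(s=0\). I will prove the support inclusion through Fourier analysis, using the Paley--Wiener--Schwartz theorem. This requires two facts. First, \(\xi\mapsto\widehat{\alpha_{\chi}}(\mu,\xi)\) extends to an entire function of \(\xi\in\mathbb C\). Second, it satisfies a bound of the form \(|\widehat{\alpha_{\chi}}(\mu,\xi)|\le C_\mu(1+|\xi|)^{N}e^{k(\mu)|\operatorname{Im}\xi|}\). Entirety is already implicit in the earlier construction. The vacuum Volterra series of Proposition~\ref{thm:olver-vacuum} has coefficient \(1+\xi^2\), a polynomial in \(\xi\), so it converges locally uniformly for complex \(\xi\). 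Continuation through finite densities by the Cauchy matrix \eqref{eq:Fourier-Cauchy-matrix-equation} consists of entire functions of \(\xi\); this was noted in the proof of Lemma~\ref{lem:section-7-Fourier-multiplier-bounds}. The real work is therefore the exponential-type bound with the sharp rate \(k(\mu)\).

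For the growth estimate I will use the weighted variable \(y=k(\mu)\) and the function \(\Psi=\sqrt{k'}\,\widehat{\alpha_{\chi}}\) from Section~\ref{section-7-5}. These satisfy \(\Psi_{yy}+(\zeta^2-(\nu^2-1/4)y^{-2})\Psi=\mathcal R\Psi\), where \(\zeta\coloneqq(1+\xi^2)^{1/2}\) is taken with the principal branch, and I restrict first to \(|\operatorname{Im}\xi|\ge 2\) or large \(|\xi|\) so that \(\zeta\) stays away from zero. I will redo the Volterra argument of Proposition~\ref{prop:Bessel-one-term-bound} for complex \(\zeta\). The model solutions are \(\sqrt y J_\nu(\zeta y)\) and \(\sqrt y Y_\nu(\zeta y)\), and the standard complex Bessel bounds give
\[
|J_\nu(w)|,|Y_\nu(w)|\le C|w|^{-1/2}e^{|\operatorname{Im}w|}\quad(|w|\ge1),
\]
with the power-type behaviour for \(|w|\le1\). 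The Green kernel then obeys \(|K_\zeta(y,t)|\le C\,t\,e^{|\operatorname{Im}\zeta|(y-t)}/(1+|\zeta|t)\), up to the same \(M_\xi\)-type weights as before. Conjugating the Volterra equation by \(e^{|\operatorname{Im}\zeta| y}\) leaves a positive operator of the same form as in the real case. Its iterates are controlled by \(\exp\!\big(C\int_0^y t|\mathcal R(t)|\,dt\big)\), which is finite for each fixed \(y\) by Lemma~\ref{lem:Bessel-weighted-remainder} together with the large-\(y\) integrability of \(\mathcal R\) established in the proof of \eqref{eq:Bessel-global-error-growth}. The result is \(|\Psi(y,\xi)|\le C_y(1+|\xi|)^{N}e^{y|\operatorname{Im}\zeta|}\). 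Since \(|\operatorname{Im}\zeta|\le|\operatorname{Im}\xi|\) for the principal branch of \(\sqrt{1+\xi^2}\) with \(|\operatorname{Im}\xi|\ge 2\), this is the required exponential type \(k(\mu)\). On the bounded complex region near the branch points \(\xi=\pm i\), the analytic dependence alone gives a uniform bound, which is absorbed into the constant.

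Applying Paley--Wiener--Schwartz for each fixed \(\mu>0\) then places \(\supp\alpha_{\chi}(\mu,\cdot)\) in \([-k(\mu),k(\mu)]\). The inverse transform coincides with the distribution of Proposition~\ref{thm:physical-krl-existence}, since both are defined from the same real-frequency \(\widehat{\alpha_{\chi}}\).

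The main obstacle is the uniform complex Volterra estimate near the singular point \(y=0\), combined with the branch choice for \(\zeta\). The exponent must be exactly \(k(\mu)\), with no loss, and this forces the factor \(e^{-|\operatorname{Im}\zeta|(y-t)}\) to be carried through every iterate. If this step proves delicate, my fallback is an energy or domain-of-dependence argument for the first-order system \eqref{eq:section-6-normalized-first-order-system}. I would first solve it from data \(\delta_0\)-approximations prescribed at a small density \(\mu_1\), using characteristic speeds \(\pm c/h=\pm k'\), so that supports grow by exactly \(k(\mu)-k(\mu_1)\). I would then let \(\mu_1\to0\), using the \(\mathcal S'\) vacuum limits of Proposition~\ref{thm:physical-krl-existence} and Corollary~\ref{cor:uniform-vacuum-traces} to identify the limit with \(\alpha_{\chi}\). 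The support of the initial data tends to \(\{0\}\), and the support bound passes to the limit because support is preserved under distributional convergence.
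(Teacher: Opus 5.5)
Your main argument is correct, but it follows a different route from the paper.

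\emph{The paper's route.} It works in physical variables. It replaces $B$ on $[0,\auxeps]$ by $B(\auxeps)(\mu/\auxeps)^2$, so that the regularized speed $k_\auxeps'$ is constant near vacuum. An exterior energy estimate along moving characteristic boundaries then shows that the regularized solution with mollified initial velocity is supported in $|\theta|\le\delta+\int_0^\mu k_\auxeps'$. Both regularizations are removed using Fourier bounds uniform in $\auxeps$ (the rescaled Volterra estimate and the monotonicity of $E/V$), together with $\int_0^{\mu_{\rm vac}}|k_\auxeps'-k'|\,d\mu\to0$.

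\emph{Your route.} You stay on the Fourier side. Since the Fourier coefficient is the polynomial $1+\xi^2$, the vacuum Volterra series converges for all complex $\xi$ at every finite density. Complexifying Proposition~\ref{prop:Bessel-one-term-bound} makes the sharp radius $k(\mu)$ appear directly as the exponential type coming from the phase $\zeta y$. Paley--Wiener--Schwartz and injectivity of the Fourier transform on $\mathcal S'$ then give the inclusion with no approximation or limiting step. You even get $|\widehat{\alpha_\chi}(\mu,\xi)|\le C_\mu e^{k(\mu)|\operatorname{Im}\xi|}$ without polynomial loss, and $|\operatorname{Im}\zeta|\le|\operatorname{Im}\xi|$ holds for every $\xi$ and either square root.

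\emph{Comparison.} Your route is shorter once the Bessel machinery of Section~\ref{section-7-5} is available, and it never regularizes the singular coefficient. The paper's route uses only real-frequency bounds already established plus a standard domain-of-dependence computation. Its characteristic viewpoint is also the one reused in Proposition~\ref{theo:section-8-causal-krl-current}.

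\emph{The step you must justify differently: the Green-kernel bound.} The separate estimates $|J_\nu(w)|,|Y_\nu(w)|\le C|w|^{-1/2}e^{|\operatorname{Im}w|}$ only bound $f(t)g(y)-g(t)f(y)$ by $e^{|\operatorname{Im}\zeta|(y+t)}$. The resulting factor $e^{2|\operatorname{Im}\zeta|t}$ inside the iterates would destroy the exponential type. The fix depends on the regime:
\begin{itemize}
\item When $|\zeta|t\ge1$, write $J_\nu(\zeta t)Y_\nu(\zeta y)-Y_\nu(\zeta t)J_\nu(\zeta y)=\frac{1}{2i}\bigl(H^{(2)}_\nu(\zeta t)H^{(1)}_\nu(\zeta y)-H^{(1)}_\nu(\zeta t)H^{(2)}_\nu(\zeta y)\bigr)$. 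Then use $|H^{(1)}_\nu(w)|\le C|w|^{-1/2}e^{-\operatorname{Im}w}$ and $|H^{(2)}_\nu(w)|\le C|w|^{-1/2}e^{\operatorname{Im}w}$, uniformly for $\operatorname{Re}w\ge0$, $|w|\ge1$ (your principal branch keeps $\zeta t$ there).
\item When $|\zeta|t\le1\le|\zeta|y$, the crude bounds suffice because $e^{|\operatorname{Im}\zeta|t}\le e$.
\item When $|\zeta|y\le1$, no exponential appears.
\end{itemize}
The conjugated operator is then dominated by $Ct|\mathcal R(t)|/(1+|\zeta|t)$ exactly as in the real case. Moreover, $\int_0^y t|\mathcal R(t)|\,dt<\infty$ for fixed $y$, by Lemma~\ref{lem:Bessel-weighted-remainder} and continuity of $\mathcal R$ on $(0,+\infty)$.

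\emph{On the fallback.} It is close in spirit to the paper's proof, but as sketched it skips the real difficulty. $\mathcal S'$ convergence of the traces at $\mu_1$ does not identify the limit on its own. You would need a stability estimate for the solution map from $\mu_1$ to $\mu$ that is uniform as $\mu_1\to0$, which is what the paper's uniform Fourier bounds supply.
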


\begin{proof}
\bse
{\bf 1.} \emph{Regularized support cones.} To regularize the vacuum coefficient, for \(0<\auxeps<\mu_{\rm vac}\) we set
\be
 B_\auxeps(\mu) =
 \begin{cases}B(\auxeps)(\mu/\auxeps)^2,&0\leq\mu\leq\auxeps,\\
 B(\mu),&\mu\geq\auxeps,
 \end{cases}
 \qquad k_\auxeps'(\mu) = \sqrt{B_\auxeps(\mu)}/\mu.
\ee
The speed \(k_\auxeps'\) is continuous, piecewise smooth, and constant near zero. For each \(\auxeps\), the regularized wave equation has the usual domain of dependence. With initial velocity a mollifier supported in \(|\theta|\leq\delta\), its solution is supported in \( |\theta|\leq\delta+ \int_0^\mu k_\auxeps'(r)\,dr\). To justify the propagation estimate, we write \(a\coloneqq k_\auxeps'>0\) and  take \(w\)  to satisfy the regularized equation \(w_{\mu\mu}-a^2w_{\theta\theta}+a^2w=0\). The energy density \(e\coloneqq(w_\mu^2+a^2w_\theta^2+w^2)/2\) satisfies
\be
\del_\mu e-\del_\theta(a^2w_\mu w_\theta) =aa'w_\theta^2+(1-a^2)ww_\mu.
\ee
On each fixed finite density interval, the right-hand side is bounded in absolute value by \(C_\auxeps e\). At the right moving boundary \(\theta= \delta+ \int_0^\mu a\), the contribution to the exterior energy derivative is
\be
-a^2w_\mu w_\theta-ae =-\frac a2(w_\mu+aw_\theta)^2-\frac a2w^2\leq0.
\ee
The left boundary gives the same inequality with the opposite sign in the square. Since the initial exterior energy is zero, Gronwall's inequality makes it zero throughout the interval. The calculation applies on both sides of the matching density \(\auxeps\); continuity of \(a\) permits their integration together. This exact support assertion permits a constant depending on \(\auxeps\); removal of the regularization uses the uniform bounds below.
\ese

\vskip.2cm

\noindent{\bf 2.} \emph{Removal of the regularization.} The vacuum bounds give
\be
\int_0^{\mu_{\rm vac}}\frac{|B_\auxeps-B|}{\mu}\,d\mu \leq C\auxeps^{\gamma-1}, \qquad \int_0^{\mu_{\rm vac}}|k_\auxeps'-k'|\,d\mu \leq C\auxeps^{(\gamma-1)/2}.
\ee
Moreover, the logarithmic derivative of \(B_\auxeps\) equals two below \(\auxeps\), and lies between \((\gamma-1)/2\) and \(3(\gamma-1)/2\) above it. It is therefore bounded above and below by positive constants independent of \(\auxeps\), except at the harmless matching point. The proof of Corollary~\ref{cor:uniform-vacuum-traces} applies with these bounds: the rescaled Volterra estimate controls the interval before the transition, and the monotonicity of \(E/V\) controls the interval after it. Neither step requires differentiability at the matching point, since \(B_\auxeps\) is continuous and piecewise smooth. Thus the regularized Fourier kernels obey \( |\widehat{\alpha_{\chi_\auxeps}}|\leq C\mu\) and \( |\del_\mu\widehat{\alpha_{\chi_\auxeps}}|\leq C(1+|\xi|)\) near vacuum, with constants independent of the regularization parameter.

The first integral estimate above and the Volterra equation give convergence to the weak branch on compact frequency intervals. We propagate the polynomial bounds across compact density intervals, uniformly in the regularization parameter, and obtain convergence in the sense of distributions after Fourier inversion. We remove the mollifier and then let \(\auxeps\to0\). The second integral estimate gives convergence of the support cones. Testing against any function supported outside \(|\theta|\leq k(\mu)\) proves the equation~\eqref{eq:kernel-support-sec10-prelim}.
\end{proof}

\subsection{Polytropic leading behavior at the boundary}
\label{section-8-1}

Having located the support, we now use the high-frequency Fourier asymptotics to identify the \emph{leading behavior at the characteristic boundaries}. 
We rescale the distance to either boundary and prove convergence in the sense of distributions and in local Sobolev spaces.
  
\begin{proposition}[Leading behavior at the characteristic boundary]
\label{prop:distributional-leading-trace}
Let \(p\) be an admissible equation of state in the sense of Definition~\ref{def-pression}.
The kernel admits the decomposition
\bel{eq:section-8-one-term-krl-boundary}
\alpha_{\chi}(\mu,v;s) =a_1(\mu)G_\lambda(\mu,v-s)+ \mathcal R_{\alpha_{\chi}}(\mu,v-s), \qquad a_1(\mu) =c_{*,\lambda}k(\mu)^{-\lambda}k'(\mu)^{-1/2}>0,
\ee
where \(c_{*,\lambda}>0\) is fixed by the  vacuum normalization. 
\bei 

\item In either inward coordinate at the two characteristic boundaries, the remainder satisfies
\bel{eq:section-8-krl-boundary-remainder}
r^{-\lambda} \mathcal R_{\alpha_{\chi}}(\mu,\pm(k(\mu) -r\,\cdot)) \to 0 \quad\text{ in }\mathcal D'(\Rbb),\quad r \to 0,
\ee
uniformly on compact positive-density intervals. Equivalently, at the right characteristic boundary, introduce \(q\) by \(f_\mu(q) \coloneqq\alpha_{\chi}(\mu,k(\mu) -q)\), with extension by zero for \(q< 0\). One has
\be
\label{eq:distributional-leading-trace}
r^{-\lambda}f_\mu(r\,\cdot) \to (2k(\mu))^\lambda a_1(\mu)(\cdot)_+^\lambda \quad \text{ in }\mathcal D'(\Rbb), \qquad r \to 0.
\ee

\item The convergence also holds strongly in \(H^{\ell}_{\rm loc}(\Rbb)\) for \(0\leq  \ell<\lambda+1/2\), uniformly on compact positive-density intervals. In particular, it holds in local \(L^2\) for every \(\gamma>1\). When \(1<\gamma<3\), it is locally uniform as a convergence of functions and gives \(\alpha_{\chi}(\mu,\theta)/G_\lambda(\mu,\theta)\to a_1(\mu)\) at either boundary. The left boundary follows by reflection. The coefficient satisfies the identity
\bel{eq:section-8-leading-boundary-transport}
2kk'\del_\mu a_1+ \bigl(kk''+2\lambda k'^2 \bigr)a_1=0.
\ee
Thus the leading term \emph{vanishes} at the boundary for \(1<\gamma<3\), has a \emph{jump discontinuity} for \(\gamma=3\), and is \emph{singular} for \(\gamma>3\).
\eei
\end{proposition}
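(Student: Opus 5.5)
The plan is to read off the boundary behavior from the high-frequency Fourier asymptotics of Proposition~\ref{prop:Bessel-one-term-bound}. The leading Bessel term is inverted exactly, and the remainder is shown to be of lower order at the boundary scale. First we identify the leading term. We write \(\widehat{\alpha_{\chi}}=\Psi_0(k(\mu),\xi)/\sqrt{k'(\mu)}+r(\mu,\xi)\), with \(\Psi_0=C_*(1+\xi^2)^{-\nu/2}\sqrt y J_\nu(y\sqrt{1+\xi^2})\) and \(\nu=\lambda+1/2\). We then use the classical Fourier pair (Sonine/Poisson integral)
\[
\int_\Rbb e^{-i\xi\theta}[k^2-\theta^2]_+^{\lambda}\,d\theta=\sqrt{\pi}\,\Gamma(\lambda+1)\,(2k/|\xi|)^{\nu}J_\nu(k|\xi|).
\]
This identifies the inverse transform of \(C_*k^{1/2}|\xi|^{-\nu}J_\nu(k|\xi|)/\sqrt{k'}\) as \(a_1(\mu)G_\lambda(\mu,\theta)\), with \(a_1=c_{*,\lambda}k^{-\lambda}k'^{-1/2}\). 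The difference between \((1+\xi^2)^{-\nu/2}J_\nu(k\sqrt{1+\xi^2})\) and \(|\xi|^{-\nu}J_\nu(k|\xi|)\) is \(\Ocal(|\xi|^{-\nu-3/2})\) at high frequency, since \(\sqrt{1+\xi^2}-|\xi|=\Ocal(|\xi|^{-1})\) and the derivative of \(J_\nu\) is bounded by \(x^{-1/2}\). This discrepancy, together with \(r\), goes into \(\mathcal R_{\alpha_{\chi}}\). The identity~\eqref{eq:section-8-leading-boundary-transport} is a direct differentiation of \(a_1=c\,k^{-\lambda}k'^{-1/2}\). Positivity of \(a_1\) follows from \(C_*>0\).

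Next we show that the remainder is negligible at the boundary scale. The required statement~\eqref{eq:section-8-krl-boundary-remainder} concerns frequencies \(|\xi|\sim r^{-1}\). A function \(g\) satisfies \(r^{-\lambda}g(\pm(k-r\cdot))\to0\) in \(H^\ell_{\rm loc}\) whenever its Fourier transform is \(o(|\xi|^{-\lambda-1})=o(|\xi|^{-\nu-1/2})\) at infinity, in a weighted \(L^2\) sense. Concretely, we estimate \(\|r^{-\lambda}g(r\cdot)\chi\|_{\dot H^\ell}\) by splitting the frequency integral at \(|\xi|=Mr^{-1}\) and using scaling. The bound~\eqref{eq:Bessel-density-remainder-global} gives \(|r(\mu,\xi)|\le C W(\mu,\xi)(e^{C\mathcal H_\xi(k)}-1)\), where \(W\sim|\xi|^{-\nu-1/2}\) and \(\mathcal H_\xi(k(\mu))\to0\) as \(|\xi|\to\infty\), uniformly on compact density intervals by~\eqref{eq:Bessel-global-error-growth}. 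The remainder therefore has exactly the \(o(|\xi|^{-\nu-1/2})\) decay needed. The Bessel-difference term decays even faster. For \(\ell<\lambda+1/2\), the rescaled \(H^\ell\) norm is \(\int|\xi|^{2\ell}r^{2\nu+1-2\ell}\cdots\), and after substituting \(\xi=\eta/r\) it becomes \(\int|\eta|^{2\ell}|\eta|^{-2\nu-1}o(1)\,d\eta\) on \(|\eta|\ge1\). This integral tends to zero by dominated convergence because \(2\ell-2\nu-1<-1\). The low-frequency part contributes \(O(r^{\nu+1/2-\ell})\to0\).

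One technical point requires care: \(H^\ell\) convergence of a localized rescaling is not literally controlled by global Fourier decay of \(g\), because \(g\) has both boundaries and the cutoff. We handle this by using the compact support from Lemma~\ref{prop:kernel-positive-finite-speed-prelim} and a smooth cutoff near one boundary. Commuting the cutoff costs only lower-order terms, and multiplying by the phase \(e^{\mp i\xi k}\) translates the boundary to the origin. We expect this localization, together with making the \(o(1)\) uniform on compact density intervals, to be the main obstacle. The plan for it is to use the modulus of continuity \(\sup_{\mu\in I}\mathcal H_\xi(k(\mu))\), which tends to zero.

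The distributional statement~\eqref{eq:distributional-leading-trace} follows from the \(L^2_{\rm loc}\) case, since \(G_\lambda(\mu,k-rq)=(rq)^\lambda(2k-rq)^\lambda\) and \((2k-rq)^\lambda\to(2k)^\lambda\). For \(1<\gamma<3\) we have \(\lambda>0\), so \(H^\ell\) with \(1/2<\ell<\lambda+1/2\) embeds in \(C^0\). This gives locally uniform convergence, and dividing by \(G_\lambda\) yields the ratio limit. The left boundary follows by the reflection \(\theta\mapsto-\theta\), using the evenness of \(\widehat{\alpha_{\chi}}\).
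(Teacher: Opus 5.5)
Your first step is correct, and it is more explicit than what the paper writes down. The Sonine--Poisson pair shows that $a_1G_\lambda$ is exactly the inverse transform of $C_*k^{1/2}|\xi|^{-\nu}J_\nu(k|\xi|)/\sqrt{k'}$, with $c_{*,\lambda}=C_*/(2^\lambda\sqrt{2\pi}\,\Gamma(\lambda+1))$ as in the paper. The remainder $\mathcal R_{\alpha_{\chi}}(\mu,\cdot)$ is then supported in $[-k,k]$, and its Fourier transform is $o(|\xi|^{-\lambda-1})$ uniformly on compact density intervals.

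The gap is the passage from this decay to the local statement~\eqref{eq:section-8-krl-boundary-remainder}. You invoke the principle that a Fourier transform which is $o(|\xi|^{-\lambda-1})$ forces $r^{-\lambda}g(\pm(k-r\,\cdot))\to0$ in $H^\ell_{\rm loc}$. This is false for $\lambda\geq0$. Any $g\in C_c^\infty(\Rbb)$ with $g(k)\neq0$ has a rapidly decaying transform, yet $r^{-\lambda}g(k-rq)\approx r^{-\lambda}g(k)$ does not tend to zero.

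Your scaling computation also has the wrong sign. One has $\|r^{-\lambda}g(k-r\,\cdot)\|_{\dot H^\ell}^2=r^{2\ell-2\nu}\|g\|_{\dot H^\ell}^2$ with $2\ell-2\nu<0$. So the part $|\eta|\leq1$ of your rescaled frequency integral carries the diverging mass of the stretched function (unless $g=0$), instead of being $\Ocal(r^{\nu+1/2-\ell})$.

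Consequently the cutoff is not a lower-order commutator. Take $\psi_r(\theta)=\phi((k-\theta)/r)$. The triangle-inequality bound on $\widehat{\psi_r\mathcal R_{\alpha_{\chi}}}$ at $|\xi|\lesssim r^{-1}$ picks up a term of order $r$ from frequencies $|\zeta|=\Ocal(1)$. This is not $o(r^{\lambda+1})$ when $\lambda\geq0$, i.e. for $1<\gamma\leq3$. (For $\lambda<0$ a convolution estimate does close, because $|\tau|^{-\lambda-1}$ is then locally integrable.) What is missing is a quantitative use of the fact that the remainder vanishes beyond each characteristic boundary; Fourier decay alone does not see this.

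The paper supplies this through a derivative trick, and the same trick would complete your route.
\begin{enumerate}
\item Fix an integer $m>\lambda+1$ and let $F(q)\coloneqq\mathcal R_{\alpha_{\chi}}(\mu,k-q)$, which vanishes for $q<0$. Put $g_r(q)\coloneqq r^{m-\lambda}F^{(m)}(rq)$.
\item Then $|\widehat{g_r}(\eta)|=r^{-\lambda-1}|\eta|^m|\widehat F(\eta/r)|\leq\epsilon(|\eta|/r)\,|\eta|^{m-\lambda-1}$, with $\epsilon$ bounded and $\epsilon(t)\to0$ as $t\to\infty$. The factor $|\eta|^m$ removes the low-frequency singularity, and dominated convergence gives $g_r\to0$ in $H^{\ell-m}$ for every $\ell<\lambda+1/2$.
\item Because $F$ is supported in $[0,+\infty)$, $m$-fold integration from $q=0$ recovers $r^{-\lambda}F(r\,\cdot)$ from $g_r$ without polynomial ambiguity. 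On $(-Q,Q)$ this integration is convolution with a truncated $q_+^{m-1}/(m-1)!$, whose transform is $\Ocal((1+|\eta|)^{-m})$. Hence $r^{-\lambda}F(r\,\cdot)\to0$ in $H^\ell(-Q,Q)$.
\end{enumerate}

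Combined with your exact leading term, this gives strong $H^\ell_{\rm loc}$ convergence directly for each $\ell<\lambda+1/2$. The paper instead works with $f_\mu$ itself and disposes of the far-boundary term $e^{-2ik\xi}(-i\xi)^{-\lambda-1}$ by Riemann--Lebesgue. It then upgrades distributional convergence to $H^{\ell'}_{\rm loc}$, $\ell'<\ell$, through a uniform $H^\ell$ bound and compactness.
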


Some observations are in order.
\bei 

\item 
Observe that the \emph{singular} range $\gamma>3$ is included. Since $\lambda=-1/2+1/(\gamma-1)>-1/2$, the inward boundary power satisfies
\be
\label{eq:characteristic-power-integrability}
\int_0^d q^\lambda\,dq=\frac{d^{\lambda+1}}{\lambda+1}<+\infty,\qquad \int_0^d q^{2\lambda}\,dq=\frac{d^{2\lambda+1}}{2\lambda+1}<+\infty\quad(d>0).
\ee
The global Fourier bounds give $\alpha_{\chi}(\mu,\cdot),\beta_{\chi}(\mu,\cdot)\in L^2(\Rbb)$ at every $\mu>0$. Their compact support then gives $L^1$ integrability. When $\gamma>3$, the nonzero scaling limit~\eqref{eq:distributional-leading-trace} also excludes local essential boundedness at either characteristic boundary: a bounded kernel would give a zero limit after multiplication by $r^{-\lambda}\to0$.  
The integrability constants may depend on $\gamma$.

\item Observe also that the strong \(L^2\) trace gives a quantitative estimate even when the kernel is unbounded at the boundary. For each fixed \(Q>0\), with \(A_\mu=(2k(\mu))^\lambda a_1(\mu)\), we obtain
\be
\label{eq:kernel-boundary-quadratic-mass}
r^{-2\lambda-1}\int_{k(\mu) -rQ}^{k(\mu)}|\alpha_{\chi}(\mu,\theta)|^2\,d\theta \to \frac{A_\mu^2 Q^{2\lambda+1}}{2\lambda+1}, \qquad r \to 0.
\ee
Indeed, the change of variable \(\theta=k(\mu) -rq\) makes the left-hand side equal to the squared \(L^2(0,Q)\) norm of \(r^{-\lambda}f_\mu(rq)\). Proposition~\ref{prop:distributional-leading-trace} above gives norm convergence, uniform on compact positive-density intervals; reflection gives the left-boundary result. Since \(2\lambda+1=2/(\gamma-1)>0\), the integral of the squared kernel over these shrinking intervals tends to zero for every \(\gamma>1\), including \(\gamma>3\), where the leading term is singular.
\eei

\begin{proof}
\bse
\noindent{{\bf 1.}} The standard Bessel asymptotic formula~\cite{Olver} and  \autoref{prop:Bessel-one-term-bound} give, uniformly on compact positive-density intervals,
\be
\widehat{\alpha_{\chi}}(\mu,\xi) = \frac{C_*\sqrt{2/\pi}}{\sqrt{k'(\mu)}}|\xi|^{-\lambda-1} \cos\bigl(k(\mu)|\xi|-\pi(\lambda+1)/2 \bigr)+o(|\xi|^{-\lambda-1}).
\ee
Indeed, replacing \(\sqrt{1+ \xi^2}\) by \(|\xi|\) changes the phase by \(\Ocal(|\xi|^{-1})\) on those intervals. We write
\be
A_\mu\coloneqq \frac{C_*}{\sqrt{2\pi}\,\Gamma(\lambda+1)\sqrt{k'(\mu)}} =(2k(\mu))^\lambda a_1(\mu).
\ee
This also specifies the normalization \(c_{*,\lambda} \, =C_*/(2^\lambda\sqrt{2\pi}\Gamma(\lambda+1))\). With the convention \(\widehat f(\xi) = \int e^{-i\xi q}f(q)\,dq\), reflection and translation yield
\be
\widehat f_\mu(\xi) =A_\mu\Gamma(\lambda+1) \bigl((i\xi)^{-\lambda-1}+e^{-2ik(\mu)\xi}(-i\xi)^{-\lambda-1} \bigr) +o(|\xi|^{-\lambda-1})\qquad (|\xi|\to\infty).
\ee
Here the powers for nonzero real \(\xi\) use arguments \(\pm\pi/2\).
\ese

\vskip.2cm

\bse
\noindent{{\bf 2.}} We avoid interpreting the singular powers at zero by taking an integer \(m>\lambda+1\). For \(g_{\mu,r}(q) \coloneqq r^{m-\lambda}f_\mu^{(m)}(rq)\), we have
\bel{eq:boundary-rescaled-Fourier-bound}
\widehat g_{\mu,r}(\eta) =r^{-\lambda-1}(i\eta)^m\widehat f_\mu(\eta/r), \qquad |\widehat g_{\mu,r}(\eta)|\leq C|\eta|^{m-\lambda-1}.
\ee
The bound follows from \(|\widehat f_\mu(\xi)|\leq C(1+|\xi|)^{-\lambda-1}\), including the bounded frequency interval. The error tends to zero after integration against a Schwartz function by dominated convergence. The term with \(e^{-2ik(\mu)\eta/r}\) tends to zero by the Riemann--Lebesgue lemma, since its remaining factor times that test function is integrable. We obtain
\be
g_{\mu,r} \to A_\mu\,\del_q^m(q_+^\lambda) \quad\text{in }\mathcal S'(\Rbb).
\ee
For the Fourier identity used here, we multiply the elementary Laplace identity
\be
\int_0^{+\infty} q^{\lambda}e^{- (\auxeps+i\eta)q}\,dq = \Gamma(\lambda+1)(\auxeps+i\eta)^{-\lambda-1}
\ee
by \((i\eta)^m\) and let \(\auxeps\to0\). Both \(g_{\mu,r}\) and its limit are supported in \([0,+ \infty)\). On distributions with this support,
 \(m\)-fold integration from zero is continuous locally: convolution with \(q_+^{m-1}/(m-1)!\), tested on a bounded interval, involves only a fixed bounded interval of each factor. Applying this integration gives the equation~\eqref{eq:distributional-leading-trace}, since it inverts \(\del_q^m\) without a polynomial ambiguity on this support class.
\ese

\vskip.2cm

\bse
\noindent{{\bf 3.}} To strengthen this convergence to Sobolev convergence, we use the Fourier bound~\eqref{eq:boundary-rescaled-Fourier-bound}, which gives, for \( \ell<\lambda+1/2\),
\be
\|g_{\mu,r}\|_{H^{\ell-m}}^2 \leq C\int_\Rbb(1+ \eta^2)^{\ell-m}|\eta|^{2m-2(\lambda+1)} \,d\eta<+ \infty,
\ee
uniformly in \(r\). The integrability at zero follows from \(m>\lambda+1\). For a fixed \(Q>0\), we choose a compactly supported smooth cutoff equal to one on \([0,2Q]\), and multiply \(q_+^{m-1}/(m-1)!\) by this cutoff to obtain \(K_m\). Its \(m\) -th distributional derivative is a Dirac mass plus a smooth compactly supported function; hence \(|\widehat K_m(\eta)|\leq C(1+|\eta|)^{-m}\). On \((-Q,Q)\), support in \([0,+ \infty)\) gives
\be
r^{-\lambda}f_\mu(rq) =(K_m*g_{\mu,r})(q).
\ee
Thus the kernels are uniformly bounded in \(H^{\ell}(-Q,Q)\) for every \( \ell<\lambda+1/2\). Compactness on smaller intervals and the unique distributional limit give strong convergence in \(H^{\ell'}_{\rm loc}\) for \(0\leq  \ell'< \ell\). Since \( \ell\) can approach \(\lambda+1/2\), this proves the Sobolev assertion. If \(\lambda>0\), we choose \(1/2< \ell<\lambda+1/2\); one-dimensional Sobolev embedding gives locally uniform convergence, and evaluation at \(q=1\) gives the pointwise boundary ratio. The coefficient bounds, the Bessel remainder, and the positive lower bound for \(k(\mu)\) on compact density intervals make all arguments uniform there. Reflection proves the other boundary assertion. Finally, logarithmic differentiation of \(a_1=c_{*,\lambda}k^{-\lambda}k'^{-1/2}\) gives the identity~\eqref{eq:section-8-leading-boundary-transport}.
\ese
\end{proof}

\subsection{Positivity properties of the weak spacetime kernel}
\label{section-8-3}

We next prove \(|\beta_{\chi}|\leq c \, \alpha_{\chi}\) under the genuine nonlinearity condition~\eqref{hyperbolic-eos-3}. We integrate the two-component system along its characteristics and use the non-negativity of its coupling coefficients. Writing this system for measures allows the argument to include kernels singular at the characteristic boundaries.

\begin{proposition}[Future-timelike character of the weak kernel]
\label{theo:section-8-causal-krl-current}
Let \(p\) be an admissible equation of state in the sense of Definition~\ref{def-pression} and assume in addition the genuine nonlinearity condition \eqref{hyperbolic-eos-3}. Let \((\alpha_{\chi},\beta_{\chi})\) be the rest-frame density-flux kernel determined by the first-order system~\eqref{eq:section-6-normalized-first-order-system} and the vacuum normalization \eqref{eq:kernel-physical}. 
\bei 

\item Then, for every \(\mu>0\), \(\alpha_{\chi}(\mu,\cdot\,;s)\) and \((c\alpha_{\chi}\pm\beta_{\chi})(\mu,\cdot\,;s)\) are \emph{non-negative locally finite measures} in the rapidity variable. Their total variations are uniformly bounded when \(\mu\) ranges in a compact positive-density interval. In the open characteristic cone, they are represented by  continuous functions and one has
\bel{eq:section-8-causal-krl-bound}
\alpha_{\chi}(\mu,v;s)>0, \qquad |\beta_{\chi}(\mu,v;s)|<c(\mu)\alpha_{\chi}(\mu,v;s) \qquad\bigl(\mu>0,\ |v-s|<k(\mu) \bigr).
\ee

\item Consequently, the current vector
\bel{eq:section-8-krl-current-vector}
\chi(\mu,v;s) \coloneqq \alpha_{\chi}(\mu,v;s)\overline u+ \beta_{\chi}(\mu,v;s)\overline u^\perp
\ee
is \emph{future timelike} in that cone, and
\bel{eq:section-8-krl-current-timelike}
g(\chi,\chi) =-\alpha_{\chi}^{2}+(\beta_{\chi})^2\leq- (1-c(\mu)^2)\alpha_{\chi}^{2}< 0.
\ee
\eei
\end{proposition}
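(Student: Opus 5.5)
The plan is to rewrite the first-order system~\eqref{eq:section-6-normalized-first-order-system} in characteristic form and run a hyperbolic maximum principle, as announced before Lemma~\ref{prop:kernel-positive-finite-speed-prelim}. Set $\phi_\pm\coloneqq c\alpha_\chi\pm\beta_\chi$. Using $h'=1+c^2$ and the system, we compute $h\del_\mu\phi_\pm$. The principal part is $\pm c\,\del_v\phi_\pm$, which is transport along the characteristics $dv/d\mu=\mp c/h=\mp k'$. What remains is a zero-order combination of $\phi_+$ and $\phi_-$, whose coefficients involve $c$, $c^2$ and $hc'$. We write the result as
\[
h\del_\mu\phi_\pm \mp c\,\del_v\phi_\pm = A\,\phi_\pm + B_\pm\,\phi_\mp .
\]
The identity~\eqref{eq:acoustic-speed-density-direction}, or a direct computation, should show that the cross-coupling coefficient $B_\pm$ equals a positive multiple of $G_{\rm N}$, divided by something like $c^2$. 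So under~\eqref{hyperbolic-eos-3} the off-diagonal coupling is non-negative. The diagonal coefficient $A$ is harmless, since it is removed by an integrating factor $e^{\int A/h}$.

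Next, integrate along characteristics. Writing the system in the Riemann invariants $w_\pm$, each $\phi_\pm$ (times its integrating factor) is differentiated along one family of characteristics, and the source is a non-negative multiple of the other component. The Duhamel/Picard iteration therefore preserves non-negativity: starting from non-negative data, every iterate is non-negative.

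To handle the singular vacuum data $\del_\mu\alpha_\chi=\delta_0$ and possibly singular boundary profiles when $\gamma>3$, I would not work with the kernel directly. I would instead convolve with a non-negative mollified profile $\vartheta_\eta\geq0$ and regularize the vacuum coefficient as in the proof of Lemma~\ref{prop:kernel-positive-finite-speed-prelim}. Near vacuum the regularized solution behaves like $\alpha\approx\mu\vartheta_\eta$ with $\beta=o(\mu)$. Since $c\alpha\sim c\mu\vartheta_\eta$ dominates $\beta$ there, initial positivity of $\phi_\pm$ holds on a small density slice. This is the delicate point: I would read off $\beta$ from the flux recovery formula~\eqref{eq:tau-recovery-sec9} and the vacuum estimates of Corollary~\ref{cor:uniform-vacuum-traces} to show $|\beta|\ll c\alpha$ at small density. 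The maximum principle then gives $\phi_\pm\geq0$, hence $\alpha\geq0$ because $2c\alpha=\phi_++\phi_-$.

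Now pass to the limit $\eta\to0$ and remove the coefficient regularization. The limit follows from the distributional convergence already used in Lemma~\ref{prop:kernel-positive-finite-speed-prelim}. Non-negative distributions are measures, and testing against a cutoff equal to one on the support $|v|\leq k(\mu)$ bounds their total variation by the pairing in~\eqref{eq:section-8-uniform-physical-krl-bound}, uniformly on compact density intervals.

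For \emph{strict} interior positivity and continuity, I would use the Duhamel representation inside the cone. Each $\phi_\pm$ at $(\mu,v)$ dominates the integral of $B_\pm\phi_\mp$ over the backward characteristic. Combined with the boundary asymptotics of Proposition~\ref{prop:distributional-leading-trace}, where $a_1>0$ makes $\alpha$ nonzero near both boundaries, this propagates positivity inward. The continuity of the iterates gives continuous representatives in the open cone. Strict inequality for $\phi_\pm$ then yields $|\beta_\chi|<c\alpha_\chi$ and $\alpha_\chi>0$. Finally, $g(\chi,\chi)=-\alpha^2+\beta^2\leq-(1-c^2)\alpha^2<0$ follows from $g(\overline u,\overline u)=-1$ and $g(\overline u^\perp,\overline u^\perp)=1$, and $\chi$ is future-directed because $\alpha>0$.

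I expect two main obstacles. The first is the sign computation of the coupling coefficient in terms of $G_{\rm N}$. The second, and hardest, is securing initial positivity of $\phi_\pm$ uniformly at small density before letting the regularizations go.
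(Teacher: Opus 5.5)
Your characteristic computation is right. In the paper's notation $c\alpha_\chi\mp\beta_\chi=2c\Numb R_\pm$, and the cross-coupling in $h\del_\mu\phi_\pm\mp c\,\del_v\phi_\pm$ is $c\Lambda_{\rm c}=G_{\rm N}/(4c^2)$.

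\textbf{The gap.} It lies in the step you flag yourself: positivity of $c\alpha\pm\beta$ on a small density slice for a compactly supported mollified profile $\vartheta_\eta\geq0$. The information you propose to use, $\alpha\approx\mu\vartheta_\eta$ and $\beta=o(\mu)$ (or even $\|\beta_\chi(\mu)\|_{H^{-2}}\leq C\mu^\gamma$), is not a bound relative to $\vartheta_\eta(v)$.
\begin{itemize}
\item Where $\vartheta_\eta$ vanishes or is small compared with its derivatives, the leading term $c\mu\vartheta_\eta(v)$ carries no information. This includes the whole strip of width $k(\mu)$ beyond $\supp\vartheta_\eta$.
\item Near the right edge of the support of the convolved current, the leading boundary parts of $c\alpha$ and $\beta$ actually cancel in $c\alpha+\beta$ (Proposition~\ref{prop:section-9-conormal-current-remainder}). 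The sign there is a subleading effect, which is precisely what has to be proved.
\item Corollary~\ref{cor:uniform-vacuum-traces} gives only Fourier-side bounds and limits in $\mathcal S'$. These never yield a pointwise comparison of $c\alpha$ with $|\beta|$.
\end{itemize}
There is a secondary issue as well. The regularization in Lemma~\ref{prop:kernel-positive-finite-speed-prelim} modifies only the scalar coefficient $B$. Nothing guarantees that the regularized equation comes from a first-order system of the form~\eqref{eq:section-6-normalized-first-order-system}, which requires $h'=1+c^2$, with coupling still of the sign of $G_{\rm N}$. Your maximum principle therefore does not apply to the regularized problem as stated.

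\textbf{The missing idea, and how the paper avoids the issue.} The paper never needs positivity on a small slice, because it normalizes. With $y=k(\mu)$ and $R_\pm=(c\alpha_\chi\mp\beta_\chi)/(2c\Numb)$, the system becomes $(\del_y\pm\del_v)R_\pm=\Lambda_{\rm c}(R_\mp-R_\pm)$. The other singular diagonal terms disappear, and the degenerate vacuum data become the non-negative trace $R_\pm|_{y=0}=\tfrac12\delta_s$. The rate $\Lambda_{\rm c}\sim(\gamma+1)/(2(\gamma-1)y)$ is not integrable at $y=0$, so the paper truncates it and mollifies the Dirac mass. Picard iteration then preserves $R_\pm\geq0$ starting from $y=0$ itself. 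The identity $\int(R_++R_-)\,dv=1$ gives uniform mass bounds for passing to the limit, and the limit is identified with the Fourier kernel by uniqueness in Proposition~\ref{thm:olver-vacuum}.

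The same normalization gives $\int R_\pm(\auxeps,v)\,dv=\tfrac12$ with support in $[-\auxeps,\auxeps]$. The one-switch term then yields a uniform strict lower bound at every interior point. This is more robust than seeding positivity from Proposition~\ref{prop:distributional-leading-trace}, which for $\gamma\geq3$ holds only in an $L^2$ scaling sense and cannot give a pointwise bound along a single characteristic.

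\textbf{A possible repair of your scheme.} If you want to keep the slice-based approach, use strictly positive profiles with $\sup_{|\theta|\leq1}|\vartheta^{(j)}(v-\theta)|\leq C\vartheta(v)$ for $j\leq2$, for instance $\psi+\epsilon/\cosh$. Compact support of the kernel, $\|\alpha_\chi(\mu)\|_{L^1}\leq C\mu$, $\int\alpha_\chi=\Numb$, $\int\beta_\chi=0$ and $\|\beta_\chi(\mu)\|_{H^{-2}}\leq C\mu^\gamma$ then give $\alpha_\vartheta\geq\tfrac12\mu\vartheta(v)$ and $|\beta_\vartheta|\leq C\mu^\gamma\vartheta(v)=o(c\mu\vartheta(v))$, uniformly in $v$. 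You would then propagate with the maximum principle on bounded coefficients and let $\epsilon\to0$.
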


In fact, the integral identity \eqref{eq:section-8-causal-krl-mass} yields a sharper global integrability bound under the same sign hypothesis:
\be
\label{eq:positive-krl-global-mass}
\|\alpha_{\chi}(\mu,\cdot)\|_{L^1} \, = \Numb(\mu), \qquad \|\beta_{\chi}(\mu,\cdot)\|_{L^1}\leq c(\mu)\Numb(\mu), \qquad \mu>0.
\ee
The same bounds hold near vacuum for every admissible pressure, even when the sign condition fails at other densities. Indeed, the vacuum conditions~\eqref{eq:intro-vacuum-B}--\eqref{eq:intro-vacuum-r} imply \(\mu \eps^{2}p''/\eps^{2}p'\to\gamma-1\),  by differentiating \(\eps^{2}p'=B(1+\eps^{2}p/\mu)^2\) and using the weighted remainder bounds. Since \(\eps^{2}p/\mu,\eps^{2}p'\to0\), we obtain
\be
\label{eq:local-vacuum-krl-positivity}
\frac{h\eps^{2}p''+2\eps^{2}p'(1-\eps^{2}p')}{\eps^{2}p'} \to \gamma+1>0 \qquad(\mu\to 0).
\ee
We can therefore apply the construction of the non-negative kernels on an interval \(0<\mu\leq\mu_{\rm vac}\). Uniqueness identifies it with the Fourier kernel on this interval, and~\eqref{eq:positive-krl-global-mass} gives \(\|\beta_{\chi}(\mu)\|_{L^1}\leq c(\mu)\Numb(\mu) =\Ocal(\mu^{(\gamma+1)/2})\). This local conclusion does not require a sign condition at larger densities.

\begin{proof}
\bse
\noindent{{\bf 1.}} We write \(h= \mu+\eps^{2}p(\mu)\), let \(\Numb\) be the particle density, and use the distance \(y=k(\mu)\), so that \(\del_y=(h/c)\del_\mu\). We introduce
\bel{eq:section-8-causal-normalized-components}
U_{\rm c} \coloneqq \frac{\alpha_{\chi}}{\Numb}, \qquad W_{\rm c} \coloneqq -\frac{\beta_{\chi}}{c\Numb}, \qquad R_\pm \coloneqq \frac12(U_{\rm c}\pm W_{\rm c}).
\ee
The identities \(\Numb'/\Numb=h^{-1}\) and \eqref{eq:section-6-normalized-first-order-system} give \(\del_y U_{\rm c}=-\del_v W_{\rm c}\) and \(\del_y W_{\rm c}=-\del_v U_{\rm c}-(c^{-1}-c+c_y/c)W_{\rm c}\). Since \(c_y=h\eps^{2}p''/(2c^2)\), we define the \emph{coupling rate} by
\bel{eq:section-8-causal-switching-rate}
\aligned
\Lambda_{\rm c}(\mu) 
& \coloneqq \frac{h(\mu)\eps^{2}p''(\mu)+2(1-c(\mu)^2)c(\mu)^2}{4c(\mu)^3} 
\\
& = \frac{(\mu+\eps^{2}p(\mu))\eps^{2}p''(\mu)+2\bigl(1-\eps^{2}p'(\mu) \bigr)\eps^{2}p'(\mu)} {4\eps^{3}(p'(\mu))^{3/2}} >0.
\endaligned
\ee
The genuine nonlinearity condition~\eqref{hyperbolic-eos-3} gives its positivity. Adding and subtracting the equations for \(U_{\rm c}\) and \(W_{\rm c}\) therefore gives the first-order system
\bel{eq:section-8-causal-coeaie-system}
\aligned (\del_y+ \del_v)R_+&= \Lambda_{\rm c}(R_--R_+), \qquad (\del_y-\del_v)R_-&= \Lambda_{\rm c}(R_+-R_-), \endaligned
\ee
The vacuum normalization gives \(R_+|_{y=0} \, =R_-|_{y=0} \, = \frac12\delta_{v=s}\), by the equation~\eqref{eq:section-3-particle-density}. To accommodate the singular switching rate at the vacuum boundary, we replace \(\Lambda_{\rm c}\) by \(\min(\Lambda_{\rm c},M)\) and the Dirac mass by a non-negative mollifier. Integration along the two characteristic families yields a positive Volterra operator, and Picard iteration preserves the cone \(R_\pm\geq0\).
\ese

\vskip.2cm

\noindent{{\bf 2.}}  Conservation of the total integral of $R_++R_-$ gives a uniform
bound on these positive measures, which we use to pass to the
limit in the two regularizations.  Integrating the two regularized equations in \(v\), using finite propagation, and adding them gives
\bel{eq:section-8-causal-krl-mass}
\frac{d}{dy}\int_\Rbb(R_++R_-)(y,v)\,dv=0, \qquad \int_\Rbb(R_++R_-)(y,v)\,dv=1.
\ee
\bse
Positivity gives a uniform total-variation bound for each regularized measure. Moreover, if \(U_M=R_{+,M}+R_{-,M}\) and \(W_M=R_{+,M}-R_{-,M}\), then
\be
\del_y U_M=-\del_v W_M, \qquad \bigl|\frac{d}{dy}\langle U_M,\varphi\rangle\bigr| \leq\|\varphi'\|_{L^\infty}, \qquad \varphi\in C_c^1(\Rbb).
\ee
This bound is independent of the truncated rate. After removal of the mollifier, the support is contained in \(|v-s|\leq y\); hence we obtain
\be
|\langle U_M(y),\varphi\rangle-\varphi(s)| \leq y\|\varphi'\|_{L^\infty}.
\ee
For the other component, the difference equation is \(\del_yW_M=-\del_vU_M-2\Lambda_MW_M\), where \(\Lambda_M= \min(\Lambda_{\rm c},M)\). On each interval \([\auxeps,Y]\) with \(\auxeps>0\), it gives
\be
\bigl|\frac{d}{dy}\langle W_M,\varphi\rangle\bigr| \leq\|\varphi'\|_{L^\infty} +2\|\Lambda_{\rm c}\|_{L^\infty([\auxeps,Y])}\|\varphi\|_{L^\infty}.
\ee
The uniform total-variation bounds and equicontinuity estimates allow us to apply the Arzel\`a--Ascoli theorem to the integrals
against each smooth compactly supported test function. A diagonal argument then selects a single subsequence along which
both components converge weakly as measures, uniformly on compact positive-density intervals. We first obtain this convergence for
a countable dense collection of test functions; the uniform total-variation bounds extend it to every test function.

Both components converge weakly as measures, uniformly in \(y\in[\auxeps,Y]\); the bound for \(U_M\) extends its convergence to \(y=0\), with trace \(\delta_s\). On every such positive interval the truncated rates eventually equal \(\Lambda_{\rm c}\), so passage to the integrated equations gives the original first-order system and continuous measure-valued traces on every positive slice. Setting \(\alpha_{\chi}= \Numb U\), we recover the scalar current equation. Since \(\Numb/\mu\to1\) and the support shrinks to \(s\), its Fourier transform satisfies \(\widehat{\alpha_{\chi}}/\mu\to e^{-i\xi s}\). The derivative normalization follows from the first-order equations as well. In fact, the integrals of the two regularized components remain equal to $1/2$: their difference solves the scalar equation \(d'(y) =-2\Lambda_M(y)d(y)\) with \(d(0) =0\). After taking the limit, \(\int W(y,v)\,dv=0\), and the shrinking support gives \(|\langle W(y),\varphi\rangle|\leq y\|\varphi'\|_\infty\). In the density variable, \(\del_\mu\alpha_{\chi}=(\alpha_{\chi}+ \del_v\beta_{\chi})/h=(\Numb/h)(U-c\del_vW)\). Hence we obtain
\be
\label{eq:positive-krl-vacuum-derivative}
\bigl|\langle\del_\mu\alpha_{\chi},\varphi\rangle-\varphi(s)\bigr| \leq\bigl|\frac{\Numb}{h}-1\bigr|\|\varphi\|_\infty + \frac{\Numb}{h}(k(\mu)+c(\mu))\|\varphi'\|_\infty.
\ee
Since \(\Numb/h\to1\), \(k(\mu)\to0\), and \(c(\mu)\to0\), the right-hand side tends to zero. Thus the positive construction satisfies both \(\alpha_{\chi}/\mu\to\delta_s\) and \(\del_\mu\alpha_{\chi}\to\delta_s\) in the sense of distributions. For each real frequency \(\xi\), compact support permits Fourier transformation of the scalar equation and of these vacuum limits. The transformed density solves the ODE in \autoref{thm:olver-vacuum}, with normalization \(\widehat{\alpha_{\chi}}/\mu\to e^{-i\xi s}\). Uniqueness in \autoref{thm:olver-vacuum}  gives equality with the weak kernel. The first-order system determines the rapidity derivative of its flux; compact support excludes an additive constant in rapidity and  determines the flux uniquely. This proves the non-negativity of the kernel measures independently of the conormal remainder argument.
\ese

\vskip.2cm

\bse
\noindent{{\bf 3.}} Proposition~\ref{cor:kernel-Sobolev-regularity} shows that \(\alpha_{\chi}\) has an \(L^2\) density on every positive-density slice. The measure inequalities \(c\alpha_{\chi}\pm\beta_{\chi}\geq0\) then imply that \(\beta_{\chi}\) also has an \(L^2\) density, with \(|\beta_{\chi}|\leq c\alpha_{\chi}\). We first justify the continuous representatives used in the pointwise inequalities. We use translation invariance in rapidity to set \(s=0\), fix an interior point \((y,v)\), and choose \(0<\auxeps<(y-|v|)/2\). At time \(\auxeps\), each positive measure \(R_+(\auxeps)\) and \(R_- (\auxeps)\) is supported in \([-\auxeps,\auxeps]\) and has total integral \(1/2\). On \([\auxeps,y]\), the coefficient \(\Lambda_{\rm c}\) is continuous and strictly positive. We choose \(Y>y\) and put \(L_\auxeps\coloneqq\max_{[\auxeps,Y]}\Lambda_{\rm c}\). In the characteristic expansion from a point \(w\) on the slice \(\auxeps\), the contribution with exactly \(j\geq1\) changes of family has a transition density \(\mathcal H_j\). Solving for the last switching time contributes the Jacobian \(1/2\); the remaining \(j-1\) times lie in an ordered simplex. The non-negative attenuation factors are at most one, so we obtain
\be
\label{eq:positive-krl-interior-transition-bound}
0\leq\mathcal H_j(y,\auxeps;v-w) \leq\frac{L_\auxeps}{2}\frac{(L_\auxeps(y-\auxeps))^{j-1}}{(j-1)!}, \qquad \auxeps<y\leq Y,\quad j\geq1.
\ee
{Each density is continuous on \(|v-w|<y-\auxeps\). For \(j=1\), this follows directly from the continuous rate evaluated at the switching time; for \(j\geq2\), dominated convergence applies to the integrals over the remaining times, whose moving boundary has measure zero. The factorial bound makes the sum uniformly convergent. This expansion represents \(R_\pm\): the characteristic integral equations on \([\auxeps,Y]\) have at most one finite-measure solution with the prescribed initial traces, by Gronwall's inequality in total variation. The contribution with no change is supported on \(v=w\pm(y-\auxeps)\), outside a neighborhood of our chosen point for every \(|w|\leq\auxeps\). Integration of the continuous transition densities against \(R_\pm(\auxeps,dw)\) therefore gives continuous representatives of both components in that neighborhood. Varying the point proves interior continuity.}

\vskip.2cm

\noindent{{\bf 4.}} To prove strict positivity, we iterate the characteristic integral equations once and keep only the non-negative term containing one change of family. If a negative characteristic starts at \(z\), changes family at time \(r\), and reaches \((y,v)\) as a positive characteristic, then \(v=z+y+ \auxeps-2r\). The reverse change gives \(v=z+2r-y-\auxeps\). The change from the switching time to the terminal position has Jacobian \(1/2\). Consequently, the absolutely continuous parts containing exactly one change satisfy
\be
\aligned
R_+(y,v)&\geq\frac12e^{-\int_\auxeps^y\Lambda_{\rm c}(t)\,dt}
 \int_{-\auxeps}^{\auxeps}\Lambda_{\rm c} \Bigl( \frac{z+y+ \auxeps-v}{2} \Bigr) 
 \mathbf 1_{(\auxeps,y)} \Bigl( \frac{z+y+ \auxeps-v}{2} \Bigr) R_- (\auxeps,dz),
\\
R_- (y,v)&\geq\frac12e^{-\int_\auxeps^y\Lambda_{\rm c}(t)\,dt}
 \int_{-\auxeps}^{\auxeps}\Lambda_{\rm c} \Bigl( \frac{v-z+y+ \auxeps}{2} \Bigr) 
 \mathbf 1_{(\auxeps,y)} \Bigl( \frac{v-z+y+ \auxeps}{2} \Bigr) R_+(\auxeps,dz).
\endaligned
\ee
Our choice of \(\auxeps\) makes both indicators equal to one for every \(|z|\leq\auxeps\). Since each initial component has mass \(1/2\), these formulas give, separately for each sign,
\be
\label{eq:strict-acoustic-component-lower-bound}
R_\pm(y,v)\geq\frac14\min_{[\auxeps,y]}\Lambda_{\rm c} \exp \Bigl( -\int_\auxeps^y\Lambda_{\rm c}(t)\,dt \Bigr) >0 \quad\text{ near } (y,v).
\ee
Adding these estimates yields
\be
U_{\rm c}(y,v)\geq\frac12\min_{[\auxeps,y]}\Lambda_{\rm c} \exp \Bigl( -\int_\auxeps^y\Lambda_{\rm c}(t)\,dt \Bigr) >0 \quad\text{ near } (y,v).
\ee
Since \(\alpha_{\chi}= \Numb(R_++R_-)\) and \(c\alpha_{\chi}\pm\beta_{\chi}=2c\Numb R_\mp\), the bounds above give strict positivity and the inequality~\eqref{eq:section-8-causal-krl-bound}. On a compact subset of the open cone, the distance from its sides, the minimum of \(\Lambda_{\rm c}\) on the required positive intervals, and the minima of \(c,\Numb\) are uniformly positive. A finite covering therefore bounds \(\alpha_{\chi}\) and \(c\alpha_{\chi}\pm\beta_{\chi}\) uniformly below by a positive constant. The inequalities \(R_\pm\geq0\) give \(U_{\rm c}\geq|W_{\rm c}|\), hence \(\alpha_{\chi}\geq0\) and \(|\beta_{\chi}|\leq c\alpha_{\chi}\). Since \(0<c<1\), strict interior positivity yields the Lorentzian estimate~\eqref{eq:section-8-krl-current-timelike}.
\ese
\end{proof}

\section{Weak and strong density-flux kernels}
\label{section-flux-kernels}
\label{section-11}

\subsection{The weak flux kernel}
\label{section-11-1}

\subsubsection{From the density kernel to the current vector}

By the Fourier formula~\eqref{eq:tau-recovery-sec9}, we already defined the weak flux kernel $\beta_{\chi}$ in the rest-frame. We now study it further and also construct the strong flux kernel. Parameter dependence is postponed to~\cite{TorusT2-3-asymptotic-2026}. Recall that the density and flux satisfy the system~\eqref{eq:section-6-normalized-first-order-system}:
\bel{eq:section-9-normalized-krl-flux-system}
h(\mu)\del_\mu\alpha_{\chi}-\alpha_{\chi}= \del_v\beta_{\chi},\qquad h(\mu)\del_\mu\beta_{\chi}-c(\mu)^2\beta_{\chi}=c(\mu)^2\del_v\alpha_{\chi}.
\ee
For the vector-valued kernel $\chi$, the convention~\eqref{eq:section-3-rest-frame-scalars} gives $\alpha_\chi=-g(\chi,\overline u)$ and $\beta_\chi=g(\chi,\overline u^\perp)$, where $g$ denotes the Minkowski metric.
 The transformation~\eqref{eq:section-9-fixed-frame-current-by-convolution}, below, gives its laboratory components. Eliminating $\alpha_\chi$ from the system~\eqref{eq:section-9-normalized-krl-flux-system} gives the equation for $\beta_\chi$.
\begin{lemma}[Equation for the rest-frame flux component]
\label{prop:section-9-spatial-component-equation}
The weak rest-frame  flux kernel satisfies
\bel{eq:section-9-shifted-flux-equation}
\del_\mu^2\beta_{\chi}-\frac{c(\mu)^2}{h(\mu)^2}\del_v^2\beta_{\chi}-\frac{\eps^{2}p''(\mu)}{\eps^{2}p'(\mu)}\del_\mu\beta_{\chi}+ \frac{c(\mu)^2}{h(\mu)^2}\beta_{\chi}=0.
\ee
The  flux kernel is uniquely determined by the first equation in \eqref{eq:section-9-normalized-krl-flux-system} and compact support in the rapidity variable. Its  vacuum limit is zero. The second-order equation alone does not specify this normalization.
\end{lemma}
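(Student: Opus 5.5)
The plan is to derive the second-order equation by direct elimination, and then to obtain uniqueness and the vacuum limit from the first-order relation together with the Fourier bounds already established. All identities are understood in $\mathcal S'(\Rbb_v)$ at each positive density. By Proposition~\ref{thm:physical-krl-existence} and the discussion following~\eqref{eq:tau-recovery-sec9}, $\alpha_\chi$ and $\beta_\chi$ are $C^2$ in $\mu$ with values in tempered distributions. It is therefore enough to verify each identity on the Fourier side, for every fixed frequency $\xi$.

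\textbf{Derivation of~\eqref{eq:section-9-shifted-flux-equation}.} I would solve the second equation of~\eqref{eq:section-9-normalized-krl-flux-system} for the rapidity derivative of the density:
\[
\del_v\alpha_\chi=\frac{h}{c^2}\del_\mu\beta_\chi-\beta_\chi .
\]
Next I would apply $\del_v$ to the first equation and substitute, which gives
\[
(h\del_\mu-1)\Bigl(\frac{h}{c^2}\del_\mu\beta_\chi-\beta_\chi\Bigr)=\del_v^2\beta_\chi .
\]
Expanding this uses $h'=1+c^2$ and $(h/c^2)'=(1+c^2)/c^2-h\,\eps^2p''/(\eps^2p')\,c^{-2}$. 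The first-derivative terms then collect as $\frac{h^2}{c^2}\bigl(\del_\mu^2\beta_\chi-\frac{\eps^2p''}{\eps^2p'}\del_\mu\beta_\chi\bigr)$. The remaining pieces combine to $\frac{h}{c^2}\bigl[(1+c^2)-1-c^2\bigr]\del_\mu\beta_\chi=0$ and $+\beta_\chi$. Multiplying through by $c^2/h^2$ yields~\eqref{eq:section-9-shifted-flux-equation}. This step is routine bookkeeping.

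\textbf{Uniqueness.} The first equation determines $\del_v\beta_\chi$ from $\alpha_\chi$. Any two fluxes satisfying it therefore differ, at each $\mu$, by a distribution in $v$ with zero derivative, that is, by a constant $C(\mu)$. Lemma~\ref{prop:kernel-positive-finite-speed-prelim} gives compact support of $\alpha_\chi$, and hence of $\del_v\beta_\chi$. If $\beta_\chi$ is required to have compact support in $v$, then $C(\mu)=0$. On the Fourier side this is the statement that~\eqref{eq:tau-recovery-sec9} is the only candidate for $\xi\neq0$, while its value at $\xi=0$ is forced by continuity. That value is zero because the numerator is $\Ocal(\xi^2)$. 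To confirm compact support of the constructed $\beta_\chi$, I would note that $\widehat{\beta_\chi}(\mu,\cdot)$ is the Fourier transform of the antiderivative $\int_{-\infty}^v(h\del_\mu\alpha_\chi-\alpha_\chi)$. This antiderivative vanishes to the left of the cone and also to the right, because the total integral of the numerator vanishes: its value at $\xi=0$ is zero.

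\textbf{Vacuum limit, and the expected obstacle.} This is the hard step, because dividing by $i\xi$ loses control at small $\xi$ uniformly as $\mu\to0$. The plan is as follows. On $0<\mu\le\mu_{\rm vac}$, use the local positivity remark after Proposition~\ref{theo:section-8-causal-krl-current}, which gives $\|\beta_\chi(\mu)\|_{L^1}\le c(\mu)\Numb(\mu)\to0$. This holds for every admissible pressure, via uniqueness with the locally positive construction, and it yields $\beta_\chi\to0$ in measures, hence in $\mathcal S'$. As an alternative that avoids positivity, I would bound $\widehat{\beta_\chi}$ through $\del_\xi$ of the numerator near $\xi=0$, using Lemma~\ref{lem:section-7-Fourier-multiplier-bounds} made uniform near vacuum through the Volterra estimate~\eqref{eq:joint-vacuum-frequency-trace}. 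For $|\xi|\ge1$, I would use $|h\del_\mu\widehat{\alpha_\chi}-\widehat{\alpha_\chi}|\le C(1+|\xi|)\,\mu$, obtained from Corollary~\ref{cor:uniform-vacuum-traces} together with $h\del_\mu\widehat{\alpha_\chi}-\widehat{\alpha_\chi}=(h-\mu)\del_\mu\widehat{\alpha_\chi}+(\mu\del_\mu\widehat{\alpha_\chi}-\widehat{\alpha_\chi})$. Dominated convergence then completes the argument.

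Finally, I would remark that~\eqref{eq:section-9-shifted-flux-equation} admits the solution $\beta\equiv$ const in $\mu$ only if it is modified, and that it carries no information about the additive $v$-constant or the vacuum data. This is why the normalization must come from the first-order system.
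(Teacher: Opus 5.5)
\paragraph{Derivation and uniqueness.} These match the paper's proof. You solve the second first-order equation for $\del_v\alpha_\chi$ and substitute into the $v$-derivative of the first. The $\del_\mu\beta_\chi$ coefficients $h(1+c^2)/c^2-h-h/c^2$ cancel, and the rapidity constant is killed by compact support. Your extra check is also valid: the Fourier-constructed flux is compactly supported, via the antiderivative of $h\del_\mu\alpha_\chi-\alpha_\chi$ and the identity $h\Numb'-\Numb=0$ at $\xi=0$. It can stand in for the paper's later $L^2$ argument in Section~\ref{section-11-3}.

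\paragraph{Vacuum limit: a different route.} The paper stays on the Fourier side. For $|\xi|\leq1$ it runs the vacuum Volterra equation for $\widehat{\alpha_\chi}-\Numb$, whose forcing carries a factor $\xi^2$. Hence the numerator of \eqref{eq:tau-recovery-sec9}, which equals $h\del_\mu(\widehat{\alpha_\chi}-\Numb)-(\widehat{\alpha_\chi}-\Numb)$, is $\Ocal(\xi^2\mu^\gamma)$ uniformly down to vacuum. With \eqref{eq:weak-flux-vacuum-integral} this gives $\|\beta_\chi(\mu)\|_{H^{-2}}\leq C\mu^\gamma$, with no sign condition.

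You instead import the local-positivity remark after Proposition~\ref{theo:section-8-causal-krl-current}, which gives $\|\beta_\chi(\mu)\|_{L^1}\leq c(\mu)\Numb(\mu)=\Ocal(\mu^{(\gamma+1)/2})$. This is legitimate: that remark precedes the lemma, and its identification of the flux uses exactly the compact-support uniqueness you proved independently. Compared with the paper's route, yours gives a stronger norm but a weaker rate, and it rests on the heavier measure-valued characteristic construction.

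Your fallback for $|\xi|\leq1$ does not work as stated. Estimate \eqref{eq:joint-vacuum-frequency-trace} bounds values, not $\xi$-derivatives. Lemma~\ref{lem:section-7-Fourier-multiplier-bounds} is uniform only on compact density intervals away from vacuum. The clean fix is the paper's rewriting of the numerator shown above.

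\paragraph{Gap: the last sentence of the statement.} Your closing remark does not prove it. Saying constants solve ``only if modified'' and that the equation ``carries no information about the additive constant'' is not an argument. Moreover, a nonzero constant is not a solution, because of the zero-order term $c^2\beta/h^2$. What is needed is an explicit rapidity-independent solution with nonzero vacuum value, namely $h/\Numb$.

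Using $\Numb'=\Numb/h$ and $h'=1+c^2$, one gets $\del_\mu(h/\Numb)=c^2/\Numb$ and $\del_\mu^2(h/\Numb)=(c^2)'/\Numb-c^2/(h\Numb)$. Substituting these into \eqref{eq:section-9-shifted-flux-equation} gives zero. This solution tends to one at vacuum and is not compactly supported. Hence $\beta_\chi+C\,h/\Numb$ solves the same second-order equation for every $C$. In fact it solves the whole first-order system, since it is $v$-independent and $h\del_\mu(h/\Numb)=c^2h/\Numb$. This is precisely why the normalization must come from compact support or the zero vacuum trace.
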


\begin{proof}
The second equation in \eqref{eq:section-9-normalized-krl-flux-system} gives \(\del_v\alpha_{\chi}=(h/c^2)\del_\mu\beta_{\chi}-\beta_{\chi}\). We differentiate this identity in \(\mu\) and the first equation in \(v\). Substitution gives
\be
\frac{h^2}{c^2}\del_\mu^2\beta_{\chi} + \Bigl( h \Bigl( \frac{h}{c^2} \Bigr) '-h-\frac{h}{c^2} \Bigr) \del_\mu\beta_{\chi} + \beta_{\chi}-\del_v^2\beta_{\chi}=0.
\ee
After multiplication by \(c^2/h^2\), the coefficient of \(\del_\mu\beta_{\chi}\) is \((h'-c^2-1)/h- (c^2)'/c^2=-\eps^{2}p''/\eps^{2}p'\), proving the equation~\eqref{eq:section-9-shifted-flux-equation}. Two compactly supported  flux kernels associated with the same \(\alpha_{\chi}\) have zero difference derivative in \(v\); their difference is a  constant in rapidity, which compact support forces to vanish. The  zero vacuum limit follows from the first-order construction below. For comparison, the  rapidity-independent function \(h/\Numb\) solves the second-order equation and tends to one at vacuum, so its exclusion is necessary.
\end{proof}
Fourier transformation in rapidity gives \(\widehat{\beta_{\chi}}\) through the first equation of~\eqref{eq:section-9-normalized-krl-flux-system}:
\bel{eq:section-9-first-order-flux-system} 
h(\mu)\,\del_\mu\widehat{\alpha_{\chi}}(\mu,\xi)=\widehat{\alpha_{\chi}}(\mu,\xi)+i\xi\,\widehat{\beta_{\chi}}(\mu,\xi).
\ee 
For $\xi\neq0$, this identity determines $\widehat{\beta_\chi}$ by dividing
$h\partial_\mu\widehat{\alpha_\chi}-\widehat{\alpha_\chi}$ by $i\xi$, as in~\eqref{eq:tau-recovery-sec9}. As proved in Section~\ref{section-7}, the numerator is $\mathcal O(\xi^2)$ near zero, so the quotient extends
continuously to $\xi=0$ with value zero. Thus the flux is determined by the weak density kernel
already constructed.

\subsubsection{Vanishing flux component at vacuum}

Near vacuum, we improve the bound
$\|\beta_\chi(\mu)\|_{H^{-2}}\leq C\mu$
to $\|\beta_\chi(\mu)\|_{H^{-2}}\leq C\mu^\gamma$,
with $\gamma>1$, so the flux component vanishes faster
than linearly in density.
Indeed, let us prove that there exists  a threshold \(\mu_{\rm vac}>0\) such that
\be
\label{eq:flux-vacuum-improved-bound}
\|\beta_{\chi}(\mu)\|_{H^{-2}} + \mu\|\del_\mu\beta_{\chi}(\mu)\|_{H^{-2}} \leq C\mu^\gamma, \qquad 0<\mu\leq\mu_{\rm vac}.
\ee
First, \(c^2/h\leq C\mu^{\gamma-2}\) near vacuum. The bound~\eqref{eq:uniform-vacuum-traces} gives \(\|\del_v\alpha_{\chi}\|_{H^{-2}}\leq C\mu\). Formula~\eqref{eq:tau-recovery-sec9} gives \(\|\beta_{\chi}\|_{H^{-2}}\leq C\mu\) as well: for \(|\xi|\geq1\), its numerator is bounded by \(C\mu(1+|\xi|)\). For \(|\xi|\leq1\), we apply the vacuum Volterra formula to \(\widehat{\alpha_{\chi}}(\mu,\xi)-\Numb(\mu)\), whose equation has forcing \(-\xi^2\eps^{2}p'\widehat{\alpha_{\chi}}/h^2\). After decreasing \(\mu_{\rm vac}\), the integral \(\int_0^\mu r\eps^{2}p'(r)/h(r)^2\,dr\) can be absorbed, giving \(|\widehat{\alpha_{\chi}}-\Numb|\leq C\xi^2\mu^\gamma\) and \(|\del_\mu\widehat{\alpha_{\chi}}-\Numb'|\leq C\xi^2\mu^{\gamma-1}\). Since \(h\Numb'=\Numb\), the numerator is \(\Ocal(\mu^\gamma\xi^2)\), uniformly down to vacuum. The second current equation and  these two bounds therefore give \(\|\del_\mu\beta_{\chi}\|_{H^{-2}}\leq C\mu^{\gamma-1}\). This is integrable at zero.  More explicitly, \(h'=1+\eps^{2}p'\) and \(\Numb'=\Numb/h\) give \(\del_\mu\log(h/\Numb)=\eps^{2}p'/h\). Multiplication of the second current equation by \(\Numb/h^2\) and integration from zero therefore yield
\be
\label{eq:weak-flux-vacuum-integral}
\beta_{\chi}(\mu,\cdot)=\frac{h(\mu)}{\Numb(\mu)} \int_0^\mu\frac{\Numb(r)\eps^{2}p'(r)}{h(r)^2}\,\del_v\alpha_{\chi}(r,\cdot)\,dr \qquad\text{in }H^{-2}(\Rbb).
\ee
{The lower boundary term is zero since \(\Numb/h\to1\) and \(\|\beta_{\chi}(\mu)\|_{H^{-2}}\leq C\mu\). The integrand has \(H^{-2}\) norm at most \(Cr^{\gamma-1}\), and \(h/\Numb\) is bounded near zero. This proves the bound \(C\mu^\gamma\) in the estimate~\eqref{eq:flux-vacuum-improved-bound}; the derivative bound follows from the same current equation.} 

Consequently, the weak current has the following vacuum normalization. The estimate~\eqref{eq:flux-vacuum-improved-bound} gives
$\|\beta_\chi(\mu)/\mu\|_{H^{-2}}\leq C\mu^{\gamma-1}$,
which tends to zero since $\gamma>1$. Combining this with
$\alpha_\chi(\mu)/\mu\to\delta_0$, we obtain
\be
\frac{1}{\mu}\bigl(\alpha_\chi(\mu,\cdot),
\beta_\chi(\mu,\cdot)\bigr)
\longrightarrow(\delta_0,0)
\qquad\text{as }\mu \to 0.
\ee
(Here the kernel is centered at zero rapidity.)

\subsection{The strong flux kernel}
\label{section-strong-flux-kernel}

\subsubsection{The odd flux primitive}

The selected strong density kernel determines a corresponding  \emph{flux component} in the space of tempered distributions, as follows. We set
\be
A^{\rm strong}(\mu,\cdot) \coloneqq h(\mu)\del_\mu\alpha_{\chi}^{\rm strong}(\mu,\cdot) -\alpha_{\chi}^{\rm strong}(\mu,\cdot).
\ee
This distribution is even in rapidity, since the weak branch and the two propagators in the equation~\eqref{eq:strong-Cauchy-selection} are even in frequency, so the selected strong branch and its inverse Fourier transform are even as well. For a Schwartz test function \(\varphi\), we define
\be
\label{eq:strong-flux-test-primitive}
(\mathcal P\varphi)(v) \coloneqq \frac12\int_{-\infty}^v\bigl(\varphi(w) -\varphi(-w) \bigr)\,dw.
\ee
The integrand is odd with zero integral, so \(\mathcal P\varphi\) is an even Schwartz function. On the positive half-line we integrate from \(v\) to \(+\infty\): each weighted supremum is bounded by one of \(\varphi\) with two additional powers. Reflection gives the negative half-line; positive-order derivatives are linear combinations of derivatives of \(\varphi\). Thus \(\mathcal P\) is continuous on Schwartz space, and the unique odd primitive of \(A^{\rm strong}\) is defined by
\be
\label{eq:strong-flux-dual-primitive}
\langle\beta_{\chi}^{{\rm strong}},\varphi\rangle \coloneqq -\langle A^{\rm strong},\mathcal P\varphi\rangle.
\ee
Indeed, \(\mathcal P(\varphi')\) is the even part of \(\varphi\), and hence we obtain
\bel{eq:strong-flux-primitive}
\del_v\beta_{\chi}^{{\rm strong}} \, =A^{\rm strong}.
\ee
Two primitives differ by a constant, and oddness forces that constant to vanish. Away from zero frequency, the same definition reads
\bel{eq:strong-flux-fourier}
\widehat{\beta_{\chi}^{{\rm strong}}}(\mu,\xi) = \frac{h(\mu)\del_\mu\widehat{\alpha_{\chi}^{\rm strong}}(\mu,\xi) -\widehat{\alpha_{\chi}^{\rm strong}}(\mu,\xi)}{i\xi}, \qquad \xi\ne0,
\ee
which, after inverse Fourier transform, can be taken as the definition of the strong flux kernel $\beta_{\chi}^{{\rm strong}}$. Strictly speaking, formula~\eqref{eq:strong-flux-dual-primitive} provides the distributional extension across zero frequency.

\subsubsection{Properties of the strong spacetime kernel}

\begin{proposition}[The strong density-flux current kernel]
\label{prop:strong-entropy-flux-kernel}
At every \(\mu>0\), the strong density and flux satisfy the following system in the sense of tempered distributions:
\bel{eq:strong-krl-current-system}
h\del_\mu\alpha_{\chi}^{\rm strong}-\alpha_{\chi}^{\rm strong}= \del_v\beta_{\chi}^{{\rm strong}},\qquad h\del_\mu\beta_{\chi}^{{\rm strong}}-c^2\beta_{\chi}^{{\rm strong}}=c^2\del_v\alpha_{\chi}^{\rm strong}.
\ee
Recall that $d_{\rm F}=\nu+3/2=\lambda+2$ denotes the exponent controlling polynomial growth in the Fourier frequency
in~\eqref{eq:strong-global-density-bound}.
\bei

\item 
For every \(\ell>d_{\rm F}+5/2\) and every \(\delta>0\), there exists \(C_{\ell,\delta}(\eps)\), independent of density, such that
\be
\label{eq:strong-flux-global-test-bound}
|\langle\beta_{\chi}^{{\rm strong}}(\mu),\varphi\rangle| +|\langle\mu\del_\mu\beta_{\chi}^{{\rm strong}}(\mu),\varphi\rangle| \leq C_{\ell,\delta}(\eps)(1+\mu)^{1/(1+\eps^2k_\infty^2)+\delta}\|\mathcal P\varphi\|_{H^{\ell}}, \qquad \varphi\in\mathcal S(\Rbb).
\ee
\item 
The density component  converges to \(\delta_0\) at vacuum, and the  flux component satisfies
\bel{eq:strong-flux-vacuum-trace}
\beta_{\chi}^{{\rm strong}}(\mu,\cdot) \to -\frac12\operatorname{sgn}(v), \qquad \mu\del_\mu\beta_{\chi}^{{\rm strong}}(\mu,\cdot) \to 0 \quad\text{in }\mathcal S'(\Rbb)\quad(\mu\to 0).
\ee
\eei
\noindent Both components and their first density derivatives have finite distributional order on every compact positive-density interval, uniformly in density.
\end{proposition}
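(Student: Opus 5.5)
The plan is to work entirely on the Fourier side, where every assertion becomes a pointwise statement about $\widehat{\alpha_{\chi}^{\rm strong}}(\mu,\xi)=f(\mu,\xi)$. Then we transfer back through the continuous primitive operator $\mathcal P$ of~\eqref{eq:strong-flux-test-primitive}.

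\textbf{Step 1: the system~\eqref{eq:strong-krl-current-system}.} The first equation is exactly~\eqref{eq:strong-flux-primitive}. For the second, set $\widehat A=hf_\mu-f$. The Fourier equation $f_{\mu\mu}=-(1+\xi^2)c^2h^{-2}f$ together with $h'=1+c^2$ gives, for $\xi\ne0$,
\[
h\,\del_\mu\widehat A-c^2\widehat A=-\xi^2c^2 f .
\]
After division by $i\xi$, this reads $h\del_\mu\widehat{\beta}-c^2\widehat\beta=i\xi c^2 f$, which is the second equation at nonzero frequency. To avoid dividing by $\xi$ near zero, we argue on the physical side. Let $\Phi\coloneqq h\del_\mu\beta^{\rm strong}-c^2\beta^{\rm strong}-c^2\del_v\alpha^{\rm strong}$. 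Differentiating the definition~\eqref{eq:strong-flux-dual-primitive} in $\mu$ is legitimate by the $C^2((0,\infty);H^{-\ell})$ regularity in Proposition~\ref{prop:section-8-selected-strong-branch}. The computation above then shows $\del_v\Phi=0$, so $\Phi$ is a constant in rapidity. It is also odd, since $\beta^{\rm strong}$ is odd and $\del_v\alpha^{\rm strong}$ is odd. An odd constant vanishes, so $\Phi=0$.

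\textbf{Step 2: the global bound~\eqref{eq:strong-flux-global-test-bound}.} We have $\langle\beta,\varphi\rangle=-\langle A^{\rm strong},\mathcal P\varphi\rangle$. Plancherel gives
\[
|\langle A,\psi\rangle|\le \|(1+\xi^2)^{-\ell/2}\widehat A\|_{L^2}\,\|\psi\|_{H^\ell}.
\]
The estimate~\eqref{eq:strong-global-density-bound} bounds $|f|+\mu|f_\mu|$ by $(1+\mu)^{1/(1+\eps^2k_\infty^2)+\delta}(1+\xi^2)^{d_{\rm F}/2}$. Since $h\le C\mu$, the same bound holds for $|\widehat A|$, and the integral is finite once $\ell>d_{\rm F}+1/2$.

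For $\mu\del_\mu\beta$, we use $\del_v(\mu\del_\mu\beta)=\mu\del_\mu A$ and write
\[
\mu\del_\mu\widehat A=\mu h f_{\mu\mu}+\mu h' f_\mu-\mu f_\mu .
\]
Replacing $f_{\mu\mu}$ through the equation costs a factor $(1+\xi^2)$, because $\mu h c^2h^{-2}$ is bounded. That factor is the source of the extra margin $\ell>d_{\rm F}+5/2$. The same pairing against $\mathcal P\varphi$ then applies, because $\mu\del_\mu\beta$ is odd and is the unique odd primitive.

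\textbf{Step 3: the vacuum trace~\eqref{eq:strong-flux-vacuum-trace}.} This is the delicate step. We compute
\[
\langle A^{\rm strong}(\mu),\psi\rangle=\frac1{2\pi}\int \widehat A(\mu,\xi)\,\overline{\widehat\psi(\xi)}\,d\xi,
\]
with $\widehat A=hf_\mu-f$. As $\mu\to0$, $f\to1$ and $hf_\mu\to0$ pointwise: $h/\mu$ is bounded and $\mu f_\mu\to0$ by the strong normalization. The polynomial majorant~\eqref{eq:strong-global-density-bound} justifies dominated convergence. Hence $A^{\rm strong}(\mu)\to-\delta_0$ tested on $\mathcal P\varphi$, and
\[
\langle\beta(\mu),\varphi\rangle\to(\mathcal P\varphi)(0).
\]
For $\mu\del_\mu\beta$, the limit is $0$ by the same argument together with~\eqref{eq:strong-vacuum-rate-frequency}; here $f_{\mu\mu}$ terms decay like $R_\gamma$. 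It remains to identify the limit. We have
\[
(\mathcal P\varphi)(0)=\tfrac12\int_{-\infty}^0\varphi-\tfrac12\int_0^\infty\varphi
=\bigl\langle-\tfrac12\operatorname{sgn},\varphi\bigr\rangle,
\]
which is the asserted trace. It is consistent with $\del_v(-\tfrac12\operatorname{sgn})=-\delta_0$.

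\textbf{Finite order and main difficulty.} The finite distributional order on compact density intervals follows from~\eqref{eq:strong-compact-frequency-bound}, with up to two frequency factors for density derivatives, together with the continuity of $\mathcal P$. I expect the main care to be needed in consistently handling the zero frequency: every identity must be proved through the odd-primitive uniqueness rather than through the formula~\eqref{eq:strong-flux-fourier}, which is singular at $\xi=0$.
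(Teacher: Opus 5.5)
Your proposal is correct and follows essentially the same route as the paper. You prove the second current equation by showing that the defect has zero rapidity derivative and is odd. You obtain the global bound from the duality $\langle\beta_{\chi}^{\rm strong},\varphi\rangle=-\langle A^{\rm strong},\mathcal P\varphi\rangle$, with the extra factor $(1+\xi^2)$ coming from eliminating $f_{\mu\mu}$. You get the vacuum trace by dominated convergence followed by evaluating $(\mathcal P\varphi)(0)$.

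Your appeal to the rate estimate~\eqref{eq:strong-vacuum-rate-frequency} for $\mu\del_\mu\beta_{\chi}^{\rm strong}$ is harmless but unnecessary. The facts $c^2\to0$, $B\to0$, and $\mu f_\mu\to0$ already give the pointwise limit, and~\eqref{eq:strong-global-density-bound} provides the majorant.
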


\begin{proof}

{\bf 1.} \emph{The first-order system.} The first current equation holds by definition; to prove the second, we differentiate in \(v\) the difference between the two sides of the second equation and use the scalar current equation for \(\alpha_{\chi}^{\rm strong}\). Since \(h'=1+c^2\), we obtain
\be
\label{eq:strong-flux-equation-cancellation}
\del_v\bigl(h\del_\mu\beta_{\chi}^{{\rm strong}}-c^2\beta_{\chi}^{{\rm strong}}-c^2\del_v\alpha_{\chi}^{\rm strong}\bigr)=h^2\del_\mu^2\alpha_{\chi}^{\rm strong}+c^2\alpha_{\chi}^{\rm strong}-c^2\del_v^2\alpha_{\chi}^{\rm strong}=0.
\ee
The difference is constant in rapidity and odd, hence zero. Differentiation of the primitive is justified by the continuity of \(\mathcal P\) and the density regularity in  \autoref{prop:section-8-selected-strong-branch}.

\vskip.2cm

\bse
\noindent{\bf 2.} \emph{The global growth bound.} For the global bounds, we write \(f= \widehat{\alpha_{\chi}^{\rm strong}}\).  
Since \(h'=1+c^2\) and \(f_{\mu\mu} \, =-(1+\xi^2) B(\mu)\mu^{-2}f\), we have
\be
\label{eq:strong-flux-primitive-density-derivative}
\widehat A^{\rm strong} \, =hf_\mu-f, \qquad \mu\del_\mu\widehat A^{\rm strong} =c^2\mu f_\mu-\frac{h}{\mu} (1+\xi^2) B(\mu)f.
\ee
The coefficients \(h/\mu\), \(c^2\), and \(B\) are bounded throughout the positive-density range. Thus~the estimate~\eqref{eq:strong-global-density-bound} gives
\be
|\widehat A^{\rm strong}|+|\mu\del_\mu\widehat A^{\rm strong}| \leq C_\delta(\eps)(1+\mu)^{1/(1+\eps^2k_\infty^2)+\delta}(1+\xi^2)^{d_{\rm F}/2+1}.
\ee
Integration with the Fourier weight  \((1+\xi^2)^{-\ell}\) yields
\be
\label{eq:strong-flux-primitive-Sobolev-bound}
\|A^{\rm strong}(\mu)\|_{H^{- \ell}}+ \|\mu\del_\mu A^{\rm strong}(\mu)\|_{H^{- \ell}} \leq C_{\ell,\delta}(\eps)(1+\mu)^{1/(1+\eps^2k_\infty^2)+\delta}, \qquad \ell>d_{\rm F}+5/2.
\ee
Duality in the equation~\eqref{eq:strong-flux-dual-primitive} proves the global estimate~\eqref{eq:strong-flux-global-test-bound}.
\ese

\vskip.2cm

\bse
\noindent{\bf 3.} \emph{The vacuum trace.} At vacuum, \(f\to1\), \(\mu f_\mu\to0\), \(h/\mu\to1\), and \(c^2,B\to0\). The two Fourier expressions in the equation~\eqref{eq:strong-flux-primitive-density-derivative} therefore tend to \(-1\) and zero, respectively. Their polynomial majorant is uniform for \(0<\mu\leq1\), so weighted dominated convergence gives
\be
A^{\rm strong}(\mu) \to -\delta_0, \qquad \mu\del_\mu A^{\rm strong}(\mu) \to 0 \quad\text{in }H^{- \ell}, \qquad \ell>d_{\rm F}+5/2.
\ee
The continuous odd-primitive map proves the equation~\eqref{eq:strong-flux-vacuum-trace}. In particular, the odd primitive of \(-\delta_0\) is \(-\tfrac12\operatorname{sgn}(v)\). The sign follows directly from the test-function formula:
\be
\label{eq:strong-flux-vacuum-sign-test}
-\langle-\delta_0,\mathcal P\varphi\rangle=(\mathcal P\varphi)(0)=-\frac12\int_\Rbb\operatorname{sgn}(v)\varphi(v)\,dv.
\ee
On compact positive-density intervals, the preceding bounds and the continuity of \(\mathcal P\) also give a finite distributional order uniform on each such interval.
\ese
\end{proof}

The bound~\eqref{eq:strong-flux-global-test-bound} uses Schwartz test functions: the sign-function vacuum limit has Fourier transform proportional to the \emph{Principal Value} distribution 
 \(\operatorname{pv}(1/\xi)\), so it belongs to no global \(H^{-\ell}(\Rbb)\). The bound controls the flux from vacuum to arbitrarily large density with the same isothermal growth weight as the strong density kernel.

The obstruction persists at every positive density: at zero frequency, the scalar equation and \(h'=1+c^2\) give \(\del_\mu\widehat A^{\rm strong}=(c^2/h)\widehat A^{\rm strong}\). Since \(\Numb'=\Numb/h\), we obtain
\be
\label{eq:strong-flux-zero-frequency}
\del_\mu\Bigl(\frac{\Numb}{h}\widehat A^{\rm strong}(\mu,0)\Bigr)=0, \qquad \widehat A^{\rm strong}(\mu,0)=-\frac{h(\mu)}{\Numb(\mu)}.
\ee
{The constant follows from \(\Numb/h\to1\) and \(\widehat A^{\rm strong}(\mu,0)\to-1\) at vacuum. The weak Volterra series in the equation~\eqref{eq:vacuum-plus-branch-sec9} and its differentiated series converge locally uniformly in frequency, since \(B(r)/r\) is integrable. Thus the weak branch is smooth and even in real frequency; ODE continuation and the positive denominator in the equation~\eqref{eq:strong-Cauchy-selection} give the same properties for the selected strong branch. In particular, \(\widehat A^{\rm strong}(\mu,\xi)-\widehat A^{\rm strong}(\mu,0)={\Ocal}_\mu(\xi^2)\). The Fourier transform of its odd primitive is therefore}
\be
\label{eq:strong-flux-principal-value}
\widehat{\beta_{\chi}^{{\rm strong}}}(\mu,\xi) =-\frac{h(\mu)}{\Numb(\mu)}\operatorname{pv}\frac{1}{i\xi} +\frac{\widehat A^{\rm strong}(\mu,\xi)-\widehat A^{\rm strong}(\mu,0)}{i\xi}.
\ee
{Here \(\operatorname{pv}\) denotes the symmetric principal value, and the second term extends continuously across zero frequency with value zero. Oddness excludes an additional Dirac mass there. The nonzero principal-value term cannot be represented by a locally square-integrable function of frequency, so \(\beta_{\chi}^{{\rm strong}}(\mu)\notin H^{-\ell}(\Rbb)\) for any \(\ell\). This explains the use of the estimate~\eqref{eq:strong-flux-global-test-bound} rather than a global Sobolev bound for the flux.}

\subsection{Behavior at the characteristic boundary}

\label{section-11-3}

\subsubsection{Leading characteristic expansion of the flux component}

We recall the acoustic distance \(k\) defined in the equation~\eqref{eq:standard-acoustic-distance}  and the leading term at the characteristic boundary  in the equation~\eqref{eq:section-8-acoustic-radius}:
\bel{eq:section-9-characteristic-distance}
\theta {=} v-s, \qquad G_b(\mu,\theta) = \bigl[k(\mu)^2-\theta^2 \bigr]_+^b.
\ee
The coefficients of the two leading terms are
\bel{eq:section-9-leading-current-coefficients}
a_1=c_{*,\lambda}k^{-\lambda}k'^{-1/2}>0, \qquad b_1= \frac{hk'}{k}a_1>0,
\ee
and the characteristic behavior in~\autoref{prop:distributional-leading-trace}, together with the first equation in~\eqref{eq:section-9-normalized-krl-flux-system}, gives the expansions
\bel{eq:section-9-one-term-current-boundary}
\alpha_{\chi}(\mu,\theta) =a_1(\mu)G_\lambda(\mu,\theta)+ \mathcal R_{\alpha_{\chi}}(\mu,\theta), \qquad \beta_{\chi}(\mu,\theta) =-\theta b_1(\mu)G_\lambda(\mu,\theta)+ \mathcal R_{\beta_\chi}(\mu,\theta),
\ee
and, whenever \(1<\gamma<3\), the remainders satisfy, on every compact interval \(I\Subset(0,+ \infty)\),
\bel{eq:section-9-one-term-current-remainders}
\frac{\mathcal R_{\alpha_{\chi}}(\mu,\theta)}{G_\lambda(\mu,\theta)} \to 0, \qquad \frac{\mathcal R_{\beta_\chi}(\mu,\theta)}{G_\lambda(\mu,\theta)} \to 0 \qquad\text{as }|\theta|\nearrow k(\mu),
\ee
uniformly for \(\mu\in I\). For any \(\gamma>1\), the corresponding assertion is the strong local \(L^2\) scaling limit
\be
\label{eq:current-remainders-leading-L2}
r^{-\lambda} \mathcal R_{\alpha_{\chi}}(\mu,\pm(k(\mu) -r\,\cdot)) \to 0, \qquad r^{-\lambda} \mathcal R_{\beta_\chi}(\mu,\pm(k(\mu) -r\,\cdot)) \to 0, \qquad r \to 0.
\ee
The convergence is uniform for $\mu\in I$. Proposition~\ref{prop:distributional-leading-trace} gives the density limit, and the proof below gives the flux limit. Both use $\lambda>-1/2$, which makes $q_+^\lambda$ locally square integrable. The next proposition derives the characteristic integral formula and the improved leading behavior of $c\alpha_{\chi}\pm\beta_{\chi}$; this gain concerns the combination, not either remainder separately.

\begin{proof}[Proof of the flux scaling limit]
\bse
To establish the distributional leading trace for both components, 
we define the Fourier amplitude $\mathcal A_\mu\coloneqq C_*\sqrt{2/\pi}/\sqrt{k'(\mu)}$. We first obtain the asymptotic formula
\be
\widehat{\beta_{\chi}}(\mu,\xi) =i c(\mu) \mathcal A_\mu\operatorname{sgn}(\xi)|\xi|^{-\lambda-1} \sin\bigl(k(\mu)|\xi|-\pi(\lambda+1)/2 \bigr)+o(|\xi|^{-\lambda-1}).
\ee
We use the differentiated remainder bound~\eqref{eq:Bessel-derivative-remainder}, rather than differentiating an undifferentiated asymptotic error. Since \(\widehat{\alpha_{\chi}}=\Psi/\sqrt{k'}\), that bound and the Bessel asymptotics give
\be
\label{eq:weak-current-differentiated-Fourier-profile}
\begin{pmatrix}
\widehat{\alpha_{\chi}}(\mu,\xi)\\
\dfrac{\del_\mu\widehat{\alpha_{\chi}}(\mu,\xi)}{k'(\mu) (1+\xi^2)^{1/2}}
\end{pmatrix}
=\mathcal A_\mu(1+\xi^2)^{-(\lambda+1)/2}
\begin{pmatrix}
\cos((1+\xi^2)^{1/2} k(\mu)-\pi(\lambda+1)/2)\\
-\sin((1+\xi^2)^{1/2} k(\mu)-\pi(\lambda+1)/2)
\end{pmatrix}
+o((1+\xi^2)^{-(\lambda+1)/2}).
\ee
{Both remainders are uniform on compact positive-density intervals. The derivative of \((k')^{-1/2}\) contributes \(\Ocal((1+\xi^2)^{-\lambda/2-1})\) to the second entry. Substitution into the equation~\eqref{eq:tau-recovery-sec9}, using \(hk'=c\), gives the announced sine asymptotic; replacing \((1+\xi^2)^{1/2}\) by \(|\xi|\) changes the phase by \(\Ocal(|\xi|^{-1})\).} The same estimates and continuity at \(\xi=0\) give \(|\widehat{\beta_{\chi}}|\leq C(1+|\xi|)^{-\lambda-1}\), hence \(\beta_{\chi}\in L^2\) since \(\lambda>-1/2\). The identity \(\del_\theta\beta_{\chi}=h\del_\mu\alpha_{\chi}-\alpha_{\chi}\) shows that \(\beta_{\chi}\) is constant on each exterior interval \(\theta>k(\mu)\) and \(\theta<-k(\mu)\). Both constants vanish by its \(L^2\) integrability.

The differentiation and integration argument of  \autoref{prop:distributional-leading-trace} applies to this sine asymptotic as well as to the cosine asymptotic of \(\alpha_{\chi}\). In the inward coordinate \(q=k-\theta\), it gives
\be
 r^{-\lambda}
 \begin{pmatrix}\alpha_{\chi}(\mu,k-r\,\cdot)\\ \beta_{\chi}(\mu,k-r\,\cdot)\end{pmatrix}
  \to  (2k)^\lambda
 \begin{pmatrix}a_1\\-ca_1\end{pmatrix}(\cdot)_+^\lambda
 \quad\text{in }\mathcal D'(\Rbb).
\ee
The local Sobolev argument in  \autoref{prop:distributional-leading-trace} applies to both Fourier asymptotic formulas, so this convergence is also strong in local \(L^2\). At the other boundary the sign of the second component is reversed, and \(ca_1=kb_1\).  With \(s_\pm=v\pm k(\mu)\), the identity \(\theta=v-s\) gives
\be
G_\lambda(\mu,v-s) = \bigl((s-s_-)(s_+-s) \bigr)_+^\lambda.
\ee
This proves the flux limit~\eqref{eq:current-remainders-leading-L2} in local $L^2$; Proposition~\ref{prop:distributional-leading-trace} gives the density limit.
\ese
\end{proof}

\begin{proposition}[Leading behavior of the density-flux kernel at the characteristic boundaries]
\label{prop:section-9-conormal-current-remainder}
Suppose that the locally integrable density kernel has the locally uniform leading  term \(\alpha_{\chi}/G_\lambda\to a_1\) on a positive-density characteristic strip. 
\bei 

\item Then the distributional first-order current equations and support in the acoustic cone imply
\bel{eq:section-9-conormal-current-remainders}
\mathcal R_{\alpha_{\chi}}=o(G_\lambda), \qquad \mathcal R_{\beta_\chi}=o(G_\lambda).
\ee
This conclusion requires no estimate of either remainder one order beyond its leading scaling. Uniformity holds for families with uniform leading  terms and uniform coefficient bounds on the strip.

\item Near the right characteristic boundary, the leading terms of $c\alpha_\chi$ and $\beta_\chi$ cancel in their sum, whereas near the left boundary they cancel in their difference.
For every admissible pressure and every $\gamma>1$, one determines the resulting boundary behavior by rescaling the inward distance by $r$ and dividing these combinations by $r^{\lambda+1}$;
the limits below hold in $L^2$. Write \(\mu(y) = k^{-1}(y)\), use \(c(y) =c(\mu(y))\), and set \(\mathcal C_\pm\coloneqq c\alpha_{\chi}\pm\beta_{\chi}\). For every compact interval \(I\Subset(0,+ \infty)\) of acoustic distances and every \(L>0\), one has
\be
\label{eq:rel-characteristic-combination-scaling}
r^{-\lambda-1}\mathcal C_\pm\bigl(\mu(y),\pm(y-rq) \bigr) \to \frac{(2y)^\lambda a_1(\mu(y))}{2(\lambda+1)} \bigl(c_y(y)+1-c(y)^2 \bigr)q_+^{\lambda+1} \quad\text{in }L^2(I\times(-L,L))\quad(r \to 0).
\ee

\item 
The coefficient has the sign of the genuine-nonlinearity expression, since
\be
\label{eq:rel-characteristic-coefficient-sign}
c_y+1-c^2= \frac{h\eps^{2}p''+2\eps^{2}p'(1-\eps^{2}p')}{2\eps^{2}p'}.
\ee
This limit uses only the leading trace of the density kernel, including when that trace is available in the local \(L^2\) sense rather than pointwise.
\eei
\end{proposition}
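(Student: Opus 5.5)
The plan is to work in the acoustic variable $y=k(\mu)$, with $\del_y=(h/c)\del_\mu$, and in the characteristic combinations $\mathcal C_\pm=c\alpha_\chi\pm\beta_\chi$ used in the proof of Proposition~\ref{theo:section-8-causal-krl-current}. From the first-order system~\eqref{eq:section-9-normalized-krl-flux-system} and $h'=1+c^2$, we derive transport equations along the two characteristic families:
\[
(\del_y\mp\del_v)\mathcal C_\pm=\bigl(c_y+1-c^2\bigr)\alpha_\chi+\text{(terms proportional to $\mathcal C_\pm$)}.
\]
The identity $(c_y+1-c^2)\,2\eps^2p'=h\eps^2p''+2\eps^2p'(1-\eps^2p')$ follows by direct substitution of $c_y=h\eps^2p''/(2c^2)$. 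This already settles the third item, formula~\eqref{eq:rel-characteristic-coefficient-sign}.

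For the first item, we take the locally uniform leading term $\alpha_\chi=a_1G_\lambda+o(G_\lambda)$ as given. We recover $\beta_\chi$ from the first current equation, $\del_v\beta_\chi=h\del_\mu\alpha_\chi-\alpha_\chi$, integrated in $v$ from outside the acoustic cone, where $\beta_\chi$ vanishes by Lemma~\ref{prop:kernel-positive-finite-speed-prelim}. The difficulty is that $\del_\mu\alpha_\chi$ has no pointwise control. To avoid it, we instead use the characteristic equation for $\mathcal C_\mp$, namely the family that is transversal at the right boundary. We integrate it along characteristics ending at $(y,v)$, starting from the exterior where $\mathcal C_\mp=0$. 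The source $\alpha_\chi\sim a_1G_\lambda$ is integrated over a transversal path of length comparable to the inward distance $d$, which gives $\mathcal C_\mp=O(d\,G_\lambda)$, and in particular $o(G_\lambda)$. Hence $\beta_\chi=\pm c\alpha_\chi+o(G_\lambda)$, which is exactly $-\theta b_1G_\lambda+o(G_\lambda)$ because $kb_1=ca_1$. Uniformity follows from uniform coefficient bounds on the strip.

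For the second item, we rescale the inward distance as $q=(y-|v|)/r$. Integrating the transversal transport equation over the characteristic segment of length $\sim rq/2$ produces $\int_0^{q}(2y)^\lambda a_1\,s_+^\lambda\,ds\,\tfrac{r^{\lambda+1}}{2}$, up to the coefficient $c_y+1-c^2$. This yields the factor $q_+^{\lambda+1}/(2(\lambda+1))$ in~\eqref{eq:rel-characteristic-combination-scaling}. The terms proportional to $\mathcal C_\pm$ contribute only $O(r)$ relative corrections, which we handle by a Gronwall argument along the segment.

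The main obstacle is the general case $\gamma\ge3$, where the leading trace holds only in local $L^2$ (Proposition~\ref{prop:distributional-leading-trace} and~\eqref{eq:current-remainders-leading-L2}) and not pointwise. There we carry out the same characteristic integration in integrated form. The transversal integration operator, rescaled by $r$, is bounded on $L^2(I\times(-L,L))$ by Minkowski's integral inequality, and varying $y\in I$ supplies the second variable. Applying it to the $L^2$ convergence of $r^{-\lambda}\alpha_\chi$ then gives the stated $L^2$ limit.
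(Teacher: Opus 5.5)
Your proposal is correct and follows essentially the same route as the paper: you transport $\mathcal C_\pm$ along the characteristic family transversal to the boundary and integrate from the exterior, where the kernels vanish (the paper uses an explicit integrating factor where you use Gronwall). This gives $\mathcal C_\pm=O(G_{\lambda+1})$ and the flux trace via $kb_1=ca_1$, and you then obtain the $L^2$ scaling limit by bounding the rescaled characteristic integral operator uniformly in $r$ and passing to the limit with the $L^2$ density trace. Three small points: at the right boundary the relevant combination is $\mathcal C_+$, transported by $\partial_y-\partial_v$, so your $\pm/\mp$ labels must be read with the lower sign there; the source integral $\int_0^b t^\lambda\,dt$ converges because $\lambda>-1$; and the rescaled operator involves a $y$-shift of size $O(r)$, so it maps $L^2$ on a slightly larger interval $I'$ into $L^2(I\times(0,L))$.
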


\begin{proof}
\bse
{\bf 1.} \emph{Integration from a characteristic boundary.} We put \(y=k(\mu)\), \(\theta=v-s\), and \(\mathcal C_+= \beta_{\chi}+c\alpha_{\chi}\). The first-order current equations give
\be
\label{eq:section-9-D-remainder-current-identity}
(\del_y-\del_\theta)\mathcal C_+=c\mathcal C_++(c_y+1-c^2)\alpha_{\chi}.
\ee
In the characteristic coordinates \(a=y+ \theta\), \(b=y-\theta\), this becomes \(2\del_b\mathcal C_+=c\mathcal C_++(c_y+1-c^2)\alpha_{\chi}\). The kernels vanish for \(b< 0\), and the equation holds distributionally across \(b=0\). Multiplication by the corresponding integrating factor therefore gives
\be
\label{eq:section-9-D-joint-remainder-trace}
\mathcal C_+(a,b) = \frac12\int_0^b \exp \Bigl( \frac12\int_t^b c((a+r)/2)\,dr \Bigr) (c_y+1-c^2)((a+t)/2)\alpha_{\chi}(a,t)\,dt,
\ee
where the right-hand side is set equal to zero for \(b< 0\). Indeed, the integral satisfies the same first-order equation as \(\mathcal C_+\). Their difference, multiplied by the integrating factor, has zero distributional derivative in \(b\) and vanishes on the exterior region. Since \(\lambda>-1\) and \(\alpha_{\chi}=\Ocal(G_\lambda)\), this representation gives \(\mathcal C_+=\Ocal(G_{\lambda+1})\). Consequently, \(\beta_{\chi}/G_\lambda\to-ca_1\). Since \(kb_1=ca_1\), this proves the flux trace in the estimate~\eqref{eq:section-9-one-term-current-boundary}. Reflection in \(\theta\) gives the other characteristic boundary. The density assertion follows from the leading-trace hypothesis of the proposition; all steps are uniform under its coefficient and trace bounds.

\ese

\vskip.2cm

\bse
\noindent{\bf 2.} \emph{The limiting profile.} For~\eqref{eq:rel-characteristic-combination-scaling}, we use the equation~\eqref{eq:section-9-D-joint-remainder-trace} together with the density scaling trace in  \autoref{prop:distributional-leading-trace}. On a slightly larger compact interval \(I'\), this trace gives
\be
\label{eq:rel-density-scaling-for-combination}
f_r(y,q) \coloneqq r^{-\lambda}\alpha_{\chi}(\mu(y),y-rq) \to a_1(\mu(y))(2y)^\lambda q_+^\lambda \quad\text{in }L^2(I'\times(-L,L)).
\ee
We substitute \(a=2y-rq\), \(b=rq\), and \(t=r\theta\) in the integral for \(\mathcal C_+\). After division by \(r^{\lambda+1}\), its density factor is \(f_r(y+r(\theta-q)/2,\theta)\). The exponential factor tends uniformly to one, and the other coefficient tends uniformly to \(c_y(y)+1-c(y)^2\), for \(0\leq\theta\leq q\leq L\) and \(y\in I\). Cauchy--Schwarz in \(\theta\), followed by integration in \(y,q\), bounds this integral operator from \(L^2(I'\times(0,L))\) to \(L^2(I\times(0,L))\), independently of small \(r\). It therefore sends the difference in the equation~\eqref{eq:rel-density-scaling-for-combination} to zero. For the limiting profile, continuity of the coefficients and \(\theta^\lambda\in L^2(0,L)\) justify passage to the limit under the same translations. The resulting integral is
\be
\frac12\bigl(c_y+1-c^2 \bigr)a_1(\mu(y))(2y)^\lambda \int_0^q\theta^\lambda\,d\theta = \frac{(2y)^\lambda a_1(\mu(y))}{2(\lambda+1)} \bigl(c_y+1-c^2 \bigr)q^{\lambda+1}.
\ee
Outside the cone both kernels vanish; reflection proves the assertion for \(\mathcal C_-\) at the other boundary. Finally, \(k'=c/h\) gives \(c_y=hc'/c=h\eps^{2}p''/(2\eps^{2}p')\), proving the equation~\eqref{eq:rel-characteristic-coefficient-sign}.

\ese
\end{proof}

\subsubsection{Lorentzian reconstruction}

The density $\alpha_{\chi}$ and flux $\beta_{\chi}$ reconstruct the current of~\autoref{prop:section-3-current-reconstruction} by an inverse Lorentz transformation. For every fixed \(\mu>0\),  the inverse Fourier transform is defined directly in the space of tempered distributions by
\bel{eq:inverse-fourier-tau-sec9}
\beta_{\chi}(\mu,\cdot) \coloneqq \mathcal F_{\xi\to v}^{-1}\bigl(\widehat{\beta_{\chi}}(\mu,\xi)\bigr)\qquad\text{in }\mathcal S'(\Rbb_v).
\ee
For a vacuum profile \(\entropygenerator\in\mathcal S(\Rbb)\), we first define
\bel{eq:section-9-normalized-current-by-convolution}
\alpha_{\entropygenerator}(\mu,v) \coloneqq \int_\Rbb\alpha_{\chi}(\mu,v-s)\entropygenerator(s)\,ds,\qquad \beta_{\entropygenerator}(\mu,v) \coloneqq \int_\Rbb\beta_{\chi}(\mu,v-s)\entropygenerator(s)\,ds.
\ee
The corresponding fixed-frame current is obtained by the inverse Lorentz transformation
\bel{eq:section-9-fixed-frame-current-by-convolution}
\begin{aligned}
F^{0}_{\entropygenerator}(\mu,v)&= \cosh(v)\alpha_{\entropygenerator}(\mu,v)+ \sinh(v)\beta_{\entropygenerator}(\mu,v),\\
F^{1}_{\entropygenerator}(\mu,v)&= \sinh(v)\alpha_{\entropygenerator}(\mu,v)+ \cosh(v)\beta_{\entropygenerator}(\mu,v).
\end{aligned}
\ee
The vacuum normalization determines the flux constant, and the equivalent formula~\eqref{eq:section-8-fixed-frame-current-from-normalized-profile}, below,  gives the current in the formal balance law~\eqref{eq:intro-ft-quasi-balance}; Section~\ref{section-9}, below, extends the construction beyond Schwartz profiles.

\subsection{Global estimates for the weak density-flux kernel} 
\label{section-fixed-current-bounds}

We also have bounds for the spatial component of the spacetime kernel as well. We use $a\coloneqq\lambda+1=1/2+1/(\gamma-1)>1/2$ and $k_{\min}=\min\{k,1\}$.

\begin{proposition}[Global bounds for the weak density-flux kernel]
\label{prop:fixed-current-bounds}
The weak density-flux kernel $\chi$ satisfies
\be
\label{eq:fixed-current-global-decay}
|\widehat{\alpha_{\chi}}(\mu,\xi)|+|\widehat{\beta_{\chi}}(\mu,\xi)|\leq C\mu(1+ (1+\xi^2)^{1/2} k_{\min}(\mu))^{-a},\qquad \mu>0,\quad\xi\in\Rbb.
\ee
For large density, the isothermal coefficient \(B_\infty=\eps^2k_\infty^2/(1+\eps^2k_\infty^2)^2\) gives, for every \(\delta>0\),
\bse
\be
\label{eq:fixed-current-isothermal-growth}
|\widehat{\alpha_\chi}(\mu,\xi)|+|\widehat{\beta_\chi}(\mu,\xi)|
\leq C_\delta(\eps)\sqrt\mu(1+\xi^2)^{-a/2}
\mu^{\sqrt{(1/4-(1+\xi^2)B_\infty)_+}+\delta},\qquad\mu\geq1.
\ee
For every \(\eta>0\), the range \(|\xi|\geq\xi_\infty+\eta\) satisfies the stronger bound
\be
\label{eq:fixed-current-isothermal-oscillatory}
|\widehat{\alpha_\chi}(\mu,\xi)|+|\widehat{\beta_\chi}(\mu,\xi)|
\leq C_\eta(\eps)\sqrt\mu(1+\xi^2)^{-a/2},\qquad\mu\geq1.
\ee
\ese
For $0\leq\ell<a-1/2$, both components belong to $H^\ell$, with the bound in~\eqref{eq:intro-kernel-Sobolev-bound}; at large density this improves to \(C_{\ell,\delta}(\eps)\mu^{1/(1+\eps^2k_\infty^2)+\delta}\). Extended by $\kernelpair{}(0)=0$, the pair belongs to $C^1([0,+\infty);H^{-1}(\Rbb)^2)$, with $\del_\mu\kernelpair{}(0)= (\delta_0,0)$. There are $C>0$ and $\mu_{\rm vac}>0$ such that
\be
\label{eq:fixed-current-vacuum-Hminusone}
\|\mu^{-1}\kernelpair{}(\mu)- (\delta_0,0)\|_{H^{-1}}+\|\del_\mu\kernelpair{}(\mu)- (\delta_0,0)\|_{H^{-1}}\leq C\mu^{(\gamma-1)/4},\qquad0<\mu\leq\mu_{\rm vac}.
\ee
\end{proposition}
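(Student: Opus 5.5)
The plan is to reduce every assertion about $\widehat{\beta_\chi}$ to the bounds already proved for $\widehat{\alpha_\chi}$, using the recovery formula~\eqref{eq:tau-recovery-sec9}. That formula expresses $\widehat{\beta_\chi}$ as $(h\del_\mu\widehat{\alpha_\chi}-\widehat{\alpha_\chi})/(i\xi)$. The work then splits into two frequency regimes.

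\textbf{High frequencies, $|\xi|\geq1$.} Here $|\xi|^{-1}\simeq(1+\xi^2)^{-1/2}$ and $h\simeq\mu$ uniformly (since $\mu\leq h\leq2\mu$). Hence
\[
|\widehat{\beta_\chi}|\leq C\bigl(\mu|\del_\mu\widehat{\alpha_\chi}|+|\widehat{\alpha_\chi}|\bigr)(1+\xi^2)^{-1/2}.
\]
\begin{itemize}
\item For~\eqref{eq:fixed-current-global-decay}, I insert~\eqref{eq:global-krl-frequency-decay}. The derivative term costs a factor $1+\sqrt{1+\xi^2}\,k_{\min}$, which is at most $C\sqrt{1+\xi^2}$ since $k_{\min}\leq1$. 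The factor $|\xi|^{-1}$ absorbs this loss, giving $C\mu(1+\sqrt{1+\xi^2}k_{\min})^{-a}$ with $a=\nu+1/2$.
\item For~\eqref{eq:fixed-current-isothermal-growth} and~\eqref{eq:fixed-current-isothermal-oscillatory}, I use~\eqref{eq:weak-kernel-isothermal-growth} and~\eqref{eq:weak-kernel-isothermal-oscillatory}. They already control $\mu|\del_\mu\widehat{\alpha_\chi}|/\sqrt{1+\xi^2}$, so the estimate is immediate.
\end{itemize}

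\textbf{Low frequencies, $|\xi|\leq1$.} Here I need the numerator to be $\Ocal(\xi^2)$ times the correct density weight, uniformly in $\mu$.
\begin{itemize}
\item \emph{Near vacuum.} The argument preceding~\eqref{eq:flux-vacuum-improved-bound} gives $|\widehat{\alpha_\chi}-\Numb|\leq C\xi^2\mu^\gamma$ and the analogous bound for the derivative. Since $h\Numb'=\Numb$, the numerator is $\Ocal(\xi^2\mu^\gamma)$, which is better than $\Ocal(\xi^2\mu)$.
\item \emph{Compact density intervals.} Smoothness and evenness in $\xi$ (Lemma~\ref{lem:section-7-Fourier-multiplier-bounds}) give the $\Ocal(\xi^2)$ bound.
\item \emph{Large density.} I will avoid dividing by $\xi$ at all. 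The Fourier form of the second current equation,
\[
h\del_\mu\widehat{\beta_\chi}-c^2\widehat{\beta_\chi}=i\xi c^2\widehat{\alpha_\chi},
\]
is a first-order ODE in $\mu$. Using the integrating factor $\Numb/h$ as in~\eqref{eq:weak-flux-vacuum-integral}, it integrates to
\[
\widehat{\beta_\chi}(\mu,\xi)=i\xi\,\frac{h}{\Numb}\int_0^\mu\frac{\Numb c^2}{h^2}\widehat{\alpha_\chi}(r,\xi)\,dr.
\]
Inserting the Lemma~\ref{lem:isothermal-growth-frequency-regimes}-type bound $|\widehat{\alpha_\chi}|\leq C_\delta r^{1/2+\sqrt{1/4-(1+\xi^2)B_\infty}+\delta}$, with $\Numb\simeq\mu^{1/(1+\eps^2k_\infty^2)\pm\delta}$ and $c^2/h^2\simeq r^{-2}$, bounds $|\widehat{\beta_\chi}|$ by $\mu^{1/2+\sqrt{\cdots}+2\delta}$ up to constants.
\end{itemize}

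\textbf{Main obstacle.} The delicate step is making the low-frequency large-density bound uniform, with only a $\delta$ loss in the exponent. Two issues arise:
\begin{itemize}
\item the ratio $h/\Numb$ grows like $\mu^{1-1/(1+\eps^2k_\infty^2)}$;
\item the integrand exponent must stay above $-1$ so that the integral is dominated by its upper endpoint.
\end{itemize}
Both can be checked using $\tfrac12+\sqrt{1/4-B_\infty}=1/(1+\eps^2k_\infty^2)$, since the exponents then match exactly. An alternative would be to apply Lemma~\ref{lem:isothermal-growth-frequency-regimes} to the combined quantity directly.

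\textbf{Consequences.}
\begin{itemize}
\item \emph{Sobolev bounds.} Plancherel with the integral splitting of Corollary~\ref{cor:kernel-physical-Sobolev} gives the $H^\ell$ bound for $\ell<a-1/2$, together with its large-density refinement.
\item \emph{Vacuum behaviour.} I divide by $\mu$ and compare with $(1,0)$ in the $H^{-1}$ norm $\int(1+\xi^2)^{-1}|\cdot|^2$. The joint vacuum estimate~\eqref{eq:joint-vacuum-frequency-trace} gives an error $C(1+\xi^2)k^2$ on $\sqrt{1+\xi^2}\,k\leq1$; the bounds above give $\Ocal(1)$ on the complement. Splitting at $|\xi|\simeq k^{-1/2}$ bounds the squared $H^{-1}$ norm by a positive power of $k\simeq\mu^{(\gamma-1)/2}$; choosing the split optimally should give a rate of $\mu^{(\gamma-1)/4}$.
\item \emph{Flux component and $C^1$ statement.} The flux component is handled by~\eqref{eq:flux-vacuum-improved-bound} together with $\mu^{\gamma-1}\leq\mu^{(\gamma-1)/4}$. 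The derivatives are handled the same way, which yields the $C^1([0,\infty);H^{-1})$ statement.
\end{itemize}
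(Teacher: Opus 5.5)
Your Fourier bounds \eqref{eq:fixed-current-global-decay}--\eqref{eq:fixed-current-isothermal-oscillatory} and their Sobolev consequences are fine. They follow the paper's route: the recovery formula for $|\xi|\geq1$, and integration of the first-order flux equation with integrating factor $h/\Numb$ for $|\xi|\leq1$. The paper avoids your three density regimes with one estimate, $(h/\Numb)(\mu)/(h/\Numb)(r)\leq2\sqrt{\mu/r}$; together with $|\widehat{\alpha_\chi}(r,\xi)|\leq Cr$ this gives $|\widehat{\beta_\chi}(\mu,\xi)|\leq C|\xi|\mu$ for every $\mu>0$.

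The genuine gap is in the vacuum estimate \eqref{eq:fixed-current-vacuum-Hminusone} for the flux. You cite \eqref{eq:flux-vacuum-improved-bound}, but that is an $H^{-2}$ bound, and $H^{-2}$ control gives nothing in $H^{-1}$. The argument behind it controls the flux only through $|\widehat{\beta_\chi}(\mu,\xi)|/\mu\leq C|\xi|\mu^{\gamma-1}\simeq C|\xi|k(\mu)^2$. That bound grows linearly in $\xi$, so $\int(1+\xi^2)^{-1}|\widehat{\beta_\chi}/\mu|^2\,d\xi$ diverges; the same happens for $\del_\mu\widehat{\beta_\chi}$.

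What is missing is high-frequency decay of the flux near vacuum. On $\sqrt{1+\xi^2}\,k\geq1$:
\begin{itemize}
\item the recovery formula \eqref{eq:tau-recovery-sec9} combined with \eqref{eq:global-krl-frequency-decay} gives $|\widehat{\beta_\chi}|/\mu\leq Ck(\sqrt{1+\xi^2}\,k)^{-a}$;
\item the first-order flux equation then gives $|\del_\mu\widehat{\beta_\chi}|\leq Ck(\sqrt{1+\xi^2}\,k)^{1-a}$.
\end{itemize}
Combine these with the low-frequency bound $C|\xi|k^2$ and split at $\sqrt{1+\xi^2}=k^{-1}$. Because $a>1/2$, both squared $H^{-1}$ norms are then $\Ocal(k^3)$. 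Interpolating your $H^{-2}$ bound against the $L^2$ bounds from the first part would also work, but some such extra step is needed.

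A smaller error affects the density derivative. Your claim that the bounds give $\Ocal(1)$ off the region $\sqrt{1+\xi^2}\,k\leq1$ holds for $\widehat{\alpha_\chi}/\mu-1$. It fails for $\del_\mu\widehat{\alpha_\chi}-1$ when $\gamma>3$. There \eqref{eq:global-krl-frequency-decay} only gives $|\del_\mu\widehat{\alpha_\chi}|\leq C(1+\sqrt{1+\xi^2}\,k)^{b}$ with $b=\max\{0,1-a\}>0$. The cruder bound \eqref{eq:uniform-vacuum-traces} grows linearly in $\xi$ and would make the $H^{-1}$ tail diverge. The argument survives because $b<1/2$, which gives $\int_{\sqrt{1+\xi^2}\geq k^{-1}}(1+\xi^2)^{-1}(\sqrt{1+\xi^2}\,k)^{2b}\,d\xi\leq Ck$, but this has to be stated.

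Finally, the split belongs at $\sqrt{1+\xi^2}\simeq k^{-1}$, not $k^{-1/2}$; the latter yields only $\mu^{(\gamma-1)/8}$. At $k^{-1}$ both pieces are $\Ocal(k)$, which gives the rate $\sqrt k\simeq\mu^{(\gamma-1)/4}$.
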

\begin{proof}
{\bf 1.} \emph{Global coarse bounds.} Proposition~\ref{cor:kernel-Sobolev-regularity} gives the density bound and its differentiated version. For $|\xi|\geq1$, the recovery formula~\eqref{eq:tau-recovery-sec9} divides the differentiated density bound by $\xi$; the inequality $(1+(1+\xi^2)^{1/2} k_{\min})/(1+\xi^2)^{1/2}\leq2$ gives the same decay for the flux. For $|\xi|\leq1$, we integrate its first-order equation from vacuum. Its integrating factor from density $r$ to density $\mu$ is $G(\mu,r)\coloneqq(h/\Numb)(\mu)/(h/\Numb)(r)\leq2\sqrt{\mu/r}$, since $h\leq2\mu$ and $\Numb'/\Numb=1/h\geq1/(2\mu)$. The bound $|\widehat{\alpha_{\chi}}(r,\xi)|\leq Cr$ therefore gives $|\widehat{\beta_{\chi}}(\mu,\xi)|\leq C|\xi|\mu$. This proves the global Fourier bound at bounded frequencies, including the continuous zero value at $\xi=0$. Fourier integration gives the stated coarse Sobolev bounds.

\vskip.2cm

\noindent{\bf 2.} \emph{The isothermal regime.} For the isothermal refinements, the density and its derivative satisfy~\eqref{eq:weak-kernel-isothermal-growth} and~\eqref{eq:weak-kernel-isothermal-oscillatory}. The flux recovery formula gives the same estimates on every frequency range separated from zero, since \(h/\mu\) is bounded. To cover zero frequency, we use its first-order equation on \(\mu\geq1\). We choose a number \(\sigma_0\) strictly between \(\eps^2k_\infty^2/(1+\eps^2k_\infty^2)\) and \(1/2\). The limit \(\mu c^2/h\to\eps^2k_\infty^2/(1+\eps^2k_\infty^2)\) implies
\be
G(\mu,r)=\exp\Bigl(\int_r^\mu\frac{c(t)^2}{h(t)}\,dt\Bigr)
\leq C(\eps)\Bigl(\frac\mu r\Bigr)^{\sigma_0},\qquad\mu\geq r\geq1.
\ee
The integral formula is
\be
\widehat{\beta_\chi}(\mu,\xi)
=G(\mu,1)\widehat{\beta_\chi}(1,\xi)
+i\xi\int_1^\mu G(\mu,r)\frac{c(r)^2}{h(r)}\widehat{\alpha_\chi}(r,\xi)\,dr.
\ee
For \(|\xi|\leq1\), the density bound has power at least \(1/2+\delta>\sigma_0\), while \(c^2/h\leq C/r\). Integrating the corresponding power of \(r\) proves~\eqref{eq:fixed-current-isothermal-growth}, with a constant uniform in this bounded frequency interval. The initial term has the same bound. The supercritical range in~\eqref{eq:fixed-current-isothermal-oscillatory} is separated from zero and is already covered by recovery. Finally, the largest density exponent is \(1/(1+\eps^2k_\infty^2)+\delta\); Fourier integration yields the refined Sobolev estimate.

\vskip.2cm

\noindent{\bf 3.} \emph{Differentiation at vacuum.} We give the vacuum estimates to justify differentiation at the endpoint. With $k=k(\mu)\leq1$, one has $c^2\asymp k^2\asymp\mu^{\gamma-1}$. On $(1+\xi^2)^{1/2} k\leq1$, the Volterra equation gives $|\widehat{\alpha_{\chi}}/\mu-1|+|\widehat{\alpha_{\chi,\mu}}-1|\leq C (1+\xi^2) k^2$, while the integral flux equation and its derivative give $|\widehat{\beta_{\chi}}|/\mu+|\widehat{\beta_{\chi}}_\mu|\leq C|\xi|k^2$. On $(1+\xi^2)^{1/2} k\geq1$, the differentiated Bessel bound gives $|\widehat{\alpha_{\chi}}/\mu-1|+|\widehat{\alpha_{\chi,\mu}}-1|\leq C(1+(1+\xi^2)^{1/2} k)^b$, where $b\coloneqq\max\{0,1-a\}<1/2$. Recovery of the flux and its first-order equation give $|\widehat{\beta_{\chi}}|/\mu\leq Ck((1+\xi^2)^{1/2} k)^{-a}$ and $|\widehat{\beta_{\chi}}_\mu|\leq Ck((1+\xi^2)^{1/2} k)^{1-a}$. Splitting the squared $H^{-1}$ integral at $(1+\xi^2)^{1/2}=k^{-1}$ bounds the density contribution by $Ck$ and the flux contribution by $Ck^3$. Taking square roots proves~\eqref{eq:fixed-current-vacuum-Hminusone}. At positive density, continuity follows from the same Fourier majorants and the first-order equations. The endpoint limits identify the derivative at zero and prove the $C^1$ assertion.
\end{proof}

\begin{remark} With the notation introduced in the next section, it follows that for any vacuum profile $\vartheta\in H^1(\Rbb)$, duality and Lorentz reconstruction give
\be
\label{eq:fixed-Hone-current-bound}
|F^{0}_{\vartheta}(\mu,v)|+|F^{1}_{\vartheta}(\mu,v)|\leq C\mu\cosh v\,\|\vartheta\|_{H^1}.
\ee
Since $\mu\cosh v\leq\sqrt{\mu E}$ for $E=\mu\cosh^2v+\eps^2p\sinh^2v$, these currents tend to zero along bounded-energy states approaching vacuum. 
For $\vartheta\in H^1_{\loc}(\Rbb)$, a cutoff equal to one on the relevant compact support reduces density differentiation to $H^{-1}$--$H^1$ duality. Rapidity differentiation pairs the $L^2$ kernel with the $L^2$ derivative of the profile. The same Fourier majorants give continuity of these derivatives on compact positive-density state sets. Thus the current is $C^1$ there and its conservation law holds along $C^1$ positive-density Euler flows.
\end{remark}

\section{Representation of spacetime quasi-currents by vacuum profiles}
\label{section-9}
 
\subsection{The role of Lorentz covariance}
\label{section-9-1}

\subsubsection{Applying Lorentz boosts}

Using the kernels of Sections~\ref{section-6} to~\ref{section-flux-kernels}, we now represent spacetime quasi-currents by their vacuum profiles, as stated in Proposition~\ref{thm:param-weak-entropy-currents}, below. We also recover the reference currents and examine the growth of profiles toward the physical vacuum boundary. Throughout the construction, the normalized momentum parallel to the symmetry orbits is fixed. We recall
\be
\varpi=|\Jhatpar|^2,\qquad
m=\meff{\varpi}(\mu)
=\mu+\frac{\varpi\Numb(\mu)^2}{4h(\mu)},\qquad
p_\varpi(m)=p(\mu).
\ee
The effective pressure is admissible by Proposition~\ref{prop:fixed-effective-admissibility}. We write
\be
h_\varpi(m)=m+\eps^2p_\varpi(m),\qquad
c_\varpi(m)=\eps\sqrt{p_\varpi'(m)},\qquad
k_\varpi'(m)=\frac{c_\varpi(m)}{h_\varpi(m)},\qquad
k_\varpi(0)=0.
\ee
The associated particle density $\Neff{\varpi}$ is normalized by
\be
\frac{(\Neff{\varpi})'(m)}{\Neff{\varpi}(m)}
=\frac1{h_\varpi(m)},\qquad
\frac{\Neff{\varpi}(m)}m\longrightarrow1
\quad(m \to 0).
\ee
We also recall $D_{\varpi,0}=1+\varpi/4$, so that
$m/\mu\to D_{\varpi,0}$ at vacuum.

To display the parameter explicitly, we denote the fluid-frame coefficients of the weak kernel for $p_\varpi$ by
$\alpha_\chi(m,v-s;\varpi)$ and $\beta_\chi(m,v-s;\varpi)$.
The effective density $m$ is their density argument; the physical state is recovered by composition with $m=\meff{\varpi}(\mu)$. All estimates and limits in this section keep $\varpi$ fixed, and their constants may depend on this parameter.

Translation invariance and the vacuum normalization give
\bel{eq:section-8-normalized-krl-translation}
\alpha_\chi(m,v;s,\varpi)
=\alpha_\chi(m,v-s;\varpi).
\ee
Under a boost of rapidity $\theta$, the fluid rapidity and vacuum label become
$v^\theta=v-\theta$ and $s^\theta=s-\theta$, while $\mu$, $m$, and $\varpi$ are unchanged. The identity $v^\theta-s^\theta=v-s$ therefore leaves the scalar kernel coefficients unchanged, so boosts act on vacuum profiles by the \emph{translation representation}
\bel{eq:section-8-lorentz-action-on-traces}
(\mathcal T_\theta\entropygenerator)(s;\varpi)
\coloneqq\entropygenerator(s+\theta;\varpi).
\ee
The profile translation~\eqref{eq:section-8-lorentz-action-on-traces} and component transformation~\eqref{eq:section-3-lorentz-current} describe the same spacetime quasi-current.

We denote by $L(v)$ the reconstruction matrix in the equation~\eqref{eq:section-3-rest-frame-and-fixed-frame-components}. Acting on the column of fluid-frame coefficients, the change of variable $s=r+\theta$ gives
\be
\label{eq:trace-boost-current-equivariance}
\begin{aligned}
&L(v-\theta)\int_\Rbb
\binom{\alpha_\chi(m,v-\theta-r;\varpi)}
      {\beta_\chi(m,v-\theta-r;\varpi)}
\entropygenerator(r+\theta;\varpi)\,dr
\\
&=L(-\theta)L(v)\int_\Rbb
\binom{\alpha_\chi(m,v-s;\varpi)}
      {\beta_\chi(m,v-s;\varpi)}
\entropygenerator(s;\varpi)\,ds.
\end{aligned}
\ee
The identity $L(v-\theta)=L(-\theta)L(v)$ confirms the sign of the boost in the reconstructed quasi-current. Thus the profile must be translated together with the frame. Constant profiles are unchanged, whereas
$\mathcal T_\theta\cosh=\cosh\theta\cosh+\sinh\theta\sinh$:
\emph{the same energy current is represented in the new frame by a linear combination of the energy and momentum profiles}. Choosing the profile $\cosh$ again in that frame instead selects the energy current of the new observer.

\subsubsection{\texorpdfstring{Differentiation in a moving frame}{Differentiation in a moving frame}}

For a smooth point-dependent boost $\theta=\theta(x)$, the transformed profile is
$\entropygenerator^\theta(x,s;\varpi)
=\entropygenerator(s+\theta(x);\varpi)$.
Identity~\eqref{eq:trace-boost-current-equivariance} still holds pointwise, including when $\varpi$ varies with the state. Whenever the profiles and the resulting quasi-current family are sufficiently differentiable, we differentiate both the profile and the frame in this identity. To see the cancellation, we write the connection matrix of the oriented two-dimensional orthonormal frame as $\omega_{\rm fr}H$, where
$H\coloneqq\bigl(\begin{smallmatrix}0&1\\1&0\end{smallmatrix}\bigr)$,
and write $Q$ for the column of current components. We have
$Q^\theta=L(-\theta)Q$ and
$\omega_{\rm fr}^\theta=\omega_{\rm fr}+d\theta$, and obtain
\be
\label{eq:trace-boost-covariant-derivative}
dQ^\theta+\omega_{\rm fr}^\theta H Q^\theta
=L(-\theta)(dQ+\omega_{\rm fr}H Q).
\ee
This proves that the covariant divergence is independent of the chosen orthonormal frame.

\subsection{The representation formula}
\label{section-9-2}

\subsubsection{A main conclusion}

For a quasi-current expressed in the effective density and rapidity variables, its \emph{vacuum profile} is the value at $m=0$ of the derivative of its fluid-frame time coefficient with respect to $m$, with $v$ and $\varpi$ held fixed. The derivative with respect to the physical density $\mu$ has an additional factor $D_{\varpi,0}$.

\begin{proposition}[Representation of weak spacetime quasi-currents at fixed parallel momentum]
\label{thm:param-weak-entropy-currents}
Assume that $p=p(\mu)$ is an admissible equation of state in the sense of~\autoref{def-pression}, and fix $\varpi\geq0$. Every continuous profile
$\entropygenerator(\cdot;\varpi)\in\mathcal C^0(\Rbb)$
defines the fluid-frame density coefficient
\bel{eq:entropy-parametrization-sec10}
\alpha_{\entropygenerator}(m,v;\varpi)
=\int_\Rbb\alpha_\chi(m,v-s;\varpi)
\,\entropygenerator(s;\varpi)\,ds.
\ee
This coefficient is the unique distributional solution of the current equation~\eqref{eq:section-6-normalized-entropy-equation}, with $(\mu,p)$ replaced by $(m,p_\varpi)$, which is continuous up to vacuum and satisfies
\bel{eq:section-8-tempered-vacuum-trace}
\alpha_{\entropygenerator}(0,\cdot;\varpi)=0,\qquad
\del_m\alpha_{\entropygenerator}(0,\cdot;\varpi)
=\entropygenerator(\cdot;\varpi),
\ee
where both boundary values are understood in the sense of distributions. The spatial coefficient, defined by convolution in rapidity as
\bel{eq:weak-current-spatial-component-representation}
\beta_{\entropygenerator}(m,v;\varpi)
=\int_\Rbb\beta_\chi(m,v-s;\varpi)
\,\entropygenerator(s;\varpi)\,ds,
\ee
is continuous, vanishes at vacuum, and is the unique spatial coefficient with this vacuum value satisfying the first-order system~\eqref{eq:section-6-normalized-first-order-system}, with $(\mu,p)$ replaced by $(m,p_\varpi)$. Explicitly, one has 
\be
\begin{aligned}
h_\varpi\del_m\alpha_{\entropygenerator}
&=\alpha_{\entropygenerator}+\del_v\beta_{\entropygenerator},
\\
h_\varpi\del_m\beta_{\entropygenerator}
&=c_\varpi^2
  \bigl(\beta_{\entropygenerator}+\del_v\alpha_{\entropygenerator}\bigr).
\end{aligned}
\ee
These identities hold in the sense of distributions for continuous profiles and pointwise whenever the coefficients are $C^1$.
\end{proposition}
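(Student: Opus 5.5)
We work at fixed $\varpi$ throughout and drop it from the notation. By Proposition~\ref{prop:fixed-effective-admissibility}, $p_\varpi$ is admissible in the variable $m$, so every kernel result of Sections~\ref{section-6}--\ref{section-flux-kernels} applies with $(\mu,p)$ replaced by $(m,p_\varpi)$. The proof has three parts: the convolution is well defined and continuous, it has the required equations and vacuum traces, and it is unique.

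\textbf{Well-definedness and continuity.} By Lemma~\ref{prop:kernel-positive-finite-speed-prelim}, $\alpha_\chi(m,\cdot)$ is supported in $[-k(m),k(m)]$. The flux kernel $\beta_\chi(m,\cdot)$ is supported in the same interval, since it is constant outside it and lies in $L^2$. Corollary~\ref{cor:kernel-physical-Sobolev} and Proposition~\ref{prop:fixed-current-bounds} give $\|\alpha_\chi(m)\|_{L^1}+\|\beta_\chi(m)\|_{L^1}\le C m\sqrt{k/k_{\min}}$, which is $\le Cm$ near vacuum. Hence for continuous $\vartheta$ the integrals \eqref{eq:entropy-parametrization-sec10} and \eqref{eq:weak-current-spatial-component-representation} converge absolutely and only involve $\vartheta$ on $[v-k(m),v+k(m)]$. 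Continuity in $v$ at fixed $m$ follows from continuity of translation in $L^1$. Continuity in $m$ at positive density follows from the $L^2$ continuity of the kernels, which comes from dominated convergence in Fourier with the majorants \eqref{eq:fixed-current-global-decay}, together with the locally uniform support bound. The bound $|\alpha_\vartheta|+|\beta_\vartheta|\le Cm\sup_{|s-v|\le k(m)}|\vartheta(s)|$ gives continuity up to vacuum with value zero.

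\textbf{Equations and vacuum traces.} The kernels satisfy \eqref{eq:intro-current-kernel-system} in $\mathcal S'$, with values in $C^1((0,\infty);H^{-1})$. For a test function $\varphi(m,v)$ compactly supported in $m>0$, we write
\[
\langle\alpha_\vartheta,\varphi\rangle=\int\vartheta(s)\,\langle\alpha_\chi(\cdot,\cdot-s),\varphi\rangle\,ds .
\]
The inner pairing only sees $\vartheta$ through a compact set, so Fubini transfers the first-order system from the kernel to $(\alpha_\vartheta,\beta_\vartheta)$. The scalar equation then follows as in Proposition~\ref{prop:section-6-normalized-entropy-equation}.

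For the traces, we test against $\psi\in C_c^\infty(\Rbb_v)$. Then $\langle \alpha_\vartheta(m)/m,\psi\rangle=\langle\alpha_\chi(m)/m,\check\psi*\vartheta\rangle$, where $\check\psi*\vartheta$ is continuous, with $\check\psi(v)=\psi(-v)$. The kernel is a measure of mass $\le Cm$ supported in a shrinking interval, and $\alpha_\chi/m\to\delta_0$ in $H^{-1}$ by \eqref{eq:fixed-current-vacuum-Hminusone}. These two facts give convergence against continuous functions: approximate uniformly on a compact set by smooth functions and use the uniform $L^1$ bound. The same argument gives $\partial_m\alpha_\vartheta\to\vartheta$. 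Here we use the $C^1([0,\infty);H^{-1})$ statement for the kernel, plus an $L^1$ bound on $\partial_m\alpha_\chi$ near vacuum that comes from the first-order system. If that $L^1$ bound is unavailable, we read $\partial_m\alpha_\vartheta(0)$ as the distributional limit of $\alpha_\vartheta(m)/m$, which by the preceding step equals $\vartheta$. The argument for $\beta_\vartheta\to0$ is identical, using $\|\beta_\chi(m)\|_{L^1}=O(m^{(\gamma+1)/2})$. This rate is the local positivity consequence of \eqref{eq:positive-krl-global-mass} near vacuum, or alternatively the $H^{-2}$ bound \eqref{eq:flux-vacuum-improved-bound}.

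\textbf{Uniqueness.} This is the main obstacle, because the equation is singular at vacuum and $\vartheta$ may grow. We use finite propagation, which reduces the question locally to compactly supported data. Let $w$ be the difference of two solutions, with vanishing data and continuous up to vacuum. Fix $(m_0,v_0)$ and cut off outside the backward cone $\{|v-v_0|\le k(m_0)-k(m)\}$; by the domain-of-dependence argument of Lemma~\ref{prop:kernel-positive-finite-speed-prelim} the localization is harmless. Taking the Fourier transform in $v$ of the localized problem, we get at each $\xi$ a solution of $y''+(1+\xi^2)B y/m^2=0$. The truncation produces commutator terms supported near the cone boundary, and these vanish by the energy argument of that lemma. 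Each such solution has $y\to0$ and $y'\to0$ distributionally at $m=0$. By Proposition~\ref{thm:olver-vacuum}, any solution is a combination of $y_{\rm weak}\sim m$ and $y_{\rm strong}\to1$, so both coefficients vanish and $w\equiv0$.

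For $\beta_\vartheta$, the first equation fixes $\partial_v\beta_\vartheta$, so $\beta_\vartheta$ is determined up to a function $g(m)$. The second equation forces $h\,g'=c^2 g$, so $g=C\,h/\Numb$, which tends to $C$ at vacuum. The vanishing vacuum value therefore gives $C=0$, as in Lemma~\ref{prop:section-9-spatial-component-equation}.
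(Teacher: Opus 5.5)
Your existence and continuity argument, the Fubini transfer of the first-order system, the limit $\beta_\vartheta\to0$, and the uniqueness of $\beta_\vartheta$ through $g=C\,h_\varpi/N_\varpi$ are sound and essentially match the paper.

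\textbf{The gap: uniqueness of the density coefficient.} You rightly call this the main obstacle, but you do not resolve it. You cut the difference $w$ off in $v$ outside a backward cone and assert that this is harmless ``by the domain-of-dependence argument'' of Lemma~\ref{prop:kernel-positive-finite-speed-prelim}. That lemma does not give this.
\begin{itemize}
\item Its energy identity is for the regularized coefficient $B_{\delta_1}$, whose speed is constant near $m=0$.
\item The regularization is removed only through the Fourier representation of the constructed kernel.
\item It says nothing about an arbitrary solution that is continuous up to vacuum, with vacuum data known only in the sense of distributions.
\item Here the equation has coefficient $q_\varpi=(k_\varpi')^2\simeq m^{\gamma-3}$, which blows up or degenerates at $m=0$.
\end{itemize}
Concretely, $\eta(v)w$ solves a \emph{forced} equation with source $-q_\varpi(2\eta' w_v+\eta'' w)$. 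Claiming that these commutator terms vanish presupposes that $w$ vanishes near the sides of the cone. That is exactly the domain-of-dependence property you are trying to establish, so the step is circular.

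Without the localization, the Fourier step is unavailable: a continuous profile may grow arbitrarily, so $w$ has no growth bound in $v$. Your ODE argument with $y_{\mathrm{weak}}$ and $y_{\mathrm{strong}}$ would be fine for a compactly supported, genuinely homogeneous solution. The problem is obtaining one.

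\textbf{The missing ingredient} is an energy estimate for the singular equation itself. The paper proceeds as follows.
\begin{itemize}
\item It regularizes by \emph{convolution} in $v$ rather than by a cutoff. Since the coefficients depend only on $m$, the smoothed difference $d$ still solves the homogeneous equation and is $C^2$ in $m$.
\item It sets $w=d/m$ and uses the weighted energy $w_m^2/q_\varpi+w_v^2+w^2$ on backward acoustic cones.
\item The dissipation coefficient $4/m+q_\varpi'/q_\varpi=(2+mB_\varpi'/B_\varpi)/m$ is positive near vacuum, because $mB_\varpi'/B_\varpi\to\gamma-1>0$. So the energy is non-increasing there, with non-positive lateral terms.
\item Integrating $(m^2w_m)_m=B_\varpi(w_{vv}-w)$ from $m=0$ gives $w_m^2/q_\varpi=o(B_\varpi(m))$, so the initial energy tends to zero.
\item Gronwall's inequality then handles all later densities.
\end{itemize}
You need an argument of this kind (or a substitute for it) before any localization or Fourier transform in $v$ is legitimate.

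\textbf{A smaller point.} For $\partial_m\alpha_\vartheta(0)=\vartheta$ you need neither an $L^1$ bound on $\partial_m\alpha_\chi$ nor a weakened reading of the trace as the limit of $\alpha_\vartheta/m$. Such an $L^1$ bound does not follow from the first-order system, since $\partial_v\beta_\chi$ need not be integrable, e.g.\ when $\gamma>3$ and the kernels are unbounded at $|\theta|=k$. Because $\psi$ is smooth, the function you pair with the kernel is smooth. After truncating $\vartheta$ on a compact set, which is harmless by the shrinking support, it is also compactly supported. So $\partial_m\alpha_\chi(m)\to\delta_0$ in $H^{-1}$ already suffices, which is what the paper uses.
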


Lorentz reconstruction gives also the quasi-current in the physical state variables:
\bel{eq:section-8-fixed-frame-current-from-normalized-profile}
\begin{aligned}
F^0_{\entropygenerator}(\mu,v;\varpi)
&=\cosh(v)\alpha_{\entropygenerator}(m,v;\varpi)
 +\sinh(v)\beta_{\entropygenerator}(m,v;\varpi),
\\
F^1_{\entropygenerator}(\mu,v;\varpi)
&=\sinh(v)\alpha_{\entropygenerator}(m,v;\varpi)
 +\cosh(v)\beta_{\entropygenerator}(m,v;\varpi),
\qquad m=\meff{\varpi}(\mu).
\end{aligned}
\ee
The following observations are in order here.
\bei

\item 
Its time component vanishes at vacuum, and its density derivatives there are
\be
\del_m F^0_{\entropygenerator}\big|_{m=0}
=\cosh(v)\entropygenerator(v;\varpi),\qquad
\del_\mu F^0_{\entropygenerator}\big|_{\mu=0}
=D_{\varpi,0}\cosh(v)\entropygenerator(v;\varpi).
\ee
Here the derivative with respect to $m$ means differentiation after expressing the current in the effective density variable. Thus a prescribed fixed-frame profile $\entropygenerator_{\rm fix}$ relative to $m$ corresponds to
$\entropygenerator=\entropygenerator_{\rm fix}/\cosh$;
if that profile is prescribed relative to $\mu$, the corresponding relation is
$\entropygenerator=\entropygenerator_{\rm fix}/(D_{\varpi,0}\cosh)$.

\item For $\entropygenerator(\cdot;\varpi)\in H^1_{\loc}(\Rbb)$, the coefficients are $C^1$ in $(m,v)$ at positive density by the regularity proved after Proposition~\ref{prop:fixed-current-bounds}, applied to $p_\varpi$. Profiles of class $C^2_{\loc}$ give $C^2$ regularity in these variables through the first-order current equations. These assertions keep $\varpi$ fixed. The $C^1$ dependence on the parallel momentum is established in the companion article~\cite{TorusT2-3-asymptotic-2026}, under locally $C^1$ dependence of the profile with values in $H^1_{\loc}(\Rbb)$.

\item Whenever the resulting family is $C^1$ in all positive-density state variables, its composition with a regular plane-symmetric Euler flow satisfies
\be
\begin{aligned}
\del_tF^0_{\entropygenerator}
+\del_xF^1_{\entropygenerator}
&=\sum_{j=2}^n\Fvee_j\del_x\widehat J_j,
\qquad\quad
\Fvee_j
=\frac{2\widehat J_j}{\cosh v}
\left(
\del_\varpi\beta_{\entropygenerator}
+\frac{\Numb(\mu)^2}{4h(\mu)}
 \del_m\beta_{\entropygenerator}
\right)_{m=\meff{\varpi}(\mu)}.
\end{aligned}
\ee
The derivative $\del_\varpi\beta_{\entropygenerator}$ on the right keeps $(m,v)$ fixed and includes the dependence of the prescribed profile. The second term accounts for the variation of the effective density at fixed $(\mu,v)$.

\item More generally, the profile may depend on the full vector $\Jhatpar$, rather than only on $\varpi=|\Jhatpar|^2$. The same representation applies with
$\entropygenerator(s;\varpi)$ replaced by
$\entropygenerator(s;\Jhatpar)$, while the kernel depends on $\Jhatpar$ through $\varpi$. All arguments below apply at each fixed $\Jhatpar$, and the source factors of a $C^1$ family are then
\be
\Fvee_j
=\frac1{\cosh v}
\del_{\widehat J_j}
\left[
\beta_{\entropygenerator}
\bigl(\meff{|\Jhatpar|^2}(\mu),v;\Jhatpar\bigr)
\right]_{\mu,v\ {\rm fixed}}.
\ee
\eei

\begin{proof}

\noindent{\bf 1.} \emph{Existence and vacuum normalization.}
We fix $\varpi$ throughout the proof. Compact support and local $L^2$, hence $L^1$, continuity of the effective kernel on compact positive-density intervals make the formula~\eqref{eq:entropy-parametrization-sec10} continuous. For every fixed state this is an ordinary integral, even at a singular characteristic boundary when $\gamma>3$. Profiles of arbitrary growth are allowed, since Cauchy--Schwarz gives
\be
\label{eq:continuous-profile-absolute-convolution}
\begin{aligned}
&\int_\Rbb
|\alpha_\chi(m,v-s;\varpi)\entropygenerator(s;\varpi)|\,ds
\\
&\leq\sqrt{2k_\varpi(m)}
\,\|\alpha_\chi(m,\cdot;\varpi)\|_{L^2}
\sup_{|s-v|\leq k_\varpi(m)}
|\entropygenerator(s;\varpi)|
<+\infty.
\end{aligned}
\ee
The same estimate applies to the flux kernel, and the current equation follows by testing the kernel equation and then convolving in rapidity. Near vacuum,
$\|\alpha_\chi(m,\cdot;\varpi)\|_{L^1}\leq Cm$,
its support shrinks to zero, and its total integral is
$\widehat{\alpha_\chi}(m,0;\varpi)=\Neff{\varpi}(m)$.
We fix a compact rapidity interval $I$ and a compact interval $I_*$ whose interior contains $I$. For sufficiently small $m>0$, all intervals
$v+[-k_\varpi(m),k_\varpi(m)]$, $v\in I$, lie in $I_*$. With $\omega_{\entropygenerator,\varpi}$ denoting the modulus of continuity of the fixed profile on $I_*$, we obtain
\be
\left|
\frac{\alpha_{\entropygenerator}(m,v;\varpi)}m
-\entropygenerator(v;\varpi)
\right|
\leq
C\omega_{\entropygenerator,\varpi}(k_\varpi(m))
+\left|\frac{\Neff{\varpi}(m)}m-1\right|
 \|\entropygenerator(\cdot;\varpi)\|_{L^\infty(I_*)}.
\ee
This tends to zero locally uniformly. The distributional normal derivative follows by convolving a compactly supported smooth test function with the locally truncated profile and applying the vacuum normalization of $\alpha_\chi$. Finite propagation ensures that the truncation does not affect the result on the chosen compact interval.

\vskip.2cm

\bse
\noindent{\bf 2.} \emph{Uniqueness of the density coefficient.}
We let $d$ be the difference of two density coefficients with the same vacuum profile, at the fixed parameter $\varpi$. We fix $M>0$ and a compact rapidity interval $K$. The principal part of the effective current equation has propagation speed $k_\varpi'(m)$, so the backward acoustic cone of $\{M\}\times K$ meets the vacuum line in
\be
K_M\coloneqq K+[-k_\varpi(M),k_\varpi(M)].
\ee
We convolve $d$ in rapidity with a compactly supported smooth function and enlarge $K_M$ by its support. The smoothed difference still satisfies the homogeneous equation; on every compact positive-density rectangle,
\[
d_{mm}=(k_\varpi')^2(d_{vv}-d).
\]
The right-hand side is continuous, so the smoothed solution is $C^2$ in density there. Since its distributional normal derivative tends to zero and its vacuum value vanishes, integration in density gives
$d(m,\cdot)/m\to0$ in $\mathcal D'(\Rbb)$.
After convolution with a fixed compactly supported smooth function, this limit and the limit of the normal derivative hold locally uniformly in rapidity, together with every rapidity derivative. Indeed, the translates of the smoothing function and of each of its derivatives, over a compact rapidity interval, form a compact set of test functions; convergence of the distributional traces is uniform on this set. This justifies the integration in density and the following energy calculation.

We continue to denote the smoothed difference by $d$, and put
$w\coloneqq d/m$ and
$q_\varpi\coloneqq B_\varpi/m^2=(k_\varpi')^2$,
where
$B_\varpi(m)=m^2c_\varpi(m)^2/h_\varpi(m)^2$.
The equation for $w$ and its energy identity are
\be
\label{eq:vacuum-uniqueness-weighted-energy}
\begin{aligned}
w_{mm}+\frac2m w_m
&=q_\varpi(w_{vv}-w),
\\
\del_m\left(\frac{w_m^2}{q_\varpi}+w_v^2+w^2\right)
&=2\del_v(w_mw_v)
-\left(\frac4m+\frac{q_\varpi'}{q_\varpi}\right)
 \frac{w_m^2}{q_\varpi}.
\end{aligned}
\ee
Admissibility of the effective pressure gives
$mB_\varpi'/B_\varpi\to\gamma-1>0$ at vacuum, and hence
\[
\frac4m+\frac{q_\varpi'}{q_\varpi}
=\frac{2+mB_\varpi'/B_\varpi}{m}>0
\]
for sufficiently small $m>0$.
To prove that the initial energy tends to zero, we verify the only term requiring a weight by integrating
$(m^2w_m)_m=B_\varpi(w_{vv}-w)$
from zero. The integration constant vanishes because
$m^2w_m=md_m-d\to0$.
Since $w,w_{vv}\to0$ locally uniformly and
$B_\varpi(m)\simeq m^{\gamma-1}$, we obtain
\be
w_m(m,v)
=m^{-2}\int_0^m B_\varpi(r)(w_{vv}-w)(r,v)\,dr
=o\bigl(B_\varpi(m)/m\bigr),\qquad
\frac{w_m^2}{q_\varpi}=o(B_\varpi(m)).
\ee
We integrate~\eqref{eq:vacuum-uniqueness-weighted-energy} on a backward acoustic cone from $m=\delta>0$. Its boundary speed is
$\sqrt{q_\varpi}=k_\varpi'$; the inequality
\[
2|w_mw_v|
\leq\sqrt{q_\varpi}
\left(\frac{w_m^2}{q_\varpi}+w_v^2+w^2\right)
\]
gives non-positive lateral contributions. We write
\[
v_-(m)=\inf K-k_\varpi(M)+k_\varpi(m),\qquad
v_+(m)=\sup K+k_\varpi(M)-k_\varpi(m),
\]
and define the slice energy by
\be
\label{eq:vacuum-uniqueness-cone-energy}
e\coloneqq\frac{w_m^2}{q_\varpi}+w_v^2+w^2,\qquad
\mathcal E_d(m)
\coloneqq\int_{v_-(m)}^{v_+(m)}e(m,v)\,dv.
\ee
Since $v_-'=\sqrt{q_\varpi}$ and
$v_+'=-\sqrt{q_\varpi}$, differentiation includes both lateral contributions and gives
\be
\label{eq:vacuum-uniqueness-cone-dissipation}
\begin{aligned}
\mathcal E_d'(m)
&+\left(\frac4m+\frac{q_\varpi'}{q_\varpi}\right)
 \int_{v_-(m)}^{v_+(m)}\frac{w_m^2}{q_\varpi}\,dv
 +\sqrt{q_\varpi}
 \sum_\pm
 \left[
 \left(\frac{w_m}{\sqrt{q_\varpi}}\mp w_v\right)^2+w^2
 \right]_{v=v_\pm(m)}
=0.
\end{aligned}
\ee
All boundary values are well defined pointwise for the smoothed solution on $m\geq\delta$. The energy is therefore non-increasing on a sufficiently small vacuum interval. Letting $\delta\to0$ proves that it vanishes there. On every subsequent compact positive-density interval, the same identity and Gronwall's inequality preserve this zero energy, since the coefficients are bounded. Thus the smoothed difference vanishes on $\{M\}\times K$. Taking a sequence of smoothing kernels converging to the Dirac measure and varying $K,M$ proves uniqueness throughout the positive-density region. The normal derivative at vacuum then recovers the profile uniquely.
\ese

\vskip.2cm

\noindent{\bf 3.} \emph{Uniqueness of the spatial coefficient.}
Applied to $p_\varpi$, the reconstruction formula~\eqref{eq:tau-recovery-sec9} and the bounds~\eqref{eq:global-krl-frequency-decay} and~\eqref{eq:Bessel-derivative-remainder} give the Fourier majorant
$C_I(1+\xi^2)^{-(\lambda+1)/2}$
on every compact positive-density interval $I$. The Fourier transforms are continuous in density, including their zero-frequency values. Since $\lambda+1>1/2$, dominated convergence and Plancherel's identity give strong $L^2$ continuity of both kernels. Their supports lie in one fixed compact interval on each such density interval, which justifies continuous convolution with the profile. The first-order kernel identities pass to this convolution in the sense of distributions.

Near vacuum, the local positivity argument~\eqref{eq:local-vacuum-krl-positivity}, applied to the effective pressure, gives
\[
\|\beta_\chi(m,\cdot;\varpi)\|_{L^1}
\leq c_\varpi(m)\Neff{\varpi}(m).
\]
Consequently, on each compact rapidity interval,
\[
|\beta_{\entropygenerator}(m,v;\varpi)|
\leq c_\varpi(m)\Neff{\varpi}(m)
\|\entropygenerator(\cdot;\varpi)\|_{L^\infty}
\longrightarrow0,
\]
where the profile norm is taken on a slightly larger interval containing the acoustic supports. If two spatial coefficients correspond to the same density coefficient, their difference $d$ satisfies
\be
\label{eq:spatial-current-normalization-uniqueness}
\del_vd=0,\qquad
h_\varpi\del_md=c_\varpi^2d,\qquad
d(m)=C\frac{h_\varpi(m)}{\Neff{\varpi}(m)}.
\ee
The last equality follows distributionally from
$(\log(h_\varpi/\Neff{\varpi}))'=c_\varpi^2/h_\varpi$
on positive-density intervals. Since
$h_\varpi/\Neff{\varpi}\to1$ at vacuum, the zero trace gives $C=0$ for each fixed $\varpi$.
\end{proof}

Figure~\ref{fig:profile-to-current} summarizes the construction. At fixed normalized parallel momentum, the profile determines the two fluid-frame coefficients by scalar convolution in the effective density variable $m$; the Lorentz matrix then gives the density and flux in the laboratory frame.

\begin{figure}[htbp]
\centering
\includegraphics[width=\linewidth]{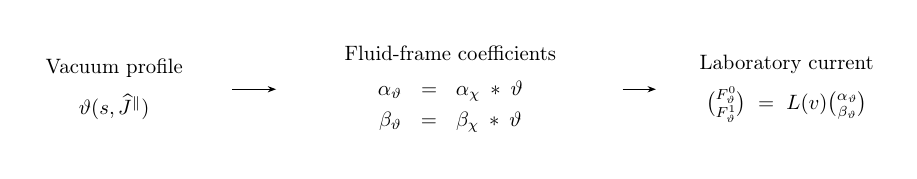}
\caption{Construction at fixed normalized parallel momentum $\varpi=|\Jhatpar|^2$. Convolution of the vacuum profile with the kernel for $p_\varpi$ gives the fluid-frame coefficients at effective density $m=\meff{\varpi}(\mu)$; $L(v)$ gives the laboratory density and flux. The state, parameter, and frame remain fixed during convolution.}
\label{fig:profile-to-current}
\end{figure}

\subsubsection{Quasi-currents defined from non-negative profiles}

Under the condition $G_{\rm N}[p]>0$ in~\eqref{hyperbolic-eos-3}, the effective pressure also satisfies $G_{\rm N}[p_\varpi]>0$, by~\eqref{eq:section-6-effective-genuine-nonlinearity}. The kernel positivity theorem therefore gives
$|\beta_\chi|\leq c_\varpi\alpha_\chi$.
Since $c_\varpi<1$, non-negative profiles produce future-causal quasi-currents at every fixed parallel momentum.

\begin{corollary}[Causal quasi-currents defined from non-negative profiles]
\label{cor:section-8-causal-current-convolution}
Let $p$ be admissible and satisfy~\eqref{hyperbolic-eos-3}, fix $\varpi\geq0$, and let
$\entropygenerator(\cdot;\varpi)$ be continuous and non-negative. Then the fluid-frame coefficients satisfy
\bel{eq:section-8-causal-current-convolution}
\alpha_{\entropygenerator}(m,v;\varpi)\geq0,\qquad
|\beta_{\entropygenerator}(m,v;\varpi)|
\leq c_\varpi(m)\alpha_{\entropygenerator}(m,v;\varpi).
\ee
The associated fixed-frame vector is future causal, and it is future timelike at every positive-density state for which
$[v-k_\varpi(m),v+k_\varpi(m)]$
meets the set where $\entropygenerator(\cdot;\varpi)$ is positive.
\end{corollary}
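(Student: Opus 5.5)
The plan is to reduce everything to Proposition~\ref{theo:section-8-causal-krl-current}, applied to the effective pressure $p_\varpi$. First, by Proposition~\ref{prop:fixed-effective-admissibility}, $p_\varpi$ is admissible. By the identity~\eqref{eq:section-6-effective-genuine-nonlinearity}, $G_{\rm N}[p_\varpi]$ has the sign of $G_{\rm N}[p]$, hence is positive. The kernel positivity theorem then gives, for each $m>0$, that $\alpha_\chi(m,\cdot;\varpi)$ and $(c_\varpi\alpha_\chi\pm\beta_\chi)(m,\cdot;\varpi)$ are non-negative measures. By Proposition~\ref{cor:kernel-Sobolev-regularity}, these measures are actually $L^2$ functions supported in $|\theta|\leq k_\varpi(m)$, with the strict bounds~\eqref{eq:section-8-causal-krl-bound} in the open cone.

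Next I would convolve with the continuous non-negative profile $\vartheta$. By~\eqref{eq:continuous-profile-absolute-convolution} the convolution integrals converge absolutely, so linearity and positivity of integration give directly
\[
\alpha_\vartheta=\int\alpha_\chi(m,v-s)\vartheta(s)\,ds\geq0,\qquad c_\varpi\alpha_\vartheta\pm\beta_\vartheta=\int(c_\varpi\alpha_\chi\pm\beta_\chi)(m,v-s)\vartheta(s)\,ds\geq0 .
\]
Together these are the two inequalities in~\eqref{eq:section-8-causal-current-convolution}. For causality, I would use that $\overline u,\overline u^\perp$ form an orthonormal frame, so that $g(F_\vartheta,F_\vartheta)=-\alpha_\vartheta^2+\beta_\vartheta^2\leq-(1-c_\varpi^2)\alpha_\vartheta^2\leq0$. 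Since $\alpha_\vartheta=-g(F_\vartheta,\overline u)\geq0$ with $\overline u$ future-directed, $F_\vartheta$ is future causal, or zero. Lorentz reconstruction~\eqref{eq:section-8-fixed-frame-current-from-normalized-profile} does not change these scalar invariants.

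For the timelike assertion, I would suppose that $\vartheta(s_0)>0$ for some $s_0$ with $|v-s_0|\leq k_\varpi(m)$. By continuity, $\vartheta>0$ on a neighbourhood of $s_0$, and this neighbourhood meets the open interval $|v-s|<k_\varpi(m)$ in a set of positive measure, even if $s_0$ sits on the boundary. On that set, the continuous representative of $\alpha_\chi(m,v-s)$ is strictly positive by~\eqref{eq:section-8-causal-krl-bound}, so $\alpha_\vartheta>0$. Then $g(F_\vartheta,F_\vartheta)\leq-(1-c_\varpi^2)\alpha_\vartheta^2<0$, because $c_\varpi<1$ by~\eqref{eq:section-6-effective-sound-speed}.

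The only delicate step is this last one. It relies on the kernel's strict positivity holding on the open cone and not merely almost everywhere, and on the boundary of the acoustic interval having measure zero. Both follow from the continuous interior representatives constructed in Proposition~\ref{theo:section-8-causal-krl-current}. Everything else is linear bookkeeping.
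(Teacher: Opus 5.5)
Your proposal is correct and follows essentially the same route as the paper: you apply Proposition~\ref{theo:section-8-causal-krl-current} to the admissible effective pressure $p_\varpi$, whose genuine nonlinearity is inherited through~\eqref{eq:section-6-effective-genuine-nonlinearity}, and convolve the three non-negative measures $\alpha_\chi$, $c_\varpi\alpha_\chi\pm\beta_\chi$ against the non-negative profile. Strict positivity of $\alpha_\vartheta$ then comes, as in the paper, from continuity of the profile near a point of the acoustic interval combined with the interior positivity of $\alpha_\chi$. One small citation fix: the support inclusion comes from Lemma~\ref{prop:kernel-positive-finite-speed-prelim}, not from the Sobolev bounds.
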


\begin{proof}
The inequalities follow by convolving the three non-negative measures
$\alpha_\chi$ and $c_\varpi\alpha_\chi\pm\beta_\chi$
against $\entropygenerator$, using
\autoref{theo:section-8-causal-krl-current} for the effective pressure. If the profile is positive at a point of the backward acoustic interval, continuity makes it bounded below by a positive constant on a subinterval of its interior. The positive essential lower bound for $\alpha_\chi$ on compact interior subintervals then gives
$\alpha_{\entropygenerator}>0$. Since $c_\varpi<1$,
\be
-(\alpha_{\entropygenerator})^2
+(\beta_{\entropygenerator})^2
\leq-(1-c_\varpi^2)(\alpha_{\entropygenerator})^2<0.
\ee
The decomposition~\eqref{eq:section-3-current-rest-frame-decomposition} gives the Lorentzian conclusion at each such state.
\end{proof}

\subsubsection{\texorpdfstring{Profile growth toward the causal boundary}{Profile growth toward the causal boundary}}

Continuity up to vacuum in the effective density--rapidity variables means $m \to 0$ with finite rapidity and fixed parallel momentum; equivalently, $\mu \to 0$ under these restrictions. To extend a quasi-current continuously as a function of the physical momentum $\Jbb$, we must also control the growth of its profile as rapidity tends to infinity.

For example, at zero parallel momentum we have $\varpi=0$, $m=\mu$, and $p_0=p$. Take $\vartheta(s;0)=\cosh(2s)$. The support
$|v-s|\leq k(\mu)$, the local positivity~\eqref{eq:local-vacuum-krl-positivity}, and
$\int\alpha_\chi(\mu,s;0)\,ds=\Numb(\mu)$ give
\be
\label{eq:profile-growth-vacuum-momentum}
F^0_\vartheta(\mu,v;0)
\geq
\bigl(\cosh v-c(\mu)|\sinh v|\bigr)
e^{-2k(\mu)}\Numb(\mu)\cosh(2v).
\ee
Along $\mu=e^{-5v/2}$ with $v\to+\infty$, the physical momentum tends to zero, while this lower bound tends to $+\infty$. Thus arbitrary continuous profiles cannot all extend continuously to the closed momentum cone. For a family satisfying the reference bound
$|F^0|+|F^1|\leq C_R(T^{00}+\vNumb^0)$
on $|\Jhatpar|^2\leq R$, both components instead tend to zero when $\Jbb\to0$. The convolution construction covers finite rapidity, while such reference bounds also control sequences of unbounded rapidity.

\subsection{Recovery of the reference currents}
\label{section-9-3}

\subsubsection{Three choices of profile functions}

The vacuum profiles below prescribe
$\del_m\alpha_\vartheta(0,v;\varpi)
=\entropygenerator(v;\varpi)$:
the derivative is taken in the effective density variable, with rapidity and parallel momentum fixed. The unit profile generates the normalized effective particle current. Recovering the physical particle current requires the factor $D_{\varpi,0}^{-1/2}$ from~\eqref{eq:section-6-effective-particle-density}.

\begin{proposition}[Particle-number and stress-energy currents]
\label{prop:N0-M00-M01-weak-entropies}

\

\bei

\item The profiles generating the physical particle-number, energy, and momentum currents are
\bel{eq:section-8-distinguished-normalized-traces}
\entropygenerator_{\rm N}(s;\varpi)=D_{\varpi,0}^{-1/2},
\qquad
\entropygenerator_{T,00}(s;\varpi)=\cosh s,
\qquad
\entropygenerator_{T,01}(s;\varpi)=\sinh s.
\ee
Their fluid-frame coefficients, expressed in the effective density variable, are
\bel{eq:section-8-distinguished-normalized-current-components}
\begin{aligned}
(\alpha_{\rm N},\beta_{\rm N})
&=\bigl(D_{\varpi,0}^{-1/2}\Neff{\varpi}(m),0\bigr),
\\
(\alpha_{T,00},\beta_{T,00})
&=\bigl(m\cosh v,\eps^2p_\varpi(m)\sinh v\bigr),
\\
(\alpha_{T,01},\beta_{T,01})
&=\bigl(m\sinh v,\eps^2p_\varpi(m)\cosh v\bigr).
\end{aligned}
\ee
After composition with $m=\meff{\varpi}(\mu)$ and Lorentz reconstruction~\eqref{eq:section-8-fixed-frame-current-from-normalized-profile}, the time components are respectively
$\vNumb^0$, $T^{00}$, and $T^{01}$.

\item Recall that $\Mbf_{00}=2T^{00}$ and
$\Mbf_{01}=-2T^{01}$ in the orthonormal frame used here. The derivatives at vacuum of the fixed-frame densities
$\vNumb^0$, $\Mbf_{00}$, and $\Mbf_{01}$,
taken with respect to $m$ at fixed $\varpi$ and rapidity $v=s$, are respectively
\bel{eq:section-8-distinguished-fixed-frame-traces}
D_{\varpi,0}^{-1/2}\cosh s,\qquad
2\cosh^2s,\qquad
-2\sinh s\,\cosh s.
\ee
The corresponding derivatives with respect to $\mu$ are obtained by multiplying each expression by $D_{\varpi,0}$.

\eei
\end{proposition}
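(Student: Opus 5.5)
The plan is to use the uniqueness part of Proposition~\ref{thm:param-weak-entropy-currents}: by that result, a pair of fluid-frame coefficients vanishing at vacuum, solving the first-order system~\eqref{eq:section-6-normalized-first-order-system} for $(m,p_\varpi)$, and having vacuum $m$-derivative $\vartheta$, is necessarily the convolution $(\alpha_\vartheta,\beta_\vartheta)$. So instead of computing the convolutions, I will check directly that each explicit pair in~\eqref{eq:section-8-distinguished-normalized-current-components} is such a solution, read off its vacuum profile, and then identify its Lorentz reconstruction with the known physical current.

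\textbf{Step 1: the system and the vacuum data.} For the particle pair $(D_{\varpi,0}^{-1/2}\Neff{\varpi}(m),0)$, the first equation reduces to $h_\varpi\Neff{\varpi}'=\Neff{\varpi}$, which is the defining relation of $\Neff{\varpi}$. The second equation is trivial, since $\beta=0$ and $\del_v\alpha=0$. The vacuum value is zero, and the profile equals $D_{\varpi,0}^{-1/2}$ because $\Neff{\varpi}(m)/m\to1$.

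For the energy pair $(m\cosh v,\eps^2p_\varpi\sinh v)$:
\begin{itemize}
\item the first equation reads $h_\varpi\cosh v=m\cosh v+\eps^2p_\varpi\cosh v$, which holds by the definition $h_\varpi=m+\eps^2p_\varpi$;
\item the second equation reads $h_\varpi\eps^2p_\varpi'\sinh v=c_\varpi^2(\eps^2p_\varpi+m)\sinh v$, which holds since $c_\varpi^2=\eps^2p_\varpi'$.
\end{itemize}
Both coefficients vanish at $m=0$ because $p_\varpi(0)=0$, and the profile is $\cosh s$. The momentum pair is handled identically, with $\cosh$ and $\sinh$ interchanged.

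These explicit functions are smooth at positive density and continuous up to vacuum, so uniqueness applies, including the uniqueness of the spatial coefficient with zero vacuum trace.

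\textbf{Step 2: Lorentz reconstruction.} Apply~\eqref{eq:section-8-fixed-frame-current-from-normalized-profile}.
\begin{itemize}
\item Particle current: $F^0=D_{\varpi,0}^{-1/2}\Neff{\varpi}\cosh v$. By~\eqref{eq:section-6-effective-particle-density}, $\Neff{\varpi}\overline u^0=D_{\varpi,0}^{1/2}\Numb u^0=D_{\varpi,0}^{1/2}\vNumb^0$, and $\overline u^0=\cosh v$, so $F^0=\vNumb^0$.
\item Energy current: $F^0=m\cosh^2v+\eps^2p_\varpi\sinh^2v=h_\varpi\cosh^2v-\eps^2p_\varpi$. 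By~\eqref{eq:section-6-effective-stress} this is $T^{00}$.
\item Momentum current: $F^0=(m+\eps^2p_\varpi)\sinh v\cosh v=T^{01}$.
\end{itemize}
In all three cases we use $p_\varpi(m)=p(\mu)$.

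\textbf{Step 3: the fixed-frame derivatives.} Differentiate $F^0$ in $m$ at $m=0$ and $v=s$.
\begin{itemize}
\item Particle density: $D_{\varpi,0}^{-1/2}\cosh s$.
\item $\Mbf_{00}=2T^{00}$: the derivative is $2(1+\eps^2p_\varpi'(0))\cosh^2s-2\eps^2p_\varpi'(0)$. Since $p'_\varpi(0)=0$ (the vacuum exponent is $\gamma>1$ by Proposition~\ref{prop:fixed-effective-admissibility}), this equals $2\cosh^2s$.
\item $\Mbf_{01}=-2T^{01}$: the derivative is $-2\sinh s\cosh s$.
\end{itemize}
The $\mu$-derivatives follow from the chain rule with $\meff{\varpi}'(0)=D_{\varpi,0}$.

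The main obstacle is not these algebraic checks. It is making sure that the uniqueness statement genuinely covers these profiles. The profiles $\cosh$ and $\sinh$ are unbounded, but the uniqueness argument in Proposition~\ref{thm:param-weak-entropy-currents} is local through backward cones, so growth is harmless. I will also confirm that the vacuum traces hold in the distributional sense required there; for these explicit smooth functions that is immediate.
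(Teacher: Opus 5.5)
Your argument is correct, but it identifies the convolutions by a different route than the paper. You check that each explicit pair solves the effective first-order system, has zero vacuum value, and has the right \(m\)-derivative. You then invoke the two uniqueness assertions of Proposition~\ref{thm:param-weak-entropy-currents}: the weighted backward-cone energy argument for the density coefficient, and the \(d=C\,h_\varpi/\Neff{\varpi}\) argument for the spatial coefficient. Your remark that this uniqueness is local, so the growth of \(\cosh,\sinh\) is harmless, is exactly the point that needs checking, and it holds.

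The paper instead computes the convolutions directly from moments of the kernel, using its compact support:
\begin{itemize}
\item Integrating the kernel equation against \(\cosh\theta\), which is annihilated by \(\del_\theta^2-1\), gives \(\frac{d^2}{dm^2}\int\alpha_\chi\cosh\theta\,d\theta=0\). The vacuum data then give \(\int\alpha_\chi\cosh\theta\,d\theta=m\), and parity kills the \(\sinh\) moment.
\item Integrating \(\del_\theta\beta_\chi=h_\varpi\del_m\alpha_\chi-\alpha_\chi\) against \(\cosh\theta\) and \(\sinh\theta\) gives \(\int\beta_\chi\sinh\theta\,d\theta=-\eps^2p_\varpi(m)\) and \(\int\beta_\chi\cosh\theta\,d\theta=0\).
\item The zeroth moment is \(\widehat{\alpha_\chi}(m,0)=\Neff{\varpi}(m)\).
\end{itemize}
Expanding \(\cosh(v-\theta)\) and \(\sinh(v-\theta)\) then produces the three pairs.

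The two routes trade off as follows. The paper's is more elementary: it needs only compact support and the kernel equations, not the weighted energy estimate near vacuum. It also records exact moment identities of the kernel as a by-product. Yours is shorter and reduces everything to algebraic verification. The price is dependence on the heavier uniqueness theorem, including its validity for non-tempered continuous profiles, which you justify correctly. Your reconstruction and vacuum-derivative steps coincide with the paper's.
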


\begin{proof}
\bse
\noindent{\bf 1.} \emph{Identification of the profiles.}
The identity
$(\Neff{\varpi})'/\Neff{\varpi}=h_\varpi^{-1}$
shows that
$(D_{\varpi,0}^{-1/2}\Neff{\varpi},0)$
satisfies the effective first-order current system. Direct substitution verifies the other two pairs. To identify them with the convolution construction, including the non-tempered profiles $\cosh,\sinh$, we use the compact support of $\alpha_\chi(m,\cdot;\varpi)$. Its hyperbolic cosine moment satisfies
\be
\frac{d^2}{dm^2}
\int_\Rbb\alpha_\chi(m,\theta;\varpi)\cosh\theta\,d\theta=0,
\qquad
\int_\Rbb\alpha_\chi(m,\theta;\varpi)\cosh\theta\,d\theta=m.
\ee
The first identity follows from the effective current equation by integration by parts; the second follows from the vacuum value and derivative. Parity gives the vanishing hyperbolic sine moment of $\alpha_\chi$. Similarly, the identity
$\del_\theta\beta_\chi=h_\varpi\del_m\alpha_\chi-\alpha_\chi$
gives
\be
\int_\Rbb\beta_\chi(m,\theta;\varpi)\sinh\theta\,d\theta
=-\eps^2p_\varpi(m),\qquad
\int_\Rbb\beta_\chi(m,\theta;\varpi)\cosh\theta\,d\theta=0.
\ee
Expanding $\cosh(v-\theta)$ and $\sinh(v-\theta)$ proves the energy and momentum convolution identities. The integral of the density kernel is $\Neff{\varpi}(m)$, so the constant profile $D_{\varpi,0}^{-1/2}$ gives the stated particle coefficient. Since
$\Neff{\varpi}(m)/m\to1$ and
$\eps^2p_\varpi(m)=o(m)$,
the three vacuum profiles are precisely those in~\eqref{eq:section-8-distinguished-normalized-traces}.

\noindent{\bf 2.} \emph{Reconstruction of the physical currents.}
Applying~\eqref{eq:section-8-fixed-frame-current-from-normalized-profile} to the three pairs gives, with $m=\meff{\varpi}(\mu)$,
\bel{eq:section-8-reference-current-reconstruction}
\begin{aligned}
D_{\varpi,0}^{-1/2}\cosh(v)\Neff{\varpi}(m)
&=\vNumb^0,
\\
\cosh(v)\bigl(m\cosh v\bigr)
+\sinh(v)\bigl(\eps^2p_\varpi(m)\sinh v\bigr)
&=T^{00},
\\
\cosh(v)\bigl(m\sinh v\bigr)
+\sinh(v)\bigl(\eps^2p_\varpi(m)\cosh v\bigr)
&=T^{01}.
\end{aligned}
\ee
The particle identity follows from~\eqref{eq:section-6-effective-particle-density}; the stress identities follow from~\eqref{eq:section-6-effective-stress}. Taking the derivatives at $m=0$, and using the factors relating $T^{00},T^{01}$ to $\Mbf_{00},\Mbf_{01}$, yields~\eqref{eq:section-8-distinguished-fixed-frame-traces}. Finally,
$\del_\mu m|_{\mu=0}=D_{\varpi,0}$
gives the conversion to derivatives in the physical density.
\ese
\end{proof}

The reference energy obtained in this way is the physical fluid energy, including the velocity components parallel to the symmetry orbits. Indeed,
$p_\varpi(m)=p(\mu)$ and
$m+\eps^2p(\mu)=h(\mu)(1+\sigma_\varpi)$ give
\be
\label{eq:effective-reference-physical-energy}
m\cosh^2v+\eps^2p(\mu)\sinh^2v
=h(\mu)(1+\sigma_\varpi)\cosh^2v-\eps^2p(\mu)
=T^{00}.
\ee
By contrast, the unit profile gives the physical particle current projected onto the plane orthogonal to the symmetry orbits multiplied by $\sqrt{D_{\varpi,0}}$; the physical particle current itself is generated by $D_{\varpi,0}^{-1/2}$.

When the parallel momentum varies, the dependence of this normalization factor must be included in parameter differentiation. The three physical reference currents nevertheless remain conserved along regular Euler flows. Their composed fluid-frame spatial coefficients are
$0$, $\eps^2p(\mu)\sinh v$, and
$\eps^2p(\mu)\cosh v$, respectively, so their derivatives with respect to the parallel momentum at fixed $(\mu,v)$ vanish. Consequently, their shear-induced source factors are zero.

\section{Convexity criteria}
\label{section-10}

\subsection{Rest-frame Hessians}
\label{section-10-1}
\label{section-10-1-0}

\subsubsection{Convexity relative to selected variables}

We use the representation formulas to compute the Hessian of a spacetime density current, and derive criteria for convexity in the two conservative systems of Section~\ref{section-4}. Each criterion is \emph{pressure-independent at vacuum}, while its propagation to any positive mass-energy density follows from a causal current argument in the particle-number variables, and from a characteristic system argument requiring an additional pressure sign in the mass-energy variables. Convexity concerns the fixed-frame density $F^{0}_{\entropygenerator}$ in the equation~\eqref{eq:section-8-fixed-frame-current-from-normalized-profile}.

\subsubsection{The two systems of principal variables}

The calculations in this section concern the Euler system in one spatial variable, with density \(\mu\), pressure \(\eps^{2}p\), and particle density \(\Numb\) normalized by \(\Numb(\mu)/\mu\to1\) at vacuum. To apply them to a \emph{frozen} momentum parallel to the symmetry orbits, we substitute the effective density \(m\), the pressure \(\eps^{2}p_{\varpi}(m)\), and its vacuum-normalized particle density. In that application, \(T\) and \(\vNumb\) below are the stress and particle current of the reduced system, while the parameter \(\varpi\) is held fixed when taking the Hessians. The two pairs of principal variables are
\bel{eq:section-8-two-principal-variable-pairs}
\aligned
U_{\rm E}& \coloneqq (\vNumb^0,T^{01}) = \bigl(\Numb(\mu)\cosh v,h(\mu)\sinh v\cosh v \bigr),
\\
U_{\rm N}& \coloneqq (T^{00},T^{01}) = \bigl(\mu\cosh^2v+\eps^{2}p(\mu)\sinh^2v,h(\mu)\sinh v\cosh v \bigr).
\endaligned
\ee
The first pair is the reduced counterpart of \(Z_{\rm E}\) in the equation~\eqref{eq:section-5-mass-energy-variables}, while the second one is the reduced counterpart of \(Z_{\rm N}\) in the equation~\eqref{eq:section-5-particle-number-variables}. For either pair, the frozen current is reconstructed from the same vacuum profile, but its Hessian is taken with respect to a \emph{different map from the physical state space}. 

Obviously, the two resulting notions of convexity agree for the \emph{affine currents} belonging to both formulations, but they are otherwise \emph{distinct}. For the effective family, the identity~\eqref{eq:section-6-effective-particle-density} shows that the first component of the reduced mass-energy pair is \( \bigl(1+\frac{\varpi}{4}\bigr)^{1/2} \) times the original particle-current density, the factor being \(\sqrt{D_{\varpi,0}}\); the momentum component is unchanged. At fixed \(\varpi\), this is a constant invertible linear change of conservative variables, so the Hessians in these two mass-energy coordinate systems are related by congruence and have the same sign. The particle-production pair is unchanged since the stress components in the plane orthogonal to the symmetry orbits agree by the equation~\eqref{eq:section-6-effective-stress}.

\subsubsection{\texorpdfstring{Convexity under a change of timelike normal}{Convexity under a change of timelike normal}}

At fixed momentum parallel to the symmetry orbits, the two reduced Euler formulations also permit a direct comparison of normal densities. We write either system as $\del_tU+\del_x f(U)=0$, put $A\coloneqq D_Uf$, and consider a future unit normal $n=(\cosh\theta,\sinh\theta)$ in the plane orthogonal to the symmetry orbits, with $\theta$ held fixed in all state derivatives. The conservative variables and current density selected by this normal are
\be
\label{eq:convexity-changed-normal-variables}
Y\coloneqq\cosh\theta\,U-\sinh\theta\,f(U),\qquad F^{0}_{n}\coloneqq\cosh\theta\,F^{0}-\sinh\theta\,F^{1},\qquad B\coloneqq D_UY=\cosh\theta\,I-\sinh\theta\,A.
\ee
{The acoustic eigenvalues satisfy $|\lambda_\pm|<1$. Consequently, $B$ has positive eigenvalues $\cosh\theta-\sinh\theta\,\lambda_\pm$ and is invertible. The current compatibility relation $D_UF^{1}=(D_UF^{0})A$ gives $D_UF^{0}_{n}=(D_UF^{0})B$. On each local chart of the normal conservative variables, the chain rule therefore yields}
\be
\label{eq:convexity-changed-normal-hessian}
D_YF^{0}_{n}=D_UF^{0},\qquad D_Y^2F^{0}_{n}=(D_U^2F^{0})B^{-1}.
\ee
{The last matrix is symmetric because $(D_U^2F^{0})A$ is symmetric. If $r_\pm$ are the two acoustic eigenvectors in $U$, their mixed Hessian component vanishes, and we obtain}
\be
\label{eq:convexity-changed-normal-acoustic-components}
D_Y^2F^{0}_{n}[Br_\pm,Br_\pm]=(\cosh\theta-\sinh\theta\,\lambda_\pm)D_U^2F^{0}[r_\pm,r_\pm],\qquad D_Y^2F^{0}_{n}[Br_+,Br_-]=0.
\ee
Thus (strict) convexity is \emph{preserved} wherever the normal variables define a state chart. The conclusion concerns variations of the two acoustic variables, with the normalized momentum parallel to the symmetry orbits held fixed and with the normal in the orthogonal plane.

\subsubsection{Rest-frame Hessian representation} 

We first give the calculation in the mass-energy variables. The integrals involving \(\alpha_{\chi,\mu}\) below should be understood as \emph{distributional pairings} (and need not even involve Lebesgue integrable functions). On compact positive-density intervals, the equation~\eqref{eq:global-krl-frequency-decay} gives \(|\widehat{\alpha_{\chi,\mu}}|\leq C(1+|\xi|)^{-\lambda}\), 
with $\lambda >-1/2$, and hence \(\alpha_{\chi,\mu}\in H^{-1}\). Its pairings with a \(C^2\) profile and its first derivative are therefore well defined after inserting a smooth cutoff equal to one near the kernel support, independently of the cutoff chosen. Pairings with \(\alpha_{\chi}\) and the second profile derivative are ordinary integrals, since \(\alpha_{\chi}\in L^1\). With this cutoff, mollification of a \(C^2\) profile converges in \(C^2\) near the support and justifies the calculation made for smooth profiles. This interpretation includes the singular characteristic boundaries for every \(\gamma>1\). 

\vskip.2cm

\noindent{\bf 1.} For brevity, we write \(U \coloneqq U_{\rm E}\), let \((\mu,v) =(\mu(U),v(U))\) denote the inverse change of variables, and set
\bel{eq:eta-krl-U-sec10}
\alpha_{\entropygenerator}(U) = \int_\Rbb \alpha_{\chi}(\mu(U),v(U) -s)\,\entropygenerator(s)\,ds.
\ee
We recall the identities
\be
h(\mu)=\mu+\eps^{2}p(\mu), \qquad c(\mu)^2=\eps^{2}p'(\mu),
\ee
and we denote by \(\Numb= \Numb(\mu)\) the particle density. Then we find
\bse
\be
U_1= \Numb(\mu)\cosh v, \qquad U_2=h(\mu)\sinh v\cosh v.
\ee
The Jacobian matrix of the map \((\mu,v)\mapsto(U_1,U_2)\) is
\bel{eq:Jacobian-U-mu-v-sec10}
D_{(\mu,v)}U
=
\begin{pmatrix}
\frac{\Numb(\mu)}{h(\mu)}\cosh v & \Numb(\mu)\sinh v\\[0.4em]
(1+c(\mu)^2)\sinh v\cosh v & h(\mu)\cosh(2v)
\end{pmatrix}.
\ee
Its determinant is
\bel{eq:det-Jacobian-U-mu-v-sec10}
\det D_{(\mu,v)}U = \Numb(\mu)\cosh v\,\Delta(\mu,v), \qquad \Delta(\mu,v) \coloneqq \cosh^2 v-c(\mu)^2\sinh^2 v.
\ee
\ese
Therefore the first derivatives of the inverse map are
\bel{eq:inverse-Jacobian-U-sec10}
\aligned
\mu_{U_1}
&=
\frac{h(\mu)\cosh(2v)}
{\Numb(\mu)\cosh v\,\Delta(\mu,v)},
&
\mu_{U_2}
&=
-\frac{\sinh v}{\cosh v\,\Delta(\mu,v)},
\\
v_{U_1}
&=
-\frac{(1+c(\mu)^2)\sinh v}
{\Numb(\mu)\,\Delta(\mu,v)},
&
v_{U_2}
&=
\frac{1}{h(\mu)\,\Delta(\mu,v)}.
\endaligned
\ee
The chain rule gives the Hessian of \(\alpha_{\entropygenerator}\):
\bel{eq:eta-second-U-derivatives-prelim-sec10}
\aligned
\del_{U_iU_j}\alpha_{\entropygenerator}
&=
\mu_{U_i}\mu_{U_j}\,\alpha_{\entropygenerator,\mu\mu}
+(\mu_{U_i}v_{U_j}+v_{U_i}\mu_{U_j})\,\alpha_{\entropygenerator,\mu v}
+v_{U_i}v_{U_j}\,\alpha_{\entropygenerator,vv}
+ \mu_{U_iU_j}\,\alpha_{\entropygenerator,\mu}
+v_{U_iU_j}\,\alpha_{\entropygenerator,v}.
\endaligned
\ee

\vskip.2cm

\noindent{\bf 2.}
\bse
Next, the kernel \(\alpha_{\chi}= \alpha_{\chi}(\mu,v-s)\) satisfies
\bel{eq:chi-mumu-elimination-sec10}
\alpha_{\chi,\mu\mu} = \frac{c(\mu)^2}{h(\mu)^2}\alpha_{\chi,vv} -\frac{c(\mu)^2}{h(\mu)^2}\alpha_{\chi}.
\ee
Moreover, since \(\alpha_{\chi}\) depends on \(v\) only through \(v-s\), we have
\be
\del_v\alpha_{\chi}(\mu,v-s) =-\del_s\alpha_{\chi}(\mu,v-s), \qquad \del_{vv}\alpha_{\chi}(\mu,v-s) = \del_{ss}\alpha_{\chi}(\mu,v-s),
\ee
\ese
and therefore, after integration by parts in \(s\),
\bel{eq:v-derivatives-transfer-sec10}
\aligned
\alpha_{\entropygenerator,v}(\mu,v)
&=
\int_\Rbb \alpha_{\chi}(\mu,v-s)\,\entropygenerator'(s)\,ds,
\\
\alpha_{\entropygenerator,vv}(\mu,v)
&=
\int_\Rbb \alpha_{\chi}(\mu,v-s)\,\entropygenerator''(s)\,ds,
\qquad\quad 
\alpha_{\entropygenerator,\mu v}(\mu,v)
=
\bigl\langle\alpha_{\chi,\mu}(\mu,v-\cdot),\entropygenerator' \bigr\rangle.
\endaligned
\ee
These integrations by parts are understood \emph{in the sense of distributions} on the whole rapidity line. A smooth cutoff equal to one near the acoustic interval makes each profile compactly supported without changing the pairing. The distributional derivatives include any singular contribution at the characteristic boundary; only the terms containing derivatives of the cutoff vanish, since they are supported outside that interval. We define the profile integrals \(I_j\) for \(j=0,1,2\) and write \(I_{j,\mu}\coloneqq\del_\mu I_j\) for \(j=0,1\). These integrals and their density derivatives are given by
\bel{eq:def-IJ-kernels-sec10}
\aligned
I_j(\mu,v)& \coloneqq \int_\Rbb \alpha_{\chi}(\mu,v-s)\,\entropygenerator^{(j)}(s)\,ds,
\qquad\quad
I_{j,\mu}(\mu,v) = \bigl\langle\alpha_{\chi,\mu}(\mu,v-\cdot),\entropygenerator^{(j)} \bigr\rangle.
\endaligned
\ee
The coefficient matrices are
\bel{eq:def-ABCD-sec10}
\aligned
A_{ij}& \coloneqq v_{U_i}v_{U_j}+ \frac{c(\mu)^2}{h(\mu)^2}\mu_{U_i}\mu_{U_j},
& B_{ij}& \coloneqq \mu_{U_i}v_{U_j}+v_{U_i}\mu_{U_j},\\
C_{ij}& \coloneqq \mu_{U_iU_j},
& D_{ij}& \coloneqq v_{U_iU_j},
\qquad 
&&E_{ij} \coloneqq -\frac{c(\mu)^2}{h(\mu)^2}\mu_{U_i}\mu_{U_j}.
\endaligned
\ee
Substituting the identities~\eqref{eq:chi-mumu-elimination-sec10} and~\eqref{eq:v-derivatives-transfer-sec10} into the equation~\eqref{eq:eta-second-U-derivatives-prelim-sec10}, we obtain the current Hessian
\bel{eq:eta-second-U-derivatives-final-sec10}
\del_{U_iU_j}\alpha_{\entropygenerator} = A_{ij}\,I_2 + B_{ij}\,I_{1,\mu} + C_{ij}\,I_{0,\mu} + D_{ij}\,I_1 + E_{ij}\,I_0,
\ee

\vskip.2cm

\noindent{\bf 3.}
\bse
The two pairings involving \(\alpha_{\chi,\mu}\) can also be written as ordinary integrals. The identity \(h\alpha_{\chi,\mu}= \alpha_{\chi}+ \del_v\beta_{\chi}\), followed by integration by parts in the vacuum label, gives, for \(j=0,1\),
\be
\label{eq:regu-integrals-J0-J1}
I_{j,\mu}(\mu,v) = \frac1{h(\mu)} \int_\Rbb\bigl(\alpha_{\chi}(\mu,v-s)\entropygenerator^{(j)}(s) + \beta_{\chi}(\mu,v-s)\entropygenerator^{(j+1)}(s) \bigr)\,ds.
\ee
Both kernels are integrable and supported in \([-k(\mu),k(\mu)]\). Thus, with \(K_{\mu,v} \, =[v-k(\mu),v+k(\mu)]\), we have
\be
\label{eq:regu-integrals-J-bound}
\begin{aligned} h(\mu)|I_{j,\mu}(\mu,v)| &\leq\|\alpha_{\chi}(\mu)\|_{L^1}\|\entropygenerator^{(j)}\|_{L^\infty(K_{\mu,v})} + \|\beta_{\chi}(\mu)\|_{L^1}\|\entropygenerator^{(j+1)}\|_{L^\infty(K_{\mu,v})}. \end{aligned}
\ee
This gives a direct interpretation for \(C^2\) profiles without treating the singular kernel at the characteristic boundary as an ordinary function. Under the fluid-cone positivity hypothesis, the equation~\eqref{eq:positive-krl-global-mass} replaces the two kernel norms by \(\Numb(\mu)\) and \(c(\mu)\Numb(\mu)\), respectively, on the entire density range.
\ese
In this notation, the  rest-frame current density and flux satisfy the identities
\bel{eq:section-8-profile-and-flux-identities}
\aligned
\alpha_{\entropygenerator}&=I_0,
&
\del_v\alpha_{\entropygenerator}&=I_1,
&
\del_\mu\alpha_{\entropygenerator}&=I_{0,\mu},
\\
\del_{vv}\alpha_{\entropygenerator}&=I_2,
&
\del_{\mu v}\alpha_{\entropygenerator}&=I_{1,\mu},
&
\del_{\mu\mu}\alpha_{\entropygenerator}&= \frac{c^2}{h^2}(I_2-I_0),
\\
\del_v\beta_{\entropygenerator}&=hI_{0,\mu}-I_0,
&
\del_\mu\beta_{\entropygenerator}&= \frac{c^2}{h}\bigl(\beta_{\entropygenerator}+I_1 \bigr).
\endaligned
\ee
The first two rows follow from the equations~\eqref{eq:chi-mumu-elimination-sec10} and \eqref{eq:v-derivatives-transfer-sec10}; the last row is the first-order system \eqref{eq:section-6-normalized-first-order-system}. Thus the flux reconstruction uses profile derivatives only through order two.

\vskip.2cm

\noindent{\bf 4.}
Using the equation~\eqref{eq:inverse-Jacobian-U-sec10}, we find the explicit leading coefficients \(A_{ij}\) and \(B_{ij}\)
\bse
\bel{eq:Aij-expl-sec10}
\aligned
A_{11}
&=
\frac{(1+c^2)^2\sinh^2 v
+c^2\cosh(2v)^2/\cosh^2 v}
{\Numb(\mu)^2\,\Delta(\mu,v)^2},
&&
A_{22} =
\frac{1+c(\mu)^2\tanh^2 v}
{h(\mu)^2\,\Delta(\mu,v)^2},
\\
A_{12}
&=
-\frac{\sinh v}{\Numb(\mu)h(\mu)\,\Delta(\mu,v)^2}
\Bigl(1+2c(\mu)^2+c(\mu)^2\tanh^2 v\Bigr),
\endaligned
\ee
and the coefficients $B_{ij}$ are
\bel{eq:Bij-expl-sec10}
\aligned
B_{11}
&=
-\frac{2h(\mu)(1+c(\mu)^2)\sinh v\cosh(2v)}
{\Numb(\mu)^2\cosh v\,\Delta(\mu,v)^2},
&& \qquad B_{22}
= -\frac{2\sinh v}{h(\mu)\cosh v\,\Delta(\mu,v)^2},
\\
B_{12}
&=
\frac{\cosh(2v)+(1+c(\mu)^2)\sinh^2 v}
{\Numb(\mu)\cosh v\,\Delta(\mu,v)^2}.
\endaligned
\ee
\ese
The second derivatives of the inverse map determine \(C_{ij}\) and \(D_{ij}\), while \(E_{ij}\) comes from the zero-order term in the current equation. We introduce the Hessians
\be
\mathbb H^{(1)} \coloneqq \Hess_{(\mu,v)}(U_1), \qquad \mathbb H^{(2)} \coloneqq \Hess_{(\mu,v)}(U_2),
\ee
and obtain, by inversion,
\bel{eq:second-derivatives-inverse-map-sec10}
\binom{\mu_{U_iU_j}}{v_{U_iU_j}} = - \bigl(D_{(\mu,v)}U \bigr)^{-1} \binom{\langle \mathbb H^{(1)}X_i,X_j\rangle }{\langle \mathbb H^{(2)}X_i,X_j\rangle }, \qquad X_i \coloneqq \binom{\mu_{U_i}}{v_{U_i}}.
\ee
With the inverse derivatives~\eqref{eq:inverse-Jacobian-U-sec10}, this determines the five matrices~\eqref{eq:def-ABCD-sec10}: curvature \(I_2\), mixed derivative \(I_{1,\mu}\), first derivatives \(I_{0,\mu},I_1\), and zero order \(I_0\).

\vskip.2cm

\noindent{\bf 5.}
The representation~\eqref{eq:eta-second-U-derivatives-final-sec10} depends on pressure through \(c^2/h^2\), the inverse map \(U\mapsto(\mu,v)\), and the kernels. The formula~\eqref{eq:regu-integrals-J0-J1} uses only integrable \(\alpha_{\chi},\beta_{\chi}\) and at most two profile derivatives. Convexity of the rest-frame density in $U_{\rm E}$ means
\bel{eq:convexity-test-krl-sec10}
\xi^\top\Hess_U(\alpha_{\entropygenerator})(U)\,\xi\ge0 \qquad \text{for every non-vacuum state }U\text{ and every }\xi\in\Rbb^2.
\ee
More generally, future-convexity also involves the  rest-frame flux in the equation~\eqref{eq:section-8-fixed-frame-current-from-normalized-profile}.

\subsection{Fixed-frame Hessians}

For smooth \(\entropygenerator\) with locally bounded derivatives, the flux reconstruction of Section~\ref{section-11} and the equation~\eqref{eq:section-8-fixed-frame-current-from-normalized-profile} define the fixed-frame density \(F^{0}_{\entropygenerator}\). Its mass-energy Hessian is
\bel{eq:section-8-p-convex-trace-condition}
\mathscr C_{{\rm E},p}[\entropygenerator](U_{\rm E}) \coloneqq \Hess_{U_{\rm E}}(F^{0}_{\entropygenerator})(U_{\rm E})\text{} \qquad \text{for every non-vacuum state }U_{\rm E}.
\ee
Convexity means $\mathscr C_{{\rm E},p}[\entropygenerator]\geq0$. Under the mass-energy convexity condition~\eqref{eq:intro-mass-energy-convexity-condition}, Proposition~\ref{theo:section-8-pressure-independent-convexity}, below, will reduce this matrix inequality to a pressure-independent profile condition.

For the particle-production formulation, the same fixed-frame current must instead be tested in the stress variables \(U_{\rm N}\) from the equation~\eqref{eq:section-8-two-principal-variable-pairs}. Their Jacobian is
\bel{eq:section-8-particle-number-Jacobian}
D_{(\mu,v)}U_{\rm N} \, = \begin{pmatrix}1+(1+c^2)\sinh^2v&2h\sinh v\cosh v\\(1+c^2)\sinh v\cosh v&h\cosh(2v)\end{pmatrix},
\qquad
\det D_{(\mu,v)}U_{\rm N} \, =h\Delta,
\ee
where \(\Delta\) was introduced in the equation~\eqref{eq:det-Jacobian-U-mu-v-sec10}. Thus the inverse change of variables is regular in the interior of the causal state cone, and its first derivatives are
\bel{eq:section-8-particle-number-inverse-Jacobian}
\aligned
\mu_{U_{{\rm N},1}}&= \frac{\cosh(2v)}{\Delta(\mu,v)},& \mu_{U_{{\rm N},2}}&=-\frac{2\sinh v\cosh v}{\Delta(\mu,v)},\\
v_{U_{{\rm N},1}}&=-\frac{(1+c(\mu)^2)\sinh v\cosh v}{h(\mu)\Delta(\mu,v)},& v_{U_{{\rm N},2}}&= \frac{1+(1+c(\mu)^2)\sinh^2v}{h(\mu)\Delta(\mu,v)}.
\endaligned
\ee

We likewise introduce the particle-number Hessian by
\bel{eq:section-8-N-convexity}
\mathscr C_{{\rm N},p}[\entropygenerator](U_{\rm N}) \coloneqq \Hess_{U_{\rm N}}(F^{0}_{\entropygenerator})(U_{\rm N})\text{} \qquad \text{for every non-vacuum state }U_{\rm N}.
\ee
{Here convexity means $\mathscr C_{{\rm N},p}[\entropygenerator]\geq0$; its propagation is proved below using the positivity of $c\alpha_{\chi}\pm\beta_{\chi}$.}

We arrive at the following decompositions. 

\begin{lemma}[Hessian decompositions in the two conservative systems]
\label{prop:section-8-pressure-dependent-convexity}
\label{prop:section-8-particle-number-convexity}
For \(*={\rm E},{\rm N}\), let \(p=p(\mu)\) be an admissible equation of state in the sense of Definition~\ref{def-pression}. For every \(\entropygenerator\in C^2_{\loc}(\Rbb)\), the two Hessians have the respective representations
\bel{eq:section-8-physical-hessian-trace-decomposition}
\mathscr C_{{\rm E},p}[\entropygenerator]= \mathscr K_{{\rm E},p,2}[\entropygenerator'']+ \mathscr K_{{\rm E},p,1}[\entropygenerator']+ \mathscr K_{{\rm E},p,0}[\entropygenerator],
\ee
\bel{eq:section-8-N-physical-hessian-trace-decomposition}
\mathscr C_{{\rm N},p}[\entropygenerator]= \mathscr K_{{\rm N},p,2}[\entropygenerator'']+ \mathscr K_{{\rm N},p,1}[\entropygenerator']+ \mathscr K_{{\rm N},p,0}[\entropygenerator],
\ee
where \(\mathscr K_{*,p,j}\) are finite sums of convolutions in the rapidity variable, multiplied by state-dependent matrix coefficients. They are determined by \(\alpha_{\chi}\), \(\alpha_{\chi,\mu}\), \(\beta_{\chi}\), and the first two derivatives of the inverse map \(U_{*}\mapsto(\mu,v)\).  On every compact positive-density state set, the operator bounds are finite for any given pressure law.
\end{lemma}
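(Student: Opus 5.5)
The plan is to write the fixed-frame density as $F^0_\vartheta=\cosh(v)\,\alpha_\vartheta+\sinh(v)\,\beta_\vartheta$ and compute its Hessian by the chain rule in the state variables $(\mu,v)$. In $(\mu,v)$, the derivatives of $F^0_\vartheta$ up to order two are linear combinations, with coefficients $\cosh v$, $\sinh v$, of $\alpha_\vartheta,\beta_\vartheta$ and of their first and second $(\mu,v)$-derivatives. The second derivatives of $\alpha_\vartheta$ have already been expressed in~\eqref{eq:section-8-profile-and-flux-identities} through $I_0,I_1,I_2,I_{0,\mu},I_{1,\mu}$. For $\beta_\vartheta$ I would use the rows
\[
\del_v\beta_\vartheta=hI_{0,\mu}-I_0,\qquad \del_\mu\beta_\vartheta=\tfrac{c^2}{h}(\beta_\vartheta+I_1).
\]
Differentiating these once more gives $\del_{vv}\beta_\vartheta=hI_{1,\mu}-I_1$ (derivative transferred onto the profile), together with
\[
\del_{\mu v}\beta_\vartheta=\tfrac{c^2}{h}\bigl(hI_{0,\mu}-I_0+I_2\bigr),
\]
and $\del_{\mu\mu}\beta_\vartheta=(c^2/h)'(\beta_\vartheta+I_1)+(c^2/h)(\del_\mu\beta_\vartheta+I_{1,\mu})$. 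Here $(c^2/h)'$ involves $p''$, which is available since $p\in C^3$ at positive density.

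Next I introduce the analogous flux profile integrals $J_j\coloneqq\beta_\chi(\mu,\cdot)*\vartheta^{(j)}$, so that $\beta_\vartheta=J_0$. Every second derivative of $F^0_\vartheta$ is then a combination of $I_0,I_1,I_2,I_{0,\mu},I_{1,\mu},J_0,J_1$ with continuous state-dependent coefficients. I then rewrite each $I_{j,\mu}$ through~\eqref{eq:regu-integrals-J0-J1} as $h^{-1}(I_j+J_{j+1})$. This expresses everything as ordinary convolutions of $\alpha_\chi$ or $\beta_\chi$ with $\vartheta$, $\vartheta'$ or $\vartheta''$, and the requirement of at most two profile derivatives is met. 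Grouping by the order of the profile derivative defines $\mathscr K_{*,p,2}$, $\mathscr K_{*,p,1}$ and $\mathscr K_{*,p,0}$. Alternatively one may keep $\alpha_{\chi,\mu}$ as a distributional pairing, as the statement permits.

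The passage to $U_*$ coordinates is given by formula~\eqref{eq:eta-second-U-derivatives-prelim-sec10} applied to $F^0_\vartheta$. The first derivatives $\mu_{U_i},v_{U_i}$ come from~\eqref{eq:inverse-Jacobian-U-sec10} for $*={\rm E}$ and from~\eqref{eq:section-8-particle-number-inverse-Jacobian} for $*={\rm N}$. The second derivatives come from~\eqref{eq:second-derivatives-inverse-map-sec10}, with the Hessians of $U_{\rm N}$ replacing those of $U_{\rm E}$ in the second case. Both Jacobian determinants, $\Numb\cosh v\,\Delta$ and $h\Delta$, are positive at positive density because $c<1$. Hence all these coefficients are continuous on the non-vacuum state space, which yields the two decompositions~\eqref{eq:section-8-physical-hessian-trace-decomposition}--\eqref{eq:section-8-N-physical-hessian-trace-decomposition}.

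For the operator bounds on a compact positive-density state set, the kernels $\alpha_\chi(\mu)$ and $\beta_\chi(\mu)$ are supported in $[-k(\mu),k(\mu)]$. They are also uniformly bounded in $L^1$ on compact density intervals, by Corollary~\ref{cor:kernel-physical-Sobolev} and Proposition~\ref{prop:fixed-current-bounds} via Cauchy--Schwarz. Each convolution is therefore bounded by a constant times $\|\vartheta^{(j)}\|_{L^\infty}$ on a compact rapidity window, exactly as in~\eqref{eq:regu-integrals-J-bound}. The matrix coefficients are bounded there by continuity.

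The main obstacle is justifying the differentiation itself for a $C^2_{\loc}$ profile. The kernels are singular at the characteristic boundary when $\gamma>3$, and $\alpha_{\chi,\mu}$ is only in $H^{-1}$. I would first prove the identities for mollified profiles, where all manipulations are classical. I would then pass to the limit using $C^2$ convergence on a neighborhood of the acoustic interval after a cutoff, as described before~\eqref{eq:regu-integrals-J0-J1}. Since the final expressions involve only $L^1$ kernels paired with continuous functions, dominated convergence then transfers them to general $C^2_{\loc}$ profiles, locally uniformly on compact state sets.
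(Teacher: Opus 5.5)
Your proposal is correct and follows the paper's proof essentially step by step. The paper likewise differentiates $F^0_\vartheta=\cosh v\,I_0+\sinh v\,\beta_\vartheta$ in $(\mu,v)$ using the first-order current identities; your formulas for $\del_{\mu v}\beta_\vartheta$, $\del_{vv}\beta_\vartheta$ and $\del_{\mu\mu}\beta_\vartheta$ reproduce its expressions for $F^0_{\mu\mu},F^0_{\mu v},F^0_{vv}$, since $(c^2/h)'+(c^2/h)^2=(h\eps^2p''-c^2)/h^2$. It then applies the conservative chain rule with the inverse-function second derivatives for $U_{\rm E}$ and $U_{\rm N}$, groups terms by profile-derivative order, bounds the operators through~\eqref{eq:regu-integrals-J0-J1} and the locally uniform $L^1$ kernel bounds, and passes from mollified profiles to $C^2_{\loc}$ profiles by $C^2$ convergence of the currents on compact state sets, just as you do.
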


\begin{proof}
\bse
\noindent{{\bf 1.} \emph{Derivatives in density--rapidity variables.}} We differentiate before either conservative change of variables. We abbreviate \(C\coloneqq\cosh v\), \(S\coloneqq\sinh v\), and \(T\coloneqq\beta_{\entropygenerator}\) in this proof. With $a\coloneqq c^2/h$, $b\coloneqq c^2/h^2$, and $d\coloneqq(h\eps^2p''-c^2)/h^2$, the Lorentz reconstruction is \(F^{0}_{\entropygenerator}=CI_0+ST\). The first-order kernel identities give
\be
\label{eq:fixed-frame-density-first-derivatives}
F^{0}_{\mu}=CI_{0,\mu}+aS(T+I_1), \qquad F^{0}_{v}=C(T+I_1)+hSI_{0,\mu}.
\ee
In particular, differentiating the Lorentz factors is essential: the terms \(SI_0\) cancel only after we use \(T_v=hI_{0,\mu}-I_0\). A second differentiation yields
\be
\label{eq:fixed-frame-density-second-derivatives}
\begin{aligned}
 F^{0}_{\mu\mu}&=bC(I_2-I_0)+aSI_{1,\mu}+dS(T+I_1),\\
 F^{0}_{\mu v}&=CI_{1,\mu}+(1+c^2)SI_{0,\mu}+a\bigl(C(T+I_1)+S(I_2-I_0) \bigr),\\
 F^{0}_{vv}&=C(I_2-I_0)+S(T+I_1)+2hCI_{0,\mu}+hSI_{1,\mu}.
\end{aligned}
\ee
We used \((I_{0,\mu})_\mu=b(I_2-I_0)\), \((I_1)_\mu=I_{1,\mu}\), \(T_\mu=a(T+I_1)\), and \(h'=1+c^2\). The coefficient in the first line is \(a'+a^2=(h\eps^{2}p''-c^2)/h^2=d\), where differentiation of the sound speed contributes the term containing \(p''\).
\ese

\vskip.15cm
\noindent{{\bf 2.} \emph{Passage to conservative variables.}} For either \(U=U_{\rm E}\) or \(U=U_{\rm N}\), the conservative Hessian is now given entry by entry by
\be
\label{eq:fixed-frame-conservative-Hessian}
\begin{aligned} (\Hess_UF^{0})_{ij} &= F^{0}_{\mu\mu}\mu_{U_i}\mu_{U_j} + F^{0}_{\mu v}(\mu_{U_i}v_{U_j}+v_{U_i}\mu_{U_j})+ F^{0}_{vv}v_{U_i}v_{U_j} + F^{0}_{\mu}\mu_{U_iU_j}+ F^{0}_{v}\,v_{U_iU_j}. \end{aligned}
\ee
The first inverse derivatives are given by the equation~\eqref{eq:inverse-Jacobian-U-sec10} for \(U_{\rm E}\) and the equation~\eqref{eq:section-8-particle-number-inverse-Jacobian} for \(U_{\rm N}\); their second derivatives follow from the inverse-function formula. Substitution of the equation~\eqref{eq:fixed-frame-density-first-derivatives}--\eqref{eq:fixed-frame-density-second-derivatives} expresses each Hessian entry as a linear combination of \(I_2,I_{1,\mu},I_{0,\mu},I_1,I_0,T\). Grouping these six terms by profile-derivative order gives the representations. The integral formula~\eqref{eq:regu-integrals-J0-J1} for \(I_{0,\mu},I_{1,\mu}\) uses at most two profile derivatives, including at the characteristic boundary.

\vskip.15cm
For the particle-number variables, the inverse-map identities take the following explicit form.
\bse
We write \(W_{\rm N}(U_{\rm N}) \coloneqq (\mu(U_{\rm N}),v(U_{\rm N}))\). For \(\zeta\in\Rbb^2\), we put \(X\coloneqq D_{U_{\rm N}}W_{\rm N}\,\zeta\). The chain rule gives
\bel{eq:section-8-N-hessian-chain-rule}
\Hess_{U_{\rm N}}(F^{0}_{\entropygenerator})[\zeta,\zeta]= \Hess_{(\mu,v)}(F^{0}_{\entropygenerator})[X,X]+D_{(\mu,v)}F^{0}_{\entropygenerator}\cdot D^2_{U_{\rm N}}W_{\rm N}[\zeta,\zeta].
\ee
The first derivatives of \(W_{\rm N}\) are given by the equation~\eqref{eq:section-8-particle-number-inverse-Jacobian}, while the inverse-function formula gives
\bel{eq:section-8-N-second-inverse-derivative}
D^2_{U_{\rm N}}W_{\rm N}[\zeta,\zeta]=- (D_{(\mu,v)}U_{\rm N})^{-1}D^2_{(\mu,v)}U_{\rm N}[D_{U_{\rm N}}W_{\rm N}\,\zeta,D_{U_{\rm N}}W_{\rm N}\,\zeta].
\ee
\ese

\noindent{{\bf 3.} \emph{Uniformity and approximation.}} On a compact positive-density state set, the inverse Jacobian stays nonsingular, its first two derivatives are bounded, and \(a,b,d\) are bounded. The kernel bounds in \autoref{thm:physical-krl-existence} and Proposition~\ref{prop:fixed-current-bounds} therefore bound the operators on that compact state set. These Hessian bounds on compact state sets give no uniform positive convexity constant at vacuum. For a profile of class \(C^2_{\loc}\), we approximate in \(C^2\) on an interval containing the fluid supports over the chosen compact state set. Formula~\eqref{eq:regu-integrals-J0-J1} and the locally uniform \(L^1\) bounds for \(\alpha_{\chi},\beta_{\chi}\) show that all terms in the first and second derivative formulas converge uniformly there. Thus the currents converge in \(C^2\), and the Hessian representation holds for the original profile.
\end{proof}

\subsection{Convexity criteria}
\label{section-10-2}

\subsubsection{Leading-order analysis near vacuum}

We first derive necessary profile conditions from the \emph{vacuum limit}, then prove their sufficiency at positive density under the respective pressure conditions. We put \(r_{\rm E} \coloneqq \vNumb^0\), \(q_{\rm E} \coloneqq T^{01}\), \(E_{\rm N} \coloneqq T^{00}\), and \(P_{\rm N} \coloneqq T^{01}\). As the state tends to vacuum with the ratios below in a compact set, the first nonzero homogeneous terms of the fixed-frame current are
\bel{eq:section-8-vacuum-homogeneous-entropies}
\aligned
F^{0}_{\entropygenerator}&=r_{\rm E}\entropygenerator \Bigl( \operatorname{arsinh} \Bigl( \frac{q_{\rm E}}{r_{\rm E}} \Bigr)  \Bigr) +o(r_{\rm E}),\\
F^{0}_{\entropygenerator}&=E_{\rm N}\sqrt{1- \Bigl( \frac{P_{\rm N}}{E_{\rm N}} \Bigr) ^2}\,\entropygenerator \Bigl( \artanh \Bigl( \frac{P_{\rm N}}{E_{\rm N}} \Bigr)  \Bigr) +o(E_{\rm N}).
\endaligned
\ee
The lines correspond respectively to mass-energy and particle-number variables.

\begin{lemma}[Convexity conditions near the vacuum boundary]
\label{prop:section-8-two-boundary-convexities}
Let \(\entropygenerator\in C^2(\Rbb)\).  The homogeneous vacuum density \emph{in the mass-energy variables} in the equation~\eqref{eq:section-8-vacuum-homogeneous-entropies} is \emph{convex if and only if}
\bel{eq:section-8-E-boundary-trace-condition}
\mathcal D_{\rm E}\entropygenerator(s) = \entropygenerator''(s) -\tanh(s)\entropygenerator'(s)\ge0 \qquad\text{for all }s\in\Rbb.
\ee
The homogeneous vacuum density \emph{in the particle-number variables} is \emph{convex if and only if} 
\bel{eq:section-8-N-boundary-trace-condition}
\mathcal D_{\rm N}\entropygenerator(s) = \entropygenerator''(s) -\entropygenerator(s)\ge0 \qquad\text{for all }s\in\Rbb.
\ee
Consequently, a future-convex fixed-frame current for which the vacuum boundary Hessian is well defined satisfies the first condition in the dissipated-energy formulation and the second condition in the particle-production formulation. Conversely, each inequality makes the corresponding homogeneous boundary current future-convex.
\end{lemma}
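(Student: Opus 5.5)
The plan is to reduce both assertions to one-variable convexity of degree-one homogeneous functions, and then to pass the convexity of $F^{0}_{\entropygenerator}$ to the vacuum limit by rescaling.

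\emph{The homogeneous vacuum densities.} I would first justify the expansions~\eqref{eq:section-8-vacuum-homogeneous-entropies}. By Proposition~\ref{thm:param-weak-entropy-currents} and the flux bound~\eqref{eq:flux-vacuum-improved-bound}, one has $\alpha_{\entropygenerator}=\mu\entropygenerator(v)+o(\mu)$ and $\beta_{\entropygenerator}=o(\mu)$, locally uniformly in $v$. Hence $F^{0}_{\entropygenerator}=\mu\cosh v\,\entropygenerator(v)+o(\mu)$.

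In the mass-energy variables, $r_{\rm E}=\Numb\cosh v=\mu\cosh v(1+o(1))$ and $q_{\rm E}/r_{\rm E}=(h/\Numb)\sinh v\to\sinh v$. This gives the first line of~\eqref{eq:section-8-vacuum-homogeneous-entropies}.

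In the particle-number variables, $E_{\rm N}=\mu\cosh^2v+o(\mu)$ and $P_{\rm N}=\mu\sinh v\cosh v+o(\mu)$. Therefore $P_{\rm N}/E_{\rm N}\to\tanh v$ and $\sqrt{E_{\rm N}^2-P_{\rm N}^2}=\mu\cosh v(1+o(1))$, which gives the second line.

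\emph{Convexity of the homogeneous densities.} Both limits have the form $f(x_1,x_2)=x_1\,g(x_2/x_1)$ on the open cone $x_1>0$. A direct computation gives
\[
D^2f[\zeta,\zeta]=x_1^{-1}g''(x_2/x_1)\,(\zeta_2-(x_2/x_1)\zeta_1)^2 .
\]
So $f$ is convex if and only if $g''\geq0$ on the range of $x_2/x_1$, which here is $\Rbb$, respectively $(-1,1)$.

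\emph{The two profile conditions.} In the mass-energy case, $g(x)=\entropygenerator(\operatorname{arsinh}x)$. With $x=\sinh s$ and $dx/ds=\cosh s$, the chain rule yields
\[
g''(x)=\cosh^{-2}s\,\bigl(\entropygenerator''(s)-\tanh s\,\entropygenerator'(s)\bigr)=\cosh^{-2}s\,\mathcal D_{\rm E}\entropygenerator(s),
\]
which proves the equivalence with~\eqref{eq:section-8-E-boundary-trace-condition}.

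In the particle-number case, $g(x)=\sqrt{1-x^2}\,\entropygenerator(\artanh x)=\entropygenerator(s)/\cosh s$, with $x=\tanh s$ and $dx/ds=\cosh^{-2}s$. Differentiating twice gives $g'=\entropygenerator'\cosh s-\entropygenerator\sinh s$, and then
\[
g''=\cosh^{3}s\,(\entropygenerator''-\entropygenerator)=\cosh^{3}s\,\mathcal D_{\rm N}\entropygenerator .
\]
This proves the equivalence with~\eqref{eq:section-8-N-boundary-trace-condition}.

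\emph{Necessity for a future-convex current.} For a future-convex current, I would use the rescaled functions $F_\lambda(U)\coloneqq\lambda^{-1}F^{0}_{\entropygenerator}(\lambda U)$ with $\lambda\to0$. Each $F_\lambda$ is convex on convex subsets of the state domain. The expansions above show that $F_\lambda$ converges locally uniformly on the open cone to the homogeneous density. A pointwise limit of convex functions is convex, so the homogeneous density is convex, and the previous step yields the profile condition. The converse assertion, that each inequality makes the homogeneous boundary current future-convex, is exactly the sufficiency half of the one-variable criterion.

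I expect the main obstacle to be the uniformity needed in the rescaling. One needs the $o(\mu)$ remainders to be uniform for $v$ in compact sets, which holds because of finite propagation and the $L^1$ convergence $\alpha_\chi(\mu)/\mu\to\delta_0$ in Proposition~\ref{thm:param-weak-entropy-currents}. One also needs the inverse change of variables $U\mapsto(\mu,v)$ to keep $v$ bounded when $x_2/x_1$ ranges over a compact set, which follows from the explicit formulas for $U_{\rm E}$ and $U_{\rm N}$. The statement's hypothesis that the ratios stay in a compact set is what makes this step work.
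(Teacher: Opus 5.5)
Your proposal is correct and follows the paper's route. The paper proves the lemma with exactly your perspective-function criterion and the two chain-rule identities $f_{\rm E}''=\cosh^{-2}s\,\mathcal D_{\rm E}\vartheta$ and $f_{\rm N}''=\cosh^{3}s\,\mathcal D_{\rm N}\vartheta$, and your rescaling $\lambda^{-1}F^{0}_{\vartheta}(\lambda U)\to$ homogeneous density for the necessity part is the argument in Step~1 of the proof of Proposition~\ref{theo:section-8-pressure-independent-convexity}.

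One small wording fix: $\alpha_\chi(\mu)/\mu\to\delta_0$ holds as measures, using $\|\alpha_\chi(\mu)\|_{L^1}\leq C\mu$ and the shrinking support, not in $L^1$. The locally uniform limit $\alpha_\vartheta/\mu\to\vartheta$ that you need is Step~1 of the proof of Proposition~\ref{thm:param-weak-entropy-currents}.
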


\begin{proof}
\bse
The Hessian of a perspective \((r,q)\mapsto r f(q/r)\), with \(r>0\), is non-negative if and only if \(f''\ge0\). For the first line in the equation~\eqref{eq:section-8-vacuum-homogeneous-entropies}, we set \(y\coloneqq q_{\rm E}/r_{\rm E}\), \(s\coloneqq\operatorname{arsinh}y\), and \(f_{\rm E}(y) \coloneqq \entropygenerator(\operatorname{arsinh}y)\). Direct differentiation gives
\bel{eq:section-8-E-boundary-second-derivative}
f_{\rm E}''(y) = \frac{1}{\cosh^2s}\bigl(\entropygenerator''(s) -\tanh(s)\entropygenerator'(s) \bigr).
\ee
For the second line, we set \(x\coloneqq P_{\rm N}/E_{\rm N}\), \(s\coloneqq\artanh x\), and \(f_{\rm N}(x) \coloneqq \sqrt{1-x^2}\entropygenerator(\artanh x)\). Then we find
\bel{eq:section-8-N-boundary-second-derivative}
f_{\rm N}''(x) = \cosh^3s\bigl(\entropygenerator''(s) -\entropygenerator(s) \bigr).
\ee
Since all prefactors in the estimates~\eqref{eq:section-8-E-boundary-second-derivative} and \eqref{eq:section-8-N-boundary-second-derivative} are positive, the two assertions follow.
\ese
\end{proof}

\subsubsection{Criteria for global convexity}

Having identified the profile conditions required by convexity from the vacuum calculation, we now prove their necessity for the reconstructed currents and their sufficiency throughout the density range, using kernel positivity and the pressure hypotheses of the following proposition.
 
\begin{proposition}[Pressure-independent conditions on vacuum profiles]
\label{theo:section-8-pressure-independent-convexity}
Let \(p=p(\mu)\) be an admissible equation of state in the sense of~\autoref{def-pression}.
Then the fixed-frame current defined from \(\entropygenerator\) is \(C^2\) on every compact subset of the non-vacuum state cone and has the following properties.
\bei

\item If the time component of the fixed-frame current is \emph{convex with respect to the particle-number} variables~$U_{\rm N}$, then one has
\bel{eq:section-8-global-N-trace-condition}
\mathcal D_{\rm N}\entropygenerator= \entropygenerator''-\entropygenerator\geq0\quad\text{on }\Rbb.
\ee
Conversely, this condition is \emph{sufficient when} the measures \(c\alpha_{\chi}+\beta_{\chi}\) and \(c\alpha_{\chi}-\beta_{\chi}\) are non-negative. By \autoref{theo:section-8-causal-krl-current}, this positivity follows from the genuine nonlinearity condition~\eqref{hyperbolic-eos-3}.

\item If the time component of the fixed-frame current is \emph{convex with respect to the mass-energy} variables~$U_{\rm E}$, then one has
\bel{eq:section-8-global-E-trace-condition}
\mathcal D_{\rm E}\entropygenerator= \entropygenerator''-\tanh(s)\entropygenerator'\geq0\quad\text{on }\Rbb.
\ee
Conversely, the profile condition~\eqref{eq:section-8-global-E-trace-condition} \emph{implies convexity} throughout the positive-density cone under the mass-energy convexity condition~\eqref{eq:intro-mass-energy-convexity-condition}.
This condition implies genuine nonlinearity: the left-hand side of~\eqref{hyperbolic-eos-3} equals that of~\eqref{eq:intro-mass-energy-convexity-condition} plus \(4c^3(1-c)>0\).
\eei
Moreover, the condition \eqref{eq:intro-mass-energy-convexity-condition} is inherited by every effective pressure \(\eps^{2}p_{\varpi}\), for arbitrary \(\varpi\geq0\), by Lemma~\ref{lem:effective-mass-energy-convexity}; hence the conclusions apply to the corresponding quasi-currents in the two conservative variables of the reduced Euler system, with the normalized momentum parallel to the symmetry orbits held fixed. The physical and effective conservative variables are related by the constant linear change described after~\eqref{eq:section-8-two-principal-variable-pairs}.
\end{proposition}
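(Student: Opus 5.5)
We split the proof into regularity, necessity, and sufficiency. The $C^2$ regularity on compact non-vacuum sets is Lemma~\ref{prop:section-8-pressure-dependent-convexity}: the Hessian representations there involve only $\alpha_\chi$, $\beta_\chi$, the pairings $I_{j,\mu}$ written as ordinary integrals by~\eqref{eq:regu-integrals-J0-J1}, and at most two profile derivatives.

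\emph{Necessity.} We localize near vacuum. Fix a rapidity $s_0$ and approach vacuum along $v=s_0$, i.e.\ with $q_{\rm E}/r_{\rm E}$ or $P_{\rm N}/E_{\rm N}$ fixed. Rescale the conservative variables by $\mu$: set $\widetilde F(\widetilde U)\coloneqq\mu^{-1}F^0_\vartheta(\mu\widetilde U)$, with $\widetilde U$ in a compact set bounded away from the causal boundary. Convexity is invariant under this dilation, so each rescaled density is convex. We then show $\widetilde F\to$ the homogeneous density of~\eqref{eq:section-8-vacuum-homogeneous-entropies} in $C^0_{\rm loc}$. This uses $\alpha_\chi(\mu)/\mu\to\delta_0$ with shrinking support, $\Numb(\mu)/\mu\to1$, $\eps^2p(\mu)=o(\mu)$, and $\|\beta_\chi(\mu)\|_{L^1}=O(\mu^{(\gamma+1)/2})$ from~\eqref{eq:positive-krl-global-mass} near vacuum. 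Locally uniform limits of convex functions are convex, so the homogeneous limit is convex. Lemma~\ref{prop:section-8-two-boundary-convexities} then gives $\mathcal D_{\rm N}\vartheta\ge0$, respectively $\mathcal D_{\rm E}\vartheta\ge0$, at $s_0$. Only $C^0$ convergence is needed, so no Hessian limit must be justified.

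\emph{Sufficiency for $U_{\rm N}$.} The plan is to decompose $\vartheta$ by the profile operator. Let $g\coloneqq\mathcal D_{\rm N}\vartheta\ge0$. Then $\vartheta=\vartheta_h+\int G(s,r)g(r)\,dr$, where $\vartheta_h\in\operatorname{span}\{\cosh,\sinh\}$ and $G(s,r)=\sinh|s-r|$-type Green functions. By Proposition~\ref{prop:N0-M00-M01-weak-entropies}, the homogeneous part generates $T^{00},T^{01}$, which are affine in $U_{\rm N}$ and contribute no Hessian. The currents are linear in $\vartheta$, so it suffices to show that the elementary currents with profiles $\vartheta_r(s)=\sinh(s-r)_+$ (one-sided, after subtracting affine parts) have nonnegative Hessian in $U_{\rm N}$. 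Using~\eqref{eq:fixed-frame-density-second-derivatives}, we compute the Hessian of such a current. Since $\vartheta_r''-\vartheta_r=\delta_r$, the Hessian reduces to a combination of $\alpha_\chi(\mu,v-r)$ and $\beta_\chi(\mu,v-r)$ at a single point. By the covariance of~\eqref{eq:convexity-changed-normal-acoustic-components}, it is diagonal in the acoustic eigenvectors $r_\pm$. The diagonal entries should be positive multiples of $(c\alpha_\chi\mp\beta_\chi)(\mu,v-r)$. These are nonnegative measures by Proposition~\ref{theo:section-8-causal-krl-current}, and the resulting matrix is positive semidefinite. We may verify this first at $v=0$ and then boost.

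\emph{Sufficiency for $U_{\rm E}$.} We proceed the same way with $\mathcal D_{\rm E}$. Its kernel is spanned by $1$ and $\sinh$, whose currents are the particle current and $T^{01}$; both are affine in $U_{\rm E}$. The elementary Hessian now reduces to the pairing of $\alpha_\chi,\beta_\chi$ against $\cosh r\,\delta_r$-type data. The coefficient multiplying $\alpha_\chi$ is not simply $c\alpha_\chi\pm\beta_\chi$; it picks up a term proportional to $G_{\rm E}$. We will integrate the characteristic system~\eqref{eq:section-9-D-remainder-current-identity} for $\mathcal C_\pm$ and show that the relevant combination is a nonnegative Volterra integral when $G_{\rm E}\ge0$. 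This is the step I expect to be the main obstacle: identifying exactly which combination appears and proving its sign using~\eqref{eq:effective-convexity-genuine-nonlinearity-relation}. The final assertion about effective pressures follows from Lemma~\ref{lem:effective-mass-energy-convexity}, \eqref{eq:section-6-effective-genuine-nonlinearity}, and the congruence of Hessians under constant linear changes of variables.
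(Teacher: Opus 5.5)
Your necessity argument is the paper's: rescale toward vacuum, pass to the locally uniform limit given by the homogeneous densities, use that convexity survives such limits, then apply Lemma~\ref{prop:section-8-two-boundary-convexities}. Your $U_{\rm N}$ sufficiency is also essentially the paper's, except that the Green-function decomposition is a detour you do not need. By translation invariance, $A_{\rm N}=\partial_v^2\alpha_\vartheta-\alpha_\vartheta$ and $B_{\rm N}=\partial_v^2\beta_\vartheta-\beta_\vartheta$ are already the rest-frame current generated by $\mathcal D_{\rm N}\vartheta$. The acoustic cross term vanishes because the Hessian symmetrizes the flux Jacobian. The diagonal components are the positive multiples $2c(\cosh v\pm c\sinh v)h^{-2}$ of $cA_{\rm N}\pm B_{\rm N}$, so kernel positivity applies directly.

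The genuine gap is the $U_{\rm E}$ sufficiency, and the route you sketch would not close it.

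First, $\mathcal D_{\rm E}$ does not commute with translations. The acoustic components of the $U_{\rm E}$ Hessian are $2c(\cosh v\pm c\sinh v)h^{-2}\mathcal Q_{\rm E}^\pm$, where $\mathcal Q_{\rm E}^\pm=cA_{\rm E}\pm B_{\rm E}$ with $A_{\rm E}=\partial_v^2\alpha_\vartheta+\partial_v\beta_\vartheta-\tanh v\,(\partial_v\alpha_\vartheta+\beta_\vartheta)$ and $B_{\rm E}=\partial_v^2\beta_\vartheta-\beta_\vartheta$. Here $\tanh v$ is taken at the state rapidity, not at the vacuum label. So $(A_{\rm E},B_{\rm E})$ is not the current of $\mathcal D_{\rm E}\vartheta$. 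For your elementary profile $\vartheta_r(s)=(\sinh s-\sinh r)_+/\cosh r$, the direct computation leaves the nonlocal term $\int\alpha_\chi(\mu,v-s)(\tanh s-\tanh v)\vartheta_r'(s)\,ds$, plus its flux analogue. There is therefore no reduction, as in the $U_{\rm N}$ case, to kernel values at $v-r$.

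Second, the $p''$ terms cancel in the conservative Hessian, so $G_{\rm E}$ does not appear as a coefficient there, contrary to your guess. Integrating the kernel identity for $c\alpha_\chi\pm\beta_\chi$ can only reproduce kernel positivity. Its source coefficient is $c_y+1-c^2=G_{\rm N}/(2c^2)$, so it is governed by $G_{\rm N}$, not $G_{\rm E}$.

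The missing idea is a closed characteristic system for the Hessian components themselves. Set $y=k(\mu)$ and $A_p=[hc'+c(1+c^2)]/(2c^2)$. The current equations give
\begin{align*}
(\partial_y-\partial_v)\mathcal Q_{\rm E}^+&=A_p\mathcal Q_{\rm E}^++(A_p+\tanh v)\mathcal Q_{\rm E}^-,\\
(\partial_y+\partial_v)\mathcal Q_{\rm E}^-&=(A_p-\tanh v)\mathcal Q_{\rm E}^++A_p\mathcal Q_{\rm E}^-.
\end{align*}
Since $4c^3(A_p-1)=G_{\rm E}$, the condition $G_{\rm E}\geq0$ is exactly what makes both couplings $A_p\pm\tanh v$ non-negative at every rapidity. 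Genuine nonlinearity alone only gives $A_p>c$.

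Signs are then propagated from vacuum, where $\mathcal Q_{\rm E}^\pm=\mu c\,\mathcal D_{\rm E}\vartheta+o(\mu c)$ locally uniformly, because the flux contributions are $\mathcal O(\mu^\gamma)$. The normalized quantities $Z^\pm=\mathcal Q_{\rm E}^\pm/(\mu c)$ satisfy a system with non-negative couplings and bounded row sums $(c^2-\eps^2p/\mu)/c\pm\tanh v$. A lower barrier $-\delta e^{C(y-y_0)}$ on backward characteristic cones, with $\delta\to0$ as $y_0\to0$, therefore yields $\mathcal Q_{\rm E}^\pm\geq0$.

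Finally, for $C^2_{\rm loc}$ profiles the smoothing step must preserve $\mathcal D_{\rm E}\vartheta\geq0$. Since $\mathcal D_{\rm E}$ does not commute with convolution, you should mollify $y\mapsto\vartheta(\operatorname{arsinh}y)$ rather than $\vartheta$ itself.
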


Necessity of the conditions on $\entropygenerator$ follows by rescaling toward vacuum; sufficiency follows by diagonalizing the Hessian in the acoustic directions. In particle-number variables, the diagonal components form the current with profile $\mathcal D_{\rm N}\entropygenerator$, so kernel positivity applies. In mass-energy variables, the condition~\eqref{eq:intro-mass-energy-convexity-condition} gives non-negative coupling in their characteristic system, propagating the vacuum signs.

The pressure asymptotics imply this constitutive condition near vacuum and at all sufficiently large densities. Indeed, differentiating \(\eps^{2}p'=B(1+\eps^{2}p/\mu)^2\) and using the weighted endpoint bounds gives \(\mu \eps^{2}p''/\eps^{2}p'\to\gamma-1\) at vacuum, whereas \(\mu \eps^{2}p''\to0\) and \(\eps^{2}p'\to \eps^{2}k_\infty^{2}\in(0,1)\) at large density. Thus the condition need only be checked on an intermediate compact density interval.

\begin{proof}

\bse
\noindent{{\bf 1.} \emph{Necessary conditions at the vacuum boundary.}} We choose $\star\in\{{\rm E},{\rm N}\}$ and assume that $F^{0}_{\entropygenerator}$ is convex with respect to~$U_\star$. We first consider the mass-energy variables $U_{\rm E} \, =(r,q)$, with $r>0$.  We denote by $(\mu_\auxeps,v_\auxeps)$  the physical state associated with $\auxeps U_{\rm E}$. Since $\Numb(\mu_\auxeps)\leq\auxeps r$, we have $\mu_\auxeps\to0$. Moreover, the ratio of the two components of $U_{\rm E}$ bounds $v_\auxeps$. Since $\Numb/\mu\to1$ and $h/\mu\to1$ at vacuum, we obtain
\bel{eq:vacuum-rescaled-E-variables}
v_\auxeps\to v_0 \coloneqq\operatorname{arsinh}(q/r), \qquad \frac{\mu_\auxeps}{\auxeps}\to\frac{r}{\cosh v_0}.
\ee
For the particle-number variables,  we set $U_{\rm N} \, =(E,P)$, with $E>|P|$. Then $\mu_\auxeps\leq\auxeps E$ and $|\tanh v_\auxeps|\leq |P|/E<1$. Hence we obtain
\bel{eq:vacuum-rescaled-N-variables}
v_\auxeps\to v_0 \coloneqq\artanh(P/E), \qquad \frac{\mu_\auxeps}{\auxeps}\to\frac{E}{\cosh^2v_0}.
\ee
Both limits are locally uniform in the corresponding open cones. The vacuum normalization of the rest-frame current is
\be
\frac{\alpha_{\entropygenerator}(\mu,v)}{\mu}\to\entropygenerator(v), \qquad \frac{\beta_{\entropygenerator}(\mu,v)}{\mu}\to0.
\ee
After Lorentz reconstruction, the two homogeneous functions in the equation~\eqref{eq:section-8-vacuum-homogeneous-entropies} are therefore the locally uniform limits of $\auxeps^{-1}F^{0}_{\entropygenerator}(\auxeps U_\star)$. Since a locally uniform limit of convex functions is convex, \autoref{prop:section-8-two-boundary-convexities} gives the necessary conditions \eqref{eq:section-8-global-N-trace-condition} and \eqref{eq:section-8-global-E-trace-condition}.
\ese

\medskip
\bse
\noindent{{\bf 2.} \emph{Characteristic components of the Hessian.}} We next prove sufficiency, first for smooth profiles.  We define
\bel{eq:section-8-characteristic-state-directions}
X_\pm \coloneqq\del_\mu\pm\frac{c}{h}\del_v, \qquad r_{\star,\pm} \coloneqq D_{(\mu,v)}U_\star\,X_\pm, \qquad \star\in\{{\rm E},{\rm N}\}.
\ee
These are the two acoustic directions expressed in the conservative variables. If $w\coloneqq(\mu,v)$, $M_\star\coloneqq D_wU_\star$, and $g_\star\coloneqq(D_wF^{0}_{\entropygenerator})M_\star^{-1}$, the chain rule gives
\bel{eq:characteristic-Hessian-change}
\Hess_{U_\star}F^{0}_{\entropygenerator}[M_\star x,M_\star x]=x^{\mathsf T} \Bigl (D_w^2F^{0}_{\entropygenerator}-\sum_{j=1}^2(g_\star)_jD_w^2(U_\star)_j \Bigr)x.
\ee
We apply this identity to $x=(1,\pm c/h)$ and use the current equations, together with their rapidity derivatives, to eliminate all density derivatives. In particular, the mixed derivatives are
\be
\label{eq:characteristic-Hessian-mixed-current-derivatives}
h\del_{\mu v}\alpha_{\vartheta}=\del_v\alpha_{\vartheta}+\del_{vv}\beta_{\vartheta},\qquad h\del_{\mu v}\beta_{\vartheta}=c^2(\del_{vv}\alpha_{\vartheta}+\del_v\beta_{\vartheta}).
\ee
Together with the density derivatives~\eqref{eq:current-second-density-derivatives}, below, these identities express the Hessian through rapidity derivatives and the second derivatives of the coordinate map in the equation~\eqref{eq:characteristic-Hessian-change}. The latter distinguish the conservative Hessian from the Hessian in $(\mu,v)$.

\ese

\bse
For these substitutions, we write $C_v\coloneqq\cosh v$ and $S_v\coloneqq\sinh v$. The second derivatives of the three conservative components are
\be
\label{eq:convexity-conservative-second-derivatives}
\begin{aligned}
D_w^2(\Numb C_v)&=\Numb\begin{pmatrix}-c^2C_v/h^2&S_v/h\\S_v/h&C_v\end{pmatrix},\\
D_w^2(hC_vS_v)&=\begin{pmatrix}\eps^{2}p''C_vS_v&(1+c^2)(C_v^2+S_v^2)\\(1+c^2)(C_v^2+S_v^2)&4hC_vS_v\end{pmatrix},\\
D_w^2(hC_v^2-\eps^{2}p)&=\begin{pmatrix}\eps^{2}p''S_v^2&2(1+c^2)C_vS_v\\2(1+c^2)C_vS_v&2h(C_v^2+S_v^2)\end{pmatrix}.
\end{aligned}
\ee
{Here we use $h'=1+c^2$, $\Numb'=\Numb/h$, and $\Numb''=-\Numb c^2/h^2$. These formulas and the current derivatives give the quadratic forms below for arbitrary $x=(x_\mu,x_v)$, before restriction to the acoustic directions.} We make the change of conservative variables explicit. The fixed-frame density is $F^{0}=C_v\alpha_\vartheta+S_v\beta_\vartheta$. The first-order current equations give $h \alpha_{\vartheta,\mu}=\alpha_\vartheta+\beta_{\vartheta,v}$ and $h\beta_{\vartheta,\mu}=c^2(\beta_\vartheta+\alpha_{\vartheta,v})$. We substitute these identities in $g_\star=(D_wF^{0})M_\star^{-1}$ and obtain
\be
\label{eq:convexity-conservative-current-gradients}
\begin{aligned}
g_{\rm E}&=\Bigl(\frac{\alpha_\vartheta+\beta_{\vartheta,v}-\tanh v\,(\beta_\vartheta+\alpha_{\vartheta,v})}{\Numb},\frac{\beta_\vartheta+\alpha_{\vartheta,v}}{hC_v}\Bigr),\\
g_{\rm N}&=\frac1h\Bigl(C_v(\alpha_\vartheta+\beta_{\vartheta,v})-S_v(\beta_\vartheta+\alpha_{\vartheta,v}),C_v(\beta_\vartheta+\alpha_{\vartheta,v})-S_v(\alpha_\vartheta+\beta_{\vartheta,v})\Bigr).
\end{aligned}
\ee
{These formulas also exhibit the cancellation of the pressure second derivative in the equation~\eqref{eq:characteristic-Hessian-change}. In both formulations, the terms proportional to $\eps^{2}p''$ in $\sum_i(g_\star)_i\del_{\mu\mu}(U_\star)_i$ equal $\eps^{2}p''S_v(\beta_\vartheta+\alpha_{\vartheta,v})/h$, which is exactly the corresponding term in $\del_{\mu\mu}F^{0}$. The remaining entries follow from the equations~\eqref{eq:characteristic-Hessian-mixed-current-derivatives} and~\eqref{eq:current-second-density-derivatives}, below; they give the full quadratic forms in steps~3 and~4.}

\ese

\bse
The flux Jacobian has eigenvectors $r_{\star,\pm}$ with eigenvalues
\be
\lambda_\pm= \frac{\tanh v\pm c}{1\pm c\tanh v}, \qquad \lambda_+-\lambda_-= \frac{2c(1-\tanh^2v)}{1-c^2\tanh^2v}>0.
\ee
The current density--flux compatibility identity implies that \(H_\star A_\star\) is symmetric, where \(H_\star=\Hess_{U_\star}F^{0}_{\entropygenerator}\). Consequently, we have
\bel{eq:acoustic-Hessian-cross-term}
r_{\star,+}^{\mathsf T}H_\star r_{\star,-} \, =0.
\ee
Thus the Hessian is non-negative if and only if its two diagonal characteristic components are non-negative.
\ese

\medskip
\bse
\noindent{{\bf 3.} \emph{Particle-number variables.}} We define the curvature current by
\bel{eq:section-8-particle-curvature-current}
A_{\rm N} \coloneqq\del_v^2\alpha_{\vartheta}-\alpha_{\vartheta}, \qquad B_{\rm N} \coloneqq\del_v^2\beta_{\vartheta}-\beta_{\vartheta}.
\ee
By differentiation under the convolution integral, $(A_{\rm N},B_{\rm N})$ is the rest-frame current defined from $\mathcal D_{\rm N}\entropygenerator= \entropygenerator''-\entropygenerator$. The  full quadratic form is
\be
\label{eq:particle-number-full-pulled-hessian}
\Hess_{U_{\rm N}}F^{0}_{\entropygenerator}[M_{\rm N}x,M_{\rm N}x] =(C_vA_{\rm N}+S_vB_{\rm N})\bigl(c^2x_\mu^2/h^2+x_v^2\bigr) +\frac2h(C_vB_{\rm N}+c^2S_vA_{\rm N})x_\mu x_v.
\ee
{Substituting $x=(1,\pm c/h)$ gives}
\bel{eq:section-8-particle-characteristic-hessian}
\Hess_{U_{\rm N}}(F^{0}_{\entropygenerator})[r_{{\rm N},\pm},r_{{\rm N},\pm}]= \frac{2c(\cosh v\pm c\sinh v)}{h^2}\bigl(c \, A_{\rm N}\pm B_{\rm N} \bigr).
\ee
If $\mathcal D_{\rm N}\entropygenerator\geq0$ and $c\alpha_{\chi}\pm\beta_{\chi}$ are non-negative measures, then
\be
|B_{\rm N}|\leq c \, A_{\rm N}.
\ee
Both terms on the right-hand side of \eqref{eq:section-8-particle-characteristic-hessian} are therefore non-negative. Under the genuine nonlinearity condition \eqref{hyperbolic-eos-3}, the positivity of $c\alpha_{\chi}\pm\beta_{\chi}$ follows from \autoref{theo:section-8-causal-krl-current}. 
\ese

\medskip
\bse
\noindent{{\bf 4.} \emph{Mass-energy variables.}} We define the characteristic curvatures by
\bel{eq:section-8-mass-energy-characteristic-curvatures}
\mathcal Q_{\rm E}^\pm \coloneqq c\bigl(\del_v^2\alpha_{\vartheta}+\del_v\beta_{\vartheta}-\tanh v\,(\del_v\alpha_{\vartheta}+\beta_{\vartheta}) \bigr)\pm(\del_v^2\beta_{\vartheta}-\beta_{\vartheta}).
\ee
The corresponding characteristic Hessian components are
\bel{eq:section-8-mass-energy-characteristic-hessian}
\Hess_{U_{\rm E}}(F^{0}_{\entropygenerator})[r_{{\rm E},\pm},r_{{\rm E},\pm}]= \frac{2c(\cosh v\pm c\sinh v)}{h^2}\mathcal Q_{\rm E}^\pm.
\ee
It remains to prove $\mathcal Q_{\rm E}^\pm\geq0$.

We put $y=k(\mu)$ and write $\mathcal Q_{\rm E}^\pm=cA_{\rm E}\pm B_{\rm E}$, where
\be
A_{\rm E} \coloneqq\del_v^2\alpha_{\vartheta}+\del_v\beta_{\vartheta}-\tanh v\,(\del_v\alpha_{\vartheta}+\beta_{\vartheta}), \qquad B_{\rm E} \coloneqq\del_v^2\beta_{\vartheta}-\beta_{\vartheta}.
\ee
{In these variables, the full quadratic form is}
\be
\label{eq:mass-energy-full-pulled-hessian}
\Hess_{U_{\rm E}}F^{0}_{\entropygenerator}[M_{\rm E}x,M_{\rm E}x] =(C_vA_{\rm E}+S_vB_{\rm E})\bigl(c^2x_\mu^2/h^2+x_v^2\bigr) +\frac2h(C_vB_{\rm E}+c^2S_vA_{\rm E})x_\mu x_v.
\ee
{Restriction to $x=(1,\pm c/h)$ proves the equation~\eqref{eq:section-8-mass-energy-characteristic-hessian}.}
\ese

\bse
For the curvature equations, we put $R_{\rm E}\coloneqq\del_v\alpha_{\vartheta}+\beta_{\vartheta}$, so that $A_{\rm E}=(\del_v-\tanh v)R_{\rm E}$. The first-order current equations give
\be
\label{eq:mass-energy-curvature-calculation}
\del_yR_{\rm E}=(c+c^{-1})R_{\rm E}+c^{-1}B_{\rm E},\qquad \del_yB_{\rm E}=c(\del_v^2-1)R_{\rm E}.
\ee
{Since $c$ depends only on $y$ and $\del_v\tanh v=1-\tanh^2v$, differentiating the first identity with $\del_v-\tanh v$ gives the equation for $A_{\rm E}$ below, while $(\del_v^2-1)R_{\rm E}=\del_vA_{\rm E}+\tanh v\,A_{\rm E}$ gives the equation for $B_{\rm E}$. Thus we obtain}
\bel{eq:mass-energy-curvature-intermediate}
\del_yA_{\rm E}-c^{-1}\del_vB_{\rm E}=(c+c^{-1})A_{\rm E}-c^{-1}\tanh v\,B_{\rm E},\qquad \del_yB_{\rm E}-c\del_vA_{\rm E}=c\tanh v\,A_{\rm E}.
\ee
We set \(A_p(\mu) \coloneqq [h(\mu)c'(\mu)+c(\mu)(1+c(\mu)^2)]/(2c(\mu)^2)\). Since $\del_yc=hc'/c$, we obtain
\bel{eq:section-8-mass-energy-curvature-system}
(\del_y-\del_v)\mathcal Q_{\rm E}^+= A_p\mathcal Q_{\rm E}^++(A_p+\tanh v)\mathcal Q_{\rm E}^-,\qquad (\del_y+\del_v)\mathcal Q_{\rm E}^-=(A_p-\tanh v)\mathcal Q_{\rm E}^++A_p\mathcal Q_{\rm E}^-.
\ee
The constitutive condition \eqref{eq:intro-mass-energy-convexity-condition} is equivalent to $A_p\geq1$. Indeed, \(4c^3(A_p-1)=h \eps^{2}p''+2\eps^{2}p'(1-c)^2\). Hence the two coupling coefficients are non-negative. By contrast, genuine nonlinearity gives $4c^3(A_p-c)=h\eps^{2}p''+2\eps^{2}p'(1-c^2)>0$, and therefore only $A_p>c$. Since $c<1$, this inequality does not ensure the signs of $A_p\pm\tanh v$ at arbitrary rapidity. The characteristic comparison therefore applies under the mass-energy convexity condition~\eqref{eq:intro-mass-energy-convexity-condition}.

\ese

\bse
The vacuum normalization gives
\bel{eq:section-8-mass-energy-curvature-vacuum-trace}
\mathcal Q_{\rm E}^\pm(\mu,v) = \mu c(\mu)\mathcal D_{\rm E}\entropygenerator(v)+o\bigl(\mu c(\mu) \bigr)\qquad\text{as }\mu\to0,
\ee
locally uniformly in~$v$. Indeed, $\del_v^j\alpha_{\vartheta}= \mu\entropygenerator^{(j)}+o(\mu)$ for $j\leq2$, while the flux equation yields
\be
\del_v^j\beta_{\vartheta}(\mu,v) = \frac{h(\mu)}{\Numb(\mu)}\int_0^\mu\frac{c(t)^2\Numb(t)}{h(t)^2}\del_v^{j+1}\alpha_{\vartheta}(t,v)\,dt=\Ocal(\mu^\gamma), \qquad j\leq2.
\ee
Thus the flux contributions are $o(\mu c(\mu))$.

We set $w(\mu) \coloneqq\mu c(\mu)$ and $Z^\pm\coloneqq\mathcal Q_{\rm E}^\pm/w$. The vacuum formula shows that the negative parts of $Z^\pm$ tend to zero as $y\to0$. Moreover, the sums of the coefficients in the normalized system are
\be
2A_p-\frac{w_y}{w}\pm\tanh v= \frac{c^2-\eps^{2}p/\mu}{c}\pm\tanh v.
\ee
They are bounded near vacuum and on every compact positive-density interval. Indeed, $c^2-\eps^{2}p/\mu=\Ocal(\mu^{\gamma-1})$ and $c\simeq\mu^{(\gamma-1)/2}$ near vacuum. We fix a positive density and a compact rapidity interval, and work on their backward characteristic cone. Its rapidity sections lie in one fixed compact interval. On $y\geq\auxeps$, all coefficients are bounded; the off-diagonal coefficients are non-negative and the row sums are bounded above by a constant $C$ independent of $\auxeps$. We choose $\delta_\auxeps$ as the maximum of the negative parts of $Z^+$ and $Z^-$ on the lower section $y=\auxeps$. The locally uniform vacuum limit gives $\delta_\auxeps\to0$. The function $-\delta_\auxeps e^{C(y-\auxeps)}$ is a lower comparison function for both equations, since their row sums are at most $C$. Characteristic integration therefore gives
\be
Z^\pm(y,v)\geq-\delta_\auxeps e^{Cy}.
\ee
Since $\delta_\auxeps\to0$, we obtain $\mathcal Q_{\rm E}^\pm\geq0$. Formula \eqref{eq:section-8-mass-energy-characteristic-hessian} then proves mass-energy convexity.
\ese

\medskip
\bse
\noindent{{\bf 5.} \emph{General profiles and effective pressure laws.}} For particle-number variables, convolution with a non-negative mollifier preserves $\mathcal D_{\rm N}\entropygenerator\geq0$. For mass-energy variables, $\mathcal D_{\rm E}\entropygenerator\geq0$ is equivalent to the convexity of $y\mapsto\entropygenerator(\operatorname{arsinh}y)$. Mollifying this function and pulling it back to the rapidity variable gives smooth profiles converging in $C^2_{\rm loc}$ and preserving the corresponding profile condition. The current equations give
\bel{eq:current-second-density-derivatives}
\alpha_{\vartheta,\mu\mu} \, = \frac{\eps^{2}p'}{h^2}(\alpha_{\vartheta,vv}-\alpha_{\vartheta}), \qquad \beta_{\vartheta,\mu\mu} \, = \frac{\eps^{2}p''}{h}(\beta_{\vartheta}+\alpha_{\vartheta,v})+\frac{\eps^{2}p'}{h^2}(\beta_{\vartheta,vv}-\beta_{\vartheta}).
\ee
These formulas and the mixed-derivative identities give locally uniform convergence of the Hessians on compact non-vacuum state sets, extending the conclusions to all $\entropygenerator\in C^2_{\rm loc}(\Rbb)$.

Preservation of the mass-energy pressure condition follows from Lemma~\ref{lem:effective-mass-energy-convexity}. Its coefficient calculation can also be written as
\bel{eq:effective-E-convexity}
D_\varpi^3 G_{\rm E}[p_\varpi]\bigl(\meff{\varpi}(\mu) \bigr) =(1+\sigma_\varpi)^2G_{\rm E}(\mu)+4c^3\bigl((1+\sigma_\varpi)^2(1-c) -D_\varpi^{3/2}+cD_\varpi \bigr).
\ee
The last parenthesis is the non-negative expression factorized in the proof of that lemma, divided by \(2c>0\), so \(G_{\rm E}\geq0\) implies \(G_{\rm E}[p_\varpi]\geq0\) for every \(\varpi\geq0\).
\ese
\end{proof}

\subsubsection*{Acknowledgments}

This research was initiated when the first author (BLF) was a research fellow at the Princeton Center for Theoretical Science (PCTS), Princeton University, and the second author (PLF) was a visiting research fellow at the Courant Institute for Mathematical Sciences, New York University. During the completion of this project, the authors were partially supported by the ANR project ANR-23-CE40-0010-02 and the ERC project MSCA-101131233.

\phantomsection
\subsubsection*{Note on AI use}
After we had written the first draft, we used ChatGPT 6 to help reorganize the manuscript and simplify some mathematical arguments before making a human-only check of the paper.

\small
\appendix

\section{Current equation in Riemann invariants}
\label{appendix:AB-from-ab}

\subsubsection{Transformation to Riemann invariants} 

We give the derivation of the fixed-frame current equation used in \autoref{section-5}, expressing it in density and rapidity and calculating the coefficients that become singular at vacuum.

We write \(\theta=v\) for the fluid rapidity and take \(w_+=\theta+k(\mu)\), \(w_-=\theta-k(\mu)\), with \(k'=c/h\), and set \(\kappa=\artanh c\). The characteristic speeds are \(\lambda_\pm= \tanh(\theta\pm\kappa)\). For a fixed-frame density-flux pair \((F^{0},F^{1})\), the flux identities are    \(F^{1}_{w_+}= \lambda_+ F^{0}_{w_+}\) and \(F^{1}_{w_-}= \lambda_-F^{0}_{w_-}\). Equality of the mixed flux derivatives gives
\be
(\lambda_+-\lambda_-)F^{0}_{w_+w_-} +(\del_{w_-}\lambda_+)F^{0}_{w_+}- (\del_{w_+}\lambda_-)F^{0}_{w_-}=0.
\ee
Since $\lambda_+-\lambda_-=\sinh(2\kappa)/(\cosh(\theta+\kappa)\cosh(\theta-\kappa))>0$, division yields the coefficients $a,b$ in the equations~\eqref{eq:ab-def}--\eqref{eq:ab-explicit}, with $v=\theta$. Thus the current equation is~\eqref{eq:entropy-eq-riemann}, and the signs of both coefficients are those given in the equation~\eqref{eq:ab-explicit}.

The direct change to the fixed-frame equation of~\autoref{section-5} is
\be
\theta= \frac{w_++w_-}{2}, \qquad {k}(\mu) = \frac{w_+-w_-}{2}, \qquad d_{\rm ac}(\mu) \coloneqq \frac{d\mu}{dk} \, = \frac{\mu+\eps^{2}p(\mu)}{c(\mu)}.
\ee
The coordinate derivatives are
\be
\del_{w_+}= \frac12\del_\theta+ \frac{d_{\rm ac}}{2}\del_\mu, \qquad \del_{w_-}= \frac12\del_\theta-\frac{d_{\rm ac}}{2}\del_\mu,
\ee
Their composition gives the mixed second derivative
\be
F^{0}_{w_+w_-} \, = \frac14F^{0}_{\theta\theta}-\frac{d_{\rm ac}^2}{4}F^{0}_{\mu\mu}-\frac{d_{\rm ac}d_{\rm ac}'}{4}F^{0}_{\mu}.
\ee
Substitution in the current equation in Riemann invariants gives
\be
d_{\rm ac}^2F^{0}_{\mu\mu}-F^{0}_{\theta\theta}+ \bigl(d_{\rm ac}d_{\rm ac}'-2d_{\rm ac}(a-b) \bigr)F^{0}_{\mu}-2(a+b)F^{0}_{\theta}=0
\ee
Division by $d_{\rm ac}^2$ gives the density--rapidity equation
\be
F^{0}_{\mu\mu}-k'(\mu)^2F^{0}_{\theta\theta}+ \Bigl(-\frac{k''(\mu)}{k'(\mu)}-2k'(\mu)(a-b)\Bigr)F^{0}_{\mu}-2k'(\mu)^2(a+b)F^{0}_{\theta}=0.
\ee

\subsubsection{The fixed-frame current equation} 

Eliminating the flux from the  equations for the fixed-frame current density \(F^{0}= F^{0}(\mu,v)\), we obtain
\bel{eq:entropy-mu-v}
\aligned & L_{\mu,v}(F^{0}) =0, \qquad\quad L_{\mu,v}(F^{0}) \coloneqq F^{0}_{\mu\mu}+ \gamma_2(\mu)F^{0}_{vv}+ \gamma_1(\mu,v)F^{0}_{\mu}+ \gamma_3(\mu,v)F^{0}_{v}, \endaligned
\ee
\bse
The coefficient of the second rapidity derivative is
\be
\gamma_2(\mu) =-k'(\mu)^2=-\frac{c(\mu)^2}{(\mu+\eps^{2}p(\mu))^2} \, =-\frac{\eps^{2}p'(\mu)}{(\mu+\eps^{2}p(\mu))^2},
\ee
The coefficients of the first derivatives are
\be
\gamma_1(\mu,v) =-\frac{k''(\mu)}{k'(\mu)}-2k'(\mu)\bigl(a-b \bigr), \qquad \quad \gamma_3(\mu,v) =-2k'(\mu)^2\bigl(a+b \bigr).
\ee
\ese
Writing \(v= \theta\), the coefficients \(a+b\) and \(a-b\) are
\be
\aligned a+b & =-\frac{\bigl(1-\kappa_k(\mu) \bigr)\bigl(1-c(\mu)^2 \bigr)\tanh v}{1-c(\mu)^2\tanh^2 v}, \qquad\quad a-b = \frac{\bigl(1-\kappa_k(\mu) \bigr)\bigl(1+c(\mu)^2\tanh^2 v \bigr)}{\sinh(2\kappa(\mu))\bigl(1-c(\mu)^2\tanh^2 v \bigr)}, \endaligned
\ee
Here we recall the sound-speed rapidity and its derivative with respect to acoustic distance:
\be
\kappa(\mu) = \artanh(c(\mu)), \qquad \quad {\kappa_k}(\mu) = \frac{\mu+\eps^{2}p(\mu)}{c(\mu)(1-c(\mu)^2)}\,c'(\mu).
\ee
Substituting these expressions and using \(h'=1+c^2\), we obtain the coefficients directly in density--rapidity variables:
\be
\label{eq:fixed-frame-expl-coefficients}
\begin{aligned} \gamma_1(\mu,v)&= \frac{2cc'\tanh^2v+2c^2h^{-1}(1-\tanh^2v)}{1-c^2\tanh^2v}, \qquad \quad \gamma_3(\mu,v) = \frac{2c\bigl(c(1-c^2) -hc' \bigr)\tanh v}{h^2(1-c^2\tanh^2v)}. \end{aligned}
\ee
The denominator is positive for every positive density and finite rapidity, since \(0<c<1\). At \(v=0\), these formulas reduce to \(\gamma_1=2c^2/h\) and \(\gamma_3=0\). The rapidity dependence concerns the fixed-frame time component; the two-component current leads instead to the \emph{rapidity-independent} equation used in~\autoref{section-5}.

We set $D\coloneqq\cosh(\theta+\kappa)\cosh(\theta-\kappa)$. The difference and product identities are
\be
\cosh^2(\theta-\kappa)-\cosh^2(\theta+\kappa)=-\sinh(2\theta)\sinh(2\kappa),\qquad 2D=\cosh(2\theta)+\cosh(2\kappa).
\ee
The corresponding sum identity is
\be
\cosh^2(\theta-\kappa)+\cosh^2(\theta+\kappa)=1+\cosh(2\theta)\cosh(2\kappa)
\ee
Adding and subtracting the equations~\eqref{eq:ab-explicit}, and using these three identities, gives
\be
a+b=-\frac{1-\kappa_k}{2}\frac{\sinh(2\theta)}D=-(1-\kappa_k)\frac{\sinh(2\theta)}{\cosh(2\kappa)+\cosh(2\theta)},
\ee
\be
a-b=\frac{1-\kappa_k}{2\sinh(2\kappa)}\frac{1+\cosh(2\kappa)\cosh(2\theta)}D=\frac{1-\kappa_k}{\sinh(2\kappa)}\frac{1+\cosh(2\kappa)\cosh(2\theta)}{\cosh(2\kappa)+\cosh(2\theta)}.
\ee
Substituting $c=\tanh\kappa$ and $v=\theta$ gives the density--rapidity expressions for $a+b$ and $a-b$ above, and hence the equations~\eqref{eq:fixed-frame-expl-coefficients}.

\subsubsection{Vacuum singularity} 

We use the normalization~\eqref{eq:standard-acoustic-distance} and recall \(\lambda=(3-\gamma)/(2(\gamma-1))\), as in the construction of the kernels. The vacuum pressure estimates give \(k(\mu)\sim2c(\mu)/(\gamma-1)\) and \(\kappa_k(\mu)\to(\gamma-1)/2\). In particular, we find
\be
\label{eq:Riemann-vacuum-coefficients}
k(\mu)a(w_+,w_-) \to \frac{\lambda}{2}, \qquad {k}(\mu)b(w_+,w_-) \to -\frac{\lambda}{2} \qquad(\mu\to 0).
\ee
These limits are uniform in \(\theta\): the quotient \(\cosh(\theta-\kappa)/\cosh(\theta+ \kappa)\) lies between \(e^{-2\kappa}\) and \(e^{2\kappa}\). The explicit formulas therefore give \(|a|+|b|\leq C/c(\mu)\) for all \(\mu>0\) and all rapidities. Here we use the global bounds for \(\kappa_k\) and the subluminal gap of the given pressure law; both follow from its asymptotics and continuity on the intervening density interval. 
Since    \(w_+-w_-=2k\), the leading coefficients are    \(\lambda/(w_+-w_-)\) and \(-\lambda/(w_+-w_-)\). They vanish at leading order when \(\gamma=3\), but bounded remainders do not follow from this cancellation alone. Their signs are governed by \(1-\kappa_k\), not by the genuine-nonlinearity factor. This distinction is why the positivity argument in~\autoref{section-7} uses the first-order system with non-negative coupling terms for the current components rather than a sign assumption on \(a\) and \(b\).

\section{Liouville--Green estimates}
\label{appendix-olver}

\subsubsection{The exponential regime}

We prove the Liouville--Green estimates for exponential and oscillatory solutions, including bounds for their derivatives and continuous dependence on parameters. We use a Liouville transformation and a Volterra integral equation~\cite[Chapter~6]{Olver} on half-lines approaching vacuum or large density where the coefficient has no turning point. The resulting estimates are used in Sections~\ref{section-6} and~\ref{section-fourier-continuation}. We take \(I\)  to be a half-line  and take \(Q\in C^2(I)\)  to be positive. We use the error-control  function from the equation~\eqref{eq:appendix-olver-error-control-density}.

\begin{proposition}[Liouville--Green estimate in the exponential regime]
\label{theo:appendix-olver-exponential}
There exists a universal constant \(C>0\) with the following properties.
\bei 

\item \emph{Left endpoint.} Let \(I=(-\infty,z_0]\), assume    \(\int_{-\infty}^{z_0}\sqrt{Q(r)}\,dr=+ \infty\), and set
\bel{eq:appendix-olver-left-variation}
\mathcal V_- (z) \coloneqq \int_{-\infty}^z|\mathcal E_Q(r)|\,dr.
\ee
If \(\mathcal V_- (z_0)<+ \infty\), then the equation
\bel{eq:appendix-olver-left-exponential-equation}
\del_z^2Y-Q(z)Y=0
\ee
admits a fundamental pair \(Y_-^+ \), \(Y_-^-\) satisfying
\bel{eq:appendix-olver-left-exponential-basis}
Y_-^\pm(z) =Q(z)^{-1/4}\exp\!\Bigl(\pm\int_z^{z_0}\sqrt{Q(r)}\,dr\Bigr)\bigl(1+ \auxeps_-^\pm(z) \bigr),
\ee
and the relative errors satisfy
\bel{eq:appendix-olver-left-exponential-error}
\bigl|\auxeps_-^- (z) \bigr|\le C\Bigl(\exp\!\bigl(C\mathcal V_- (z) \bigr) -1\Bigr), \qquad \lim_{z\to-\infty}\auxeps_-^\pm(z) =0.
\ee

\item \emph{Right endpoint.} Let \(I=[z_0,+ \infty)\), assume    \(\int_{z_0}^{+ \infty}\sqrt{Q(r)}\,dr=+ \infty\), and set
\bel{eq:appendix-olver-right-variation}
\mathcal V_+(z) \coloneqq \int_z^{+ \infty}|\mathcal E_Q(r)|\,dr.
\ee
If \(\mathcal V_+(z_0)<+ \infty\), then the equation~\eqref{eq:appendix-olver-left-exponential-equation} admits a fundamental pair \(Y_+^+ \), \(Y_+^-\) satisfying
\bel{eq:appendix-olver-right-exponential-basis}
Y_+^\pm(z) =Q(z)^{-1/4}\exp\!\Bigl(\pm\int_{z_0}^z\sqrt{Q(r)}\,dr\Bigr)\bigl(1+ \auxeps_+^\pm(z) \bigr),
\ee
and the relative errors satisfy
\bel{eq:appendix-olver-right-exponential-error}
\bigl|\auxeps_+^- (z) \bigr|\le C\Bigl(\exp\!\bigl(C\mathcal V_+(z) \bigr) -1\Bigr), \qquad \lim_{z\to+ \infty}\auxeps_+^\pm(z) =0.
\ee

\item For the growing solution, a different bound holds. In either orientation, let \(\tau\) be the phase tending to \(+ \infty\) at the endpoint and write \(v(\tau) = \mathcal V_\pm(z(\tau))\). For sufficiently large \(T\), one may choose the growing solution so that, for \(\tau\geq T\),
\be
|\auxeps^+(\tau)|+|\del_\tau\auxeps^+(\tau)|\leq C \Bigl( v(\tau)+e^{-2(\tau-T)} + \int_T^\tau e^{-2(\tau-r)}v(r)\,dr \Bigr).
\ee
In the same phase variable, the decreasing remainder satisfies \(|\auxeps^-|+|\del_\tau\auxeps^-|\leq C(\exp(Cv) -1)\). Thus both remainders and their phase derivatives tend to zero. The growing remainder need not be bounded by the tail  \(v(\tau) = \mathcal V_\pm(z(\tau))\) alone.
\eei
\end{proposition}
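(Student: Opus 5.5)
The plan is the classical Liouville--Green construction. I would change variables to the phase $\tau(z)=\int^z\sqrt{Q(r)}\,dr$ and the amplitude $W=Q^{1/4}Y$. A direct computation turns $\del_z^2Y-QY=0$ into
\[
\del_\tau^2W-W=\psi(\tau)W,
\]
where $\psi\,d\tau=-\mathcal E_Q\,dz$ up to sign conventions. Hence $\int|\psi|\,d\tau$ is exactly the variation $\mathcal V_\pm$. The hypothesis $\int\sqrt Q=+\infty$ sends the endpoint to $\tau=+\infty$ in the right case, and to $\tau=+\infty$ after orientation reversal $\tau=\int_z^{z_0}\sqrt Q$ in the left case. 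Both cases therefore reduce to one problem on a half-line $[\tau_0,+\infty)$ with $\psi\in L^1$. I would treat the right endpoint and deduce the left one by reflection.

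For the recessive solution I would write $W^-=e^{-\tau}(1+\epsilon^-)$ and solve the Volterra equation from $+\infty$:
\[
\epsilon^-(\tau)=\int_\tau^{\infty}\tfrac12\bigl(1-e^{-2(r-\tau)}\bigr)\psi(r)\bigl(1+\epsilon^-(r)\bigr)\,dr .
\]
The kernel is bounded by $1/2$, so Picard iteration gives terms bounded by $v(\tau)^j/j!$ with $v(\tau)=\int_\tau^\infty|\psi|$. This yields $|\epsilon^-|\le e^{v}-1$. Differentiating the equation bounds $\del_\tau\epsilon^-$ by the same quantity. Returning to $z$ gives the stated form of $Y_+^-$ with a universal $C$, and $\epsilon^-\to0$ at the endpoint follows since $v\to0$.

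The dominant solution is the step I expect to be the obstacle, because a Volterra equation from infinity for $e^{\tau}(1+\epsilon^+)$ has the growing kernel $e^{2(\tau-r)}$ and does not converge. Instead I would fix $T$ large so that $v(T)$ is small, and solve forward from $T$ with data $\epsilon^+(T)=0$:
\[
\epsilon^+(\tau)=\int_T^\tau\tfrac12\bigl(1-e^{-2(\tau-r)}\bigr)\psi(r)\bigl(1+\epsilon^+(r)\bigr)\,dr .
\]
Iteration gives $|\epsilon^+|\le e^{\int_T^\tau|\psi|}-1$, which is bounded but not small. To obtain decay I would remove a multiple of the recessive solution, namely subtract the constant term $\tfrac12\int_T^\infty\psi(1+\epsilon^+)$. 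This amounts to choosing the growing solution modulo $W^-$. After an integration by parts in $r$, what remains is
\[
-\tfrac12\int_\tau^\infty\psi(1+\epsilon^+)\,dr-\tfrac12\int_T^\tau e^{-2(\tau-r)}\psi(1+\epsilon^+)\,dr .
\]
The first integral is $O(v(\tau))$. For the second, I would split $|\psi(r)|=-v'(r)$ and integrate by parts once more. This produces $v(\tau)$, the boundary term $e^{-2(\tau-T)}v(T)$, and $\int_T^\tau e^{-2(\tau-r)}v(r)\,dr$, which is exactly the claimed bound. Differentiating in $\tau$ gives the same estimate for $\del_\tau\epsilon^+$.

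Independence of the pair follows from the Wronskian of the leading terms, which equals $-2$ up to $o(1)$ and is constant, hence nonzero. Finally, I would transfer the estimates back to $z$ through $Y=Q^{-1/4}W$ and $\del_z=\sqrt Q\,\del_\tau$.
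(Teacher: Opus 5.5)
Your proposal is correct, and for the growing solution it takes a different route from the paper. The Liouville transformation and the recessive solution are handled as in the paper: the same Volterra equation from $+\infty$ with kernel $\tfrac12(1-e^{-2(r-\tau)})$, the same factorial bounds, and the same differentiated kernel $e^{-2(r-\tau)}$.

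\emph{How the paper builds the growing solution.} The paper writes no Volterra equation for it. It uses reduction of order from the recessive solution $f=e^{-\tau}(1+e)$, namely $g=2f\int_T^\tau f^{-2}\,dr$. This gives exactly $g/e^{\tau}=(1+e)\bigl(1-e^{-2(\tau-T)}+2\int_T^\tau e^{-2(\tau-r)}((1+e)^{-2}-1)\,dr\bigr)$, and the bound follows from $|(1+e)^{-2}-1|\le Cv$. The bare term $e^{-2(\tau-T)}$ there comes from $g(T)=0$.

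\emph{Your route, and one correction.} Your forward Volterra equation from $T$ also works. It even gives the sharper intermediate bound $C\bigl(v(\tau)+\int_T^\tau e^{-2(\tau-r)}|\psi(r)|\,dr\bigr)$, which your integration by parts against $|\psi|=-v'$ turns into the stated form. The step that needs fixing is your justification for subtracting the constant $K=\tfrac12\int_T^\infty\psi(1+\epsilon^+)\,dr$ from $\epsilon^+$. This is not ``choosing the growing solution modulo $W^-$''. Adding $cW^-$ changes $\epsilon^+$ by $ce^{-2\tau}(1+\epsilon^-)$, not by a constant, and $e^{\tau}(1+\epsilon^+-K)$ is not a solution. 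The correct operation is a rescaling. Since $\epsilon^+\to K$, the solution $W^+/(1+K)$ has relative error $(\epsilon^+-K)/(1+K)$ and phase derivative $\del_\tau\epsilon^+/(1+K)$. Because $|K|\le\tfrac12 v(T)e^{v(T)/2}\le\tfrac12$ for large $T$, your estimates survive with a factor $2$.

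\emph{What each route buys.} The paper's reduction of order fixes the Wronskian $W(f,g)=2$ exactly and makes the growing solution an explicit functional of the recessive one. That is the construction reused in~\eqref{eq:connection-continuous-growing-solution} with Cauchy data $(0,2/D_\xi(Z))$, where continuity in the frequency parameter is inherited directly from that of $f$. Your construction does not depend on the recessive solution, and its growing solution has relative error already small at $\tau=T$.
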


\begin{proof}
In either orientation, we set \(Y=Q^{-1/4}W(\tau)\), with \(\tau\) the outward phase specified in the statement. Differentiation gives
\bel{eq:appendix-olver-exponential-transformation}
W''-W= \mathcal R_QW, \qquad \mathcal R_Q(\tau(z)) = \frac{\mathcal E_Q(z)}{\sqrt{Q(z)}}, \qquad |\mathcal R_Q|\,d\tau=|\mathcal E_Q|\,|dz|.
\ee
\bse
For the decreasing solution \(f=e^{-\tau}(1+e)\), the integral equation is
\be
e(\tau) = \frac12\int_\tau^{+\infty} (1-e^{-2( r-\tau)})\mathcal R_Q(r)(1+e(r))\,dr.
\ee
Its iterated kernels give \(|e(\tau)|\leq e^{v(\tau)/2}-1\). Differentiation under the integral gives
\be
e'(\tau) =-\int_\tau^{+\infty} e^{-2(r-\tau)}\mathcal R_Q(r)(1+e(r))\,dr, \qquad |e'(\tau)|\leq v(\tau)e^{v(\tau)/2}.
\ee
The integrable majorant justifies this differentiation, and the same construction extends the solution over the remaining finite interval and proves the decreasing-solution bounds.
\ese

\bse
We choose \(T\) so that \(|e|\leq1/2\) on \([T,\infty)\). Reduction of order gives the independent solution
\be
g(\tau) \coloneqq2f(\tau)\int_T^\tau f(r)^{-2}\,dr, \qquad W(f,g) =2.
\ee
Dividing by \(e^\tau\), we find
\be
\frac{g(\tau)}{e^\tau} =(1+e(\tau)) \Bigl( 1-e^{-2(\tau-T)} +2\int_T^\tau e^{-2(\tau-r)}\bigl((1+e(r))^{-2}-1 \bigr)\,dr \Bigr).
\ee
Since \(|(1+e)^{-2}-1|\leq C|e|\leq Cv\) on this half-line, this is the growing-solution bound. Differentiating the last formula uses only \(e'\) and the identity    \(\del_\tau\int_T^\tau e^{-2(\tau-r)}k(r)\,dr=k(\tau) -2\int_T^\tau e^{-2(\tau-r)}k(r)\,dr\), with \(k=(1+e)^{-2}-1\). The same right-hand side therefore controls the derivative of \(g/e^\tau-1\). Both bounds tend to zero: we split the integral at a fixed large phase, then use    \(v(r)\to0\)  on the remaining part. Returning to \(z\) proves both endpoint assertions. 
\ese
\end{proof}

\subsubsection{The oscillatory regime} 

We now switch the sign in front of $Q$ and construct oscillatory solutions for the operator $\del_z^2Y+Q(z)Y$, with analogous error bounds.

\begin{proposition}[Liouville--Green estimate in the oscillatory regime]
\label{theo:appendix-olver-oscillatory}
Let \(Q\in C^2([z_0,+ \infty))\) be positive and assume that
\bel{eq:appendix-olver-oscillatory-phase}
\int_{z_0}^{+ \infty}\sqrt{Q(r)}\,dr=+ \infty, \qquad \mathcal V_+(z_0) = \int_{z_0}^{+ \infty}|\mathcal E_Q(r)|\,dr<+ \infty.
\ee
Then the equation
\bel{eq:appendix-olver-oscillatory-equation}
\del_z^2Y+Q(z)Y=0
\ee
admits two complex conjugate solutions \(Y_+^+ \) and \(Y_+^-\) which form a fundamental system and satisfy
\bel{eq:appendix-olver-oscillatory-basis}
Y_+^\pm(z) =Q(z)^{-1/4}\exp\!\Bigl(\pm i\int_{z_0}^z\sqrt{Q(r)}\,dr\Bigr)\bigl(1+ \auxeps_+^\pm(z) \bigr),
\ee
whose relative errors satisfy
\bel{eq:appendix-olver-oscillatory-error}
\bigl|\auxeps_+^\pm(z) \bigr|\le C\Bigl(\exp\!\bigl(C\mathcal V_+(z) \bigr) -1\Bigr), \qquad \lim_{z\to+ \infty}\auxeps_+^\pm(z) =0,
\ee
where \(C>0\) is universal. In the phase variable    \(\tau\coloneqq \int_{z_0}^z\sqrt{Q(r)}\,dr\), the same bound holds for \(|\auxeps_+^\pm|+|\del_\tau\auxeps_+^\pm|\). With the convention \(W(Y_1,Y_2) =Y_1Y_2'-Y_1'Y_2\), one has \(W(Y_+^+,Y_+^-) =-2i\).
\end{proposition}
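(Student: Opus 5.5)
The plan is to mirror the proof of Proposition~\ref{theo:appendix-olver-exponential}, now with the oscillatory Liouville transformation. I will set $\tau\coloneqq\int_{z_0}^z\sqrt{Q(r)}\,dr$ and $Y=Q^{-1/4}W(\tau)$. The same differentiation as in~\eqref{eq:appendix-olver-exponential-transformation}, with the sign of $Q$ reversed, gives
\[
W''+W=\mathcal R_QW,\qquad \mathcal R_Q(\tau(z))=\frac{\mathcal E_Q(z)}{\sqrt{Q(z)}},\qquad |\mathcal R_Q|\,d\tau=|\mathcal E_Q|\,dz .
\]
The divergent phase assumption makes $\tau$ range over $[0,+\infty)$. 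The finiteness of $\mathcal V_+(z_0)$ gives $\mathcal R_Q\in L^1(d\tau)$, with tail $v(\tau)\coloneqq\mathcal V_+(z(\tau))$.

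I will look for $W^\pm=e^{\pm i\tau}(1+e^\pm)$, with $e^\pm$ defined by the Volterra equation with data at infinity. Variation of constants with the basis $e^{\pm i\tau}$ gives
\[
e^\pm(\tau)=-\int_\tau^{+\infty}\frac{\sin(r-\tau)}{1}\,e^{\pm i(r-\tau)}\,\mathcal R_Q(r)\bigl(1+e^\pm(r)\bigr)\,dr,
\]
up to the exact form of the kernel. The key point is that this kernel, equal to $(1-e^{\pm2i(r-\tau)})/(2i)$ up to sign, is bounded by one uniformly, unlike the exponential case where only one direction was bounded. Successive approximation then gives iterates bounded by $v(\tau)^j/j!$. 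Hence $|e^\pm|\le e^{v(\tau)}-1$, which is the stated bound with a universal $C$, and $e^\pm\to0$ as $\tau\to\infty$.

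Differentiating under the integral is justified by the integrable majorant $|\mathcal R_Q|$, and the kernel vanishes at $r=\tau$. This gives $e^{\pm\prime}(\tau)=\mp\int_\tau^\infty e^{\pm2i(r-\tau)}\mathcal R_Q(1+e^\pm)\,dr$, so $|\del_\tau e^\pm|\le v\,e^{v}\le C(e^{Cv}-1)$. The solutions then extend from a neighbourhood of infinity to all of $[z_0,+\infty)$ by linear ODE theory. The Volterra bound is in fact global on $[0,\infty)$ because $v(0)<\infty$. Since $Q$ is real and the equation has real coefficients, conjugating the integral equation shows that $\overline{e^+}$ solves the equation for $e^-$. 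By uniqueness of the Volterra solution, $Y_+^-=\overline{Y_+^+}$.

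For the Wronskian, I will use that it is constant. I will evaluate it as $\tau\to\infty$ using $e^\pm,\del_\tau e^\pm\to0$ and $dz=d\tau/\sqrt Q$. The factor $Q^{-1/2}$ from the amplitudes cancels the factor $\sqrt Q$ from the chain rule, and the $(Q^{-1/4})'$ terms cancel in the antisymmetric combination. This leaves $W(Y^+,Y^-)=W_\tau(e^{i\tau},e^{-i\tau})=-2i$. The nonzero Wronskian proves that the pair is a fundamental system.

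The only delicate point I expect is the bookkeeping of the Liouville transformation and of the sign and normalization of the Volterra kernel. Once $|\mathcal R_Q|\,d\tau=|\mathcal E_Q|\,dz$ is checked from the definition~\eqref{eq:appendix-olver-error-control-density}, the rest is routine. It is simpler than the exponential case, since no reduction-of-order step is needed.
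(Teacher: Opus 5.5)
Your proposal is correct and follows the paper's proof essentially step by step. The shared steps are the Liouville transformation to $W''+W=\mathcal R_QW$, the terminal Volterra equation whose kernel has modulus at most one (giving $|e|\le e^{v}-1$ and $|\del_\tau e|\le v e^{v}$), taking $W^-=\overline{W^+}$, and evaluating the constant Wronskian at infinity with the exact cancellation $Q^{-1/2}\del_z\tau=1$.

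The only slips are signs, and they affect none of the bounds. The correct equation is $e^{+}(\tau)=+\int_\tau^{+\infty}\sin(r-\tau)e^{i(r-\tau)}\mathcal R_Q(1+e^{+})\,dr=\int_\tau^{+\infty}\frac{e^{2i(r-\tau)}-1}{2i}\mathcal R_Q(1+e^{+})\,dr$. For both signs one then has $\del_\tau e^{\pm}=-\int_\tau^{+\infty}e^{\pm2i(r-\tau)}\mathcal R_Q(1+e^{\pm})\,dr$, not $\mp$.
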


\begin{proof}
\bse
With    \(\tau(z) = \int_{z_0}^z\sqrt{Q(r)}\,dr\)  and \(Y(z) =Q(z)^{-1/4}W(\tau(z))\), we obtain \(\del_\tau^2W+W= \mathcal R_Q(\tau)W\), with the same function \(\mathcal R_Q\) as in the equation~\eqref{eq:appendix-olver-exponential-transformation}. We put \(v(\tau)\coloneqq\int_\tau^{+\infty}|\mathcal R_Q(r)|\,dr\). For \(W^+=e^{i\tau}(1+e)\), the terminal integral equation is
\be
e(\tau) = \int_\tau^{+\infty} \frac{e^{2i(r-\tau)}-1}{2i}\, \mathcal R_Q(r)(1+e(r))\,dr.
\ee
Since the kernel has modulus at most one, successive substitution gives a convergent series: the term with \(j\) factors of \(\mathcal R_Q\) has modulus at most \(v(\tau)^j/j!\). Consequently \(|e(\tau)|\leq e^{v(\tau)}-1\). Differentiation, justified by the integrable majorant, gives
\be
e'(\tau) =-\int_\tau^{+\infty} e^{2i(r-\tau)} \mathcal R_Q(r)(1+e(r))\,dr, \qquad |e'(\tau)|\leq v(\tau)e^{v(\tau)}.
\ee
A second differentiation verifies the differential equation. In particular, \(W^+=e^{i\tau}(1+o(1))\) and \((W^+)'=ie^{i\tau}(1+o(1))\). Since \(Q\) is real, we take \(W^-= \overline{W^+}\). The Wronskian is constant and its limit is \(W_\tau(W^+,W^-) =-2i.\) Returning to \(z\) preserves this Wronskian, since \(Q^{-1/2}\del_z\tau=1\); the terms obtained by differentiating the amplitude factor \(Q^{-1/4}\) cancel. Thus \(Y^+ \) and \(Y^-\) are independent. Finally, \(v(\tau(z)) = \mathcal V_+(z)\), which proves the error estimate with a universal constant.
\ese
\end{proof}

\subsubsection{Values and derivatives in logarithmic density}

For the connection formulas of~\autoref{section-fourier-continuation}, we convert the phase estimates in~\(\tau\) to the pair \((Y,\del_zY)\) in logarithmic density~\(z\). We denote by \(\sigma\in\{-1,1\}\)  the orientation of the phase, so that $ \del_z\tau= \sigma\sqrt Q. $ We assume that
\be
Y=Q^{-1/4}e^{\omega\tau}(1+\auxeps),
\ee
where \(\omega= \pm1\) in the exponential regime and \(\omega= \pm i\) in the oscillatory regime. Since $ \del_z\auxeps = \sigma\sqrt Q\,\del_\tau\auxeps, $ a direct differentiation gives the identity
\be
\label{eq:olver-original-derivative}
Q^{-1/4}e^{-\omega\tau}\del_zY = \sigma\omega(1+\auxeps)+\sigma\del_\tau\auxeps -\frac{\del_zQ}{4Q^{3/2}}(1+\auxeps).
\ee
For the asymptotic formulas, we assume
\be
\auxeps\to0, \qquad \del_\tau\auxeps\to0, \qquad \frac{\del_zQ}{Q^{3/2}}\to0;
\ee
the identity~\eqref{eq:olver-original-derivative} then gives
\be
Y=Q^{-1/4}e^{\omega\tau}(1+o(1)), \qquad \del_zY= \sigma\omega Q^{1/4}e^{\omega\tau}(1+o(1)).
\ee
The approximation therefore controls \(Y\) and \(\del_zY\) in~\(z\).

To extend these conclusions to compact parameter families, we assume joint continuity of $Q$, $\del_zQ$ and $\del_z^2Q$ on compact intervals, bounds $0<q_*\leq Q\leq q^*<+\infty$, and uniformly vanishing tails of $\int(|\del_zQ|+|\del_z^2Q|)\,dz$. Integrability gives a limit of $\del_zQ$, necessarily zero, and hence
\be
|\del_zQ(z)| \leq\int_z^{+\infty}|\del_z^2Q(r)|\,dr \to 0,
\ee
uniformly in the parameters. Thus $\del_zQ$ is uniformly bounded, and $(\del_zQ)^2\leq C|\del_zQ|$ gives uniformly vanishing tails for $\mathcal E_Q$. We truncate the terminal Volterra series at a finite logarithmic density $L$. Their kernels and iterates are continuous in the parameters on that compact interval. The factorial bounds make the series uniformly convergent, and the truncation error, also for the phase derivative, is bounded by a fixed multiple of the uniform tail $\sup\int_L^{+\infty}|\mathcal E_Q|\,dz\to0$. This proves continuity without requiring a pointwise integrable majorant for the family. Formula~\eqref{eq:olver-original-derivative} gives continuity and uniform asymptotics of the first $z$ derivatives.

Uniform smallness of the decreasing exponential remainder permits one lower integration point $Z$ for the reduction of order~\eqref{eq:connection-continuous-growing-solution}. Its Cauchy data $(0,2/D(Z))$ are continuous in the parameters. The growing-error bound in Proposition~\ref{theo:appendix-olver-exponential} is uniform: we split its convolution at a fixed large phase and use the uniformly small tails on the remainder. The phase values at $Z$ stay bounded, so its separate exponential term also tends uniformly to zero. This applies to the phase derivative as well. For the right-hand phases used at large density, the Wronskians remain $-2i$ and $-2$, since $Q^{-1/2}\del_z\tau=1$; ordinary finite-interval ODE continuation preserves parameter continuity. For the large-density coefficients of Section~\ref{section-fourier-continuation}, the frequency ranges in a compact set separated from the thresholds; Lemma~\ref{lem:large-density-B-condition} gives the coefficient tails required here.

\section{Pressure asymptotics and examples}
\label{appendix-pressure-endpoints}

\subsection{Pressure asymptotics}

\subsubsection{Weighted derivatives of the remainders}

We derive the pressure asymptotics implied by the hypotheses, verify the rescaling of the pressure, and give examples on the full density range. For a $C^2$ function $f$ such that $f'$ and $\mu f''$ are integrable at either endpoint, $(\mu f')'=f'+\mu f''$ is integrable. The endpoint limit of $\mu f'$ therefore exists and must be zero, since a nonzero limit contradicts $f'\in L^1$. Integration from that endpoint gives
\be
\label{eq:weighted-remainder-first-derivative}
|\mu f'(\mu)|\leq
\begin{cases}
\displaystyle\int_0^\mu\bigl(|f'(r)|+|r f''(r)| \bigr)\,dr&\text{near vacuum},\\
\displaystyle\int_\mu^{+\infty}\bigl(|f'(r)|+|r f''(r)| \bigr)\,dr&\text{at large density}.
\end{cases}
\ee
In particular, $\mu f'$ is bounded, and applying this estimate to the pressure remainders also controls products of their derivatives and will justify the quadratic term in the Liouville--Green variation.

\subsubsection{Pressure asymptotics}

For the admissibility conditions of Definition~\ref{def-pression}, we use the local abbreviation $B_{\rm ref}= \mu^2p'/(\mu+\eps^2p)^2$. Lemma~\ref{lem:epsilon-pressure-admissibility}, below, gives the corresponding endpoint estimates. Proposition~\ref{prop:fixed-effective-admissibility} gives admissibility at each fixed parallel momentum; quantitative variation of this family is studied in~\cite{TorusT2-3-asymptotic-2026}.

\begin{proposition}[Pressure limits]
\label{prop:pressure-endpoint-asymptotics}
Every admissible equation of state in the sense of Definition~\ref{def-pression} satisfies
\bel{eq:pressure-vacuum-asymptotics}
p'(\mu)=\kappa_0\mu^{\gamma-1}(1+o(1)), \qquad p(\mu)=\frac{\kappa_0}{\gamma}\mu^\gamma(1+o(1)) \qquad(\mu\to0),
\ee
and its large-density limits, with $k_\infty\in(0,\eps^{-1})$ as in~\eqref{eq:intro-infty-B}, are
\bel{eq:pressure-infinity-asymptotics}
\frac{p(\mu)}{\mu}\to k_\infty^2, \qquad p'(\mu)\to k_\infty^2 \qquad(\mu\to+\infty).
\ee
\end{proposition}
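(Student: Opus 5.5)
The plan is to read off both endpoint statements from the coefficient identity $B_{\rm ref}(\mu)=\mu^2p'(\mu)/(\mu+\eps^2p(\mu))^2$. We first prove a crude bound on $p/\mu$, then bootstrap to the precise asymptotics. The only inputs are the hypotheses \eqref{hyperbolic-eos-1}--\eqref{hyperbolic-eos-2} and the prescribed forms \eqref{eq:intro-vacuum-B}, \eqref{eq:intro-infty-B}.

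\emph{Vacuum.} Since $p(0)=0$ and $p\in C^1([0,+\infty))$, we have $p(\mu)/\mu\to p'(0)$. Suppose first that $p'(0)>0$. Then $B_{\rm ref}(\mu)\to p'(0)/(1+\eps^2p'(0))^2>0$, which contradicts \eqref{eq:intro-vacuum-B}, whose right-hand side tends to zero because $\gamma>1$. Hence $p'(0)=0$ and $\eps^2p(\mu)/\mu\to0$. We then invert the identity to obtain
\[
p'(\mu)=B_{\rm ref}(\mu)\bigl(1+\eps^2p(\mu)/\mu\bigr)^2=\kappa_0\mu^{\gamma-1}(1+o(1)).
\]
Integrating from $0$, where $p(0)=0$ and the integrand is positive, gives $p(\mu)=\int_0^\mu\kappa_0 r^{\gamma-1}(1+o(1))\,dr=\frac{\kappa_0}{\gamma}\mu^\gamma(1+o(1))$. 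This step is an elementary l'Hôpital-type integration of a relative $o(1)$ error.

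\emph{Large density.} Set $A(\mu)\coloneqq \eps^2p(\mu)/\mu$. By \eqref{hyperbolic-eos-2}, $A$ is bounded in $(0,1)$ and bounded below by a positive constant for large $\mu$. The key computation is
\[
\mu A'(\mu)=\eps^2p'-A=\eps^2B_{\rm ref}(1+A)^2-A\eqqcolon \Phi(A)+o(1),\qquad \Phi(A)\coloneqq \beta(1+A)^2-A,
\]
where $\beta=\eps^2k_\infty^2/(1+\eps^2k_\infty^2)^2$. The error is $o(1)$ uniformly because $A$ is bounded and $r_+\to0$.

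In the variable $t=\log\mu$ this becomes the autonomous ODE $A_t=\Phi(A)+o(1)$. The quadratic $\Phi$ has roots $A_1=\eps^2k_\infty^2$ and $A_2=1/(\eps^2k_\infty^2)$. Since $\eps k_\infty<1$, we have $A_1<1<A_2$, and $\Phi>0$ on $[0,A_1)$, $\Phi<0$ on $(A_1,1]$. Because $A$ stays in $[0,1)$ and the forcing is $o(1)$, a standard trapping argument shows that $A(t)\to A_1$. Concretely, for any $\delta>0$, once the error is below $\frac12\min\{|\Phi(a)|:a\in[0,1]\setminus(A_1-\delta,A_1+\delta)\}$, the solution moves toward $(A_1-\delta,A_1+\delta)$ at a definite rate, enters it in finite $t$, and cannot leave it afterwards.

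This gives $p(\mu)/\mu\to k_\infty^2$. Then $\eps^2p'=B_{\rm ref}(1+A)^2\cdot\eps^2\to\beta(1+\eps^2k_\infty^2)^2=\eps^2k_\infty^2$, that is, $p'\to k_\infty^2$.

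\emph{Main obstacle.} The one delicate step is excluding the second root $A_2$ and confirming that $A_1$ is attracting rather than repelling. This is exactly where the strictly subluminal bound $\sup A<1$ from \eqref{hyperbolic-eos-2} and the constraint $k_\infty<\eps^{-1}$ enter. We will check the sign of $\Phi$ on $[0,1]$ directly from the factorization $\Phi(A)=\beta(A-A_1)(A-A_2)$.
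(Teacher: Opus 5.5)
Your proof is correct and follows essentially the same route as the paper. At vacuum, you get $p/\mu\to0$ by ruling out $p'(0)>0$ by contradiction, whereas the paper bounds $p'=B_{\rm ref}(1+\eps^2p/\mu)^2\to0$ directly and averages; both then integrate the relative $o(1)$ error. At large density, you run the same trapping argument for the ODE satisfied by $p/\mu$ in $\log\mu$, with unique admissible zero at $\eps^2k_\infty^2$ and second root $1/(\eps^2k_\infty^2)>1$ excluded; the only minor difference is that you take $\sup\eps^2p/\mu<1$ directly from \eqref{hyperbolic-eos-2}, while the paper first traps the ratio below some $r_0<\eps^{-2}$ before the final trapping step.
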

\begin{proof}
We put $y(\mu)\coloneqq p(\mu)/\mu$. Since $0\leq y<\eps^{-2}$, the identity
\bel{eq:pressure-B-identity}
p'(\mu)=B_{\rm ref}(\mu)(1+\eps^2y(\mu))^2
\ee
and the vacuum assumption imply $p'(\mu)\to0$. Since $p(0)=0$, averaging $p'$ on $(0,\mu)$ gives $y(\mu)\to0$. Substitution in the identity~\eqref{eq:pressure-B-identity} yields the first formula in~\eqref{eq:pressure-vacuum-asymptotics}, and integration from zero gives the second one. On the other hand, at large density, direct differentiation gives
\bel{eq:pressure-ratio-endpoint-ode}
\frac{dy}{d\log\mu}=B_{\rm ref}(\mu)(1+\eps^2y)^2-y.
\ee
The limiting right-hand side has, on $[0,\eps^{-2})$, the unique zero $y=k_\infty^2$; its other zero is $\eps^{-4}k_\infty^{-2}>\eps^{-2}$. For the comparison argument, we put $z=\log\mu$ and $F(z,y)\coloneqq B_{\rm ref}(e^z)(1+\eps^2y)^2-y$. To prove that the ratio has a uniform upper bound below  \(\eps^{-2}\), we choose $r_0\in(k_\infty^2,\eps^{-2})$. The limiting vector field is strictly negative on $[r_0,\eps^{-2}]$, so $F(z,y)\leq-c_0<0$ there for all sufficiently large $z$. Since $y<\eps^{-2}$, the solution enters $[0,r_0]$ in finite $z$-time and remains there. Continuity on compact positive-density intervals and $y(\mu)\to0$ at vacuum then give $r_*\coloneqq\sup_{\mu>0}p(\mu)/\mu<\eps^{-2}$. Since $0\leq y\leq r_*$, for every sufficiently small $\auxeps>0$ there exist $z_\auxeps$ and $c_\auxeps>0$ such that
\be
\label{eq:pressure-ratio-trapping}
F(z,y)\geq c_\auxeps\quad\text{for }0\leq y\leq k_\infty^2-\auxeps, \qquad F(z,y)\leq-c_\auxeps\quad\text{for }k_\infty^2+\auxeps\leq y\leq r_*, \qquad z\geq z_\auxeps.
\ee
The vector field points into $[k_\infty^2-\auxeps,k_\infty^2+\auxeps]$ at both endpoints, and a solution outside this interval enters it in finite $z$-time. The interval is then forward invariant, and letting $\auxeps$ tend to zero gives $y(\mu)\to k_\infty^2$. The equation~\eqref{eq:pressure-B-identity} finally gives $p'(\mu)\to k_\infty^2$.
\end{proof}

\subsection{Rescaling and pressure conditions}

\subsubsection{Rescaling of the pressure}
\label{section-pressure-light-speed}

\begin{lemma}[Scaled pressure]
\label{lem:epsilon-pressure-admissibility}
Fix an admissible equation of state $p$ in the sense of Definition~\ref{def-pression}. The pressure law $\mu\mapsto\eps^2p(\mu)$ is admissible with unit inverse light speed, with the same vacuum exponent $\gamma$, vacuum coefficient $\eps^2\kappa_0$, and limiting sound speed $\eps k_\infty$. 
\end{lemma}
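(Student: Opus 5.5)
The plan is to write the scaled law as $\tilde p(\mu)\coloneqq\eps^2p(\mu)$ with light-speed parameter $\tilde\eps=1$, and to check the conditions of Definition~\ref{def-pression} one at a time. The key observation is that every quantity entering the definition depends on $\eps$ and $p$ only through the combinations $\eps^2p$, $\eps^2p'$ and $\eps^2p''$. For hyperbolicity~\eqref{hyperbolic-eos-1}, we have $\tilde p(0)=0$, and the regularity $C^3((0,+\infty))\cap C^1([0,+\infty))$ is unchanged. Moreover, $0<p'<\eps^{-2}$ is equivalent to $0<\tilde p'<1$, and $0\le p'(0)<\eps^{-2}$ is equivalent to $0\le\tilde p'(0)<1$. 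The subluminal condition~\eqref{hyperbolic-eos-2} transforms in the same way: multiplying by $\eps^2$ gives $0<\liminf\tilde p/\mu\le\sup\tilde p/\mu<1$.

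For the asymptotic conditions, the coefficient appearing in~\eqref{eq:intro-vacuum-B} for the scaled law with unit light speed is
\[
\frac{\mu^2\tilde p'}{(\mu+\tilde p)^2}=\frac{\eps^2\mu^2p'}{(\mu+\eps^2p)^2}=\eps^2\kappa_0\mu^{\gamma-1}(1+r_-(\mu)).
\]
This is the original condition with coefficient $\eps^2\kappa_0$, and the remainder $r_-$ is identical. Hence~\eqref{eq:intro-vacuum-r} holds verbatim, and the exponent $\gamma$ is preserved.

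At large density the same computation gives
\[
\frac{\eps^2k_\infty^2}{(1+\eps^2k_\infty^2)^2}(1+r_+)=\frac{\tilde k^2}{(1+\tilde k^2)^2}(1+r_+),\qquad \tilde k\coloneqq\eps k_\infty\in(0,1),
\]
again with the same remainder $r_+$, so~\eqref{eq:intro-infty-r} is unchanged. The limiting sound speed is therefore $\eps k_\infty$.

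No step is a genuine obstacle. The only thing requiring care is the bookkeeping: in the scaled setting, the role of $\eps^2$ inside the denominators $\mu+\eps^2p$ is played by $\tilde p$ with unit light speed, and the constraint $k_\infty<\eps^{-1}$ becomes $\tilde k<1$. I will also record, since the proof of Proposition~\ref{prop:fixed-effective-admissibility} relies on it, that $A=p/\mu$ has $A',\mu A''\in L^1$ at both endpoints, with $A$ and $\mu A'$ bounded. To prove this, I will differentiate $y=A$ using~\eqref{eq:pressure-ratio-endpoint-ode} and~\eqref{eq:pressure-B-identity}. Writing $\mu A'=B_{\rm ref}(1+\eps^2A)^2-A$, the bracket tends to zero at both ends. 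I will then integrate this linear-in-remainder relation, using the weighted bound~\eqref{eq:weighted-remainder-first-derivative} applied to $r_\pm$, and a Gronwall/variation-of-constants argument near the stable fixed point $k_\infty^2$ (respectively near $0$ at vacuum, where $A=\Ocal(\mu^{\gamma-1})$). This gives $A'\in L^1$; differentiating the relation once more then gives $\mu A''\in L^1$.
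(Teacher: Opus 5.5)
Your direct verification is correct and complete for the statement as written. With $\tilde p=\eps^2p$ and unit light speed, the coefficient $\mu^2\tilde p'/(\mu+\tilde p)^2$ is exactly $\eps^2$ times the coefficient in \eqref{eq:intro-vacuum-B} and \eqref{eq:intro-infty-B}. Hence the remainders $r_\pm$, the thresholds $\mu_0,\mu_1$, the exponent $\gamma$ and the integrability conditions \eqref{eq:intro-vacuum-r}, \eqref{eq:intro-infty-r} carry over verbatim. Since $t\mapsto t^2/(1+t^2)^2$ is increasing on $(0,1)$, the large-density parameter is $\eps k_\infty\in(0,1)$.

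The paper takes a more roundabout route. It sets $A=p/\mu$, writes $B=\eps^2B_{\rm ref}$ with $B_{\rm ref}=p'(1+\eps^2A)^{-2}$, and proves $A',\mu A''\in L^1$ with $\mu A'$ bounded at both endpoints. Near vacuum this comes from $A'=(p'-A)/\mu=\Ocal(\mu^{\gamma-2})$. At large density it comes from the damped linear equation $A_{zz}+(1-2\eps^2B_{\rm ref}(1+\eps^2A))A_z=(1+\eps^2A)^2\del_z(B_{\rm ref}(e^z))$ in $z=\log\mu$. The paper then argues that smooth functions of $A$ preserve the weighted remainder conditions. For this lemma that machinery is unnecessary; the paper itself remarks after the proof that the remainders of $B$ are the same $r_\pm$. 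What it buys is the bounds on $A$ quoted in the proof of Proposition~\ref{prop:fixed-effective-admissibility}. It would also matter if the reference coefficient were normalized without $\eps$ in its denominator.

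If you do record those auxiliary bounds, the order of your steps matters at large density. The hypotheses give $r_+\to0$ and $r_+'\in L^1(d\mu)$, but not $\int|r_+|\,d\mu/\mu<+\infty$; $r_+=1/\log\mu$ is a counterexample. Variation of constants applied to $A-k_\infty^2$ with forcing $r_+$ only yields $|A_z|\le C(|A-k_\infty^2|+|r_+|)$. In that example neither term on the right is integrable in $z$, so this does not prove $A'\in L^1(d\mu)$. You should differentiate the relation first, as the paper does, so the forcing becomes $\del_z(B_{\rm ref}(e^z))=\mu B_{\rm ref}'\in L^1(dz)$. Then apply variation of constants to $A_z$ itself, which gives $A_z\in L^1(dz)\cap L^\infty$, and read off $A_{zz}\in L^1(dz)$ from the equation.
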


\begin{proof}
We put $A(\mu)\coloneqq p(\mu)/\mu$ and recall the reference coefficient $B_{\rm ref}(\mu)=p'(\mu)(1+\eps^2A(\mu))^{-2}$ from~\eqref{eq:intro-vacuum-B} and~\eqref{eq:intro-infty-B}. The coefficient in the current equation is
\be
\label{eq:fixed-parameter-pressure-coefficient}
B(\mu)=\frac{\eps^2\mu^2p'(\mu)}{(\mu+\eps^2p(\mu))^2} =\eps^2B_{\rm ref}(\mu).
\ee
We verify the derivative bounds for the last factor. Near vacuum, conditions~\eqref{eq:intro-vacuum-B}--\eqref{eq:intro-vacuum-r} give $B_{\rm ref}=\Ocal(\mu^{\gamma-1})$, $B_{\rm ref}'\in L^1$, and $\mu B_{\rm ref}''\in L^1$. Since $p'=B_{\rm ref}(1+\eps^2A)^2$ and $0\leq A<\eps^{-2}$, we have $p',A=\Ocal(\mu^{\gamma-1})$ and $A'=(p'-A)/\mu=\Ocal(\mu^{\gamma-2})\in L^1$. Differentiation gives
\be
p''=B_{\rm ref}'(1+\eps^2A)^2+2\eps^2B_{\rm ref}(1+\eps^2A)A'.
\ee
It follows that $p''\in L^1$, $\mu A''=p''-2A'\in L^1$, and $\mu A'$ is bounded near vacuum.

At large density, Proposition~\ref{prop:pressure-endpoint-asymptotics} gives $A\to k_\infty^2$. With $z=\log\mu$, differentiation of $A_z=B_{\rm ref}(e^z)(1+\eps^2A)^2-A$ yields
\be
A_{zz}+\bigl(1-2\eps^2B_{\rm ref}(1+\eps^2A)\bigr)A_z=(1+\eps^2A)^2\del_z\bigl(B_{\rm ref}(e^z)\bigr).
\ee
The coefficient of $A_z$ tends to $(1-\eps^2k_\infty^2)/(1+\eps^2k_\infty^2)>0$, while $\del_z(B_{\rm ref}(e^z))\in L^1(dz)$ by~\eqref{eq:intro-infty-B}--\eqref{eq:intro-infty-r}. We choose a sufficiently large $z_0$ and $d>0$ so that this coefficient is at least $d$ for $z\geq z_0$. Variation of constants gives
\be
|A_z(z)|\leq|A_z(z_0)|e^{-d(z-z_0)} +C\int_{z_0}^ze^{-d(z-r)}\bigl|\del_r(B_{\rm ref}(e^r))\bigr|\,dr.
\ee
Integration gives $A_z\in L^1(dz)\cap L^\infty$, and its differential equation gives $A_{zz}\in L^1(dz)$. In the density variable, these conclusions are
\be
A'\in L^1(d\mu),\qquad \mu A''\in L^1(d\mu),\qquad \sup|\mu A'|<+\infty.
\ee
A smooth function of bounded $A$, with denominator bounded away from zero, has the same two derivative-integrability properties: its second derivative contains $A''$ and $(A')^2$, and $\mu(A')^2\leq\sup|\mu A'|\,|A'|$. Multiplication by a pressure remainder $r$ preserves these properties, since the mixed term satisfies $\mu|r'A'|\leq\sup|\mu A'|\,|r'|$. The factor in the equation~\eqref{eq:fixed-parameter-pressure-coefficient} therefore preserves the weighted derivative bounds in  the assumptions~\eqref{eq:intro-vacuum-r} and~\eqref{eq:intro-infty-r}. The limits $A\to0$ at vacuum and $A\to k_\infty^2$ at infinity give
\be
B(\mu)\sim\eps^2\kappa_0\mu^{\gamma-1}, \qquad B(\mu)\longrightarrow\frac{\eps^2k_\infty^2}{(1+\eps^2k_\infty^2)^2}.
\ee
Scaling preserves the required regularity, $0<\eps^2p'<1$, and positive upper and lower bounds on $\eps^2p/\mu$ at large density. 
\end{proof}

 The endpoint remainders of \(B\) are the same \(r_-\) and \(r_+\) as in~\eqref{eq:intro-vacuum-B} and~\eqref{eq:intro-infty-B}, since \(B=\eps^2B_{\rm ref}\).
\be
\frac{B(\mu)}{\eps^2\kappa_0\mu^{\gamma-1}}=1+r_-(\mu),\qquad \frac{(1+\eps^2k_\infty^2)^2B(\mu)}{\eps^2k_\infty^2}=1+r_+(\mu).
\ee
Each remainder tends to zero at its respective density limit and satisfies the same type of weighted derivative bounds as in the equations~\eqref{eq:intro-vacuum-r} and~\eqref{eq:intro-infty-r}, by the preceding proof. The endpoint estimates for \(B\) use these same remainders. Their constants may depend on $\eps$.

\subsubsection{Pressure signs for positivity and convexity}
\label{section-pressure-sign-properties}

The genuine nonlinearity condition is expressed by the sign of $G_{\rm N}$, while the mass-energy convexity condition uses $G_{\rm E}$. The latter condition is stronger, since, with $c=\eps\sqrt{p'}$,  their difference is
\be
G_{\rm N}-G_{\rm E}=4c^3(1-c)>0.
\ee
These are separate hypotheses for properties of the current families; the construction of the kernels requires only admissibility.

Under the additional restriction \(p'<1\), two sufficient conditions on the pressure are
\be
(\mu+p)p''+2p'(1-p')>0, \qquad (\mu+p)p''+2p'(1-\sqrt{p'})^2\geq0.
\ee
We verify this implication pointwise in $\mu$. We write $h=\mu+\eps^2p$ and define $h_{\rm ref}\coloneqq\mu+p$. For the first sign, we set $A\coloneqq1-\eps^2p'$ and $A_{\rm ref}\coloneqq1-p'$; for the second, we set $A\coloneqq(1-\eps\sqrt{p'})^2$ and $A_{\rm ref}\coloneqq(1-\sqrt{p'})^2$. In both cases $0<h/h_{\rm ref}\leq1$ and $A\geq A_{\rm ref}>0$. The non-strict reference inequality gives $p''\geq-2p'A_{\rm ref}/h_{\rm ref}$, and therefore
\be
hp''+2p'A \geq2p'\bigl(A-(h/h_{\rm ref})A_{\rm ref}\bigr)\geq0.
\ee
The strict reference inequality gives strict positivity in the first case. 

Ordinary pressure convexity, $p''\geq0$, is sufficient for both signs under $0<\eps^2p'<1$, but is not required. Both signs are preserved by the reduction to an effective pressure, and the condition $G_{\rm E}\geq0$ yields convexity for the prescribed profile family. The reference energy is convex independently of this condition, as proved in Proposition~\ref{lem-convexM}.

For the exactly isothermal model $\eps^2p=c_{\rm iso}^2\mu$, where $0<c_{\rm iso}<1$ is the sound speed divided by the speed of light, the expressions are $G_{\rm N}=2c_{\rm iso}^2(1-c_{\rm iso}^2)>0$ and $G_{\rm E}=2c_{\rm iso}^2(1-c_{\rm iso})^2>0$. This model is outside the polytropic-vacuum class. For an admissible pressure, the limits $\mu p''/p'\to\gamma-1$ at vacuum and $\mu p''\to0$, $p'\to k_\infty^2$ at infinity imply both signs near each density limit. Their validity at intermediate densities is an additional hypothesis. The physical polytrope $p=\kappa\mu^\gamma$, $\gamma>1$, satisfies both signs wherever $\eps^2p'<1$, but loses this causal sound-speed bound at large density.

\subsubsection{The vacuum pressure condition}

The vacuum solutions were constructed by the Volterra equation in Section~\ref{section-6-2}. Here we verify additional Liouville--Green integral conditions that can be used to control more precisely the remainder.
The Liouville--Green error-control function of the weight~$Q_-$ defined in~\eqref{eq:def-Qminus-sec9} is
\bel{eq:def-Eminus-Q-sec9}
E_{\rm LG,-} (z,\xi) \coloneqq \frac{4Q_- (z,\xi)\,Q_{-,zz}(z,\xi) -5Q_{-,z}(z,\xi)^2} {16\,Q_- (z,\xi)^{5/2}},
\ee
and the corresponding variation is
\bel{eq:def-Vminus-sec9}
\mathcal V_- (z,\xi) \coloneqq \int_{-\infty}^{z} | E_{\rm LG,-} (r,\xi)|\,dr.
\ee
We imposed the weighted derivative bounds on the pressure remainder in~\eqref{equa-cond-small} to obtain the integrability condition on \(B\) stated in the following lemma, which is needed for the Liouville--Green estimates.

\begin{lemma}[Vacuum coefficient bounds]
\label{prop:olver-vacuum-pressure}

\

\bei

\item \emph{Integral condition on the function \(B\).} Assume that \(B(\mu)\to0\) as \(\mu\to0\). Then, {for each fixed \(\xi\in\Rbb\),} the condition
\bel{eq:vacuum-B-condition-sec9}
\int_0^{\mu_0} \Bigl( |\mu^2B_{\mu\mu}(\mu)+ \mu B_\mu(\mu)| + |\mu B_\mu(\mu)|^2 \Bigr) \frac{d\mu}{\mu} <+ \infty
\ee
for some \(\mu_0>0\) implies that there exists a \emph{threshold} \(z_- (\xi)\leq\log\mu_0\) such that 
\be
Q_- (z,\xi)\geq1/8 \text{ for all } z\leq z_- (\xi) \quad \text{ and } \quad \mathcal V_- (z_- (\xi),\xi)<+ \infty.
\ee

\item \emph{Integral condition on the pressure remainder.} Assume that there exist \(\gamma>1\), \(\kappa_0>0\), and \(\mu_0>0\) such that
\bel{eq:intro-vacuum-B-recalled}
B(\mu) = \eps^{2}\kappa_0\mu^{\gamma-1}\bigl(1+r_- (\mu) \bigr), \qquad 0<\mu\le\mu_0,
\ee
where \(r_-\in C^2((0,\mu_0])\) satisfies
\bel{eq:intro-vacuum-r-recalled}
\int_0^{\mu_0} \Bigl( |r_-'(\mu)|+|\mu r_-''(\mu)| \Bigr) \,d\mu <+ \infty.
\ee
Then condition~\eqref{eq:vacuum-B-condition-sec9} is satisfied.
\eei
\end{lemma}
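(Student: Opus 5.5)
For the first assertion, I would work directly in the logarithmic variable $z=\log\mu$ and write $\beta(z)\coloneqq B(e^z)$, so that $Q_-(z,\xi)=\tfrac14-(1+\xi^2)\beta(z)$. Since $B(\mu)\to0$ at vacuum, for each fixed $\xi$ there is $z_-(\xi)\le\log\mu_0$ with $(1+\xi^2)\beta(z)\le 1/8$ for $z\le z_-(\xi)$. This gives $Q_-\ge1/8$ there, and trivially $Q_-\le1/4$.

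Next I would bound the error-control function. I would use $Q_{-,z}=-(1+\xi^2)\beta_z$ and $Q_{-,zz}=-(1+\xi^2)\beta_{zz}$, together with the two-sided bounds on $Q_-$. These give
\[
|E_{\rm LG,-}(z,\xi)|\le C(1+\xi^2)\,|\beta_{zz}|+C(1+\xi^2)^2\,|\beta_z|^2,
\]
with $C$ universal. It remains to convert to the density variable. The chain rule gives $\beta_z=\mu B_\mu$ and $\beta_{zz}=\mu^2B_{\mu\mu}+\mu B_\mu$, with $dz=d\mu/\mu$. Hence $\int_{-\infty}^{z_-(\xi)}|E_{\rm LG,-}|\,dz$ is bounded by $C(1+\xi^2)^2$ times the integral in~\eqref{eq:vacuum-B-condition-sec9}. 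This proves $\mathcal V_-(z_-(\xi),\xi)<+\infty$.

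For the second assertion, I would write $B=\eps^2\kappa_0\mu^{\gamma-1}(1+r_-)$ and expand $\mu B_\mu$ and $\mu^2B_{\mu\mu}+\mu B_\mu$. Up to the factor $\eps^2\kappa_0$, these are $\mu^{\gamma-1}\bigl((\gamma-1)(1+r_-)+\mu r_-'\bigr)$ and
\[
\mu^{\gamma-1}\Bigl((\gamma-1)^2(1+r_-)+(2\gamma-1)\mu r_-'+\mu^2r_-''\Bigr).
\]
By the weighted estimate~\eqref{eq:weighted-remainder-first-derivative}, the hypothesis~\eqref{eq:intro-vacuum-r-recalled} makes $\mu r_-'$ bounded near zero, and $r_-$ is bounded since $r_-'\in L^1$. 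Therefore $|\mu B_\mu|^2/\mu\le C\mu^{2\gamma-3}$, which is integrable because $\gamma>1$. In the other term, the contributions from $r_-$ and $\mu r_-'$ are $O(\mu^{\gamma-2})$, which is also integrable. The last contribution is $\mu^{\gamma-1}|\mu r_-''|\le\mu_0^{\gamma-1}|\mu r_-''|\in L^1$. Summing these bounds gives~\eqref{eq:vacuum-B-condition-sec9}.

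The main obstacle is keeping track of which terms need the remainder hypothesis, in particular the $\mu^2 r_-''$ term. It is handled only through the weighted integrability of $\mu r_-''$ combined with the extra factor $\mu^{\gamma-1}$. A smaller point to check is that the constant in the first part is allowed to depend on $\xi$; this is fine because the statement is for each fixed $\xi$.
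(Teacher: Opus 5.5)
Your proposal is correct and follows essentially the same route as the paper. For fixed $\xi$ you use the lower bound $Q_-\geq 1/8$ to control the error-control function by $|Q_{-,zz}|+Q_{-,z}^2$ and convert via $\del_z=\mu\del_\mu$, $dz=d\mu/\mu$; you then expand $\mu B_\mu$ and $\mu^2B_{\mu\mu}+\mu B_\mu$ and use boundedness of $r_-$ and $\mu r_-'$ together with $\mu^{\gamma-1}|\mu r_-''|\leq\mu_0^{\gamma-1}|\mu r_-''|$, exactly as the paper does (the only cosmetic difference is that you cite~\eqref{eq:weighted-remainder-first-derivative}, whereas the paper rederives inline that $\mu r_-'$ tends to zero from $(\mu r_-')'=r_-'+\mu r_-''$).
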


\begin{proof}
{{\bf 1.} \emph{Reduction to the coefficient $B$.}} With $\mu=e^z$, the definitions~\eqref{eq:def-Qminus-sec9} and~\eqref{eq:def-Eminus-Q-sec9} give
\be
Q_{-,z}=-(1+\xi^2)\mu B_\mu,\qquad Q_{-,zz}=-(1+\xi^2)(\mu^2B_{\mu\mu}+\mu B_\mu).
\ee
For fixed $\xi$, we decrease $\mu_0$ so that $1/8\leq Q_-\leq3/8$ on $z\leq\log\mu_0$, and set $z_-(\xi)\coloneqq\log\mu_0$. Since the denominator of the  error-control function is bounded below there, substitution of the preceding derivatives, followed by $dz=d\mu/\mu$, yields
\be
\mathcal V_-(z_-(\xi),\xi)\leq C_\xi\int_0^{\mu_0}\bigl(|\mu^2B_{\mu\mu}+\mu B_\mu|+|\mu B_\mu|^2\bigr)\,\frac{d\mu}{\mu}.
\ee

{{\bf 2.} \emph{Endpoint estimates.}} We put $r\coloneqq r_-$, so that $B=\eps^2\kappa_0\mu^{\gamma-1}(1+r)$. By  the assumption~\eqref{eq:intro-vacuum-r-recalled}, both $r'$ and $\mu r''$ are integrable at zero. Thus $r$ has a finite endpoint limit. The identity $(\mu r')'=r'+\mu r''$ also gives a limit of $\mu r'$, necessarily zero: a nonzero limit would make $r'$ asymptotic to a nonzero multiple of $1/\mu$, contradicting its integrability. In particular, $r$ and $\mu r'$ are bounded. Differentiation gives
\be
\mu B_\mu=\eps^2\kappa_0\mu^{\gamma-1}\bigl((\gamma-1)(1+r)+\mu r'\bigr),
\ee
\be
\mu^2B_{\mu\mu}+\mu B_\mu=\eps^2\kappa_0\mu^{\gamma-1}\bigl((\gamma-1)^2(1+r)+(2\gamma-1)\mu r'+\mu^2r''\bigr).
\ee
Therefore, using $\mu^{\gamma-1}|\mu r''|\leq\mu_0^{\gamma-1}|\mu r''|$, we obtain
\be
\frac{|\mu^2B_{\mu\mu}+\mu B_\mu|}{\mu}\leq C\mu^{\gamma-2}+C\mu_0^{\gamma-1}|\mu r''|,\qquad \frac{|\mu B_\mu|^2}{\mu}\leq C\mu^{2\gamma-3}.
\ee
Both right-hand sides are integrable at zero because $\gamma>1$ and $\mu r''\in L^1$. This proves the condition~\eqref{eq:vacuum-B-condition-sec9}, including its linear and quadratic terms.
\end{proof}

\subsection{Admissible pressure laws}
\label{section-pressure-examples}

\subsubsection{Pressure laws near vacuum}  

Near vacuum, the admissible class contains asymptotically polytropic laws with higher-order corrections, for instance
\be
p(\mu)=\mu^\gamma+C\mu^{\gamma_1}, \qquad 1<\gamma<\gamma_1,
\ee
where $\gamma_1$ may be arbitrarily close to $\gamma$. It also allows logarithmic corrections such as
\be
p(\mu)=\mu^\gamma+C\frac{\mu^\gamma}{(-\log\mu)^{1+\auxeps}}, \qquad \auxeps>0,
\ee
and oscillatory corrections of the form
\be
p(\mu)=\mu^\gamma\Bigl(1+\frac{\sin((-\log\mu)^\beta)}{(-\log\mu)^{1+\auxeps}}\Bigr), \qquad \gamma>1, \quad 0<\beta<1, \quad \auxeps>0.
\ee
These formulas describe the pressure for sufficiently small $\mu$. They may be extended to a high-density branch satisfying the condition~\eqref{hyperbolic-eos-2} and~\eqref{equa-cond-large}, while preserving hyperbolicity. Indeed, each example has $p''>0$ on a sufficiently small vacuum interval. We extend $p'$ smoothly and monotonically from that interval to a constant $k_\infty^2\in(0,\eps^{-2})$, choosing the interval so that its initial derivative is smaller than this constant. Integration gives $p(0)=0$, $0<p'<\eps^{-2}$, and an affine pressure at large density. Its coefficient $B_{\rm ref}$ has a remainder of order $\mu^{-1}$ at infinity, with the required derivative bounds. Since $p''\geq0$, both conditions~\eqref{hyperbolic-eos-3} and~\eqref{eq:intro-mass-energy-convexity-condition} hold.  The analysis uses the endpoint behavior of $B_{\rm ref}$ and its integrable remainder derivatives, allowing departures from both limiting models.

\subsubsection{A pressure law on the full density range}

For every $\gamma>1$, $\kappa_0>0$, and $k_\infty\in(0,\eps^{-1})$, an explicit admissible law with these vacuum and large-density parameters is
\be
\label{eq:global-pressure-example}
p(\mu)=\int_0^\mu\frac{\kappa_0 k_\infty^2 q^{\gamma-1}}{k_\infty^2+\kappa_0q^{\gamma-1}}\,dq.
\ee
This law also satisfies the mass-energy convexity condition~\eqref{eq:intro-mass-energy-convexity-condition} at every positive density. The classes for the construction and positivity of the kernels and for propagation of convexity are nonempty for every $\gamma>1$, without a density cutoff.

\begin{proof}
\bse
{We set $d\coloneqq\kappa_0/k_\infty^2$. Differentiation gives
\be
\label{eq:global-pressure-example-derivatives}
p'(\mu)=\frac{\kappa_0\mu^{\gamma-1}}{1+d\mu^{\gamma-1}},\qquad p''(\mu)=\frac{\kappa_0(\gamma-1)\mu^{\gamma-2}}{(1+d\mu^{\gamma-1})^2}>0.
\ee
Hence $p\in C^3((0,+\infty))\cap C^1([0,+\infty))$, $p(0)=p'(0)=0$, and $0<p'<k_\infty^2<\eps^{-2}$. Averaging $p'$ gives $0<p(\mu)/\mu<k_\infty^2$ and $p(\mu)/\mu\to k_\infty^2$ as $\mu\to+\infty$. Both constitutive signs follow from $p''>0$ and $0<p'<\eps^{-2}$.

To verify the weighted remainder assumptions, we put $A(\mu)=p(\mu)/\mu$ and integrate by parts to obtain
\be
\label{eq:global-pressure-example-ratio-derivatives}
A'(\mu)=\frac1{\mu^2}\int_0^\mu q p''(q)\,dq,\qquad \mu A''(\mu)=p''(\mu)-2A'(\mu).
\ee
Near zero, these formulas and the equation~\eqref{eq:global-pressure-example-derivatives} give $A=\Ocal(\mu^{\gamma-1})$, $A'=\Ocal(\mu^{\gamma-2})$, and $A''=\Ocal(\mu^{\gamma-3})$. We also have
\be
\label{eq:global-pressure-example-vacuum-remainder}
\frac{B_{\rm ref}(\mu)}{\kappa_0\mu^{\gamma-1}}=\frac1{(1+d\mu^{\gamma-1})(1+\eps^2A(\mu))^2},
\ee
so the vacuum remainder satisfies $r_-=\Ocal(\mu^{\gamma-1})$ and $|r_-'|+|\mu r_-''|\leq C\mu^{\gamma-2}$, which is integrable at zero.

At infinity, $p''=\Ocal(\mu^{-\gamma})$ and $p'''=\Ocal(\mu^{-\gamma-1})$. Formula~\eqref{eq:global-pressure-example-ratio-derivatives} gives
\be
\label{eq:global-pressure-example-infinity-ratio-bound}
|A'(\mu)|+|\mu A''(\mu)|\leq C
\begin{cases}
\mu^{-\gamma},&1<\gamma<2,\\
(1+\log\mu)\mu^{-2},&\gamma=2,\\
\mu^{-2},&\gamma>2,
\end{cases}\qquad \mu\geq1.
\ee
All three bounds are integrable, and differentiating $B_{\rm ref}=p'(1+\eps^2A)^{-2}$ gives
\be
\label{eq:global-pressure-example-B-second-derivative}
B_{\rm ref}''=\frac{p'''}{(1+\eps^2A)^2}-\frac{4\eps^2p''A'}{(1+\eps^2A)^3}-\frac{2\eps^2p'A''}{(1+\eps^2A)^3}+\frac{6\eps^4p'(A')^2}{(1+\eps^2A)^4}.
\ee
Thus $B_{\rm ref}'\in L^1(1,+\infty)$ and $\mu B_{\rm ref}''\in L^1(1,+\infty)$: in addition to $\mu p'''$ and $\mu A''$, the products $\mu p''A'$ and $\mu(A')^2$ are integrable by the bounds above. Finally, $B_{\rm ref}\to k_\infty^2/(1+\eps^2k_\infty^2)^2$, so its large-density normalized remainder tends to zero and has the two required integrable derivatives. This verifies all the hypotheses of~\autoref{def-pression}.}
\ese
\end{proof}


\begin{thebibliography}{10}

\bibitem{AbbresciaSpeck-2025}
\auth{Abbrescia L. and J. Speck,}
Remarkable localized integral identities for 3D compressible Euler flow and the double-null framework,
\emph{Arch. Ration. Mech. Anal.} 249 (2025), Article~10.

\bibitem{CL1-2000} 
\auth{Chen G.-Q. and P.G. LeFloch,}
Compressible Euler equations with general pressure law,
\emph{Arch. Ration. Mech. Anal.} 153 (2000), 221--259.

\bibitem{CL2-2003} 
\auth{Chen G.-Q. and P.G. LeFloch,}
Existence theory for the isentropic Euler equations,
\emph{Arch. Ration. Mech. Anal.} 166 (2003), 81--98.

\bibitem{CP-2010}
\auth{Chen G.-Q. and M. Perepelitsa},
Vanishing viscosity limit of the Navier--Stokes equations to the Euler equations for compressible fluid flow,
\emph{Comm. Pure Appl. Math.} 63 (2010), 1469--1504.

\bibitem{CS-2022} 
\auth{Chen G.-Q. and M.R.I. Schrecker,}
Global entropy solutions and Newtonian limit for the relativistic Euler equations,
\emph{Ann. PDE} 8 (2022), Article~10.

\bibitem{Dafermos-1973-entropy-rate}
\auth{Dafermos C.M.,}
The entropy rate admissibility criterion for solutions of hyperbolic conservation laws,
\emph{J. Differential Equations} 14 (1973), 202--212.

\bibitem{Dafermos-1979-stability}
\auth{Dafermos C.M.,}
The second law of thermodynamics and stability,
\emph{Arch. Rational Mech. Anal.} 70 (1979), 167--179.

\bibitem{Dafermos-book}
\auth{Dafermos C.M.,}
\emph{Hyperbolic conservation laws in continuum physics},
5th ed.,
Grundlehren der mathematischen Wissenschaften, Vol.~325,
Springer, Cham, 2024.

\bibitem{DiPerna-1983a}
\auth{DiPerna R.J.,}
Convergence of the viscosity method for isentropic gas dynamics,
\emph{Comm. Math. Phys.} 91 (1983), 1--30.

\bibitem{DiPerna-1983b}
\auth{DiPerna R.J.,}
Convergence of approximate solutions to conservation laws,
\emph{Arch. Ration. Mech. Anal.} 82 (1983), 27--70.

\bibitem{Frauendiener-2003}
\auth{Frauendiener J.,}
A note on the relativistic Euler equations,
\emph{Class. Quantum Grav.} 20 (2003), L193--L196.

\bibitem{FriedrichsLax-1971}
\auth{Friedrichs K.O. and P.D. Lax,}
Systems of conservation equations with a convex extension,
\emph{Proc. Nat. Acad. Sci. USA} 68 (1971), 1686--1688.

\bibitem{PG-PLF}
\auth{Germain P. and P.G. LeFloch,}
The finite energy method for compressible fluids: the Navier--Stokes--Korteweg model,
\emph{Comm. Pure Appl. Math.} 69 (2016), 3--61.

\bibitem{Godunov-1961}
\auth{Godunov S.K.,}
An interesting class of quasilinear systems,
\emph{Soviet Math. Dokl.} 2 (1961), 947--949.

\bibitem{Hsu-Makino-2001}
\auth{Hsu C.-H., S.-S. Lin, and T. Makino,}
On the relativistic Euler equation,
\emph{Methods Appl. Anal.} 8 (2001), 159--208.

\bibitem{TorusT2-1-2026}
\auth{Le Floch B. and P.G. LeFloch,}
Stability and instability of torus-symmetric Einstein spacetimes with square-integrable connection,
Preprint, 2026.

\bibitem{TorusT2-2-2026}
\auth{Le Floch B. and P.G. LeFloch,} Finite-energy spacetimes with torus symmetry. Cauchy stability of Einstein--Euler and Einstein--Navier--Stokes areal flows, Preprint arXiv:2608.31171, 2026.

\bibitem{TorusT2-3-asymptotic-2026}\auth{Le Floch B. and P.G. LeFloch,} Quasi-currents of perfect compressible fluids and their non-relativistic limits, Preprint, 2026.

\bibitem{TorusT2-4-2026}
\auth{Le Floch B. and P.G. LeFloch,}
The zero-viscosity limit for torus-symmetric Einstein--Navier--Stokes flows,
Preprint, 2026.

\bibitem{LeFloch-curved}
\auth{LeFloch P.G.,}
Hyperbolic balance laws with entropy on a curved spacetime. The weak-strong uniqueness theory,
\emph{J. Hyperbolic Differ. Equ.} 10 (2013), 773--798.

\bibitem{LeFlochShelukhin-2005} 
\auth{LeFloch P.G. and V. Shelukhin,}
Symmetries and global solvability of the isothermal gas dynamics equations,
\emph{Arch. Ration. Mech. Anal.} 175 (2005), 389--430.

\bibitem{LeFlochUkai-2009} 
\auth{LeFloch P.G. and S. Ukai,}
A symmetrization of the relativistic Euler equations in several spatial variables,
\emph{Kinet. Relat. Models} 2 (2009), 275--292.

\bibitem{LeFlochWestdickenberg-2007} 
\auth{LeFloch P.G. and M. Westdickenberg,}
Finite energy solutions to the isentropic Euler equations with geometric effects,
\emph{J. Math. Pures Appl.} 88 (2007), 389--429.

\bibitem{LeFlochYamazaki-2007} 
\auth{LeFloch P.G. and M. Yamazaki,}
Entropy solutions of the Euler equations for isothermal relativistic fluids,
\emph{Intern. J. Dynamical Systems and Differential Equations} 1 (2007), 20--37.

\bibitem{Lions-1-1996} 
\auth{Lions P.-L., B. Perthame, and P.E. Souganidis,}
Existence and stability of entropy solutions for the hyperbolic systems of isentropic gas dynamics in Eulerian and Lagrangian coordinates,
\emph{Comm. Pure Appl. Math.} 49 (1996), 599--638.

\bibitem{Lions-2-1994} 
\auth{Lions P.-L., B. Perthame, and E. Tadmor,}
Kinetic formulation of the isentropic gas dynamics and \(p\)-systems,
\emph{Comm. Math. Phys.} 163 (1994), 415--431.

\bibitem{Makino-2001}
\auth{Makino T.,}
Weak solutions to the compressible Euler equation with an asymptotic \(\gamma\)-law,
\emph{J. Math. Kyoto Univ.} 41 (2001), 557--592.

\bibitem{MakinoMizohataUkai-1992}
\auth{Makino T., K. Mizohata, and S. Ukai,}
The global weak solutions of compressible Euler equation with spherical symmetry,
\emph{Japan J. Indust. Appl. Math.} 9 (1992), 431--449.

\bibitem{MakinoMizohataUkai-1994}
\auth{Makino T., K. Mizohata, and S. Ukai,}
Global weak solutions of the compressible Euler equation with spherical symmetry. II,
\emph{Japan J. Indust. Appl. Math.} 11 (1994), 417--426.

\bibitem{Olver}
\auth{Olver F.W.J.,}
\emph{Asymptotics and special functions},
Academic Press, New York, 1974.

\bibitem{SchreckerSchulz-2020}
\auth{Schrecker M.R.I. and S. Schulz,}
Inviscid limit of the compressible Navier--Stokes equations for asymptotically isothermal gases,
\emph{J. Differential Equations} 269 (2020), 8640--8685.

\bibitem{SchreckerSchulz-2021-corrigendum}
\auth{Schrecker M.R.I. and S. Schulz,} Corrigendum to Inviscid limit of the compressible Navier--Stokes equations for asymptotically isothermal gases, \emph{J. Differential Equations} 287 (2021), 78--87.

\bibitem{Tartar-1979}
\auth{Tartar L.,}
Compensated compactness and applications to partial differential equations,
in \emph{Nonlinear analysis and mechanics: Heriot-Watt Symposium, IV},
Res. Notes in Math., Vol.~39, Pitman, Boston, 1979, pp.~136--212.

\bibitem{Walton-2005}
\auth{Walton R.A.,}
A symmetric hyperbolic structure for isentropic relativistic perfect fluids,
\emph{Houston J. Math.} 31 (2005), 145--160.
 \end{thebibliography}
\end{document}